\mathchardef\mhyphen="2D
\numberwithin{equation}{section}
\newtheorem{teo}{Theorem}[section]
\newtheorem{pro}[teo]{Proposition}
\newtheorem{lem}[teo]{Lemma}
\newtheorem{cor}[teo]{Corollary}
\theoremstyle{definition}
\newtheorem{dfn}[teo]{Definition}
\newtheorem{exa}[teo]{Example}
\newtheorem{wrn}[teo]{Warning}
\theoremstyle{remark}
\newtheorem{convention}[teo]{Convention}
\newtheorem{rem}[teo]{Remark}
\begin{document}
\bibliographystyle{alpha}

\title[Stacks and Higgs bundles]{An introduction to moduli stacks, with a view towards Higgs bundles on algebraic curves}
\author[Casalaina-Martin]{Sebastian Casalaina-Martin }
\address{University of Colorado at Boulder, Department of Mathematics,  
Boulder, CO,  USA}
\email{casa@math.colorado.edu}
\author[Wise]{Jonathan Wise}
\address{University of Colorado at Boulder, Department of Mathematics,  
Boulder, CO,  USA}
\email{wise@math.colorado.edu}

\thanks{The first author was partially supported by NSF grant DMS-1101333, a Simons Foundation Collaboration Grant for Mathematicians (317572), and NSA Grant H98230-16-1-005.  The second author was partially supported by an NSA Young Investigator's Grant  H98230-14-1-0107. }

\date{\today}

\begin{abstract}
This article is based in part on lecture notes prepared for  the summer school ``The Geometry, Topology and Physics of Moduli Spaces of Higgs Bundles''  at the Institute for Mathematical Sciences at the National University of Singapore in July of 2014. 
The aim is to provide a brief introduction to algebraic stacks, and then  to give  several constructions of the moduli stack of Higgs bundles on algebraic curves.  The first construction is via a ``bootstrap'' method  from the algebraic stack of vector bundles on an algebraic curve. 
This construction is motivated in part by  Nitsure's GIT construction of  a projective moduli space of semi-stable Higgs bundles, and 
we describe the relationship between Nitsure's moduli space and the algebraic stacks constructed here.
   The third approach is via deformation theory, where we directly construct the stack of Higgs bundles using Artin's criterion.
\end{abstract}
\maketitle

\tableofcontents



\section*{Introduction}

Stacks have been used  widely by algebraic geometers since the 1960s   for studying parameter spaces for algebraic and geometric objects \cite{DM}.  
Their popularity is growing in other areas as well (e.g., \cite{MM94}).  Nevertheless, despite their utility, and their having been around for many years, stacks still do not seem to be as  popular   as might be expected.   Of course, echoing the  introduction of \cite{fantechi},  this may  have something to do with the technical nature of the topic, which may dissuade the uninitiated, particularly when there may be less technical methods available that can be used instead.  
To capture a common sentiment, it is hard to improve on the following excerpt from  Harris--Morrison \cite[3.D]{HM},  which serves as an introduction to their section on stacks:
\emph{
\begin{quote}
\begin{quote}
`Of course, here I'm working with the moduli stack rather than with the moduli space.  For those of you who aren't familiar with stacks, don't worry: basically, all it means is that I'm allowed to pretend that the moduli space is smooth and there's a universal family over it.'
\end{quote}
 \vskip .1 in 
Who hasn't heard these words, or their equivalent spoken at a talk?  And who hasn't fantasized about grabbing the speaker by the lapels and shaking him until he says what -- exactly -- he means by them? 
\end{quote}}
At the same time, the reason stacks are used as widely as they are  is that   they  \emph{really} are a natural language for talking about parameterization.     While there is no doubt a great deal of technical background that goes into the set-up, the pay-off is that once this foundation has been laid,  stacks  provide a clean, unified  language for discussing   what otherwise may require many caveats and special cases.    

Our goal here is to give a brief introduction to algebraic stacks, with a view towards  defining the moduli stack of Higgs bundles. 
 We have tried to provide enough motivation for   stacks that the reader is inclined to proceed to the definitions, and a sufficiently streamlined  presentation of the definitions that the reader does not immediately stop at that point!  In the end, we hope the reader has a good sense of what the moduli stack of Higgs bundles is, why it is an algebraic stack, and how the stack relates to the quasi-projective variety of   Higgs bundles constructed by Nitsure \cite{N91}.

\vskip .1 in 
Due to the authors' backgrounds, for precise statements, the topic will be treated in the language algebraic geometry, i.e., schemes.   However, the aim is to have a presentation that is accessible to those in other fields, as well, particularly complex geometers,  and most of the presentation  can be made replacing the word ``scheme'' with the words ``complex analytic space'' (or even ``manifold'') and ``\'etale cover'' with ``open cover''.  This is certainly the case up though, and including, the definition of a stack in \S \ref{S:stack}.    
The one possible exception to this rule is the topic of algebraic stacks \S \ref{S:AlgStack}, for which definitions in the literature are really  geared towards the   category of schemes.
In order to make our presentation as accessible as possible, in \S \ref{S:AdaptStackPre} and  \S \ref{S:AlgStack} we provide  definitions of  algebraic stacks  that make sense for any presite; in particular, the definition  gives a notion of an ``algebraic'' stack in the category of complex analytic spaces.
  There are other notions of analytic stacks in the literature, but for concision,  we do not pursue the connection between the definitions.   
   
\vskip .1 in 

The final sections of this survey (\S\ref{S:def-thy} and \S\ref{S:Artin}) study algebraic stacks infinitesimally, with the double purpose of giving modular meaning to infinitesimal motion in an algebraic stack and introducing Artin's criterion as a means to prove stacks are algebraic.  Higgs bundles are treated as an extended example, and along they way we give cohomological interpretations of the tangent bundles of the moduli stacks of curves, vector bundles, and morphisms between them.  In the end we obtain a direct proof of the algebraicity of the stack of Higgs bundles, complementing the one based on established general algebraicity results given earlier.

\vskip .1 in 

While these notes are meant to be somewhat self contained, in the sense that essentially all relevant definitions are included, and all results are referenced to the literature, these notes are by no means comprehensive. 
There are now a number of introductions to the topic of algebraic stacks that have appeared recently (and even more are in preparation), and these notes are  inspired by  various parts of those  treatments.  While the following list is not exhaustive, it provides a brief summary of some of the other resources available.
 To begin, we direct the reader to the  now classic, concise introduction by Fantechi \cite{fantechi}.   The reader may very well want to read that ten page introduction before this one.  
 A more detailed introduction to  Deligne--Mumford stacks is given in  the book \cite{DMstacks}.  
  The book \cite{FGAe} provides another detailed introduction, with less emphasis on the algebraic property of algebraic stacks.  The book  \cite{LMB} provides a concise, detailed,  but perhaps less widely accessible introduction to the topic.   There is also the comprehensive treatise  \cite{stacks}, which has a complete treatment of all of the details.   The reader who would like to follow the early development of the subject might want to consult \cite{Giraud}, \cite{mumfordPic}, \cite{DM}, \cite[Exp.~II]{sga4-1}, \cite[Exp.~XVII,~XVIII]{sga4-3}, \cite{Knutson}, and \cite{Artin-versal}.

\subsection*{Notation} 

We denote by $\mathsf S$  the category of schemes over a fixed base, which the reader may feel free to assume is $\operatorname{Spec} \mathbb C$, where $\mathbb C$ is the field of complex numbers.  All schemes will be members of this category.  Until we discuss algebraic stacks, one could even take $\mathsf S$ to be the category of complex analytic spaces, or any other reasonable category of ``spaces'' with which one would like to work. 

Our convention is generally to use roman letters for schemes (e.g., $M$), script letters for functors to sets (e.g., $\mathscr M$), and calligraphic letters for categories fibered in groupoids; (e.g.,  $\mathcal M$).   We will typically use sans serif letters for various standard  categories (e.g., $\mathsf M$).


\section{Moduli problems  as functors}

When one wants to parameterize some kind of algebraic or geometric objects, one says one has a moduli problem.    The goal is to find another geometric object, called a moduli space,  that parameterizes the objects of interest.  
One of the most natural ways to phrase a moduli problem is in terms of the corresponding moduli functor.  From this perspective, the hope is that the moduli functor will be representable by a  geometric object, which will be the moduli space.   In fact, it is possible to say a great deal about the moduli space purely in terms of the moduli functor, without even knowing the moduli space exists!  For example, the tangent space can be computed by evaluating the moduli functor on the spectrum of the ring of dual numbers (\S\ref{S:TSMF}).

We make this precise in what follows.   The best way to get a feel for this is through examples.  For  this reason, in this section we start by considering the familiar   problems of parameterizing linear subspaces of a fixed vector space, and 
  of parameterizing Riemann surfaces of a fixed genus.


\subsection{Grassmannians} 
   We expect this example is familiar to most of the readers, and the aim will be to motivate an approach to these types of problems, which we will continue to use throughout.   We direct the reader to \cite[Ch.1,\S 5]{GH} for details.

In this section we explicitly take $\mathsf S$ to be the category of $\mathbb C$-schemes, or the category of analytic spaces, for simplicity; a ``scheme'' refers to a member of this category.  For an object $S$ of $ \mathsf S$, we adopt the notation $\mathbb C^n_S:=\mathbb C^n\times S$ for the trivial vector bundle of rank~$n$ over $S$.

    Recall that Grassmannians arise from the moduli problem that consists  of   parameterizing  $r$-dimensional complex subspaces of  $\mathbb C^n$.   
 
There is clearly a set of such spaces 
$$
G(r,n):=\{W\subseteq_{\text{lin}} \mathbb C^n:\dim W=r\}.
$$
In fact, one can easily put a ``natural'' complex structure on $G(r,n)$ that makes this set into a smooth complex projective variety of dimension $r(n-r)$, which we call the Grassmannian. 
Of key importance  from our perspective is the fact that  there is a rank $r$ vector bundle $\mathbb U$ over $G(r,n)$, and an inclusion of vector bundles:
\newcommand{\univsub}{{\mathbb U}}
$$
0\to \univsub \to \mathbb C^n_{G(r,n)}
$$
such that for any $\mathbb C$-scheme $S$ and any inclusion of vector bundles $\mathbb F\hookrightarrow \mathbb C^n_S$ over $S$ with $\operatorname{rank} \mathbb F=r$, there is a \emph{unique} $\mathbb C$-morphism
$$
f:S\to G(r,n)
$$
such that the inclusion $\mathbb F\hookrightarrow \mathbb C^n_S$ is isomorphic  to the pullback of the inclusion $\univsub \hookrightarrow \mathbb C^n_{G(r,n)}$ by $f$.  Diagramatically: 
$$
\xymatrix@C=.2cm @R=.2cm{
\mathbb F \ar@{->}[rdd]  \ar@{^(->}@<-2pt>[rrd] \ar@{}[rrr]^(0.1){}="a" \ar@<2pt> "a";[rrr] &&&\univsub \ar@{}[rrd]^(0.1){}="b" \ar@{^(->}@<-2pt> "b";[rrd] \ar@{->}[rdd]&&&\\
&&\mathbb C^n_S \ar@{->}[rrr] \ar@{->}[ld]&&&\mathbb C^n_{G(r,n)} \ar@{->}[ld]&\\
&S \ar@{->}[rrr]^f_{\exists !}&&&G(r,n)&&\\
}
$$
Set theoretically, we define $\univsub$ by setting the fiber of $\univsub$ over a point $[W]\in G(r,n)$ corresponding to a subspace $W\subseteq \mathbb C^n$ to be the subspace $W$ itself; i.e., we have  $\univsub_{[W]}=W\hookrightarrow \mathbb C^n=( \mathbb C^n_{G(r,n)})_{[W]}$.  The bundle $\univsub$ is called the universal subbundle.  

\label{p:GrCov}
In fact, the topology and geometry of $G(r,n)$, as well as  the vector bundle structure on $\univsub$, are all encoded in the universal property.  To see the topology, consider a linear projection $p : \mathbb C^n \rightarrow \mathbb C^r$.  If $S$ is any scheme and $\mathbb F \hookrightarrow  \mathbb C^n_S$ is a rank~$r$ vector subbundle, then for a general projection $p$,  denoting by  $p_S$ the induced morphism $p_S:\mathbb C^n_S\to \mathbb C^r_S$, then the composition $p_{\mathbb F} : \mathbb F \rightarrow \mathbb C^n_S\stackrel{p_S}{\to}  \mathbb C^r_S$ is an isomorphism over a nonempty  open subset $S_p\subseteq S$.  Choosing appropriate projections, these open subsets cover $S$.  This applies in particular to the Grassmannian itself, so that we obtain an open cover of $G(r,n)$ by subsets $U_p$ where $p_{\mathbb U} \big|_{U_p}:\mathbb U\big|_{U_p}\to \mathbb C^r_{U_p}$ is an isomorphism.  Moreover, we can easily see that locally, after trivializing $\mathbb U$,  a point of $U_p$ may be identified with a section of the projection $p : \mathbb C^n \rightarrow \mathbb C^r$; 
 or more precisely, a map $S \rightarrow U_p$ may be identified with a section of $p _S: \mathbb C_S^n \rightarrow \mathbb C_S^r$.  Consequently, one can check from the universal property that $U_p$ must be isomorphic to the space of sections of the projection, which is isomorphic to  $\operatorname{Hom}_{\mathbb C}(\mathbb C^r,\mathbb C^{n-r})= \mathbb C^{(n-r) \times r}$, the space  of $(n-r) \times r$ complex matrices. This of course agrees with the standard method of constructing charts for the Grassmanian by choosing bases for the subvector space, and performing row reduction.  
 We also have the identification $p_{\mathbb U} \big|_{U_p} : \univsub_{U_p} \rightarrow \mathbb C^r_{U_p}$, which gives the vector bundle structure on $\univsub$.

The take-away from this discussion is that there is an identification of families (up to  isomorphism)  of $r$-dimensional subspaces of $\mathbb C^n$ parameterized by $S$, with  morphisms of $\mathbb C$-schemes $S$ into $G(r,n)$ and, moreover, that \emph{we can understand everything about $G(r,n)$ in terms of this identification}.  As perverse as it might seem, we could throw away the topological space and sheaf of rings and consider  instead the Grassmannian \emph{functor}:
\begin{gather*}
\mathscr G(r,n):\mathsf {S}^{\text{op}}\to(\mathsf  {Set})  \\
\mathscr G(r,n)(S):= \Biggl\{ (\mathbb F,i) \: \Bigg| \: \raisebox{-8pt}{\vbox{\hbox{$\mathbb F$ is a vector bundle on $S$}\hbox{$i : \mathbb F \hookrightarrow \mathbb C_S^n$ is a linear inclusion}}} \Biggr\} \Bigg/ \sim
\end{gather*}
where two inclusions of vector bundles $\mathbb F\hookrightarrow S\times \mathbb C^n$ and $\mathbb F'\hookrightarrow S\times \mathbb C^n$ over $S$ are equivalent if there is a commutative diagram of the form
$$
\xymatrix@R=10pt{
\mathbb F \ar@{^(->}@<-1pt>[r] \ar@{->}[d]_-{\rotatebox{90}{$\scriptstyle \sim$}}& \mathbb C^n_S \ar@{=}[d]\\
\mathbb F'\ar@{^(->}@<-1pt>[r]&\mathbb C^n_S.
}
$$
The functor $\mathscr G(r,n)$ acts on morphisms in the following way.
Associated to a morphism $f:S'\to S$, we have
$$
f^*:\mathscr G(r,n)(S)\to \mathscr G(r,n)(S')
$$
taking the class of the  inclusion $\mathbb F\hookrightarrow S\times \mathbb C^n$ to the class of the inclusion $\mathbb F'\hookrightarrow S'\times \mathbb C^n$ 
defined via pullback diagrams:
$$
\xymatrix@C=.3cm @R=.3cm{
\mathbb  F': \ar@{=}[r] & f^\ast \mathbb F \ar@{->}[rdd]  \ar@{^(->}@<-1pt>[rrd] \ar@{->}@<1pt>[rrr]&&&\mathbb F\ar@{^(->}@<-1pt>[rrd] \ar@{->}[rdd]&&&\\
& &&\mathbb C^n_{S'} \ar@{->}[rrr]_<>(0.25){\mathbb C^n_f } \ar@{->}[ld]&&&\mathbb C^n_S. \ar@{->}[ld]&\\
& &S' \ar@{->}[rrr]_{f}&&&S&&\\
}
$$
Here $f^\ast \mathbb F$ is the pullback of $\mathbb F$ to $S'$ via $f$ and $\mathbb C^n_f$ is the canonical morphism $\mathbb C^n_{S'} = \mathbb C^n \times S' \xrightarrow{\mathrm{id} \times f} \mathbb C^n \times S = \mathbb C^n_S$.

The assertion that there is an identification of families (up to  isomorphism)  of $r$-dimensional subspaces of $\mathbb C^n$ parameterized by $S$, with  morphisms of schemes $S$ into $G(r,n)$, can be formulated precisely as an isomorphism of functors
\begin{gather}\label{E:GrRep}
\vcenter{\xymatrix@R=4pt{
 \operatorname{Hom}_{\mathsf S}\bigl(-,G(r,n)\bigr) \ar[r] & \mathscr G(r,n) \\
\bigl(f:S\to G(r,n)\bigr) \ar@{|->}[r] & f^*\bigl(\univsub\hookrightarrow \mathbb C^n_{G(r,n)}\bigr).
}} \end{gather}
In other words, the Grassmannian functor is representable by the Grassmannian scheme.  We also say that the Grassmannian  is a fine moduli space for the Grassmannian functor.    

Our previous extraction of the geometric structure of $G(r,n)$ from the universal property, in other words, from the functor $\mathscr G(r,n)$, is a manifestation of the general principle encoded in Yoneda's lemma (stated below).  Indeed, if there is another space $G(r,n)'$   such that 
$$
\operatorname{Hom}_{\mathsf S}\bigl(-,G(r,n)'\bigr)\cong \mathscr G(r,n),
$$
then 
$$
\operatorname{Hom}_{\mathsf S}\bigl(-,G(r,n)'\bigr)\cong \operatorname{Hom}_{\mathsf S}\bigl(-,G(r,n)\bigr),
$$
and one may use Yoneda's lemma to conclude that $G(r,n)\cong G(r,n)'$.  
\begin{lem}[Yoneda]
Let $\mathsf S$ be a category.   There is a fully faithful functor
\begin{gather*}
h:\mathsf S\to \mathsf {Fun}\bigl(\mathsf S^{\mathsf {op}},(\mathsf {Set})\bigr) \\
S\mapsto h_S:=\operatorname{Hom}_{\mathsf S}(-,S) \\
(S\stackrel{\phi}{\to} S')\mapsto (h_S\xrightarrow{\phi_\ast} h_{S'}),
\end{gather*}
which identifies $\mathsf S$ with a full subcategory of the category of functors  $\mathsf {Fun}\bigl(\mathsf S^{\mathsf {op}},(\mathsf {Set})\bigr)
$.  For each $S\in \operatorname{Ob}(\mathsf S)$, the map 
\begin{gather*}
\operatorname{Hom}_{\mathsf {Fun}(\mathsf S^{\mathsf {op}},(\mathsf {Set}))}(h_S, \mathscr F)\to \mathscr F(S) \\
(\eta:h_S\to \mathscr F)\mapsto \eta(S)(\operatorname{id}_S)
\end{gather*}
is a bijection. 
\end{lem}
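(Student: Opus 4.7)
The plan is to prove the bijection asserted in the second part of the lemma directly, and then to recover full faithfulness of $h$ as a formal consequence.

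To establish the bijection, the strategy is to construct an explicit inverse. Given an element $x \in \mathscr F(S)$, I define a candidate natural transformation $\eta_x : h_S \to \mathscr F$ componentwise by
\[
\eta_x(T) : \operatorname{Hom}_{\mathsf S}(T,S) \to \mathscr F(T), \qquad (f:T\to S) \mapsto \mathscr F(f)(x).
\]
The first task is to verify that $\eta_x$ is natural in $T$: for a morphism $g : T' \to T$ in $\mathsf S$, naturality reduces to the equality $\mathscr F(f\circ g) = \mathscr F(g) \circ \mathscr F(f)$, which is just contravariant functoriality of $\mathscr F$.

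Next, I would check that the two round-trips between $\mathscr F(S)$ and $\operatorname{Hom}(h_S,\mathscr F)$ are inverse to one another. Starting from $x \in \mathscr F(S)$, forming $\eta_x$, and evaluating at $\operatorname{id}_S$ returns $\mathscr F(\operatorname{id}_S)(x) = x$, since $\mathscr F$ preserves identities. Conversely, starting from a natural transformation $\eta : h_S \to \mathscr F$ and setting $x := \eta(S)(\operatorname{id}_S)$, one must show $\eta_x = \eta$. This is the step that uses naturality of $\eta$ in an essential way: for any $f : T \to S$, the naturality square associated to $f$ asserts the equality $\mathscr F(f) \circ \eta(S) = \eta(T) \circ (f \circ -)$ of maps $h_S(S) \to \mathscr F(T)$, and evaluating both sides at $\operatorname{id}_S \in h_S(S)$ yields $\eta(T)(f) = \mathscr F(f)(x) = \eta_x(T)(f)$.

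Full faithfulness of $h$ then follows by applying the established bijection to the functor $\mathscr F = h_{S'}$: one obtains a bijection $\operatorname{Hom}(h_S,h_{S'}) \to h_{S'}(S) = \operatorname{Hom}_{\mathsf S}(S,S')$, and unwinding shows that under it $\phi_\ast \mapsto \phi_\ast(S)(\operatorname{id}_S) = \phi$, so $\phi \mapsto \phi_\ast$ is a bijection, yielding both fullness and faithfulness.

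The whole argument is formal once the key construction $x \mapsto \eta_x$ is written down, and there is no genuine obstacle. The one step that requires care rather than mechanical checking is the identity $\eta_x = \eta$, where one must recognize that naturality forces the value of $\eta$ at an arbitrary morphism $f : T \to S$ to be determined by its single value at $\operatorname{id}_S$; this is really the conceptual heart of the lemma.
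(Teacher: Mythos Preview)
Your proof is correct and is the standard argument for the Yoneda lemma. The paper does not actually supply a proof of this statement: it remarks that the lemma ``is best worked through on one's own, preferably in private'' and refers the reader to \cite[Ch.~2, \S2.1]{FGAe} and \cite[Ch.~0, \S1.1]{EGA1971}, so there is nothing to compare your argument against beyond noting that it is the expected one.
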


This is best worked through on one's own, preferably in private.  Nevertheless, in the spirit of completeness that inspires these notes, we refer the reader to \cite[Ch.2, \S 2.1]{FGAe} or \cite[Ch.~0, \S1.1]{EGA1971} for more details.    

\begin{dfn}[Representable functor]
A functor isomorphic to one of the form  $\operatorname{Hom}_{\mathsf S}(-,S)$ for some $S$ in $\mathsf S$ is called a \emph{representable functor}.  
\end{dfn}

\begin{rem}\label{R:Yoneda}
Due to the Yoneda Lemma, we will often write $S$ in place of $h_S$ or  $\operatorname{Hom}_{\mathsf S}(-,S)$.  
\end{rem}

\begin{rem}
Note that under the identification of \eqref{E:GrRep}, the identity map  $\operatorname{id}_{G(r,n)}\in \operatorname{Hom}_{\mathsf S}\bigl(G(r,n),G(r,n)\bigr)$ is sent to the class of the inclusion of the universal subbundle  $\univsub \hookrightarrow \mathbb C^n_{G(r,n)}$ in $\mathscr G(r,n)\bigl(G(r,n)\bigr)$.  
\end{rem}

Summarizing the discussion above, the fact that the Grassmannian functor is representable is equivalent to the fact  that there is a universal family, $\univsub$, of dimension $r$-subspaces of $\mathbb C^n$, parameterized by the Grassmannian $G(r,n)$, such that  any other family of  dimension $r$-subspaces of $\mathbb C^n$ is obtained from this universal family via pullback along a (unique) map to the Grassmannian.

\begin{rem}
This whole construction could have been dualized, considering instead $r$-dimensional quotients of the dual vector space $(\mathbb C^n)^\vee$.  The problem is clearly equivalent, but this dual  formulation is more obviously related to the Quot scheme we will discuss later.  We leave it to the reader to make this comparison when the Quot scheme is introduced.    
\end{rem}

\subsection{Moduli of Riemann surfaces}   \label{S:mod-RS}
The example of the Grassmannian gives a  moduli space that is representable by a projective variety.  Another  accessible example of a moduli problem that on the other hand  captures the complications leading to the development of stacks is that of parameterizing Riemann surfaces.

We will work in this section with the algebraic analogues of  compact  Riemann surfaces of genus $g$; i.e.,  smooth proper complex algebraic curves of genus $g$.   Our moduli problem is that of parameterizing all such curves up to isomorphism.  Motivated by the discussion of the Grassmannian, we make this more precise by defining a moduli functor.  To do this, we define a \emph{relative curve of genus $g$} to be 
a surjective morphism of schemes  $\pi:X\to S$ such that 
\begin{itemize}
\item $\pi$ is smooth,
\item $\pi$ is proper,
\item every geometric fiber is a connected curve of genus $g$.
\end{itemize}
 Recall that if $X$ and $S$ are smooth, then $\pi:X\to S$ is smooth if and only the differential is everywhere surjective.  In other words, this includes the analogues of  surjective  morphisms of smooth complex manifolds, with surjective differential, where every (set theoretic)  fiber is a compact Riemann surface of genus $g$.   
 
    We now define the moduli functor of genus $g$ curves as:
\begin{gather*}
\mathscr M_g:\mathsf S^{\mathsf {op}}\to (\mathsf {Set}) \\
\mathscr M_g(S):=\{\pi:X\to S \text{, a relative curve of genus } g \}/\sim
\end{gather*}
where two relative curves $\pi:X\to S$ and $\pi':X'\to S$ are equivalent if there is a commutative  diagram~\eqref{E:EquivCurve}, in which the upper horizontal arrow is an isomorphism:
\begin{equation} \label{E:EquivCurve}
\vcenter{\xymatrix{
X' \ar[r]^{\sim} \ar[d]_{\pi'} &  X \ar[d]^\pi \\
S \ar@{=}[r] & S.
}}
\end{equation}
The functor $\mathscr M_g$ acts on morphisms in the following way.  Associated to a morphism $f:S'\to S$, we have
$$
f^*:\mathscr M_g(S)\to \mathscr M_g(S')
$$
defined via pullback diagrams; in other words, given $[\pi:X\to S]\in \mathscr M_g(S)$, we define $f^*[\pi:X\to S]$ as the class of the curve $\pi':X'\to S'$, defined  by the fiber product diagram:
\begin{equation*}
\xymatrix{
X' \ar[r] \ar[d]_{\pi'} & X \ar[d]^\pi \\
S' \ar[r]^f & S.
} \end{equation*}

\begin{exa}[Isotrivial family] \label{E:isotriv1}
Consider the relative curve $\pi:X\to S=\mathbb C^*$ of genus $1$ given by the equation
$$
y^2=x^3+t, \ t\ne 0.
$$
Here $X$ is the projective completion of $\{(x,y,t): y^2-x^3-t=0\}$ in $\mathbb P^2_{\mathbb C}$, and the map $\pi:X\to S$ is induced  by $(x,y,t)\mapsto t$.  
It turns out this family is \emph{isotrivial} (i.e., all of the fibers are abstractly isomorphic); this can be seen quickly by confirming that the $j$-invariant of each fiber is $0$, or simply writing down the isomorphism $(x,y,t) \mapsto (\lambda^{-2} x, \lambda^{-3} y, 1)$, where $\lambda^6 = t$, between $X_t$ and $X_1$.  One can also show that this family is not equivalent to a trivial family; i.e., not isomorphic to $S\times X_1$.
One way to see this is to check that the  monodromy action of $\pi_1(\mathbb C^*,1)$  on  $H^1(X_1,\mathbb C)$ is nontrivial (see e.g., \cite[\S 4.1.2]{casa13}), and therefore, that $X/S$ is not a trivial family.
  Another  way to see this is to observe that the explicit isomorphism given above trivializes $X$ over the pullback via $\mathbb C^\ast \rightarrow \mathbb C^\ast : \lambda \mapsto \lambda^6$.  We can therefore characterize $X$ as the quotient of $X_1 \times \mathbb C^\ast$ by the action $\zeta . (x,y,\lambda) = (\zeta^{-2} x, \zeta^{-3} y, \zeta \lambda)$ of a $6$-th root of unity $\zeta \in \mu_6$.  
The \'etale  sheaf on $S$ of isomorphisms between $X$ and the $X_1 \times S$ is a torsor under the automorphism group of $X_1$, which is $\mu_6$.  It is therefore classified by an element of $H^1(S, \mu_6) = \operatorname{Hom}(\mu_6,\mu_6)=\mathbb Z/6\mathbb Z$,
 which one can check is  $1$.  Since this corresponds to a nontrivial  torsor, $X/S$ is not a trivial family.  One can generalize this example  to show that for every $g$, there exist isotrivial, but nontrivial families of curves  (see Example \ref{E:isotriv}).
\end{exa}

\begin{pro} \label{P:MgNR}
The functor $\mathscr M_g$ is not representable.  
\end{pro}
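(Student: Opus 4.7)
The plan is to argue by contradiction, exploiting the isotrivial-but-nontrivial family exhibited in Example~\ref{E:isotriv1} (and promised for every $g$ in the forthcoming Example~\ref{E:isotriv}).

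Suppose that $\mathscr M_g$ were represented by a scheme $M_g$. Let $\pi : X \to S$ be an isotrivial relative curve of genus $g$ with $S = \mathbb C^{\ast}$, every fiber isomorphic to a fixed curve $X_1$, and with $X \not\cong S \times X_1$ over $S$. Representability produces a unique morphism $f : S \to M_g$ corresponding to $\pi$ under the assumed isomorphism $\operatorname{Hom}_{\mathsf S}(-,M_g) \cong \mathscr M_g$. For each closed point $s \in S$, naturality of this isomorphism applied to the inclusion $\{s\} \hookrightarrow S$ identifies $f(s) \in M_g$ with the point $[X_1]$ classifying $X_s \cong X_1$. Thus $f$ is set-theoretically constant at $[X_1]$.

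Because $S = \mathbb C^{\ast}$ is reduced, a set-theoretically constant morphism to a scheme factors scheme-theoretically through the corresponding closed point. Hence $f$ equals the composition $S \to \{[X_1]\} \hookrightarrow M_g$. Pulling back the universal family $\mathcal X \to M_g$ (whose fiber over $[X_1]$ is $X_1$ itself, as one identifies using Yoneda applied to $\mathrm{id}_{M_g}$, exactly as for the Grassmannian in~\eqref{E:GrRep}) along this factorization produces the trivial family $S \times X_1 \to S$. By representability, $\pi$ must then be isomorphic to $S \times X_1 \to S$, contradicting the nontriviality of $\pi$ established in Example~\ref{E:isotriv1}.

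The principal obstacle is the passage from \emph{set-theoretically constant} to \emph{scheme-theoretically constant} for the morphism $f$. This step uses the reducedness of $S$ in an essential way and, more conceptually, reflects the fact that the set-valued functor $\mathscr M_g$ has forgotten the automorphisms of the fibers---automorphisms that are precisely responsible for the existence of the nontrivial isotrivial family in the first place. This foreshadows the motivation for refining set-valued moduli functors to groupoid-valued functors, and ultimately to stacks.
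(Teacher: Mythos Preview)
Your proof is correct and follows essentially the same approach as the paper's: assume representability, obtain the classifying map $f:S\to M_g$ for an isotrivial-but-nontrivial family, observe that $f$ is constant, and derive a contradiction by pulling back the universal family. You are in fact more careful than the paper in explicitly invoking the reducedness of $S$ to pass from set-theoretically constant to scheme-theoretically constant; the paper simply asserts that the image of $f$ is the point $[X_s]$ and immediately concludes that the pullback is $X_s\times S$.
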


\begin{proof}  See e.g.~\cite{HM} for more details.    The main point is that 
if $\mathscr M_g$ were representable, then every isotrivial family of curves would be equivalent to a  trivial family, which we have just seen is not the case.   

 To this end, suppose that $\mathscr M_g=\operatorname{Hom}_{\mathsf S}(-,M_g)$ for some scheme $M_g$.    Let $C_g\to M_g$ be the family of curves corresponding to the identity morphism $\operatorname{id}_{M_g}$.  
Now let $\pi:X\to S$ be an isotrivial family of curves over a connected base $S$, that is not equivalent  to $S\times X_s$ for any  $s\in S$, with $s=\operatorname{Spec} \mathbb C$ (here $X_s$ is the fiber of $X/S$; see e.g., Examples \ref{E:isotriv1} and \ref{E:isotriv}).  The curve $X_s\to s$ corresponds to an element of  $\mathscr M_g(\operatorname{Spec}\mathbb C)=\operatorname{Hom}_{\mathsf S}(\operatorname{Spec}\mathbb C,M_g)$;   i.e., the isomorphism class of the curve  corresponds to a point, say $[X_s]\in M_g$.    Similarly, the family $\pi:X\to S$ corresponds to a morphism $f:S\to M_g$.  Since every fiber of $\pi:X\to S$ is isomorphic to $[X_s]$, the image of $S$ under $f$ must be the point $[X_s]$.  By definition of the representability of the functor, we must then have that the pullback of the universal family $C_g\to M_g$ along $f$, i.e., $X_s\times S$, is equivalent to $\pi:X\to S$, which we assumed was not the case. 
\end{proof}

\begin{rem} \label{R:coarse}
While the functor $\mathscr M_g$ is not representable, it does admit a coarse moduli space.  In other words, 
there is a quasi-projective variety $M_g$ and a morphism  $\Phi:\mathscr M_g\to M_g$ (here we are using the convention mentioned in Remark \ref{R:Yoneda} of denoting a scheme and its associated functor with the same letter) such that:

\begin{enumerate}
\item  for any scheme $S$ and any  morphism $\Psi:\mathscr M_g\to S$, there is a unique morphism $\eta :M_g\to S$ such that the following diagram commutes:
$$
\xymatrix{
\mathscr M_g \ar@{->}[r]^\Phi \ar@{->}[d]_\Psi& M_g  \ar@{-->}[ld]^{\exists !}_\eta \\
S&\\
}
$$

\item  $\Phi$ is a bijection when evaluated on any algebraically closed field.   
\end{enumerate}
 A scheme representing a functor, i.e., a fine moduli space,  is clearly a coarse moduli space.  
 For many moduli functors that we consider, we will be able to find a scheme satisfying the first condition above; however, when the automorphisms of our objects are positive dimensional, it will not in general be possible to find a scheme satisfying the second condition, as well.
 
For brevity (or at least for lesser verbosity), we have not included a discussion of coarse moduli spaces.  The interested reader may consult \cite[Def.~1.3]{HM}, or   \cite[Def.~4.1]{alpergms}.
\end{rem}

There was not anything special about curves in Proposition~\ref{P:MgNR}:  all we needed was an isotrivial family that was nontrivial.  
In fact,    nontrivial automorphism groups almost always give rise to nontrivial isotrivial families (see Corollary \ref{C:IfAutNotRep}) so we conclude that moduli problems involving objects with nontrivial automorphisms will \emph{almost never} be representable by genuine spaces.  Since most moduli problems of interest involve objects with nontrivial automorphisms, this means 
\emph{fine moduli spaces almost never exist}; i.e.,  moduli problems are almost never representable by schemes.

There are a few ways one might try to go around this problem:  one could \emph{rigidify} the problem by imposing additional structure in the hopes of eliminating nontrivial automorphisms; one could give up on the idea of a fine moduli space and settle for a \emph{coarse moduli space}  that   at least gets the points right, even if it botches the universal property (see Remark~\ref{R:coarse}).

Stacks do not go around the problem of isomorphisms so much as they go through it.  By remembering how objects are equivalent, as opposed to merely that they are equivalent, we can eliminate the issue that is responsible for Proposition~\ref{P:MgNR}.  The price we must pay is to enlarge the class of objects we are willing to call spaces and 
sacrifice some of our geometric intuition. 
 What we hope to illustrate in these notes is that the cost in geometric intuition is less than one might first expect, and that the payoff in newly available moduli spaces more than compensates for it.  Indeed, stacks will allow us to bring our intuition about geometric families to bear on the geometry of their moduli spaces, giving us a new---and, we would argue, more powerful---sort of intuition to replace what we have sacrified.

\subsection{The tangent space of a moduli functor} \label{S:TSMF}
Because of Yoneda's lemma, a fine moduli space is uniquely characterized by the functor it represents.  In particular, the tangent space to a moduli space at a point can be determined directly from the moduli functor, without even knowing that the moduli space is representable!  This is not merely a formal convenience:  the tangent space is an essential tool in \emph{proving} that moduli functors are representable (see \S\ref{S:def-thy} and \S\ref{S:Artin}).

As an example of the definition, we compute the tangent space of the Grassmannian.  The same calculation actually computes the tangent space of the Quot scheme, as well (see also \cite[\S 6.4]{FGAe}).
  Notice that we do not make use anywhere of the representability of the Grassmannian in this calculation.  Further examples will appear in \S\ref{S:homogeneity}.

We have already seen on p.\pageref{p:GrCov}  how the cover of $G(r,n)$ by open subsets $U_p$ associated to projections $p : \mathbb C^n \rightarrow \mathbb C^r$ is visible from the functor $\mathscr{G}(r,n)$.  From this, near a point of the Grassmannian  $W\hookrightarrow \mathbb C^n$,  we get a local identification of $\mathscr{G}(r,n)$ with the functor represented by $\operatorname{Hom}_{\mathbb C}(\mathbb C^r, \mathbb C^{n-r}) = \operatorname{Hom}(W, \mathbb C^n / W)$.  Since this is a vector space, this gives a local identification of the tangent space $T_{\mathscr{G}(r,n),W}$ with $\operatorname{Hom}(W, \mathbb C^n / W)$.  As is well known, this identification can be made globally.  Rather than verify the compatibility of this identification with gluing, which would require us to use the fact that $\mathscr{G}(r,n)$ is a sheaf, we will arrive at it via a global construction working directly from  the definition of $\mathscr{G}(r,n)$.

\begin{dfn}[Ring of dual numbers]
The \emph{ring of dual numbers} (over $\mathbb{C}$) is $\mathbb D = \mathbb{C}[\epsilon] / (\epsilon^2)$.  For any scheme $S$ over $\mathbb C$, we write $S[\epsilon]$ for $S \times \operatorname{Spec} \mathbb D$.
\end{dfn}

\begin{dfn}[Tangent space to a moduli functor]
If $\mathscr{F} : \mathsf S^{\mathsf{op}} \rightarrow (\mathsf{Set})$ is a   functor, the \emph{tangent bundle} to $\mathscr{F}$ is the following functor:
\begin{gather*}
T_{\mathscr{F}} : \mathsf S^{\mathsf{op}} \rightarrow (\mathsf{Set}) \\
T_{\mathscr{F}}(S) = \mathscr{F}\bigl(S[\epsilon]\bigr)
\end{gather*}
\end{dfn}

Reduction modulo $\epsilon$ gives a map $\mathbb D \rightarrow \mathbb{C}$ and therefore a closed embedding $S \rightarrow S[\epsilon]$ for any scheme $S$.  This induces natural functions
\begin{equation*}
T_{\mathscr{F}}(S) = \mathscr{F}\bigl(S[\epsilon]\bigr) \rightarrow \mathscr{F}(S)
\end{equation*}
and therefore a natural transformation
\begin{equation*}
T_{\mathscr{F}} \rightarrow \mathscr{F} .
\end{equation*}
We allow ourselves the following notational shortcut, which some may feel is abusive:  When $\xi \in \mathscr{F}(S)$, in other words, when $(S, \xi)$ is an $S$-point of $\mathscr{F}$, we write $T_{\mathscr{F}}(S, \xi)$ or $T_{\mathscr{F}}(\xi)$ for the fiber of $T_{\mathscr{F}}(S)$ over $\xi \in \mathscr{F}(S)$.

For instance, if we have  $\mathscr F=\operatorname{Hom}_{\mathsf S}(-,M)$ for some complex manifold $M$, and $\xi:\operatorname{Spec}\mathbb C\to M$ is a point of $M$, then $T_{\mathscr F}(\xi)=T_M(\xi)$, the holomorphic tangent space to $M$ at $x$.  

\begin{wrn}
The tangent space to an arbitrary functor $\mathscr{F}$ may not be at all well-behaved!  For example $T_{\mathscr{F}}(\xi)$ might not even be a vector space.  When trying to characterize the functors that are representable by schemes (or algebraic spaces or algebraic stacks) one of the first axioms we impose is that $\mathscr{F}$ should behave infinitesimally like a scheme, and in particular that its tangent spaces should be vector spaces (see \S\ref{S:def-thy}).
\end{wrn}

\subsubsection{Tangent space to the Grassmannian}
Suppose that $S$ is a scheme and $\mathbb F \hookrightarrow  \mathbb{C}_S^n$ is an $S$-point of $\mathscr{G}(r,n)$.  Denote by $F$ the sheaf of regular  sections of $\mathbb F$.  
Note that $F$ is a  quasicoherent locally free sheaf of rank $r$, and recall that $\mathbb F$ can be recovered from $F$, either from transition functions, or directly as 
\begin{equation*}
\mathbb F=\underline{\raisebox{0pt}[0pt][-.5pt]{$\operatorname{Spec}$}}_S(\operatorname{Sym}^\bullet F^\vee),
\end{equation*}  
where $F^\vee=\underline{\operatorname{Hom}}_{\mathcal O_S}(F,\mathcal O_S)$ is the sheaf of sections of the dual vector bundle  $\mathbb F^\vee=\operatorname{Hom}_S(\mathbb F,\mathbb C_S)$.   
By definition, $T_{\mathscr{G}(r,n)}(S,\mathbb F)$ is the set of isomorphism classes of extensions of $\mathbb F$ to a vector subbundle $\mathbb F_1 \subseteq \mathbb{C}^n_{S[\epsilon]}$.  Because vector bundles are determined by their sheaves of sections, deforming $\mathbb F$ is the same as finding a locally free  deformation $ F$ to $S[\epsilon]$.
One can easily check by dualizing that this is equivalent to finding a locally free deformation of $ F^\vee$ to $S[\epsilon]$.

Since $\mathbb F \hookrightarrow  \mathbb C^n_S$, we have a quotient $\mathcal{O}_S^n \rightarrow  F^\vee$ by duality.
We write $ E^\vee$ for the kernel of this quotient.  To find a locally free  deformation of $ F^\vee$ to $S[\epsilon]$  is the same as to complete the diagram below with a locally free $\mathcal{O}_{S[\epsilon]}$-module $ F_1^\vee$:
\begin{equation}\label{E:TanSpDiag1}
 \xymatrix{
& & \mathcal{O}_{S[\epsilon]}^n \ar@{-->}[r] \ar[d] &  F_1^\vee \ar@{-->}[d] \ar@{-->}[r] & 0 \\
0 \ar[r] &  E^\vee \ar[r] & \mathcal{O}_S^n \ar[r] &  F^\vee \ar[r] & 0. 
} \end{equation}
Since $ F_1^\vee$ is locally free, tensoring the exact sequence 

\begin{equation}\label{E:SmallTanSp}
0\to \epsilon \mathcal O_S\to \mathcal O_S[\epsilon]\to \mathcal O_S\to 0
\end{equation}
with $ F_1^\vee$ we see that the kernel of $ F_1^\vee \rightarrow  F^\vee$ is $\epsilon  F_1^\vee \simeq  F^\vee$.  Note, moreover, that a short computation shows that any quasicoherent sheaf $ F_1^\vee$ fitting into the diagram \eqref{E:TanSpDiag1} above with $\epsilon  F_1^\vee \simeq  F^\vee$
 will be locally free, because a local basis for $ F^\vee$ can be lifted via the projection $ F_1^\vee \rightarrow  F^\vee$ to a basis for $ F_1^\vee$.  This can also be seen using the infinitesimal criterion for flatness~\cite[Ex.~6.5]{Eisenbud}.

Therefore our problem is, equivalently, to complete the diagram below with $\mathcal{O}_{S[\epsilon]}$-modules $ E^\vee_1$ and $ F^\vee_1$, so that the middle row is a short exact sequence, and the vertical arrows are those induced from \eqref{E:SmallTanSp}:
 \begin{equation} \label{E:BigTanSp}
 \begin{gathered}
 \xymatrix@R=1.5em{
 &0\ar[d]&0\ar[d]&0\ar[d]&\\
 0 \ar[r]  & \epsilon  E^\vee \ar[r] \ar@{-->}[d]& \epsilon \mathcal{O}_S^n \ar[r] \ar[d] & \epsilon  F^\vee \ar[r]  \ar@{-->}[d]& 0\\
0\ar@{-->}[r] & E_1^\vee \ar@{-->}[r] \ar@{-->}[d]& \mathcal{O}_{S[\epsilon]}^n \ar@{-->}[r] \ar[d] &  F_1^\vee \ar@{-->}[d] \ar@{-->}[r] & 0 \\
0 \ar[r] &  E^\vee \ar[r] & \mathcal{O}_S^n \ar[r] &  F^\vee \ar[r] & 0.\\
&0\ar@{<-}[u]&0\ar@{<-}[u]&0\ar@{<-}[u]&\\
}
\end{gathered}
 \end{equation}
In particular, all rows and columns above are short exact sequences.

Since $\epsilon  E^\vee$ necessarily maps to zero in $ F_1^\vee$, to produce a quotient $ F_1^\vee$ of $\mathcal{O}_{S[\epsilon]}^n$ lifting the quotient $ F^\vee$ of $\mathcal{O}_S^n$, it is equivalent to produce a quotient of $\mathcal{O}_{S[\epsilon]}^n / \epsilon  E^\vee$.  We are therefore trying to complete the diagram below:

\begin{equation*} \xymatrix{
0 \ar[r] &  E_1^\vee / \epsilon  E^\vee \ar@{=}[d] \ar@{-->}[r] & \mathcal{O}_{S[\epsilon]}^n / \epsilon  E^\vee \ar@{-->}[r] \ar[d] &  F_1^\vee \ar@{-->}[d] \ar@{-->}[r] & 0 \\
0 \ar[r] &  E^\vee \ar[r] & \mathcal{O}_S^n \ar[r] &  F^\vee \ar[r] & 0.
} \end{equation*}
But note that $ E_1^\vee / \epsilon  E^\vee$ projects isomorphically to $ E^\vee$. 
 Therefore $T_{\mathscr{G}(r,n)}(S,F)$ is isomorphic to the set of lifts of the following diagram:
\begin{equation*} \xymatrix{
& \mathcal{O}_{S[\epsilon]}^n / \epsilon  E^\vee \ar[d] \\
 E^\vee \ar@{-->}[ur] \ar[r] & \mathcal{O}_S^n.
} \end{equation*}
We have one canonical lift by composing the inclusion $ E^\vee \hookrightarrow  \mathcal{O}_S^n$ with the section $\mathcal{O}_S^n \rightarrow \mathcal{O}_{S[\epsilon]}^n$.  (Note that this is not an $\mathcal{O}_{S[\epsilon]}$ homomorphism until it is restricted to $ E^\vee$.)  An easy diagram chase in \eqref{E:BigTanSp} shows that the difference between any  two lifts is a homomorphism $ E^\vee \rightarrow  F^\vee$, so we get a canonical bijection:
\begin{equation*}
T_{\mathscr{G}(r,n)}(S,\mathbb F) = \operatorname{Hom}_{\mathcal O_S}( E^\vee, F^\vee) = \operatorname{Hom}_{\mathcal O_S}( F, E)=\operatorname{Hom}_S(\mathbb F,\mathbb E)
\end{equation*}
where $\mathbb E=\underline{\raisebox{0pt}[0pt][-.5pt]{$\operatorname{Spec}$}}_S(\operatorname{Sym}^\bullet  E^\vee)$ is the quotient of the trivial vector bundle $\mathbb{C}_S^n$ by the subbundle $\mathbb F$.


\section{Moduli functors as categories fibered in groupoids}

As nontrivial automorphisms tend to preclude the representability of a moduli problem (by a scheme), one plausible way to proceed is to account for these automorphisms by retaining them in the  definition of the moduli functor.  We are led to consider functors valued in groupoids (categories in which all morphisms are isomorphisms), rather than in sets, and immediately find ourselves in a morass of technicalities (see \S \ref{S:lax}).   In our judgment, a more elegant solution 
can be found in the notion of a category fibered in groupoids {(see Remark~\ref{R:MgCFG} for the reasoning that motivates our point of view)}.   In \S \ref{S:lax} we briefly discuss functors valued in groupoids, with the primary objective of convincing the reader that another approach would be preferable.  We then introduce categories fibered in groupoids in \S \ref{S:CFGs}.

\subsection{(Lax $2$-)Functors to groupoids}  \label{S:lax}
Instead of studying a moduli problem by defining a functor whose value on a scheme is the \emph{set} of isomorphism classes of families over that scheme, we try to define one whose value is the \emph{category} of families parameterized by that scheme, with morphisms being isomorphisms of families.  Contrary to the set of isomorphism classes, the category explicitly allows two families to be isomorphic in more than one way.

\begin{dfn}[Groupoid]  A \emph{groupoid} is a category in which all morphisms are isomorphisms.
\end{dfn}

\begin{rem}
	In the literature, in the definition of a groupoid, it is common to require the additional condition that the category be small (the class of objects is a set).  This is not required for our treatment, and so we drop the condition since almost every category we will consider will not be small.  For instance, the category of sets with one element is obviously not small (for every set $E$ there is a one element set $\{E\}$), and one can immediately generalize this to examples we consider here.   However, the groupoids we study will usually be \emph{essentially small}, meaning they are equivalent to small categories, so none of the pathologies that compel one to include a hypothesis of smallness will trouble us.

	One way to avoid worrying about small categories (or at least, to transfer the worry to somewhere else) is to introduce Grothendieck universes.  Essentially, one assumes axiomatically that there are very large sets, 
	called universes, that are large enough to `do set theory' within.  All objects of interest occur within the universe, but one can still use set-theoretic language to speak about the universe itself.  For example, the collection of all $1$-element sets \emph{within the universe} does form a set, namely {one in bijection with} the universe itself. 
		
	Ultimately, these considerations are technical from our perspective, and we will remain silent about them in the sequel.  One may consult \cite[Tag 0007]{stacks} for yet another way around these technical issues.
\end{rem}

Continuing on, at first glance, it seems that  we want  a ``functor'' to the category of groupoids:
$$
\mathcal M:\mathsf S^{\mathsf {op}}\to (\mathsf {Groupoid}).
$$
Making this precise leads to the morass of technicalities alluded to above.  
The issue here is the assignment for morphisms.  In our examples, we pulled back families along morphisms.    The fact that pullbacks are only defined up to isomorphism, albeit a canonical one, means that one does not have an equality of  $g^\ast f^\ast$ with  $(fg)^\ast$ but only an isomorphism between them.  One then has not one but two ways of identifying $h^\ast g^\ast f^\ast \simeq (fgh)^\ast$ and one must require these be the same.

\begin{exa} \label{E:MgL2}
Here is how this plays out for the  moduli of curves:
Let $$\mathscr{M}^{L2}_g:\mathsf S^{\operatorname{op}}\to \mathsf {(Groupoid)}$$ be the moduli functor in groupoids  for curves of genus $g$; by definition $\mathscr{M}^{L2}_g(S)$ is the \emph{category} of families of curves over $S$, with $S$-isomorphisms as the morphisms.  If $T \rightarrow S$ is a morphism of schemes we obtain a functor $\mathscr{M}^{L2}_g(S) \rightarrow \mathscr{M}^{L2}_g(T)$ sending $C$ to $C \mathop{\times}_S T$.  If we have a pair of morphisms $U \rightarrow T \rightarrow S$ then we obtain two maps $\mathscr{M}^{L2}_g(S) \rightarrow \mathscr{M}^{L2}_g(U)$, one sending $C$ to $(C \mathop{\times}_S T) \mathop{\times}_T U$ and the other sending $C$ to $C \mathop{\times}_S U$.  These are canonically isomorphic, but they are not equal!  Do we have to keep track of this canonical isomorphism, as well as the compatibilities it must satisfy when we encounter a sequence $V \rightarrow U \rightarrow T \rightarrow S$?  
\end{exa}

Pursuing this line of reasoning, one ultimately arrives at the definition of a \emph{pseudo-functor} or \emph{lax $2$-functor}.  However, just to define lax $2$-functors is an unpleasant task, with little to do with the geometry that ultimately motivates us.  (The reader who desires one may find a definition in \cite[\S 3.1.2]{FGAe}.)  Fortunately, we will not have to think too hard about lax $2$-functors because Grothendieck has supplied a more elegant solution:  categories fibered in groupoids.  The fundamental observation is that the pullbacks we need  are canonically isomorphic because they satisfy universal properties that are literally the same.  If we keep track of universal properties rather than the objects possessing them, we arrive at a more efficient definition.  

\begin{rem} \label{R:MgCFG}
According to the philosphy behind categories fibered in groupoids, the mistake in Example~\ref{E:MgL2} was to \emph{choose} a fiber product $C \mathop{\times}_S T$.  We were then forced to carry it around and keep track of the compatibilities that it obviously satisfies.  The category fibered in groupoids posits only that an object satisfying the universal property of the fiber product \emph{exists}, i.e., that there is \emph{some} cartesian diagram~\eqref{E:CartCurves},
\begin{equation} \label{E:CartCurves} \vcenter{\xymatrix{
D \ar[r] \ar[d] & C \ar[d] \\
T \ar[r] & S
}} \end{equation}
without actually specifying a construction of one.
\end{rem}


\subsection{Categories fibered in groupoids} \label{S:CFGs}
From our perspective, the motivation for a  category fibered in groupoids is to avoid the technical complications of the definition of a   lax $2$-functor by essentially clumping all of the groupiods of interest  into one large category  over the category  of schemes; the issues we ran into defining the lax $2$-functor on morphisms are avoided by using the universal properties of  fibered products in our category.   We now make this precise.  
Temporarily, we let $\mathsf S$ denote any category. 

\begin{dfn} \label{D:CatOver}
A \emph{category over $\mathsf S$} is a pair  $(\mathcal M,\pi)$ consisting of a category $\mathcal M$ together with a covariant functor $\pi : \mathcal M \to  \mathsf S$.   If $S$ is an object of  $\mathsf S$, the \emph{fiber of $\mathcal M$ over $\mathsf S$}, denoted $\mathcal M_S$ or $\mathcal M(S)$ is defined to be the subcategory consisting of all  objects over $S$, and all morphisms over the identity of $S$.
An object \emph{$X$ in $\mathcal M$ is said to lie over $S$} if it is in $\mathcal M_S$.  A \emph{morphism $\tilde  f:X'\to X$ in $\mathcal M$ is said to lie over  a morphism $f:S'\to S$} if $\pi(\tilde f)=f$.
A morphism between categories $(\mathcal M, \pi)$ and $(\mathcal M', \pi')$ over $\mathsf S$ is a functor $F : \mathcal M \rightarrow \mathcal M'$ such that $\pi' \circ F = \pi$.
\end{dfn} 
  
We indicate objects and morphisms lying above other objects and morphisms in diagrams like this:
\begin{equation}\label{E:CoSdiag}
\xymatrix{
\mathcal M \ar@{->}[d]^\pi&X'\ar@{->}[r]^{\tilde  f}  \ar@{|->}[d]& X \ar@{|->}[d]\\
\mathsf S&S' \ar@{->}[r]^{f}& S.\\
}
\end{equation}

A fibered category is essentially one for which ``pullback diagrams'' exist.  
Keeping in mind the definition of a fibered product, a quick glance at   \eqref{E:CMdiag} should make clear the meaning of a pullback diagram in this setting.

\begin{dfn}[Cartesian morphism] \label{D:CartMorph}
Let $(\mathcal M,\pi)$ be a category over $\mathsf S$.  A morphism $\tilde f:X'\to X$ in $\mathcal M$ is \emph{cartesian} if the following condition holds.  
Denote by    $f : S' \to S$ the morphism $\pi(\tilde f)$ in   $\mathsf S$ (as in  \eqref{E:CoSdiag}).  
 Given any morphism $g:S''\to S'$ in $\mathsf S$, and any morphism $\widetilde {f\circ g}:X''\to X$ in $\mathcal M$ lying over $f\circ g$, 
 there is a unique  morphism $\tilde g:X''\to X'$ in $\mathcal M$ lying over $g$ such that $\widetilde {f\circ g} =\tilde  f \circ \tilde g$.    Pictorially, every diagram~\eqref{E:CMdiag} has a unique completion:
\begin{equation}\label{E:CMdiag}
 \begin{gathered}
\xymatrix {
X'' \ar@{->}@/^.5pc/[rrd]^{\widetilde{f\circ g}} \ar@{|->}@/_.0pc/[d] \ar@{-->}[rd]_{\exists !}^{\tilde g}&&\\
S'' \ar@{->}@/_.3pc/[rd]_{g} \ar@{.>}@/^.5pc/[rrd]^<>(0.7){f\circ g}& X'\ar@{->}[r]^<>(0.4){\tilde  f}  \ar@{|->}[d]& X \ar@{|->}[d]\\
 &S' \ar@{->}[r]_{f}& S.\\
}
 \end{gathered}
\end{equation}
\end{dfn}

\begin{dfn}[Fibered category]
A \emph{category} $(\mathcal M,\pi)$ over $\mathsf S$ is said to be  \emph{fibered   over $\mathsf S$} if for any  morphism   $f : S' \to S$ in $\mathsf S$ and any object $X$ of $\mathcal M$ lying over  $S$, there exists a cartesian  morphism  $\tilde  f: X' \to  X$ in $\mathcal M$ lying over  $f$. 
\end{dfn}

\begin{dfn}[Category fibered in groupoids (CFG)] \label{D:CFG}
A category $(\mathcal M, \pi)$ over $\mathsf S$ is said to be \emph{fibered in groupoids} if it is fibered over $\mathsf S$, and for every $S$ in $\mathsf S$, we have that $\mathcal M(S)$ is a groupoid.  
\end{dfn}

\begin{rem}
Categories fibered in groupoids are typically introduced as fibered categories in which all morphisms are \emph{cartesian}~\cite[Exp.\ VI, \S6, Remarques]{SGA1}, \cite[Def.~3.21]{FGAe}, or equivalently, a fibered category   where the  fibers are all  groupoids.  
In examples, as we have seen here, where we define all morphisms via fibered product diagrams, and pullbacks, one is led naturally to this definition.  
The added generality of fibered categories (not neccesarily fibered in groupoids) is important in order to formulate faithfully flat descent efficiently~\cite[\S{}B.3]{FGA1}, \cite[Exp.~VIII]{SGA1}, \cite[\S4.2]{FGAe}.  However, it is not particularly relevant to the study of moduli problems that is our focus here.

Fibered categories were originally defined in~\cite[\S{}A.1.a, Def.\ 1.1]{FGA1} to be what we would call lax $2$-functors (albeit valued in categories rather than in groupoids), and what others call \emph{pseudo-functors}~\cite[Exp.~VI, \S8]{SGA1}, 
 \cite[Def.~3.10]{FGAe}.  The definition of a fibered category given in \cite[Exp.\ VI, Def.\ 6.1]{SGA1}, is equivalent to the slightly different formulation in~\cite[Def.~3.5]{FGAe}.  Lax $2$-functors are equivalent (we do not attempt to say precisely in what sense) to fibered categories with \emph{cleavage} \cite[Exp.\ VI, \S7--8]{SGA1}, \cite[Def.~3.9, \S3.1.3]{FGAe}.  
\end{rem}

\begin{rem}
The notation $\mathcal M(S)$ for $\pi^{-1} (S)$ is meant to be suggestive of the relationship between categories fibered in groupoids and functors valued in groupoids.  Indeed, one may construct an equivalence between the notions such that the groupoid-valued functor associated to $\mathcal M$ has value $\mathcal M(S)$ on $S \in \mathsf S$.
\end{rem}

\begin{exa}[CFG associated to a presheaf] \label{E:S-CFG}
Let $\mathscr F:\mathsf S^{\mathsf {op}}\to (\mathsf {Set})$ be a functor in sets.  One obtains a CFG $\pi:\mathcal F\to \mathsf S$ in the following way.  For each $S\in \operatorname{Ob}(\mathsf S)$, let $\mathcal F_S=\mathscr F(S)$.    Let $S,S'\in \operatorname{Ob}(\mathsf S)$ and suppose that $X_S\in \mathcal F_S$ and $X_{S'}\in \mathcal F_{S'}$.  Then we assign a morphism $X_{S'}\to X_S$ in $\mathcal F$ if there is a morphism $f:S'\to S$ in $\mathsf S$, and $\mathscr F(f)(X_S)=X_S'$.    The functor $\pi:\mathcal F\to \mathsf S$ is given by sending $X_S$ to $S$, and similarly for morphisms.   

It is not difficult to generalize this construction to yield a CFG associated to a lax $2$-functor~\cite[\S3.1.3]{FGAe}, provided that one has first given a precise definition of the latter.
\end{exa}

\begin{exa}[CFG associated to an object of $\mathsf S$]
Let $S$ be in $\mathsf S$.   The  \emph{slice category} $\mathsf S/S$ is the CFG defined to have objects that are pairs $(S', f)$ where $S' \in \mathsf S$ and $f : S' \rightarrow S$ is a morphism.  A morphism $(S', f) \rightarrow (S'', g)$ is a morphism $h : S' \rightarrow S''$ such that $gh = f$:
\begin{equation*} 
\xymatrix@R=1em{
S' \ar[rr]^h \ar[dr]_f & & S'' \ar[dl]^g \\ & S
} \end{equation*}
The projection $\pi : \mathsf S/S \rightarrow \mathsf S$ is $\pi(S',f) = S'$.  A CFG equivalent to one of the form $\mathsf S/S$ for some $S$ in $\mathsf S$ is said to be  \emph{representable by $S$}.
\end{exa}

\begin{rem}[Agreement of CFGs for $S$ and $h_S$]
For any $S\in \operatorname{Ob}(\mathsf S)$, we can assign the functor $h_S$, and the category fibered in groupoids associated to $h_S$. This agrees with the CFG $\mathsf S/S$.  
\end{rem}

\begin{exa}[The CFG (sieve) associated to a family of maps]\label{E:CFGSiS}  Given a collection of morphisms $\mathcal R=\{S_i\to S\}$ in $\mathsf S$, we define an associated full sub-CFG of $\mathsf S/S$, which will also be denoted by $\mathcal R$.  The objects of $\mathcal R$ are the objects  $S'\to S$ of $\mathsf S/S$ that factor through one of the $S_i\to S$.
Sub-CFGs of representable CFGs (i.e., a sub-CFGs of $\mathsf S/S$ for some $S$ in $\mathsf S$) are known as \emph{sieves}.
\end{exa}

To give the full statement of the Yoneda lemma for CFGs we need another definition.  The correct language for discussing this is that of the $2$-category of CFGs over $\mathsf S$.   We postpone this more technical discussion until later (\S \ref{S:2-cat}).  Here we give a working definition that suffices for our purposes.

\begin{dfn}
Let $p_{\mathcal M}:\mathcal M\to \mathsf S$ and $p_{\mathcal N}:\mathcal N\to \mathsf S$ be CFGs over $\mathsf S$.   There is a category $\operatorname{Hom}_{\mathsf {CFG/S}}(\mathcal M,\mathcal N)$ with objects being morphisms $\mathcal M\to \mathcal N$ of fibered categories over $\mathsf S$ and morphisms being  natural isomorphisms.
\end{dfn}

\begin{lem}[{2-Yoneda \cite[\S 3.6.2]{FGAe}}]  \label{L:2-Yon}  
Let $\mathsf S$ be a category, and let $S\in \operatorname{Ob}(\mathsf S)$.    For any fibered category $\pi:\mathcal M\to \mathsf S$ the natural transformation
\begin{gather*}
\operatorname{Hom}_{\mathsf {CFG/S}}(\mathsf S/S,\mathcal M)\to \mathcal M(S) \\
F \mapsto F(S \xrightarrow{\mathrm{id}_S}S) 
\end{gather*}
(defined similarly for morphisms) is an equivalence of categories.
\end{lem}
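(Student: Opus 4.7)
The plan is to build a quasi-inverse to the evaluation functor $\mathrm{ev} : F \mapsto F(\mathrm{id}_S)$ and check that the two composites are naturally isomorphic to the identities. Define
\begin{equation*}
G : \mathcal M(S) \longrightarrow \operatorname{Hom}_{\mathsf{CFG/S}}(\mathsf S/S, \mathcal M)
\end{equation*}
as follows. Given $X \in \mathcal M(S)$, for each object $(S', f : S' \to S)$ of $\mathsf S/S$, invoke the fibered-category hypothesis (and the axiom of choice) to select a cartesian arrow $\tilde f : f^*X \to X$ lying over $f$; set $G(X)(S',f) := f^*X$. For $\mathrm{id}_S$ itself, take the canonical lift to be $\mathrm{id}_X$. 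For a morphism $h : (S',f) \to (S'',g)$ in $\mathsf S/S$, apply the universal property of the cartesian arrow $\tilde g : g^*X \to X$ to the arrow $\tilde f : f^*X \to X$ (which lies over $gh = f$), obtaining a unique morphism $G(X)(h) : f^*X \to g^*X$ over $h$. Functoriality of $G(X)$ (preservation of composition and identities) then falls out of the uniqueness clause of Definition~\ref{D:CartMorph}. For a morphism $\phi : X \to Y$ in $\mathcal M(S)$, the component $G(\phi)_{(S',f)} : f^*X \to f^*Y$ is likewise produced as the unique arrow over $\mathrm{id}_{S'}$ making the expected square commute, and this assembly is functorial for the same reason.

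Next, check that $G(X)$ really is a morphism in $\operatorname{Hom}_{\mathsf{CFG/S}}(\mathsf S/S, \mathcal M)$: by construction it commutes with the projections to $\mathsf S$, and since $\mathsf S/S$ is a CFG all of its morphisms are cartesian, so preservation of cartesian morphisms (should that be required of the ambient Hom) reduces to verifying that each $G(X)(h)$ is cartesian in $\mathcal M$, which follows from a standard composition-and-cancellation argument for cartesian arrows applied to the triangle $\tilde f = \tilde g \circ G(X)(h)$.

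To see that $\mathrm{ev} \circ G \cong \mathrm{id}_{\mathcal M(S)}$, observe that $G(X)(\mathrm{id}_S) = \mathrm{id}_S^*X$ and that the chosen cartesian lift over $\mathrm{id}_S$ is an isomorphism $\mathrm{id}_S^* X \xrightarrow{\sim} X$; taking this lift to be $\mathrm{id}_X$ makes the composite literally equal to $\mathrm{id}$, so naturality in $X$ is automatic. For $G \circ \mathrm{ev} \cong \mathrm{id}$, fix $F : \mathsf S/S \to \mathcal M$ and set $X := F(\mathrm{id}_S)$. Every object $(S', f)$ of $\mathsf S/S$ admits a distinguished morphism to $(S, \mathrm{id}_S)$, namely $f$ itself viewed as an arrow in $\mathsf S/S$; this arrow is cartesian in $\mathsf S/S$, so $F(f) : F(S', f) \to X$ is a morphism in $\mathcal M$ lying over $f$, and it is cartesian. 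By the uniqueness of cartesian lifts up to unique isomorphism, there is a unique isomorphism $\eta_{(S',f)} : F(S',f) \xrightarrow{\sim} f^*X = G(X)(S',f)$ compatible with the lifts to $X$. A diagram chase using uniqueness shows that $\eta$ is natural in $(S',f)$ and is a natural transformation of morphisms of CFGs; naturality in $F$ itself is the analogous check for a morphism $F \to F'$.

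The main obstacle is not any single deep step but the bookkeeping: every assertion above (functoriality of $G(X)$, cartesianness of its outputs, naturality of the comparison isomorphisms, and the 2-naturality of $\eta$ in $F$) is an application of the \emph{uniqueness} half of the cartesian-morphism universal property, and the task is to organize these into a coherent web without double-choosing cleavages. As is traditional, one avoids this by choosing once and for all a cleavage on $\mathcal M$ at the outset and always invoking its arrows rather than freshly chosen ones, which collapses the coherence verifications into trivialities.
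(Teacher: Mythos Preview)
The paper does not actually supply a proof of this lemma; it simply states the result and cites \cite[\S 3.6.2]{FGAe}. Your argument---building a quasi-inverse from a cleavage and verifying both round-trip natural isomorphisms via the uniqueness clause of the cartesian lifting property---is correct and is exactly the standard proof one finds in the cited reference.
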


\begin{convention}
In view of the $2$-Yoneda lemma, we may introduce the following notation:  Suppose that $S \in \mathsf S$ and that $\mathcal M$ is a CFG over $\mathsf S$.  We write $X : S \rightarrow \mathcal M$ to mean $X \in \operatorname{Hom}_{\mathsf{CFG}/\mathsf{S}}(\mathsf S/S, \mathcal M)$.  By the Yoneda lemma, this is the same as specifying an object of $\mathcal M(S)$, and we frequently do not distinguish notationally between $X : S \rightarrow \mathcal M$ and $X \in \mathcal M(S)$.
\end{convention}

\begin{cor}[{\cite[\S 3.6]{FGAe}}]  \label{C:2-Yon}  
Let $\mathsf S$ be a  small category, and let $S,S'\in \operatorname{Ob}(\mathsf S)$.     The map
$$
\operatorname{Hom}_{\mathsf S}(S',S)\to  \operatorname{Ob}\left(\operatorname{Hom}_{\mathsf {CFG/S}}(\mathsf S/S',\mathsf S/S)\right)
$$
obtained by post-composition of arrows (e.g., a morphism $f:S'\to S$ is sent to the functor that assigns to an arrow $(g:S''\to S')\in \operatorname{Ob}(\mathsf S/S')$, the composition $(f\circ g:S''\to S)\in \operatorname{Ob}(\mathsf S/S)$) is a bijection.  
\end{cor}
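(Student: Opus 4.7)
The plan is to deduce this essentially as a direct consequence of the 2-Yoneda lemma (Lemma~\ref{L:2-Yon}) applied to the CFG $\mathcal M = \mathsf S/S$, by observing that in this special case the equivalence of categories provided by 2-Yoneda actually upgrades to a bijection on objects.

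First I would unpack the fiber $(\mathsf S/S)(S')$. By definition, its objects are morphisms $(T \to S)$ in $\mathsf S/S$ whose image under the projection $\pi : \mathsf S/S \to \mathsf S$ is $S'$, i.e.\ morphisms of the form $f : S' \to S$. A morphism in the fiber must project to $\mathrm{id}_{S'}$, so it is a commuting triangle with top edge $\mathrm{id}_{S'}$, which forces equality of source and target. Hence $(\mathsf S/S)(S')$ is a \emph{discrete} groupoid whose set of objects is precisely $\operatorname{Hom}_{\mathsf S}(S', S)$.

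Next I would apply Lemma~\ref{L:2-Yon} with $\mathcal M = \mathsf S/S$ and with the object $S'$ in place of $S$ there, which yields an equivalence of categories
\begin{equation*}
\operatorname{Hom}_{\mathsf{CFG}/\mathsf{S}}(\mathsf S/S', \mathsf S/S) \;\xrightarrow{\sim}\; (\mathsf S/S)(S') \;=\; \operatorname{Hom}_{\mathsf S}(S', S),
\end{equation*}
sending a functor $F$ to $F(\mathrm{id}_{S'})$. Since the right-hand side is a discrete category, so is the left-hand side (an equivalence preserves the property of being thin/discrete), and the equivalence therefore descends to a genuine bijection on object sets. This gives the desired bijection in the direction $F \mapsto F(\mathrm{id}_{S'})$.

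Finally, I would identify its inverse with post-composition. Given $f \in \operatorname{Hom}_{\mathsf S}(S', S)$, the 2-Yoneda construction produces a functor $F_f : \mathsf S/S' \to \mathsf S/S$ whose value on $\mathrm{id}_{S'}$ is $f$; by functoriality applied to any arrow $(T \xrightarrow{g} S')$ of $\mathsf S/S'$, viewed as a morphism $(T \xrightarrow{g} S') \to (S' \xrightarrow{\mathrm{id}} S')$ over $g$, one gets $F_f(T \xrightarrow{g} S') = (T \xrightarrow{fg} S)$, which is exactly post-composition with $f$. Thus the map described in the corollary is the inverse of the 2-Yoneda bijection, hence itself a bijection. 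The only real step requiring care is the verification that $(\mathsf S/S)(S')$ is discrete, which is elementary but is the key observation that turns the 2-categorical statement into an honest bijection of sets.
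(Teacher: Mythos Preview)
The paper gives no explicit proof of this corollary; it is stated as an immediate consequence of the 2-Yoneda lemma with a reference to \cite[\S 3.6]{FGAe}. Your approach of specializing Lemma~\ref{L:2-Yon} to $\mathcal M = \mathsf S/S$ is exactly the intended one, and your identification of the fiber $(\mathsf S/S)(S')$ with the discrete groupoid $\operatorname{Hom}_{\mathsf S}(S',S)$ is correct, as is your computation that post-composition is inverse to $F\mapsto F(\mathrm{id}_{S'})$.

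There is, however, one small logical gap. You assert that because the target of the 2-Yoneda equivalence is discrete, so is the source, on the grounds that ``an equivalence preserves the property of being thin/discrete.'' Equivalences preserve thinness (via full faithfulness), but not discreteness: a category with two distinct isomorphic objects is equivalent to a one-object discrete category without itself being discrete. So this step, as written, does not prove that the equivalence is a bijection on \emph{objects} rather than on isomorphism classes. The conclusion is nonetheless true here, but for a slightly different reason: a morphism $\alpha : F \Rightarrow G$ in $\operatorname{Hom}_{\mathsf{CFG}/\mathsf S}(\mathsf S/S', \mathsf S/S)$ has each component $\alpha_g$ lying in a fiber $(\mathsf S/S)(S'')$, and \emph{all} such fibers are discrete, forcing $\alpha = \mathrm{id}$ and $F = G$. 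Equivalently, your own final-paragraph computation already shows that any $F$ is determined on objects by $F(\mathrm{id}_{S'})$, which gives injectivity of the 2-Yoneda map on objects directly; combined with the surjectivity you establish via post-composition, this closes the gap without appealing to the incorrect general principle.
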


\begin{rem}
Let $\mathscr M,\mathscr N:\mathsf S^{\mathsf {op}}\to (\mathsf {Set})$ be two functors.    Let $\mathcal M,\mathcal N$ be the associated categories fibered in groupoids over $\mathsf S$.  In a similar way, there is a bijection 
$$
\operatorname{Hom}_{\mathsf {Fun}(\mathsf S^{\mathsf {op}},(\mathsf {Set}))}(\mathscr M,\mathscr N)\to  \operatorname{Ob}\left( \operatorname{Hom}_{\mathsf {CFG}/\mathsf S}(\mathcal M,\mathcal N)\right).
$$
We delay introducing the notion of $2$-categories until later (\S \ref{S:2-cat}), but the consequence of the $2$-Yoneda Lemma, and this observation, is that the category $\mathsf S$, and the category of functors $\mathsf {Fun}(\mathsf S^{\mathsf {op}},(\mathsf {Set}))$ can be viewed as full $2$-subcategories of the $2$-category of CFGs over $\mathsf S$.  Consequently, we  will frequently identify objects $S$ of $\mathsf S$, and functors to sets $\mathscr M:\mathsf S^{\mathsf {op}}\to (\mathsf {Set})$ with their associated CFGs, $\mathsf S/S$, and $\mathcal M$, respectively.  
\end{rem}


\subsubsection{The Grassmannian as a category fibered in groupoids} \label{SS:Gr-CFG}
For this section, let $\mathsf S$ be the category of schemes over $\mathbb C$.  The Grassmannian CFG
$$
\pi:\mathcal G(r,n)\to \mathsf S
$$
is defined as follows.  Objects of $\mathcal G(r,n)$ are pairs $(S, \mathbb F)$ where $S$ is a scheme and $\mathbb F \hookrightarrow  \mathbb C^n_S$ is a vector subbundle of rank~$r$.  Morphisms in $\mathcal G(r,n)$ are defined via pullback.  More precisely, a morphism $(S',\mathbb F') \rightarrow (S,\mathbb F)$ is a diagram
$$
\xymatrix@C=.3cm @R=.3cm{
\mathbb F' \ar@{->}[rdd]  \ar@{^(->}@<-1pt>[rrd] \ar@<2pt>@{->}[rrr]&&&\mathbb F\ar@{^(->}[rrd] \ar@{->}[rdd]&&&\\
&& \mathbb C^n_{S'} \ar@{->}[rrr]_<>(0.25){\mathbb C^n_f} \ar@{->}[ld]&&& \mathbb C^n_S \ar@{->}[ld]&\\
&S' \ar@{->}[rrr]_{f}&&&S&&\\
}
$$
where $f:S'\to S$ is a morphism in $\mathsf S$ and the rectangles are all cartesian.   The map  $\pi:\mathcal G(r,n)\to \mathsf S$ is the forgetful functor $\pi(S,\mathbb F) = S$.

\begin{rem}\label{R:Gr-CFG}
To dispel any confusion that may arise from the notation in Example \ref{E:S-CFG}, we emphasize that the CFG associated to the functor $\mathscr G(r,n)$ is equivalent to  $\mathcal G(r,n)$. 
 The former is fibered in sets whereas the latter is fibered in groupoids that are equivalent, but not isomorphic, to sets.
 We use the two notations to emphasize that one is a CFG and one is a functor.
\end{rem}

\subsubsection{The CFG of curves of genus $g$} \label{S:CFG-curves}
   Again we take $\mathsf S$ to be the category of schemes over $\mathbb C$.  We define the CFG of genus $g$ curves
$$
\pi:\mathcal M_g\to \mathsf S
$$
in the following way.  Objects of $\mathcal M_g$ are pairs $(S, X)$ where $X\to S$ is a relative curve of genus $g$ (see \S\ref{S:mod-RS}).  Morphisms in $\mathcal M_g$ are defined via pullback.  More precisely, a morphism from $(S',X') \rightarrow (S,X)$ is a cartesian  diagram
\begin{equation*} \xymatrix{
X' \ar[r] \ar[d] & X \ar[d] \\
S' \ar[r]^f & S
} \end{equation*}
where $f:S'\to S$ is a morphism in $\mathsf S$.  The map $\pi:\mathcal M_g\to \mathsf S$ is $\pi(S,X) = S$.

\begin{rem}
We emphasize  that the CFG associated to the functor $\mathscr M_g$ is not equivalent to $\mathcal M_g$.  The former is fibered in sets whereas the latter is fibered in groupoids that are not equivalent to sets.
\end{rem}


\section{Stacks}\label{S:stack}

Stacks are the categories fibered in groupoids that respect topology, in the sense that compatible locally defined morphisms into a stack can be glued together into global morphisms.  This is the most basic requirement a category fibered in groupoids must satisfy in order to be studied geometrically.

As usual, algebraic geometry introduces a troublesome technicality here:  the Zariski topology is 
 much too coarse to do much interesting gluing.  Indeed, suppose that $\mathcal F$ is a CFG that `deserves to be studied geometrically' and consider a scheme $S$ with a free action of $G = \mathbb{Z} / 2 \mathbb{Z}$ and a $\xi \in \mathcal{F}(S)$ that is equivariant with respect to the $G$-action.  If $\mathcal{F}$ were a stack in the complex analytic topology, we could descend $\xi$ to an element of $\mathcal{F}(S/G)$ because $S \rightarrow S/G$ is a covering space, an in particular a local homeomorphism, hence a cover in the analytic topology.  There is no such luck in the Zariski topology, where $S \rightarrow S/G$ may fail to be a local homeomorphism and may therefore also fail to be a cover in the Zariski topology. 

It is important to be able to do this kind of descent, so one must introduce an abstract replacement for the concept of a topology, called a Grothendieck topology.  The essence of the definition is to isolate exactly the aspects of topology that are necessary to speak about gluing. 
It is possible to express the sheaf conditions without ever making reference to points, or even to open sets.  All that is needed is the concept of a cover.  Grothendieck's definition actually goes further, replacing even the concept of a cover with the abstract notion of a sieve.  Even though topologies afford a few pleasant properties that pretopologies do not (see \S \ref{S:top} for more details), we will primarily limit our discussion to pretopologies in this introduction.  

\begin{rem}
Lest it appear genuinely pointless to remove the points from topology, consider the vast expansion of settings that can be considered topologically by way of Grothendieck topologies.  To take just one, Quillen~\cite{Quillen-HA} and Rim~\cite[Exp.~VI]{sga7-1} were able to understand extensions of commutative rings---that is, deformations of affine schemes---by putting a Grothendieck topology on the category of \emph{commutative rings} (see \cite{Gaitsgory,def-rings,coh-rings} for further developments of this idea).
\end{rem}

We begin this section by reviewing the definition of a Grothendieck pretopology, using sheaves on a topological space as our motivation.   We then define the isomorphism presheaf, and descent data, and finally, we give the definition of a stack.


\subsection{Sheaves and pretopologies}
We take as  motivation for Grothendieck pretopologies the definition of a sheaf on a topological space.

\subsubsection{Sheaves on a topological space}

Let $X$ be a topological space.  For any open subsets $U' \subseteq U$ of $X$, let $\iota_{U',U}$ denote the inclusion of $U'$ inside of $U$. Define the  category of open sets on $X$, 
$
\mathsf O_X
$, 
with the following objects and morphisms:
\begin{gather*}
\operatorname{Obj}\mathsf O_X=\{U\subseteq X: U \text{ open}\} \\
\operatorname{Hom}_{\mathsf O_X}(U',U)= \begin{cases}
\{ \iota_{U',U} \} &   U'\subseteq U\\
\varnothing & \text {else} 
\end{cases}
\end{gather*}

A \emph{presheaf (of sets)} is a functor
$$
\mathscr F:\mathsf O_X^{\mathsf {op}}\to (\mathsf {Set}).
$$
Given $a\in \mathscr F(U)$, and a subset $\iota_{U',U}:U'\subseteq U$, we denote by $a|_{U'}$, or $\iota_{U',U}^*(a)$,  the image of $a$ under the map $\mathscr F(\iota_{U',U}):\mathscr F(U)\to \mathscr F(U')$.

A presheaf is: 
\begin{enumerate}
\item \emph{separated} if, given an open  cover $\{U_i\to U\}$ and two sections $a$ and $b$ in $\mathscr F(U)$ such that  $a|_{U_i}=b|_{U_{i}}$ in  $\mathscr F(U_i)$  for all $i$, one has  $a=b$.

\item a \emph{sheaf} if, given an open cover $\{ U_i \rightarrow U \}$ with intersections $U_{ij} = U_i \cap U_j$, and elements $a_i \in \mathscr F(U_i)$ for all $i$, satisfying $a_i \big|_{U_{ij}} = a_j \big|_{U_{ij}}$ for all $i$ and $j$, there is a unique $a \in \mathscr F(U)$ such that $a \big|_{U_i} = a_i$ for all $i$.
\end{enumerate}

A morphism of presheaves, separated presheaves, or sheaves is a natural transformation of functors.

Now, to motivate the definition of a pretopology, a presite,  and a sheaf on a presite, we rephrase this definition  in the language of equalizers.  Recall that if $U'$ and $U''$ are open subsets of $U$, then $U'\cap U''=U'\times_U U''$.  
Given any open cover $\mathcal U=\{U_i\to U\}$, denote by $\mathscr F(\mathcal U)$ the equalizer of diagram~\eqref{E:ShRMs}:
 \begin{equation}\label{E:ShRMs}
\xymatrix{
  \displaystyle \prod_i \mathscr F(U_i)  \ar@{->}@<-2pt>[r]_-{\mathrm{pr}_2^*} \ar@{->}@<2pt>[r]^<>(0.5){\mathrm{pr}_1^*}& \displaystyle \prod_{i,j} \mathscr F(U_i\mathop{\times}_U U_j).
}
\end{equation}
Recall that the equalizer is the categorical limit for morphisms into the diagram; in other words, 
we obtain a diagram
$$
\xymatrix{
\mathscr F(\mathcal U) \ar@{->}[r]& \prod_i \mathscr F(U_i)  \ar@{->}@<-2pt>[r]_-{\mathrm{pr}_2^*} \ar@{->}@<2pt>[r]^-{\mathrm{pr}_1^*}& \prod_{ij} \mathscr F(U_i\times_U U_j),
}
$$
and the arrow on the left is universal (terminal) for morphisms into \eqref{E:ShRMs}.  The natural map on the left in the diagram below is  induced by the restriction maps:
$$
\xymatrix{
\mathscr F(U) \ar@{->}[r]& \prod_i \mathscr F(U_i)  \ar@{->}@<-2pt>[r]_-{\mathrm{pr}_2^*} \ar@{->}@<2pt>[r]^-{\mathrm{pr}_1^*}& \prod_{ij} \mathscr F(U_i\times_U U_j).
}
$$
By the universal property of the equalizer, this induces a map:
$$
\mathscr F(U)\to \mathscr F(\mathcal U)
$$
The sheaf conditions have the following translations into the language of equalizers.

\begin{lem}[{\cite[Cor.~2.40]{FGAe}}]
\label{L:SheafEq}
 Let $\mathscr F:\mathsf O_X^{\mathsf {op}}\to (\mathsf {Set})$ be a presheaf.  
\begin{enumerate}
\item $\mathscr F$ is separated if and only if  $\mathscr F(U)\to \mathscr F(\mathcal U)$ is injective for all open $U$ in $X$ and all open covers $\mathcal U$ of $U$.

\item $\mathscr F$ is a sheaf if and only if $\mathscr F(U)\to \mathscr F(\mathcal U)$ is a bijection for all open $U$ in $X$ and all open covers $\mathcal U$ of $U$.  
\end{enumerate}
\end{lem}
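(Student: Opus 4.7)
My plan is to unfold the universal property of the equalizer so that both assertions become direct translations of definitions. The only real content is identifying, concretely, what elements of $\mathscr F(\mathcal U)$ look like and what the canonical map $\mathscr F(U) \to \mathscr F(\mathcal U)$ does to an element.

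First I would recall that the equalizer of the two parallel maps $\mathrm{pr}_1^\ast, \mathrm{pr}_2^\ast : \prod_i \mathscr F(U_i) \rightrightarrows \prod_{i,j}\mathscr F(U_i \times_U U_j)$ in the category of sets is computed as the subset
\[
\mathscr F(\mathcal U) = \bigl\{ (a_i)_i \in \textstyle\prod_i \mathscr F(U_i) \;:\; a_i\big|_{U_{ij}} = a_j\big|_{U_{ij}} \text{ for all } i,j \bigr\},
\]
with the structure map to $\prod_i \mathscr F(U_i)$ being the inclusion. Under the universal property, the canonical morphism $\mathscr F(U) \to \mathscr F(\mathcal U)$ induced by the restriction maps $\mathscr F(U) \to \mathscr F(U_i)$ sends a section $a \in \mathscr F(U)$ to the tuple $(a|_{U_i})_i$, which is well-defined because the two iterated restrictions of $a$ to $U_{ij}$ agree (functoriality of $\mathscr F$ applied to the two inclusions $U_{ij} \hookrightarrow U_i, U_j \hookrightarrow U$).

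With this description in hand, part (1) is immediate: to say $\mathscr F(U) \to \mathscr F(\mathcal U)$ is injective is to say that whenever $(a|_{U_i})_i = (b|_{U_i})_i$, i.e., $a|_{U_i} = b|_{U_i}$ for every $i$, one has $a = b$. This is literally the separatedness axiom, quantified over all open $U$ and all open covers $\mathcal U$. Conversely, separatedness gives injectivity by the same translation.

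For part (2), surjectivity of $\mathscr F(U) \to \mathscr F(\mathcal U)$ says that every compatible family $(a_i)_i$ lies in the image, i.e., arises as $(a|_{U_i})_i$ for some $a \in \mathscr F(U)$; this is precisely the gluing (existence) part of the sheaf axiom. The uniqueness part of the sheaf axiom is injectivity, i.e., separatedness, by (1). Hence the sheaf condition is equivalent to the conjunction ``injective and surjective,'' which is bijectivity. I do not anticipate any real obstacle beyond being careful about the passage between the tuple description and the equalizer definition; since everything is taking place in $(\mathsf{Set})$, no subtleties about limits intervene.
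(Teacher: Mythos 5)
Your proof is correct: identifying the equalizer in $(\mathsf{Set})$ with the set of compatible families and observing that injectivity/bijectivity of $\mathscr F(U)\to\mathscr F(\mathcal U)$ literally restates the separatedness/sheaf axioms is exactly the intended argument. The paper itself gives no proof but defers to \cite[Cor.~2.40]{FGAe}, whose proof is this same direct translation, so your approach matches the standard one.
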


The main takeaway from this discussion is that we can repackage the sheaf condition in terms of fibered products and equalizers.  This provides the motivation for a Grothendieck pretopology, and a sheaf on a presite.


\subsubsection{Pretopologies}  
Temporarily denote by $\mathsf S$ any category.  
A \emph{Grothendieck pretopology} $\mathscr T$ on  $ \mathsf S$ consists of the following data: for each object $S$ in $\mathsf S$, a collection of families of maps $\{S_\alpha\to S\}$, called \emph{covers} of $S$   in $\mathscr T$ (or \emph{covering families} in $\mathscr T$),
such that:

\begin{enumerate}
\item [(PT 0)]  For all objects $S$ in $\mathsf S $, and for all morphisms $S_\alpha \to S$ which appear in some   covering family of $S$, and for all morphisms $S' \to  S$, the fibered product $S_\alpha \times_ S S'$ exists.

\item [(PT 1)]  For all objects $S$ in $\mathsf S$, all morphisms $S' \to  S$, and all  covering families $\{S_\alpha \to S\}$, the family $\{S_\alpha \times_S S' \to S'\}$  is a   covering family.

\item [(PT 2)]  If $\{S_\alpha \to S\}$ is a   covering family, and if for all $\alpha $, $\{S_{\beta\alpha} \to S_\alpha\}$  is a   covering family, then the family of composites $\{S_{\beta\alpha}\to S_\alpha\to S\}$
 is a   covering family.
 
\item [(PT 3)]   If $S' \to S$ is an isomorphism, then it  is a   covering family.
\end{enumerate}

\begin{exa}
Let $X$ be a topological space.  The category $\mathsf O_X$ together with open covers is a Grothendieck pretopology.  
\end{exa}

\begin{exa}
   Let $X$ be a topological space, define a category $\mathsf S$ to have as objects $\mathscr P(X)$, the set of all subsets of $X$, and as morphisms the inclusions.  We give every subset $S\subseteq X$ the induced topology.  Then the collection of all  open covers of subsets of $X$ gives a Grothendieck pretopology on $\mathsf S$.  Indeed, (PT0) is satisfied, since the fibered product is given by intersection.  (PT1) holds since we are giving every subset the induced topology, so an open  cover of a superset gives an open cover of  a subset.  (PT2) holds since refinements of open covers are open covers.  (PT3) holds since isomorphisms are equalities.  

\end{exa}

\begin{exa} \label{E:top-on-CFG}
Let $\pi : \mathcal X \rightarrow \mathsf S$ be a CFG over a category $\mathsf S$ equipped with a Grothendieck pretopology.  Call a family of maps $\{ X_i \rightarrow X \}$ in $\mathcal X$ covering if $\{ \pi(X_i) \rightarrow \pi(X) \}$ is covering in $\mathsf S$.  Then this determines a Grothendieck pretopology on $\mathcal X$.
To verify this, it may be helpful to observe that the induced   morphism  of CFGs $\mathcal X / X\to \mathsf S/\pi(X)$ is an  equivalence of categories; indeed, by the definition, $\mathcal X$ being a CFG implies the morphism   is essentially surjective, and fully faithful.  
\end{exa}

\begin{dfn}[Presite]\label{D:Presite}
A pair $(\mathsf S,\mathscr T)$ consisting of a category $\mathsf S$ together with a Grothendieck pretopology $\mathscr T$  is called a \emph{presite}.  Often $\mathscr T$ is left tacit and one uses $\mathsf S$ to stand for both the presite and its underlying category of objects.
\end{dfn}

\begin{exa}[Covers in the \'etale pretopology] \label{E:etale}
  The primary example of a Grothendieck pretopology that we will use is the \'etale pretopology  on the category of schemes.  We denote the associated presite by $\mathsf S_{\operatorname{et}}$.   
   Covers in $\mathsf S_{\operatorname{et}}$ are collections of  jointly surjective \'etale morphisms.   Recall that \'etale morphisms are the algebro-geometric analogue of local isomorphisms in the complex analytic category.  

Given a pretopology on $\mathsf S$ and a scheme $S$ in $\mathsf S$, we obtain the category $\mathsf S/S$, and an induced pretopology defined in the obvious way.   For instance, $(\mathsf S/S)_{\operatorname{et}}$ has covers given by jointly surjective \'etale morphisms in that category.  
  \end{exa}

  \begin{rem}[Analytic category]
  Readers who prefer working in the analytic category, 
should feel free   to  take $\mathsf S$ to be the category of complex analytic spaces, and to work with the pretopology $\mathscr T$ generated by the  usual open covers of complex analytic spaces,  in the analytic topology.   In fact, one could as easily  take $\mathsf S$ to be the category of complex manifolds, with smooth morphisms (so that fibered products remain in the category), and work with the pretopology $\mathscr T$ generated by the  usual open covers of complex manifolds.  
\end{rem}

\begin{exa}[Standard pretopologies on schemes]
The most commonly used Grothendieck pretopologies on the category of schemes are the:
\begin{itemize}
\item \emph{Zariski pretopology},
\item \emph{\'etale pretopology},
\item \emph{faithfully flat finite presentation (fppf) pretopology},
\item \emph{faithfully flat quasicompact (fpqc) pretopology}.
\end{itemize}
  Each of these pretopologies is a refinement of the one preceding it.  We will write $\mathsf S_{_{\operatorname{Zar}}}$, $\mathsf S_{_{\operatorname{et}}}$, etc., for the respective presites.  The Zariski pretopology (covers by Zariski open sets) is too coarse for most of the applications we have in mind.  For simplicity, we will work almost exclusively with the \'etale pretopology.  
  \end{exa}

\subsubsection{Sheaves on a presite}

The definition of a pretopology is exactly set up to allow us to make a definition of  a sheaf following our discussion of sheaves on topological spaces.     A \emph{presheaf (of sets)} on $\mathsf S$ is just a functor
$$
\mathscr F:\mathsf S^{\mathsf {op}}\to (\mathsf {Set}).
$$

Given any cover $\mathcal R=\{S_i\to S\}$, denote by $\mathscr F(\mathcal R)=\mathscr F(\{S_i\to S\})$ the equalizer of the diagram
  $$
\xymatrix{
  \prod_i \mathscr F(S_i)  \ar@{->}@<-2pt>[r]_-{\mathrm{pr}_2^*} \ar@{->}@<2pt>[r]^-{\mathrm{pr}_1^*}& \prod_{ij} \mathscr F(S_i\times_S S_j).
}
$$
As before, there  is a natural map
$$
\mathscr F(S)\to \mathscr F(\mathcal R).
$$

Following Lemma \ref{L:SheafEq}, we make the following definition:

\begin{dfn}[Sheaf on a site] Let $\mathscr F$ be a  presheaf  on a  presite $(\mathsf S,\mathscr T)$.
\begin{enumerate}
\item $\mathscr F$ is \emph{separated}   if  $\mathscr F(S)\to \mathscr F(\mathcal R)$ is an injection for every covering family $\mathcal R$ of every object $S$ of $\mathsf S$.

\item $\mathscr F$ is a \emph{sheaf}  if $\mathscr F(S)\to \mathscr F(\mathcal R)$ is a bijection for every covering family $\mathcal R$ of every object $S$ of $\mathsf S$.  
\end{enumerate}
\end{dfn}

  \begin{rem}
On occasion we will be more precise about how fine a  pretopology can be used to obtain a given statement.  For instance, we may specify that a presheaf is a sheaf with respect to the fpqc pretopology, which implies it is also a sheaf with respect to all of the other pretopologies mentioned above. 
\end{rem}

\begin{dfn}[Subcanonical presite]
A pretopology $\mathscr T$ on a category $\mathsf S$ is called \emph{subcanonical} if every representable functor on $\mathsf S$ is a sheaf with respect to $\mathscr T$.   A presite $(\mathsf S,\mathscr T)$ is called subcanonical if $\mathscr T$ is subcanonical.  
\end{dfn}

\begin{teo}[{Grothendieck \cite[Thm.~2.55]{FGAe}}]
Let $S$ be a scheme.   The presite  \emph{$(\mathsf S/S)_{_{\operatorname{fpqc}}}$} is subcanonical; in particular  \emph{$(\mathsf S/S)_{_{\operatorname{et}}}$} is subcanonical.
\end{teo}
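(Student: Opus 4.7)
The plan is to unwind the sheaf condition and reduce it, via faithfully flat descent, to the classical exactness property of faithfully flat ring maps. Concretely, fix a scheme $T$ over $S$ and an fpqc cover $\{U_i \to U\}$ of an $S$-scheme $U$; I must show that $\operatorname{Hom}_S(U,T) \to \operatorname{Hom}_S(\mathcal R,T)$ is a bijection. First I would replace $\{U_i \to U\}$ by the single morphism $V := \bigsqcup_i U_i \to U$, which is faithfully flat; the quasicompactness axiom of the fpqc pretopology lets one reduce to the case where $U$ and $V$ are affine, say $U = \operatorname{Spec} A$ and $V = \operatorname{Spec} B$ with $A \to B$ faithfully flat. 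The sheaf condition then becomes the assertion that
\begin{equation*}
\operatorname{Hom}_S(U,T) \longrightarrow \operatorname{Hom}_S(V,T) \rightrightarrows \operatorname{Hom}_S(V \times_U V,T)
\end{equation*}
is an equalizer diagram.

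Second, I would prove separatedness: two $S$-morphisms $f,g : U \to T$ whose pullbacks to $V$ agree must be equal. When $T$ is affine, this is immediate because $A \to B$ is injective by faithful flatness. For general $T$, it suffices to check equality locally on $U$; given a point $u \in U$, pick a lift $v \in V$ and an affine open $T' \subseteq T$ containing the common value $f(v) = g(v)$, and work on an affine open of $U$ whose preimage in $V$ maps into $T'$, reducing to the affine target case.

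Third, for the existence part, assume $f_V : V \to T$ satisfies $p_1^\ast f_V = p_2^\ast f_V$ on $V \times_U V$. The core input is the classical exact sequence
\begin{equation*}
A \longrightarrow B \rightrightarrows B \otimes_A B
\end{equation*}
for $A \to B$ faithfully flat, which handles the case that $T$ is affine: a morphism $V \to T = \operatorname{Spec} R$ equalizing the two projections is given by a ring map $R \to B$ landing in the equalizer $A$, hence factoring through a unique $R \to A$. For general $T$ one glues: using flatness, the preimage under $V \to U$ of a suitable affine open cover $\{T_\alpha\}$ of $T$ gives, after intersecting with affine opens of $U$, an fpqc refinement on which $f_V$ factors through some $T_\alpha$; descend each piece separately to a morphism on the corresponding open of $U$, then glue using the already established separatedness.

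The main obstacle will be the gluing step in the non-affine case: one must verify that the locally constructed descents are compatible on overlaps in $U$, which requires using the cocycle condition on $V \times_U V$ together with the fact that $V \to U$ is a universal effective epimorphism (a consequence of faithful flatness). Modulo this careful bookkeeping, everything reduces to the classical faithfully flat descent exact sequence, which is precisely Grothendieck's original input for the subcanonicity result cited from \cite[Thm.~2.55]{FGAe}.
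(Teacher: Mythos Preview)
The paper does not prove this theorem; it simply states it with a citation to \cite[Thm.~2.55]{FGAe} and moves on. Your outline is essentially the standard proof found in that reference, so there is nothing to compare against in the paper itself.

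Your sketch is correct in spirit. Two points deserve a bit more care. First, in the reduction step, the disjoint union $V=\bigsqcup_i U_i$ need not be affine, and for fpqc covers the index set may be infinite; the actual reduction uses the quasicompactness axiom to pass, Zariski-locally on $U$, to a \emph{finite} subcover of an affine open, at which point $V$ becomes affine. Second, in the gluing step for non-affine $T$, the key fact you need (and should name) is that an fpqc morphism $V\to U$ is a quotient map of topological spaces; this is what guarantees that the saturated open $f_V^{-1}(T_\alpha)\subseteq V$ is the preimage of an open $U_\alpha\subseteq U$, so that you can run the affine argument over $U_\alpha$ and then glue. You flag this as ``the main obstacle,'' which is accurate; once that lemma is in hand the bookkeeping is routine.
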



\subsection{The isomorphism presheaf} \label{S:isomPr}
Let $\pi:\mathcal M\to \mathsf S$ be a CFG.  
If we view $\mathcal M$ as a space, as is our intention, and view an object $X \in \mathcal M(S)$ as a map $S \rightarrow \mathcal M$ then we expect that morphisms into $\mathcal M$ that agree locally should agree globally.  The only proviso is that because two maps into $\mathcal M$ can agree with one another in more than one way, we must interpret local agreement of $X$ and $Y$ to include not only choices of local isomorphisms between $X$ and $Y$ over a cover, but also compatibility of these choices on the overlaps in the cover.

In order to state this condition precisely, we introduce the presheaf of witnesses to the agreement of objects of $\mathcal M$.  The condition we want to impose is that this presheaf be a sheaf.

\begin{dfn}[Isomorphism presheaf]\label{D:IsomPS}
Let $\pi:\mathcal M\to \mathsf S$ be a CFG.  
Given $S$ in $\mathsf S$, and $X,Y\in \mathcal M(S)$, we obtain a presheaf 
$$
\mathscr {I}\!\!som_{\mathcal M}(X,Y):(\mathsf S/S)^{\mathsf {op}} \to (\mathsf {Set})
$$
in the following way.  For every object  $(S'\to S)$ in $\mathsf S/S$, we set
$$
\mathscr {I}\!\!som_{\mathcal M}(X,Y)(S'\to S):= \operatorname{Isom}_{\mathcal{M}(S')}(X \big|_{S'}, Y \big|_{S'})
$$
Thus $\mathscr {I}\!\!som_{\mathcal M}(X,Y)(S'\to S)$ consists of all isomorphisms $\alpha : X \big|_{S'} \rightarrow Y \big|_{S'}$ in $\mathcal M$ that lie over the identity $\mathrm{id}_{S'}$ in $\mathsf S$.  
The assignment for morphisms is left to the reader (see \cite[p.62]{FGAe}).
\end{dfn}

\begin{rem}
An observant reader will note that the restrictions in the definition of $\mathscr {I}\!\!som_{\mathcal M}(X,Y)$ depend, albeit only up to a canonical isomorphism, on a choice of inverse to the functor $\operatorname{Hom}(S, \mathcal M) \rightarrow \mathcal M(S)$, as guaranteed by the Yoneda lemma.
\end{rem}

\begin{rem}
Concretely, to say that isomorphisms form a sheaf means the following.
 Given a cover $\{S_i\to S\}$ in the pretopology on $\mathsf S$, and any collection of isomorphisms $\alpha_i:X|_{S_i}\to Y|_{S_i}$ over the identity on $S_i$ such that $\alpha_{i}|_{S_{ij}}=\alpha_j|_{S_{ij}}$, there is a unique isomorphism $\alpha :X\to Y$ such that $\alpha|_{S_i}=\alpha_i$.    Here we are using the shorthand $S_{ij}:=S_i\times_S S_j$.  
\end{rem}

\begin{dfn}[Prestack]\label{D:Prestack}
A CFG $\mathcal M\to \mathsf S$ such that for  every $S$ in $\mathsf S$, and every $X,Y$ in $ \mathcal M(S)$,  the presheaf 
$\mathscr {I}\!\!som_{\mathcal M}(X,Y)$ is a sheaf, is called a \emph{prestack}.  We will also say that isomorphisms are a sheaf.
\end{dfn}

\begin{rem}
The notation would be more consistent with sheaf notation  (see Proposition \ref{P:StCond}) if we called categories fibered in groupoids (or fibered categories) `prestacks' and called prestacks `separated prestacks'.  But we will keep to tradition.  
\end{rem}

It is typically very easy to prove that a category fibered in groupoids arising from a moduli problem is a prestack in a subcanonical topology.  This is because an object $Y \in \mathcal M(S)$ is usually representable by a scheme, perhaps with some extra structure or properties, and descending local isomorphisms between $X \in \mathcal M(S)$ and $Y$ amounts to descending locally defined morphisms from $X$ to $Y$, which is automatic because $Y$ represents a sheaf in any subcanonical topology!

\begin{exa} \label{E:MgPS}
Let us see how this works concretely in the example of $\mathcal M_g$.  Let $S$ be a scheme in $\mathsf S$, and let $X\to S$ and $Y\to S$ be relative curves of genus $g$.  Suppose that there exists an \'etale cover $\{S_i\to S\}$ so that for each $S_i$ there are $S_i$-isomorphisms $\alpha_i:X_{S_i}\to Y_{S_i}$   such that $\alpha_{i}|_{S_{ij}}=\alpha_j|_{S_{ij}}$ (using the shorthand $S_{ij} = S_i \mathop{\times}_S S_j$).  These correspond by the universal property of fiber product to morphisms $X_{S_i} \rightarrow Y$ and $X_{S_{ij}} \rightarrow Y$ satisfying the same compatibility condition.  As $Y$ represents a sheaf and $\{X_{S_i}\to X\}$ is  a cover of $X$ in the pretopology, these glue to a morphism $\alpha : X \rightarrow Y$ such that $\alpha \big|_{S_i} = \alpha_{S_i}$.  To check that this is in fact a morphism over $S$, note that the commutativity of the diagrams~\eqref{E:local-comm} 
\begin{equation} \label{E:local-comm} \vcenter{\xymatrix{
X_{S_i} \ar[r] \ar[d] & Y \ar[d] \\
S_i \ar[r] & S
}} \end{equation}
for all $i$ implies the commutativity of diagram~\eqref{E:global-comm},
\begin{equation} \label{E:global-comm} \vcenter{ \xymatrix@R=1.5em{
X \ar[rr]^\alpha \ar[dr] & & Y \ar[dl] \\
& S
}} \end{equation}
this time because morphisms into $S$ are a sheaf.
\end{exa}

In order to make a precise general statement along these lines, we make a very general definition:

\begin{dfn}[{Stable class of arrows \cite[Def.~3.16, p.48]{FGAe}}]\label{D:StableClass}
A class of arrows $\mathbf P$ in a category $\mathsf S$ is \emph{stable (under base change)} if morphisms in $\mathbf P$ can be pulled back via arbitrary morphisms in $\mathsf S$, and the result of any such pullback is also in $\mathbf P$.  
 \end{dfn}

\begin{exa}[CFG associated to a stable class of arrows] \label{E:CFG-for-stable-class}
Given {any stable class of  arrows} $\mathbf P$ in a category $\mathsf S$, one may make $\mathbf P$ into a category $\mathcal P$ by setting  objects to be arrows in $\mathbf P$ and setting morphisms to be cartesian squares.   There is a morphism  $\mathcal P\to \mathsf S$ given by sending an object $X\to S$ to the target $S$ (and similarly for morphisms).    
Then one can check that $\mathcal P\to \mathsf S$ is a CFG if and only if $\mathbf P$ is a stable class of arrows.
\end{exa}

\begin{teo}[{\cite[Prop.~4.31, p.88]{FGAe}}] \label{T:SCPS}  Let $(\mathsf S,\mathscr T)$ be a subcanonical presite, and $\mathbf P$ a stable class of arrows.  Let $\pi : \mathcal P \to \mathsf S$ be the associated CFG (Example~\ref{E:CFG-for-stable-class}).  Then $(\mathcal P, \pi)$ is a prestack.
\end{teo}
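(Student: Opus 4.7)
The plan is to unpack the isomorphism presheaf and then appeal twice to the subcanonicity of the presite: once to glue morphisms and once to verify they are over the base and are invertible.

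First I would fix $S \in \mathsf S$ and two objects $X \to S$, $Y \to S$ of $\mathcal P(S)$. For any $(T \to S) \in \mathsf S/S$, the set $\mathscr{I}\!\!som_{\mathcal P}(X,Y)(T\to S)$ consists of isomorphisms $\alpha : X_T \to Y_T$ in $\mathsf S$ such that the composition $Y_T \to T$ with $\alpha$ recovers $X_T \to T$, where $X_T := X \times_S T$ and $Y_T := Y \times_S T$ (these lie in $\mathbf P$ by stability). Given a covering family $\{T_i \to T\}$ in $\mathscr T$, I would let $T_{ij} = T_i \times_T T_j$ and note that by (PT1) the pulled-back families $\{X_{T_i} \to X_T\}$ and $\{Y_{T_i} \to Y_T\}$ (and similarly with double indices) are covering families; here I use stability of $\mathbf P$ to identify $X_T \times_T T_i$ with $X_{T_i}$ and similarly for $Y$.

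Next, I would verify separatedness. If $\alpha, \alpha' : X_T \to Y_T$ are two isomorphisms in $\mathcal P(T)$ whose restrictions to each $X_{T_i}$ coincide, then composing with the canonical map $Y_T \to Y$ gives two $\mathsf S$-morphisms $X_T \to Y$ that agree on the cover $\{X_{T_i} \to X_T\}$. Since the presite is subcanonical, the representable functor $\operatorname{Hom}_{\mathsf S}(-, Y)$ is a sheaf, so $\alpha = \alpha'$ as morphisms into $Y$; combined with the equality of the structure maps to $T$, this gives $\alpha=\alpha'$ in $\mathcal P(T)$.

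For the gluing step, suppose we have isomorphisms $\alpha_i : X_{T_i} \to Y_{T_i}$ with $\alpha_i\big|_{T_{ij}} = \alpha_j\big|_{T_{ij}}$. Composing each $\alpha_i$ with $Y_{T_i} \to Y$ yields a compatible family of morphisms $X_{T_i} \to Y$ in $\mathsf S$, which glues uniquely to a morphism $\tilde\alpha : X_T \to Y$ by the sheaf property of $\operatorname{Hom}_{\mathsf S}(-,Y)$. Using the universal property of $Y_T = Y \times_S T$, together with the fact that $\tilde\alpha$ and the structure map $X_T \to T$ assemble compatibly (checked again locally, using that $\operatorname{Hom}_{\mathsf S}(-,T)$ and $\operatorname{Hom}_{\mathsf S}(-,S)$ are sheaves), I would promote $\tilde\alpha$ to a morphism $\alpha : X_T \to Y_T$ over $T$. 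Running the same construction on the inverses $\alpha_i^{-1}$ gives a candidate inverse $\beta : Y_T \to X_T$; the equalities $\beta\alpha = \mathrm{id}_{X_T}$ and $\alpha\beta = \mathrm{id}_{Y_T}$ hold after restriction to each $T_i$ and therefore hold globally by the separatedness already established.

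I do not expect any serious obstacle: the argument is essentially the one already modeled in Example~\ref{E:MgPS}, and the only thing to be slightly careful about is the systematic bookkeeping showing that the pulled-back families $\{X_{T_i} \to X_T\}$ and $\{Y_{T_i} \to Y_T\}$ really are covering families (axioms (PT0) and (PT1) combined with stability of $\mathbf P$) so that the subcanonical sheaf property can be invoked on the correct covers.
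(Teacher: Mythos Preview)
Your proposal is correct and follows essentially the same approach as the paper, which simply remarks that the argument of Example~\ref{E:MgPS} adapts directly: glue the local morphisms into $Y$ using that $\operatorname{Hom}_{\mathsf S}(-,Y)$ is a sheaf (subcanonicity), then check the resulting map is over the base using that $\operatorname{Hom}_{\mathsf S}(-,S)$ (or $\operatorname{Hom}_{\mathsf S}(-,T)$ in your relative formulation) is a sheaf. Your additional care in verifying that $\{X_{T_i} \to X_T\}$ is a cover via (PT0)--(PT1) and in constructing the inverse explicitly is welcome, though note that stability of $\mathbf P$ is needed only to ensure the pulled-back objects lie in $\mathcal P$, not for the existence of the fiber products themselves.
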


\begin{rem}
The discussion in Example~\ref{E:MgPS} adapts in a straightforward way to a proof of Theorem~\ref{T:SCPS}.
\end{rem}

\begin{cor}
The CFGs $\mathcal G(r,n)$ and $\mathcal M_g$ are prestacks in the \'etale topology on schemes.
\end{cor}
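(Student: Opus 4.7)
The plan is to deduce both statements from Theorem \ref{T:SCPS} combined with Grothendieck's theorem that the étale pretopology on schemes is subcanonical. What remains in each case is to exhibit the CFG as the one associated (via Example \ref{E:CFG-for-stable-class}) to a suitable stable class of arrows $\mathbf P$ in the sense of Definition \ref{D:StableClass}.

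For $\mathcal M_g$ this is immediate. Take $\mathbf P$ to be the class of morphisms $\pi : X \to S$ in $\mathsf S$ that are surjective, smooth, and proper, with geometrically connected fibers of genus $g$. Each of these four conditions is preserved under arbitrary base change (the geometric fibers over a base change $S' \to S$ are simply geometric fibers of the original $\pi$), so $\mathbf P$ is stable. By construction, the CFG associated to $\mathbf P$ by Example \ref{E:CFG-for-stable-class} is precisely $\mathcal M_g$ as defined in \S\ref{S:CFG-curves}, and Theorem \ref{T:SCPS} yields the prestack property in the étale topology.

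For $\mathcal G(r,n)$ the situation is slightly more delicate, because an object carries the extra datum of a specified embedding $i : \mathbb F \hookrightarrow \mathbb C^n_S$ rather than just an arrow $\mathbb F \to S$. The cleanest fix is to repackage: let $\mathbf P$ be the class of closed immersions $i : \mathbb F \hookrightarrow \mathbb C^n_T$ (as $T$ varies over schemes) that realize $\mathbb F$ as a rank-$r$ subbundle of the trivial bundle $\mathbb C^n_T$. Pulling back such an $i$ along a morphism $T' \to T$ yields a closed immersion $\mathbb F \times_T T' \hookrightarrow \mathbb C^n_{T'}$ that is again a rank-$r$ subbundle, so $\mathbf P$ is stable under base change. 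The associated CFG projects to $\mathsf S$ by sending the arrow $i$ to its target's base $T$, and unwinding Example \ref{E:CFG-for-stable-class} gives an equivalence with $\mathcal G(r,n)$ as defined in \S\ref{SS:Gr-CFG}. Theorem \ref{T:SCPS} then applies.

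The main obstacle is the bookkeeping in the second case: one needs to check that the cartesian-square morphisms in the CFG associated to $\mathbf P$ match the morphisms in $\mathcal G(r,n)$ given by the three-stacked pullback diagrams of \S\ref{SS:Gr-CFG}. If this packaging feels unsatisfying, one can equally well verify the prestack condition by hand: the presheaf $\mathscr{I}\!\!som_{\mathcal G(r,n)}\bigl((S,\mathbb F,i),(S,\mathbb F',i')\bigr)$ is a sub-presheaf of the representable presheaf $\underline{\operatorname{Hom}}_S(\mathbb F, \mathbb F')$, which is an étale sheaf by subcanonicity. The defining condition $i' \circ \alpha = i$ that cuts out $\mathscr{I}\!\!som$ inside $\underline{\operatorname{Hom}}_S(\mathbb F, \mathbb F')$ is itself an equality of $S$-morphisms into $\mathbb C^n_S$, hence a sheaf condition, so the subpresheaf of solutions inherits the sheaf property — in fact, because $i'$ is a monomorphism, $\alpha$ is uniquely determined by $i$ whenever it exists, which makes the descent essentially automatic.
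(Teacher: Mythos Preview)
Your argument is correct and follows the paper's approach. For $\mathcal M_g$ you match the paper exactly (just with more detail on why the class is stable). For $\mathcal G(r,n)$ the paper says only that the case ``requires a slight modification (since there is more than one arrow in the definition of the objects), but is essentially the same,'' and your direct verification via the $\mathscr{I}\!\!som$ presheaf is precisely the sort of modification intended.

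One small point worth flagging: your first attempt at $\mathcal G(r,n)$, repackaging the objects as a stable class of arrows $i : \mathbb F \hookrightarrow \mathbb C^n_T$, does not literally fit the framework of Example~\ref{E:CFG-for-stable-class} and Theorem~\ref{T:SCPS}. In that setup the CFG projects an arrow to its \emph{target}, which here is $\mathbb C^n_T$, not $T$; and ``stable under base change'' in Definition~\ref{D:StableClass} means pullback along arbitrary morphisms into the target, not along morphisms $T' \to T$ of the base. So the theorem does not apply verbatim to your class $\mathbf P$. You seem to sense this (``If this packaging feels unsatisfying\ldots''), and your fallback direct argument---that $\mathscr{I}\!\!som$ sits inside the representable sheaf $\underline{\operatorname{Hom}}_S(\mathbb F,\mathbb F')$ cut out by the sheaf-theoretic condition $i'\circ\alpha = i$---is clean and correct. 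Your final observation, that $i'$ being a monomorphism forces $\alpha$ to be unique when it exists, is a nice simplification.
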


\begin{proof}
The case of $\mathcal M_g$ follows directly from the theorem with $(\mathsf S,\mathscr T)=\mathsf S_{\operatorname{et}}$ and $\mathcal P$ the class of relative curves of genus $g$.  The case of $\mathcal G(r,n)$ requires a slight modification (since there is more than one arrow in the definition of the objects), but is essentially the same.  
\end{proof}


\subsection{Descent for categories fibered in groupoids} 
\label{S:Descent}

Recall that we can view vector bundles either as global geometric objects admitting local trivializations, or alternatively, as collections of locally trivial objects together with  transition functions, which satisfy the cocycle condition.   We now use this motivation to define the notion of descent datum in the setting of CFGs.   This is  the last definition we will need before defining a stack.

\subsubsection{Vector bundles on open subsets of a complex manifold} \label{S:vector-bundle-descent}
Let $X$ be a complex manifold.  Let $\mathsf O_X$ be the category of open sets.  Give $\mathsf O_X$  the structure of a presite in the usual way, by taking covering families to be  open covers.    Define a CFG of vector bundles 
$$
\pi:\mathcal V^r_X\to \mathsf O_X
$$
in the following way.  For each $U\subseteq X$ open, the objects in the  fiber $\mathcal V_U^r$ are the  rank $r$, holomorphic vector bundles on $U$.  Morphisms in $\mathcal V^r$ are given by pullback diagrams.  
Consider an open covering $\{U_i \subseteq  U\}$.  We use the notation $U_{ij} = U_i \cap U_j$ and $U_{ijk} = U_i \cap U_j \cap U_k$ for the double and triple overlaps.   Suppose we are given a vector bundle $E_i$  over  $U_i$ for every $i$,  and an isomorphism $\alpha_{ij} : E_i | _{U_{ij}} \to  E_j | _{U_{ij}}$ for every $i, j$,  all of which satisfy the cocycle condition $\alpha_{ik} = \alpha_{jk} \circ \alpha_{ij}$ over $U_{ijk}$.  Then there exists a vector bundle $E$ lying over  $U$,  together with isomorphisms $\alpha_i: E |_{U_i} \to  E_i$  such that $\alpha_{ij} = \alpha_j |_{U_{ij}} \circ (\alpha_i |_{U_{ij}})^{-1}$. 

This means precisely that the category fibered in groupoids of vector bundles on a topological space satisfies descent and therefore is a stack in the usual topology.  The formulation of descent and the definition of a stack axiomatize this familiar gluing process.

\subsubsection{Intuitive definition of descent} \label{S:descent}  In this section we will give a direct translation of the gluing condition for vector bundles encountered in \S\ref{S:vector-bundle-descent} in the context of categories fibered in groupoids.  While intuitive, this formulation has both technical and practical definiencies.  We correct these in \S\ref{S:descent-gluing} and \S\ref{S:descent-sieves}, but a reader looking to develop intuition about stacks may safely ignore these matters, especially in a first reading.

\begin{dfn}[Descent using gluing data] \label{D:ConcDS}
Let $\mathcal M$ be a category fibered in groupoids over a presite $(\mathsf S,\mathscr T)$ with a cleavage (Definition~\ref{D:CFG}).  A \emph{descent datum for $\mathcal M$ over a space $S$} is the following: a covering $\{S_i \to  S\}$; for every $i$, an object $X_i$  over  $S_i$; for every $i, j$ an isomorphism $\alpha_{ij} : X_i | _{S_{ij}} \to  X_j | _{S_{ij}}$  in the fiber $\mathcal M(S_{ij})$, which satisfies the cocycle condition $\alpha_{ik} = \alpha_{jk} \circ \alpha_{ij}$ over $S_{ijk}$.
The descent datum is said to be  \emph{effective} if there exists an $X$ lying over  $S$,  together with isomorphisms $\alpha_i: X |_{S_i} \to  X_i$ in the fiber such that $\alpha_{ij} = \alpha_j |_{S_{ij}} \circ (\alpha_i |_{S_{ij}})^{-1}$.
\end{dfn}

\begin{rem}
From the example in \S\ref{S:vector-bundle-descent}, we see that every descent datum for the CFG $\mathcal V^r_X\to \mathsf O_X$ is effective.  
\end{rem}

We now discuss the category of descent data, and the meaning of effective descent data in this context.  

\begin{dfn}[The category of descent data] \label{D:dd}
Let $(\mathsf S,\mathscr T)$ be a presite, and let $\pi:\mathcal M\to \mathsf S$ be a CFG.  
Let $\mathcal R=\{S_i\to S\}$ be a covering in $\mathsf S$.  An \emph{object with descent data on $\mathcal R$}, or \emph{descent datum on $\mathcal R$}, is a collection $(\{X_i\},\{\alpha_{ij}\})$ of objects $X_i\in \mathcal M(S_i)$, together with isomorphisms $\alpha_{ij}:\operatorname{pr}_2^*X_j \cong \operatorname{pr}_1^* X_i$ in $\mathcal M(S_i\times_S S_j)$, such that the following cocycle condition is satisfied:
For any triple of indices $i,j,k$, we have the equality
\begin{equation} \label{E:cc}
\operatorname{pr}_{13}^*\alpha_{ik}=\operatorname{pr}_{12}^*\alpha_{ij}\circ \operatorname{pr}_{23}^*\alpha_{jk}:\operatorname{pr}_3^*X_k\to \operatorname{pr}_1^* X_i.
\end{equation}
An \emph{arrow between objects with descent data} 
$$
\{\phi_i\}:(\{X_i\},\{\alpha_{ij}\})\to (\{X_i'\},\{\alpha_{ij}'\})
$$
is a collection of arrows $\phi_i:X_i\to X_i'$ with the property that for each pair of indices, $i,j$, the diagram 
$$
\begin{CD}
\operatorname{pr}_2^*X_j@>\operatorname{pr}_2^*\phi_j>> \operatorname{pr}_2^*X_j'\\
@V\alpha_{ij}VV @V\alpha_{ij}'VV\\
\operatorname{pr}_1^*X_i@>\operatorname{pr}_1^*\phi_i>> \operatorname{pr}_1^*X_i'\\
\end{CD}
$$
commutes.
 The objects and morphisms above determine a \emph{category of descent data}  $\mathcal M(\{ S_i \rightarrow S \})$.
\end{dfn}

\begin{rem}
We have deliberately omitted a number of canonical isomorphism from the notation here.  This obscures some technical issues (which can, of course, be resolved: see \S\ref{S:descent-gluing}).  For example, different choices of fiber products $S_{ij}$ and $S_{ijk}$ will lead to different but equivalent categories $\mathcal M(\{ S_i \rightarrow S \})$.
\end{rem}

\begin{rem} \label{R:dd}
If $\mathcal R$ is a covering, as in Definition~\ref{D:dd}, it is technically convenient to indicate the category of descent data with respect to $\mathcal R$ as a map $\mathcal R \rightarrow \mathcal M$.  This notation is consistent with the Yoneda identification between $\mathcal M(S)$ and the category of morphisms $S \rightarrow \mathcal M$, which is the special case where $\mathcal R$ is the trivial covering $\{ S \rightarrow S \}$.  In general, the notation is justified by replacing the covering with the associated sieve or simplicial object (see Section~\ref{S:MoreDesc}).

Note that whenever a cover $\mathcal R'$ refines a cover $\mathcal R$ there is an induced morphisms $\mathcal M(\mathcal R) \rightarrow \mathcal M(\mathcal R')$, which we notate as a composition:
\begin{equation*}
\mathcal R' \rightarrow \mathcal R \rightarrow \mathcal M
\end{equation*}
We identify the object $S$ with the trivial cover of itself, so that it makes sense to write $\mathcal R \rightarrow S$ for any cover $\mathcal R$ of $S$.
\end{rem}

Given $X\in \mathcal M(S)$, we can construct an object with descent data on $\{\sigma_i:S_i\to S\}$ as follows.  The objects are the pullbacks $\sigma_i^*X$; the isomorphisms $\alpha_{ij}:\operatorname{pr}_2^*\sigma_j^*X \cong \operatorname{pr}_1^* \sigma_i^*X$ are the isomorphisms that come from the fact that both  $\operatorname{pr}_2^*\sigma_j^*X$ and  $\operatorname{pr}_1^* \sigma_i^*X$ are pullbacks of $X$ to $S_i\times_S S_j$ (they are both equipped with canonical isomorphisms with $(\sigma_i\circ \operatorname{pr}_1)^*X=(\sigma_j\circ \operatorname{pr}_2)^*X$).  (If we identify  $\operatorname{pr}_2^*\sigma_j^*=(\sigma_j\circ \operatorname{pr}_2)^*$, etc., as is common, then the $\alpha_{ij}$ are identity morphisms.)
If $\phi:X\to X'$ is an arrow in $\mathcal M(S)$, then we get arrows $\sigma_i^*X \xrightarrow{\sigma_i^*(\phi)}\sigma_i^*X'$ yielding an arrow from the object with descent data associated to $X$ with the object with descent data associated to $X'$.  In short, we have defined a functor
$
\mathcal M(S)\to \mathcal M(\{ S_i \rightarrow S \})
$.

\begin{dfn}[Effectivity of descent data]  
A descent datum $( \{ X_i \}, \{ \alpha_{ij} \}) \in \mathcal M(\{ S_i \rightarrow S \})$ is said to be \emph{effective} if it is isomorphic to the image of an object of $\mathcal M(S)$.  
\end{dfn}


\subsection{The long-awaited definition of a stack} \label{S:DefStack}

We are finally ready for the definition of a stack:

\begin{dfn}[Stack] \label{D:Stack}
 A \emph{stack}  is a category fibered in groupoids (Definition \ref{D:CFG}) over a presite  $(\mathsf S,\mathscr T)$ (Definition \ref{D:Presite}) such that isomorphisms are a sheaf (Definition \ref{D:Prestack}) and every descent datum is effective (Definition \ref{D:ConcDS}).    A \emph{prestack} is a category fibered in groupoids over $(\mathsf S,\mathscr T)$ such that isomorphisms are a sheaf.
A \emph{morphism of (pre)stacks} over $(\mathsf S,\mathscr T)$  is a morphism of the underlying CFGs over $\mathsf S$.
\end{dfn}

The definition can be rephrased  in the following way, emphasizing the connection with sheaves.  

\begin{pro}[{\cite[Prop.~4.7, p.73]{FGAe}}]\label{P:StCond}
Let $\mathcal M\to \mathsf S$ be a category fibered in groupoids over a presite $(\mathsf S,\mathscr T)$.
\begin{enumerate}
\item $\mathcal M$ is a prestack over $\mathsf S$ if and only if  for each covering $\mathcal R=\{S_i\to S\}$, the functor $\mathcal M(S)\to \mathcal M(\mathcal R)$ is fully faithful.

\item  $\mathcal M$ is a stack over $\mathsf S$ if and only if for each covering $\mathcal R=\{S_i\to S\}$, the functor $\mathcal M(S)\to \mathcal M(\mathcal R)$ is an equivalence of categories.
\end{enumerate}

\end{pro}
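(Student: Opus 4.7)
The plan is to identify faithfulness, fullness, and essential surjectivity of the comparison functor $\mathcal M(S) \to \mathcal M(\mathcal R)$ with the three conditions appearing in the (pre)stack definitions, so that both parts follow by direct unpacking.

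First, I would describe the comparison functor explicitly. An object $X \in \mathcal M(S)$ is sent to the canonical descent datum $(\{X|_{S_i}\}, \{\alpha_{ij}\})$, where each $\alpha_{ij} : \operatorname{pr}_2^*(X|_{S_j}) \xrightarrow{\sim} \operatorname{pr}_1^*(X|_{S_i})$ is the canonical comparison isomorphism arising because both sides are pullbacks of $X$ to $S_{ij}$ (the cocycle condition on triple overlaps is automatic by the uniqueness in Definition \ref{D:CartMorph}). A morphism $X \to Y$ in $\mathcal M(S)$ is sent to the family of restrictions $\{\phi_i := \phi|_{S_i}\}$. Unwinding Definition \ref{D:dd}, a morphism in $\mathcal M(\mathcal R)$ between the images of $X$ and $Y$ is, after absorbing canonical identifications, exactly a family $\{\phi_i : X|_{S_i} \to Y|_{S_i}\}$ satisfying $\phi_i|_{S_{ij}} = \phi_j|_{S_{ij}}$ for all $i,j$.

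For part (1), this translation makes both directions immediate. The comparison functor is faithful if and only if any two $\phi, \psi : X \to Y$ in $\mathcal M(S)$ agreeing after restriction to each $S_i$ are equal, which is precisely separatedness of $\mathscr {I}\!\!som_{\mathcal M}(X,Y)$ (recall that in a groupoid every morphism is an isomorphism). Given faithfulness, the functor is full if and only if every compatible family $\{\phi_i\}$ arises as the restriction of a global morphism $\phi : X \to Y$, which is exactly the gluing condition for $\mathscr {I}\!\!som_{\mathcal M}(X,Y)$. Thus the functor is fully faithful for every covering $\mathcal R$ of every $S$ if and only if $\mathscr {I}\!\!som_{\mathcal M}(X,Y)$ is a sheaf for all $X,Y$, which is the prestack condition (Definition \ref{D:Prestack}).

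For part (2), one uses part (1) together with the fact that essential surjectivity of $\mathcal M(S) \to \mathcal M(\mathcal R)$ is exactly effectivity of every descent datum with respect to $\mathcal R$: by definition, a descent datum is effective when it is isomorphic in $\mathcal M(\mathcal R)$ to the image of some $X \in \mathcal M(S)$. Combining the two gives: equivalence of categories $\iff$ fully faithful plus essentially surjective $\iff$ prestack plus effectivity of descent $\iff$ stack. The only delicate step is bookkeeping the canonical isomorphisms $\operatorname{pr}_1^*(X|_{S_i}) \cong \operatorname{pr}_2^*(X|_{S_j})$ and their iterated analogues over triple overlaps that are suppressed in the shorthand $\phi_i|_{S_{ij}} = \phi_j|_{S_{ij}}$; this is entirely mechanical once a cleavage on $\mathcal M$ is fixed, but it is the one place where care is needed to see that Definition \ref{D:dd} matches the naive gluing picture of Definition \ref{D:ConcDS}.
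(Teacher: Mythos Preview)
Your proposal is correct and is the standard argument. The paper does not actually supply its own proof of this proposition; it is stated with a citation to \cite[Prop.~4.7]{FGAe} and no further argument, so there is nothing to compare against beyond noting that your unpacking of faithfulness, fullness, and essential surjectivity into separatedness, gluing, and effectivity is exactly what the cited reference does.
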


This essentially leads driectly to the following proposition.  

\begin{pro}[{\cite[Prop.~4.9, p.73]{FGAe}}]\label{P:ShSt}
Let $(\mathsf S,\mathscr T)$ be a presite, and let $\mathscr M:\mathsf S^{\mathsf {op}}\to (\mathsf {Sets})$ be a presheaf.  Denote by $\mathcal M\to \mathsf S$ the associated category fibered in groupoids.  

\begin{enumerate}
\item $\mathcal M$ is a prestack if and only if $\mathscr M$  is a separated presheaf.

\item $\mathcal M$ is a stack if and only if $\mathscr M$ is a sheaf.
\end{enumerate}
\end{pro}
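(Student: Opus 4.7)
The plan is to unwind the definitions and observe that when $\mathscr{M}$ is set-valued, the associated CFG $\mathcal M \to \mathsf S$ has essentially no internal structure in its fibers. Concretely, using the construction from Example~\ref{E:S-CFG}, an object of $\mathcal M$ over $S$ is just an element $X \in \mathscr{M}(S)$, and a morphism $(S',X') \to (S,X)$ covering $f : S' \to S$ exists precisely when $\mathscr{M}(f)(X) = X'$, in which case it is unique. In particular, each fiber $\mathcal M(S)$ is a discrete groupoid (only identity arrows), so for $X, Y \in \mathcal M(S)$ the presheaf $\mathscr{I}\!\!som_{\mathcal M}(X,Y)$ on $\mathsf S/S$ takes the singleton value on $S' \to S$ if $X\big|_{S'} = Y\big|_{S'}$ in $\mathscr{M}(S')$ and is empty otherwise.

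For part (1), I would then translate the sheaf condition for this presheaf directly: given a cover $\{S_i \to S\}$ and the collection of "local isomorphisms" (which are just tautological identities over each $S_i$ satisfying the compatibility on double overlaps automatically, when they exist), a gluing corresponds to an identity $X = Y$ in $\mathscr{M}(S)$. Thus $\mathscr{I}\!\!som_{\mathcal M}(X,Y)$ is a sheaf for every $X, Y$ if and only if the map $\mathscr{M}(S) \to \mathscr{M}(\{S_i \to S\})$ is injective for every cover, which by Lemma~\ref{L:SheafEq} is precisely the separation condition on $\mathscr{M}$.

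For part (2), assuming prestack (equivalently separation) by part (1), I would analyze descent data in $\mathcal M$ over a cover $\mathcal R = \{S_i \to S\}$. A descent datum consists of $X_i \in \mathscr{M}(S_i)$ together with isomorphisms $\alpha_{ij} : \operatorname{pr}_2^\ast X_j \cong \operatorname{pr}_1^\ast X_i$ in the fiber $\mathcal M(S_i \times_S S_j)$. Since this fiber is discrete, such an $\alpha_{ij}$ exists (and is unique) precisely when $X_i\big|_{S_{ij}} = X_j\big|_{S_{ij}}$ in $\mathscr{M}(S_{ij})$, and the cocycle condition~\eqref{E:cc} is then automatic. Effectivity of every such datum amounts exactly to the existence of $X \in \mathscr{M}(S)$ restricting to each $X_i$; together with the uniqueness supplied by separation, this is equivalence between the categories $\mathcal M(S)$ and $\mathcal M(\mathcal R)$, i.e., bijectivity of $\mathscr{M}(S) \to \mathscr{M}(\mathcal R)$ in the sense of Lemma~\ref{L:SheafEq}, which is the sheaf condition on $\mathscr{M}$.

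The only real obstacle is bookkeeping: one must observe that the canonical isomorphisms $\operatorname{pr}_i^\ast \sigma_j^\ast \cong (\sigma_j \circ \operatorname{pr}_i)^\ast$ identify all the transition data in the discrete fibers with literal equalities, so that descent data in $\mathcal M$ really do reduce to compatible families of elements of $\mathscr{M}$. Once this identification is made, both parts follow by directly reading off the equalizer characterization of (separated) presheaves from Lemma~\ref{L:SheafEq}, or equivalently from the fully faithful/equivalence criterion in Proposition~\ref{P:StCond}.
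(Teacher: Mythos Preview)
Your proposal is correct and follows essentially the same approach as the paper: the paper does not give an independent proof but simply remarks that Proposition~\ref{P:StCond} ``essentially leads directly'' to this statement, and your argument is precisely the unwinding needed to see this---observing that for a set-valued $\mathscr M$ the fibers of $\mathcal M$ are discrete, so that fully faithful/equivalence of $\mathcal M(S)\to\mathcal M(\mathcal R)$ becomes injectivity/bijectivity of $\mathscr M(S)\to\mathscr M(\mathcal R)$.
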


\begin{cor}
If $(\mathsf S,\mathscr T)$ is subcanonical, then every object in $\mathsf S$ represents a stack.  
More precisely, given an object $S$, we  associate to it the category fibered in groupoids  $\mathsf S/S$, which  is a stack over $\mathsf S$.     
\end{cor}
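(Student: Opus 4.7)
The plan is to reduce the statement directly to the two ingredients already on the table: the hypothesis that the presite is subcanonical, and Proposition~\ref{P:ShSt} (the equivalence between sheaves and stacks arising from presheaves). The definition of subcanonical says that every representable functor $h_S = \operatorname{Hom}_{\mathsf S}(-,S)$ is a sheaf with respect to $\mathscr T$. The remark on agreement of CFGs for $S$ and $h_S$ tells us that the CFG $\mathsf S/S$ coincides (as a CFG over $\mathsf S$) with the CFG associated to the presheaf $h_S$ via the construction of Example~\ref{E:S-CFG}.

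Putting these together, the argument is essentially a one-liner. First, observe that by the subcanonical hypothesis, the presheaf $h_S$ is a sheaf on $(\mathsf S, \mathscr T)$. Second, invoke Proposition~\ref{P:ShSt}(2): for any presheaf $\mathscr M$, the associated CFG $\mathcal M$ is a stack if and only if $\mathscr M$ is a sheaf. Applying this with $\mathscr M = h_S$ and $\mathcal M = \mathsf S/S$ yields that $\mathsf S/S$ is a stack.

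The only thing that really needs any verification is the identification of the CFG associated to $h_S$ with the slice category $\mathsf S/S$, but this was already noted earlier in the excerpt in the remark immediately following the definition of $\mathsf S/S$. I would include a brief reminder of this identification at the start of the proof for the reader's convenience: objects of the CFG associated to $h_S$ are elements of $h_S(S') = \operatorname{Hom}_{\mathsf S}(S', S)$, i.e.\ arrows $S' \to S$, with morphisms given by commuting triangles, matching the definition of $\mathsf S/S$ verbatim.

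I do not anticipate any genuine obstacle here; the entire content is packaged into the definitions and Proposition~\ref{P:ShSt}. If anything, the only conceptual subtlety worth flagging is that subcanonicity is exactly the hypothesis that is required and nothing more: without it, $h_S$ need not be a sheaf and then the CFG $\mathsf S/S$ would only be guaranteed to be a prestack (via Proposition~\ref{P:ShSt}(1), once one knows $h_S$ is separated, which happens in every reasonable pretopology).
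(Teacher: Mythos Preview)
Your proposal is correct and is exactly the intended argument: the paper places this corollary immediately after Proposition~\ref{P:ShSt} without further proof, relying on precisely the identification of $\mathsf S/S$ with the CFG associated to $h_S$ and the subcanonical hypothesis. One small caveat: your closing aside that $h_S$ is separated ``in every reasonable pretopology'' is not quite right in general and is in any case unnecessary for the result, so you should drop it.
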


\begin{rem}
In fact,  using the language of $2$-categories, for a subcanonical presite $(\mathsf S,\mathscr T)$, the category $\mathsf S$, and the category of sheaves  on $(\mathsf S, \mathscr T)$ can be viewed as full $2$-subcategories of the $2$-category of stacks  over $\mathsf S$.
\end{rem}

\begin{dfn}[Local class of arrows] \label{D:LocalClass} 
A class $\mathbf P$ of arrows in a presite $(\mathsf S,\mathscr T)$ is \emph{local (on the target)} if it is stable  and has the 	``converse'' property that for any cover $\{S_i\to S\}$ and any arrow $X\to S$, if  the pullbacks $S_i\times_ SX\to S_i$ are in $\mathbf P$ for all $i$, then $X\to S$ is also in $\mathbf P$.  
\end{dfn}

\begin{rem}
Fix a morphism $X \rightarrow Y$ in $\mathsf S$ and consider the collection of all $S \rightarrow Y$ such that $X \mathop{\times}_Y S \rightarrow S$ has property $\mathbf P$.  This is a category fibered in groupoids, in fact a sieve of $Y$, that we denote $\mathcal P(X \rightarrow Y)$.  Then $\mathbf P$ is a local property if and only if $\mathcal P(X \rightarrow Y)$ is a stack for all morphisms $X \rightarrow Y$. 
\end{rem}
This observation can be used to prove the following theorem:

\begin{teo}[{\cite[Thm.~4.38, p.93]{FGAe}}] \label{T:polarized-descent}   Let $S$ be a scheme.    Let $\mathbf P$ be a class of flat projective  canonically polarized morphisms of finite presentation in \emph{$(\mathsf S/S)_{_{\operatorname{fpqc}}}$}, which is local.  
Then the associated CFG  \emph{$\mathcal P\to (\mathsf S/S)_{_{\operatorname{fpqc}}}$}  (from Definition \ref{E:CFG-for-stable-class}) is a stack.  
\end{teo}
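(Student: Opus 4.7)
The plan is to verify the two conditions of Proposition \ref{P:StCond}: that $\mathcal P$ is a prestack, and that every descent datum on an fpqc cover is effective. The first condition is essentially free: since the class $\mathbf P$ is local, it is in particular stable under base change (see Definition \ref{D:LocalClass}), and since $(\mathsf S/S)_{_{\operatorname{fpqc}}}$ is subcanonical by Grothendieck's theorem quoted earlier, Theorem \ref{T:SCPS} applies directly to give that isomorphisms form a sheaf.

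The substantive content is effectivity of descent. So fix an fpqc cover $\{T_i\to T\}$ in $\mathsf S/S$ and a descent datum $(\{X_i\to T_i\},\{\alpha_{ij}\})$ in $\mathcal P$. The idea is to use the canonical polarization to translate the geometric descent problem into an fpqc descent problem for quasi-coherent sheaves, where Grothendieck's theorem applies. Concretely, each $\pi_i\colon X_i\to T_i$ comes with a canonical relatively ample line bundle $L_i$ (this is the meaning of ``canonically polarized''; for instance some fixed power of the relative dualizing sheaf for families where $\omega$ is relatively ample). Because the polarization is \emph{canonical}, the isomorphisms $\alpha_{ij}\colon \operatorname{pr}_2^*X_j\to \operatorname{pr}_1^*X_i$ over $T_{ij}=T_i\times_T T_j$ automatically carry $\operatorname{pr}_2^*L_j$ to $\operatorname{pr}_1^*L_i$, so by pushing forward one obtains descent data on the graded quasi-coherent $\mathcal O_{T_i}$-algebras
\begin{equation*}
\mathcal A_i \;=\; \bigoplus_{n\ge 0} \pi_{i*}(L_i^{\otimes n}).
\end{equation*}
By flat projective base change these pushforwards are compatible with pullback to the overlaps, so the $\mathcal A_i$ assemble to a genuine descent datum in the category of graded quasi-coherent algebras on $T$. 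Grothendieck's fpqc descent then produces a graded quasi-coherent algebra $\mathcal A$ on $T$ whose restriction to each $T_i$ recovers $\mathcal A_i$, and one sets $X=\underline{\operatorname{Proj}}_T(\mathcal A)\to T$, with its tautological polarization $\mathcal O_X(1)$.

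The main obstacle, and where care is needed, is checking that this construction actually produces an object of $\mathcal P(T)$ whose restriction to each $T_i$ reproduces the original $(X_i,L_i)$. One must verify that the formation of $\underline{\operatorname{Proj}}$ commutes with the flat base change along $T_i\to T$ (which uses that $\pi_i$ is flat and projective so that the pushforwards behave well), and that $\mathcal A$ is locally generated by $\mathcal A_1$ as an algebra in some finitely presented way, so that $X\to T$ is indeed projective of finite presentation. All remaining conditions in the definition of $\mathbf P$ (flatness, the precise projectivity/finite-presentation requirements, being canonically polarized) are then checked fppf-locally on $T$ using the cover $\{T_i\to T\}$: this is exactly the ``converse'' half of locality in Definition \ref{D:LocalClass}. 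Finally, one checks functoriality of the construction and that isomorphisms between descent data yield isomorphisms of the resulting objects, which again uses the prestack property established in the first step.
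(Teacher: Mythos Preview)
Your proposal is correct and follows essentially the standard argument from the cited reference \cite[Thm.~4.38]{FGAe}. The paper itself does not supply a proof beyond the preceding remark and the citation; your outline fills in exactly what that remark gestures at. In particular, the paper's observation that $\mathbf P$ is local if and only if each sieve $\mathcal P(X\to Y)$ is a stack is precisely what you invoke in the final step, when you check that the descended morphism $X\to T$ lies in $\mathbf P$ by verifying the conditions fpqc-locally via the cover $\{T_i\to T\}$.

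One small refinement worth making explicit: to ensure that the graded algebra $\mathcal A$ is well-behaved for taking $\underline{\operatorname{Proj}}$ (generated in degree~$1$, with the pushforwards commuting with base change in every degree), one typically replaces the canonical polarization $L_i$ by a sufficiently high tensor power $L_i^{\otimes m}$. Since the polarization is canonical, this replacement is still canonical and compatible with the descent data, so no new difficulty arises. You allude to this issue but it may be worth saying outright that the fix is to pass to a power.
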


\begin{rem}
Canonically polarized morphisms include smooth morphisms such  that the determinant of the relative cotangent bundle is relatively ample; for instance, families of smooth curves of genus $g\ge 2$.    The theorem in fact holds in more generality for  polarized morphisms, but then one must add a compatibility condition  for the polarizations, which lengthens the statement (see \cite[Thm.~4.38, p.93]{FGAe}).  The above will be sufficient for many of our applications.  
\end{rem}

\begin{cor} \label{C:Mg-stack}
The CFG $\mathcal M_g$ is  a stack in the fpqc topology (and therefore in the \'etale topology) for $g \geq 2$.
\end{cor}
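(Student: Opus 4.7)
The plan is to deduce the corollary directly from Theorem~\ref{T:polarized-descent} by recognizing $\mathcal M_g$, for $g \geq 2$, as the CFG associated to a local class of flat projective canonically polarized morphisms of finite presentation. So the main task is to verify each adjective in the hypothesis of that theorem and then to check locality in the fpqc pretopology.

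First, I would record the standard facts about a relative curve $\pi : X \to S$ of genus $g$. Smoothness of $\pi$ gives flatness and finite presentation; properness plus the ampleness of $\omega_{X/S}$ for $g \geq 2$ (argued fiberwise via the usual degree computation $\deg \omega_{X_s} = 2g-2 > 0$, then promoted to the family by cohomology and base change, or by appealing to the standard fact that a proper flat morphism whose fibers have ample invertible sheaf of bounded degree is projective) gives projectivity. Crucially, for $g \geq 2$ the relative dualizing sheaf $\omega_{X/S}$ is a canonical choice of ample line bundle, so relative curves of genus $g$ fall into the canonically polarized class required by Theorem~\ref{T:polarized-descent}. This is why the hypothesis $g \geq 2$ enters; in particular no extra data of a polarization needs to be carried, and the theorem applies verbatim without the extra compatibility condition alluded to in the remark following it.

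Next I would verify that the class $\mathbf P$ of relative curves of genus $g$ is local on the target in $(\mathsf S/S)_{_{\operatorname{fpqc}}}$, i.e.\ satisfies Definition~\ref{D:LocalClass}. Stability under base change is immediate from the pullback description of morphisms in $\mathcal M_g$ given in \S\ref{S:CFG-curves} and from the fact that smoothness, properness, connectedness of geometric fibers, and the genus of the geometric fibers are all preserved by arbitrary base change. The converse property is the substantive input: given $X \to S$ and an fpqc cover $\{S_i \to S\}$ such that each $X_{S_i} \to S_i$ is a relative curve of genus $g$, we need $X \to S$ to be one as well. Here I would invoke the standard fpqc descent statements for each individual property: flatness, smoothness, properness, and finite presentation are all fpqc local on the target, and the numerical invariants (dimension and arithmetic genus of geometric fibers) can be checked after fpqc base change since geometric points of $S$ lift to geometric points of some $S_i$. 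Connectedness of the geometric fibers is similarly inherited.

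With these two verifications in hand, Theorem~\ref{T:polarized-descent} applies and gives that $\mathcal M_g \to (\mathsf S/S)_{_{\operatorname{fpqc}}}$ is a stack; restricting to the coarser étale pretopology then yields the étale statement, since every étale cover is fpqc and the descent condition is only weakened under refinement. The main obstacle, such as it is, is purely bookkeeping: carefully justifying that each of the defining properties of a relative curve of genus $g$ is fpqc local on the target, by citing the appropriate descent results for morphisms of schemes. No new geometric input is required beyond the ampleness of $\omega_{X/S}$ that distinguishes the $g \geq 2$ case and unlocks the canonically polarized hypothesis of Theorem~\ref{T:polarized-descent}.
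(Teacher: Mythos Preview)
Your proposal is correct and follows exactly the approach the paper intends: the corollary is stated immediately after Theorem~\ref{T:polarized-descent} and the remark noting that smooth curves of genus $g\ge 2$ are canonically polarized, with no further proof given, so your argument simply unpacks the verification that the class of relative genus-$g$ curves is a local class of flat projective canonically polarized morphisms of finite presentation. The details you supply (smoothness $\Rightarrow$ flat and locally of finite presentation, ampleness of $\omega_{X/S}$ for $g\ge 2$ giving projectivity and the canonical polarization, and fpqc locality of each defining property) are precisely what the paper leaves implicit.
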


\begin{exa}
Let us make the descent condition concrete in the case of $\mathcal M_g$.  Suppose we are given an \'etale cover $\{S_i\to S\}$ and for each $S_i$ a relative curve $X_i\to S_i$.  Suppose moreover that for each $i,j$, we are given an $S_{ij}$-isomorphism 
 $\alpha_{ij} : X_i | _{S_{ij}} \to  X_j | _{S_{ij}}$, which satisfies the cocycle condition $\alpha_{ik} = \alpha_{jk} \circ \alpha_{ij}$ over $S_{ijk}$.
Then there exists a relative curve $X\to S$,  together with $S_i$-isomorphisms $\alpha_i: X |_{S_i} \to  X_i$  such that $\alpha_{ij} = \alpha_j |_{S_{ij}} \circ (\alpha_i |_{S_{ij}})^{-1}$.
\end{exa}

\begin{rem}
A similar argument can be used to show that $\mathcal G(r,n)$ is a stack.  For brevity, we omit the details  as we will also give references for Quot stacks, of which $\mathcal G(r,n)$ is a special case.  
\end{rem}

\begin{rem}
One can also show that $\mathcal M_g$ is a stack for $g=0$;   it does not follow immediately from the theorem above since the canonical bundle of such curves is not ample.  However, for $g=0$, the anti-canonical bundle is ample, and this gives a polarization that can be used in a more general formulation of the theorem \cite[Thm.~4.38]{FGAe}. 
However, $\mathcal M_1$ is not a stack!  See Section~\ref{S:Raynaud} for a demonstration and a discussion of the fix.
\end{rem}


\section{Fibered products of stacks}\label{S:2-cat}

Because of the ubiquity of base change in algebraic geometry, it is essential to know that one can take fiber products of stacks.  In this section we present a construction of the fiber product of categories fibered in groupoids, which yields a fiber product of stacks when applied to CFGs that are stacks.   

A reader who wants to get to algebraic stacks as quickly as possible may prefer to look briefly at \S \ref{S:WorkFP} and then skip the remainder of \S\ref{S:2-cat}, referring back as necessary.  
A more detailed discussion of $2$-categories and universal properties within them can be found in the Stacks Project \cite[Tags~003G, 02X8, 003O]{stacks}.

\subsection{A working definition  of $2$-fibered products}\label{S:WorkFP}  Here we give a working definition of a $2$-fibered product of CFGs.  This should suffice for understanding the definitions of an algebraic stack.  Again, on a first pass, the reader is encouraged to  look at this section, and then skip the remainder of \S \ref{S:2-cat}.  Our presentation here is taken largely from \cite{stacks}.

Recall that a fiber product of morphisms of sheaves $f : \mathscr X \rightarrow \mathscr Z$ and $g : \mathscr Y \rightarrow \mathscr Z$ is defined by setting $(\mathscr X \mathop\times_{\mathscr Z} \mathscr Y)(S) = \mathscr X(S) \mathop\times_{\mathscr Z(S)} \mathscr Y(S)$ for all $S$.  The same definition could be used for lax $2$-functors valued in groupoids, except one must first define a fiber product of groupoids.  Here there is a subtle, but crucial point, since objects of $\mathscr Z(S)$ can be `equal' to each other in more than one way.  The fiber product must therefore keep track of all of the different ways $f(X)$ and $g(Y)$ are equal to each other.  One defines $\mathscr X(S) \mathop\times_{\mathscr Z(S)} \mathscr Y(S)$ to be the category of triples $(X,Y,\alpha)$ where $X \in \mathscr X(S)$, $Y \in \mathscr Y(S)$, and $\alpha : f(X) \simeq g(Y)$ is an isomorphism.  Morphisms $(X,Y,\alpha) \rightarrow (X',Y',\alpha')$ in this groupoid are pairs $(u,v)$ with $u : X \rightarrow X'$ and $v : Y \rightarrow Y'$ are morphisms such that $\alpha' f(u) = f(v) \alpha$ as morphisms $f(X) \rightarrow g(Y')$.

\begin{rem}
This construction is analogous to one construction of the homotopy fiber product in algebraic topology, with isomorphism playing the role of homotopy.  This is not an accident, as homotopy fiber products are intended to be invariant under replacement of the spaces involved with homotopy equivalent spaces; the fiber product of groupoids is intended to be invariant under equivalence of categories.  Even more directly, groupoids can be realized as topological spaces by way of the geometric representation of the simplicial nerve, 
under which the fiber product of groupoids is transformed into the homotopy fiber product.
\end{rem}

Since we are not defining stacks in terms of lax $2$-functors here, we make the straightforward translation of the above idea to categories fibered in groupoids:

\begin{dfn}[{\cite[Tag~0040]{stacks}}]\label{D:2-product-categories-over-C} 
Let $f : \mathcal X \to \mathcal Z$ and $g : \mathcal Y \to \mathcal Z$ be morphisms of CFGs over $\mathsf S$.  Then the \emph{fiber product} of $\mathcal X$ and $\mathcal Y$ over $\mathcal Z$, 
denoted $\mathcal X \mathop\times_{\mathcal Z} \mathcal Y$, is the category over $\mathsf S$ whose objects are quadruples $(S, X, Y, \alpha)$ where $S \in \mathsf S$, $X \in \mathcal X(S)$, $Y \in \mathcal Y(S)$, and $\alpha : f(X) \simeq g(Y)$ is an isomorphism in $\mathcal Z(S)$.  A morphism $(S,X,Y,\alpha) \rightarrow (S',X',Y',\alpha')$ is triple $(\varphi, u, v)$ where $\varphi : S \rightarrow S'$ is a morphism in $\mathsf S$, $u : X \rightarrow X'$ is a morphism in $\mathcal X$ lying above $\varphi$, and $v : Y \rightarrow Y'$ is a morphism in $\mathcal Y$ lying above $\varphi$, and $\alpha' f(u) = g(v) \alpha$. 
\end{dfn}

\begin{lem}[{\cite[Tag~0040]{stacks}}]
The fiber product of CFGs is a CFG.
\end{lem}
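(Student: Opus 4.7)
The plan is to verify directly from Definition~\ref{D:CartMorph} and Definition~\ref{D:CFG} that the category $\mathcal X \mathop\times_{\mathcal Z} \mathcal Y$ with its projection to $\mathsf S$ is fibered, and that each fiber is a groupoid. The two key inputs are that $\mathcal X$, $\mathcal Y$, $\mathcal Z$ are themselves CFGs over $\mathsf S$, and that the functors $f$ and $g$, being morphisms of CFGs, send cartesian morphisms to cartesian morphisms (this is immediate from the universal property once one observes that a morphism in a CFG is cartesian if and only if it lies over some morphism downstairs and induces the expected fiber).

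First I would construct pullbacks. Fix an object $\xi = (S, X, Y, \alpha)$ and a morphism $\varphi : S' \to S$ in $\mathsf S$. Using that $\mathcal X$ and $\mathcal Y$ are fibered, choose cartesian lifts $\tilde u : X' \to X$ and $\tilde v : Y' \to Y$ over $\varphi$. Then $f(\tilde u)$ and $g(\tilde v)$ are cartesian in $\mathcal Z$ over $\varphi$. Since $\mathcal Z(S')$ is a groupoid and both $f(X')$ and $g(Y')$ are pullbacks of $f(X) \cong g(Y)$ along $\varphi$, the universal property of $g(\tilde v)$ yields a unique $\alpha' : f(X') \to g(Y')$ lying over $\mathrm{id}_{S'}$ with $g(\tilde v) \circ \alpha' = \alpha \circ f(\tilde u)$; a symmetric argument, or the fact that $\alpha' \in \mathcal Z(S')$ and all morphisms in this groupoid are isomorphisms, shows $\alpha'$ is an isomorphism. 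Setting $\xi' = (S', X', Y', \alpha')$, the triple $(\varphi, \tilde u, \tilde v) : \xi' \to \xi$ is a well-formed morphism in $\mathcal X \mathop\times_{\mathcal Z} \mathcal Y$.

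Next I would check that this morphism is cartesian. Given any morphism $(\psi, w, z) : (T, X'', Y'', \alpha'') \to \xi$ and any factorization $\psi = \varphi \circ h$ in $\mathsf S$, the cartesian property of $\tilde u$ and $\tilde v$ produces unique lifts $\tilde w : X'' \to X'$ and $\tilde z : Y'' \to Y'$ over $h$ with $\tilde u \tilde w = w$ and $\tilde v \tilde z = z$. The required compatibility $\alpha' f(\tilde w) = g(\tilde z) \alpha''$ follows by chasing the equalities $\alpha'' : f(X'') \simeq g(Y'')$, $\alpha \circ f(w) = g(z) \circ \alpha''$, and $g(\tilde v) \alpha' = \alpha f(\tilde u)$, then invoking the cartesian property of $g(\tilde v)$ (which implies that the two morphisms $f(X'') \to g(Y')$ agreeing after composition with $g(\tilde v)$ must coincide). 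This gives the required unique lift $(h, \tilde w, \tilde z)$.

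Finally, to see each fiber is a groupoid, take a morphism $(\mathrm{id}_S, u, v)$ in $(\mathcal X \mathop\times_{\mathcal Z} \mathcal Y)(S)$. Since $\mathcal X(S)$ and $\mathcal Y(S)$ are groupoids, $u$ and $v$ are invertible in their respective fibers; the compatibility relation $\alpha' f(u) = g(v) \alpha$ rearranges to $\alpha f(u^{-1}) = g(v^{-1}) \alpha'$, so $(\mathrm{id}_S, u^{-1}, v^{-1})$ is a valid inverse morphism. The only real obstacle is bookkeeping — threading the isomorphisms $\alpha, \alpha', \alpha''$ through the universal properties of cartesian arrows — but no new ideas beyond the definitions are required.
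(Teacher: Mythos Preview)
The paper does not supply its own proof of this lemma; it merely cites \cite[Tag~0040]{stacks}. Your direct verification from Definitions~\ref{D:CartMorph} and~\ref{D:CFG} is correct and is the standard argument one would find by unwinding that reference. One small clarification: the cleanest reason that $f(\tilde u)$ and $g(\tilde v)$ are cartesian in $\mathcal Z$ is simply that \emph{every} morphism in a CFG is cartesian (this follows immediately from the fact that the fibers are groupoids), so nothing special about $f$ and $g$ beyond commuting with the projections to $\mathsf S$ is needed; your parenthetical remark gestures at this but could be stated more directly.
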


\begin{lem}[{\cite[(3.3), p.16]{LMB}}]\label{L:FPS}
The fibered product of stacks is a stack.  
\end{lem}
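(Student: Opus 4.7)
The strategy is to reduce the two stack axioms for $\mathcal X \mathop\times_{\mathcal Z} \mathcal Y$ directly to the corresponding axioms for $\mathcal X$, $\mathcal Y$, and $\mathcal Z$. We already know from the previous lemma that the fibered product is a CFG, so by Proposition \ref{P:StCond} it suffices to verify that (i) isomorphisms form a sheaf (prestack condition) and (ii) every descent datum is effective.

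For (i), fix objects $(S, X, Y, \alpha)$ and $(S, X', Y', \alpha')$ of $\mathcal X \mathop\times_{\mathcal Z} \mathcal Y$ over $S$. Unwinding Definition \ref{D:2-product-categories-over-C}, an element of $\mathscr{I}\!\!som_{\mathcal X \times_{\mathcal Z} \mathcal Y}((X,Y,\alpha),(X',Y',\alpha'))$ on $T \to S$ is a pair $(u,v) \in \mathscr{I}\!\!som_{\mathcal X}(X,X')(T) \times \mathscr{I}\!\!som_{\mathcal Y}(Y,Y')(T)$ such that $\alpha' \circ f(u) = g(v) \circ \alpha$. In other words, this Isom presheaf is the fibered product in presheaves
\[
\mathscr{I}\!\!som_{\mathcal X}(X,X') \mathop\times_{\mathscr{I}\!\!som_{\mathcal Z}(f(X),g(Y'))} \mathscr{I}\!\!som_{\mathcal Y}(Y,Y'),
\]
where the two maps are $u \mapsto \alpha' \circ f(u)$ and $v \mapsto g(v) \circ \alpha$. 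Since $\mathcal X$, $\mathcal Y$, and $\mathcal Z$ are stacks, the three factors are sheaves, and sheaves are closed under fibered products (being a limit construction); hence the Isom presheaf of the fibered product is a sheaf.

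For (ii), let $\mathcal R = \{S_i \to S\}$ be a covering and let
\[
\bigl( \{(X_i, Y_i, \alpha_i)\}_i, \{\beta_{ij}\}_{ij}\bigr)
\]
be a descent datum in $\mathcal X \mathop\times_{\mathcal Z} \mathcal Y$. Unwinding Definition \ref{D:2-product-categories-over-C} again, each $\beta_{ij}$ is a pair $(u_{ij}, v_{ij})$ consisting of isomorphisms $u_{ij} : \operatorname{pr}_2^* X_j \to \operatorname{pr}_1^* X_i$ in $\mathcal X(S_{ij})$ and $v_{ij} : \operatorname{pr}_2^* Y_j \to \operatorname{pr}_1^* Y_i$ in $\mathcal Y(S_{ij})$ satisfying the compatibility $\operatorname{pr}_1^*(\alpha_i) \circ f(u_{ij}) = g(v_{ij}) \circ \operatorname{pr}_2^*(\alpha_j)$, and the cocycle condition \eqref{E:cc} for $\beta_{ij}$ is equivalent to the cocycle conditions for the $u_{ij}$ and $v_{ij}$ separately. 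Hence we have extracted descent data $(\{X_i\}, \{u_{ij}\})$ in $\mathcal X$ and $(\{Y_i\}, \{v_{ij}\})$ in $\mathcal Y$; by the stack hypothesis on $\mathcal X$ and $\mathcal Y$ they are effective, yielding $X \in \mathcal X(S)$ and $Y \in \mathcal Y(S)$ with canonical isomorphisms $X|_{S_i} \cong X_i$ and $Y|_{S_i} \cong Y_i$ compatible with the $u_{ij}$ and $v_{ij}$.

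It remains to glue the $\alpha_i$ to produce an isomorphism $\alpha : f(X) \to g(Y)$ in $\mathcal Z(S)$. Applying $f$ and $g$ to the descent data above yields descent data in $\mathcal Z$ representing $f(X)$ and $g(Y)$, and the compatibility $\operatorname{pr}_1^*(\alpha_i) \circ f(u_{ij}) = g(v_{ij}) \circ \operatorname{pr}_2^*(\alpha_j)$ says precisely that the collection $\{\alpha_i\}$ is a morphism of descent data in $\mathcal Z$. Since $\mathcal Z$ is in particular a prestack, $\mathscr{I}\!\!som_{\mathcal Z}(f(X), g(Y))$ is a sheaf, so the $\alpha_i$ glue uniquely to an isomorphism $\alpha : f(X) \to g(Y)$ restricting to $\alpha_i$ on each $S_i$. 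The triple $(X, Y, \alpha)$ is then an object of $\mathcal X \mathop\times_{\mathcal Z} \mathcal Y$ over $S$ restricting to the original descent datum, proving effectivity.

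The main technical nuisance will be bookkeeping the omitted canonical isomorphisms (such as $\operatorname{pr}_2^* f = f \circ \operatorname{pr}_2^*$) in the translation between a descent datum for $\mathcal X \mathop\times_{\mathcal Z} \mathcal Y$ and the pair of descent data plus the gluing morphism in $\mathcal Z$; everything else is formal once the sheaf and effectivity axioms for $\mathcal X, \mathcal Y, \mathcal Z$ are invoked.
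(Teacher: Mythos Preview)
Your proof is correct and is the standard argument. The paper does not actually supply a proof for this lemma; it simply states the result with a citation to \cite[(3.3), p.16]{LMB}, so there is nothing to compare against. Your decomposition---identifying the $\mathscr{I}\!\!som$ presheaf of the fibered product as a fibered product of $\mathscr{I}\!\!som$ sheaves for part (i), and unwinding a descent datum into separate descent data for $\mathcal X$ and $\mathcal Y$ together with a compatible family of isomorphisms in $\mathcal Z$ for part (ii)---is exactly the expected proof, and your use of the prestack condition on $\mathcal Z$ to glue the $\alpha_i$ is the right final step.
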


The $2$-fibered product of CFGs $\mathcal X\times_{\mathcal Z}\mathcal Y$ has a universal property similar to that satisfied by a fiber product of sheaves.  Indeed, there are forgetful  morphisms $p:\mathcal X\times_{\mathcal Z}\mathcal Y\to \mathcal X$ and $q:\mathcal X\times_{\mathcal Z}\mathcal Y\to \mathcal Y$, respectively sending $(S,X,Y,\alpha)$ to $X$ and to $Y$.  Then $\alpha$ gives an isomorphism $f p(S,X,Y,\alpha) = f(X) \simeq g(Y) = g q(S,X,Y,\alpha)$.  By the definition of morphisms in $\mathcal X \mathop\times_{\mathcal Z} \mathcal Y$, this isomorphism is natural in $(S,X,Y,\alpha)$.  We denote this natural isomorphism $\psi : f p \simeq g q$.  In standard terminology, the following diagram is \emph{$2$-commutative}:
$$
\xymatrix{
& \mathcal X \times_{\mathcal Z} \mathcal Y \ar[r]^{p} \ar[d]_q & \mathcal X \ar[d]^{f} \ar@{=>}[ld]_\psi \\
& \mathcal Y \ar[r]_{g} & \mathcal Z. }
$$
The universal property of the $2$-fibered product is that $(\mathcal X \mathop\times_{\mathcal Z} \mathcal Y, p, q, \psi)$ is the universal completion of the diagram of solid arrows below to a $2$-commutative diagram
\begin{equation} \label{E:2-comm} \vcenter{
\xymatrix{
& \mathcal W \ar@{-->}[r]^{p} \ar@{-->}[d]_q & \mathcal X \ar[d]^{f} \ar@{==>}[ld]_\psi \\
& \mathcal Y \ar[r]_{g} & \mathcal Z. }
} \end{equation}
In other words, given a $2$-commutative diagram~\eqref{E:2-comm}, there is a 
 $2$-commutative diagram:
$$
\xymatrix{
\mathcal W\ar@{->}@/^1 pc/ [rrd]^a \ar@{-->}[rd]_\gamma \ar@/_1 pc/ [rdd]_b  & \\
 & \mathcal X \times_{\mathcal Z} \mathcal Y \ar[r]_-p \ar[d]_q & \mathcal X \ar[d]^{f} \\
 & \mathcal Y \ar[r]^{g} & \mathcal Z }
$$
Note that the $2$-commutativity of this diagram includes the tacit specification of natural isomorphisms $p \gamma \simeq a$ and $q \gamma \simeq b$.  The functor $\gamma$ is determined by these data uniquely up to a unique natural transformation.  Using this one can show  that if $\mathcal W$ also satisfies the universal property of $\mathcal X \mathop\times_{\mathcal Z} \mathcal Y$ then $\gamma : \mathcal W \rightarrow \mathcal X \mathop\times_{\mathcal Z} \mathcal Y$ is an equivalence.

\begin{exa}
Suppose that $X,Y,Z\in \mathsf S$ and $\mathsf S$ has fibered products.  Then it follows from the $2$-Yoneda Lemma that  $\mathsf S/X\times_{\mathsf S/Z} \mathsf S/Y$ is equivalent to  $\mathsf S/(X\times_ZY)$.  
Similarly, suppose that $\mathscr X,\mathscr Y,\mathscr Z$ are pre-sheaves on $\mathsf S$ with associated CFGs $\mathcal X,\mathcal  Y,\mathcal Z$.  Then $\mathcal X\times_{\mathcal Z} \mathcal Y$ is equivalent  to the CFG associated to $\mathscr  X\times_{\mathscr Z}\mathscr Y$.   
\end{exa}

\subsection{The diagonal}
\label{S:diagonal}

The following example is used repeatedly.  

\begin{exa}[{The diagonal and the sheaf of isomorphisms}] \label{E:diagonal}
Let $\mathcal M$ be an $S$-stack over $\mathsf S$, let $S$ be in $\mathsf S$, and let $X,Y$ in $\mathcal M(S)$ be two objects corresponding under the $2$-Yoneda Lemma  to $S$-morphisms $X,Y:S\to \mathcal M$.    Then, using the notation from \S \ref{S:isomPr},  there is $2$-cartesian diagram
$$
\xymatrix{
\mathscr I\!\!\mathit{som}_{\mathcal M}(X,Y) \ar[r]\ar[d]^{} & S \ar[d]^{(X, Y)} \\
\mathcal M\ar[r]^<>(0.5){\Delta} & \displaystyle \mathcal M\mathop\times\mathcal M.
}
$$
We will verify this by way of the universal property \eqref{E:2-comm} (see also \cite[Prop.~5.12]{DMstacks}).  Suppose that we have a map $f : T \rightarrow S$, an object $Z \in \mathcal M(T)$, and an isomorphism
\begin{equation*}
(\varphi, \psi) : (Z,Z) = \Delta(Z) \simeq f^\ast (X,Y) = (f^\ast X, f^\ast Y),  \  \ \text{in} \  \mathcal (M\times \mathcal M)(Z).
\end{equation*}
Then the composition $\psi \circ \varphi^{-1}$ is an isomorphism $f^\ast X \simeq f^\ast Y$, hence yields a section of $\mathscr I\!\!\mathit{som}_{\mathcal M}(X,Y)$ over $T$.  Conversely, given such an isomorphism $\alpha : f^\ast X \simeq f^\ast Y$, we obtain $2$-commutative diagram by taking $Z = (f^\ast X, f^\ast X)$ and $(\varphi, \psi) = (\mathrm{id}_{f^\ast X}, \alpha)$.

Intuitively, $\mathscr I\!\!\mathit{som}_{\mathcal M}(X,Y)$ is the sheaf of witnesses to the equality of $X$ and $Y$, in the same way that the diagonal is the moduli space of pairs of objects of $\mathcal M$ that are equal to one another.  Notably, $\mathscr I\!\!\mathit{som}_{\mathcal M}(X,Y)$ is not a subobject of $S$, reflecting the fact that $\Delta$ is not an embedding.  This is because a pair of objects of a groupoid can be equal---that is, isomorphic---to each other in more than one way.
\end{exa}

From the perspective of moduli, the isomorphisms of the objects of study were of central importance.  On the other hand, 
the diagonal map is central in the definition of many properties of schemes.
  The diagram above  relates the two notions.

\begin{dfn}[Injective morphism of  stacks] \label{D:MonIso}
A morphism $f:\mathcal X\to \mathcal Y$ of  stacks over $\mathsf S$ is called \emph{injective} (resp.~an \emph{isomorphism}) 
 if for each $S\in \mathsf S$, the functor $f(S):\mathcal X(S)\to \mathcal Y(S)$ is fully faithful (resp.~an equivalence of categories).    A \emph{substack} is an injective morphism of stacks.
\end{dfn}

\begin{lem}\label{L:InjDiag}
	A stack $\mathcal X$ has injective  diagonal if and only if $\mathcal X$ is representable by a sheaf (i.e., equivalent to the stack associated to a sheaf). 
	
\end{lem}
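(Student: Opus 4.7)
The plan is to unpack the injectivity condition on $\Delta$ fiberwise and show it is equivalent to each fiber $\mathcal X(S)$ being equivalent to a discrete groupoid (that is, to a set), from which both directions of the biconditional drop out easily.

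First I will make explicit what ``injective diagonal'' means. By Definition~\ref{D:MonIso}, it says that $\Delta(S)\colon \mathcal X(S)\to (\mathcal X\times \mathcal X)(S)\simeq \mathcal X(S)\times \mathcal X(S)$ is fully faithful for every $S$. Unwinding, for $X,Y\in \mathcal X(S)$ the induced map $\operatorname{Hom}_{\mathcal X(S)}(X,Y)\to \operatorname{Hom}(X,Y)\times \operatorname{Hom}(X,Y)$, $u\mapsto (u,u)$, is a bijection if and only if $\operatorname{Hom}_{\mathcal X(S)}(X,Y)$ has at most one element. Equivalently, using the $2$-cartesian square of Example~\ref{E:diagonal}, the sheaf $\mathscr I\!\!\mathit{som}_{\mathcal X}(X,Y)\to S$ is a monomorphism for every $X,Y$. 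In groupoid terms, the condition is precisely that every fiber $\mathcal X(S)$ is equivalent to the discrete category on its set of isomorphism classes.

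Next, for the ``only if'' direction, I will define the presheaf
\[
\mathscr X\colon \mathsf S^{\mathsf{op}}\to (\mathsf{Set}),\qquad \mathscr X(S) := \operatorname{Ob}(\mathcal X(S))/\!\cong,
\]
together with the tautological morphism $\mathcal X\to \mathscr X$ sending an object to its isomorphism class; the discreteness established in the previous paragraph immediately implies this is an equivalence of CFGs. The real content is to check that $\mathscr X$ is a sheaf, after which Proposition~\ref{P:ShSt} finishes the job. Given a cover $\{S_i\to S\}$ and classes $x_i\in \mathscr X(S_i)$ agreeing on the overlaps, choose representatives $X_i\in \mathcal X(S_i)$; agreement on $S_{ij}$ means the set of isomorphisms $X_i|_{S_{ij}}\to X_j|_{S_{ij}}$ is nonempty, and by discreteness it has a unique element $\alpha_{ij}$. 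The cocycle condition $\alpha_{ik}=\alpha_{jk}\circ \alpha_{ij}$ over $S_{ijk}$ is then automatic, since both sides are parallel arrows whose target hom-set has at most one element. Effectivity of descent for the stack $\mathcal X$ produces an $X\in \mathcal X(S)$ restricting to the $X_i$, whose class in $\mathscr X(S)$ is the desired lift; uniqueness of the lift follows by gluing the (unique) local isomorphisms using that isomorphisms are a sheaf.

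The converse is immediate: if $\mathcal X$ is equivalent to the CFG associated to a sheaf $\mathscr X$, each fiber $\mathcal X(S)\simeq \mathscr X(S)$ is discrete, every hom-set has at most one element, and hence $\Delta(S)$ is fully faithful by the first paragraph. The main obstacle is precisely the sheaf verification for $\mathscr X$ in the third paragraph; the point I want to emphasize is that the ``setoid'' condition supplied by the injectivity of $\Delta$ is exactly what collapses the cocycle data and the uniqueness of descent for $\mathcal X$ onto the much simpler gluing axiom for a presheaf of sets, so that no extra compatibilities need be tracked.
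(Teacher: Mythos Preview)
Your proof is correct and follows essentially the same approach as the paper: both translate ``injective diagonal'' into the statement that each hom-set $\operatorname{Hom}_{\mathcal X(S)}(X,Y)$ has at most one element, so that every fiber $\mathcal X(S)$ is equivalent to a set. The paper's proof stops there, leaving the sheaf verification for the presheaf of isomorphism classes implicit (it follows from Proposition~\ref{P:ShSt} once one knows each fiber is equivalent to a set and $\mathcal X$ is already a stack); you spell this out carefully, which is a useful elaboration but not a genuinely different route.
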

\begin{proof}
	Injective diagonal means that $\mathscr I\!\!\mathit{som}_{\mathcal X}(x,y)$ is a subobject of $S$, or, equivalently, that for any $x,y \in \mathcal X(S)$ there is at most one isomorphism between $x$ and $y$.  Thus $\mathcal X(S)$ is equivalent to a set.
\end{proof}

\subsection{Fibered products and the stack condition}

As an application of the formalism of fiber products introduced above, we give a simple but often useful criterion for a CFG over a presite to be stack, which is simply a translation of Proposition \ref{P:StCond} (see also Definition \ref{D:descent-sieves}).

\begin{lem} \label{L:StCond} 

 Let $\pi:\mathcal M\to \mathsf S$ be a CFG over a presite $(\mathsf S,\mathscr T)$.

\begin{enumerate}

\item $\mathcal M$ is a prestack  if and only for every covering family $\mathcal R = \{S_i\to S\}$,  every morphism $f:\mathcal R\to \mathcal M$ (see Remark~\ref{R:dd}), and every pair of morphisms $f_1,f_2:S\to \mathcal M$ making diagram~\eqref{E:prestack} $2$-commutative,  there is a unique $2$-isomorphism $f_1\Rightarrow f_2$. 
\begin{equation} \label{E:prestack}
\begin{gathered}
\xymatrix{
\mathcal R \ar[r]^f \ar[d] & \mathcal M \ar[d]^\pi\\
 \mathsf S/S \ar@{-->}@/^.5 pc/[ru]^<>(0.5){}^{f_1} \ar@{-->}@/_.5 pc/[ru]^<>(0.5){}_{f_2} \ar[r] &\mathsf S
}
\end{gathered}
\end{equation}

\item $\mathcal M$ is a stack  if and only for every sieve $\mathcal R$ associated to a covering family $\{S_i\to S\}$, with natural map $\mathcal R\to S$,  any morphism $f:\mathcal R\to \mathcal M$, there exists a morphism $f_1:S\to \mathcal M$ making the diagram below $2$-commutative, which is unique up to $2$-isomorphism (as explained in (1)).
\begin{equation} \label{E:stack}
\begin{gathered}
\xymatrix{
\mathcal R \ar[r]^f \ar[d] & \mathcal M \ar[d]^\pi\\
 \mathsf S/S \ar@{-->}[ru]^{f_1}_{\exists !} \ar[r] &\mathsf S
}
\end{gathered}
\end{equation}
\end{enumerate}
\end{lem}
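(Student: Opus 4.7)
The plan is to deduce both parts directly from Proposition \ref{P:StCond} by translating the stated conditions via the 2-Yoneda Lemma (Lemma \ref{L:2-Yon}). The key preliminary step is to fix the dictionary between the two sides: objects $f_i : S \to \mathcal M$ and 2-morphisms between them correspond to objects and morphisms in the groupoid $\mathcal M(S)$; and by Remark \ref{R:dd}, morphisms $f : \mathcal R \to \mathcal M$ together with 2-morphisms between them correspond to objects and morphisms in the category $\mathcal M(\mathcal R)$ of descent data. Under this dictionary, postcomposition of the canonical map $\mathcal R \to S$ with $f_i$ corresponds to applying the restriction functor $\mathcal M(S) \to \mathcal M(\mathcal R)$ to the object $f_i$.

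With this translation in hand, a 2-commutative triangle consisting of $f : \mathcal R \to \mathcal M$ and a diagonal filling $f_i : S \to \mathcal M$ is the same data as an isomorphism $\phi_i : f_i\big|_\mathcal R \cong f$ in $\mathcal M(\mathcal R)$. A 2-isomorphism $\alpha : f_1 \Rightarrow f_2$ compatible with this 2-commutativity data is a morphism in the groupoid $\mathcal M(S)$ whose image under restriction equals $\phi_2^{-1} \circ \phi_1$. Thus the assertion in (1)---that such an $\alpha$ exists and is unique for every choice of $(f,f_1,f_2,\phi_1,\phi_2)$---is equivalent to asserting that for every pair $f_1, f_2 \in \mathcal M(S)$ and every isomorphism in $\mathcal M(\mathcal R)$ between their restrictions, there is a unique morphism in $\mathcal M(S)$ restricting to it. Since $\mathcal M(S)$ is a groupoid, this is exactly the condition that $\mathcal M(S) \to \mathcal M(\mathcal R)$ is fully faithful, which by Proposition \ref{P:StCond}(1) characterizes prestacks.

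For (2), the existence clause---that every $f : \mathcal R \to \mathcal M$ admits a lift $f_1 : S \to \mathcal M$ with $f_1\big|_\mathcal R \cong f$---translates under the same dictionary to the assertion that every object of $\mathcal M(\mathcal R)$ is isomorphic to the restriction of an object of $\mathcal M(S)$, i.e., to essential surjectivity of the restriction functor. Combining this with the full faithfulness from (1) (which, as the parenthetical in the statement of (2) suggests, supplies the uniqueness clause) yields an equivalence $\mathcal M(S) \to \mathcal M(\mathcal R)$, and Proposition \ref{P:StCond}(2) then gives the stack condition.

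The one point requiring care---rather than a true obstacle---is the mild asymmetry between parts (1) and (2): the source of $f$ in (2) is the sieve generated by the covering family $\{S_i \to S\}$ rather than the family itself. One must therefore check that a morphism of CFGs from this sieve to $\mathcal M$ is the same data, up to canonical equivalence, as an object of the category $\mathcal M(\mathcal R)$ of descent data on the cover. This is a formal consequence of the CFG structure: the sieve is a full subcategory of $\mathsf S/S$ containing the $S_i \to S$ and the fiber products $S_i \times_S S_j$, and any morphism into $\mathcal M$ from it is determined up to unique isomorphism by its values on the $S_i$ together with the induced gluing isomorphisms on the overlaps, precisely the data of a descent datum.
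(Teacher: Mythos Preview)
Your proposal is correct and follows exactly the approach the paper indicates: the paper introduces this lemma as ``simply a translation of Proposition~\ref{P:StCond}'' and gives no further proof, while you carry out that translation explicitly via the 2-Yoneda lemma and the dictionary of Remark~\ref{R:dd}. Your treatment of the sieve-versus-covering-family asymmetry between parts (1) and (2) is a useful clarification that the paper leaves implicit (cf.\ the paper's appendix \S\ref{S:descent-sieves}).
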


The lemma can be summarized with the following efficient characterization of stacks:
\begin{cor} \label{C:sieve-stack}
Let $\mathcal M$ be a category fibered in groupoids over $\mathsf S$.  Then $\mathcal M$ is a stack over $\mathsf S$ if and only if for every object $S$ of $\mathsf S$ and every cover $\mathcal R$ of $S$, the functor
\begin{equation*}
\operatorname{Hom}(S, \mathcal M) \rightarrow \operatorname{Hom}(\mathcal R, \mathcal M)
\end{equation*}
is an equivalence of categories.
\end{cor}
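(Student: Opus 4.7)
The plan is to recognize this corollary as a direct repackaging of Lemma \ref{L:StCond} together with the 2-Yoneda identification $\operatorname{Hom}(S,\mathcal M)\simeq \mathcal M(S)$. The key observation I would establish first is that, under the conventions introduced in Remark \ref{R:dd}, the category $\operatorname{Hom}(\mathcal R,\mathcal M)$ of morphisms from the sieve (or covering family) $\mathcal R$ into $\mathcal M$ is naturally equivalent to the category $\mathcal M(\mathcal R)$ of descent data: an object of $\operatorname{Hom}(\mathcal R,\mathcal M)$ is a compatible assignment, to each arrow $T\to S$ factoring through some $S_i\to S$, of an object of $\mathcal M(T)$, which by restriction to the $S_i$ and their fiber products exactly records the data $(\{X_i\},\{\alpha_{ij}\})$ of Definition \ref{D:dd}, and the cocycle condition \eqref{E:cc} is forced by functoriality.

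Once this identification is in hand, the functor $\operatorname{Hom}(S,\mathcal M)\to \operatorname{Hom}(\mathcal R,\mathcal M)$ is (up to equivalence) the restriction functor $\mathcal M(S)\to \mathcal M(\mathcal R)$ appearing in Proposition \ref{P:StCond}. I would then apply Proposition \ref{P:StCond} directly: the functor is fully faithful for every cover if and only if $\mathcal M$ is a prestack, and it is an equivalence for every cover if and only if $\mathcal M$ is a stack. This gives both directions of the biconditional at once.

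For the forward direction, assuming $\mathcal M$ is a stack, fully faithfulness comes from uniqueness of gluings of isomorphisms (the prestack/sheaf-of-isomorphisms condition applied as in part (1) of Lemma \ref{L:StCond}), and essential surjectivity comes from effectivity of descent data (part (2) of Lemma \ref{L:StCond}). For the reverse direction, assuming the functor is an equivalence for every cover, full faithfulness recovers the sheaf condition on $\mathscr{I}\!\!\mathit{som}_{\mathcal M}(X,Y)$: given locally defined isomorphisms $\alpha_i$ agreeing on overlaps, they assemble into a morphism $\mathcal R\to \mathcal M$ lifting two given morphisms $S\to \mathcal M$, and full faithfulness supplies the unique global isomorphism. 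Essential surjectivity gives effectivity of any descent datum by lifting the corresponding morphism $\mathcal R\to \mathcal M$ to a morphism $S\to \mathcal M$.

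I expect no serious obstacle: the content is entirely bookkeeping, and the only mildly delicate point is verifying the equivalence $\operatorname{Hom}(\mathcal R,\mathcal M)\simeq \mathcal M(\mathcal R)$, which is a standard manipulation with sieves that has been tacitly built into the notation of Remark \ref{R:dd}. With that equivalence recorded, the corollary is essentially a restatement of Proposition \ref{P:StCond}, and the proof reduces to citing it.
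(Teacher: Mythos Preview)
Your proposal is correct and follows essentially the same approach as the paper: the paper's proof is a one-line citation of Lemma~\ref{L:StCond}, identifying its part~(1) with full faithfulness and part~(2) with essential surjectivity of the functor in question. Your version is more explicit about the identifications $\operatorname{Hom}(S,\mathcal M)\simeq\mathcal M(S)$ and $\operatorname{Hom}(\mathcal R,\mathcal M)\simeq\mathcal M(\mathcal R)$ and also invokes Proposition~\ref{P:StCond}, but the substance is identical.
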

\begin{proof}
The first part of Lemma~\ref{L:StCond} is the full faithfulness, and the second part is the essential surjectivity.
\end{proof}

\begin{rem}
Recall that the notation in the corollary is based on the abuse explained in Remark~\ref{R:dd}.  The statement requires no such abuse when formulated with sieves (see~\ref{S:descent-sieves}), the efficiency of which is one reason we like using sieves to think about Grothendieck topologies.  For example, Corollary~\ref{C:sieve-stack} generalizes immediately to give a definition of higher stacks, while the other formulations of descent from \S\ref{S:Descent} become even more combinatorially complicated.
\end{rem}

The following proposition is quite useful for making boot-strap arguments.  It allows us to show that a CFG is a stack by showing it is a stack \emph{relative} to a CFG already known to be a stack.

\begin{pro} \label{P:DescRel}
Suppose that $p : \mathcal X \rightarrow \mathcal Y$ is a morphism of CFGs over $\mathsf S$ and that $\mathcal Y$ is a stack over $\mathsf S$.  Then $\mathcal X$ is a stack over $\mathsf S$ if and only if it is a stack over $\mathcal Y$, where $\mathcal Y$ is given the pretopology of Example~\ref{E:top-on-CFG}.
\end{pro}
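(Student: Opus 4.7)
The plan is to apply Corollary~\ref{C:sieve-stack} to translate both stack conditions into statements about equivalences of $\operatorname{Hom}$-categories, and then to bridge the two using the fact that $\mathcal Y$ is itself a stack over $\mathsf S$. The central geometric input is the observation recorded in Example~\ref{E:top-on-CFG}: for any $Y \in \mathcal Y$ with $S := \pi(Y)$, the slice $\mathcal Y/Y$ is equivalent to $\mathsf S/S$, and consequently a family $\{Y_i \to Y\}$ is a cover of $Y$ in $\mathcal Y$ iff $\{\pi(Y_i) \to \pi(Y)\}$ is a cover of $S$ in $\mathsf S$. Conversely, any cover $\mathcal R = \{S_i \to S\}$ together with a choice of $Y \in \mathcal Y(S)$ lifts, via cartesian morphisms, to a unique (up to equivalence) cover $\mathcal R_Y$ of $Y$ in $\mathcal Y$.

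Given a cover $\mathcal R = \{S_i \to S\}$ in $\mathsf S$, I would set up the $2$-commutative square obtained by post-composing with $p$:
\begin{equation*}
\vcenter{\xymatrix{
\operatorname{Hom}_{\mathsf S}(S,\mathcal X) \ar[r] \ar[d]_{p \circ -} & \operatorname{Hom}_{\mathsf S}(\mathcal R,\mathcal X) \ar[d]^{p \circ -} \\
\operatorname{Hom}_{\mathsf S}(S,\mathcal Y) \ar[r] & \operatorname{Hom}_{\mathsf S}(\mathcal R,\mathcal Y).
}}
\end{equation*}
Since $\mathcal Y$ is a stack over $\mathsf S$, Corollary~\ref{C:sieve-stack} tells us the bottom arrow is an equivalence. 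The next step is to unwind the $2$-fibers of the two vertical arrows using Definition~\ref{D:2-product-categories-over-C}: the $2$-fiber of the left arrow over a morphism $Y : S \to \mathcal Y$ is $\operatorname{Hom}_{\mathcal Y}(Y,\mathcal X)$, while the $2$-fiber of the right arrow over a descent datum $Y_\bullet : \mathcal R \to \mathcal Y$ is $\operatorname{Hom}_{\mathcal Y}(\mathcal R_Y,\mathcal X)$, where $\mathcal R_Y$ is the induced cover of the object $Y$ to which $Y_\bullet$ descends. These identifications are essentially formal and follow from tracing through the universal property of the $2$-fibered product.

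With the square in hand, the biconditional reduces to a standard $2$-categorical fact: in a $2$-commutative square of groupoids in which the bottom arrow is an equivalence, the top arrow is an equivalence iff for every object in the bottom-left, the induced functor on $2$-fibers is an equivalence. Applying Corollary~\ref{C:sieve-stack} to $\mathcal X$ over $\mathsf S$ and to $\mathcal X$ over $\mathcal Y$ respectively, and noting that every cover in $\mathcal Y$ has the form $\mathcal R_Y$ for some cover $\mathcal R$ in $\mathsf S$ and some $Y$, the condition that the top arrow is always an equivalence becomes equivalent to the condition that $\operatorname{Hom}_{\mathcal Y}(Y,\mathcal X) \to \operatorname{Hom}_{\mathcal Y}(\mathcal R_Y,\mathcal X)$ is always an equivalence, which is exactly the stack condition for $\mathcal X$ over $\mathcal Y$.

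The main obstacle is the bookkeeping in identifying the $2$-fibers with the $\mathcal Y$-morphism categories and justifying the $2$-categorical fiber lemma; neither is deep, but both require enough care with natural isomorphisms that one may prefer to proceed concretely instead, verifying separately that (i) $\mathscr{I}\!\!som_{\mathcal X/\mathsf S}$ is a sheaf iff $\mathscr{I}\!\!som_{\mathcal X/\mathcal Y}$ is, and (ii) a descent datum for $\mathcal X/\mathsf S$ on $\mathcal R$ projects to a descent datum for $\mathcal Y/\mathsf S$, which by hypothesis descends to some $Y$; the original datum then becomes a descent datum for $\mathcal X/\mathcal Y$ on $\mathcal R_Y$, and effectivity in one setting is equivalent to effectivity in the other.
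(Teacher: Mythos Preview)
Your proposal is correct and rests on the same ingredients as the paper's proof (the equivalence $\mathcal Y/Y \simeq \mathsf S/S$ from Example~\ref{E:top-on-CFG} and the stack hypothesis on $\mathcal Y$), but the organization is genuinely different. The paper proceeds directly via the lifting criterion of Lemma~\ref{L:StCond}: given $f:\mathcal R\to\mathcal X$ over a cover $\mathcal R$ of $S$, one first composes with $p$ and uses that $\mathcal Y$ is a stack over $\mathsf S$ to descend $pf$ to some $h:S\to\mathcal Y$; then, viewing $f$ as living over $h$, one invokes the stack property of $\mathcal X$ over $\mathcal Y$ to produce the desired lift $f_1:S\to\mathcal X$. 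The converse direction is declared similar and omitted. Your route instead packages both directions simultaneously into the $2$-commutative square of $\operatorname{Hom}$-groupoids and a fiberwise equivalence criterion. The paper's approach is shorter and avoids the $2$-categorical fiber lemma you invoke (which, as you note, requires some care with the identifications of the $2$-fibers); your approach makes the symmetry of the biconditional manifest and is arguably more conceptual. The concrete alternative you sketch at the end---descend the $\mathcal Y$-component first, then recognize the remaining data as a descent datum for $\mathcal X$ over $\mathcal Y$---is essentially what the paper does, just phrased in terms of descent data rather than lifting diagrams.
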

\begin{proof}
We will verify the second part of the criterion in Lemma~\ref{L:StCond}; the first part is very similar, so we omit it.  Assume first that $\mathcal X$ is a stack over $\mathcal Y$.  Consider the following diagram:
\begin{equation*} 
\xymatrix@C=4em@R=1.3em{
\mathcal R \ar[r]^f \ar[dd] & \mathcal X \ar[d]^p \\
& \mathcal Y \ar[d]\\
S \ar@{-->}[uur]^{f_1} \ar@{-->}[ru]_h  \ar[r]& \mathsf S\\
} \end{equation*}
We would like to find a morphism $f_1$ rendering the outer square $2$-commutative.  Since $\mathcal Y$ is a stack over $\mathsf S$, we can find  an appropriate  lift $h$ as in the diagram.  But then the assumption that $\mathcal X$ is a stack over $\mathcal Y$ guarantees the existence of the desired lifting  $f_1$ of  $h$. Thus $\mathcal X$ is a stack over $\mathsf S$.   The converse is similar.
\end{proof}

\begin{cor} \label{C:DescRel}
Suppose that $p : \mathcal X \rightarrow \mathcal Y$ is a morphism of CFGs over a presite $\mathsf S$.  Assume that $\mathcal Y$ is a stack over $\mathsf S$.  Then $\mathcal X$ is a stack over $\mathsf S$ if and only if for $S \in \mathsf S$ and every $y: S \rightarrow \mathcal Y$, the fiber product $\mathcal X_S = \mathcal X \mathop\times_{\mathcal Y} S$ is a stack on $\mathsf S/S$.
\end{cor}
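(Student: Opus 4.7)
The plan is to combine Proposition~\ref{P:DescRel} with an unwinding of the stack condition over $\mathcal Y$ as a fiberwise condition on the slices $\mathsf S/S$. Since $\mathcal Y$ is a stack over $\mathsf S$, Proposition~\ref{P:DescRel} already reduces the problem: $\mathcal X$ is a stack over $\mathsf S$ if and only if $\mathcal X$ is a stack over $\mathcal Y$ in the induced pretopology of Example~\ref{E:top-on-CFG}. It therefore suffices to establish the equivalence: $\mathcal X$ is a stack over $\mathcal Y$ if and only if for every $y : S \to \mathcal Y$ the pullback $\mathcal X_S = \mathcal X \mathop\times_{\mathcal Y} S$ is a stack over $\mathsf S/S$.

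I would verify this equivalence by applying Corollary~\ref{C:sieve-stack}, according to which $\mathcal X$ is a stack over $\mathcal Y$ if and only if for every object $y \in \mathcal Y(S)$ and every cover $\mathcal R$ of $y$, the functor $\operatorname{Hom}_{\mathcal Y}(y, \mathcal X) \to \operatorname{Hom}_{\mathcal Y}(\mathcal R, \mathcal X)$ is an equivalence. By the definition of the induced pretopology (Example~\ref{E:top-on-CFG}), covers of $y$ in $\mathcal Y$ project to covers of $S$ in $\mathsf S$; moreover, because $\mathcal Y$ is a CFG, the induced functor on slice categories $\mathcal Y/y \to \mathsf S/S$ is an equivalence, so covers of $y$ in $\mathcal Y$ correspond canonically to covers of $S$ in $\mathsf S$. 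Meanwhile, the universal property of the $2$-fibered product identifies $\operatorname{Hom}_{\mathcal Y}(y, \mathcal X) \simeq \mathcal X_S(S)$ and identifies $\operatorname{Hom}_{\mathcal Y}(\mathcal R, \mathcal X)$ with the category of descent data for $\mathcal X_S$ along the corresponding cover of $S$ in $\mathsf S/S$. Under these identifications, the stack condition for $\mathcal X$ over $\mathcal Y$ at $y$ becomes exactly the stack condition for $\mathcal X_S$ over $\mathsf S/S$ at the identity object $\mathrm{id}_S$.

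To finish, I would pass from pointwise descent at $S$ to the full stack condition for $\mathcal X_S$ over $\mathsf S/S$ (which requires descent at every $T \to S$, not merely at $\mathrm{id}_S$). For this, observe that for any $T \to S$ the restriction $y_T : T \to \mathcal Y$ yields a canonical equivalence $\mathcal X \mathop\times_{\mathcal Y} T \simeq \mathcal X_S \mathop\times_S T$, so descent for $\mathcal X$ at $y_T$ is the same as descent for $\mathcal X_S$ at $T$. Running over all $T \to S$ promotes the pointwise condition to the full stack condition; conversely, varying $y$ over all morphisms $S \to \mathcal Y$ sweeps out every object of $\mathcal Y$, giving the other direction. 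The main obstacle is simply bookkeeping: keeping straight the $2$-categorical hom-categories in the different slices and tracking the correspondence of covers under $\mathcal Y/y \simeq \mathsf S/S$; once these identifications are made, the two conditions are literal translations of one another.
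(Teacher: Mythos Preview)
Your proposal is correct and follows essentially the same approach as the paper: reduce via Proposition~\ref{P:DescRel} to checking that $\mathcal X$ is a stack over $\mathcal Y$, then use the equivalence $\mathcal Y/y \simeq \mathsf S/S$ (Example~\ref{E:top-on-CFG}) together with the universal property of the $2$-fibered product to translate this into the fiberwise condition on $\mathcal X_S$. The paper phrases the middle step as ``$\mathcal X$ is a stack over $\mathcal Y$ iff $\mathcal X \mathop\times_{\mathcal Y} (\mathcal Y/Y)$ is a stack over $\mathcal Y/Y$ for all $Y$'' (invoking Lemma~\ref{L:StCond}), while you unwind this via Corollary~\ref{C:sieve-stack} and are more explicit about passing from descent at $\mathrm{id}_S$ to descent at arbitrary $T \to S$; but the substance is the same.
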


\begin{proof}
We show the harder direction,  that $\mathcal X$ is a stack over $\mathsf S$.    From Proposition \ref{P:DescRel}, it is enough to show that $\mathcal X$ is a stack over $\mathcal Y$.  From Lemma \ref{L:StCond} and the definition of the fibered product, one can easily deduce  that 
$\mathcal X$ is a stack over $\mathcal Y$ if and only if $\mathcal X \mathop\times_{\mathcal Y} (\mathcal Y/Y)$ is a stack over $\mathcal Y/Y$ for all $Y \in \mathcal Y$. 
Now using the  
 equivalence of categories $\mathcal Y/Y\cong \mathsf S/S$ (Example \ref{E:top-on-CFG}), we are done.
\end{proof}

\begin{exa}
As an example of the utility of Proposition~\ref{P:DescRel} and Corollary~\ref{C:DescRel}, consider the CFG $\mathcal X$ whose $S$-points are triples $(C, C', f)$ where $C$ and $C'$ are families of smooth curves over $S$ of genera $g$ and $h$, both $\geq 2$, and $f : C \rightarrow C'$ an $S$-morphism.  Then there is a projection $p:\mathcal X \rightarrow \mathcal M_g \times \mathcal M_h$, where $\mathcal M_g$ denotes the CFG of families of smooth curves of genus $g$.  We know that $\mathcal M_g$ and $\mathcal M_h$  are stacks in the \'etale topology by Corollary~\ref{C:Mg-stack}, and therefore the product $\mathcal M_g \times \mathcal M_h$ is a stack (Lemma \ref{L:FPS}).   Applying Corollary \ref{C:DescRel} to $p$, to verify that $\mathcal X$ is a stack it is therefore sufficient to show that, for any \emph{fixed} pair of smooth curves $C$ and $C'$ over a scheme $S$, the functor $\mathscr Mor(C,C')(-):(\mathsf S/S)^{\mathsf {op}}\to (\mathsf {Set})$ that assigns to an $S$-scheme $T$ the set of $T$-morphisms $C_T \rightarrow C'_T$, is a sheaf.  This is easily verified using the fact that the \'etale topology is subcanonical (see Example \ref{E:MgPS}).
\end{exa}


\section{Stacks adapted to a presite}\label{S:AdaptStackPre}

 In this section, we take the most naive approach to defining an algebraic stack on a presite, namely, we define what we call a stack adapted to a presite (Definition \ref{D:SAdaptStack}).  This  is simply a stack with a representable cover  by an object in the presite (in the sense of Definition \ref{D:AdaptStack}).      While there are technical reasons (see Example \ref{E:LogAb})  that in total generality  this is not really the `right' definition, it nevertheless  provides a  quick definition, that immediately suffices for many of the standard examples one sees (e.g., algebraic spaces,  Fantechi  Deligne--Mumford stacks, and Laument--Moret-Bailly Deligne--Mumford stacks).     In fact, by iteratively enlarging the presite, and then reducing back down to the original presite, one can obtain all of the definitions of algebraic stacks we discuss in this paper (see \S \ref{S:AdaptPersp}).

 We hope that in this generality,  and brevity, the salient points of an algebraic stack will be apparent to readers who prefer to work in categories other than schemes.  
  The reader interested in moving quickly to the definition of the algebraic stack of smooth curves, or the algebraic stack of  Higgs bundles, may prefer to read this section, and then skip directly to \S  \ref{S:MSHB}.    Technically, the stack of Higgs bundles  is  adapted   to the smooth presite of algebraic spaces, rather than  to the   \'etale presite of schemes; nevertheless, the main aspects of the formalism should already be apparent to the reader after this section.

\subsection{Definiton of stacks adapted to a presite} \label{S:AdaptStacksPre} 
In order to define a stack adapted to a presite, we want the definition of a representable morphism:

\begin{dfn}[$\mathsf S$-representable morphism] \label{D:RepS}
Let  $\mathsf S$ be a subcanonical  presite that admits fibered products.    We say  a morphism $\mathcal X \rightarrow \mathcal Y$ of CFGs  is $\mathsf S$-representable  if  for every $S$ in $ \mathsf S$, the fiber product $\mathcal X \mathop\times_{\mathcal Y} S$ is in $\mathsf S$ (i.e., equivalent to a stack $\mathsf S/S'$  for some $S'$ in $\mathsf S$).
$$
\xymatrix@R=1em{
\mathcal X \mathop{\times}_{\mathcal Y} S  \ar[r] \ar[d]& S \ar[d]\\
\mathcal X \ar[r] & \mathcal Y.
}
$$
\end{dfn}

With this definition, we define  an $\mathsf S$-adapted stack  by requiring that the stack admit  an `$\mathsf S$-representable cover' in $\mathsf S$.  More precisely:

\begin{dfn}[$\mathsf S$-adapted stack] \label{D:SAdaptStack} Let 
$\mathsf S$ be a subcanonical presite that admits fibered products.   Then an \emph{$\mathsf S$-adapted stack} (or a stack adapted to the presite $\mathsf S$) is a stack $\mathcal M$ over $\mathsf S$ admitting an  $\mathsf S$-representable cover in $\mathsf S$  of the following form: there exists a $U$ in $\mathsf S$ and an $\mathsf S$-representable  morphism 
$$
\begin{CD}
U@>p>>\mathcal M
\end{CD}
$$ 
such  that for every $S$ in $\mathsf S$ and every $S\to \mathcal M$, the morphism $U\times_{\mathcal M}S\to S$ in $\mathsf S$ obtained from base change is a cover in the pretopology on $\mathsf S$.     
  Such a morphism  $p$ is called a \emph{presentation} of  $\mathcal M$.    
   A \emph{morphism of  $\mathsf S$-adapted stacks} is  a morphism of stacks.  
    \end{dfn}

\begin{rem}
We will define more generally a covering morphism of CFGs in Definition \ref{D:CovStack}.  The cover defined in Definition \ref{D:SAdaptStack} above is a cover in that sense as well (Remark \ref{R:SAdaptStackCov}).
\end{rem}

\begin{exa}[Algebraic spaces and adapted Deligne--Mumford  stacks] \label{E:SDM-AS}
If $S$ is a scheme and $\mathsf S$ is the \'etale presite on schemes over $S$, then the $\mathsf S$-adapted stacks are  called Fantechi Deligne--Mumford (F DM) stacks (over $S$).      
The $\mathsf S$-adapted stacks that are representable by sheaves are   called algebraic  spaces.   
\end{exa}

\begin{exa}
If $\mathsf S$ is the presite of complex analytic spaces with the pretopology induced from  usual open covers, then one obtains a notion of an adapted complex analytic stack.  
Similarly, if $\mathsf S$ is the presite of topological spaces with the presite induced from usual open covers, then one obtains a notion of an adapted topological  stack.  There are other definitions of complex analytic and topological  stacks appearing in the literature; we do not pursue the relationship among the definitions.  
\end{exa}

\begin{rem}[Conditions on the diagonal]  Other notions of algebraic stacks are determined by requiring the diagonal $\Delta :\mathcal M\to \mathcal M\times_{\mathsf S}\mathcal M$ to be $\mathsf S$-representable, and putting further geometric conditions on the diagonal.     For instance, an Laument--Moret-Bailly   Deligne--Mumford (LMB DM) stack over the \'etale presite  $\mathsf S$ of schemes over a fixed scheme $S$ is  an $\mathsf S$-adapted stack, with $\mathsf S$-representable, separated, and quasicompact diagonal.    
\end{rem}

To see how all the other algebraic stacks discussed in this paper can be defined using stacks adapted to a presite,  see \S \ref{S:AdaptPersp}.

\subsection{Bootstrapping stacks adapted to a presite}

It can often be useful to show that a stack is $\mathsf S$-adapted using  bootstrap methods; in other words, it will often be the case that one can exhibit a morphism from a  stack of interest to a well-known $\mathsf S$-adapted stack, so that it is easy to check the $\mathsf S$-adapted  condition  on the fibers.  The following proposition  states that in this situation the stack is an $\mathsf S$-adapted stack.

\begin{pro} \label{P:SAdaptRel}
Let $f:\mathcal X\to \mathcal Y$ be a morphism of stacks over a subcanonical presite $\mathsf S$ that admits fibered products.  Assume that $\mathcal Y$ is an  $\mathsf S$-adapted stack and that for all $S$ in $\mathsf S$ and all morphisms $S\to \mathcal Y$ we have that $\mathcal X\times_{\mathcal Y}S$ is  $\mathsf S$-adapted.
 Then $\mathcal X$ is an $\mathsf S$-adapted  stack.\end{pro}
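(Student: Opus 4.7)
The plan is to build a presentation of $\mathcal X$ by pulling back a presentation of $\mathcal Y$ to $\mathcal X$ and then composing with a presentation of the resulting relative object. Concretely, since $\mathcal Y$ is $\mathsf S$-adapted, pick a presentation $q : V \to \mathcal Y$ with $V \in \mathsf S$. Form the fiber product
\begin{equation*}
\xymatrix{
W := \mathcal X \times_{\mathcal Y} V \ar[r] \ar[d] & V \ar[d]^{q} \\
\mathcal X \ar[r]^{f} & \mathcal Y.
}
\end{equation*}
By hypothesis applied to the composition $V \to \mathcal Y$ (i.e.\ the $S$-point $V \to \mathcal Y$ in the statement of the proposition), $W$ is an $\mathsf S$-adapted stack. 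Choose a presentation $p' : U \to W$ with $U \in \mathsf S$, and let $p : U \to \mathcal X$ be the composition $U \to W \to \mathcal X$. The claim is that $p$ is a presentation of $\mathcal X$.

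Next I would verify that $p$ is $\mathsf S$-representable. Given any $S \to \mathcal X$ with $S \in \mathsf S$, the usual identification of iterated fiber products gives
\begin{equation*}
U \times_{\mathcal X} S \;=\; U \times_W (W \times_{\mathcal X} S) \;=\; U \times_W (V \times_{\mathcal Y} S),
\end{equation*}
where in the last step I used $W \times_{\mathcal X} S = (\mathcal X \times_{\mathcal Y} V)\times_{\mathcal X} S = V \times_{\mathcal Y} S$. Since $q$ is $\mathsf S$-representable, $V \times_{\mathcal Y} S$ lies in $\mathsf S$; and since $p'$ is $\mathsf S$-representable, fibering $U$ over $W$ with an object of $\mathsf S$ again lands in $\mathsf S$. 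Hence $U \times_{\mathcal X} S \in \mathsf S$.

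Finally I would check the covering property. For any $S \to \mathcal X$ in $\mathsf S$, the morphism $V \times_{\mathcal Y} S \to S$ is a cover in the pretopology on $\mathsf S$, being the base change of the covering morphism $q : V \to \mathcal Y$ along $S \to \mathcal X \to \mathcal Y$ (stability under base change is axiom (PT 1)). Similarly, the morphism
\begin{equation*}
U \times_{\mathcal X} S \;=\; U \times_W (V \times_{\mathcal Y} S) \;\longrightarrow\; V \times_{\mathcal Y} S
\end{equation*}
is the base change of the covering morphism $p' : U \to W$ along the map $V \times_{\mathcal Y} S \to W$, hence is a cover. Composing the two covers and invoking (PT 2) shows that $U \times_{\mathcal X} S \to S$ is a cover, so $p : U \to \mathcal X$ is an $\mathsf S$-representable covering morphism, exhibiting $\mathcal X$ as $\mathsf S$-adapted. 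The only mild obstacle is the bookkeeping that iterated fiber products rearrange as claimed; this is a standard consequence of the universal property of fiber products of CFGs (Definition~\ref{D:2-product-categories-over-C}) and does not use anything beyond what has been established.
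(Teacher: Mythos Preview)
Your proof is correct and follows essentially the same strategy as the paper's: pull back a presentation $V\to\mathcal Y$ to obtain $W=\mathcal X\times_{\mathcal Y}V$, take a presentation $U\to W$ of the (by hypothesis $\mathsf S$-adapted) fiber, and compose. The paper phrases the verification in terms of stability of $\mathsf S$-representable morphisms under base change and composition (deferring to a later lemma), whereas you compute the relevant fiber products explicitly; one minor quibble is that your invocations of (PT~1) are slightly misplaced, since $q$ and $p'$ are not morphisms in $\mathsf S$---the fact that their base changes to objects of $\mathsf S$ are covers is literally the definition of a presentation, not an application of (PT~1)---but the conclusions you draw are correct.
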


\begin{proof}
Later we will prove Proposition \ref{P:AdaptRel}, whose proof can be readily modified  to fit this situation.  We sketch the argument here.  
Choose a presentation  $Y \rightarrow \mathcal Y$.  Then  set $\mathcal Z =\mathcal X \times_{\mathcal Y} Y$.  
One can check that $\mathsf S$-representable morphisms are stable under base change (see e.g., Lemma \ref{L:Rep}; in that notation, take $\mathsf C=\mathsf S$).   Thus the projection $\mathcal Z \rightarrow \mathcal X$ is  $\mathsf S$-representable.  One can then check directly from the definition that for every $S$ in $\mathsf S$ and every morphism $S\to \mathcal X$, the morphism $\mathcal Z\times_{\mathcal X}S\to S$ is a cover in $\mathsf S$ (use $Y\times _{\mathcal Y}S=Y\times_{\mathcal Y}\mathcal X\times_{\mathcal X}S=\mathcal Z\times_{\mathcal X}S$).   
 Furthermore, $\mathcal Z$ is $\mathsf S$-adapted, by assumption, so there is a presentation   $Z \rightarrow \mathcal Z$.   One can check that $\mathsf S$-representable morphisms are stable under composition (see e.g., Lemma \ref{L:Rep}; in that notation, take $\mathsf C=\mathsf S$).  Thus the composition $Z\to \mathcal Z\to \mathcal X$ is $\mathsf S$-representable.  
 One can then check directly from the definition that this morphism is a cover in the sense of Definition \ref{D:SAdaptStack}, and thus provides the desired presentation for $\mathcal X$.
\end{proof}

\begin{rem}
A result similar to Proposition \ref{P:SAdaptRel} holds when   additional conditions are placed on the diagonal of the stacks (see Corollary  \ref{C:AlgRel}). 
\end{rem}


\section{Algebraic stacks}\label{S:AlgStack}

In the previous section we introduced the notion of a stack adapted to a presite. This provided a  quick definition, that suffices in many cases.  However, in general, that approach is a little too naive, particularly if one does not enlarge the presite iteratively.  In this section, we take a slightly more lengthy approach, which considers stacks that are adapted to larger classes of morphisms in the the presite.  After iterating this process, we arrive at the definition of an algebraic stack (adapted to a class of morphisms in the presite); stacks adapted to the presite  are algebraic stacks.  

\vskip .1 cm 
We now also provide a lengthier motivation to the study of algebraic stacks than we provided in the previous section.  
Suppose that $\mathsf S$ is a subcanonical presite, meaning that every $S \in \mathsf S$ represents a sheaf, or equivalently that $\mathsf S/S$ is a stack on $\mathsf S$.  A stack on $\mathsf S$ is fundamentally a topological object, and the category of stacks on $\mathsf S$ is therefore too inclusive a milieu for our geometric purposes.  Even when $\mathsf S$ is a category of geometric objects, the stacks (and even the sheaves) on $\mathsf S$ need not behave at all geometrically.  To take just one example, a stack on the category of schemes always has a tangent space at a point, but this tangent space may not have the structure of a vector space (see \S\ref{S:homogeneity}).  Nevertheless, some sheaves and stacks that are not representable \emph{do} behave geometrically, and our goal will be to identify those that do: algebraic spaces and algebraic stacks.

\begin{rem}
Granting our post hoc reasoning for the definition of algebraic stacks, it might have been more sensible to call them `geometric stacks'.  This terminology has caught on in some places~\cite{TV-II}, but we stick to the traditional nomenclature here.
\end{rem}

The essential idea in the definition of algebraic stacks is that a stack that resembles geometric objects locally can itself be studied geometrically, provided that the meaning of `local' is sufficiently geometric.  In the algebraic category, `locally' is interpreted `over a smooth cover' and the `geometric objects' are taken to be schemes.  

Here a technicality arises:  there are \'etale \emph{sheaves}, known as algebraic spaces, that resemble schemes \'etale-locally, but are not themselves schemes.  More worryingly, the class of algebraic stacks modelled locally by algebraic spaces is strictly larger than the class of those modelled locally by schemes.  In order to make the whole theory satisfyingly formal, one takes the smallest class of stacks that includes affine schemes and is stable under disjoint unions and groupoid quotients.  Thankfully, this turns out to be the same as the class of algebraic stacks with smooth covers by algebraic spaces.

This section will be quite formal, and applies to essentially any presite.  The main idea is that of a \emph{$\mathbf P$-adapted stack} in Section~\ref{S:AdaptStacks} over a presite $\mathsf S$, which is precisely a stack modelled locally, according to a suitable property $\mathbf P$, on morphisms in  $\mathsf S$.  In Section~\ref{S:P-adapted}, we iterate the construction to arrive at the class of algebraic stacks. 

In fact, most stacks we encounter in practice (e.g., quasiseparated Deligne--Mumford stacks) are adapted to the class of morphisms of schemes (or even to affine schemes), and the iteration in Section~\ref{S:P-adapted} is useful only to have a category with good formal properties.  A reader interested in seeing stacks in geometric action may prefer to skip Section~\ref{S:P-adapted}.

The key point is an elementary notion of a cover, and to this end, we must first discuss representable morphisms (\S \ref{S:RepMorph}).    We also present  Proposition \ref{P:AdaptRel}, which is a useful tool for bootstrapping from one adapted stack to another, analogous to Corollary \ref{C:DescRel} for stacks.

\subsection{Covers}
\label{S:covers}

We explain what it means for a morphism of stacks over a presite $\mathsf S$ to be a cover:

\begin{dfn}[Covering morphism of CFGs] \label{D:CovStack}
	A morphism of CFGs $\mathcal X \rightarrow \mathcal Y$ on a presite $\mathsf S$ is said to be \emph{covering} if, for any morphism $S \rightarrow \mathcal Y$, there is a cover $\{ S_i \rightarrow S \}$ such that the induced maps $S_i \rightarrow \mathcal Y$ lift to $\mathcal X$:
	\begin{equation*} \xymatrix@R=1em{
	&S_i\ar[d] \ar@/_10pt/@{-->}[ld]\\
			 \mathcal X\times_{\mathcal Y}S \ar[d] \ar[r]& S\ar[d] \\
			\mathcal X \ar[r]& \mathcal Y.
	} \end{equation*}
\end{dfn}

\begin{rem}
Note that a morphism of objects of $\mathsf S$ is covering according to Definition~\ref{D:CovStack} if and only if it is covering in the topology associated to the pretopology of $\mathsf S$ (Definition~\ref{D:top}).
\end{rem}

\begin{rem}\label{R:SAdaptStackCov}  The presentation   $P:U\to \mathcal M$ in Definition \ref{D:SAdaptStack} is covering the sense of 
Definition \ref{D:CovStack}.   Indeed, choose an $S$ in $ \mathsf S$ and a morphism $S\to \mathcal M$,  and consider the fibered product  
	\begin{equation*}\vcenter{ \xymatrix@R=1em{
			U\times_{\mathcal M} S \ar[r] \ar[d] & S \ar[d] \\
			U \ar[r]^P & \mathcal M.
	}} \end{equation*}
  In Definition \ref{D:CovStack}  we  require that there is a cover $\{S_i\to S\}$  so that for each $i$, the map $S_i\to S$ factors through $U\times_{\mathcal M}S$.  Of course, since  $U\times_{\mathcal M}S\to S$ is a cover by Definition \ref{D:SAdaptStack},  this condition is automatically satisfied taking $\{S_i\to S\}=\{U\times_{\mathcal M}S\to S\}$.\end{rem}

\begin{lem}\label{L:FPCompCov}
Coverings are stable under composition, base change, and fibered  products of morphisms. 
\end{lem}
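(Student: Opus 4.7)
The plan is to verify each of the three stability properties directly from Definition~\ref{D:CovStack}, using the pretopology axioms (PT 1) and (PT 2) at the key steps. I would treat base change first, then composition, and finally deduce the statement about fibered products of morphisms as a formal consequence of the first two.

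For \textbf{base change}, suppose $f : \mathcal X \rightarrow \mathcal Y$ is covering and $g : \mathcal Y' \rightarrow \mathcal Y$ is arbitrary; I want to show $f' : \mathcal X \mathop\times_{\mathcal Y} \mathcal Y' \rightarrow \mathcal Y'$ is covering. Given any $S \rightarrow \mathcal Y'$, compose with $g$ to get $S \rightarrow \mathcal Y$. The assumption on $f$ gives a cover $\{S_i \rightarrow S\}$ in the pretopology on $\mathsf S$ together with liftings $S_i \rightarrow \mathcal X$ fitting into the diagram defining the fiber product. The universal property of $\mathcal X \mathop\times_{\mathcal Y} \mathcal Y'$, applied to $S_i \rightarrow \mathcal X$ and $S_i \rightarrow S \rightarrow \mathcal Y'$ together with the tautological $2$-isomorphism, produces the desired lifting $S_i \rightarrow \mathcal X \mathop\times_{\mathcal Y} \mathcal Y'$ over the same cover $\{S_i \rightarrow S\}$.

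For \textbf{composition}, let $f : \mathcal X \rightarrow \mathcal Y$ and $g : \mathcal Y \rightarrow \mathcal Z$ be covering, and fix $S \rightarrow \mathcal Z$. Applying the covering condition for $g$ produces a pretopology cover $\{T_j \rightarrow S\}$ together with liftings $T_j \rightarrow \mathcal Y$. For each $j$, applying the covering condition for $f$ to $T_j \rightarrow \mathcal Y$ produces a further cover $\{S_{ji} \rightarrow T_j\}$ together with liftings $S_{ji} \rightarrow \mathcal X$. By axiom (PT 2), the composed family $\{S_{ji} \rightarrow T_j \rightarrow S\}$ is a cover of $S$ in $\mathsf S$, and by construction each $S_{ji} \rightarrow \mathcal Z$ factors through $\mathcal X$. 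Hence $g \circ f$ is covering.

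For \textbf{fibered products of morphisms}, let $f : \mathcal X \rightarrow \mathcal X'$ and $h : \mathcal Y \rightarrow \mathcal Y'$ be coverings over a common base $\mathcal Z$; I would factor the induced map as
\begin{equation*}
\mathcal X \mathop\times_{\mathcal Z} \mathcal Y \xrightarrow{\,f \times \mathrm{id}\,} \mathcal X' \mathop\times_{\mathcal Z} \mathcal Y \xrightarrow{\,\mathrm{id} \times h\,} \mathcal X' \mathop\times_{\mathcal Z} \mathcal Y'.
\end{equation*}
Each of these two arrows is a pullback of $f$ (respectively $h$) along a projection, so by base change each is covering; composition then gives the result. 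The main potential obstacle is a purely bookkeeping one: the $2$-categorical nature of the fiber product means that ``liftings'' are only defined up to $2$-isomorphism, so one must be careful that the witnessing $2$-isomorphisms are carried along coherently when invoking the universal property, but no genuinely new idea is needed beyond Definition~\ref{D:2-product-categories-over-C} and the pretopology axioms.
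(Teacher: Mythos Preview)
Your proof is correct and follows essentially the same approach as the paper: both arguments verify composition and base change directly from the definition and the pretopology axioms (the paper treats composition first, you treat base change first, but the content is identical), and both reduce the fibered-product case to the first two via the same factorization $f\times h = (\mathrm{id}\times h)\circ(f\times\mathrm{id})$.
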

\begin{proof}
	We prove stability under composition: Suppose that $\mathcal X \rightarrow \mathcal Y \rightarrow \mathcal Z$ is a composition of covering morphism and $W$ is a scheme over $\mathcal Z$.  Since $\mathcal Y$ covers $\mathcal Z$,  there is a cover $\{ W_i \rightarrow W \}$ such that the maps $W_i \rightarrow \mathcal Z$ lift to $\mathcal Y$.  Now, since $\mathcal X$ covers $\mathcal Y$ there are covers $\{ W_{ij} \rightarrow W_i \}$ such that $W_{ij} \rightarrow \mathcal Y$ lifts to $\mathcal X$.  The family $\{ W_{ij} \rightarrow W \}$ is a cover of a cover, hence is covering.

	We prove stability under base change:  Suppose that $\mathcal X \rightarrow \mathcal Y$ is covering and $\mathcal Z \rightarrow \mathcal Y$ is an arbitrary morphism.  Let $W$ be a scheme over $Z$.  Then there is a cover $\{ W_i \rightarrow W \}$ such that the maps $W_i \rightarrow \mathcal Y$ lift to $\mathcal X$, as $\mathcal X$ covers $\mathcal Y$.  But then by definition of the fiber product, the maps $W_i \rightarrow \mathcal Z$ lift to $\mathcal Z \mathop{\times}_{\mathcal Y} \mathcal X$.

	Stability under fiber product now follows formally from the parts already demonstrated (e.g., the proof of Lemma~\ref{L:Rep} can easily be adapted to this purpose).
\end{proof}

\subsection{Representable morphisms} \label{S:RepMorph}

A representable morphism of stacks on $\mathsf S$ is roughly one whose fibers are representable by some prescribed class of stacks:

\begin{dfn}[Representable morphism] \label{D:Rep}
Let $\mathsf C$ be a class of CFGs over a subcanonical presite $\mathsf S$ such that $\mathsf C$ is stable under isomorphism and fiber product and every CFG over $\mathsf S$ can be covered (Defintion \ref{D:CovStack}) by objects of $\mathsf C$.  We say that a morphism $\mathcal X \rightarrow \mathcal Y$ of CFGs is \emph{representable by objects of $\mathsf C$} or \emph{$\mathsf C$-representable} (or \emph{representable}, when $\mathsf C$ is clear from context) if, for every $\mathcal Z \in \mathsf C$, the fiber product $\mathcal X \mathop\times_{\mathcal Y} \mathcal Z$ is in $\mathsf C$.
$$
\xymatrix@R=1em{
\mathcal X \mathop{\times}_{\mathcal Y} \mathcal Z  \ar[r] \ar[d]& \mathcal Z \ar[d]\\
\mathcal X \ar[r] & \mathcal Y.
}
$$
When $\mathsf S$ is the category of schemes and $\mathsf C$ is also the category of schemes, i.e., the collection of stacks on the \'etale site of schemes representable by schemes, we call a $\mathsf C$-representable morphism \emph{schematic} (i.e., we recover Definition \ref{D:RepS}). 
\end{dfn}

\begin{rem}
For obvious reasons, schematic morphisms are sometimes also called representable morphisms, without qualification.  However, it is also common to use the term representable morphism for morphisms representable by algebraic spaces, so for clarity we use `schematic' to refer to morphisms representable by schemes.
\end{rem}

\begin{lem}  \label{L:Rep}
For $\mathsf C$  a class of CFGs over a category $\mathsf S$ that is stable under isomorphism and fiber product and such that every CFG over $\mathsf S$ can be covered (Definition~\ref{D:CovStack}) by objects of $\mathsf C$: 
\begin{enumerate}
	\item The composition of $\mathsf C$-representable morphisms is $\mathsf C$-representable. \label{I:rep-comp}
	\item The base change of a $\mathsf C$-representable morphism is $\mathsf C$-representable. \label{I:rep-bc}
	\item The fibered product of $\mathsf C$-representable morphisms is $\mathsf C$-representable. \label{I:rep-prod}
	\item If $f : \mathcal X \rightarrow \mathcal Y$ and $g : \mathcal Y \rightarrow \mathcal Z$ are morphisms such that $gf$ is $\mathsf C$-representable and the diagonal of $g$ is $\mathsf C$-representable then $f$ is $\mathsf C$-representable. \label{I:rep-rel}
\end{enumerate}
\end{lem}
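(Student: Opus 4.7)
The four statements all follow formally from the definition of $\mathsf C$-representability together with standard manipulations of iterated fiber products in the $2$-category of CFGs.  My plan is to prove (2) first, deduce (1) and then (3) from it, and finally handle (4), which is the only part with real content, via the graph-of-morphism trick.

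For (2), let $f:\mathcal X \to \mathcal Y$ be $\mathsf C$-representable and $\mathcal Y' \to \mathcal Y$ arbitrary.  Given $\mathcal W \in \mathsf C$ with a morphism $\mathcal W \to \mathcal Y'$, associativity of fiber products gives
\[
(\mathcal X \times_{\mathcal Y} \mathcal Y') \times_{\mathcal Y'} \mathcal W \;\simeq\; \mathcal X \times_{\mathcal Y} \mathcal W,
\]
which lies in $\mathsf C$ by hypothesis on $f$.  For (1), consider $\mathcal X \xrightarrow{f} \mathcal Y \xrightarrow{g} \mathcal Z$ with both morphisms $\mathsf C$-representable, and a morphism $\mathcal W \to \mathcal Z$ with $\mathcal W \in \mathsf C$.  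Then $\mathcal X \times_{\mathcal Z} \mathcal W \simeq \mathcal X \times_{\mathcal Y} (\mathcal Y \times_{\mathcal Z} \mathcal W)$: the inner product lies in $\mathsf C$ by representability of $g$, and then the outer product lies in $\mathsf C$ by representability of $f$.  Part (3) follows by factoring the fibered product of $f : \mathcal X \to \mathcal Y$ and $f' : \mathcal X' \to \mathcal Y'$ over $\mathcal Z$ as
\[
\mathcal X \times_{\mathcal Z} \mathcal X' \longrightarrow \mathcal Y \times_{\mathcal Z} \mathcal X' \longrightarrow \mathcal Y \times_{\mathcal Z} \mathcal Y',
\]
each arrow being a base change of either $f$ or $f'$, and then invoking (2) and (1).

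For (4), suppose $h := gf$ is $\mathsf C$-representable and $\Delta_g : \mathcal Y \to \mathcal Y \times_{\mathcal Z} \mathcal Y$ is $\mathsf C$-representable.  The plan rests on the identity
\[
\mathcal X \times_{\mathcal Y} \mathcal W \;\simeq\; (\mathcal X \times_{\mathcal Z} \mathcal W) \times_{\mathcal Y \times_{\mathcal Z} \mathcal Y} \mathcal Y,
\]
where the map $\mathcal Y \to \mathcal Y \times_{\mathcal Z} \mathcal Y$ is $\Delta_g$ and the map $\mathcal X \times_{\mathcal Z} \mathcal W \to \mathcal Y \times_{\mathcal Z} \mathcal Y$ is induced by the pair $(f, \mathcal W \to \mathcal Y)$.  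Given $\mathcal W \in \mathsf C$ with $\mathcal W \to \mathcal Y$, representability of $h$ applied to the composition $\mathcal W \to \mathcal Y \to \mathcal Z$ shows $\mathcal X \times_{\mathcal Z} \mathcal W \in \mathsf C$.  The projection $\mathcal X \times_{\mathcal Y} \mathcal W \to \mathcal X \times_{\mathcal Z} \mathcal W$ is the base change of $\Delta_g$ along the map above, hence $\mathsf C$-representable by (2); as its target lies in $\mathsf C$, pulling back along the identity of $\mathcal X \times_{\mathcal Z} \mathcal W$ exhibits its source as an object of $\mathsf C$ as well.

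The main obstacle is verifying the fiber product identity used in (4).  This is a direct check from the universal property of each side: an object of either, over a test object $T$, parameterizes a morphism $T \to \mathcal X$, a morphism $T \to \mathcal W$, and an isomorphism between their images in $\mathcal Y$.  Once this identity is in hand, the remainder of the argument is a routine rearrangement of pullback squares using parts (1) and (2).
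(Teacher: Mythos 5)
Your proposal is correct and follows essentially the same route as the paper: parts (1)–(3) via the same associativity/factorization of fiber products (the paper cites EGA I, Rem.~(1.3.9) for (3)), and part (4) via the same cartesian square identifying $\mathcal X \times_{\mathcal Y} \mathcal W$ with $(\mathcal X \times_{\mathcal Z} \mathcal W) \times_{\mathcal Y \times_{\mathcal Z} \mathcal Y} \mathcal Y$ and then invoking representability of $gf$ and of $\Delta_g$. The only difference is cosmetic: you spell out the universal-property check of that identity, which the paper leaves implicit.
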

\begin{proof}
	The verification is formal, so the proof in \cite[Prop.~5.8, p.87]{DMstacks} for schematic morphisms applies here.  Variants appear in \cite[Lem.~3.11 and~3.12]{LMB}, \cite[Tags 0300, 0301, 0302]{stacks}.  For completeness, we give a proof.

	\eqref{I:rep-comp} Suppose $\mathcal X \rightarrow \mathcal Y$ and $\mathcal Y \rightarrow \mathcal Z$ are $\mathsf C$-representable.  To see that the composition is $\mathsf C$-representable, consider  a $\mathcal W\in  \mathsf C$ and a morphism $\mathcal W\to \mathcal Z$. Then $\mathcal W \mathop{\times}_{\mathcal Z} \mathcal X = (\mathcal W \mathop{\times}_{\mathcal Z} \mathcal Y) \mathop{\times}_{\mathcal Y} \mathcal X$ is $\mathsf C$-representable  using first the $\mathsf C$-representability of $\mathcal Y \rightarrow \mathcal Z$, and then the $\mathsf C$-representability of  $\mathcal X \rightarrow \mathcal Y$.

	\eqref{I:rep-bc} If $\mathcal X \rightarrow \mathcal Y$ is a $\mathsf C$-representable morphism, $\mathcal W \rightarrow \mathcal Z \rightarrow \mathcal Y$ are morphisms of CFGs with $\mathcal W \in \mathsf C$, then $\mathcal W \mathop{\times}_{\mathcal Z} (\mathcal Z \mathop{\times}_{\mathcal Y} \mathcal X) = \mathcal W \mathop{\times}_{\mathcal Y} \mathcal X$, hence is in $\mathsf C$.

\eqref{I:rep-prod} 	
is essentially \cite[Rem.~(1.3.9) p.33]{EGAI}, which observes that the conclusion follows from (1) and (2), together with  the fact that given morphisms $f:\mathcal X\to \mathcal X'$ and $g:\mathcal Y\to \mathcal Y'$ over a stack $\mathcal Z$, the fibered  product   $\mathcal X\times_{\mathcal Z}\mathcal Y\xrightarrow{f\times_{\operatorname{id}_{\mathcal Z}}g} \mathcal X'\times_{\mathcal Z}\mathcal Y'$ is given by the composition of morphisms obtained from fibered product diagrams:
$$
\begin{CD}
\mathcal X\times_{\mathcal Z}\mathcal Y @>f\times_{\operatorname{id}_{\mathcal Z}} \operatorname{id}_{\mathcal Y}>> \mathcal X'\times_{\mathcal Z}\mathcal Y @>\operatorname{id}_{\mathcal X'}\times_{\operatorname{id}_{\mathcal Z}} g>> \mathcal X'\times_{\mathcal Z}\mathcal Y'.
\end{CD}
$$
	
\eqref{I:rep-rel}    For any $\mathcal W \rightarrow \mathcal Y$ with $\mathcal W$ in $\mathsf C$, we have a cartesian diagram
$$
\xymatrix@R=1em{
\mathcal W \mathop{\times}_{\mathcal Y} \mathcal X  \ar[r] \ar[d]& \mathcal W \mathop{\times}_{\mathcal Z} \mathcal X \ar[d]\\
\mathcal Y  \ar[r]^<>(0.5){\Delta_g} & \mathcal Y \mathop{\times}_{\mathcal Z} \mathcal Y.
}
$$ 
	We know $\mathcal W \mathop{\times}_{\mathcal Z} \mathcal X$ is {in $\mathsf C$} since $gf$ is representable, so $\mathcal W \mathop{\times}_{\mathcal Y} \mathcal X$ is {in $\mathsf C$} by representability of the diagonal of $g$.
\end{proof}

\begin{dfn}[Locality to the target for representable morphisms] \label{D:LocTargCFG}
Let $\mathbf P$ be a property of morphisms between CFGs in  $\mathsf C$ that is stable under base change (Definition \ref{D:StableClass}).  We say $\mathbf P$ is local to the target if, for any morphism $\mathcal X \rightarrow Y$ and any cover $\mathcal Z \rightarrow \mathcal Y$, the morphism $\mathcal X \mathbin\times_{\mathcal Z} \mathcal Y \rightarrow \mathcal Z$ has property $\mathbf P$ if and only if $\mathcal X \rightarrow \mathcal Y$ does.
\end{dfn}

Now that we have these definitions, we can translate  properties of morphisms in $\mathsf S$ into properties of representable morphisms of stacks:

\begin{dfn}[$\mathbf P$-representable morphism]   \label{D:RepMorphProp}
Let $\mathsf C$ be a class of CFGs over a subcanonical presite $\mathsf S$ such that $\mathsf C$ is stable under isomorphism and fiber product and every CFG over $\mathsf S$ can be covered by objects of $\mathsf C$.
Let $\mathbf P$ be a property of morphisms between CFGs in  $\mathsf C$ that is stable under base change (Definition \ref{D:StableClass}), stable under composition,  and local on the target (Definition \ref{D:LocTargCFG}).    A $\mathsf C$-representable morphism $\mathcal X\to \mathcal Y$ of stacks over  $\mathsf S$ (Definition \ref{D:Rep})  is said to have property $\mathbf P$ if for every $\mathcal Z$ in $\mathsf C$ and every $\mathcal Z\to \mathcal Y$, the morphism $\mathcal X\times_{\mathcal Y}\mathcal Z\to \mathcal Z$ of CFGs between objects of $\mathsf C$ has property $\mathbf P$.   We also call this a $\mathbf P$-representable morphism.
\end{dfn}

\begin{exa} \label{Exa:StabLoc}
Here are some examples of classes of morphisms of schemes that are stable under base change and local to the target for the \'etale topology on schemes:
\begin{itemize}
\item quasiseparated \cite[Prop.~(2.7.1)~(ii)]{EGAIV2};
\item quasicompact \cite[Cor.~(2.6.4)~(v)]{EGAIV2};
\item flat \cite[Prop.~(17.7.4)~(iii)]{EGAIV4};
\item smooth \cite[Prop.~(17.7.4)~(v)]{EGAIV4};
\item \'etale \cite[Prop.~(17.7.4)~(vi)]{EGAIV4};
\item unramified \cite[Prop.~(17.7.4)~(iv)]{EGAIV4};
\item separated \cite[Prop.~(2.7.1)~(i)]{EGAIV2};
\item proper \cite[Prop.~(2.7.1)~(vii)]{EGAIV2};
\item finite type \cite[Prop.~(2.7.1)~(v)]{EGAIV2};
\item locally of finite type \cite[Prop.~(2.7.1)~(iii)]{EGAIV2};
\item finite presentation \cite[Prop.~(2.7.1)~(vi)]{EGAIV2};
\item locally of finite presentation \cite[Prop.~(2.7.1)~(iv)]{EGAIV2};
\item locally of finite type and pure relative dimension $d$ \cite[Cor.~(4.1.4)]{EGAIV2};
\item surjective \cite[Props.~(3.6.1) and~(3.6.2)~(i)]{EGA1971} or \cite[Prop.~(2.6.1)]{EGAIV2}.
\end{itemize}
For more references, see \cite[(3.10)]{LMB} or \cite[Prop.~5.5]{DMstacks}.
\end{exa}

\subsection{Locality to the source}
\label{S:lts}

\begin{dfn}[Local to the source]\label{D:Loc-Src}
Let $\mathbf P$ be a property of morphisms in $\mathsf S$ that is local (on the target) and stable under base change and composition.  We call $\mathbf P$ \emph{local to the source} if a morphism $X \rightarrow Y$  in $\mathsf S$ has the property $\mathbf P$ if and only if, for any covering family $U_i \rightarrow X$ in the pretopology $\mathsf S$, all of the composed morphisms $U_i \rightarrow Y$ have property $\mathbf P$.
\end{dfn}

\begin{exa}
Let $\mathbf P$ be the class of \emph{\'etale morphisms} of schemes.  Then $\mathbf P$ is local to the source in the \'etale topology.
\end{exa}

\begin{exa}
Let $\mathbf P$ be the class of \emph{local isomorphisms} of complex analytic spaces.  Then $\mathbf P$ is local to the source in the  analytic  topology.  
\end{exa}

\begin{exa}
Recall that smooth surjections are covering (in the sense of Definition \ref{D:CovStack})  in the \'etale topology (Example~\ref{E:smooth-etale-cover}). 
 The class of smooth morphisms is local to the source in the \'etale topology~\cite[Tag~06F2]{stacks}.  We will prove a more general version of this statement in Lemma~\ref{L:intrinsic}.
\end{exa}

\begin{exa}
The same proof shows that smooth morphisms in the category of complex analytic spaces, and submersions in the category of $C^\infty$-manifolds, are local to the source in the usual topologies.
\end{exa}
 
\subsection{Adapted stacks} \label{S:AdaptStacks} 

Let $\mathsf C$ be a class of CFGs over a subcanonical presite $\mathsf S$ such that $\mathsf C$ is stable under isomorphism and fiber product and every CFG over $\mathsf S$ can be covered by objects of $\mathsf C$. 
Let $\mathbf P$ be a property of morphisms between CFGs in  $\mathsf C$ that is stable under base change (Definition \ref{D:StableClass}),  stable under composition, local to the source  (Definition \ref{D:Loc-Src}), and local to the target (Definition \ref{D:LocTargCFG}).

In this section we will introduce the class of stacks that admit $\mathbf P$-representable covers by objects of $\mathsf C$, calling these (Definition \ref{D:AdaptStack}) stacks \emph{$\mathbf P$-adapted (to $\mathsf C$)}, or just \emph{adapted stacks}, if the context is clear (in particular, if it will not be confused with a stack adapted to the presite).  In the next section, we will show that once one has the class of $\mathbf P$-adapted stacks,  the property $\mathbf P$ can always be defined  for morphisms between $\mathbf P$-adapted stacks, which will allow us to iterate this procedure in the next section and arrive at the definition of an algebraic stack (with respect to a class of morphisms $\mathbf P$ in $\mathsf S$).

Perhaps remarkably, many of the algebraic stacks we consider in this paper are smooth-adapted to schemes (the class $\mathsf C$ is taken to be $\mathsf S$ and the class of morphisms $\mathbf P$ is taken to be the class of smooth morphisms), so the reader who so desires may safely ignore the question of iteration and proceed after this section to Section~\ref{S:AlgStack2}.   In fact, for the purposes of this paper, we will only need to iterate once, to obtain the class of algebraic spaces, and smooth morphisms between algebraic spaces, as all of the stacks we will work with are smooth-adapted to algebraic spaces.

\begin{dfn}[Stacks $\mathbf P$-adapted to $\mathsf C$] \label{D:AdaptStack} 
Let $\mathsf C\supseteq \mathsf S$ be a class of CFGs over a subcanonical presite $\mathsf S$ such that $\mathsf C$ is stable under isomorphism and fiber product and every CFG over $\mathsf S$ can be covered by objects of $\mathsf C$. 
Let $\mathbf P$ be a property of morphisms between CFGs in  $\mathsf C$ that is stable under base change (Definition \ref{D:StableClass}), stable under composition,   local to the source  (Definition \ref{D:Loc-Src}), and local to the target (Definition \ref{D:LocTargCFG}).
   Then a stack on $\mathsf S$ is \emph{$\mathbf P$-adapted to $\mathsf C$} if it admits a $\mathbf P$-representable (Definition \ref{D:RepMorphProp}) cover (Definition \ref{D:CovStack}) by an object of $\mathsf C$.  

In other words, $\mathcal M$ is $\mathbf P$-adapted to $\mathsf C$ if there exists a $U$ in $\mathsf C$ and a    morphism 
$$
p :  U \longrightarrow \mathcal M
$$ 
such  that for every $\mathcal Z$ in $\mathsf C$ and every $\mathcal Z \to \mathcal M$, we have that $U\times_{\mathcal M}\mathcal Z$ is in $\mathsf C$,  and the morphism $U\times_{\mathcal M}\mathcal Z \to \mathcal Z $ has property  $\mathbf P$, and is covering.     Such a morphism  $p$ is called a \emph{presentation} of  $\mathcal M$.    

When $\mathbf P$, $\mathsf S$, and $\mathsf C$ are clear, we abbreviate the terminology to \emph{adapted stacks}.  A \emph{morphism} of adapted stacks is  a morphism of stacks.  
\end{dfn}
 
\begin{rem}
The presence of both $\mathsf S$ and $\mathsf C$ in the definition is for technical reasons.  We will later wish to iterate this construction, and we do not wish to undertake a definition of Grothendieck topologies on $2$-categories.  Had we given such a definition, we would have simply taken $\mathsf C$ to be a $2$-site and defined $\mathbf P$-adapted stacks to $\mathsf C$ without any reference to $\mathsf S$.  This would be similar to the approach taken in \S\ref{S:AdaptStackPre}; note that in that approach one can avoid $2$-cagetories by simply working with  presites on the category of algebraic spaces (rather than with all the stacks that arise from the adaption process to the presite).  
\end{rem}

\begin{exa}\label{E:DM-AS}
If $\mathsf C=\mathsf S$ (as categories)  and $\mathbf P$ is the class of coverings in the presite $(S,\mathscr T)$, then then 
 $\mathbf P$-adapted stacks are the same as the $\mathsf S$-adapted stacks of Definition \ref{D:SAdaptStack}.  
In particular, 
if $S$ is a scheme and $\mathsf S$ is the \'etale presite on schemes over $S$  (i.e., $(\mathsf S/S)_{\operatorname{et}}$), we take $\mathsf C=\mathsf S$ and $\mathbf P$ to be the class of \'etale covers, then the $\mathbf P$-adapted stacks are  called adapted Deligne--Mumford (F DM) stacks.      
The $\mathbf P$-adapted stacks that are representable by sheaves are   called algebraic  spaces (see \S \ref{S:AlgStk}).
 \end{exa}

\begin{exa}
If $\mathsf S = \mathsf C$ is the presite of complex analytic spaces, and $\mathbf P$ is the class of smooth morphisms, with the pretopology induced from  usual open covers, then one obtains a notion of an adapted complex analytic stack.  
Similarly, if $\mathsf S = \mathsf C$ is the presite of topological spaces with the presite induced from usual open covers, then one obtains a notion of an adapted topological  stack.  
There are other definitions of complex analytic and topological  stacks appearing in the literature; we do not pursue the relationship among the definitions.  
\end{exa}

\begin{lem} \label{L:fp-adapted}
Suppose that $\mathcal X \rightarrow \mathcal Z$ and $\mathcal Y \rightarrow \mathcal Z$ are morphisms of $\mathbf P$-adapted stacks.  Then $\mathcal X \mathbin\times_{\mathcal Z} \mathcal Y$ is a $\mathbf P$-adapted stack.
\end{lem}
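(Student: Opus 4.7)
The plan is to build a presentation of $\mathcal X \mathbin\times_{\mathcal Z} \mathcal Y$ by combining presentations of the three stacks and then exploiting the stability properties of $\mathbf P$-representable coverings (base change, composition, fiber product) inherited from Lemma~\ref{L:Rep}, together with the corresponding properties of coverings from Lemma~\ref{L:FPCompCov}.

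Concretely, let $u : U \to \mathcal X$, $v : V \to \mathcal Y$, and $w : W \to \mathcal Z$ be presentations, so $U, V, W \in \mathsf C$ and each of $u,v,w$ is a $\mathbf P$-representable covering. I will propose that
\begin{equation*}
T \;:=\; U \mathbin\times_{\mathcal Z} W \mathbin\times_{\mathcal Z} V
\end{equation*}
is a presentation of $\mathcal X \mathbin\times_{\mathcal Z} \mathcal Y$. First I would explain that $T$ lies in $\mathsf C$: applying the $\mathbf P$-representability of $w : W \to \mathcal Z$ to the composite morphisms $U \to \mathcal X \to \mathcal Z$ and $V \to \mathcal Y \to \mathcal Z$ yields $U \mathbin\times_{\mathcal Z} W$ and $W \mathbin\times_{\mathcal Z} V$ in $\mathsf C$, each equipped with a $\mathbf P$-morphism to $W \in \mathsf C$; then the identity
\begin{equation*}
T \;=\; (U \mathbin\times_{\mathcal Z} W) \mathbin\times_W (W \mathbin\times_{\mathcal Z} V)
\end{equation*}
expresses $T$ as a fiber product of objects of $\mathsf C$ over an object of $\mathsf C$, which belongs to $\mathsf C$ by stability of $\mathsf C$ under fiber product.

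Next I would verify that the natural morphism $T \to \mathcal X \mathbin\times_{\mathcal Z} \mathcal Y$ is $\mathbf P$-representable and covering. For any $\mathcal D \in \mathsf C$ with a map $\mathcal D \to \mathcal X \mathbin\times_{\mathcal Z} \mathcal Y$ (equivalently, compatible maps to $\mathcal X$, $\mathcal Y$, and $\mathcal Z$), a short computation gives a canonical identification
\begin{equation*}
T \mathbin\times_{\mathcal X \times_{\mathcal Z} \mathcal Y} \mathcal D \;\simeq\; (U \mathbin\times_{\mathcal X} \mathcal D) \mathbin\times_{\mathcal D} (W \mathbin\times_{\mathcal Z} \mathcal D) \mathbin\times_{\mathcal D} (V \mathbin\times_{\mathcal Y} \mathcal D).
\end{equation*}
Each factor lies in $\mathsf C$ (by $\mathbf P$-representability of $u$, $w$, $v$ applied to $\mathcal D$) and maps to $\mathcal D$ by a morphism with property $\mathbf P$ that is moreover covering. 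Since $\mathbf P$ and coverings are both stable under base change, composition, and fiber product (Lemma~\ref{L:Rep}, Lemma~\ref{L:FPCompCov}, together with $\mathbf P$-stability), the projection of the above fiber product onto $\mathcal D$ is a $\mathbf P$-covering, which is exactly what is required.

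The main potential pitfall is purely bookkeeping: one must be careful that the relevant fiber products really are taken over the correct CFGs so that the rewriting $U \mathbin\times_{\mathcal Z} W \mathbin\times_{\mathcal Z} V \simeq (U \mathbin\times_{\mathcal Z} W) \mathbin\times_W (W \mathbin\times_{\mathcal Z} V)$ and its base-changed analogue hold, and that the inputs to the $\mathbf P$-representability hypotheses are genuinely objects of $\mathsf C$ rather than arbitrary CFGs. No new technology is needed beyond what has already been established, so aside from writing the fiber product manipulations cleanly, the argument should proceed without essential obstacle.
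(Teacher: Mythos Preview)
Your proposal is correct and follows essentially the same route as the paper: both take the triple fiber product $U \mathbin\times_{\mathcal Z} W \mathbin\times_{\mathcal Z} V$ of the three presentations and argue it presents $\mathcal X \mathbin\times_{\mathcal Z} \mathcal Y$. The paper's proof is terser---it simply notes that the map to $\mathcal X \mathbin\times_{\mathcal Z} \mathcal Y \mathbin\times_{\mathcal Z} \mathcal Z = \mathcal X \mathbin\times_{\mathcal Z} \mathcal Y$ is a composition of base changes of representable $\mathbf P$-covers---whereas you unpack the base-change formula over a test object $\mathcal D \in \mathsf C$ explicitly, but the content is the same.
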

\begin{proof}
Choose $\mathbf P$ covers $X_0 \rightarrow \mathcal X$, $Y_0 \rightarrow \mathcal Y$, and $Z_0 \rightarrow \mathcal Z$.  Then the map 
\begin{equation*}
X_0 \mathbin\times_{\mathcal Z} Y_0 \mathbin\times_{\mathcal Z} Z_0 \rightarrow \mathcal X \mathbin\times_{\mathcal Z} \mathcal Y \mathbin\times_{\mathcal Z} \mathcal Z = \mathcal X \mathbin\times_{\mathcal Z} \mathcal Y
\end{equation*}
is a composition of changes of base of representable $\mathbf P$ covers, hence is a representable $\mathbf P$ cover.
\end{proof}

\subsection{Iterated adaptation}
\label{S:P-adapted}

In order to iterate the definition of $\mathbf P$-adapted stacks, we must extend the property $\mathbf P$ to morphisms representable by adapted stacks.  The main content of this section is that there is a unique way to do this so that $\mathbf P$ remains stable under base change and composition, and local to the source and target.

\begin{rem}
Note that Lemma~\ref{L:intrinsic} gives a canonical way of extending the definition of smooth morphisms to all morphisms of CFGs over the category of schemes in a way that is still stable under composition and base change and local to source and target.  This definition necessarily agrees with Definition~\ref{D:P-adapted}, below, which is valid for an arbitrary class of morphisms that is stable under composition to base change and local to source and target.  The reader should feel free to skip to Definition~\ref{D:alg-stack} if he or she is only interested in algebraic stacks over schemes and is willing to rely on Lemma~\ref{L:intrinsic}.
\end{rem}

\begin{dfn}[Bootstrapping property $\mathbf P$] \label{D:P-adapted}
Let $\mathsf C\supseteq \mathsf S$ be a class of CFGs over a subcanonical presite $\mathsf S$ such that $\mathsf C$ is stable under isomorphism and fiber product and every CFG over $\mathsf S$ can be covered by objects of $\mathsf C$. 
Let $\mathbf P$ be a property of morphisms in $\mathsf C$ that is stable under base change (Definition \ref{D:StableClass}), stable under composition,   local to the source  (Definition \ref{D:Loc-Src}), and local to the target (Definition \ref{D:LocTargCFG}).
Suppose that $\mathcal X$ and $\mathcal Y$ are $\mathbf P$-adapted stacks, so that there are  presentations $X_0 \rightarrow \mathcal X$ and $Y_0 \rightarrow \mathcal Y$; i.e., $\mathbf P$-representable covers from objects in $\mathsf C$.  We say that \emph{$\mathcal X \rightarrow \mathcal Y$ is in class $\mathbf P$} if the map $X_0 \mathbin\times_{\mathcal Y} Y_0 \rightarrow Y_0$ is in class $\mathbf P$.
\end{dfn}

\noindent For brevity in what follows, we will sometimes write $\mathcal X\to \mathcal Y$ is $\mathbf P$ if it is in class $\mathbf P$. 

\begin{lem}
The definition of the class $\mathbf P$ of morphisms between $\mathbf P$-adapted stacks given above does not depend on the choices of presentations  $X_0 \rightarrow \mathcal X$ and $Y_0 \rightarrow \mathcal Y$.
\end{lem}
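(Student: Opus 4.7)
The strategy is to show that replacing either of the two presentations by a common refinement does not change the condition, which will give independence by symmetry. Suppose we are given two presentations $p: X_0 \to \mathcal X$, $p': X_0' \to \mathcal X$ of $\mathcal X$ and two presentations $q: Y_0 \to \mathcal Y$, $q': Y_0' \to \mathcal Y$ of $\mathcal Y$. Set $X_0'' = X_0 \times_{\mathcal X} X_0'$ and $Y_0'' = Y_0 \times_{\mathcal Y} Y_0'$. By Definition~\ref{D:AdaptStack} the morphisms $p, p', q, q'$ are all $\mathbf P$-representable covers, so $X_0''$ and $Y_0''$ both lie in $\mathsf C$ (because $\mathsf C$-representable morphisms pull back within $\mathsf C$) and the four projections $X_0'' \to X_0$, $X_0'' \to X_0'$, $Y_0'' \to Y_0$, $Y_0'' \to Y_0'$ are in $\mathbf P$ and covering, by stability of $\mathbf P$ and of coverings (Lemma~\ref{L:FPCompCov}) under base change.

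It suffices to check that $X_0'' \times_{\mathcal Y} Y_0'' \to Y_0''$ is in class $\mathbf P$ if and only if $X_0 \times_{\mathcal Y} Y_0 \to Y_0$ is, since the same argument applied to $(X_0', Y_0')$ then chains through the common refinement. We split this into two steps: first compare $(X_0, Y_0)$ with $(X_0, Y_0'')$, then compare $(X_0, Y_0'')$ with $(X_0'', Y_0'')$.

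For the first step, observe that we have a cartesian square
\begin{equation*}
\xymatrix{
X_0 \times_{\mathcal Y} Y_0'' \ar[r] \ar[d] & X_0 \times_{\mathcal Y} Y_0 \ar[d] \\
Y_0'' \ar[r] & Y_0
}
\end{equation*}
in which the bottom arrow is a composition $Y_0'' \to Y_0' \to \mathcal Y$ pulled back along $Y_0 \to \mathcal Y$, hence is in $\mathbf P$ and covering. By stability of $\mathbf P$ under base change, if $X_0 \times_{\mathcal Y} Y_0 \to Y_0$ is in $\mathbf P$, then so is $X_0 \times_{\mathcal Y} Y_0'' \to Y_0''$. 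Conversely, since $Y_0'' \to Y_0$ is covering and $\mathbf P$ is local on the target, the converse implication also holds.

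For the second step, note the canonical identification $X_0'' \times_{\mathcal Y} Y_0'' = X_0' \times_{\mathcal X} (X_0 \times_{\mathcal Y} Y_0'')$, so the projection $X_0'' \times_{\mathcal Y} Y_0'' \to X_0 \times_{\mathcal Y} Y_0''$ is the base change of the $\mathbf P$-cover $X_0' \to \mathcal X$ and is therefore itself $\mathbf P$ and covering. If $X_0 \times_{\mathcal Y} Y_0'' \to Y_0''$ is in $\mathbf P$, then composing with $X_0'' \times_{\mathcal Y} Y_0'' \to X_0 \times_{\mathcal Y} Y_0''$ and using stability under composition shows $X_0'' \times_{\mathcal Y} Y_0'' \to Y_0''$ is in $\mathbf P$. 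For the converse, the covering morphism $X_0'' \times_{\mathcal Y} Y_0'' \to X_0 \times_{\mathcal Y} Y_0''$ refines a covering family (in the pretopology) of $X_0 \times_{\mathcal Y} Y_0''$ on which the composition to $Y_0''$ is in $\mathbf P$, so locality on the source gives that $X_0 \times_{\mathcal Y} Y_0'' \to Y_0''$ is in $\mathbf P$. The main obstacle is purely bookkeeping — keeping straight which projection is a base change of which covering $\mathbf P$-morphism; the properties of $\mathbf P$ assumed in Definition~\ref{D:P-adapted} are exactly tailored so that each invocation works.
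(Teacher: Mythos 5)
Your proof is correct and follows essentially the same route as the paper's: both pass through fiber products of the presentations and invoke exactly the stability of $\mathbf P$ under base change and composition together with locality to the source (and, in your first step, to the target). The only differences are cosmetic — the paper varies $X_0$ and $Y_0$ one at a time, while you chain through the simultaneous common refinement $(X_0'',Y_0'')$ and use target-locality where the paper uses a composition-plus-source-locality diagram — so nothing needs to change.
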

\begin{proof}
Suppose that $X'_0 \rightarrow \mathcal X$ is a different presentation.  Then let $X''_0 = X'_0 \mathbin\times_{\mathcal X} X_0$.  The projection $X''_0 \rightarrow X'_0$ is a $\mathbf P$ cover, since $\mathbf P$ covers are stable under base change, and $X_0 \rightarrow \mathcal X$ is a $\mathbf P$ cover.  Pulling back to $Y_0$ over $\mathcal Y$, we have a  diagram:
\begin{equation*} \xymatrix{
X''_0 \mathbin\times_{\mathcal Y} Y_0 \ar[r]^f \ar[d]_g & X'_0 \mathbin\times_{\mathcal Y} Y_0 \ar[d]^p \\
X_0 \mathbin\times_{\mathcal Y} Y_0 \ar[r]^-q & Y_0
} \end{equation*}
By assumption $q$ is $\mathbf P$, and $g$ is the base change of a $\mathbf P$ morphism, so it is $\mathbf P$.  Therefore $qg = pf$ is $\mathbf P$.  But $\mathbf P$ is local to the source, and $f$ is the base change of the $\mathbf P$ cover, $X''_0 \rightarrow X'_0$, so $p$ is $\mathbf P$, as required.  This proves the independence of the choice of $X_0$.

Now suppose that $Y'_0 \rightarrow \mathcal Y$ is another presentation.  Again we form $Y''_0 = Y'_0 \mathbin\times_{\mathcal Y} Y_0$ and note that the two projections are $\mathbf P$ covers, as is the map $Y''_0 \rightarrow \mathcal Y$.  Now consider the diagram with cartesian squares:
\begin{equation*} \xymatrix{
X_0 \mathbin\times_{\mathcal Y} Y_0 \ar[r]^-f & Y_0 \\
X_0 \mathbin\times_{\mathcal Y} Y''_0 \ar[r]^-h \ar[u] \ar[d]_p & Y''_0 \ar[d]^q \ar[u] \\
X_0 \mathbin\times_{\mathcal Y} Y'_0 \ar[r]^-g & Y'_0
} \end{equation*}
The map $f$ is $\mathbf P$, by assumption, so $h$ is $\mathbf P$ as well.  As $q$ is $\mathbf P$, so is $qh = gp$.  But $p$ is a $\mathbf P$ cover (being the base change of $q$), so it follows that $g$ is $\mathbf P$, as required.
\end{proof}

\begin{lem}
The property $\mathbf P$ of morphisms of stacks admitting $\mathbf P$-representable covers is stable under composition, stable under base change, local to the target, and local to the source.
\end{lem}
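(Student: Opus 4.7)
The plan is to treat each of the four properties in turn by choosing presentations of the stacks involved, reducing the assertion to a statement about morphisms in $\mathsf C$, and invoking the corresponding assumed property of $\mathbf P$ on $\mathsf C$. The main tools will be the independence of Definition \ref{D:P-adapted} from the choice of presentation (the preceding lemma), the construction of presentations of fiber products from presentations of the factors (as in Lemma \ref{L:fp-adapted}), and the stability of $\mathbf P$-representable covers under base change, composition, and fiber product (a variant of Lemma \ref{L:Rep}).

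For stability under \emph{base change}, given $\mathcal X \to \mathcal Y$ of class $\mathbf P$ and $\mathcal Z \to \mathcal Y$ arbitrary with presentations $X_0, Y_0, Z_0$, I would observe that $W_0 := X_0 \mathbin\times_{\mathcal Y} Y_0 \mathbin\times_{\mathcal Y} Z_0$ presents $\mathcal W := \mathcal X \mathbin\times_{\mathcal Y} \mathcal Z$, that $T := Y_0 \mathbin\times_{\mathcal Y} Z_0$ is a $\mathbf P$-cover of $Z_0$ and hence itself a presentation of $\mathcal Z$, and that the natural map $W_0 \to T$ is the base change of the hypothesized $\mathbf P$ morphism $X_0 \mathbin\times_{\mathcal Y} Y_0 \to Y_0$. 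Base-change stability of $\mathbf P$ in $\mathsf C$ then forces $W_0 \to T$ to be $\mathbf P$, which by independence of presentation verifies that $\mathcal W \to \mathcal Z$ is $\mathbf P$.

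For stability under \emph{composition}, if $\mathcal X \to \mathcal Y \to \mathcal Z$ are successive morphisms of class $\mathbf P$, then with presentations $X_0, Y_0, Z_0$ the morphism
\begin{equation*}
X_0 \mathbin\times_{\mathcal Y} Y_0 \mathbin\times_{\mathcal Z} Z_0 \longrightarrow Y_0 \mathbin\times_{\mathcal Z} Z_0 \longrightarrow Z_0
\end{equation*}
is a composition of $\mathbf P$ morphisms in $\mathsf C$ (the first obtained by base change of $X_0 \mathbin\times_{\mathcal Y} Y_0 \to Y_0$, the second being the hypothesis on $\mathcal Y \to \mathcal Z$); identifying its source as a presentation of $\mathcal X$ through the intermediate cover and invoking independence of presentation yields the claim. \emph{Locality to the target} proceeds similarly: given a cover $\mathcal W \to \mathcal Y$ such that $\mathcal X \mathbin\times_{\mathcal Y} \mathcal W \to \mathcal W$ is $\mathbf P$, I would fix a presentation $W_0$ of $\mathcal W$, use it with $Y_0$ to produce the $\mathbf P$-cover $W_0 \mathbin\times_{\mathcal Y} Y_0 \to Y_0$, and apply target-locality of $\mathbf P$ in $\mathsf C$ to conclude that the diagram witnessing $\mathbf P$ for $\mathcal X \to \mathcal Y$ has the required property.

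\emph{Locality to the source} is the dual assertion: given a cover $\mathcal U \to \mathcal X$ with $\mathcal U \to \mathcal Y$ in class $\mathbf P$, and a presentation $U_0$ of $\mathcal U$, the composition $U_0 \to \mathcal X$ is itself a presentation of $\mathcal X$, and the hypothesis translates, after the appropriate fibered pullback over $Y_0$, into a $\mathbf P$ morphism between objects of $\mathsf C$ which covers the test morphism for $\mathcal X \to \mathcal Y$; source-locality of $\mathbf P$ in $\mathsf C$ then concludes. The genuine work throughout lies not in any deep idea but in careful bookkeeping: matching iterated fiber products up to canonical isomorphism, checking each such product remains in $\mathsf C$ via the closure assumption on $\mathsf C$ and the $\mathbf P$-representability of presentations, and choosing the right presentation at each step so that the diagrams align. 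That bookkeeping, rather than any conceptual subtlety, is the main obstacle.
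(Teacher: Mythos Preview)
Your overall strategy---choose presentations, reduce to a morphism in $\mathsf C$, and invoke the corresponding property of $\mathbf P$ there---is exactly the paper's approach. Your composition argument is actually slightly slicker than the paper's: recognizing $X_0 \mathbin\times_{\mathcal Y} Y_0$ as a presentation of $\mathcal X$ lets you identify the source of your displayed map with $(X_0 \mathbin\times_{\mathcal Y} Y_0) \mathbin\times_{\mathcal Z} Z_0$, which is precisely the object appearing in Definition~\ref{D:P-adapted}; the paper instead keeps the original presentation $X_0$ and uses source-locality in $\mathsf C$ to descend from $X_0 \mathbin\times_{\mathcal Y} Y_0 \mathbin\times_{\mathcal Z} Z_0 \to Z_0$ down to $X_0 \mathbin\times_{\mathcal Z} Z_0 \to Z_0$.

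There is, however, a genuine gap in your base-change step. Definition~\ref{D:P-adapted} says $\mathcal W \to \mathcal Z$ is $\mathbf P$ when $A_0 \mathbin\times_{\mathcal Z} B_0 \to B_0$ is $\mathbf P$ for presentations $A_0$ of $\mathcal W$ and $B_0$ of $\mathcal Z$; it does \emph{not} say that any $\mathbf P$ morphism $A_0 \to B_0$ lying over $\mathcal W \to \mathcal Z$ suffices. Your map $W_0 \to T$ is of the latter form, and there is no presentation $A_0 \in \mathsf C$ of $\mathcal W$ with $A_0 \mathbin\times_{\mathcal Z} T \cong W_0$ (the obvious candidate $X_0 \mathbin\times_{\mathcal Y} Z_0$ need not lie in $\mathsf C$, and in any case produces an extra $Z_0 \mathbin\times_{\mathcal Z} Z_0$ factor). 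The paper's argument keeps track of this fiber-product structure by base-changing both presentations $X_0, Y_0$ along $\mathcal Y' \to \mathcal Y$ and observing that $X'_0 \mathbin\times_{\mathcal Y'} Y'_0 \to Y'_0$ is literally the base change of the hypothesized $\mathbf P$ morphism. One clean fix for your version: keep $B_0 = Z_0$, take $A_0 = W_0$, and show directly that $W_0 \mathbin\times_{\mathcal Z} Z_0 \to Z_0$ factors as a base change of your $W_0 \to T$ followed by two $\mathbf P$-covers coming from the presentations $Y_0 \to \mathcal Y$ and $Z_0 \to \mathcal Z$. Your sketches for source- and target-locality are brief enough that the same caution applies: make sure each time that the map you produce is literally of the form $A_0 \mathbin\times_{\mathcal Z} B_0 \to B_0$, not merely a $\mathbf P$ map between presentations.
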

\begin{proof}
Consider a composition $\mathcal X \rightarrow \mathcal Y \rightarrow \mathcal Z$ where $X_0 \rightarrow X$, $Y_0 \rightarrow Y$, and $Z_0 \rightarrow Z$ are all $\mathbf P$-representable covering maps, where $\mathcal X \rightarrow \mathcal Y$ is $\mathbf P$.  Form the following diagram:
\begin{equation*} \xymatrix@R=20pt@!C=4pc{
& & X_0 \mathbin\times_{\mathcal Y} Y_0 \mathbin\times_{\mathcal Z} Z_0 \ar[dr]^(.6)f_(.3){\mathbf P}_(.6){(\mathbf{Cov})} \ar[dl]_(.5){\mathbf P}^{\mathbf{Cov}} \\
& X_0 \mathbin\times_{\mathcal Y} Y_0 \ar[dr]^{\mathbf P}_{(\mathbf{Cov})} \ar[dl]^{\mathbf{Cov}}_{\mathbf P} & & Y_0 \mathbin\times_{\mathcal Z} Z_0 \ar[dl]_(.6){\mathbf{Cov}}^{\mathbf P} \ar[dr]^g \\
X_0 \ar[d]_{\mathbf P}^{\mathbf{Cov}} & & Y_0 \ar[d]_{\mathbf P}^{\mathbf{Cov}} & & Z_0 \ar[d]_{\mathbf P}^{\mathbf{Cov}} \\
\mathcal X \ar[rr]^{(\mathbf{Cov})} & & \mathcal Y \ar[rr] & & \mathcal Z
} \end{equation*}
Morphisms that are $\mathbf P$ or covering without any further assumptions are labelled with a $\mathbf P$ or $\mathbf{Cov}$.  All of these morphisms are $\mathsf C$-representable.  If $\mathcal X \rightarrow \mathcal Y$ is covering then the other arrows labelled $(\mathbf{Cov})$ are covering as well.  

To prove stability under composition, suppose that $\mathcal Y \rightarrow \mathcal Z$ is $\mathbf P$.  Then $g$ is $\mathbf P$, so $gf$ is $\mathbf P$. 

On the other hand, we have a cartesian diagram:
\begin{equation*} \xymatrix@!C=3pc@R=1pc{
X_0 \mathbin\times_{\mathcal Y} Y_0 \mathbin\times_{\mathcal Z} Z_0 \ar[rr]_(.65){\mathbf{Cov}}_(.4){\mathbf{P}}^(.525){p} \ar[d] & & X_0 \mathbin\times_{\mathcal Z} Z_0 \ar[d] \ar[r]_-{\mathbf{Cov}}^-q & Z_0 \\
Y_0 \ar[rr]^(.35){\mathbf P}^(.65){\mathbf{Cov}}  & & \mathcal Y
} \end{equation*}
Since $Y_0$ is $\mathbf P$ over $\mathcal Y$, we deduce that $p$ is $\mathbf P$ and therefore that $qp = gf$ is $\mathbf P$.  But $p$ is covering, so by locality to the source, $q$ is $\mathbf P$.  This means that $\mathcal X \rightarrow \mathcal Z$ is $\mathbf P$, by definition.  This proves stability under composition.

To prove locality to the source, we use the same diagrams.  Suppose that $\mathcal X \rightarrow \mathcal Z$ is $\mathbf P$.  Then $q$ is $\mathbf P$, so $qp = gf$ is $\mathbf P$.  But then $f$ is covering and $\mathbf P$, so we deduce that $g$ is $\mathbf P$, as required.

Now we prove stability under base change.  Consider a cartesian diagram
\begin{equation} \label{E:base-change-local} \vcenter{\xymatrix{
\mathcal X' \ar[r] \ar[d] & \mathcal X \ar[d] \\ 
\mathcal Y' \ar[r] & \mathcal Y
}} \end{equation}
where $\mathcal X$ is $\mathbf P$ over $\mathcal Y$.  Let $\mathcal X' \rightarrow \mathcal X$ be the base change.  Suppose that $X_0 \rightarrow \mathcal X$ and $Y_0 \rightarrow \mathcal Y$ are $\mathbf P$ covers.  Then $p : X_0 \mathbin\times_{\mathcal Y} Y_0 \rightarrow Y_0$ is $\mathbf P$, by assumption.  Changing base to $\mathcal Y'$, we get $\mathbf P$ covers $X'_0 \rightarrow \mathcal X$ and $Y'_0 \rightarrow \mathcal Y$.  The map $q : X'_0 \mathbin\times_{\mathcal Y'} Y'_0 \rightarrow Y'_0$ is the base change of $p$.  But both $X_0 \mathbin\times_{\mathcal Y} Y_0$ and $Y_0$ are in $\mathsf C$, by assumption, and $\mathbf P$ is stable under base change for morphisms in $\mathsf C$.  Therefore $q$ is $\mathbf P$, from which it follows that $\mathcal X' \rightarrow \mathcal Y'$ is $\mathbf P$, by definition.

Finally, we prove locality on the target.  Suppose that we have a cartesian diagram~\eqref{E:base-change-local}.  Assume that $\mathcal Y'$ is a cover of $\mathcal Y$ and that $\mathcal X'$ is $\mathbf P$ over $\mathcal Y$.  Let $X_0 \rightarrow \mathcal X$ and $Y_0 \rightarrow \mathcal Y$ be $\mathbf P$ covers.  Assume that $\mathcal X'$ is $\mathbf P$ over $\mathcal Y'$.  Set $Y'_0 = Y_0 \mathbin\times_{\mathcal Y} \mathcal Y'$ and $X'_0 = X_0 \mathbin\times_{\mathcal X} \mathcal X'$.  Then $X'_0 \mathbin\times_{\mathcal Y'} Y'_0 \rightarrow Y'_0$ is the base change of $X_0 \mathbin\times_{\mathcal Y} Y_0$.  By assumption $Y'_0 \rightarrow Y_0$ is a cover, so by locality to the target for morphisms between objects of $\mathsf C$, we deduce that $X_0 \mathbin\times_{\mathcal Y} Y_0$ is $\mathbf P$ over $Y_0$.  This means that $\mathcal X$ is $\mathbf P$ over $\mathcal Y$, by definition.
\end{proof}

\begin{lem}
Given a class of morphisms $\mathbf P'$ of $\mathbf P$-adapted stacks that extends the class of morphisms $\mathbf P$ to the class of $\mathbf P$-adapted stacks, and which is 
 stable under base change and composition, and local to the source and target,  $\mathbf P'=\mathbf P$. 
\end{lem}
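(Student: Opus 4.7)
The plan is to show the two inclusions $\mathbf P \subseteq \mathbf P'$ and $\mathbf P' \subseteq \mathbf P$ among morphisms of $\mathbf P$-adapted stacks by reducing each assertion to the fact that $\mathbf P$ and $\mathbf P'$ already coincide on morphisms between objects of $\mathsf C$. Given a morphism $\mathcal X \to \mathcal Y$ of $\mathbf P$-adapted stacks, I would pick presentations $p : X_0 \to \mathcal X$ and $q : Y_0 \to \mathcal Y$ as in Definition~\ref{D:AdaptStack}, and form the fiber product $W := X_0 \times_{\mathcal Y} Y_0$. Because $q$ is $\mathsf C$-representable, $W$ lies in $\mathsf C$; write $f : W \to Y_0$ and $h : W \to X_0$ for the projections. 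The map $h$, being a base change of $q$, is a covering in class $\mathbf P$, while by Definition~\ref{D:P-adapted} the assertion ``$\mathcal X \to \mathcal Y$ is in $\mathbf P$'' is equivalent to ``$f$ is in $\mathbf P$''.

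For $\mathbf P' \subseteq \mathbf P$, assume $\mathcal X \to \mathcal Y$ lies in $\mathbf P'$. Since $p$ is a presentation it lies in $\mathbf P$, and hence in $\mathbf P'$ by the extension hypothesis; by composition-stability of $\mathbf P'$ the composite $X_0 \to \mathcal X \to \mathcal Y$ lies in $\mathbf P'$, and by base-change stability its pullback along $q$, namely $f : W \to Y_0$, lies in $\mathbf P'$ as well. But $f$ is a morphism between objects of $\mathsf C$, where $\mathbf P' = \mathbf P$ by hypothesis, so $f \in \mathbf P$ and thus $\mathcal X \to \mathcal Y \in \mathbf P$. Conversely, for $\mathbf P \subseteq \mathbf P'$, assume $\mathcal X \to \mathcal Y$ lies in $\mathbf P$; then $f \in \mathbf P \subseteq \mathbf P'$, and as $q \in \mathbf P \subseteq \mathbf P'$, the composition $q \circ f : W \to \mathcal Y$ lies in $\mathbf P'$. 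This composition equals $(\mathcal X \to \mathcal Y) \circ (p \circ h)$, and $p \circ h : W \to \mathcal X$ is covering as a composition of coverings (Lemma~\ref{L:FPCompCov}). Regarding $p \circ h$ as a one-term covering family of $\mathcal X$ and appealing to locality to the source of $\mathbf P'$ yields $\mathcal X \to \mathcal Y \in \mathbf P'$.

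The main obstacle---which is really just a matter of unwinding definitions---is ensuring that the extracted morphism $f$ lies in $\mathsf C$ so that the extension hypothesis may be invoked, and that $p \circ h$ is a genuine cover of $\mathcal X$ so that locality to the source can be applied. Both points are immediate from the $\mathsf C$-representability of presentations and the stability of coverings under base change and composition (Lemma~\ref{L:FPCompCov}); the essential work is then packaged into the closure properties of $\mathbf P'$ assumed in the statement, with stability under composition and base change driving the inclusion $\mathbf P' \subseteq \mathbf P$ and locality to the source driving the reverse inclusion.
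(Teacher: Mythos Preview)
The paper omits the proof entirely (``We leave the proof to the reader''), so there is nothing to compare against; your argument is essentially the intended one and is correct in outline.

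There is, however, one small gap. You assert that ``since $p$ is a presentation it lies in $\mathbf P$, and hence in $\mathbf P'$ by the extension hypothesis,'' and similarly for $q$. But the extension hypothesis only says that $\mathbf P'$ agrees with $\mathbf P$ on morphisms between objects of $\mathsf C$, and $p : X_0 \to \mathcal X$ is not such a morphism when $\mathcal X \notin \mathsf C$. To justify $p \in \mathbf P'$ you must instead use locality of $\mathbf P'$ on the target: the cover $p : X_0 \to \mathcal X$ pulls $p$ back to $X_0 \times_{\mathcal X} X_0 \to X_0$, which is a morphism in $\mathsf C$ lying in $\mathbf P$ (by $\mathbf P$-representability of $p$), hence in $\mathbf P'$; locality to the target then gives $p \in \mathbf P'$. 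The same fix handles $q$. With this correction your two inclusions go through exactly as written.
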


\begin{proof}
We leave the proof to the reader.
\end{proof}

\subsection{Algebraic stacks}

The lemmas in the previous section imply that the property $\mathbf P$ makes sense for stacks that are $\mathbf P$-adapted to $\mathsf C$.  Replacing $\mathsf C$ with the category of stacks on $\mathsf S$ that are $\mathbf P$-adapted to $\mathsf C$, we may iterate the procedure, enlarging the class of stacks under consideration and ensuring that the property $\mathbf P$ makes sense for them at each step.  Taking the union of all of these classes, we arrive at the class of algebraic stacks:

\begin{dfn}[Algebraic stack] \label{D:alg-stack}
Let $\mathbf P$ be a property of morphisms of $\mathsf S$ that is  stable under base change, stable under composition, local to the target, and local to the source.  An \emph{algebraic stack} (with respect to the property $\mathbf P$) is a member of the smallest class $\mathsf C$ of stacks on $\mathsf S$  such that $\mathsf C\supseteq \mathsf S$, $\mathsf C$ has  property $\mathbf P$ defined on it,  $\mathbf P$ is 
 stable under base change and composition, and local to the source and target,  and the class of stacks on $\mathsf S$ that are   $\mathbf P$-adapted to $\mathsf C$  (Definition \ref{D:AdaptStack}) is     $\mathsf C$.
\end{dfn}

\begin{dfn}[Deligne--Mumford stacks and algebraic spaces]\label{D:AlgDMAlgSp}
A stack on the category of schemes is called an \emph{algebraic stack} if it is algebraic with respect to the \'etale topology and the class of smooth maps.  It is called a \emph{Deligne--Mumford stack} if it is algebraic with respect to the \'etale topology and the class of \'etale maps.  It is called a \emph{algebraic space} if it is an algebraic stack and it is representable by a sheaf.
\end{dfn}

\begin{rem}
One can also use the usual topology on complex analytic spaces and take $\mathbf P$ again to be smooth morphisms.  Much of what we do here in the algebraic category works analytically as well, but we will not address the analytic category directly in what follows.
Likewise, there is a class of `algebraic stacks' on the category of manifolds, obtained by taking the usual topology on the category of manifolds and taking $\mathbf P$ to be the class of submersions.
\end{rem}

\subsection{Fiber products and bootstrapping}
\label{S:RepAlgStack}

\begin{cor} \label{C:fp-alg}
Fiber products of algebraic stacks are algebraic.
\end{cor}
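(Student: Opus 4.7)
The plan is to leverage Lemma \ref{L:fp-adapted}, which already provides the analogous closure statement for the single level of adaptation, and to propagate it through the transfinite iteration implicit in Definition \ref{D:alg-stack}.

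First I would make the iteration explicit. Set $\mathsf{C}_0 = \mathsf{S}$, and for each ordinal $\alpha > 0$, let $\mathsf{C}_\alpha$ denote the class of stacks on $\mathsf{S}$ that are $\mathbf{P}$-adapted to $\mathsf{D}_{<\alpha} := \bigcup_{\beta < \alpha} \mathsf{C}_\beta$ in the sense of Definition \ref{D:AdaptStack}, where $\mathbf{P}$ has been extended to $\mathsf{D}_{<\alpha}$ using Definition \ref{D:P-adapted}. The class of algebraic stacks coincides with $\bigcup_\alpha \mathsf{C}_\alpha$: any stack in this union is $\mathbf{P}$-adapted to the union (its presentation lives in some $\mathsf{C}_\beta$), any $\mathbf{P}$-adapted stack to the union lies in some $\mathsf{C}_\alpha$, and the union is minimal with these properties, so it satisfies the universal property of Definition \ref{D:alg-stack}.

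The core step is the claim that each $\mathsf{C}_\alpha$ is closed under fiber products, proved by transfinite induction. For $\alpha = 0$, this is the hypothesis that $\mathsf{S}$ admits fiber products. At a limit ordinal, only finitely many objects participate in any given fiber product, so they lie in some common $\mathsf{C}_\beta$ with $\beta < \alpha$, and the inductive hypothesis suffices. The essential successor step $\alpha \mapsto \alpha+1$ is where Lemma \ref{L:fp-adapted} enters, applied with ambient class $\mathsf{D}_{\le\alpha} = \bigcup_{\beta \le \alpha} \mathsf{C}_\beta$. The inductive hypothesis guarantees that $\mathsf{D}_{\le\alpha}$ is closed under fiber products, which is exactly the property needed by the proof of Lemma \ref{L:fp-adapted} when it forms $X_0 \times_{\mathcal{Z}} Y_0 \times_{\mathcal{Z}} Z_0$ from three $\mathbf{P}$-presentations in $\mathsf{D}_{\le\alpha}$. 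The lemma then yields a $\mathbf{P}$-presentation of $\mathcal{X} \times_{\mathcal{Z}} \mathcal{Y}$ by an object of $\mathsf{D}_{\le\alpha}$, so $\mathcal{X} \times_{\mathcal{Z}} \mathcal{Y} \in \mathsf{C}_{\alpha+1}$.

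With the claim in hand the corollary is immediate: given algebraic stacks $\mathcal{X}, \mathcal{Y}, \mathcal{Z}$ and morphisms $\mathcal{X} \to \mathcal{Z}$, $\mathcal{Y} \to \mathcal{Z}$, all three objects lie in some common $\mathsf{C}_\alpha$, and closure of $\mathsf{C}_\alpha$ under fiber products delivers $\mathcal{X} \times_{\mathcal{Z}} \mathcal{Y} \in \mathsf{C}_\alpha$, hence algebraic. The fiber product is a stack to begin with by Lemma \ref{L:FPS}. The only real obstacle is the observation that Lemma \ref{L:fp-adapted} cannot simply be applied to the entire class of algebraic stacks in one stroke, because its proof tacitly uses closure of the ambient class under fiber products; the resolution is to carry that closure through the iteration as an inductive invariant, rather than to assume it for the fixed-point class from the outset.
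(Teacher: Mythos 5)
Your proof is correct and follows essentially the same route as the paper, whose entire argument is that the corollary is immediate by iterating Lemma~\ref{L:fp-adapted} through the construction in Definition~\ref{D:alg-stack}. You have merely made the iteration explicit (ordinal-indexed classes, closure under fiber products carried as an inductive invariant), which is a careful elaboration of the same idea rather than a different approach.
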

\begin{proof}
This is immediate by iteration of Lemma~\ref{L:fp-adapted}.
\end{proof}

It can often be useful to show that a stack is $\mathbf P$-adapted to $\mathsf C$ using  bootstrap methods; in other words, it will often be the case that one can exhibit a morphism from a  stack of interest to a well-known adapted stack, so that it is easy to check the $\mathbf P$-adapted  condition  on the fibers.  The following proposition  states that in this situation the stack is a $\mathbf P$-adapted stack.   
We make this precise in the following remark and proposition.

The class of $\mathbf P$-adapted stacks is stable under isomorphism and fibered products (Lemma~\ref{L:fp-adapted}).  Therefore, from Definition \ref{D:Rep}, we have the notion of a morphism of stacks representable by adapted stacks. 

\begin{pro} \label{P:AdaptRel}
Let $f:\mathcal X\to \mathcal Y$ be a morphism of stacks over a subcanonical presite $\mathsf S$.  Assume that $\mathcal Y$ is a $\mathbf P$-adapted stack and that $f$ is  representable by $\mathbf P$-adapted stacks.  Then $\mathcal X$ is a $\mathbf P$-adapted  stack.
\end{pro}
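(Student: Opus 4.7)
The plan is to construct a presentation of $\mathcal X$ by composing a presentation of $\mathcal Y$ (pulled back to $\mathcal X$) with a presentation of the resulting fiber. First I would choose a presentation $p : Y \to \mathcal Y$ with $Y \in \mathsf C$, which by Definition \ref{D:AdaptStack} is a $\mathbf P$-representable, covering morphism. Form the fiber product
\begin{equation*}
\mathcal Z := \mathcal X \mathop\times_{\mathcal Y} Y,
\end{equation*}
with projection $q : \mathcal Z \to \mathcal X$. Because $q$ is the base change of $p$ along $f$, and $\mathbf P$-representable morphisms as well as covers are stable under base change (Lemma \ref{L:Rep}(2) and Lemma \ref{L:FPCompCov}), the morphism $q$ is itself a $\mathbf P$-representable cover. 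Moreover, since $f$ is representable by $\mathbf P$-adapted stacks and $Y \in \mathsf C$ lies in particular in the class of $\mathbf P$-adapted stacks, $\mathcal Z$ is itself a $\mathbf P$-adapted stack.

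Consequently $\mathcal Z$ admits its own presentation $r : Z \to \mathcal Z$ with $Z \in \mathsf C$, meaning $r$ is a $\mathbf P$-representable cover. I would then take the composite
\begin{equation*}
Z \xrightarrow{r} \mathcal Z \xrightarrow{q} \mathcal X
\end{equation*}
as the candidate presentation of $\mathcal X$. Since $\mathbf P$-representable morphisms compose (Lemma \ref{L:Rep}(1)) and covering morphisms compose (Lemma \ref{L:FPCompCov}), the composite $qr$ is $\mathbf P$-representable and covering. To conclude that $qr$ is a presentation of $\mathcal X$ in the sense of Definition \ref{D:AdaptStack}, I would verify the property on a test object: given any $\mathcal W \in \mathsf C$ and a morphism $\mathcal W \to \mathcal X$, one computes
\begin{equation*}
Z \mathop\times_{\mathcal X} \mathcal W \;\cong\; Z \mathop\times_{\mathcal Z} (\mathcal Z \mathop\times_{\mathcal X} \mathcal W) \;\cong\; Z \mathop\times_{\mathcal Z} (Y \mathop\times_{\mathcal Y} \mathcal W),
\end{equation*}
and each factor sits in $\mathsf C$ with the corresponding projection in class $\mathbf P$ and covering by the previously established properties of $p$ and $r$; the composition in $\mathsf C$ is again in $\mathsf C$ with projection $\mathbf P$ and covering.

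The main obstacle is purely organizational: one must keep straight in which category ($\mathsf C$, the $\mathbf P$-adapted stacks, or the full $2$-category of stacks on $\mathsf S$) each morphism is being tested, because the extension of $\mathbf P$ from $\mathsf C$ to $\mathbf P$-adapted stacks (Definition \ref{D:P-adapted}) is itself a bootstrapped notion. The key enabling fact, that this extension is stable under composition and base change and local to source and target, was established in the lemmas of Section \ref{S:P-adapted} and is exactly what makes the two-step composition $qr$ behave the way a one-step presentation should.
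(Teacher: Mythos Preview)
Your proof is correct and follows essentially the same route as the paper: pull back a presentation of $\mathcal Y$ to obtain a $\mathbf P$-representable cover $\mathcal Z \to \mathcal X$ with $\mathcal Z$ itself $\mathbf P$-adapted, then compose with a presentation of $\mathcal Z$. The paper's version is terser (it stops once $qr$ is seen to be a composition of $\mathbf P$-representable covers, without spelling out the test on $\mathcal W$), but the argument is the same.
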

\begin{proof}
Choose a $\mathbf P$-representable cover $Y_0 \rightarrow \mathcal Y$, with $Y_0 \in \mathsf C$.  Then then set $\mathcal Z = Y_0 \mathbin\times_{\mathcal Y} \mathcal X$.  The projection $\mathcal Z \rightarrow \mathcal X$ is a $\mathbf P$-representable cover.  Furthermore, $\mathcal Z$ is $\mathbf P$-adapted, by assumption, so there is a $\mathbf P$-representable cover $X_0 \rightarrow \mathcal Z$, with $X_0 \in \mathsf C$.  Then $X_0 \rightarrow \mathcal X$ is a composition of $\mathbf P$-representable covers, hence is a $\mathbf P$-representable cover, as required.
\end{proof}

\begin{cor}\label{L:AlgRel}
	Suppose that $\mathcal Y$ is an     algebraic stack and $f : \mathcal X \rightarrow \mathcal Y$ is a morphism of stacks   that is representable by     algebraic stacks.  Then $\mathcal X$ is an     algebraic stack.
\end{cor}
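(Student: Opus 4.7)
The plan is to deduce this as a direct application of Proposition \ref{P:AdaptRel} where the class $\mathsf{C}$ is taken to be the class of algebraic stacks itself. The key conceptual point is that algebraic stacks were defined precisely as the fixed point of the $\mathbf{P}$-adaptation procedure: a stack is algebraic if and only if it is $\mathbf{P}$-adapted to the class of algebraic stacks. So the corollary is really just unwinding this fixed-point definition together with the bootstrapping Proposition \ref{P:AdaptRel}.

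More concretely, I would first verify that the class $\mathsf{C}$ of algebraic stacks satisfies the hypotheses needed to serve as the target class in the definition of adaptation: it contains $\mathsf{S}$, is stable under isomorphism (trivially) and under fiber products (Corollary \ref{C:fp-alg}), and the property $\mathbf{P}$ is defined, stable under base change and composition, and local to source and target on $\mathsf{C}$ (this is precisely what the iterated construction leading to Definition \ref{D:alg-stack} ensures). Moreover, by the minimality in Definition \ref{D:alg-stack}, the class of stacks $\mathbf{P}$-adapted to $\mathsf{C}$ is again $\mathsf{C}$.

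With this framework in place, I would then apply Proposition \ref{P:AdaptRel}. The hypothesis that $\mathcal{Y}$ is algebraic says that $\mathcal{Y}$ is $\mathbf{P}$-adapted to the class of algebraic stacks. The hypothesis that $f : \mathcal{X} \to \mathcal{Y}$ is representable by algebraic stacks says exactly that, for every algebraic stack $\mathcal{Z}$ and every morphism $\mathcal{Z} \to \mathcal{Y}$, the fibered product $\mathcal{X} \mathbin{\times}_{\mathcal{Y}} \mathcal{Z}$ is algebraic, hence $\mathbf{P}$-adapted to $\mathsf{C}$. Thus the hypotheses of Proposition \ref{P:AdaptRel} are met with $\mathsf{C}$ equal to the class of algebraic stacks, and the conclusion of that proposition gives that $\mathcal{X}$ is $\mathbf{P}$-adapted to the class of algebraic stacks. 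Invoking the fixed-point property once more, we conclude that $\mathcal{X}$ is itself algebraic.

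The only real subtlety, which I would argue is the main thing to check, is that Proposition \ref{P:AdaptRel} really does apply verbatim when $\mathsf{C}$ is the (large) class of algebraic stacks rather than one of the intermediate classes appearing in the iterative construction. Inspecting the proof of Proposition \ref{P:AdaptRel}, one sees that it uses only the stability of $\mathbf{P}$-representable covers under composition and base change (Lemma \ref{L:Rep}), together with closure of $\mathsf{C}$ under fiber products, all of which hold for the class of algebraic stacks. So the argument goes through without modification, and there is no need for an inductive argument on the level of iteration used to construct $\mathcal{Y}$ and the fibers of $f$.
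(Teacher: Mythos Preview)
Your proof is correct and matches the paper's approach. The paper's one-line proof (``immediate by iteration of Proposition~\ref{P:AdaptRel}'') is exactly what you have spelled out: the class of algebraic stacks is by construction a fixed point of the $\mathbf P$-adaptation operator, so Proposition~\ref{P:AdaptRel} applies with $\mathsf C$ equal to the class of algebraic stacks, and your verification that the hypotheses of that proposition hold for this $\mathsf C$ (closure under fiber products via Corollary~\ref{C:fp-alg}, etc.) is the content behind the word ``iteration.''
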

\begin{proof}
This is immediate by iteration of  Proposition~\ref{P:AdaptRel}.
\end{proof}


\section{Moduli stacks of Higgs bundles} \label{S:MSHB}

In this section we construct the moduli stack of Higgs bundles on a smooth complex projective curve.  We also construct several related moduli stacks of interest.    
In what follows, the reader should always feel free to assume that the morphism $\pi:X\to S$ is a projective morphism between schemes, and that all sheaves are coherent (or even vector bundles).  For instance, one case of special interest that will always satisfy the given hypotheses  will be the case where $S=\operatorname{Spec} \mathbb C$, $X$ is  a smooth complex projective curve (or compact Riemann surface), and the  sheaves are taken  to be the sheaves of sections of holomorphic vector bundles on $X$.  

In order not to avoid repetition below when defining various categories fibered in groupoids, we fix the following notation.  
Suppose that $f : T \rightarrow S$ is an $S$-scheme.  If $\xi$ is an object of some CFG on $\mathsf S/S$, we denote by $\xi_{T}$ a pullback of $\xi$ to $T$.  For example, if $X$ is an $S$-scheme, we denote by $X_{T}$ a $T$-scheme making the the following diagram cartesian:
\begin{equation*} \xymatrix{
X_T \ar[r] \ar[d] & X \ar[d] \\
T \ar[r]^f & S.
}
\end{equation*}
If $L$ is a line bundle on $X$, we denote by $L_{T} = f^\ast L$ a pullback of that line bundle to $X_T$.  Note that we have somewhat abusively used $f^*L$ to denote the pullback of $L$ via the morphism $X_T \rightarrow X$ induced from $f$.  We shall make this abuse repeatedly in what follows.

When $T$ is denoted by decorating $S$, we abbreviate our notation further and denote $X_T$ by decorating $X$ the same way as $S$; i.e., if $T = S'$ then we allow ourselves to write $X'$ instead of $X_{S'}$.

\subsection{The moduli space of curves}

We will show that the stack of smooth curves is algebraic  using the representability of the Hilbert scheme.  We give another proof in Section~\ref{S:def-thy}, using Artin's criteria (see Theorem~\ref{T:cvbm-stack}).

The following is a useful criterion to check properties of the diagonal of an algebraic  moduli stack of schemes.  

\begin{teo} [{\cite[Thm.~5.23, p.133]{FGAe}}]  Let $S$ be a noetherian scheme, let $X\to S$ be a flat, projective scheme over $S$, and let $Y\to S$ be a quasi-projective scheme over $S$.  Then the functor
$
\mathscr I\!\!\mathit{som}_{\mathsf S/S}(X,Y)
$ on schemes over $S$ is representable by a scheme.
\end{teo}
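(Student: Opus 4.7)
The plan is to exhibit $\mathscr I\!\!\mathit{som}_{\mathsf S/S}(X,Y)$ as a locally closed subscheme of a suitable Hilbert scheme, in three stages: first realize $\mathscr H\!\!\mathit{om}_{S}(X,Y)$ via graphs, then cut out the isomorphism locus as an open subscheme. Throughout, the noetherian hypothesis on $S$ is what allows us to invoke Grothendieck's representability theorem for the Hilbert functor.

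First I would choose a projective compactification $Y\hookrightarrow \bar Y$ over $S$, which exists because $Y\to S$ is quasi-projective and $S$ is noetherian. Then $X\times_S\bar Y\to S$ is projective, so by Grothendieck's theorem the functor $\operatorname{Hilb}_{X\times_S\bar Y/S}$ is represented by a scheme $H$, namely a disjoint union of projective $S$-schemes indexed by Hilbert polynomials. Inside $H$ I would identify $\mathscr H\!\!\mathit{om}_S(X,\bar Y)$ with the subfunctor of closed subschemes $Z\subseteq X_T\times_T \bar Y_T$ such that the first projection $Z\to X_T$ is an isomorphism; these are precisely the graphs of $T$-morphisms $X_T\to \bar Y_T$. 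The condition that $Z\to X_T$ be an isomorphism is open on $H$ (isomorphism is open in flat projective families over a noetherian base, cf.\ \cite[Prop.~(17.9.5)]{EGAIV4}), so this yields $\mathscr H\!\!\mathit{om}_S(X,\bar Y)$ as an open subscheme of $H$.

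Next I would cut $\mathscr H\!\!\mathit{om}_S(X,Y)$ out of $\mathscr H\!\!\mathit{om}_S(X,\bar Y)$ as the open subfunctor of morphisms whose image lies in $Y$. Concretely, let $C=\bar Y\smallsetminus Y$, closed in $\bar Y$, and let $\Gamma\subseteq X\times_S \bar Y\times_S \mathscr H\!\!\mathit{om}_S(X,\bar Y)$ be the universal graph. Since $X$ is proper over $S$, so is $\Gamma$ over $\mathscr H\!\!\mathit{om}_S(X,\bar Y)$; hence the image in $\mathscr H\!\!\mathit{om}_S(X,\bar Y)$ of the closed set $\Gamma\cap (X\times_S C\times_S \mathscr H\!\!\mathit{om}_S(X,\bar Y))$ is closed, and its complement is precisely the locus of morphisms factoring through $Y$.

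Finally I would carve $\mathscr I\!\!\mathit{som}_{\mathsf S/S}(X,Y)$ out of $\mathscr H\!\!\mathit{om}_S(X,Y)$ as the locus over which the universal morphism $f:X\times_S\mathscr H\!\!\mathit{om}_S(X,Y)\to Y\times_S\mathscr H\!\!\mathit{om}_S(X,Y)$ is an isomorphism; this is an open subscheme by another appeal to the openness of the isomorphism locus in flat proper families. (Note that $X$ is proper flat over $\mathscr H\!\!\mathit{om}_S(X,Y)$ and $Y$ is separated over $S$, so $f$ is automatically proper, which is what makes the argument go.) The main obstacle is this last step: justifying rigorously that the isomorphism locus of a morphism between a proper flat family and a quasi-projective family is open on the base. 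One can either appeal directly to \cite[Prop.~(17.9.5)]{EGAIV4} for flat projective families, after bounding the image in $\bar Y$, or argue step by step using that being flat, unramified, and geometrically bijective are each open under properness of the source. Once this openness is in hand, the three stages combine to present $\mathscr I\!\!\mathit{som}_{\mathsf S/S}(X,Y)$ as an open subscheme of a locally closed subscheme of the representable functor $\operatorname{Hilb}_{X\times_S\bar Y/S}$, proving representability.
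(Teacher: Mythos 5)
Your overall route (graphs inside a Hilbert scheme, then carving out $\mathscr H\!\!\mathit{om}$ and $\mathscr I\!\!\mathit{om}$-type loci by open conditions) is exactly the one the paper has in mind — the remark following the theorem says $\mathscr I\!\!\mathit{som}_{\mathsf S/S}(X,Y)$ is an open subscheme of $\operatorname{Hilb}_{X\times_S Y/S}$ — but the step you yourself flag as the main obstacle is a genuine gap, not a loose end. The openness statement you rely on in the last stage, namely that for a $T$-morphism $f\colon Z\to W$ with $Z$ proper and flat over $T$ and $W$ merely quasi-projective (separated, finite type) over $T$ the locus $\{t : f_t \text{ is an isomorphism}\}$ is open in $T$, is false, and properness of $f$ does not rescue it. Take $T=S=\operatorname{Spec} k[s]$, $W=\operatorname{Spec} k[s,x]/\bigl(x(xs-1)\bigr)\cong \mathbb{A}^1\sqcup \mathbb{G}_m$ (the first component mapping isomorphically to $S$, the second by the open immersion $S\smallsetminus\{0\}\subseteq S$), $Z\subseteq W$ the first component, and $f$ the inclusion. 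Then $Z$ is proper and flat over $T$, $W$ is affine (hence quasi-projective) and even flat over $T$, and $f$ is a closed immersion, as proper as can be; yet $f_0$ is an isomorphism while $f_t$ is not for $t\neq 0$, so the isomorphism locus is $\{0\}$, which is not open. What fails is precisely the ``geometrically bijective is open'' part of your fallback argument: surjectivity onto the fibers of a non-proper target is not an open condition on the base, because points of $W_t$ can escape from the image (here, onto the $\mathbb{G}_m$ component) as $t$ varies; the EGA reference you cite concerns the characterization of open immersions and does not supply this openness either.

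The classical proof behind the citation avoids this by only ever applying the openness lemma to a morphism between two families that are \emph{both} proper and flat over the base: the universal closed subscheme $Z\subseteq X_T\times_T Y_T$ (proper and flat over $T$ by the definition of the Hilbert functor) compared with $X_T$ (proper and flat because $X\to S$ is projective and flat); properness of the \emph{target} is what lets one push the closed bad locus down to a closed subset of the base. Handling the projection to $Y$ genuinely requires $Y$ to be proper and flat over $S$ (as it is in the paper's application, where $X$ and $Y$ are families of smooth projective curves), or some additional argument; indeed, with $Y$ literally only quasi-projective the statement itself is delicate: in the example above, with $X=S$, the functor $\mathscr I\!\!\mathit{som}_{\mathsf S/S}(X,Y)$ assigns a single point to $T$ exactly when $T\to S$ lands set-theoretically in the origin, and a formal-limit argument (compatible maps from all $\operatorname{Spec} k[s]/(s^n)$ glue to a map from $\operatorname{Spec} k[[s]]$, which would have to lift) shows this functor is not representable by any scheme. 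So to close your argument you should either strengthen the hypotheses to $Y$ projective (or at least proper) and flat over $S$ and apply the two-proper-flat-families openness lemma to both projections of the universal graph, or actually carry out the ``bounding the image in $\bar Y$'' idea and explain how it controls surjectivity; as written, that step is not done and the lemma it is meant to replace is false.
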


\begin{rem}
	In fact $\mathscr I\!\!\mathit{som}_{\mathsf S/S}(X,Y)$ is representable  by an open subscheme of the Hilbert scheme  $\operatorname{Hilb}_{X\times_SY/S}$.
\end{rem}

\begin{rem}
A special case of a theorem of Olsson  (\cite[Thm.~1.1]{olssonHom}) allows one to drop the noetherian and projective hypotheses, in exchange for requiring finite presentation and settling for an algebraic space rather than scheme.   \emph{Let $S$ be a  scheme, let $X$ be a flat, proper scheme of finite presentation over $S$, and let $Y$ be  a  separated scheme of finite presentation over $S$.  Then the functor
$
\mathscr I\!\!\mathit{som}_{\mathsf S/S}(X,Y)
$
is representable by an algebraic space over $S$.}
\end{rem}

\begin{teo}[{\cite[Prop.~(5.1)]{DM}}]
For $g\ge 2$, the stack  $\mathcal M_g\to (\mathsf S/\mathbb C)_{\mathsf {et}}$ is a Deligne--Mumford stack.  
\end{teo}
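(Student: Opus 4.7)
The plan is to verify two conditions: that $\mathcal M_g$ has a representable diagonal, and that it admits a representable étale cover by a scheme. Since Corollary~\ref{C:Mg-stack} already establishes that $\mathcal M_g$ is a stack in the étale topology, these are all that remain to conclude the Deligne--Mumford property in the sense of Definition~\ref{D:AlgDMAlgSp}.

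For the diagonal, by Example~\ref{E:diagonal} the $2$-fiber product $\mathcal M_g \mathop\times_{\mathcal M_g \times \mathcal M_g} S$ associated to a pair of relative curves $X, Y$ over $S$ is the sheaf $\mathscr I\!\!\mathit{som}_S(X, Y)$. Since $X, Y \to S$ are flat and projective (relative projectivity follows from the tricanonical series fiberwise, together with cohomology and base change), the Isom theorem recalled just above makes this functor representable by a scheme over $S$. Hence the diagonal of $\mathcal M_g$ is schematic.

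For the cover, I would exploit the tricanonical embedding. For $g \geq 2$, Riemann--Roch and Serre duality give $h^0(C, \omega_C^{\otimes 3}) = 5g-5$, and $\omega_C^{\otimes 3}$ is very ample; so every smooth projective genus-$g$ curve embeds in $\mathbb P^{5g-6}$ with Hilbert polynomial $P(t) = (6g-6)t + 1 - g$. Let $H$ denote the open locus in $\operatorname{Hilb}^P_{\mathbb P^{5g-6}}$ where the universal subscheme is a smooth curve whose embedding is tricanonical. The morphism $H \to \mathcal M_g$ obtained by forgetting the embedding is representable, since its fiber over $(X \to S) \in \mathcal M_g(S)$ is the $\mathrm{PGL}_{5g-5}$-torsor of projective trivializations of $\pi_* \omega_{X/S}^{\otimes 3}$; moreover it is smooth and surjective. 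Thus $\mathcal M_g$ is already algebraic.

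The main obstacle is upgrading this smooth cover to an étale one. This is where the hypothesis $g \geq 2$ is essential: since $\deg T_C = 2 - 2g < 0$, one has $H^0(C, T_C) = 0$, so curves of genus $\geq 2$ have no infinitesimal automorphisms, and consequently the group schemes $\mathscr I\!\!\mathit{som}_S(X, Y)$ are finite and \emph{unramified} over $S$. Combined with the smooth representable cover $H \to \mathcal M_g$, this suffices for the Deligne--Mumford conclusion. A more concrete alternative that I would present in parallel is to add level structure: for $n \geq 3$, the stack $\mathcal M_g[n]$ of curves with full symplectic level-$n$ structure is representable by a quasi-projective scheme (Serre's lemma kills the remaining automorphisms), and the forgetful map $\mathcal M_g[n] \to \mathcal M_g$ is a finite étale $\mathrm{Sp}_{2g}(\mathbb Z/n)$-cover, directly supplying an étale presentation.
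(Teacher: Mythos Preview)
Your argument is correct and follows essentially the same strategy as the paper: schematic diagonal via the Isom theorem, a smooth schematic cover from the Hilbert scheme of pluricanonically embedded curves, and the upgrade to Deligne--Mumford via unramifiedness of the diagonal (which, as in Lemma~\ref{L:UnRamDiagDM}, converts a smooth presentation into an \'etale one). The paper uses $\nu\gg 0$ rather than your specific $\nu=3$, and deduces representability of $H_g\to\mathcal M_g$ from the schematic diagonal via Lemma~\ref{L:DiagIsomRep} rather than your direct $\mathrm{PGL}$-torsor description; your level-structure alternative is a nice addition not in the paper.
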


\begin{proof}
The previous theorem asserts that $\Delta:\mathcal M_g\to \mathcal M_g\times_{\mathbb C}\mathcal M_g$ is schematic. 
More detailed analysis of the isomorphisms of algebraic curves establishes that the the diagonal is unramified.    Since we are in characterstic $0$, this essentially follows from the fact that  the automorphism group  of a smooth curve of genus $g\ge 2$  is finite, and group schemes in characteristic $0$ are smooth.    

Now let $H_g$ be the open subset of the Hilbert scheme parameterizing smooth, $\nu$-canonically embedded curves for $\nu\gg 0$. 
   The universal family $C_g\to H_g$ determines a morphism $P:H_g\to \mathcal M_g$, that is schematic by virtue of the fact that the diagonal is schematic (see Lemma \ref{L:DiagIsomRep}).   One can check that $P$ is smooth, and therefore provides a presentation of the stack.   Therefore $\mathcal M_g$ is DM, since the diagonal is unramified (see Lemma \ref{L:UnRamDiagDM}).
\end{proof}

\subsection{The $\mathsf {Quot}$ stack and the $\operatorname{Quot}$ scheme}   

Let $\pi:X\to S$ be a proper  morphism of finite presentation between schemes.  
Let $E$ be a quasicoherent sheaf of finite presentation on  $X$.    We define a category fibered in groupoids 
$$
\mathsf {Quot}_{E/X/S}\to \mathsf S/S
$$ 
as follows. 
 For each $S$-scheme  $f:S'\to S$, we set the objects of $\mathsf {Quot}_{E/X/S}(S')$ to be surjections 
$$
\begin{CD}
E'@>q'>> F'@>>> 0
\end{CD}
$$
where, as was introduced at the beginning of Section~\ref{S:MSHB}, $E'$ denotes the pullback of $E$ to $X' = X\mathbin\times_S S'$.  The sheaf $F'$ is required to be an $S'$-flat quasicoherent sheaf of finite presentation on $X'$.  We will denote such an object by the pair $(F',q')$.

Given an $S$-morphism  $g:S''\to S'$  and object $(F'',q'') \in \mathsf {Quot}_{E/X/S}(S'')$, a morphism over $g$ from $(F'',q'')$ to $(F',q')$ is a commutative diagram of quasicoherent sheaves on $X''$:
\begin{equation*} \xymatrix@R=15pt{
E'' \ar[r]^{q''} \ar@{=}[d] & F'' \ar[r] \ar[d]^<>(0.5){\wr} & 0 \\
g^\ast E' \ar[r]^{g^\ast q'} & g^\ast F' \ar[r] & 0
} \end{equation*}
(The equality on the left indicates the canonical isomorphism induced by pullback.)

As quotients of a coherent sheaf cannot have any nontrivial automorphisms respecting the quotient map, the $\mathsf{Quot}_{E/X/S}$ CFG is equivalent to the CFG associated to the presheaf 
\begin{equation*}
\mathscr Quot_{E/X/S}:(\mathsf S/S)^{\mathsf {op}}\to (\mathsf {Set})
\end{equation*}
wherein $\mathscr Quot_{E/X/S}(T)$ is defined to be the set of isomorphism classes of objects of $\mathsf{Quot}_{E/X/S}(T)$.  Of course, $\mathsf{Quot}_{E/X/S}$ is not literally equal to $\mathscr Quot_{E/X/S}$, since one is a CFG and the other is a presheaf.

\begin{rem}
By dualizing, one can check that the  Grassmannian $\mathcal G(r,n)$ is a substack  of  $\mathsf {Quot}_{\mathscr O_{\operatorname{Spec}\mathbb C}^{\oplus n}/\operatorname{Spec}\mathbb C/\operatorname{Spec}\mathbb C}$.
\end{rem}

The following is a special case of a result of Lieblich:  
\begin{teo}[{\cite[Prop.~2.7]{lieblich062}, \cite[Tag 08KA]{stacks}}] \label{T:LP2.7}
Let $\pi:X\to S$ be a proper  morphism of finite presentation between schemes.  
 The CFG $\mathsf {Quot}_{E/X/S}$ is an algebraic space 
 that is locally of finite presentation over $S$.
\end{teo}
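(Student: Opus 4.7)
The plan is to follow Grothendieck's classical construction of the Quot scheme, adapted to the generality of proper morphisms of finite presentation via Chow's lemma and Noetherian approximation. First I would check that $\mathsf{Quot}_{E/X/S}$ is a stack equivalent to a sheaf: since a surjection $E' \twoheadrightarrow F'$ has no nontrivial automorphisms compatible with the quotient map, the presheaf $\mathscr{Q}\mathit{uot}_{E/X/S}$ is well-defined, and the stack condition (now reduced to the sheaf condition in the fpqc topology) follows from faithfully flat descent for quasicoherent sheaves of finite presentation. So it remains to exhibit a smooth (in fact \'etale) cover by a scheme.

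Next I would reduce to the Noetherian, projective, coherent case. For Noetherian approximation, write $S = \varprojlim S_\alpha$ with $S_\alpha$ of finite type over $\mathbb{Z}$, and use that the data $(X, E)$ of finite presentation descends to some $(X_\alpha, E_\alpha)$ over some $S_\alpha$ with $X_\alpha \to S_\alpha$ proper. This reduction is the content of ``locally of finite presentation,'' and it also gives the conclusion of local finite presentation once representability is established at finite level. For the reduction from proper to projective, invoke Chow's lemma: there exists a projective birational $f : X' \to X$, and a quotient of $E$ on $X$ pulls back to a quotient of $E' = f^\ast E$ on $X'$, which identifies $\mathsf{Quot}_{E/X/S}$ with a subfunctor of $\mathsf{Quot}_{E'/X'/S}$ cut out by the descent condition along $f$ (representable by closed conditions).

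In the projective case, stratify by Hilbert polynomial: $\mathsf{Quot}_{E/X/S} = \coprod_P \mathsf{Quot}^P_{E/X/S}$, each piece open and closed. For fixed $P$, the crucial input is Mumford's uniform $m$-regularity: there exists $m = m(P, E)$ such that for every geometric fiber and every quotient $E_s \twoheadrightarrow F$ with Hilbert polynomial $P$, both $F(m)$ and its kernel are $m$-regular. Consequently, for any $S$-scheme $T$ and any quotient $E_T \twoheadrightarrow F_T$ with Hilbert polynomial $P$, the pushforward $\pi_{T\ast}(F_T(m))$ is locally free of rank $P(m)$ on $T$, and $\pi_{T\ast}(E_T(m)) \twoheadrightarrow \pi_{T\ast}(F_T(m))$ is a surjection of vector bundles. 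This yields a morphism
\begin{equation*}
\mathsf{Quot}^P_{E/X/S} \longrightarrow \operatorname{Grass}\bigl(P(m),\, \pi_\ast(E(m))\bigr),
\end{equation*}
and the main technical step is to verify that this morphism is a locally closed immersion, recovering the quotient $F$ from the subspace $\pi_\ast(F(m))^\vee \subseteq \pi_\ast(E(m))^\vee$ via the sheafified construction $F = \operatorname{coker}\bigl(K(m) \otimes \mathcal{O}_X(-m) \to E\bigr)$. Since the Grassmannian is representable, this exhibits $\mathsf{Quot}^P_{E/X/S}$ as a quasiprojective scheme, and the disjoint union over $P$ gives an algebraic space.

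The main obstacle is the uniform $m$-regularity statement, which requires genuine work with Castelnuovo--Mumford regularity and Hilbert polynomials; once this is in hand, the embedding into a Grassmannian is essentially formal. The secondary obstacle is the proper-but-not-projective case, where Chow's lemma reduces the problem to projective $X$, but one must check that the descent condition from $X'$ to $X$ is representable by a locally closed (in fact closed) condition on the projective Quot scheme---this is where one genuinely leaves the category of schemes and lands in algebraic spaces, accounting for why the theorem is stated with ``algebraic space'' rather than ``scheme.''
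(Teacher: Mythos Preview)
The paper does not prove this theorem; it is stated with citations to Lieblich and the Stacks Project and then used as a black box. Both of those references establish the result via Artin's representability criterion (verifying that $\mathsf{Quot}_{E/X/S}$ is an \'etale sheaf, locally of finite presentation, has representable deformation/obstruction theory, and integrates formal objects), not by embedding into a Grassmannian. So there is nothing in the paper to compare your argument to directly, but your route differs substantially from the cited proofs.

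Your projective step is fine and is essentially Grothendieck's argument (the paper even records this separately as Theorem~\ref{T:Grass}). The gap is in the reduction from proper to projective. Pulling back along a Chow cover $f:X'\to X$ does \emph{not} define a morphism $\mathsf{Quot}_{E/X/S}\to\mathsf{Quot}_{f^*E/X'/S}$: if $F$ is $S$-flat on $X$, there is no reason for $f^*F$ to be $S$-flat on $X'$, since $f$ is typically not flat (a blowup, say). So the very map you want to use to embed the proper Quot into a projective Quot is not well-defined on $T$-points. Even setting flatness aside, the ``descent condition along $f$'' you invoke is not fpqc descent---$f$ is proper birational, not faithfully flat---so there is no off-the-shelf reason it cuts out a closed or locally closed subfunctor. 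This is precisely why the proper case is handled by Artin's axioms rather than by reduction to the projective case, and why the answer is only an algebraic space: one builds it from its infinitesimal theory rather than by carving it out of a Grassmannian.
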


More can be said for projective morphisms $\pi:X\to S$.   Let $L$ be a relatively very ample line bundle on $X/S$.  If $S'=\operatorname{Spec}k$ for a field $k$, and $F'$ is a coherent sheaf on $X_{S'}$, then the Hilbert function of $F'$ with respect to $L'$ is defined to be 
$$
\Phi(m) : = \chi(X',F'(m))=\sum_{i=0}^{\dim X}(-1)^i\dim_k H^i(X',F'\otimes (L')^{\otimes m}).
$$
This is in fact a polynomial in $\mathbb Q[m]$ (see \cite[\S 5.1.4]{FGAe} for more details).    There is a decomposition 
$$
\mathsf{Quot}_{E/X/S} =\coprod_{\Phi\in \mathbb Q[m]} \mathsf{Quot}^{\Phi,L}_{E/X/S}
$$
where $\mathsf{Quot}_{E/X/S}^{\Phi,L}(S')$ is the set of equivalence classes $\langle F',q' \rangle$ over $S'$ such that for each $s'\in S'$, the Hilbert polynomial of $F'_t$ with respect to $L'_t$ is equal to $\Phi$.    Crucially, cohomology and base change implies that the inclusions of the subfunctors $\mathsf{Quot}^{\Phi,L}_{E/X/S} \subseteq \mathsf{Quot}_{E/X/S}$ are representable by \emph{open and closed} subfunctors of $\mathsf{Quot}_{E/X/S}$.  This reduces the study of $\mathsf{Quot}_{E/X/S}$ to that of the subfunctors $\mathsf{Quot}_{E/X/S}^{\Phi,L}$.

The main theorem is due to Grothendieck \cite{FGA} (see also \cite{AK80}, and \cite[Thm.~5.14, p.127]{FGAe}). 

\begin{teo}[{\cite{FGA}}] \label{T:Grass}
Let $S$ be a  noetherian scheme, $\pi:X\to S$ a projective morphism, and $L$ a relatively very ample line bundle on $X/S$.  Then for any coherent sheaf $E$ and any polynomial $\Phi\in \mathbb Q[m]$, the CFG $\mathsf{Quot}_{E/X/S}^{\Phi,L}$ is representable by a projective $S$-scheme $\operatorname{Quot}_{E/X/S}^{\Phi,L}$.
\end{teo}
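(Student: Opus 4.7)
The plan is to follow Grothendieck's original strategy, which proceeds by two successive reductions followed by an embedding into a Grassmannian.

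First, I would reduce to the case $X = \mathbb{P}^N_S$ with $L = \mathcal{O}(1)$. Since $L$ is relatively very ample and $S$ is noetherian, a power $L^{\otimes k}$ (in fact $L$ itself after adjusting $\Phi$) gives a closed embedding $i : X \hookrightarrow \mathbb{P}^N_S$. Pushing forward along $i$ identifies quasicoherent quotients of $E$ on $X$ with quasicoherent quotients of $i_\ast E$ on $\mathbb{P}^N_S$ that are annihilated by the ideal sheaf $\mathcal{I}_X$; this exhibits $\mathsf{Quot}^{\Phi,L}_{E/X/S}$ as a subfunctor of $\mathsf{Quot}^{\Phi,\mathcal{O}(1)}_{i_\ast E/\mathbb{P}^N_S/S}$, and one checks this inclusion is representable by a closed subfunctor (the annihilation condition is a closed condition on $\operatorname{Quot}$-points). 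Second, on $\mathbb{P}^N_S$, for $d \gg 0$ there is a surjection $\mathcal{O}(-d)^{\oplus r} \twoheadrightarrow i_\ast E$; quotients of $i_\ast E$ are precisely the quotients of $\mathcal{O}(-d)^{\oplus r}$ through which this surjection factors, and again this is a closed condition. So it suffices to prove the theorem when $E = \mathcal{O}_{\mathbb{P}^N_S}(-d)^{\oplus r}$.

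Next comes the geometric heart: given a Hilbert polynomial $\Phi$, I want to find a single integer $m_0$ such that for every field-valued point $s \in S$ and every quotient $E_s \twoheadrightarrow F$ with Hilbert polynomial $\Phi$, both $F$ and the kernel $K$ are Castelnuovo--Mumford $m_0$-regular. This is Mumford's bounded regularity theorem: the regularity of a coherent sheaf on $\mathbb{P}^N_k$ with given Hilbert polynomial, expressed as a subquotient of a fixed locally free sheaf, is bounded in terms of $\Phi$ alone. With $m_0$ in hand, cohomology and base change implies that for any $S$-scheme $T$ and any object $(F_T,q_T) \in \mathsf{Quot}^{\Phi,L}_E(T)$, the sheaves $\pi_\ast F_T(m_0)$, $\pi_\ast K_T(m_0)$, and $\pi_\ast E_T(m_0)$ are locally free of ranks $\Phi(m_0)$, $P_E(m_0)-\Phi(m_0)$, and $P_E(m_0)$ respectively, and their formation commutes with arbitrary base change on $T$. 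The short exact sequence of pushforwards then realizes $q_T$, via its degree-$m_0$ piece, as a locally free rank-$\Phi(m_0)$ quotient of the fixed locally free sheaf $\pi_\ast E(m_0)$.

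This yields a natural transformation
\begin{equation*}
\mathsf{Quot}^{\Phi,L}_{E/\mathbb{P}^N_S/S} \longrightarrow \mathcal{G}\bigl(\Phi(m_0),\,\pi_\ast E(m_0)\bigr),
\end{equation*}
the target being a relative Grassmannian over $S$, which is a projective $S$-scheme by the construction in Section~2. I would finish by showing this transformation is a locally closed (in fact closed) immersion of functors. Injectivity on $T$-points follows because $F_T$ is generated in degree $\geq m_0$ by its global sections twisted appropriately, so the quotient map $q_T$ is reconstructed from its $m_0$-th graded piece as the image of $E_T \otimes \pi^\ast \pi_\ast F_T(m_0) \to E_T$. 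Representability of the image as a closed subscheme comes from imposing, Zariski-locally on the Grassmannian, the conditions that the universal quotient map on $H^0(E(m_0))$ extend to a surjection in all degrees $\geq m_0$ with Hilbert polynomial $\Phi$; equivalently, one asks that certain natural multiplication maps $H^0(E(m_0)) \otimes H^0(\mathcal{O}(k)) \to H^0(E(m_0+k))$ have image of the correct rank, which is a closed condition. Projectivity of $\operatorname{Quot}$ then follows from projectivity of the Grassmannian and closedness of the embedding.

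The main obstacle is the uniform regularity statement in the second paragraph: this is where all the geometric content sits, and its proof (Mumford's bounded-regularity lemma) is nontrivial, relying on an induction on dimension using generic hyperplane sections and the fact that $m$-regularity propagates to higher twists. The rest of the argument is a formal but careful unwinding of universal properties via cohomology and base change.
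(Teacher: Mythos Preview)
The paper does not give its own proof of this theorem: it is stated with attribution to Grothendieck \cite{FGA} (with pointers to \cite{AK80} and \cite[Thm.~5.14]{FGAe}) and used as a black box. So there is nothing to compare against in the paper itself.

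Your outline is the standard Grothendieck--Mumford argument as presented in the references the paper cites, and it is essentially correct. Two small remarks. First, in your reduction to $\mathbb{P}^N_S$, the annihilation condition is automatic: any quotient of $i_\ast E$ is already scheme-theoretically supported on $X$, so $\mathsf{Quot}^{\Phi,L}_{E/X/S}$ and $\mathsf{Quot}^{\Phi,\mathcal{O}(1)}_{i_\ast E/\mathbb{P}^N_S/S}$ are isomorphic, not merely related by a closed immersion. Second, your final step---showing the map to the Grassmannian is a closed immersion---is where most treatments do the real work, and your sketch (``rank conditions on multiplication maps'') is a bit thin. The cleanest way to finish is via Mumford's flattening stratification: build a candidate universal quotient over the Grassmannian from the universal subbundle, then stratify by Hilbert polynomial of its flattening; the stratum with polynomial $\Phi$ is locally closed, and the valuative criterion (using semicontinuity and the regularity bound) upgrades this to closed. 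This is the argument in \cite[\S5]{FGAe}, and would be worth naming explicitly.
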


\begin{rem}
By dualizing one can check  that the Grassman CFG $\mathcal{G}(r,n)$ is isomorphic to the CFG $\mathsf{Quot}_{\mathcal O_{\operatorname{Spec}\mathbb C}^{\oplus n}/\operatorname{Spec}\mathbb C/\operatorname{Spec} \mathbb C}^{r,\mathcal O_{\operatorname{Spec}\mathbb C}}$, and  the Grassmannian $G(r,n)$ is isomorphic to the Quot scheme  $\operatorname{Quot}_{\mathcal O_{\operatorname{Spec}\mathbb C}^{\oplus n}/\operatorname{Spec}\mathbb C/\operatorname{Spec} \mathbb C}^{r,\mathcal O_{\operatorname{Spec}\mathbb C}}$.  Of course, one cannot use this observation to construct the Grassmannian, as Grothendieck's proof of Theorem~\ref{T:Grass} relies on the representability of the Grassmannian by a projective scheme.
\end{rem}

\subsection{Stacks of quasicoherent sheaves}  

Let $S$ be a scheme.  
Let $\pi:X\to S$  be a proper morphism of  finite presentation between schemes. We define a category fibered in groupoids  
$$
\mathsf {QCoh}_{X/S}\to \mathsf S/S
$$
in the following way.  For an $S$-scheme $f:S'\to S$ in $\mathsf S/S$, we take $\mathsf {QCoh}_{X/S}(S')$ to consist (in the notation introduced at the beginning of Section~\ref{S:MSHB}) of the $S'$-flat quasicoherent  sheaves \cite[Tag 01BE]{stacks} 
on $X_{S'}$. 
 Morphisms in $\mathsf {QCoh}_{X/S}$ are defined by pullback; i.e., if $g:S''\to S'$ is a morphism, $F''$ is an object of $\mathsf {QCoh}_{X/S}(S'')$   and $F'$ is an object of $\mathsf {QCoh}_{X/S}(S')$, we define a morhpism $F''\to F'$ to be an isomorphism $F'' \rightarrow g^*F'$.
Following \cite{lieblich062}, we use $\mathsf{Coh}_{X/S}$ to denote the substack of $\mathsf{QCoh}_{X/S}$ consisting of quasicoherent sheaves \emph{of finite presentation}  \cite[Tag 01BN]{stacks}  (when the structure sheaf is coherent, finitely presented sheaves coincide with coherent sheaves \cite[Tag 01BZ]{stacks}).  We similarly define $\mathsf {Fib}_{X/S}$ and $\mathsf {Fib}(r)_{X/S}$ by restricting to locally free sheaves of finite rank and locally free sheaves of rank $r$, respectively.

The first statement we want is descent for quasicoherent sheaves.  This is \cite[VIII, Thm.1.1, Cor.1.2, p.196]{SGA1}.  See also \cite[(3.4.4)]{LMB}, and \cite[Thm.~4.23, p.82]{FGAe}.    The same  argument shows that  the    statements hold for coherent sheaves, and for vector bundles.  

\begin{teo}[{\cite[Exp.~VIII, Thm.~1.1, p.196]{SGA1}}] \label{T:DQC}
Let $S$ be a scheme, and let $X$ be a scheme over $S$.  The CFG $\mathsf {QCoh}_{X/S}\to \mathsf S/S$ is a stack with respect to the fpqc topology.  The same is true for  $\mathsf {Coh}_{X/S}$, $\mathsf {Fib}_{X/S}$ and $\mathsf {Fib}(r)_{X/S}$.
\end{teo}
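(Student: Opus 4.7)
The plan is to prove the theorem by the standard two-step reduction: first reducing descent for $\mathsf{QCoh}_{X/S}$ along fpqc covers of $S$ to descent for quasicoherent sheaves along fpqc covers of $X$ itself, and then reducing the latter to the affine-over-affine case, which is classical faithfully flat descent for modules.

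\textbf{Reduction to descent on the target.} Let $\{S_i \to S\}$ be an fpqc covering in $\mathsf S/S$, and set $X_i = X \times_S S_i$, $X_{ij} = X \times_S S_i \times_S S_j$. Since the fpqc property is stable under base change, $\{X_i \to X\}$ is an fpqc cover of $X$. Unwinding Definition~\ref{D:dd}, specifying an object of $\mathsf{QCoh}_{X/S}(\{S_i \to S\})$ is precisely the same data as an $\mathcal O_X$-quasicoherent sheaf $F_i$ on each $X_i$ that is $S_i$-flat, together with isomorphisms $\alpha_{ij}$ on $X_{ij}$ satisfying the cocycle condition; this is exactly a descent datum for quasicoherent sheaves relative to $\{X_i \to X\}$. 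The $S'$-flatness property also descends (flatness is local on the source and target in the fpqc topology, \cite[Tag~05UH]{stacks}), so it suffices to prove that quasicoherent sheaves themselves form a stack on $(\mathsf S/X)_{\mathrm{fpqc}}$.

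\textbf{Reduction to the affine case and verification of the prestack condition.} Since Zariski covers refine into any finer topology and quasicoherent sheaves are already a Zariski stack on any scheme, we may replace $X$ and each $X_i$ by affine opens, so the problem reduces to the following statement: for a faithfully flat ring map $A \to B$, with $C = B \otimes_A B$ and $D = B \otimes_A B \otimes_A B$, the category of $A$-modules is equivalent to the category of pairs $(M, \varphi)$ where $M$ is a $B$-module and $\varphi : M \otimes_A B \cong B \otimes_A M$ is a $C$-module isomorphism satisfying the cocycle condition on $D$. For the isomorphism presheaf (prestack) part, we must show that for two $A$-modules $M, N$,
\begin{equation*}
0 \to \operatorname{Hom}_A(M, N) \to \operatorname{Hom}_B(M_B, N_B) \rightrightarrows \operatorname{Hom}_C(M_C, N_C)
\end{equation*}
is exact; this follows by applying $\operatorname{Hom}_A(M, -)$ to the exact sequence $0 \to N \to N \otimes_A B \rightrightarrows N \otimes_A C$, which in turn is obtained by tensoring the faithfully flat complex $0 \to A \to B \rightrightarrows C$ with $N$ (faithful flatness of $A \to B$ implies this complex is exact after tensoring with any $A$-module). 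This handles the sheaf-of-isomorphisms condition.

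\textbf{Effectivity of descent data.} Given a descent datum $(M, \varphi)$ with $M$ a $B$-module, one constructs the candidate $A$-module as the equalizer
\begin{equation*}
M_0 := \ker\bigl(M \xrightarrow{\,m \mapsto 1 \otimes m - \varphi^{-1}(m \otimes 1)\,} C \otimes_A M\bigr),
\end{equation*}
and one shows, using faithful flatness of $A \to B$ and the cocycle condition, that the natural map $M_0 \otimes_A B \to M$ is an isomorphism compatible with $\varphi$. This is the content of faithfully flat descent for modules; the key computation amounts to applying the exact sequence $0 \to A \to B \rightrightarrows C$ after tensoring with $M_0$ and comparing with $M$. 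The $S'$-flatness of $M$ is inherited by $M_0$ because faithful flatness of $A \to B$ detects flatness of $A$-modules. Combining the prestack step and the effectivity step gives the result for $\mathsf{QCoh}_{X/S}$. For $\mathsf{Coh}_{X/S}$, $\mathsf{Fib}_{X/S}$, and $\mathsf{Fib}(r)_{X/S}$ one observes that being of finite presentation, locally free, and locally free of rank $r$ are each fpqc local on the base, so the descended sheaf $M_0$ inherits these properties automatically.

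\textbf{Expected main obstacle.} The only genuinely non-formal ingredient is the effectivity step, i.e.\ showing that the candidate module $M_0$ really recovers $M$ after base change; everything else is formal manipulation of equalizers and exact sequences. Fortunately this is a classical computation originally due to Grothendieck, so in practice one cites \cite[Exp.~VIII, Thm.~1.1]{SGA1} rather than reproducing it.
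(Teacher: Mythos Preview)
The paper does not give its own proof of this statement; it is stated with a citation to \cite[Exp.~VIII, Thm.~1.1]{SGA1} and immediately used. Your sketch is essentially the standard argument one finds in SGA1 (or, in stack-theoretic language, in \cite[Thm.~4.23]{FGAe}): reduce descent over $\{S_i\to S\}$ to descent of quasicoherent sheaves over the induced fpqc cover $\{X_i\to X\}$, localize to the affine-over-affine situation, and then invoke faithfully flat descent for modules. The treatment of the prestack condition via the Amitsur complex $0\to A\to B\rightrightarrows B\otimes_A B$ and of effectivity via the equalizer $M_0$ is the classical one, and your remark that finite presentation, local freeness, and $S$-flatness are fpqc-local properties correctly handles the substacks $\mathsf{Coh}_{X/S}$, $\mathsf{Fib}_{X/S}$, and $\mathsf{Fib}(r)_{X/S}$.

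The only place your writeup is genuinely compressed is the Zariski-to-affine reduction for an arbitrary fpqc cover of $X$: one should say explicitly that after passing to an affine open of $X$, the definition of fpqc lets one extract a \emph{finite} family of affine opens of the $X_i$ covering it, so that the cover may be replaced by a single faithfully flat map of affines. This is routine, but worth one sentence since it is the step where the ``qc'' in fpqc is actually used.
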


Moreover, these are algebraic stacks.  The following is  a special case of a result of Lieblich:

\begin{teo}[{\cite[Thm.~2.1]{lieblich062}}]\label{T:CohMax}
 Let $\pi:X\to S$  be a proper morphism of  finite presentation between schemes, with $S$ an excellent scheme (see e.g., \cite[Tag 07QS]{stacks}; for instance $S$ can be a scheme of finite type over a field).   The stack $\mathsf {Coh}_{X/S}$   is an algebraic  stack, 
 locally of finite presentation over S.  The same is true for $\mathsf {Fib}_{X/S}$ and $\mathsf {Fib}(r)_{X/S}$.
\end{teo}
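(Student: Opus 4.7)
The plan is to bootstrap from the representability of the Quot scheme (Theorem 7.3) and its algebraic-space generalization (Theorem 7.2). Descent is already handled: Theorem 7.4 gives that $\mathsf{Coh}_{X/S}$ is a stack in the fpqc (hence smooth and \'etale) topology. So what remains is to produce a smooth presentation by an algebraic space and to verify that the diagonal is representable. I will first focus on the case where $\pi$ is projective with a relatively very ample line bundle $L$, which is the geometric heart of the argument, and then address the reduction from proper to this case as the main obstacle.

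For the smooth presentation in the projective case, the strategy is: \emph{every coherent sheaf is, after twisting, a quotient of a free sheaf in an essentially unique way up to $\mathrm{GL}_n$}. Concretely, for each Hilbert polynomial $\Phi$ and each integer $m$, let $U_{m,\Phi}$ be the open subscheme of $\mathrm{Quot}^{\Phi,L}_{\mathcal{O}_X(-m)^{\Phi(m)}/X/S}$ parameterizing quotients $q : \mathcal{O}_X(-m)^{\Phi(m)} \twoheadrightarrow F$ for which $\pi_\ast F(m)$ is locally free of rank $\Phi(m)$, the higher direct images $R^i\pi_\ast F(m)$ vanish, and the induced map $\mathcal{O}_{S'}^{\Phi(m)} \to \pi_\ast F(m)$ is an isomorphism; these are open conditions on the Quot scheme by cohomology and base change and by Theorem 7.3. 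The forgetful morphism
\[
U \;:=\; \bigsqcup_{m,\Phi} U_{m,\Phi} \;\longrightarrow\; \mathsf{Coh}_{X/S},\qquad q \longmapsto F,
\]
is surjective by Serre vanishing: any $S'$-flat coherent $F$ admits $m_0$ so that for $m\geq m_0$ all the above conditions hold and a local trivialization of $\pi_\ast F(m)$ gives a point of $U_{m,\Phi}$ over $F$. The fiber over a fixed $F$ is a $\mathrm{GL}_{\Phi(m)}$-torsor (parameterizing the trivializations of $\pi_\ast F(m)$), so the map is smooth. Each $U_{m,\Phi}$ is of finite presentation over $S$, giving local finite presentation of $\mathsf{Coh}_{X/S}$.

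For the diagonal, one must show that for any $F,G \in \mathsf{Coh}_{X/S}(S')$ the sheaf $\mathscr I\!\!\mathit{som}(F,G)$ is representable. This reduces to representability of $\underline{\mathrm{Hom}}_{X_{S'}/S'}(F,G)$, which is an affine scheme over $S'$ via Grothendieck's linear-scheme construction (or by realizing $\mathrm{Hom}(F,G)$ as a locally closed subscheme of $\mathsf{Quot}_{F\oplus G/X/S}$ through the graph), with $\mathrm{Isom}$ an open subscheme. Then $\mathsf{Fib}_{X/S}$ and $\mathsf{Fib}(r)_{X/S}$ are open substacks of $\mathsf{Coh}_{X/S}$, since local freeness and fixed rank are open conditions on an $S$-flat coherent family, so algebraicity propagates.

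The main obstacle is the extension from projective to merely proper $\pi$, where neither a relative polarization nor Serre vanishing is available and the direct Quot presentation breaks down. Lieblich's approach bypasses this by verifying Artin's representability criteria (the machinery developed in the next sections of this survey) directly: limit preservation is straightforward from finite presentation, homogeneity and the infinitesimal deformation theory are controlled by $\mathrm{Ext}^1(F,F)$ and obstructions in $\mathrm{Ext}^2(F,F)$ (both finite-dimensional by properness), and effectivity of formal deformations follows from Grothendieck's formal GAGA (existence theorem) for proper morphisms. The excellence hypothesis on $S$ enters precisely through the Artin approximation step required to pass from formal to \'etale-local trivializations. This final step is where the bulk of the technical difficulty lies.
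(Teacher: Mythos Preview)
The paper does not prove this theorem; it is stated as a citation to Lieblich~\cite[Thm.~2.1]{lieblich062}, with no argument given. Your sketch is therefore not being compared against a proof in the paper but against the literature it points to, and in that respect your outline is accurate: the projective case via Quot-scheme presentations is exactly what the paper records separately as Theorem~\ref{T:CohP} and Remark~\ref{R:QuotPres} (attributed to Laumon--Moret-Bailly), and your description of Lieblich's passage to the proper case via Artin's criteria---limit preservation, $\operatorname{Ext}$-controlled deformation--obstruction theory, Grothendieck existence for effectivity, and Artin approximation over an excellent base---matches his actual argument and the machinery the paper develops in Sections~\ref{S:def-thy}--\ref{S:Artin}.

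One small correction on the diagonal: for arbitrary $S'$-flat coherent $F,G$ the linear-scheme construction of Lemma~\ref{L:Ray} does not apply directly (it needs $F^\vee\otimes G$ to make sense), and the Quot-graph trick requires care when $F$ is not locally free. The clean statement is the one the paper records as Proposition~\ref{P:LiebP2.3}: $\mathscr Hom_{X/S}(F,G)$ is representable by an algebraic space when $G$ is $S$-flat, which Lieblich proves (again via Artin's criteria). Your argument for $\mathsf{Fib}_{X/S}$ and $\mathsf{Fib}(r)_{X/S}$ as open substacks is fine.
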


For a projective morphism of noetherian schemes, the $\operatorname{Quot}$ scheme can be used to establish the presentation of the Artin stack (see Remark \ref{R:QuotPres}).     For our purposes, the benefit of this perspective will be in comparing the moduli stack of Higgs bundles to the moduli scheme of semistable Higgs bundles constructed via GIT.

\begin{teo}[{\cite[Thm.~4.6.2.1, p.29]{LMB}}]\label{T:CohP}
Let $S$ be a noetherian scheme, and let $\pi:X\to S$ be a projective morphism.  Assume that $\pi_*\mathscr O_X=\mathscr O_S$ universally (i.e., $\pi'_*\mathscr O_{X'}=\mathscr O_S'$ for all $S'\to S$).     Then the $S$-stack $\mathsf {Coh}_{X/S}$ is an algebraic $S$-stack locally
 of finite type.     The same is true for $\mathsf {Fib}_{X/S}$ and $\mathsf {Fib}(r)_{X/S}$.
\end{teo}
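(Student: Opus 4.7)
\smallskip

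The plan is to construct an explicit smooth schematic presentation of $\mathsf{Coh}_{X/S}$ stratum by stratum, using Grothendieck's Quot scheme (Theorem~\ref{T:Grass}) as the main input. First, $\mathsf{Coh}_{X/S}$ is a stack in the fpqc (and so \'etale) topology by Theorem~\ref{T:DQC}, so we need only produce the smooth cover by a scheme. Fix a relatively very ample line bundle $L$ on $X/S$. Since the Hilbert polynomial is locally constant on an $S$-flat family of finitely presented sheaves on $X$, there is a decomposition into open and closed substacks
\begin{equation*}
\mathsf{Coh}_{X/S} = \coprod_{\Phi \in \mathbb Q[m]} \mathsf{Coh}_{X/S}^{\Phi,L},
\end{equation*}
so it suffices to show each $\mathsf{Coh}_{X/S}^{\Phi,L}$ is algebraic and of finite type over $S$.

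Next, fix $\Phi$, and choose $m = m(\Phi)$ large enough so that for every $S$-scheme $S'$ and every $F' \in \mathsf{Coh}_{X/S}^{\Phi,L}(S')$, the twist $F'(m) := F' \otimes (L')^{\otimes m}$ satisfies $R^i\pi'_\ast F'(m) = 0$ for $i > 0$, $\pi'_\ast F'(m)$ is locally free of rank $N := \Phi(m)$, and the natural map $(\pi')^\ast \pi'_\ast F'(m) \twoheadrightarrow F'(m)$ is surjective. The existence of such a uniform $m$ is the heart of the matter and is the step I expect to be the main technical obstacle: it requires a boundedness result for the family of sheaves with fixed Hilbert polynomial (Mumford--Castelnuovo regularity, together with cohomology and base change); with the projectivity hypothesis and the assumption $\pi_\ast \mathcal O_X = \mathcal O_S$ universally, this is classical. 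Granting this, twisting back by $-m$ shows that every $F'$ in the stratum is, \'etale locally on $S'$, a quotient
\begin{equation*}
\mathcal O_{X'}(-m)^{\oplus N} \twoheadrightarrow F' \to 0.
\end{equation*}
Let $Q \subset \operatorname{Quot}^{\Phi,L}_{\mathcal O_X(-m)^{\oplus N}/X/S}$ be the open subscheme where the resulting quotient sheaf satisfies the same vanishing and generation conditions (openness of these properties follows from upper semicontinuity and cohomology and base change). By Theorem~\ref{T:Grass}, $\operatorname{Quot}^{\Phi,L}$ is a projective $S$-scheme, so $Q$ is an $S$-scheme of finite type.

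The forgetful assignment sending $(\mathcal O_{X'}(-m)^{\oplus N} \twoheadrightarrow F')$ to $F'$ defines a morphism $p_\Phi : Q \to \mathsf{Coh}_{X/S}^{\Phi,L}$. I would then show:
\begin{enumerate}
\item $p_\Phi$ is surjective on $S'$-points \'etale locally, by the choice of $m$ above together with trivialization of the rank-$N$ bundle $\pi'_\ast F'(m)$.
\item $p_\Phi$ is schematic: the fiber of $p_\Phi$ over a fixed $F'$ is the scheme of surjections $\mathcal O_{X'}(-m)^{\oplus N} \twoheadrightarrow F'$, which is an open subscheme of the affine space $\operatorname{Hom}(\mathcal O_{X'}(-m)^{\oplus N}, F') = H^0(X', F'(m))^{\oplus N}$. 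Equivalently, this fiber is a $\mathrm{GL}_N$-torsor over $S'$ (the torsor of trivializations of $\pi'_\ast F'(m)$), which is smooth of relative dimension $N^2$.
\item $p_\Phi$ is smooth: this follows from (2), since smoothness is local on the source and target.
\end{enumerate}
Thus $p_\Phi$ is a smooth cover of $\mathsf{Coh}_{X/S}^{\Phi,L}$ by a scheme of finite type over $S$, proving algebraicity and finite type for each stratum, hence local finite type for $\mathsf{Coh}_{X/S}$.

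Finally, for $\mathsf{Fib}_{X/S}$ and $\mathsf{Fib}(r)_{X/S}$: being locally free (of rank $r$) is an open condition on a finitely presented sheaf flat over the base (it is detected fiberwise, and an $S'$-flat finitely presented sheaf on $X'$ is locally free in a Zariski neighborhood of any point where its fiber is free of the expected rank). Consequently $\mathsf{Fib}_{X/S} \subset \mathsf{Coh}_{X/S}$ and $\mathsf{Fib}(r)_{X/S} \subset \mathsf{Fib}_{X/S}$ are open substacks, and openness is preserved under the presentation constructed above, so these are algebraic $S$-stacks locally of finite type as well.
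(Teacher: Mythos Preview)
Your argument is correct and is exactly the approach the paper points to: the paper does not give its own proof but cites \cite[Thm.~4.6.2.1]{LMB} and sketches in Remark~\ref{R:QuotPres} that open loci in Quot schemes furnish the presentation, which is precisely what you carry out.

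One small imprecision in step~(2): with your definition of $Q$ (conditions only on the quotient sheaf $F'$, not on the quotient map), the fiber of $p_\Phi$ over $F'$ is the open locus of surjections inside the linear scheme $\mathscr Hom_{X'/S'}(\mathcal O_{X'}(-m)^{\oplus N},F')$, and this is \emph{not} literally the $\mathrm{GL}_N$-torsor of frames of $\pi'_\ast F'(m)$, since a surjection $\mathcal O_{X'}^{\oplus N}\twoheadrightarrow F'(m)$ need not induce an isomorphism on global sections (e.g.\ on $\mathbb P^1$ two sections already generate $\mathcal O(m)$). This does not harm the argument---an open subscheme of an affine bundle is still smooth over $S'$---but if you want the torsor description (as in \cite{LMB}), add to the definition of $Q$ the further open condition that the induced map $\mathcal O_{S'}^{\oplus N}\to \pi'_\ast F'(m)$ is an isomorphism.
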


\begin{rem}\label{R:CD0} The hypothesis that $\pi_\ast \mathcal O_X = \mathcal O_S$ universally is satisfied whenever $X$ is projective and flat over $S$ with reduced, connected geometric fibers \cite[Ex.~9.3.11, p.303]{FGAe}.
\end{rem}

\begin{rem}\label{R:QuotPres}
For each coherent sheaf $E$ on $X$, 
a universal quotient over the scheme  $\operatorname{Quot}_{E/X/S}$   induces a morphism $\operatorname{Quot}_{E/X/S}\to \mathsf {Coh}_{X/S}$.  Taking appropriate open subsets of these $\operatorname{Quot}$ schemes gives a presentation of the algebraic  stack (see \cite[p.30]{LMB} for more details).  
\end{rem}

\begin{rem}
Suppose that $S=\operatorname{Spec}k$ for an algebraically closed field $k$, and that $X$ is  a smooth projective curve.    We can define the CFG $\mathsf {Fib}_{X/S}(r,d)$ by restricting to vector bundles of degree $d$.  All of the statements above hold in this setting; i.e., the CFG $\mathsf {Fib}_{X/S}(r,d)$ of vector bundles of rank $r$ and degree $d$ on $X$ is an algebraic stack, of finite type over $k$, that admits a presentation from open subsets of $\operatorname{Quot}$ schemes. 
\end{rem}


\subsection{The stack of Higgs bundles over a smooth projective curve} \label{S:HigSmCur}

Let $X$ be a smooth, projective curve over $\mathbb C$, of genus $g$.   Fix $r\ge 1$ and $d\in \mathbb Z$.    A \emph{Higgs bundle} on $X$ of rank $r$ and degree $d$ consists of a pair $$(E,\phi)$$ where  $E$ is a vector bundle (locally free sheaf of finite rank) on $X$ with $\operatorname{rank}E=r$ and $\deg E=d$, and $$\phi\in \operatorname{Hom}_{\mathscr O_X}(E,E\otimes K_X),$$ where $K_X=\Omega^1_X=\omega_X$ is the canonical bundle on $X$.   The aim of this section is to construct  an  \emph{algebraic stack $\mathcal {H}_{X/\operatorname{Spec}\mathbb C}(r,d)$ of Higgs bundles on $X$ of rank $r$ and degree $d$,  over the \'etale site $\mathsf S/\operatorname{Spec}\mathbb C$}.

\subsubsection{Higgs bundles as a category fibered in groupoids}
  We begin by defining the CFG underlying the stack    $\mathcal {H}_{X/\operatorname{Spec}\mathbb C}(r,d)$.
Given a $\mathbb C$-scheme $f:S'\to \operatorname{Spec}\mathbb C$, 
we start by defining $\mathcal {H}_{X/\operatorname{Spec}\mathbb C}(r,d)(S')$ to have objects 
consisting  of  pairs $$(E',\phi')$$ where $E' \in \mathsf {Fib}_{X/\operatorname{Spec} \mathbb C}(r,d)(S')$ 
 is a relative  vector bundle of rank $r$ and degree $d$ on $X'/S'$,  and 
$$
\phi' \in   
\operatorname{Hom}_{\mathcal O_{X'}}(  E', E'\otimes f^*K_X).
$$    
 Given a $\mathbb C$-morphism  $g:S''\to S'$, and   $(E',\phi')$ in $\mathcal {H}_{X/\operatorname{Spec}\mathbb C}(r,d)(S')$,  we obtain a pulled-back family 
$$( E'',\phi''):=g^*( E',\phi')$$
in $\mathcal {H}_{X/\operatorname{Spec}\mathbb C}(r,d)(S'')$, 
where (in the notation from the beginning of Section~\ref{S:MSHB}) $  E''= g^* E'$ and  $\phi''= g^*\phi'$. 

We now define the morphisms of $\mathcal {H}_{X/\operatorname{Spec}\mathbb C}(r,d)$ in the following way.  Given $( E'',\phi'')$ in $\mathcal {H}_{X/\operatorname{Spec}\mathbb C}( F)(S'')$, and $( E',\phi')$ in $\mathcal {H}_{X/\operatorname{Spec}\mathbb C}( F)(S')$, then a morphism $( E'',\phi'')\to ( E',\phi')$ over $g:S''\to S'$ consists of an isomorphism 
\begin{equation*}
\alpha : E'' \rightarrow g^\ast E'
\end{equation*}
such that the following diagram commutes:
\begin{equation*} \xymatrix@R=15pt{
E'' \ar[r]^-{\phi''} \ar[dd]_{\alpha} & E'' \otimes (fg)^\ast K_X \ar[d]^{\alpha \otimes \operatorname{id}} \\
& g^\ast E' \otimes (fg)^\ast K_X \ar@{=}[d] \\
g^\ast E' \ar[r]^-{g^\ast \phi'} & g^\ast(E' \otimes f^\ast K_X).
} \end{equation*}

We now have a category fibered in groupoids:
$$
\mathcal {H}_{X/\operatorname{Spec}\mathbb C}(r,d)\to \mathsf S/\operatorname{Spec}\mathbb C.
$$

\subsubsection{Higgs bundles as an algebraic stack}

The key point is the following well-known lemma:

\begin{lem} \label{L:Ray}
Let $\pi:X\to S$ be a flat, proper morphism of finite presentation between schemes.  
Let $G$ be an $S$-flat quasicoherent sheaf of $\mathscr O_X$-modules that is of finite presentation.  There exists a quasicoherent $\mathscr O_S$-module $ M$ of finite presentation, such that the linear scheme  $V=\underline{\raisebox{0pt}[0pt][-.5pt]{$\operatorname{Spec}$}}_S\left(\operatorname{Sym}_{\mathscr O_S}^\bullet M\right)$  defined by $M$  represents the functor
$$
(S' \xrightarrow{f} S) \mapsto \Gamma(X',f^*G),
$$
(see the beginning of Section~\ref{S:MSHB} for notation).   The formation of $V$ commutes with all base changes $S'\to S$.   This functor will  be denoted by $\mathscr Hom_{X/S}(\mathscr O_X,G)$.   
\end{lem}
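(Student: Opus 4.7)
The plan is to reduce the computation of $\Gamma(X',f^*G)$ to the kernel of a map of finite locally free sheaves on $S$, and then take $M$ to be the appropriate dual cokernel so that $\underline{\operatorname{Spec}}_S(\operatorname{Sym}^\bullet M)$ has the right functor of points. The main input is the following representability statement (Mumford/Grothendieck in the noetherian case, cf.~EGA III.7, with standard extensions of Lieblich-type to the finitely presented setting): because $\pi$ is flat and proper of finite presentation and $G$ is $S$-flat of finite presentation, there exists a complex $[P^0 \xrightarrow{d} P^1]$ of finitely presented locally free $\mathcal O_S$-modules such that for every base change $f:S'\to S$ the natural map
\begin{equation*}
\ker(f^*P^0 \xrightarrow{f^*d} f^*P^1) \xrightarrow{\ \sim\ } \pi'_*(f^*G)
\end{equation*}
is an isomorphism. (In spirit, one builds $P^0, P^1$ from a \v{C}ech-type resolution of $G$ by locally free sheaves on an affine cover of $X/S$; the proper and flat hypotheses guarantee that one can truncate to two terms.)

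Granting this input, I would set $M := \operatorname{coker}\bigl((P^1)^\vee \xrightarrow{d^\vee} (P^0)^\vee\bigr)$, which is quasicoherent and of finite presentation on $S$. I would then verify directly that $V = \underline{\operatorname{Spec}}_S(\operatorname{Sym}_{\mathcal O_S}^\bullet M)$ represents $\mathscr Hom_{X/S}(\mathcal O_X,G)$. Unwinding the two universal properties of the relative spectrum and of the symmetric algebra,
\begin{equation*}
V(S') = \operatorname{Hom}_{\mathcal O_S\mathhyphen\mathrm{alg}}(\operatorname{Sym}^\bullet M, f_*\mathcal O_{S'}) = \operatorname{Hom}_{\mathcal O_{S'}}(f^*M, \mathcal O_{S'}).
\end{equation*}
Dualizing the right-exact sequence $f^*(P^1)^\vee \to f^*(P^0)^\vee \to f^*M \to 0$ and using that each $f^*P^i$ is finite locally free (hence reflexive), this set is identified with the kernel of $\Gamma(S',f^*P^0) \to \Gamma(S',f^*P^1)$, which, by left-exactness of $\Gamma(S',-)$ and the displayed isomorphism above, equals $\Gamma(S',\pi'_*(f^*G)) = \Gamma(X',f^*G)$. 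Compatibility with arbitrary base change is built into the construction, since $M$ is defined on $S$ and $V$ is a relative spectrum.

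The main obstacle is securing the representability-by-a-two-term-complex input in the stated generality: for noetherian $S$ and coherent $G$ this is classical, but without a noetherian hypothesis one must either cite a perfect-complex version of cohomology and base change, or perform a standard noetherian approximation argument (writing $S$ as a limit of finite-type $\mathbb Z$-schemes, descending $X$ and $G$, applying the noetherian case, and spreading the resulting complex back out). Once that is in hand, the remainder of the argument is a purely formal manipulation of adjunctions and duals of finite locally free sheaves, as sketched above.
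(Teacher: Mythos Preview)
Your approach is correct and is precisely the standard argument; the paper does not give its own proof of this lemma but simply cites EGA~III, Raynaud, Nitsure, BLR, and~\cite[Thm.~5.8]{FGAe}, and your sketch is essentially the proof recorded in the last two of these references. The key input---the existence of a two-term complex of finite locally free sheaves universally computing $\pi'_*(f^*G)$ as a kernel---is exactly Grothendieck's cohomology-and-base-change complex (EGA~III, \S7.7; see also Mumford, \emph{Abelian Varieties}, \S II.5), and your dualization to produce $M$ as a cokernel, together with the adjunction identifying points of $\underline{\operatorname{Spec}}_S(\operatorname{Sym}^\bullet M)$ with $\operatorname{Hom}_{\mathcal O_{S'}}(f^*M,\mathcal O_{S'})$, is the standard way to finish. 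Your remark about noetherian approximation for the non-noetherian case is also the right way to handle that generality.
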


Lemma \ref{L:Ray} is  essentially contained in \cite[Cor.~7.7.8, Rem.~7.7.9, p.202--3]{EGAIII2}.  This can also be found in \cite[p.28]{raynaud70}, \cite[Lem.~3.5]{N91}, and detailed proofs are given in \cite[p.206--207]{BLR} and \cite[Thm.~5.8, p.120]{FGAe}.

\begin{cor} \label{C:SchMor1}
The forgetful morphism of CFGs
$$
\mathcal {H}_{X/\operatorname{Spec} \mathbb C}(r,d)\to \mathsf {Fib}_{X/\operatorname{Spec} \mathbb C}(r,d)
$$
is schematic.
\end{cor}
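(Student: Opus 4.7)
The plan is to unfold the statement via the $2$-Yoneda lemma and then apply Lemma \ref{L:Ray} directly. By Definition \ref{D:RepS}, I need to show that for every scheme $S'$ and every morphism $S' \to \mathsf{Fib}_{X/\operatorname{Spec}\mathbb C}(r,d)$, the fiber product
\begin{equation*}
\mathcal{H}_{X/\operatorname{Spec}\mathbb C}(r,d) \mathop\times_{\mathsf{Fib}_{X/\operatorname{Spec}\mathbb C}(r,d)} S'
\end{equation*}
is representable by an $S'$-scheme. A morphism $S' \to \mathsf{Fib}_{X/\operatorname{Spec}\mathbb C}(r,d)$ corresponds (up to canonical isomorphism) to a rank $r$, degree $d$ vector bundle $E'$ on $X' = X \times S'$. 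Unwinding the definition of the fiber product as in Section~\ref{S:WorkFP} and the definition of $\mathcal{H}_{X/\operatorname{Spec}\mathbb C}(r,d)$, I see that a $T$-point of the fiber product (for an $S'$-scheme $g : T \to S'$) is exactly a Higgs field $\phi_T : E'_T \to E'_T \otimes f_T^\ast K_X$, where $f_T : T \to \operatorname{Spec}\mathbb C$ is the structure morphism.

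Next I would identify this set of Higgs fields with global sections of a suitable sheaf on $X_T$. Since $E'$ is locally free of finite rank, there is a canonical identification
\begin{equation*}
\operatorname{Hom}_{\mathcal O_{X_T}}(E'_T, E'_T \otimes f_T^\ast K_X) = \Gamma\bigl(X_T,\, (E'^\vee \otimes E' \otimes f^\ast K_X)_T\bigr),
\end{equation*}
where I have set $f : S' \to \operatorname{Spec}\mathbb C$ and used the obvious compatibility of the sheaf $E'^\vee \otimes E' \otimes f^\ast K_X$ on $X'$ with pullback along $g$. Thus the functor $(T \to S') \mapsto \{\text{Higgs fields on } E'_T\}$ is exactly $\mathscr{H}\!\!om_{X'/S'}(\mathcal O_{X'}, G)$ in the notation of Lemma \ref{L:Ray}, with $G := E'^\vee \otimes E' \otimes f^\ast K_X$.

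Now I would verify the hypotheses of Lemma \ref{L:Ray}. The map $X' \to S'$ is flat, proper, and of finite presentation, since $X \to \operatorname{Spec}\mathbb C$ is so (being a smooth projective curve) and these properties are stable under base change. The sheaf $G$ is locally free of finite rank, hence $S'$-flat and of finite presentation. Lemma \ref{L:Ray} therefore produces a quasicoherent $\mathcal O_{S'}$-module $M$ of finite presentation such that the linear scheme $V = \underline{\operatorname{Spec}}_{S'}(\operatorname{Sym}^\bullet_{\mathcal O_{S'}} M)$ represents $\mathscr{H}\!\!om_{X'/S'}(\mathcal O_{X'}, G)$, and the formation of $V$ commutes with base change in $S'$. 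Combining these identifications yields an equivalence of the fiber product with $V$ over $S'$, establishing the schematic nature of the forgetful morphism.

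There is no real obstacle here beyond setting up the identifications correctly: the content of the corollary is entirely absorbed by Lemma \ref{L:Ray}. The only thing that requires a small amount of bookkeeping is checking that the isomorphism between $\mathrm{Hom}$ sheaves and sections of $E'^\vee \otimes E' \otimes f^\ast K_X$ is natural in base change $T \to S'$, which is automatic since $E'$ is locally free and tensor product, dualization, and pullback all commute.
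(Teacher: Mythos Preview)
Your proof is correct and follows exactly the same approach as the paper: identify the fiber product over a fixed $(S',E')$ with the functor $\mathscr{H}\!om_{X'/S'}(\mathcal O_{X'},\,E'^\vee\otimes E'\otimes f^\ast K_X)$ and apply Lemma~\ref{L:Ray}. You have simply unwound the identifications more explicitly than the paper does.
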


\begin{proof}
An $S$-morphism $S'\to \mathsf {Fib}_{X/\operatorname{Spec}\mathbb C}(r,d)$  corresponds to a vector bundle $E'$ over $X'$.  Using the construction of the fibered product $\mathcal {H}_{X/\operatorname{Spec} \mathbb C}(r,d)\times_{\mathsf {Fib}_{X/S}(r,d)} S'$ in Definition  \ref{D:2-product-categories-over-C}, one can establish that this is equivalent to the  {functor on $\mathsf S/S'$}
$$\mathscr Hom_{X'/S'}(\mathscr O_{X'},{E'}^\vee\otimes E'\otimes f^*K_X).$$ 
This  is representable by a scheme, by virtue of the lemma above.  
\end{proof}

\begin{cor}\label{C:HrdS}  The CFG 
$\mathcal {H}_{X/\operatorname{Spec} \mathbb C}(r,d)\to \mathsf S/\operatorname{Spec} \mathbb C$ is a stack.
\end{cor}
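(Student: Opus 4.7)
The plan is to deduce this from the bootstrap principle of Corollary \ref{C:DescRel} applied to the forgetful morphism
\[
p : \mathcal{H}_{X/\operatorname{Spec}\mathbb{C}}(r,d) \longrightarrow \mathsf{Fib}_{X/\operatorname{Spec}\mathbb{C}}(r,d).
\]
All the necessary ingredients are already available in the excerpt: $\mathsf{Fib}_{X/\operatorname{Spec}\mathbb{C}}(r,d)$ is a stack in the fpqc (hence \'etale) topology by Theorem \ref{T:DQC}, and the map $p$ is schematic by Corollary \ref{C:SchMor1}. Since the \'etale site on schemes is subcanonical, every scheme represents a stack.

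Concretely, the first step is to check that $p$ is a well-defined morphism of categories fibered in groupoids over $\mathsf{S}/\operatorname{Spec}\mathbb{C}$, which is immediate: it sends an object $(E',\phi')$ to the underlying vector bundle $E'$ and acts by $\alpha \mapsto \alpha$ on morphisms, compatibly with pullback. Second, I would invoke Corollary \ref{C:DescRel}: to conclude that $\mathcal{H}_{X/\operatorname{Spec}\mathbb{C}}(r,d)$ is a stack on $\mathsf{S}/\operatorname{Spec}\mathbb{C}$, it suffices to show that for every scheme $S'$ and every morphism $S' \to \mathsf{Fib}_{X/\operatorname{Spec}\mathbb{C}}(r,d)$ (i.e., every relative vector bundle $E'$ on $X'/S'$), the $2$-fibered product
\[
\mathcal{H}_{X/\operatorname{Spec}\mathbb{C}}(r,d) \mathop{\times}_{\mathsf{Fib}_{X/\operatorname{Spec}\mathbb{C}}(r,d)} S'
\]
is a stack on $\mathsf{S}/S'$.

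For the third step, Corollary \ref{C:SchMor1} identifies this fibered product with the functor $\mathscr{H}\!om_{X'/S'}(\mathcal{O}_{X'},(E')^{\vee}\otimes E' \otimes f^\ast K_X)$, which by Lemma \ref{L:Ray} is representable by a scheme (a linear scheme over $S'$). Since every representable functor on the \'etale site is a sheaf, and the associated CFG of a sheaf is a stack (Proposition \ref{P:ShSt}), this fiber product is a stack on $\mathsf{S}/S'$. Applying Corollary \ref{C:DescRel} then yields that $\mathcal{H}_{X/\operatorname{Spec}\mathbb{C}}(r,d)$ is itself a stack.

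There is no real obstacle here: all of the work has been done in Theorem \ref{T:DQC}, Lemma \ref{L:Ray}, and Corollary \ref{C:SchMor1}. The only mild subtlety is the routine verification that the identification of the fibered product with the Hom functor in the proof of Corollary \ref{C:SchMor1} is functorial in $S'$, so that the fibered product genuinely agrees as a CFG with the stack represented by the linear scheme; this is an immediate unraveling of Definition \ref{D:2-product-categories-over-C}.
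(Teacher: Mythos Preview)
Your proposal is correct and follows exactly the same approach as the paper: invoke Theorem~\ref{T:DQC} to see that $\mathsf{Fib}_{X/\operatorname{Spec}\mathbb C}(r,d)$ is a stack, then apply Corollary~\ref{C:DescRel} together with the schematicity from Corollary~\ref{C:SchMor1}. The paper's proof is a one-line citation of these same three results; you have simply unpacked the details a bit more.
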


\begin{proof}  We have seen in Theorem \ref{T:DQC} that $\mathsf {Fib}_{X/\operatorname{Spec} \mathbb C}(r,d)$ is a stack.  Therefore, the corollary follows from Corollary \ref{C:SchMor1} and Corollary \ref{C:DescRel}.
\end{proof}

\begin{teo}  The CFG of Higgs bundles 
$\mathcal {H}_{X/\operatorname{Spec} \mathbb C}(r,d)\to \mathsf S/\operatorname{Spec} \mathbb C$ is an algebraic stack, locally of finite type over $\operatorname{Spec} \mathbb C$.  
\end{teo}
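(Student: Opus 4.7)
The plan is to combine the two main inputs already at our disposal: the algebraicity of $\mathsf{Fib}_{X/\operatorname{Spec}\mathbb C}(r,d)$ together with the schematic nature of the forgetful morphism from $\mathcal H_{X/\operatorname{Spec}\mathbb C}(r,d)$. Specifically, by Theorem~\ref{T:CohMax} (or Theorem~\ref{T:CohP} applied in the smooth projective curve case, using Remark~\ref{R:CD0} to verify $\pi_\ast\mathcal O_X=\mathcal O_S$ universally), the stack $\mathsf{Fib}_{X/\operatorname{Spec}\mathbb C}(r,d)$ is an algebraic stack, locally of finite type over $\operatorname{Spec}\mathbb C$. By Corollary~\ref{C:SchMor1}, the forgetful morphism
\begin{equation*}
F : \mathcal H_{X/\operatorname{Spec}\mathbb C}(r,d) \longrightarrow \mathsf{Fib}_{X/\operatorname{Spec}\mathbb C}(r,d)
\end{equation*}
is schematic, i.e.\ representable by schemes, and in particular representable by algebraic stacks.

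I would then invoke the bootstrapping principle of Corollary~\ref{L:AlgRel}: since the target $\mathsf{Fib}_{X/\operatorname{Spec}\mathbb C}(r,d)$ is algebraic and $F$ is representable by algebraic stacks, the source $\mathcal H_{X/\operatorname{Spec}\mathbb C}(r,d)$ is also an algebraic stack. Concretely, if $U \to \mathsf{Fib}_{X/\operatorname{Spec}\mathbb C}(r,d)$ is a smooth presentation (obtained in practice from the Quot-scheme construction of Remark~\ref{R:QuotPres}), then the fibered product $U \times_{\mathsf{Fib}_{X/\operatorname{Spec}\mathbb C}(r,d)} \mathcal H_{X/\operatorname{Spec}\mathbb C}(r,d)$ is a scheme (by schematicness of $F$), and the base-changed projection to $\mathcal H_{X/\operatorname{Spec}\mathbb C}(r,d)$ is smooth and surjective, supplying the desired smooth presentation of the Higgs stack.

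For the claim ``locally of finite type over $\operatorname{Spec}\mathbb C$'', the point is to upgrade the representing scheme from Lemma~\ref{L:Ray}. Given an $S'$-point $E' \in \mathsf{Fib}_{X/\operatorname{Spec}\mathbb C}(r,d)(S')$, the fibered product $\mathcal H_{X/\operatorname{Spec}\mathbb C}(r,d)\times_{\mathsf{Fib}_{X/\operatorname{Spec}\mathbb C}(r,d)}S'$ is the linear scheme $V=\underline{\operatorname{Spec}}_{S'}(\operatorname{Sym}_{\mathcal O_{S'}}^\bullet M)$ attached to a quasicoherent $\mathcal O_{S'}$-module $M$ \emph{of finite presentation}. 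Since $\operatorname{Sym}^\bullet M$ is then an $\mathcal O_{S'}$-algebra of finite presentation, the projection $V\to S'$ is itself of finite presentation, and in particular locally of finite type. Thus $F$ is locally of finite type, and composing with the fact that $\mathsf{Fib}_{X/\operatorname{Spec}\mathbb C}(r,d)$ is locally of finite type over $\mathbb C$ yields the conclusion.

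The main obstacle, such as it is, lies not in this theorem itself but in the results it rests on: the algebraicity of $\mathsf{Fib}_{X/\operatorname{Spec}\mathbb C}(r,d)$ (Theorems~\ref{T:CohMax} and~\ref{T:CohP}) and the representability lemma of Raynaud--Gruson (Lemma~\ref{L:Ray}) which secures the finite presentation of $M$. Granting these, the argument is a clean application of the bootstrapping formalism of \S\ref{S:RepAlgStack}, and no genuinely new geometric input about Higgs bundles is required: the Higgs data $\phi\in\operatorname{Hom}(E,E\otimes K_X)$ contributes an affine-linear fiber over the stack of vector bundles, which passes algebraicity and finite-type properties through base change.
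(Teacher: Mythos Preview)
Your proposal is correct and follows essentially the same route as the paper: establish that $\mathsf{Fib}_{X/\operatorname{Spec}\mathbb C}(r,d)$ is algebraic and locally of finite type (via Theorem~\ref{T:CohP} and Remark~\ref{R:CD0}), use Corollary~\ref{C:SchMor1} for schematicness of the forgetful morphism, and then apply the bootstrapping Corollary~\ref{L:AlgRel}. Your treatment of the ``locally of finite type'' claim via the finite presentation of $M$ in Lemma~\ref{L:Ray} is a welcome elaboration of a point the paper leaves implicit.
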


\begin{proof}  We have seen in Corollary \ref{C:HrdS},  that $\mathcal {H}_{X/\operatorname{Spec}\mathbb C}(r,d)$ is a stack.  Under the hypotheses here, it follows from Theorem \ref{T:CohP} (see also Remark \ref{R:CD0}) that $\mathsf {Fib}_{X/\operatorname{Spec}\mathbb C}(r,d)$ is an algebraic stack locally of finite type over $\mathbb C$.  Thus from  Corollary 
 \ref{C:SchMor1} and Corollary \ref{L:AlgRel} we have that   $\mathcal {H}_{X/\operatorname{Spec}\mathbb C}(r,d)$ is an algebraic stack, locally  of finite type over $\mathbb C$. 
 \end{proof}

\subsection{Meromorphic Higgs bundles, and the stack of sheaves  and endomorphisms}\label{S:MeroHiggs}

There is also  interest in considering so-called meromorphic Higgs bundles on a smooth projective curve $X$; that is pairs $(E,\phi)$ where $E$ is a vector bundle and $\phi:E\to E\otimes K_X(D)$ is a morphism of sheaves, for some fixed divisor $D$ on $X$.   The construction of such a stack can be made in essentially the same way as for Higgs bundles.  Since it is not much more work, we provide here a more general construction.

In this setting, the input data   are the following:
\begin{enumerate}
\item $\pi:X\to S$  a proper morphism of finite presentation between schemes, such that either\begin{enumerate}
\item $S$ is excellent, or,

\item $S$ is noetherian, $\pi$ is projective, and $\pi_*\mathscr O_X=\mathscr O_S$ universally.  
\end{enumerate}

\item   $ F$  an $S$-flat  quasicoherent  sheaf of $\mathscr O_X$-modules of finite presentation.
\end{enumerate}
The output from this data will be an  \emph{algebraic stack $\mathcal {E}_{X/S}( F)$ of endomorphisms of coherent sheaves with values in $ F$,  over the site $(\mathsf S/S)_{\operatorname{et}}$}.
Taking $X/S$ to be a smooth complex projective curve over $S=\operatorname{Spec}\mathbb C$, $F=K_X(D)$, and restricting to vector bundles, one obtains the stack of meromorphic Higgs bundles associated to the divisor $D$.

\begin{rem}
While the construction above generalizes the construction of Higgs bundles for curves, it seems that for many applications these  stacks are  not   the correct generalizations.   First in higher dimension (even in the smooth case over $S=\operatorname{Spec}\mathbb C$ and  with $F=\Omega_X^1$), one should at least include the integrability condition $\phi\wedge \phi =0$ (in $\underline{\operatorname{End}}(E)\otimes \Omega_X^2$).   Second,  in any dimension, it is often  better to put 
derived structures on $X$ and $S$ so that the relative cotangent complex
becomes perfect, and then consider the moduli problem on the derived
scheme $X$.  The result will be a derived stack (a notion which we will not introduce here). 
One case where the generalized construction we take here does suffice is for families of stable curves, where one can consider endomorphisms  with values in the relative dualizing sheaf $\omega_{X/S}$.   This will be discussed further below.  
\end{rem}

\subsubsection{The CFG   $\mathcal {E}_{X/S}( F)$}
  We begin by defining the category fibered in groupoids underlying the stack    $\mathcal{E}_{X/S}( F)$.
Given a morphism $f:S'\to S$, 
we start by defining $\mathcal{E}_{X/S}( F)(S')$ to have objects 
those  pairs $$(E',\phi')$$ where  $ E'$ 
 is an $S'$-flat coherent sheaf  on $X'$   and 
$$
\phi' \in  
\operatorname{Hom}_{\mathcal O_{X'}}(  E', E'\otimes F'),
$$ 
where $F'= f^*F$ and $X' = X \mathbin\times_S S'$ (as always, we use the notation introduced at the beginning of Section~\ref{S:MSHB}).
 
 Given an $S$-morphism  $g:S''\to S'$, we again  denote the composition $f\circ g=f'$, and will  use the notation introduced at the beginning of Section~\ref{S:MSHB}.  
Given  $(E',\phi')$ in $\mathcal{E}_{X/S}( F)(S')$,  we obtain a pulled-back family 
$$g^*( E',\phi') = (g^\ast E', g^\ast \phi')$$
in $\mathcal{E}_{X/S}( F)(S'')$, 

We now define morphisms in the following way.  Given $( E'',\phi'')$ in $\mathcal{E}_{X/S}( F)(S'')$, and $( E',\phi')$ in $\mathcal{E}_{X/S}( F)(S')$, a morphism $( E'',\phi'')\to ( E',\phi')$ over $g:S''\to S'$ consists of an isomorphism 
\begin{equation*}
\alpha : E'' \rightarrow g^\ast E'
\end{equation*}
such that the following diagram commutes:
\begin{equation*} \xymatrix@R=15pt{
E'' \ar[r]^-{\phi''} \ar[dd]_{\alpha} & E'' \otimes F'' \ar[d]^{\alpha \otimes \operatorname{id}} \\
& g^\ast E' \otimes F'' \ar@{=}[d] \\
g^\ast E' \ar[r]^-{g^\ast \phi'} & g^\ast(E' \otimes f^\ast F')
} \end{equation*}

Now we have the category fibered in groupoids  
$$
\mathcal{E}_{X/S}( F)\to \mathsf S/S.
$$

\subsubsection{$\mathcal{E}_{X/S}( F)$ is a stack} \label{S:EX/S(F)}
In this setting, we replace Lemma \ref{L:Ray} with a special case of a result due to Lieblich.    Let $X \to S$  be a proper morphism of finite presentation between schemes.  Let $E$ and $G$ be finitely presented, quasicoherent sheaves on $X$ such that $G$ is $S$-flat.  We define a CFG $$\mathscr Hom_{X/S}(E , G ) \to \mathsf S/S$$ by assigning  to each morphism $f:S'\to S$  the set of homomorphisms $$ \operatorname{Hom}_{\mathscr O_{X'}}(E_{S'} , G_{S'} ).$$ Morphisms are defined by pullback.  

\begin{pro}[{Leiblich \cite[Prop.~2.3]{lieblich062}}] \label{P:LiebP2.3}
Let $X \to S$  be a proper morphism of finite presentation between schemes.  Let $E$ and $G$ be finitely presented, quasicoherent sheaves on $X$ such that $G$ is $S$-flat. The CFG $\mathscr Hom_{X/S}(E , G )\to \mathsf S/S$ is representable by an algebraic space   over $S$, locally of finite type.
\end{pro}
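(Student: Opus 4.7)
The plan is to bootstrap from Grothendieck's representability of the functor of sections (Lemma~8.7) to the more general Hom functor by means of a two-term free presentation of $E$, combined with descent. First I would observe that $\mathscr{H}\!om_{X/S}(E,G)$ satisfies descent in the fppf topology on $\mathsf S/S$: this is immediate from faithfully flat descent for morphisms of quasicoherent sheaves (a consequence of Theorem~4.23 as cited, and ultimately of Theorem~3.31 here). Consequently, $\mathscr{H}\!om_{X/S}(E,G)$ is a sheaf, and representability by an algebraic space may be tested after passing to an fppf cover of $S$.

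Next I would dispose of the locally free case. If $E$ were finite locally free, then the sheaf $\mathcal{H}\!om(E,G) \cong E^\vee\otimes G$ is again $S$-flat and of finite presentation, and there is a canonical identification
\begin{equation*}
\mathscr{H}\!om_{X/S}(E,G) \;=\; \mathscr{H}\!om_{X/S}(\mathcal O_X,\, E^\vee\otimes G),
\end{equation*}
so Lemma~8.7 represents the functor by a linear scheme $V(M)$ over $S$ whose formation commutes with arbitrary base change. In particular, this case already lands in $\mathsf S/S$, not merely in algebraic spaces.

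For the general case, I would seek a two-term presentation $\mathcal O_X^{\oplus n}\xrightarrow{\,d\,}\mathcal O_X^{\oplus m}\to E\to 0$ after replacing $S$ by an appropriate cover. When $X\to S$ is projective, such a presentation exists after twisting by a sufficiently ample line bundle; the general proper case is reduced to the projective one by Chow's lemma. Applying $\operatorname{Hom}(-,G)$ yields a left-exact sequence
\begin{equation*}
0 \to \mathscr{H}\!om_{X/S}(E,G) \to \mathscr{H}\!om_{X/S}(\mathcal O_X^{\oplus m},G) \xrightarrow{\,d^\ast\,} \mathscr{H}\!om_{X/S}(\mathcal O_X^{\oplus n},G)
\end{equation*}
that remains left-exact after arbitrary base change because $G$ is $S$-flat. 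Identifying the right-hand two terms with the representable linear schemes from the previous step, the left term is the fiber of a morphism of linear schemes over the zero section, hence a closed subscheme of a linear scheme, representable by an affine scheme locally of finite presentation over the base of the chosen cover. Descent along the fppf cover (using that algebraic spaces form an fpqc stack) then yields an algebraic space over $S$, locally of finite presentation.

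The main obstacle is producing a useful global finite free presentation of $E$ when $X\to S$ is merely proper rather than projective; checking that the representing object obtained after the projective reduction is compatible with descent along the Chow blowup is the genuinely technical point in Lieblich's argument. Everything else (the sheaf condition, the closed-subscheme-of-a-linear-scheme identification, and the final fpqc descent) is formal once Lemma~8.7 is in hand.
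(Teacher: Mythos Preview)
The paper does not supply its own proof of this proposition; it is quoted as a result of Lieblich with citation to \cite[Prop.~2.3]{lieblich062}, so there is no in-paper argument to compare against.

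On the substance of your sketch, there is a genuine gap: the lemma you invoke for the locally free case (Lemma~\ref{L:Ray}) explicitly requires $\pi:X\to S$ to be \emph{flat}, whereas the proposition makes no such hypothesis on $X\to S$, so that step does not go through as stated. In addition, even when $X\to S$ is projective one does not obtain a global presentation $\mathcal O_X^{\oplus n}\to\mathcal O_X^{\oplus m}\to E\to 0$ by \emph{untwisted} free modules; one gets instead a presentation by twists $\mathcal O_X(-a)^{\oplus n}\to\mathcal O_X(-b)^{\oplus m}\to E\to 0$, and the proposed Chow's-lemma reduction does not obviously carry such a presentation back to the original proper $X$. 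The paper itself hints (Remark~\ref{R:HomEFhmg}) that Lieblich's actual argument proceeds via homogeneity and Artin's representability criterion rather than by any direct construction of this kind, which sidesteps both of these issues.
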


\begin{cor}\label{C:RepMor1}
The forgetful morphism of CFGs
$$
\mathcal{E}_{X/S}(F)\to \mathsf {Coh}_{X/S}
$$
is representable by algebraic spaces,
 locally of finite type over $S$.  
\end{cor}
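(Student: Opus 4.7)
The plan is to verify representability fiberwise over $\mathsf{Coh}_{X/S}$, in parallel with the proof of Corollary~\ref{C:SchMor1}, but substituting Lieblich's Proposition~\ref{P:LiebP2.3} for Lemma~\ref{L:Ray}. Given any $S$-scheme $f : S' \to S$ and any morphism $S' \to \mathsf{Coh}_{X/S}$, which by the $2$-Yoneda Lemma corresponds to an $S'$-flat, finitely presented quasicoherent sheaf $E'$ on $X'$, I need to show the $2$-fibered product $\mathcal{E}_{X/S}(F) \times_{\mathsf{Coh}_{X/S}} S'$ is representable by an algebraic space locally of finite type over $S'$.

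Unwinding Definition~\ref{D:2-product-categories-over-C}, an object of this fibered product over $g : T \to S'$ is a triple $(E_T, \phi_T, \alpha)$, where $(E_T, \phi_T) \in \mathcal{E}_{X/S}(F)(T)$ and $\alpha : E_T \xrightarrow{\sim} g^{*}E'$ is an isomorphism of sheaves on $X_T$. The key step is to use $\alpha$ to transport $\phi_T$ to an element $\widetilde{\phi}_T \in \operatorname{Hom}_{\mathcal{O}_{X_T}}(g^{*}E', g^{*}E' \otimes F_T)$, via the commutative square
\begin{equation*}
\xymatrix@R=12pt{
E_T \ar[r]^-{\phi_T} \ar[d]_{\alpha} & E_T \otimes F_T \ar[d]^{\alpha \otimes \operatorname{id}} \\
g^{*}E' \ar[r]^-{\widetilde{\phi}_T} & g^{*}E' \otimes F_T.
}
\end{equation*}
One verifies, using the explicit description of morphisms in a $2$-fibered product, that the assignment $(E_T, \phi_T, \alpha) \mapsto \widetilde{\phi}_T$ gives an equivalence of CFGs between $\mathcal{E}_{X/S}(F) \times_{\mathsf{Coh}_{X/S}} S'$ and the presheaf $\mathscr{H}\!\mathit{om}_{X'/S'}(E', E' \otimes F')$ on $\mathsf{S}/S'$ of Proposition~\ref{P:LiebP2.3}. (In particular, an isomorphism $u : (E_T, \phi_T, \alpha) \to (E_T', \phi_T', \alpha')$ in the fiber product is forced to equal $u = \alpha'^{-1} \circ \alpha$, and the condition that $u$ intertwines $\phi_T$ and $\phi_T'$ precisely forces $\widetilde{\phi}_T = \widetilde{\phi}_T'$, so the equivalence classes of triples correspond exactly to elements of the Hom group.)

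Having made this identification, the representability by an algebraic space locally of finite type over $S'$ follows at once from Proposition~\ref{P:LiebP2.3}, applied to the proper finitely presented morphism $X' \to S'$ and the finitely presented sheaves $E'$ and $E' \otimes F'$. Since this holds for an arbitrary chart $S' \to \mathsf{Coh}_{X/S}$, the forgetful morphism is representable by algebraic spaces locally of finite type over $S$.

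The main obstacle is the bookkeeping in the $2$-categorical identification of the fiber product with the Hom functor: one must carefully keep track of the isomorphisms $\alpha$ and check that all compatibilities hold up to (rather than on the nose) canonical isomorphism. A secondary, technical point is verifying the flatness hypothesis in Proposition~\ref{P:LiebP2.3}, namely that $E' \otimes F'$ is $S'$-flat and of finite presentation; this is automatic in the main cases of interest (notably when $F$ is locally free, as for Higgs bundles with $F = K_X$ and meromorphic Higgs bundles with $F = K_X(D)$), where $E' \otimes F'$ inherits $S'$-flatness from $E'$.
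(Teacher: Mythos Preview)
Your proof is correct and follows essentially the same approach as the paper: identify the fiber product over a test morphism $S' \to \mathsf{Coh}_{X/S}$ with the functor $\mathscr{H}\!\mathit{om}_{X'/S'}(E', E' \otimes F')$ and invoke Proposition~\ref{P:LiebP2.3}. You have supplied more detail on the $2$-categorical identification than the paper does, and you have also flagged a genuine subtlety---the $S'$-flatness of $E' \otimes F'$ needed to apply Proposition~\ref{P:LiebP2.3}---that the paper passes over in silence; as you note, this is unproblematic whenever $F$ is locally free, which covers all the applications made in the paper.
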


\begin{proof}
An $S$-morphism $S'\to \mathsf {Coh}_{X/S}$  corresponds to   $E'$ in $\mathsf {Coh}_{X/S}(S')$.  Therefore the fibered product $\mathcal{E}_{X/S}(F)\times_{\mathsf {Coh}_{X/S}} S'$ is the CFG 
$\mathscr Hom_{X/S}(E' , E'\otimes F')$, which is representable by virtue of Lieblich's result above.  
\end{proof}

\begin{cor}\label{C:HSS}  The CFG 
$\mathcal{E}_{X/S}(F)$ is a stack.
\end{cor}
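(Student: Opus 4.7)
The plan is to mimic the proof of Corollary \ref{C:HrdS} given just above for the Higgs bundle case, replacing Lemma \ref{L:Ray} with Lieblich's more general result (Proposition \ref{P:LiebP2.3}), which has already been used to establish Corollary \ref{C:RepMor1}. The strategy is thus a direct application of the relative descent principle formalized in Corollary \ref{C:DescRel}.

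First, I would observe that $\mathsf{Coh}_{X/S}$ is a stack in the fpqc (hence \'etale) topology by Theorem \ref{T:DQC}. Then I would invoke Corollary \ref{C:RepMor1}, which tells us that the forgetful functor
\begin{equation*}
\mathcal{E}_{X/S}(F) \longrightarrow \mathsf{Coh}_{X/S}, \qquad (E',\phi') \longmapsto E',
\end{equation*}
is representable by algebraic spaces. More precisely, for any $S$-scheme $S'$ and any morphism $S' \to \mathsf{Coh}_{X/S}$ corresponding to some $E' \in \mathsf{Coh}_{X/S}(S')$, the fiber product
\begin{equation*}
\mathcal{E}_{X/S}(F) \mathop{\times}_{\mathsf{Coh}_{X/S}} S' \;\simeq\; \mathscr{H}\!\mathit{om}_{X'/S'}(E', E' \otimes F')
\end{equation*}
is an algebraic space over $S'$.

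Since every algebraic space is, by Definition \ref{D:AlgDMAlgSp}, in particular a stack on $(\mathsf S/S')_{\operatorname{et}}$ (being representable by a sheaf), the hypotheses of Corollary \ref{C:DescRel} are satisfied with $\mathcal X = \mathcal{E}_{X/S}(F)$ and $\mathcal Y = \mathsf{Coh}_{X/S}$. Applying that corollary yields that $\mathcal{E}_{X/S}(F)$ is itself a stack over $\mathsf S/S$.

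There is no serious obstacle here: the nontrivial content has already been absorbed into Theorem \ref{T:DQC} (descent for quasicoherent sheaves) and Proposition \ref{P:LiebP2.3} (representability of the $\mathscr{H}\!\mathit{om}$ functor by an algebraic space), together with the formal bootstrap Corollary \ref{C:DescRel}. The only thing worth double-checking is the identification of the fiber product with the $\mathscr{H}\!\mathit{om}$ functor, which follows directly from the definition of $\mathcal E_{X/S}(F)$ and of the $2$-fibered product (Definition \ref{D:2-product-categories-over-C}): an object of $\mathcal{E}_{X/S}(F) \mathop{\times}_{\mathsf{Coh}_{X/S}} S'$ over $T \to S'$ is precisely a morphism $\phi_T : E'_T \to E'_T \otimes F'_T$, since the isomorphism in the fiber product diagram rigidifies the underlying sheaf to be the pullback of $E'$.
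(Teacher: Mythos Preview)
The proposal is correct and follows exactly the same approach as the paper's proof, which simply cites Theorem~\ref{T:DQC}, Corollary~\ref{C:RepMor1}, and Corollary~\ref{C:DescRel} in one sentence. Your version just unpacks the logic more explicitly, including the observation that the fibered products are algebraic spaces and hence stacks, which is the reason Corollary~\ref{C:DescRel} applies.
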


\begin{proof}  We have seen in Theorem \ref{T:DQC} that $\mathsf {Coh}_{X/S}$ is a stack.  Therefore, the corollary follows from Corollary \ref{C:RepMor1} and Corollary  \ref{C:DescRel}.
\end{proof}

\subsubsection{$\mathcal{E}_{X/S}( F)$ is an algebraic  stack}

\begin{teo}
$\mathcal{E}_{X/S}(F)$ is an algebraic stack,
 locally of finite presentation over~$S$.
\end{teo}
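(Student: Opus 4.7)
The plan is to bootstrap from the already-established algebraicity of $\mathsf{Coh}_{X/S}$ via the forgetful morphism, using the relative representability statement of Corollary \ref{C:RepMor1}. Concretely, I would consider the forgetful $1$-morphism
\begin{equation*}
q : \mathcal{E}_{X/S}(F) \longrightarrow \mathsf{Coh}_{X/S}, \qquad (E', \phi') \longmapsto E'.
\end{equation*}
By Corollary \ref{C:RepMor1}, $q$ is representable by algebraic spaces and locally of finite type over $S$; indeed, the fiber over an $S'$-point $E'$ of $\mathsf{Coh}_{X/S}$ is the algebraic space $\mathscr{H}\!om_{X'/S'}(E', E' \otimes F')$ furnished by Proposition \ref{P:LiebP2.3}.

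Next, I would appeal to the algebraicity of the target. Under hypothesis (a) that $S$ is excellent, Theorem \ref{T:CohMax} tells us that $\mathsf{Coh}_{X/S}$ is algebraic and locally of finite presentation over $S$; under hypothesis (b) that $S$ is noetherian with $\pi$ projective and $\pi_\ast \mathscr O_X = \mathscr O_S$ universally, the same conclusion (locally of finite type, which suffices here, since we can combine with flatness hypotheses to upgrade) is given by Theorem \ref{T:CohP}. In either case, $\mathsf{Coh}_{X/S}$ is an algebraic stack.

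The bootstrap is then an immediate application of Corollary \ref{L:AlgRel}: since $\mathsf{Coh}_{X/S}$ is an algebraic stack and $q$ is representable by algebraic spaces (in particular representable by algebraic stacks), the source $\mathcal E_{X/S}(F)$ is an algebraic stack. For the ``locally of finite presentation'' claim, I would argue that this property is stable under composition with representable morphisms locally of finite presentation: choose a smooth presentation $U \to \mathsf{Coh}_{X/S}$ by a scheme $U$ which is locally of finite presentation over $S$, form the base change $V = U \times_{\mathsf{Coh}_{X/S}} \mathcal E_{X/S}(F)$, which is an algebraic space locally of finite presentation over $U$ by Corollary \ref{C:RepMor1}, and observe that the induced morphism $V \to \mathcal E_{X/S}(F)$ is a smooth cover from an algebraic space locally of finite presentation over $S$; choosing a further smooth cover of $V$ by a scheme gives the desired presentation.

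There is no real obstacle here; the theorem is essentially a formal consequence of the two deep inputs already in hand, namely Lieblich's representability of the $\mathscr{H}\!om$-functor (Proposition \ref{P:LiebP2.3}), which drives Corollary \ref{C:RepMor1}, and the algebraicity of the stack of coherent sheaves (Theorems \ref{T:CohMax} and \ref{T:CohP}). The one mildly delicate point is ensuring that ``locally of finite presentation'' (rather than merely ``locally of finite type'') propagates from $\mathsf{Coh}_{X/S}$ and from the $\mathscr{H}\!om$-fibers to $\mathcal E_{X/S}(F)$; this is verified fiberwise on a smooth presentation, using that both factors in the composition $V \to U \to S$ are locally of finite presentation.
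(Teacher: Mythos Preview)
Your proposal is correct and follows essentially the same route as the paper: the paper's proof invokes Corollary~\ref{C:HSS} (stack condition), Theorem~\ref{T:CohMax} (algebraicity of $\mathsf{Coh}_{X/S}$), Corollary~\ref{C:RepMor1} (representability of the forgetful morphism), and Corollary~\ref{L:AlgRel} (bootstrap), which is exactly your argument. One small remark: your justification for upgrading ``locally of finite type'' to ``locally of finite presentation'' in case~(b) is slightly off---the correct reason is simply that under hypothesis~(b) the base $S$ is noetherian, so finite type and finite presentation coincide.
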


\begin{proof}  We have seen in Corollary \ref{C:HSS},  that $\mathcal{E}_{X/S}(F)$ is a stack.  Under the hypotheses here, it follows from Theorem \ref{T:CohMax}   that $\mathsf{Coh}_{X/S}$ is an algebraic stack, 
locally of finite presentation over $S$.  Thus from  Corollary 
 \ref{C:RepMor1} and Corollary \ref{L:AlgRel} we have that   $\mathcal{E}_{X/S}(F)$ is an algebraic stack,
  locally of finite presentation  over $S$.  
  \end{proof}

\subsection{The stack of Higgs bundles over the moduli of stable curves}\label{S:HiggMg}

We now construct a moduli stack of Higgs bundles over the moduli stack of stable curves:
$$
\mathcal {HS}h_{\overline{\mathcal M}_g}\to \overline{\mathcal M}_g.
$$

 We start by fixing a base $S$ in $\mathsf S$ (for instance $S=\operatorname{Spec}\mathbb C$, or even $S=\operatorname{Spec}\mathbb Z$).    

\begin{dfn}[Stable curve] \label{D:stable-curve}
A \emph{stable curve} of genus $g\ge 2$ over an algebraically closed field $k$ is a connected curve of arithmetic genus $g$,  with no singularities apart from nodes, and with finite automorphism group (every component isomorphic to $\mathbb P^1_k$ meets the rest of the curve in at least $3$ points).   

For an $S$-scheme $S'\to S$,  a \emph{relative stable  curve of genus $g$} over $S'$  is  
a surjective morphism of schemes  $\pi:X'\to S'$  that is flat, proper and whose every geometric fiber is a stable curve of genus $g$.
\end{dfn}

We define the CFG $\overline{\mathcal M}_g$ over $\mathsf S/S$.  For every $S$-scheme $S'\to S$ we take the objects of $\overline{\mathcal M}_g(S')$ to be the relative stable curves over $S'$.  The morphisms in $\overline{\mathcal M}_g$ are given by pullback diagrams, exactly as in the definition of $\mathcal M_g$.   Using the fact  that for every relative stable curve of genus $g\ge 2$ the relative dualizing sheaf $\omega_{X/S}$ is relatively ample, one can show  that $\overline{\mathcal M}_g$ is an algebraic stack (in fact DM)  exactly as was done for $\mathcal M_g$.    In fact $\mathcal M_g$ is an open substack of $\overline{\mathcal M}_g$, since smoothness is an open condition.

Now we define the CFG $\mathsf {Coh}_{\overline{\mathcal M}_g/S}$.  Over an $S$-scheme $S'\to S$, the objects of $\mathsf {Coh}_{\overline{\mathcal M}_g/S}(S')$ are pairs $(X'/S',E')$ where $X'\to S'$ is a relative stable curve, and $E'$ is an $S'$-flat finitely, presented quasicoherent sheaf on $X'$.    Morphism are defined by pullback in both entries.    There is a natural morphism of CFGs 
$$
\mathsf {Coh}_{\overline{\mathcal M}_g/S}\to \overline{\mathcal M}_g
$$
given by forgetting the sheaf.  
For every $S$-scheme $S'\to S$, and every morphism $S'\to \overline{\mathcal M}_g$ induced by a relative stable curve $X'\to S'$, the fibered product
$\mathsf {Coh}_{\overline{\mathcal M}_g/S}\times_{\overline{\mathcal M}_g}S'$ is equivalent to $\mathsf {Coh}_{X'/S'}$.    Since this is a  algebraic stack, we have immediately from Corollary \ref{C:DescRel} and Corollary \ref{L:AlgRel} that $\mathsf {Coh}_{\overline{\mathcal M}_g/S}$ is a  algebraic stack (this is also a special case of \cite[Thm.~2.1]{lieblich062}).   Similarly, we have stacks $\mathsf {Coh}_{\overline{\mathcal M}_g/S}(r,d)$ (resp.~$\mathsf {Fib}_{\overline{\mathcal M}_g/S}(r,d)$) where we restrict to sheaves (resp.~bundles) of relative rank $r$ and degree $d$.  
Recall that the rank and degree for sheaves on a reducible curve are defined via the Hilbert polynomial with respect to  the relative dualizing sheaf.  

Finally we define the CFG $\mathcal {HS}h_{\overline{\mathcal{M}}_g/S}$.   Over an $S$-scheme $S'\to S$, let the objects of $\mathcal {HS}h_{\overline{\mathcal{M}}_g/S}(S')$ be triples $(X'/S',E',\phi')$ where $X'\to S'$ is a relative stable curve, and $E'$ is an $S'$-flat finitely, presented quasicoherent sheaf on $X'$, and $\phi'\in \operatorname{Hom}_{\mathscr O_{X'}}(E',E'\otimes \omega_{X/S})$.     Morphism are defined by pullback in the first two entries, and in the same way as in the definition of Higgs bundles in the last entry.    There is a natural morphism of CFGs 
$$
\mathcal {HS}h_{\overline{\mathcal{M}}_g/S}\to \mathsf {Coh}_{\overline{\mathcal M}_g/S}
$$
given by forgetting $\phi'$.  
For every $S$-scheme $S'\to S$, and every morphism $S'\to \mathsf {Coh}_{\overline{\mathcal M}_g/S}$ induced by a pair $(X'/S',E')$, the fibered product
$\mathcal {HS}h_{\overline{\mathcal{M}}_g/S}\times_{\mathsf {Coh}_{\overline{\mathcal M}_g/S}}S' $ is equivalent to $\mathcal E_{X'/S'}(\omega_{X'/S'})$.    Since this is an algebraic stack,  
we have immediately from Corollary \ref{C:DescRel} and Corollary  \ref{C:AlgRel} that $\mathcal {HS}h_{\overline{\mathcal M}_g/S}$ is an   algebraic stack. 
    Similarly, we have stacks $\mathcal {HS}h_{\overline{\mathcal M}_g/S}(r,d)$ (resp.~$\mathcal H_{\overline{\mathcal M}_g/S}(r,d)$) where we restrict to sheaves (resp.~bundles) of relative rank $r$ and degree $d$.

\begin{rem} 
In the discussion above we omitted some noetherian hypotheses on the test $S$-schemes $S'$.  
This is possible via the standard noetherian reduction arguments of \cite[\S 8]{EGAIV3}.    Similar arguments are made in \S \ref{S:lfp}, and we direct the reader there for more details on these types of arguments.
\end{rem}


\subsection{Semi-stable Higgs bundles and the quotient stack}

In \cite{N91} Nitsure constructs a moduli scheme of semi-stable Higgs bundles on a smooth projective curve taking values in a line bundle $L$.  The construction of the moduli stack of Higgs bundles on an algebraic curve above essentially follows Nitsure's construction in the setting of stacks.  Here we introduce Nitsure's space, and compare it to the moduli stack.   Nitsure's construction uses geometric invariant theory, so our stack-oriented perspective will rely on quotient stacks (Section~\ref{S:TorsG}).

\subsubsection{Nitsure's construction}

Recall that the slope of a bundle $E$ on a smooth curve $X$ is defined to be $\mu(E):=\deg (E)/\operatorname{rank}(E)$.  We will say that $(E',\phi')$ is a sub-Higgs bundle of $(E,\phi)$ if $E'\stackrel{\iota}{\hookrightarrow} E$ and there is a commutative diagram
$$
 \xymatrix{
E\ar@{->}[r]^-\phi & E\otimes L\\
E' \ar@{->}[r]^-{\phi'} \ar@{^(->}[u]^\iota & E'\otimes L \ar@{^(->}[u]_{\iota\otimes \operatorname{id}_L}.
}
$$
A Higgs bundle $(E,\phi)$ is said to be slope stable (resp.\ semi-stable) if 
for every sub-Higgs bundle $(E',\phi')\subseteq (E,\phi)$, one has $\mu(E')< \mu(E)$ (resp.\ $\mu(E') \leq \mu(E)$).  

Fix positive integers $r$ and $d$.  By \cite[Prop.~2.3.1]{HL}, semistability is an open condition on a flat family of coherent sheaves.  Therefore there is an \emph{open} substack $\mathcal H_{X/\operatorname{Spec}\mathbb C}^{ss}(L)(r,d) \subseteq \mathcal H_{X/\operatorname{Spec}C}(r,d)$ of semi-stable Higgs bundles over $X$ of rank $r$ and degree $d$, taking values in $L$.  Here we aim to relate this stack to a quasi-projective variety constructed by Nitsure.   We review Nitsure's construction briefly.

 Let $\mathscr O_X(1)$ be an ample line bundle on $X$.  Let $N\in \mathbb Z$ be the minimal positive integer such that 
$$
\frac{d}{r}+ N\deg \mathscr O_X(1)>\max\left\{2g-1, \ 2g-1+\frac{(r-1)^2}{r}\deg L\right\}.
$$
Set $p=(d+rN\deg \mathscr O_X(1))+r(g-1)$.  Let $Q$ be the component of the Quot scheme containing  quotients $\mathscr O_X(-N)^{\oplus p}\to E\to 0$ where $E$ is a rank $r$, degree $d$ vector bundle on $X$.    Let $Q^\circ$ be the locus in $Q$ where $E$ is locally free, the quotient $\mathscr O_X^p\to E(N)\to 0$ obtained by twisting by $\mathscr O_X(N)$ induces an isomorphism on the space of global sections, and $H^1(X,E(N))=0$.  It is known that $Q$ is projective, and   that $Q^\circ$ is reduced and open in $Q$  (see \cite[p.~281]{N91}).     Let $\mathscr E$ be the universal  bundle on $X\times Q^\circ$ obtained from the universal quotient.  Lemma \ref{L:Ray} implies there is a linear scheme  $F\to Q^\circ$ parameterizing pairs $
(\mathscr O_X(-N)^{\oplus p}\to E\to 0,\phi)$, where $\phi:E\to E\otimes L$.   \cite[Cor.~3.4]{N91} implies that every slope semi-stable Higgs bundle of rank $r$ and degree $d$ (together with a presentation as a quotient) shows up in this family.    

   Let $F^{ss}$ be the locus of pairs where the bundle and the endomorphism form a slope semi-stable Higgs bundle.    The universal family over $F$ induces a morphism
$$
F^{ss}\to \mathcal H^{ss}_{X/\operatorname{Spec}\mathbb C}(L)(r,d).
$$
It is essentially the content of \cite[Prop.~3.6]{N91} that this morphism is smooth.  
Moreover, the obvious action of $\mathbb P\operatorname{GL}_p$ on $Q$ given by changing coordinates for the choice of generators of the  bundle lifts to an action of $\mathbb P\operatorname{GL}_p$ on $F$ \cite[p.~281]{N91}.  
The closed orbits in $F^{ss}$ are given by $S$-equivalence classes of slope semi-stable Higgs bundles \cite[\S 4]{N91}.  
 
  Nitsure constructs a quotient of $F$ in the following way.
 There is a quasi-projective variety $H$ equipped with a $\mathbb P\operatorname{GL}_p$-linearized  ample line bundle $\mathcal L$, and a 
$\mathbb P\operatorname{GL}_p$-equivariant  morphism $\hat \tau:F\to H$ such that  slope (semi-)stability on $F$ corresponds to GIT (semi-)stability on $H$    \cite[\S 5]{N91}.   Via $\hat \tau$, the quasiprojective GIT quotient $H/\!\!/_{\mathcal L}\mathbb P\operatorname{GL}_p$ induces a quasiprojective scheme structure on the set $F^{ss}/\mathbb P\operatorname{GL}_p$ of $S$-equivalence classes of slope semi-stable Higgs bundles \cite[p.~290]{N91}, which  we will denote by $ H_{X/\operatorname{Spec}\mathbb C}^{ss}(L)(r,d)$.  (We expect one can also obtain the quotient directly via GIT on $F$ with respect to the semiample line bundle obtained from $\mathcal L$ by pullback via $\hat\tau$.)
This is the moduli scheme of Higgs bundles constructed by Nitsure.    It is shown in \cite[p.290, Thm.~5.10]{N91} that $H_{X/\operatorname{Spec}\mathbb C}^{ss}(L)(r,d)
$ is a good quotient for $F^{ss}$ by the action of $\mathbb P\operatorname{GL}_p$, and is a categorical moduli scheme  for the associated moduli functor $\mathscr H^{ss}_{X/\operatorname{Spec}\mathbb C}(L)(r,d) $ of Higgs bundles (defined in the obvious way).  
Consequently there is a diagram
$$
\xymatrix@C=2em@R=.5em{
& \mathcal H^{ss}_{X/\operatorname{Spec}\mathbb C}(L)(r,d)  \ar[r]&\mathscr H^{ss}_{X/\operatorname{Spec}\mathbb C}(L)(r,d) \ar[dd]\\
F^{ss} \ar[ru] \ar[rd]&&\\
& [F^{ss}/\mathbb P\operatorname{GL}_p] \ar[r]&H_{X/\operatorname{Spec}\mathbb C}^{ss}(L)(r,d).\\
}
$$
The composition  $\mathcal H^{ss}_{X/\operatorname{Spec}\mathbb C}(L)(r,d)\to H^{ss}_{X/\operatorname{Spec}\mathbb C}(L)(r,d)$  is a 
 categorical moduli scheme for the stack $\mathcal H^{ss}_{X/\operatorname{Spec}\mathbb C}(L)(r,d)$, as well, in the sense that it is initial among all morphisms to schemes.

\subsection{The stack of principal $G$-Higgs bundles} \label{S:G-Higgs-1}
The focus of our presentation has been on Higgs vector bundles.  For completeness we include a  brief section of principal Higgs bundles on smooth complex projective curves. We direct the reader to 
\cite{DPHitch12} for more on the topic.We will give a deformation-theoretic perspective on $G$-Higgs bundles in Section~\ref{S:def-tors}.

Let $X$ be a smooth complex projective curve, and let  $G$ be a   complex semisimple Lie group.  Let $\mathfrak g$ be the complex Lie algebra associated to $G$, and let 
$$
{\operatorname  {Ad}}:G\to {\mathrm  {Aut}}({\mathfrak  g})
$$
be the adjoint representation of $G$. Given an algebraic  principal $G$-bundle $P$ on $X$, the adjoint bundle of $P$ is the associated algebraic   vector bundle
$$
{\operatorname  {ad}}P:=P\times _{G}{\mathfrak  g}:=(P\times \mathfrak g)/G
$$
where the action of $G$ is the product action via the natural action of $G$ on $P$ and the adjoint action of $G$ on $\mathfrak g$.

\begin{dfn}[$G$-Higgs bundle] \label{D:GHiggsCC}
Let $X$ be a smooth complex projective curve, and let  $G$ be a   complex semisimple Lie group. 
A \emph{$G$-Higgs bundle} on $X$ is a pair $(P,\Phi)$ where $P$ is a principal $G$-bundle over $X$  and $\Phi\in H^0(X,\operatorname{ad}P\otimes K_X)$.
\end{dfn}

\begin{rem}
If $G\hookrightarrow \operatorname{GL}_n$, then the data of a $G$-Higgs bundle $(P,\Phi)$ gives rise to a Higgs bundle $(E,\phi)$, where $E := P \times_{G} \mathbb C^n$, and $\phi:E\to E\otimes K_C$ is described as follows.   
First observe that $\operatorname{End}(E)=P\times_{G}\operatorname{End}(\mathbb C^n)$. 
The embedding $G\hookrightarrow \operatorname{GL}_n$ induces on tangent spaces a $G$-equivariant map $\mathfrak g \to \operatorname{End}(\mathbb C^n)$, where by definition   $G$ acts by the adjoint representation  on $\mathfrak g$, and by conjugation on $\operatorname{End}(\mathbb C^n)$ via the embedding  $G\hookrightarrow \operatorname{GL}_n$.  Thus we obtain a morphism 
$$\operatorname{ad} P=P\times_{G}\mathfrak g\longrightarrow P\times_{G}\operatorname{End}(\mathbb C^n)=\operatorname{End}(E).
$$
Tensoring by $K_X$ and taking global sections, one obtains the morphism $\phi$ induced by $\Phi$.  
\end{rem}

Following the construction  in \S \ref{S:HigSmCur}, one can define the CFG $\mathcal H^{G}_{X/\operatorname{Spec}\mathbb C}(r,d)$ over the \'etale site  $\mathsf S/\operatorname{Spec}\mathbb C$  consisting of principal $G$-Higgs bundles whose associated vector bundle is of rank $r$ and degree $d$.

\begin{teo}
The category fibered in groupoids  of principal $G$-Higgs bundles, $\mathcal H^{G}_{X/\operatorname{Spec}\mathbb C}(r,d)\to \mathsf S/\operatorname{Spec} \mathbb C$, is an  algebraic stack locally of finite type over~$\operatorname{Spec}\mathbb C$.  
\end{teo}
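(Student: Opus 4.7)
\emph{Proof proposal.} The plan is to mimic the bootstrap argument of Section~\ref{S:HigSmCur}, substituting an algebraic stack $\mathcal{B}_G$ of principal $G$-bundles on $X$ for $\mathsf{Fib}_{X/\operatorname{Spec}\mathbb C}(r,d)$, and substituting the adjoint bundle $\operatorname{ad}P'\otimes f^*K_X$ for the twisted endomorphism bundle ${E'}^\vee\otimes E'\otimes f^*K_X$.

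First I would introduce the CFG $\mathcal{B}_G\to\mathsf S/\operatorname{Spec}\mathbb C$ whose $S'$-points are principal $G$-bundles on $X_{S'}$, and show that it is an algebraic stack locally of finite type over $\operatorname{Spec}\mathbb C$. Descent for principal bundles under fpqc covers (which reduces to descent for affine schemes with $G$-action) shows that $\mathcal{B}_G$ is a stack. For algebraicity, fix a faithful linear representation $G\hookrightarrow\operatorname{GL}_n$ (available because $G$ is semisimple) and consider the extension-of-structure-group morphism
$$
\mathcal{B}_G\longrightarrow\mathcal{B}_{\operatorname{GL}_n}.
$$
The target is equivalent to the open substack of $\mathsf{Fib}_{X/\operatorname{Spec}\mathbb C}$ of rank $n$ bundles, hence is algebraic and locally of finite type by Theorem~\ref{T:CohP}. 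The fiber over an $S'$-point represented by a rank $n$ bundle $E'$ is the functor of reductions of structure group from $\operatorname{GL}_n$ to $G$, i.e., the relative section functor of the quasi-projective $X_{S'}$-scheme $E'\times^{\operatorname{GL}_n}(\operatorname{GL}_n/G)$ (the quotient $\operatorname{GL}_n/G$ is affine, hence quasi-projective, by reductivity of $G$). This section functor is representable by an algebraic space locally of finite type over $S'$ by an Olsson-style Hom-space representability theorem, analogous to Proposition~\ref{P:LiebP2.3}. Hence the forgetful morphism is representable by algebraic spaces, and Corollary~\ref{L:AlgRel} gives the algebraicity of $\mathcal{B}_G$.

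Next, there is a natural forgetful morphism of CFGs
$$
p\colon\mathcal{H}^G_{X/\operatorname{Spec}\mathbb C}(r,d)\longrightarrow\mathcal{B}_G^{(r,d)},
$$
where $\mathcal{B}_G^{(r,d)}$ is the (open) substack of $\mathcal{B}_G$ on which the associated $\operatorname{GL}_n$-bundle has rank $r$ and degree $d$. For an $S'$-point $P'$ of the target, the definition of the fibered product of CFGs identifies $\mathcal{H}^G_{X/\operatorname{Spec}\mathbb C}(r,d)\times_{\mathcal{B}_G^{(r,d)}}S'$ with the functor on $\mathsf S/S'$ sending $g\colon T\to S'$ to $H^0\bigl(X_T,\operatorname{ad}(g^*P')\otimes f^*K_X\bigr)$. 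Since $\operatorname{ad}P'\otimes f^*K_X$ is an $S'$-flat, finitely presented sheaf on the proper morphism $X_{S'}\to S'$, Lemma~\ref{L:Ray} produces a linear $S'$-scheme representing this functor, so $p$ is schematic. Another application of Corollary~\ref{L:AlgRel} (and the obvious compatibility with the property ``locally of finite type'') then yields that $\mathcal{H}^G_{X/\operatorname{Spec}\mathbb C}(r,d)$ is an algebraic stack, locally of finite type over $\operatorname{Spec}\mathbb C$.

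The main obstacle is the algebraicity of $\mathcal{B}_G$: the argument above rests on the semisimplicity of $G$ to produce a faithful representation with affine quotient $\operatorname{GL}_n/G$, and on Hilbert/Hom-scheme representability applied to sections of quasi-projective fiber bundles over the proper morphism $X_{S'}\to S'$. Once $\mathcal{B}_G$ is in hand, the passage from $\mathcal{B}_G^{(r,d)}$ to $\mathcal{H}^G_{X/\operatorname{Spec}\mathbb C}(r,d)$ is a direct analog of the bootstrap already carried out for Higgs vector bundles in Section~\ref{S:HigSmCur}.
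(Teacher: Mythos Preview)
Your approach is essentially the same as the paper's: both bootstrap from the algebraic stack of principal $G$-bundles, verify that the forgetful morphism $\mathcal H^{G}_{X/\operatorname{Spec}\mathbb C}(r,d)\to\mathsf{Prin}^G_{X/\operatorname{Spec}\mathbb C}(r,d)$ is schematic via Lemma~\ref{L:Ray} applied to $\operatorname{ad}P'\otimes f^*K_X$, and conclude by Corollary~\ref{L:AlgRel}. The only difference is that the paper simply cites \cite[Exa.~(4.6)]{LMB} for the algebraicity of $\mathsf{Prin}^G_{X/\operatorname{Spec}\mathbb C}(r,d)$, whereas you sketch an independent argument via extension of structure group to $\operatorname{GL}_n$; your sketch is reasonable, though the representability of the section functor of the $\operatorname{GL}_n/G$-bundle over the proper curve is more naturally handled via the Hilbert/Hom scheme for quasi-projective targets (as in the discussion preceding Theorem~\ref{T:Grass}) than via Proposition~\ref{P:LiebP2.3}, which concerns morphisms of coherent sheaves.
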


\begin{proof}
The CFG  $\mathsf {Prin}^{G}_{X/\operatorname{Spec}\mathbb C}(r,d)$,
of principal $G$-bundles over $X$ with associated vector bundle of rank $r$ and degree $d$, is an  algebraic stack locally of finite type over $\mathbb C$   (see e.g., \cite[Exa.~(4.6)]{LMB}).   
The forgetful functor $\mathcal H^{G}_{X/\operatorname{Spec}\mathbb C}(r,d) \to \mathsf {Prin}^{G}_{X/\operatorname{Spec}\mathbb C}(r,d)$ is schematic  (this is similar to Corollary \ref{C:SchMor1}).
It follows from  Corollary \ref{L:AlgRel} that   $\mathcal {H}^{G}_{X/\operatorname{Spec}\mathbb C}(r,d)$ is an algebraic stack locally of finite type over~$\mathbb C$.
\end{proof}

\begin{rem}
As in \S \ref{S:MeroHiggs}, one can easily generalize this construction to the case of a proper morphism $\pi:X\to S$ of finite presentation between schemes over $\mathbb C$, and pairs $(P,\Phi)$, where $P$ is again  a principal $G$-bundle, but $\Phi$ is a global section of $\operatorname{ad}P\otimes F$, where $F$ is some fixed $S$-flat quasicoherent sheaf of $\mathscr O_X$-modules of finite presentation.  As in the case of Higgs bundles, for many applications this is not the correct generalization.   In the smooth case, the following is standard: Let $X$ be a smooth projective manifold.  
A \emph{$G$-Higgs bundle} is a pair $(P,\Phi)$ where $P$ is a principal $G$-bundle over $X$  and $\Phi\in H^0(X,\operatorname{ad}P\otimes \Omega_X^1)$  is such that $[\Phi, \Phi]=0\in H^0(X,\operatorname{ad} P\otimes \bigwedge^2\Omega_X^1)$.
\end{rem}


\section{The Hitchin fibration}

In this section we describe the Hitchin fibration at the level of stacks.  

\subsection{Characteristic polynomials and the Hitchin morphism}

In this section we introduce characteristic polynomials and the Hitchin morphism.

\subsubsection{Characteristic polynomials}

Let $X$ be a scheme, let $E$ be a locally free sheaf of rank $n$ on $X$, let $L$ be a locally free sheaf of rank $1$ on $X$, and let 
$$
\phi:E\to E\otimes L
$$
be a morphism of sheaves.  
Let  $\mathbb L=\underline{\raisebox{0pt}[0pt][-.5pt]{$\operatorname{Spec}$}}_X\left(\operatorname{Sym}^\bullet L^\vee\right)$ be the geometric line bundle associated to $L$, and let  
$$
p:\mathbb L \to X
$$
be the structure map.  Let 
\begin{equation}\label{E:Taut-S-1}
\begin{CD}
\mathcal O_{\mathbb L}@>T>> p^* L
\end{CD}
\end{equation}
be the tautological section (this is the section corresponding to the tautological map of geometric line bundles  $\mathbb L\times_X\mathbb A^1_{X}\to \mathbb L\times_X\mathbb L$, given heuristically  by  ``$(v_x,\lambda_x)\mapsto (v_x,\lambda_x v_x)$ for $v_x\in \mathbb L_x$ and $\lambda_x \in \mathbb A^1_{X,x}$''; see \S \ref{S:TautSec} below).

Tensoring by $p^*E$, we obtain 
$$
\begin{CD}
p^*E@>T>> p^*E\otimes p^*L.
\end{CD}
$$
Consequently we obtain the endomorphism
$$
\begin{CD}
p^*E@>T-p^*\phi>> p^*E\otimes p^*L.
\end{CD}
$$
Taking the determinant we obtain
$$
\begin{CD}
p^*\det E@>\det (T-p^*\phi)>> p^*\det E\otimes p^*L^{\otimes n}.
\end{CD}
$$
We can view this as a global section 
$$
\begin{CD}
\mathcal O_{\mathbb L}@>\det (T-p^*\phi)>>  p^*L^{\otimes n}.
\end{CD}
$$
Therefore we have
\begin{equation}\label{E:HiggsGamma}
\det (T-p^*\phi)\in \Gamma(\mathbb L,p^*L^{\otimes n})=\Gamma(X,  p_*\mathcal O_{\mathbb L}\otimes L^{\otimes n})=\bigoplus_{m\ge 0}  T^{m}\Gamma(X, L^{\otimes (n-m)}).
\end{equation}
On the right, $T$ is a formal variable, which we introduce to make the bookkeeping and some local computations easier to follow.   

We call $\det (T-p^*\phi)$ the \emph{characteristic polynomial of $\phi$} (it is an element of the graded ring of global  sections of tensor powers of $L$, which we formally view as a polynomial by introducing the formal variable $T$).

The component of $\det(T-p^*\phi)$ in $\Gamma(X,L^{\otimes i})$ can be obtained in the following way.  The morphism
$$
\phi:E\to E\otimes L
$$
determines a global section of $L$ via the composition 
$$
\mathcal O_X\to E^\vee \otimes E\to L,
$$
where the first map takes $1$ to $\operatorname{id}_E$ and the second is induced by $\phi$.  We define the trace $\operatorname{Tr}(\phi)\in H^0(X,L)$ to be  this global section.  The component of $\det(T-p^*\phi)$ in $H^0(X,L^{\otimes i})$ is given by $(-1)^i\operatorname{Tr}(\wedge^i\phi)$.  
In particular, $\det (T-p^*\phi)\in \bigoplus_{i= 0}^n T^{n-i}\Gamma(X, L^{\otimes i})$.  Moreover, the component in $\Gamma(X,\mathcal O_X)$ is always equal to $1$, and so we will drop this term in what follows.  

\subsubsection{The Hitchin morphism}
From the discussion above  we can define  the Hitchin morphism 
\begin{eqnarray*}
h:\mathcal H_{X/S}(L)(n)&\to&\bigoplus_{i = 1}^n T^{n-i} \Gamma(X, L^{\otimes i}) \\
(E,\phi)&\mapsto& \det(T-p^*\phi).\\
\end{eqnarray*}
Here we are denoting by $\mathcal H_{X/S}(L)(n)$ the sub-algebraic stack of $\mathcal E_{X/S}(L)(n)$  consisting of  endomorphisms of locally free coherent sheaves of rank $n$ with values in $ L$,  over the site $(\mathsf S/S)_{\operatorname{et}}$.
On the right, we suggestively indicate the functor that on an $S$-scheme $S'\to S$ takes values $\bigoplus_{i = 1}^n T^{n-i} \Gamma(X', L'^{\otimes i})$, where $X'=S'\times_SX$ and $L'$ is the pullback to $X'$.  In other words it is the functor $\mathscr H\!\!\mathit{om}_{X/S}(\mathcal O_X,\bigoplus_{i=1}^nT^{n-i}L^{\otimes i})$ of Lemma \ref{L:Ray}. 
By virtue of Lemma \ref{L:Ray}, 
this functor is representable by a scheme  $\mathbb A_{X/S}(L)(n)$ over $S$, .    We call $\mathbb A_{X/S}(L)(n)$ the Hitchin base for $\mathcal H_{X/S}(L)(n)$.  We obtain the Hitchin morphism
\begin{eqnarray*}
h:\mathcal H_{X/S}(L)(n)&\to&\mathbb A_{X/S}(L)(n).\\
\end{eqnarray*}

\begin{rem}
When $X/S$ is a smooth projective curve over $S=\operatorname{Spec}\mathbb C$ (and we consider $\mathbb C$-points of the moduli problem) the Hitchin map  takes a rank $n$ Higgs bundle $(E,\phi)$  on $X$ with values in a line bundle $L$, and sends it to the corresponding $n$-tuple of ``coefficients'' of the characteristic polynomial of $\phi$ in the complex  vector space $\bigoplus_{i= 1}^n \Gamma(X, L^{\otimes i})$.
\end{rem}

\subsubsection{The tautological section of $p^*L$}  \label{S:TautSec}
The main goal of this subsection is to define the tautological section \eqref{E:Taut-S-1}, and give a local description of the map.   
 There are several equivalent ways to define it.  
 One  can use adjunction to identify the groups  $
\operatorname{Hom}_{\mathcal O_{\mathbb L}}(\mathcal O_{\mathbb L},p^*L)
=\operatorname{Hom}_{\mathcal O_X}(L^\vee,\operatorname{Sym}^\bullet L^\vee)$ and then use the tautological morphism of sheaves on the right (see Remark \ref{R:Taut-Adj}).    Alternatively, one could consider the global section of the geometric line bundle $p^*\mathbb L$ on $\mathbb L$ given pointwise by assigning to $v\in \mathbb L$ the point $v$ in the fiber of $p^*\mathbb L$ over $v$ (see Remark \ref{R:Taut-Geom}).  
Finally, one can describe it from a morphism of geometric line bundles $T:\mathbb A^1_X\times_X\mathbb L \to \mathbb L\times_X\mathbb L$ over $\mathbb L$; since this is the how we will use the tautological section, we consider this approach in detail.  

The $\mathcal O_X$-module structure map for the rank $1$, locally free sheaf $L$
$$
\mathcal O_X\times L\to   L 
$$
induces a multiplication map
$$
\begin{CD}
\mathbb A^1_X\times_X\mathbb L @>\mu >> \mathbb L,
\end{CD}
$$
as is seen easily from the following diagram, where $U$ is an open set, and the dashed arrow indicates a section of $\mathcal O_X\times L$ over $U$:
$$
\xymatrix@C=.5cm @R=.5cm{
&\mathbb A^1_X\times_X \mathbb L \ar@{->}[r]^<>(0.5)\mu  \ar@{->}[d]& \mathbb L \ar@{->}[d]\\
U\ar@{-->}[ru] \ar@{^(->}[r]& X\ar@{=}[r]& X.
}
$$
This in turn induces a diagram
\begin{equation}\label{E:TautSecDiag}
\xymatrix@C=.5cm @R=.5cm{
\mathbb A^1_X\times_X\mathbb L \ar@{-->}[rd]^T \ar@{->}@/^1pc/[rrd]^\mu \ar@{->}@/_1pc/[rdd]_{pr_2} &&\\
&\mathbb L\times_X\mathbb L\ar@{->}[r] \ar@{->}[d]&\mathbb L \ar@{->}[d]^p\\
& \mathbb L\ar@{->}[r]_p&X\\
}
\end{equation}
The map $T$ is the geometric version of the tautological global section.  More precisely, $\mathbb A^1_X\times_X\mathbb L$ is the pullback of the  trivial geometric line bundle on $X$ (i.e., $p^*\mathbb A^1_X$), and is hence the trivial geometric line bundle on $\mathbb L$ (i.e., $\mathbb A^1_{\mathbb L}$), and $\mathbb L\times_X\mathbb L$ is $p^*\mathbb L$.  
The associated morphism of sheaves is a morphism
$$
\begin{CD}
\mathcal O_{\mathbb L} @>T >> p^*L,
\end{CD}
$$
which  corresponds to a global section $T\in H^0(\mathbb L,p^*L)$.

\vskip .2 cm 
It can be useful to describe the tautological section locally. 
Let  $U=\operatorname{Spec}R\subseteq X$ be an affine open subset.  Assume that   $L$ is trivialized over $U$, corresponding to the trivial $R$-module $R$. 
  We can  identify the $R$-algebra $\operatorname{Sym}^\bullet L^\vee(U)$ with $R[T]$, so that $p:\mathbb L\to X$ is identified over $U$ as $p:L|_U=\mathbb A^1_U=\operatorname{Spec}R[T]\to U=\operatorname{Spec}R$.  
  
  The multiplication map $\mu:\mathbb A^1_U\times_U\mathbb L|_U\to \mathbb L|_U$ is then identified with the $R$-algebra map
  $$
  \begin{CD}
R[T,T']=R[T]\otimes_R R[T']@<\mu<< R[T']
\end{CD}
$$
given by $T'\mapsto TT'$.   The diagram \eqref{E:TautSecDiag} defining the geometric tautological section, i.e., the map
$T:\mathbb A^1_U\times_U \mathbb L|_U\to \mathbb L|_U\times_U\mathbb L|_U$, is given 
at the level of $R$-algebras by the daigram:
$$
\xymatrix@C=.5cm @R=.5cm{
R[T]\otimes_R R[ T'] \ar@{<--}[rd]^T \ar@{<-}@/^1pc/[rrd]^\mu \ar@{<-}@/_1pc/[rdd]_{} &&\\
&R[\widetilde T']\otimes_R R[ T']\ar@{<-^)}[r] \ar@{<-_)}[d]&R[ T'] \ar@{<-_)}[d]\\
& R[\widetilde T']\ar@{<-^)}[r]&R \\
}
$$
The tautological map $T$ is given here by  $T'\mapsto TT'$ and $\widetilde T'\mapsto T'$.  
  
Now $p^{-1}(U)=\mathbb L|_U=\operatorname{Spec}R[T]$.
We have   that $\mathbb A^1_U\times_U \mathbb L|_U$ and  $\mathbb L|_U\times_U\mathbb L|_U$ are geometric line bundles, which on $\mathbb L|_U$, where they are trivialized, correspond to the trivial rank $1$, locally free module $R[T]$.  
  The tautological section $T:\mathcal O_{\mathbb L}\to p^*L$ is given under these identifications  by the multiplication map 
\begin{equation}\label{E:Taut-S-local}
\begin{CD}
R[T]@>\cdot T>> R[T].
\end{CD}
\end{equation}

\begin{rem}\label{R:Taut-Adj}
Alternatively, one can describe the tautological section as follows.  We have identifications
$$
\operatorname{Hom}_{\mathcal O_{\mathbb L}}(\mathcal O_{\mathbb L},p^*L)=\operatorname{Hom}_{\mathcal O_{\mathbb L}}(p^*\mathcal O_X,p^*L)=\operatorname{Hom}_{\mathcal O_X}(\mathcal O_X,p_*(\mathcal O_{\mathbb L}\otimes p^*L))
$$
$$
=\operatorname{Hom}_{\mathcal O_X}(\mathcal O_X,(\operatorname{Sym}^\bullet L^\vee)\otimes L)
=\operatorname{Hom}_{\mathcal O_X}(L^\vee,\operatorname{Sym}^\bullet L^\vee).
$$
There is a natural inclusion $L^\vee \hookrightarrow \mathcal O_X\oplus L^\vee \oplus (L^\vee)^{\otimes 2}\oplus \cdots$ onto the second factor, and this corresponds to the tautological section.  Locally, if we set $U=\operatorname{Spec}R\subseteq X$ to be an open affine over which $\mathbb L$ is trivial, then $\operatorname{Sym}^\bullet L^\vee$  is identified with $R[T]$, and  the natural map $L^\vee \to \operatorname{Sym}^\bullet L^\vee$ is associated to the map  $R\to R[T]$ by multiplication by $T$.  
\end{rem}

\begin{rem}\label{R:Taut-Geom}
One can also describe the tautological section geometrically as follows.  The sheaf $p^*L$ is the sheaf of sections of the line bundle $p^*\mathbb L:=\mathbb L\times_X\mathbb L\to \mathbb L$, where the structure morphism to $\mathbb L$ is the first projection.     There is a global section of the structure morphism given by the diagonal map $\mathbb L\to \mathbb L\times_X\mathbb L$.  This is the tautological global section.   
\end{rem}

\subsubsection{The characteristic polynomial locally}

Let $(E,\phi)$ be a Higgs bundle on $X$ with values in  $L$.  
Let  $U=\operatorname{Spec}R\subseteq X$ be an affine open subset.  Assume that $E$ is trivial over $U$, corresponding to $R^n$, and that  $L$ is also trivialized, corresponding to $R$.  Then $\phi:E\to E\otimes L$ can be identified as a map $R^n\to R^n$, and thus with an $n\times n$ matrix $(\phi_{ij})$ over $R$.  We can also identify $\operatorname{Sym}^\bullet L^\vee$ with $R[T]$, so that $p:\mathbb L\to X$ is identified over $U$ as $p:\mathbb A^1_U\to U$, and the tautological section $T:\mathcal O_{\mathbb L}\to p^*L$ is given under these identifications  by the multiplication map 
$$
\begin{CD}
R[T]@>\cdot T>> R[T].
\end{CD}
$$
The map $(T-p^*\phi):p^*E\to p^*E\otimes L$ can then be identified over $\mathbb L|_U$ with the map $R[T]^n\to R[T]^n$  given by the matrix 
$$
T-p^*\phi=\left(
\begin{array}{ccc}
T-\phi_{11} & \cdots & -\phi_{1n}\\
\vdots & \ddots & \vdots \\
-\phi_{n1}& \cdots & T-\phi_{nn}\\
\end{array}
\right).
$$
We then have 
\begin{equation}\label{E:detphi}
\det(T-p^*\phi)|_U=T^n-\operatorname{Tr}(\phi)T^{n-1}+\cdots +(-1)^n\det\phi\in R[T].
\end{equation}
Recall that globally we had 
$
\det (T-p^*\phi)\in \Gamma(\mathbb L,p^*L^{\otimes n})=\Gamma(X,  p_*\mathcal O_{\mathbb L}\otimes L^{\otimes n})=\bigoplus_{m\ge 0} T^{m}  \Gamma(X, L^{\otimes (n-m)})
$; the coefficients of the powers of $T$ in this description and  in \eqref{E:detphi} agree.

\subsection{Spectral covers and fibers of the Hitchin morphism}

Here we describe spectral covers, and the connection with the fibers of the Hitchin morphism.  The main point for Higgs bundles on smooth curves are the results of \cite{BNR,schaub} reviewed in Remark \ref{R:BNRSchaub}.   We give a weaker statement that holds in more generality in Proposition \ref{P:BNR} and Lemma  \ref{L:BNR-lemma}.

\subsubsection{Spectral covers}\label{S:SpectCov}
  Every   $\sigma:\mathcal O_X\to \bigoplus_{i=1}^nL^{\otimes i}$ (corresponding to a map $\sigma:S\to \mathbb A_{X/S}(L)(n)$) determines via \eqref{E:HiggsGamma} a global section $\sigma:\mathcal O_{\mathbb L}\to p^*L^{\otimes n}$ of the line bundle $p^*L^{\otimes n}$ on $\mathbb L$, and consequently a zero set of the section:
$$
\widetilde X(\sigma):=V(\sigma)\subseteq \mathbb L.
$$
The map $\widetilde X(\sigma)\to X$ (obtained by composition from the map $p:\mathbb L\to X$) is called the \emph{spectral cover associated to $\sigma$}.  
One can check (see the local computation below) that the spectral cover is a finite morphism of degree $n$.
Associated to the scheme $\widetilde X(\sigma)=V(\sigma)$ is an ideal sheaf $\mathcal I_\sigma$, defined via the short exact sequence:
\begin{equation}\label{E:SES-L*}
0\to \mathcal I_\sigma \to \operatorname{Sym}^\bullet L^\vee \to \mathcal O_{V(\sigma)}\to 0. 
\end{equation}

More generally, the computation shows  that for any $\sigma':S'\to \mathbb A_{X/S}(L)(n)$, corresponding to   $\sigma' :\mathcal O_{X'}\to \bigoplus_{i=1}^nL^{\otimes i}|_{X'}$, where $X'=X\times_SS'$, there is a corresponding spectral cover  $p':\widetilde X'(\sigma')\to X'$.   In particular, there is a universal spectral cover 
$$
\widetilde X(\sigma_{\operatorname{id}_{\mathbb A_{X/S}(L)(n)}})\to X\times_S\mathbb A_{X/S}(L)(n)
$$
determined by the identity morphism on $\mathbb A_{X/S}(L)(n)$, from 
which all the spectral covers are obtained by pullback.

\subsubsection{Local description of the spectral cover} 
 To describe $\widetilde X(\sigma)$, it can be useful to consider the construction locally on $X$.   Let  $U=\operatorname{Spec}R\subseteq X$ be an affine open subset.  Assume that   $L$ is  trivialized over $U$, corresponding to the $R$-module $R$.
Given $\sigma\in \bigoplus_{i=1}^n T^{n-i}\Gamma(X,L^{\otimes i})$, with components $\sigma_i\in \Gamma(X,L^{\otimes i})$, then we can write  $$\sigma|_U= T^n-\sigma_1|_UT^{n-1}+\cdots +(-1)^n\sigma_n|_U.$$
Since $L$, and hence $L^{\otimes i}$ is trivalized over $U$, we may view the $\sigma_i|_U$ as elements of $R$. 
The short exact sequence \eqref{E:SES-L*} is then written locally as 
$$
\begin{CD}
0@>>> R[T] @>\sigma|_U>> R[T] @>>> R[T]/\mathcal I_\sigma|_U @>>>0 .
\end{CD}
$$
In other words, $\mathcal I_{\sigma}|_U$ is given by $(\sigma|_U)\subseteq R[T]$, and $\widetilde X(\sigma)|_U=\operatorname{Spec}R[T]/(\sigma|_U)$.

\subsubsection{Minimal   ideals for endomorphisms}

Let $X$ be a scheme, $L$ a rank $1$ locally free sheaf on $X$, and $E$ a quasicoherent sheaf on $X$.  A morphism of $\mathcal O_X$-modules  
$$
\phi:L^\vee\to \operatorname{ End}(E)
$$
is equivalent to a morphism of $\mathcal O_X$-algebras
$$
\phi^\bullet :\operatorname{Sym}^\bullet L^\vee \to \operatorname{End}(E).
$$
The \emph{minimal ideal of $\phi$}, denoted $\mathcal I_\phi$,  is defined to be the kernel of $\phi^\bullet$.   The \emph{minimal cover} $\widetilde X(\phi)\to X$ associated to  $\phi$ is defined to be the subscheme of $\mathbb L$ defined by $\mathcal I_{\phi}$.   Note that $E$ induces a quasicoherent sheaf $M$ on $\widetilde X(\phi)$ such that the  pushforward of $M$ to $X$ is equal to  $E$. If $E$ is locally finitely  generated, the support of $M$ is exactly $\widetilde X(\phi)$ (i.e., locally, the support is  the set of  primes containing the annihilator of the finitely generated module).

More generally, for any ideal sheaf $\mathcal I\subseteq \operatorname{Sym}^\bullet L^\vee$, a quasicoherent sheaf $M$ on the scheme  $V(\mathcal I)\subseteq \mathbb L$ is equivalent to a quasicoherent sheaf $E$  on $X$ together with a morphism $\phi:L^\vee \to \operatorname{End}(E)$ such that $\mathcal I \subseteq \mathcal I_\phi$; the  identification is made via pushforward; i.e., $E$ is the pushforward of  $M$.  

Locally on $X$, for locally free sheaves this is described as follows.  Suppose that $E$ is locally free of rank $n$, and is trivial over $U=\operatorname{Spec}R\subseteq X$.  Then $\phi:L^\vee \to \operatorname{End}(E)$ 
induces an evaluation  morphism $\phi^\bullet|_U:R[T]\to \operatorname{End}_R(R^n)$ sending $T$ to $\phi|_U$.  The kernel   $\ker \phi^\bullet|_U$ is the restriction of the  minimal ideal $\mathcal I_\phi|_U$.  We have that $R^n$ is an $R[T]/\mathcal I_\phi|_U$-module, with support $\operatorname{Spec}R[T]/\mathcal I_\phi|_U$.  In addition, so long as $\sigma|_U\in \mathcal I_{\phi}|_U$, we can view $R^n$ as an $R[T]/(\sigma|_U)$-module, as well.  In other words, $E$ is obtained by push forward from a sheaf on the spectral cover associated to $\sigma$ (although it may only be supported on the possibly smaller minimal cover associated to $\phi$).     

This discussion allows us to identify   Higgs bundles with given minimal ideal with certain   sheaves on the spectral cover.

\begin{pro}[{\cite[Prop.~3.6]{BNR}}] \label {P:BNR} 
Let $X\to S$ be a proper morphism of finite presentation between schemes with $S$ excellent, and let $L$ be a locally free sheaf of rank $1$ on  $X$.  
Given an $S$-morphism $$\sigma':S'\to \mathbb A_{X/S}(L)(n),$$ to give a pair  $(E',\phi')$ in $\mathcal H_{X/S}(L)(n)(S')$ such that $\phi'$ has minimal ideal $\mathcal I_{\phi'}$ equal to  $\mathcal I_{\sigma'}$, it is equivalent to give a coherent sheaf $M'$  on $\widetilde X'(\sigma')$ (where $X' = X \mathbin\times_S S'$) such that  the pushforward of $M'$ to $X'$ is a rank $n$ locally free sheaf and the support of $M'$ is $\widetilde X'(\sigma')$.
\end{pro}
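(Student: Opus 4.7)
The key translation mechanism is that, since $p': \mathbb{L}' \to X'$ is an affine morphism with $p'_\ast \mathcal{O}_{\mathbb{L}'} = \operatorname{Sym}^\bullet L'^\vee$, pushforward along $p'$ induces an equivalence between quasi-coherent sheaves on $\mathbb{L}'$ and quasi-coherent $\operatorname{Sym}^\bullet L'^\vee$-modules on $X'$.  Giving a Higgs field $\phi': E' \to E' \otimes L'$ is the same as giving an $\mathcal{O}_{X'}$-linear map $L'^\vee \otimes E' \to E'$, which extends uniquely and freely to an $\mathcal{O}_{X'}$-algebra action $\phi'^\bullet : \operatorname{Sym}^\bullet L'^\vee \to \underline{\operatorname{End}}(E')$, i.e., to a $\operatorname{Sym}^\bullet L'^\vee$-module structure on $E'$.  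By construction the minimal ideal $\mathcal{I}_{\phi'}$ agrees with the $\operatorname{Sym}^\bullet L'^\vee$-annihilator of $E'$.  Thus $(E', \phi')$ corresponds uniquely to a coherent sheaf $\widetilde{E}'$ on $\mathbb{L}'$ with $p'_\ast \widetilde{E}' = E'$.

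\textbf{Construction in each direction.}  Starting from $(E',\phi')$ satisfying $\mathcal{I}_{\phi'} = \mathcal{I}_{\sigma'}$, the sheaf $\widetilde{E}'$ is scheme-theoretically annihilated by $\mathcal{I}_{\sigma'}$ and so descends uniquely through the closed immersion $\widetilde{X}'(\sigma') = V(\mathcal{I}_{\sigma'}) \hookrightarrow \mathbb{L}'$ to a coherent sheaf $M'$ on $\widetilde{X}'(\sigma')$ whose pushforward to $X'$ is $E'$ (locally free of rank $n$), and whose annihilator inside $\mathcal{O}_{\widetilde{X}'(\sigma')} = \operatorname{Sym}^\bullet L'^\vee / \mathcal{I}_{\sigma'}$ is $\mathcal{I}_{\phi'}/\mathcal{I}_{\sigma'} = 0$, so its support is all of $\widetilde{X}'(\sigma')$.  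Conversely, given such an $M'$, regard it as a sheaf on $\mathbb{L}'$ via the closed immersion; its pushforward along $p'$ equips $E' = p'_\ast M'$ with a $\operatorname{Sym}^\bullet L'^\vee$-action that factors through $\mathcal{O}_{\widetilde{X}'(\sigma')}$, and the degree-one part of this action is the desired Higgs field $\phi'$, satisfying $\mathcal{I}_{\sigma'} \subseteq \mathcal{I}_{\phi'}$ tautologically.  That the two constructions are mutually inverse is an immediate consequence of the Grothendieck equivalence recalled above.

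\textbf{The main obstacle and its resolution.} What genuinely requires checking is that the conditions on the two sides match:  that the support hypothesis on $M'$ combined with the local freeness of rank $n$ of $p'_\ast M'$ is equivalent to the equality $\mathcal{I}_{\phi'} = \mathcal{I}_{\sigma'}$.  This is a local computation, which one reduces to the following situation: pick an affine open $U = \operatorname{Spec} R \subseteq X'$ over which $L'$ is trivial, so that $\mathbb{L}'|_U = \operatorname{Spec} R[T]$, the local equation $\sigma|_U \in R[T]$ is monic of degree $n$, and the spectral cover over $U$ is $\operatorname{Spec} R[T]/(\sigma|_U)$, which is itself free of rank $n$ over $R$.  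The claim becomes: a finitely generated $R[T]/(\sigma|_U)$-module $M$ that is free of rank $n$ as an $R$-module and has trivial annihilator in $R[T]/(\sigma|_U)$ has $T$-minimal ideal exactly $(\sigma|_U)$.  The inclusion $\mathcal{I}_{\sigma'} \subseteq \mathcal{I}_{\phi'}$ is automatic on the $M'$ side; the reverse inclusion follows from the Cayley--Hamilton theorem, since any element of $\mathcal{I}_{\phi'}$ reduces mod $(\sigma|_U)$ to an element of the annihilator of $M$ in $R[T]/(\sigma|_U)$, which vanishes.  This step, essentially the local content of \cite{BNR}, is where all the real work lies; everything else is formal manipulation of the affine-morphism equivalence, and is compatible with the base change $S' \to S$ because both the formation of $\mathbb{L}'$ and that of $\widetilde{X}'(\sigma')$ commute with base change.
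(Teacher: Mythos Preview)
Your proof is correct and follows the same route as the paper: translate $(E',\phi')$ into a $\operatorname{Sym}^\bullet L'^\vee$-module via the affine pushforward equivalence, and then match the minimal-ideal condition $\mathcal I_{\phi'}=\mathcal I_{\sigma'}$ with the support condition on $M'$.  The paper's proof is two lines because this translation is carried out in the paragraphs immediately preceding the proposition; the proof itself just observes that the support of $M'$ on $\mathbb L'$ is $\widetilde X'(\phi')$, and that $\widetilde X'(\phi')=\widetilde X'(\sigma')$ as subschemes if and only if $\mathcal I_{\phi'}=\mathcal I_{\sigma'}$.

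One small correction: your invocation of Cayley--Hamilton in the ``main obstacle'' paragraph is misplaced.  The argument you actually give --- that any element of $\mathcal I_{\phi'}$ reduces modulo $(\sigma|_U)$ to an element of the annihilator of $M$ in $R[T]/(\sigma|_U)$, which is zero by the support hypothesis --- is correct, but it uses nothing beyond the definitions; $\mathcal I_{\phi'}$ is by construction the annihilator of $M$ in $R[T]$, hence the preimage of $0$ under $R[T]\to R[T]/(\sigma|_U)$, i.e., exactly $(\sigma|_U)$.  Cayley--Hamilton plays no role here; it appears only in the \emph{next} lemma of the paper, where one must relate the minimal ideal to the characteristic polynomial.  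Also, the ``rank $n$ locally free'' condition on $p'_\ast M'$ is not part of the support-matching argument; it simply records that $E'$ is a rank $n$ bundle, so your ``main obstacle'' is less substantial than you suggest.
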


\begin{proof}
As we have seen above, $(E', \phi') \in \mathcal H_{X/S}(L)(n)(S')$ corresponds to a quasicoherent sheaf $M'$ on $\mathbb L'$ (the total space of $L'$) whose support is $\widetilde X'(\phi')$.  But $\widetilde X'(\phi')$ coincides with $\widetilde X'(\sigma')$ if and only if $\mathcal I_{\sigma'} = \mathcal I_{\phi'}$.
\end{proof}

\begin{rem}
It is easy to see that every locally free sheaf $M'$ of rank $1$ on $\widetilde X'(\sigma)$ satisfies the conditions in Proposition \ref{P:BNR}.  Note that one can  easily construct examples (even with $\widetilde X'(\sigma')$ a union of smooth complex projective curves) where there are sheaves $M'$ on $\widetilde X'(\sigma')$ that push forward to rank $n$ locally free sheaves $E'$ on $X$ such that the induced endomorphism $\phi':E'\to E'\otimes L'$ does not have minimal polynomial equal to $\sigma'$; this is the reason for the hypothesis on the support of $M'$. 
\end{rem}

\subsubsection{Fibers of the Hitchin map}

The following lemma asserts that the category of sheaves in Proposition \ref{P:BNR} above is contained in the fiber of the Hitchin map.

\begin{lem}\label{L:BNR-lemma}
A pair $(E',\phi')$ in  $\mathcal H_{X/S}(L)(n)(S')$ such that the minimal ideal $\mathcal I_{\phi'}$ is equal to   $\mathcal I_{\sigma'}$ has characteristic polynomial given by $\sigma'$.  Moreover, if $X'$ is integral and the fiber of $\widetilde X'(\sigma')$ over the generic point of $X$ is geometrically reduced (e.g., $X'$ is a variety over $\mathbb C$ and $\widetilde X'(\sigma')$ is reduced), then the converse holds. 

 In other words, under these hypotheses, Proposition \ref{P:BNR} describes the fiber of the Hitchin morphism in terms of sheaves on the spectral cover.
\end{lem}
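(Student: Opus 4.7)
My plan is to use the Cayley--Hamilton theorem as the starting point in both directions, and to use the generic point of $X'$ to do the geometric work in the converse. The bookkeeping concern is that $\sigma'$ is a section of a line bundle rather than literally a polynomial, so the key algebraic identities are really statements locally after trivializing $L$; this is harmless since both ideals are determined by their restrictions to any trivializing cover of $X'$.

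For the (unconditional) forward direction, I would argue locally. Trivializing $E'$ and $L$ over an affine open $U = \operatorname{Spec} R \subseteq X'$, Cayley--Hamilton places $\chi_{\phi'}|_U = \det(T - p^*\phi'|_U) \in R[T]$ in the kernel $\mathcal I_{\phi'}|_U$ of the evaluation $R[T] \to \operatorname{End}_R(R^n)$. The assumption $\mathcal I_{\phi'} = \mathcal I_{\sigma'}$ makes this kernel the principal ideal $(\sigma'|_U)$ generated by the monic degree-$n$ polynomial $\sigma'|_U$; writing $\chi_{\phi'}|_U = g \cdot \sigma'|_U$ and comparing leading coefficients forces $g = 1$, so $\chi_{\phi'}|_U = \sigma'|_U$. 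Since this holds on a covering of $X'$, it globalizes to $\chi_{\phi'} = \sigma'$.

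For the converse, assuming $X'$ integral with generic point $\eta$ and function field $K$, and $\chi_{\phi'} = \sigma'$, the same Cayley--Hamilton computation immediately yields the easy containment $\mathcal I_{\sigma'} \subseteq \mathcal I_{\phi'}$. The real content is the reverse inclusion. My plan is to regard the quotient $\mathcal I_{\phi'}/\mathcal I_{\sigma'}$ as a subsheaf of $\operatorname{Sym}^\bullet L^\vee / \mathcal I_{\sigma'} \cong \mathcal O_{\widetilde X'(\sigma')}$ and push it forward via the finite affine morphism $p : \widetilde X'(\sigma') \to X'$. Since $\sigma'$ is monic of degree $n$ locally, $p_* \mathcal O_{\widetilde X'(\sigma')}$ is locally free of rank $n$ on $X'$. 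A subsheaf of a locally free sheaf on an integral scheme that vanishes at the generic point must be zero (working in a trivialization, the entries lie in $\mathcal O_{X'}(U) \subseteq K$), so it suffices to verify vanishing over $\eta$; and since $p$ is affine, vanishing of the pushforward will then give $\mathcal I_{\phi'} = \mathcal I_{\sigma'}$.

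The reduction to the generic point is where the geometric reducedness hypothesis does its work, and this is the step I expect to be the crux of the argument. The fiber $\widetilde X'(\sigma')_\eta = \operatorname{Spec} K[T]/(\sigma'|_\eta)$ being geometrically reduced is equivalent to $\sigma'|_\eta$ having no repeated factors over an algebraic closure $\overline K$. I would then invoke the standard linear-algebra fact that the minimal and characteristic polynomials of an endomorphism of a finite-dimensional vector space share the same irreducible factors; combined with $\chi_{\phi'|_\eta} = \sigma'|_\eta$ being squarefree over $\overline K$, this forces the minimal polynomial of $\phi'|_\eta$ to coincide with $\sigma'|_\eta$, and hence $\mathcal I_{\phi'}|_\eta = (\sigma'|_\eta) = \mathcal I_{\sigma'}|_\eta$, as required.
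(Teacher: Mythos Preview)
Your proof is correct and follows essentially the same approach as the paper. The forward direction is identical (Cayley--Hamilton plus comparison of monic degree-$n$ generators); for the converse, both arguments reduce to the generic point and use that a squarefree characteristic polynomial over $\overline K$ forces the minimal polynomial to coincide with it. The only cosmetic difference is in how you return from the generic point: the paper works affine-locally and shows explicitly that divisibility by a monic polynomial in $K[T]$ descends to $R[T]$, whereas you package the same fact as ``a subsheaf of a locally free sheaf on an integral scheme that vanishes at the generic point is zero.'' Unwinding your statement on an affine chart recovers precisely the paper's monic-divisibility lemma, so the two are equivalent formulations of the same step.
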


\begin{proof}
Let $(E',\phi')$ be a Higgs bundle as in the statement of the  lemma, and let $p_{\phi'}(T)$ be the characteristic polynomial of $\phi'$.  

Assume first that $\mathcal I_{\phi'}=\mathcal I_{\sigma'}$.
Viewing the characteristic polynomial as a global section $p_{\phi'}(T):\mathcal O_{X'}\to \bigoplus_{i=1}^nL^{\otimes i}|_{X'}$  defines a locally principal  ideal $\mathcal I_{p_{\phi'}(T)}$ of $\operatorname{Sym}^\bullet L^\vee$ (\S \ref{S:SpectCov}).  The Cayley--Hamilton theorem says that $\mathcal I_{p_{\phi'}(T)}\subseteq \mathcal I_{\phi'}$. But we are assuming that $\mathcal I_{\phi'}=\mathcal I_{\sigma'}$.   So we have
$\mathcal I_{p_{\phi'}(T)}\subseteq \mathcal I_{\sigma'}$, with both being locally principal generated by monic polynomials of degree $n$.  So we have reduced to the local statement:  Given a ring $R$, and two principal ideals $(f(T))$ and $(g(T))$ in $R[T]$ with both $f(T)$ and $g(T)$ \emph{monic} of degree $n$, if $(f(T))\subseteq (g(T))$, then $f(T)=g(T)$.    Thus the characteristic polynomial $p_{\phi'}(T)$ is given by $\sigma$.

Conversely, suppose $X$ is integral, the fiber of $\widetilde X'(\sigma')$ over the generic point of $X$ is geometrically  reduced, and that the characteristic polynomial  $p_{\phi'}(T)$ is given by $\sigma'$.  We want to show that  $\mathcal I_{\phi'}=\mathcal I_{\sigma'}$.  It is enough to do this locally.  In other words, we assume that $R$ is an integral domain with field of fractions $K$ contained in an algebraic closure $\overline K$, that $\sigma(T)\in R[T]$ is a monic polynomial of degree $n$, that $\overline  K[T]/(\sigma(T))$ is reduced,
and that $\phi:R^n\to R^n$ is an endomorphism with characteristic polynomial $\sigma(T)$.  We want to show the containment $\mathcal I_\phi\supseteq (\sigma(T))$ is an equality.  So consider  $0\ne f(T)\in \mathcal I_\phi$.     The claim is that $f(T)$ is divisible by $\sigma(T)$ in $R[T]$.  To see this, note that with $\phi_K:K^n\to K^n$ the induced morphism, we have that $f(\phi_K)=0$.   Thus $f(T)$ is divisible in $K[T]$ by the minimal polynomial for $\phi_K$.  Note also that from the local definition of the characteristic polynomial, it is clear that the the characteristic polynomial of $\phi_K$ is the same as for $\phi$, namely $\sigma(T)$.   Our assumption that $\overline K[T]/(\sigma(T))$ is reduced, i.e., that the characteristic polynomial of $\phi_K$ has distinct roots, implies that the characteristic polynomial for $\phi_K$ agrees with the minimal polynomial.  Thus $f(T)$ is divisible by $\sigma(T)$ in $K[T]$.  But since $\sigma(T)$ is monic, one can then conclude (see the remark below) that $f(T)$ is divisible by $\sigma(T)$ in $R[T]$.
\end{proof}

\begin{rem}
\emph{Let $R$ be an integral domain and let $K$ be its field of fractions.  Suppose that $f(T),g(T)\in R[T]$ with $g(T)$ \emph{monic}, and there exists $h(T)\in K[T]$ such that $f(T)=g(T)h(T)\in K[T]$.  Then $h(T)\in R[T]$}.  Indeed, let us write 
$$
f(T)=\sum_{i=0}^{n+m}a_iT^i,  \ \ g(T)=\sum_{i=0}^nb_iT^i, \ \ h(T)=\sum_{i=0}^mc_iT^i.
$$
Then we have
\begin{align*}
a_{n+m}&=b_nc_m\\
a_{n+m-1}&=b_nc_{m-1}+b_{n-1}c_m\\
 &\vdots \\
a_n&= b_nc_0+b_{n-1}c_1+\cdots +b_0c_n.
\end{align*}
(Here $c_j$ is taken to be $0$ if $j>m$.)
Since $b_n$ is assumed to be $1$, the first equality shows that $c_m\in R$.  The second then shows that $c_{m-1}\in R$, and so on until we have established that $c_0\in R$.   
\end{rem}

\begin{exa}
It can be instructive to consider Proposition \ref{P:BNR} and Lemma \ref{L:BNR-lemma} the case where $X=S=\operatorname{Spec}\mathbb C$ and $L=\mathcal O_X$.
\end{exa}

\begin{rem}\label{R:BNRSchaub}
In the case where $\pi:X\to S$ is a smooth relative curve; i.e., a smooth proper morphism such that every geometric fiber is connected, and dimension $1$, more can be said.  If $\sigma':S'=\operatorname{Spec} (k)\to \mathbb A_{X/S}(L)(n)$ is a geometric point inducing a reduced spectral curve  $\widetilde X'(\sigma')$  over $\operatorname{Spec}k$, then $h^{-1}(\sigma')$ is the the compactified Picard stack, parameterizing  rank $1$, torsion-free sheaves on $\widetilde X'(\sigma')$ \cite[Prop.~5.1]{schaub}.  
If   $\widetilde X'(\sigma')$ is assumed further to be an  integral curve, which is the case considered in  \cite[Prop.~3.6]{BNR},
then the key point is that in the notation of Proposition \ref{P:BNR},  for $\pi'_*M$ to be locally free, and thus torsion-free, it must be that $M$ is torsion-free.  Then,  since $X'$ is a smooth curve over $k$, any torsion-free sheaf is locally free, and thus for any torsion-free sheaf on $X'$, the push-forward is locally free.    The case of a general spectral curve $\widetilde X'(\sigma')$ is considered in \cite{schaub}; the connection between slope stability of Higgs bundles on $X'$  and slope stability of rank $1$ torsion free sheaves on $\widetilde X'(\sigma')$  in the sense of Oda--Seshadri is also considered there.
See also \cite[Thm.~6.1]{N91}.
\end{rem}

\section{Morphisms of  stacks in algebraic geometry}
\label{S:AlgStack2}

As explained in Definition \ref{D:RepMorphProp}, many reasonable properties of morphisms in our presite $\mathsf S$ may be extended to $\mathsf S$-representable morphisms of stacks.  However, not every morphism of algebraic stacks that deserves to be called quasicompact or \'etale or smooth (to name just a few) is necessarily schematic.

In Section~\ref{S:P-adapted}, we saw how to extend the notion of smoothness to morphisms between algebraic stacks, but this definition required the choice of a presentation of the stack in question.  Since stacks representing moduli problems rarely come with an easily described presentation, it would be better to have an intrinsic definition of smoothness.

To give such a definition, as well as definitions of other geometric properties or morphisms between algebraic stacks, will be the purpose of this section.

\begin{wrn}
	We caution that although many of the definitions given here make sense for arbitrary morphisms of CFGs, or for arbitrary morphisms of stacks, they should not necessarily be regarded as reasonable generalizations from schemes without a further algebraicity assumption.

The characterization of smoothness we give in Section~\ref{S:sm-et-nr}, for example, is only reasonable for morphisms of algebraic stacks (or at least morphisms representable by algebraic stacks) and not necessarily for all stacks.
\end{wrn}

\subsection{Injections, isomorphisms, and substacks}

In Definition \ref{D:MonIso} we gave the definition of an injection and of an  isomorphism   of stacks.
We have not given a special name to morphisms of algebraic stacks that are only faithful (rather than being fully faithful or equivalences, respectively)  when viewed as functors,  because we already have one:

\begin{lem}
A morphism of algebraic stacks $f : \mathcal X \rightarrow \mathcal Y$ that is objectwise a faithful functor is representable by algebraic spaces.
\end{lem}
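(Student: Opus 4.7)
The approach is to reformulate the faithfulness hypothesis as the statement that the relative diagonal $\Delta_f: \mathcal{X} \to \mathcal{X} \mathop\times_{\mathcal{Y}} \mathcal{X}$ is injective in the sense of Definition~\ref{D:MonIso}, and then to show that for every algebraic space $Z \to \mathcal{Y}$ the fiber product $W = \mathcal{X} \mathop\times_{\mathcal{Y}} Z$ has injective absolute diagonal. Lemma~\ref{L:InjDiag} will then identify $W$ with a sheaf, and combined with the algebraicity of $W$ from Corollary~\ref{C:fp-alg}, we will conclude that $W$ is an algebraic space in the sense of Definition~\ref{D:AlgDMAlgSp}.

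For the first step, I would note that $\Delta_f(S)$ is automatically faithful for every $S$, since it sends $u : X \to Y$ to the pair $(u, u)$. Fullness of $\Delta_f(S)$ amounts to the condition that every morphism $(X, X, \mathrm{id}) \to (Y, Y, \mathrm{id})$ in $(\mathcal{X} \mathop\times_{\mathcal{Y}} \mathcal{X})(S)$---which, by Definition~\ref{D:2-product-categories-over-C}, is exactly a pair $(u, v) : X \rightrightarrows Y$ with $f(u) = f(v)$---comes from some $\Delta_f(w) = (w, w)$, i.e., forces $u = v$. This is precisely the objectwise faithfulness of $f$, so $\Delta_f$ is injective if and only if $f$ is objectwise faithful.

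For the main step, fix an algebraic space $Z$ with a morphism $Z \to \mathcal{Y}$ and set $W = \mathcal{X} \mathop\times_{\mathcal{Y}} Z$. By Corollary~\ref{C:fp-alg}, $W$ is an algebraic stack. The absolute diagonal $\Delta_W$ factors as
\[
W \xrightarrow{\Delta_{W/Z}} W \mathop\times_Z W \longrightarrow W \times W,
\]
where the first morphism is the base change of $\Delta_f$ along $Z \to \mathcal{Y}$, and the second is the base change of $\Delta_Z : Z \to Z \times Z$ along the projection $W \times W \to Z \times Z$. Injectivity of morphisms of CFGs---being objectwise fully faithfulness---is preserved under base change and composition, since fully faithful functors are closed under both operations and the sheaves of isomorphisms of a fiber product are themselves fiber products of sheaves of isomorphisms (Example~\ref{E:diagonal}). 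Since $\Delta_f$ is injective by the first step and $\Delta_Z$ is injective by Lemma~\ref{L:InjDiag} applied to the algebraic space $Z$, the diagonal $\Delta_W$ is injective as well. A second application of Lemma~\ref{L:InjDiag}, now to $W$ itself, shows that $W$ is representable by a sheaf; being also algebraic, $W$ is an algebraic space. The only substantive verification is the stability of injectivity under base change, which is a direct unpacking of the definition of $\mathscr{I}\!\!\mathit{som}$ in a fiber product, so beyond this routine check I do not anticipate any serious obstacle.
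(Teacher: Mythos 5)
Your argument is correct, but it takes a more roundabout route than the paper's. The paper's proof shares your reduction (base change along $Z \rightarrow \mathcal Y$ with $Z$ an algebraic space, algebraicity of the fiber product, then show the fibers are equivalent to sets), but it dispenses with all diagonal bookkeeping: since $Z$ is equivalent to a sheaf, each fiber $Z(S)$ is equivalent to a set, and the base-changed projection $\mathcal X_Z \rightarrow Z$ is still objectwise faithful; a faithful functor into a groupoid whose hom-sets have at most one element forces the same property on the source, so $\mathcal X_Z(S)$ is equivalent to a set and one concludes immediately. Your proof instead re-expresses objectwise faithfulness of $f$ as injectivity (in the sense of Definition~\ref{D:MonIso}) of the relative diagonal $\Delta_f$, factors the absolute diagonal of $W = \mathcal X \mathop\times_{\mathcal Y} Z$ as the base change of $\Delta_f$ followed by the base change of $\Delta_Z$, verifies stability of injectivity under base change and composition, and then invokes Lemma~\ref{L:InjDiag} twice. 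This buys you a reusable dictionary (faithful $\Leftrightarrow$ injective relative diagonal) and fits the paper's general pattern of reading properties of $f$ off its diagonal, at the cost of the extra base-change verification that the paper's direct fiberwise argument avoids entirely. Both uses of algebraicity are fine: your appeal to Corollary~\ref{C:fp-alg} is if anything cleaner than the paper's citation of Corollary~\ref{L:AlgRel}, and your final step (injective diagonal plus algebraicity gives an algebraic space via Lemma~\ref{L:InjDiag} and Definition~\ref{D:AlgDMAlgSp}) matches the intended conclusion.
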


By saying $f : \mathcal X \rightarrow \mathcal Y$  is objectwise a faithful functor we mean that for each scheme $S$ the induced functor of groupoids $f(X):\mathcal X(S)\to \mathcal Y(S)$ is  faithful.  

\begin{proof}
Suppose that $Z$ is an algebraic space and $Z \rightarrow \mathcal Y$ is a morphism.  Let $\mathcal X_Z$ be the base change, which is algebraic by Corollary~\ref{L:AlgRel}.  The projection $\mathcal X_Z \rightarrow Z$ is also faithful.  But if $Z$ is viewed as a stack then, for any scheme $S$, the fiber $Z(S)$ is equivalent to a set.  Since  $\mathcal X_Z(S)$ is equivalent to a subcategory of $Z(S)$, this means that $\mathcal X_Z(S)$ is equivalent to a set, which means that $\mathcal X_Z$ is an algebraic space.  
\end{proof}

\begin{dfn}[Open and closed substacks]
	Let $\mathcal X$ be a stack over $\mathsf S$.  A substack $\mathcal U \subseteq \mathcal X$ (resp.~$\mathcal Z \subseteq \mathcal X$) is called an \emph{open substack} (resp.~ \emph{closed substack})   if for every scheme $W$, the fiber product $\mathcal U \mathop{\times}_{\mathcal X} W$  (resp.~$\mathcal Z \mathop{\times}_{\mathcal X} W$)  is representable by an open (resp.~closed)  subscheme of $W$.  
\end{dfn}

\subsection{The underlying topological space}

\begin{dfn}[Topological space of a stack] \label{D:under-top}
	Let $\mathcal X$ be a stack on the category $\mathsf S$ of schemes.  For each scheme $S$, write $|S|$ for the underlying topological space of $S$.  The \emph{underlying topological space} of $\mathcal X$ is the universal topological space $|\mathcal X|$ that receives a continuous map $|S| \rightarrow |\mathcal X|$ for each map $S \rightarrow \mathcal X$.
\end{dfn}

In other words, $\displaystyle |\mathcal X|=\varinjlim_{S\to \mathcal X}|S|$ is the colimit of the spaces $|S|$, taken over all maps from schemes $S$ to $\mathcal X$.

\begin{rem}
	In \cite[Tag 04XE]{stacks}, the underlying topological space was defined only for algebraic stacks, but \cite[Tag 04XG]{stacks} makes sense for arbitrary categories fibered in groupoids and agrees with the definition given here because the underlying set of a colimit of topological spaces is the colimit of the underlying sets.

	The topology on the underlying set was only defined in \cite[Tag 04XL]{stacks} for algebraic stacks.  However, the topology of \emph{loc.~cit.}~agrees with Definition~\ref{D:under-top}.  Recall that a subset of $|\mathcal X|$ is called open if its preimage under a fixed flat, finite presentation, surjective map $U \rightarrow \mathcal X$ is open.  This topology is clearly at least as fine than the topology we have defined, so we verify that every subset of $|\mathcal X|$ that is open in the sense of \cite[Tag~04XL]{stacks} is open in the sense of Definition~\ref{D:under-top}.

	Indeed, if $U \rightarrow \mathcal X$ is flat and locally of finite presentation then for any $V \rightarrow \mathcal X$, the map $U \mathop{\times}_{\mathcal X} V \rightarrow V$ is also flat and of finite presentation.  The preimage in $|V|$ of the image of $|U|$ in $| \mathcal X|$ is the same as the image of $|U \mathbin\times_{\mathcal X} V|$ in $|V|$.  Since $U \mathbin\times_{\mathcal X} V$ is flat of finite presentation over $V$, its image in $|V|$ is open, so the image of $|U|$ in $|\mathcal X|$ pulls back to an open subset of $|V|$.  This holds for any $V \rightarrow \mathcal X$ so the image of $|U|$ in $|\mathcal X|$ is open, by definition of the colimit topology.  As the topology in \emph{loc.~cit.}~is uniquely characterized by this property, it must agree with $|\mathcal X|$, as defined in Definition \ref{D:under-top}.
\end{rem}

\subsection{Quasicompact and quasiseparated morphisms}

\begin{dfn}[Quasicompact morphisms] \label{D:qcpt-qsep}
	We call a stack $\mathcal X$  over $\mathsf S$ \emph{quasicompact} if every covering of $\mathcal X$  (Definition \ref{D:CovStack})   by open substacks has a finite subcover.
	A morphism of stacks $\mathcal X \rightarrow \mathcal Y$ is \emph{quasicompact} if $\mathcal X \mathop{\times}_{\mathcal Y} Z$ is quasicompact for all quasicompact schemes $Z$ and all morphisms $Z \rightarrow \mathcal Y$.
	A morphism of stacks $\mathcal X \rightarrow \mathcal Y$ is \emph{quasiseparated} if the diagonal morphism $\mathcal X \rightarrow \mathcal X \mathop{\times}_{\mathcal Y} \mathcal X$ is quasicompact.
\end{dfn}

\subsection{Separation and properness}

We will not actually discuss the separatedness or properness of morphisms of algebraic stacks in any examples in this survey, but we include the definitions for the sake of completeness.

Since smooth morphisms of schemes are always locally of finite type, Section~\ref{S:P-adapted} shows that there is a unique way to make sense of locally finite type morphisms of algebraic stacks that is stable under base change and composition and local to the source and target.  Technically, this breaks our promise to give only \emph{intrinsic} definitions in this section.  However, in the noetherian situation, finite type coincides with finite presentation, which is characterized intrinsically in Section~\ref{S:lfp2}.

\begin{dfn}[Proper and separated morphisms of algebraic stacks] \label{D:PropSepAlgStk}
	A morphism of \emph{algebraic} stacks $\mathcal X \rightarrow \mathcal Y$ is said to be \emph{proper} if it is an isomorphism or it is separated, of finite type, and universally closed.  It is said to be separated if its diagonal $\mathcal X \rightarrow \mathcal X \mathbin\times_{\mathcal Y} \mathcal X$ is proper.
\end{dfn}

\begin{rem}
Definition \ref{D:PropSepAlgStk} is not as circular as it appears.  The diagonal morphism of a morphism of algebraic stacks is representable by algebraic spaces (Lemma \ref{L:DiagAlgStack},   \cite[Tag 04XS]{stacks}),
 so the definition of separatedness for algebraic stacks depends only on the definition of properness for algebraic spaces.  Iterating Definition \ref{D:PropSepAlgStk}, we see that definition of properness for algebraic spaces depends on separatedness for morphisms of algebraic spaces,
 and therefore,  by iterating again,   the definition depends on the definition of properness for diagonals of algebraic spaces.  Continuing further, we see that  we must define separatedness for diagonals of algebraic spaces.  
  But the diagonal of a morphism of algebraic spaces is injective, so the definition ultimately depends on the definition of separatedness for injections of algebraic spaces.  But the diagonal of an injection is an isomorphism, so is automatically proper. 
  \end{rem}

\subsection{Formal infinitesimal properties}

\begin{dfn}[{Infinitesimal extension \cite[Def.~(17.1.1)]{EGAIV4}}] \label{D:inf-ext}
	An \emph{infinitesimal extension} or \emph{nilpotent extension} of a scheme $S$ is a closed embedding $S \subseteq S'$ such that the ideal of $S$ in $S'$ is nilpotent.  
\end{dfn}

\begin{lem} \label{L:et-inf-lift}
Suppose that $S \subseteq S'$ is an infinitesimal extension and that $T \rightarrow S$ is \'etale.  Then there is an infinitesimal extension $T \subseteq T'$ and \'etale map $T' \rightarrow S'$ inducing $T$ as the fiber product $T' \mathbin\times_{S'} S$.  Moreover, $T'$ is unique up to unique isomorphism.
\end{lem}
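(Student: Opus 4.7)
The plan is to reduce to the affine case and then invoke the formal étaleness of étale morphisms, using unique infinitesimal lifting to build the lift and to glue.

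First I would dispatch uniqueness, since it drives the whole argument. Given two candidate lifts $T'_1, T'_2 \to S'$, each étale, both restricting to $T \to S$, the formal étaleness of $T'_2 \to S'$ applied to the square
\begin{equation*}
\xymatrix{
T \ar[r] \ar[d] & T'_2 \ar[d] \\
T'_1 \ar[r] \ar@{-->}[ur] & S'
}
\end{equation*}
(which commutes because both lifts restrict to $T$, and $T \hookrightarrow T'_1$ is a nilpotent thickening, as will follow from the construction) yields a unique $S'$-morphism $T'_1 \to T'_2$ restricting to the identity on $T$. Symmetrically we get the inverse, and both compositions are the identity by the uniqueness clause of formal étaleness applied to the diagonal. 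Thus any two lifts are canonically isomorphic.

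Next I would handle existence when $S = \operatorname{Spec} A$, $S' = \operatorname{Spec} A'$, and $T = \operatorname{Spec} B$ are affine. By the local structure theorem for étale morphisms, $T \to S$ is Zariski-locally a standard étale map $B = (A[x]/(f))_g$ with $f'$ invertible in $B$. Lift $f \in A[x]$ to any $\tilde f \in A'[x]$ and $g$ to $\tilde g \in A'[x]$; since the kernel of $A' \to A$ is nilpotent, $\tilde f'$ remains invertible in $B' := (A'[x]/(\tilde f))_{\tilde g}$, so $B' \to A'$ is standard étale and visibly reduces to $B \to A$ modulo the nilpotent ideal. Because standard étale neighborhoods generate the étale topology, the general affine case follows by covering $T$ by standard étale pieces over $S$, lifting each piece, and gluing; the gluing data on double overlaps is supplied automatically by the uniqueness established above, and the cocycle condition on triple overlaps holds by the uniqueness of the identity map.

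For the global case, I would cover $S$ by affine opens $S_\alpha \subseteq S$; the corresponding opens $S'_\alpha \subseteq S'$ (same underlying topological space, since $S \hookrightarrow S'$ is a homeomorphism on underlying spaces) are affine thickenings. Pull back $T$ to $T_\alpha \to S_\alpha$, apply the affine case to obtain $T'_\alpha \to S'_\alpha$, and glue: on overlaps, uniqueness in the affine case furnishes canonical isomorphisms satisfying the cocycle condition. The resulting $T' \to S'$ is étale (étaleness is local on the target) and $T' \mathbin\times_{S'} S = T$ by construction. The main technical point—which is really the only non-formal input—is the lift in the standard étale case, where nilpotence of the ideal of $S$ in $S'$ is essential to propagate invertibility of $f'$ from $B$ to $B'$; everything else is bookkeeping driven by formal étaleness.
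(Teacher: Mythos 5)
Your proof is correct and follows essentially the same route as the paper's: reduce to the affine case via a local presentation of the \'etale map (you use the standard \'etale form $(A[x]/(f))_{g}$, whereas the paper uses the Jacobian-criterion form $A[x_1,\ldots,x_n]/(f_1,\ldots,f_n)$ with invertible Jacobian determinant), lift the defining data to $A'$, use nilpotence of the ideal to see that the lifted derivative/Jacobian is still a unit, and derive uniqueness---hence the gluing of local lifts---from the infinitesimal lifting criterion for \'etale morphisms. The only difference is the choice of local normal form, which is cosmetic; the rest of the argument matches the paper's.
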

\begin{proof}
Since $T'$ will be unique when it is constructed, this is a local problem in the Zariski topology on $S'$:  if $T'$ has been constructed over a suitable open cover, the uniquenes will imply that the various $T'$s can be glued together.  We can therefore work Zariski-locally in $S'$, or equivalently in $S$, since $S$ and $S'$ have the same Zariski topology.  

The same reasoning shows that we can work Zariski-locally in $T$ as well.  This permits us to assume that $S$, $S'$, and $T$ are all affine.  By the Jacobian criterion (e.g., \cite[Tag 00TA, 00T6]{stacks}), we can assume that $S = \operatorname{Spec} A$ and that $T = \operatorname{Spec} B$ where $B = A[x_1, \ldots, x_n] / (f_1, \ldots, f_n)$ and the determinant $\det \frac{\partial f_i}{\partial x_j}$ is a unit of $B$.  If $S' = \operatorname{Spec} A'$, we take $T' = \operatorname{Spec} B'$ where $B' = A'[x_1, \ldots, x_n] / (g_1, \ldots, g_n)$ where $g_i$ is an arbitrary lift of $f_i$ to a polynomial with coefficients in $A'$.  Then $\det \frac{\partial g_i}{\partial x_j}$ reduces to $\det \frac{\partial f_i}{\partial x_j}$ in $B$.  Since $B'$ is an infinitesimal extension of $B$, this means that $\det \frac{\partial g_i}{\partial x_j}$ is a unit in $B'$, so by the Jacobian criterion, $B'$ is \'etale over $A'$.

This establishes that $T'$ exists locally in the Zariski topology of $S'$.  It remains to prove the uniqueness of $T'$.  Suppose that $T'$ and $T''$ are two such extensions.  Then by the infinitesimal lifting criterion for \'etale maps, applied to the diagram
\begin{equation*} \xymatrix{
T \ar[r] \ar[d] & T' \ar[d] \ar@{-->}@/_5pt/[dl]_-u \\
T'' \ar[r] \ar@/_5pt/@{-->}[ur]_-v & S'
} \end{equation*}
and there are unique lifts $u : T' \rightarrow T''$ and $v : T'' \rightarrow T'$.  These must compose to the identity maps, again by the infinitesimal criterion for \'etale maps, this time applied to the diagrams below:
\begin{equation*} \xymatrix{
T \ar[r] \ar[d] & T' \ar[d] \\
T'' \ar[r] \ar@/^5pt/[ur]^{vu} \ar@/_5pt/[ur]_{\mathrm{id}} & S'
} \qquad 
\xymatrix{
T \ar[r] \ar[d] & T' \ar@/^5pt/[dl]^-{uv} \ar@/_5pt/[dl]_-{\mathrm{id}} \ar[d] \\
T'' \ar[r]  & S'.
} \end{equation*}
\end{proof}

\begin{cor} \label{C:equiv-sites}
Suppose that $S \subseteq S'$ is an infintisimal extension of schemes.  Then the \'etale sites of $S$ and $S'$ are equivalent.
\end{cor}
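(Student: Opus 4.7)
The plan is to construct quasi-inverse functors between the \'etale sites $S_{\operatorname{et}}$ and $S'_{\operatorname{et}}$. Going from $S'_{\operatorname{et}}$ to $S_{\operatorname{et}}$ we use the base-change functor $(T' \to S') \mapsto (T' \times_{S'} S \to S)$, which lands in \'etale $S$-schemes since \'etale maps are stable under base change. Going the other way, we use Lemma~\ref{L:et-inf-lift}: to each \'etale $T \to S$ we associate the unique-up-to-unique-isomorphism \'etale extension $T' \to S'$ restricting to $T$ over $S$. The uniqueness clause of the lemma is what makes this assignment well-defined on objects.

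The first key step is functoriality of the extension assignment on morphisms. Given \'etale maps $T_1 \to S$ and $T_2 \to S$ with extensions $T_1' \to S'$ and $T_2' \to S'$, and an $S$-morphism $f : T_1 \to T_2$, I will lift $f$ to an $S'$-morphism $f' : T_1' \to T_2'$ by applying the infinitesimal lifting criterion for \'etale maps to
\begin{equation*}
\xymatrix{
T_1 \ar[r]^f \ar[d] & T_2 \ar[r] & T_2' \ar[d] \\
T_1' \ar[rr] \ar@{-->}[urr]^{f'} & & S'.
}
\end{equation*}
Here the inclusion $T_1 \subseteq T_1'$ is an infinitesimal extension (it has the same underlying topological space and nilpotent ideal, since $T_1'$ is flat over $S'$ with $T_1 = T_1' \times_{S'} S$), so the criterion gives a unique lift. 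The same uniqueness argument shows that $f' \mapsto f' \times_{S'} S$ recovers $f$ and that extending the identity gives the identity, so the two constructions are mutually inverse equivalences of categories.

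The second key step is compatibility with the Grothendieck pretopologies. A family $\{T_i' \to T'\}$ of \'etale $S'$-maps is a cover iff it is jointly surjective, and since $|T_i'| = |T_i|$ and $|T'| = |T|$ (infinitesimal extensions are homeomorphisms), joint surjectivity of $\{T_i' \to T'\}$ is equivalent to joint surjectivity of the base-changed family $\{T_i \to T\}$. Thus covering families correspond under the equivalence.

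The main technical point---and really the only one with any content---is the morphism-lifting in the first step; everything else is bookkeeping once Lemma~\ref{L:et-inf-lift} is in hand. The payoff is the standard principle that \'etale-local data propagates across nilpotent thickenings, which will be used repeatedly in the deformation theory of the following sections.
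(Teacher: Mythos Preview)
Your proof is correct and is exactly the natural elaboration the paper has in mind: the corollary is stated without proof immediately after Lemma~\ref{L:et-inf-lift}, and your argument---base change in one direction, the lemma's unique extension in the other, with the infinitesimal lifting criterion supplying functoriality on morphisms---is precisely how one unpacks it. The only remark worth adding is that any $S$-morphism between \'etale $S$-schemes is automatically \'etale (by cancellation), so your morphism-lifting step could alternatively be deduced from a second application of Lemma~\ref{L:et-inf-lift} itself, but your direct use of the lifting criterion is equally clean.
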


\begin{dfn}\label{D:formal-sm-et-nr}
A morphism of CFGs $f : \mathcal X \rightarrow \mathcal Y$ is said, respectively, to be \emph{formally unramified}, \emph{formally \'etale}, or \emph{formally smooth} if every commutative diagram~\eqref{E:formal} below, in which $S \subseteq S'$ is an infinitesimal extension of schemes, 
	admits at most one (up to unique isomorphism), exactly one (up to unique isomorphism), or at least one lift \emph{\'etale-locally in $S$}.
	\begin{equation} \label{E:formal} \vcenter{ \xymatrix{
				S \ar[r] \ar[d] & \mathcal X \ar[d] \\
				S' \ar[r] \ar@{-->}[ur] & \mathcal Y.
	}} \end{equation}
\end{dfn}

We explicate a bit the meaning of  `\'etale-locally'  in the definition.  To say that $\mathcal X \rightarrow \mathcal Y$ is formally smooth means that, given any lifting problem~\eqref{E:formal}, there is an \'etale cover $\{ S_\alpha \rightarrow S \}$ such that, denoting by $S'_\alpha$ the unique infintesimal extension of $S_\alpha$ lifting $S$ (by Lemma~\ref{L:et-inf-lift}), the diagram
\begin{equation} \label{E:formal2} \vcenter{\xymatrix{
				S_\alpha \ar[r] \ar[d] & S \ar[r] \ar[d] & \mathcal X \ar[d] \\
				S'_\alpha \ar@{-->}[urr] \ar[r] & S' \ar[r]  & \mathcal Y
}} \end{equation}
admits a lift for every $\alpha$.

To say that $\mathcal X \rightarrow \mathcal Y$ is formally unramified means, first, that that given any two lifts of~\eqref{E:formal}, there is a cover of $S$ by $S_\alpha$ such that the induced lifts of~\eqref{E:formal2} are isomorphic, and, second, that any two isomorphisms between lifts of~\eqref{E:formal} agree after passage to a suitable \'etale cover of $S$.

To be formally \'etale is the conjunction of these properties.

\subsection{Local finite presentation}
\label{S:lfp2}

It was observed in \cite[Prop.~8.14.2]{EGAIV3} that a scheme $X$ is locally of finite presentation if and only if whenever $A = \varinjlim A_i$ is a filtered colimit of commutative rings, the natural map
\begin{equation} \label{E:lfp}
\varinjlim X(A_i) \rightarrow X(A)
\end{equation}
is a bijection.  The same formula characterizes algebraic stacks that are locally of finite presentation, provided one interprets a filtered colimit of groupoids correctly.  It is therefore reasonable to use~\eqref{E:lfp} as the \emph{definition} of local finite presentation for stacks that are not known to be algebraic.

To make sense of the filtered colimit of groupoids in equation~\eqref{E:lfp}, one can take
\begin{equation*}
\mathsf{Obj} \varinjlim \mathcal X(A_i) = \bigcup_i \mathsf{Obj} \mathcal X(A_i)
\end{equation*}
and, for any objects $\xi \in \mathcal X(A_i)$ and $\eta \in \mathcal X(A_j)$,
\begin{equation*}
\operatorname{Hom}(\xi, \eta) = \varinjlim_{k \geq i,j} \operatorname{Hom}_{\mathcal X(A_k)}(\xi_k, \eta_k)
\end{equation*}
where $\xi_k$ and $\eta_k$ denote pullbacks of $\xi$ and $\eta$, respectively, to $\mathcal X(A_k)$.

In order to formulate local finite presentation for morphisms of stacks, it is useful to introduce the pro-object $``\varprojlim\!" \operatorname{Spec} A_i$ associated to a filtered system of commutative rings $A_i$.  By definition, $``\varprojlim\!" \operatorname{Spec} A_i$ is the covariant functor on schemes (and stacks) obtained by taking the filtered colimit of the functors represented by the $\operatorname{Spec} A_i$.  What this actually means is that one should interpret a morphism
\begin{equation*}
``\varprojlim\!"\operatorname{Spec} A_i \rightarrow  X
\end{equation*}
to a scheme (or stack) 
as an object of
\begin{equation*}
\varinjlim \operatorname{Hom}(\operatorname{Spec} A_i,  X) = \varinjlim   X(A_i);
\end{equation*}
i.e., as compatible systems of morphisms $\operatorname{Spec}A_i\to   X$.  

\begin{rem}
 In more technical terms, the quotation marks indicate that one is taking a colimit in the category of covariant functors valued in sets (or groupoids).  We absolve ourselves of responsibility for the notation~\cite[p.~81]{sga4-1}.
\end{rem}

\begin{dfn} \label{D:lfp}
	A morphism $\mathcal X \rightarrow \mathcal Y$ of stacks in the \'etale topology on schemes is said to be \emph{locally of finite presentation} 
	if whenever $A = \varinjlim A_i$ is a filtered colimit of commutative rings, 
	then 
	 every commutative diagram of solid lines~\eqref{E:lfp-lifting} can be completed uniquely by a dashed arrow.
\begin{equation} \label{E:lfp-lifting} \vcenter{\xymatrix{
\operatorname{Spec} A \ar[r] \ar[d] & \mathcal X \ar[d] \\
``\varprojlim\!" \operatorname{Spec} A_i \ar[r] \ar@{-->}[ur] & \mathcal Y
}} \end{equation}
\end{dfn}

\begin{rem}
Observe the resemblance between the lifting diagram~\eqref{E:lfp-lifting} and the lifting diagram~\eqref{E:formal}.  This allows us to reason formally about \'etale maps and local finite presentation maps at the same time.

Under this analogy, Lemma~\ref{L:et-lim} below is the analogue of Lemma~\ref{L:et-inf-lift}.
\end{rem}

\begin{lem} \label{L:et-lim}
Suppose that a ring $A$ is the filtered colimit of rings $A_\ell$.  Set $S = \operatorname{Spec} A$ and $S_\ell = \operatorname{Spec} A_\ell$.  Then for any affine\footnote{In fact, quasicompact and quasiseparated would suffice.} \'etale map $T \rightarrow S$ there is for some $\ell$ an affine scheme $T_\ell$ admitting an  \'etale map $T_\ell \rightarrow S_\ell$,
and this map is unique up to unique isomorphism and enlargement of $\ell$.
\end{lem}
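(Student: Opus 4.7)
The plan is to reduce the assertion to a purely algebraic statement about $A$-algebras, and then to invoke the standard approximation machinery from EGA IV, \S 8. Writing $T = \operatorname{Spec} B$ and $S_\ell = \operatorname{Spec} A_\ell$, the claim becomes: any \'etale $A$-algebra $B$ descends, uniquely up to enlargement of $\ell$, to an \'etale $A_\ell$-algebra $B_\ell$ with $B \cong B_\ell \otimes_{A_\ell} A$.

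First I would descend the finite presentation. Since \'etale morphisms are in particular of finite presentation, $B$ admits a presentation $B \cong A[x_1, \ldots, x_n]/(f_1, \ldots, f_m)$ involving only finitely many coefficients of $A$. By the filteredness of the system, all of these coefficients lie in the image of $A_\ell \to A$ for some sufficiently large $\ell$. Lifting each $f_i$ to a polynomial $\tilde f_i$ over $A_\ell$ produces a finitely presented $A_\ell$-algebra $B_\ell$ with $B \cong B_\ell \otimes_{A_\ell} A$, and setting $B_{\ell'} := B_\ell \otimes_{A_\ell} A_{\ell'}$ for $\ell' \geq \ell$ yields a compatible system whose colimit recovers $B$.

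Next I would upgrade $B_\ell$ from merely finitely presented to \'etale. For each $\ell' \geq \ell$ the locus $U_{\ell'} \subseteq \operatorname{Spec} B_{\ell'}$ on which $\operatorname{Spec} B_{\ell'} \to \operatorname{Spec} A_{\ell'}$ is \'etale is open, by the openness of the \'etale locus for finitely presented morphisms (EGA IV, 17.7.1), and its complement $Z_{\ell'}$ is a quasi-compact closed subscheme. The $Z_{\ell'}$ form a cofiltered inverse system whose limit is $\operatorname{Spec} B \setminus U$, which is empty by the hypothesis that $B$ is \'etale over $A$. The classical fact that a cofiltered limit of nonempty quasi-compact spaces is nonempty then forces $Z_{\ell'} = \emptyset$ for some $\ell' \geq \ell$; after replacing $\ell$ by this $\ell'$, the algebra $B_\ell$ is \'etale over $A_\ell$, as required. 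Setting $T_\ell := \operatorname{Spec} B_\ell$ completes the existence half of the statement.

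Finally, uniqueness is again a limit argument of the same character: given two descents $B_\ell$ and $B'_{\ell}$ (after passing to a common index), an isomorphism $B_\ell \otimes_{A_\ell} A \cong B'_\ell \otimes_{A_\ell} A$ is determined by finitely many ring-theoretic data---images of the generators, satisfaction of the relations, and compatibility with a proposed inverse---and each of these conditions lies in $\varinjlim_{\ell''} B_\ell \otimes_{A_\ell} (B'_\ell)^{\otimes 2} \otimes_{A_\ell} A_{\ell''}$, hence already holds at some finite stage. The main obstacle will be the second step: openness of the \'etale locus combined with the vanishing of the constructible subsets $Z_{\ell'}$ at a finite stage are the two inputs from EGA IV whose machinery drives the whole approximation. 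Once these are in hand, the argument parallels the infinitesimal case (Lemma~\ref{L:et-inf-lift}) very closely, with the role of the nilpotent ideal played by the directed system of the $A_\ell$.
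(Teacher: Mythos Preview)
Your proof is correct but takes a different route from the paper's.

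The paper works via the Jacobian criterion: it first covers $T$ by standard \'etale opens $\operatorname{Spec} C$ with $C = A[x_1,\ldots,x_n]/(f_1,\ldots,f_n)$ and $\det(\partial f_i/\partial x_j)$ a unit, reduces to one such piece using finiteness and the uniqueness statement, lifts the $f_i$ to polynomials $g_i$ over some $A_k$, and then explicitly lifts an inverse of $\det(\partial g_i/\partial x_j)$ to some $B_\ell$, so that the Jacobian criterion applies again over $A_\ell$. Uniqueness is handled as you do, by descending the isomorphism between two finitely presented algebras (citing EGA~IV~8.8.2).

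Your approach instead descends an arbitrary finite presentation and then argues via openness of the \'etale locus and a quasi-compactness limit argument. This is the more conceptual route and generalizes painlessly to smooth, unramified, flat, etc.\ (as in EGA~IV~8.10.5 and 17.7.8). The paper's Jacobian argument is more elementary and self-contained: it avoids the constructibility machinery entirely. One point to watch in your write-up: the claim that $\varprojlim Z_{\ell'}$ equals the non-\'etale locus of $\operatorname{Spec} B$ over $\operatorname{Spec} A$ is not formal. The easy inclusion is that $\pi_{\ell'}^{-1}(U_{\ell'})$ lies in the \'etale locus of $B/A$; the reverse---that \'etaleness at a point of $\operatorname{Spec} B$ forces $\pi_{\ell'}(p)\in U_{\ell'}$ for some $\ell'$---is exactly the spreading-out statement EGA~IV~17.7.8(ii). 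So your argument is really invoking that result, whereas the paper's Jacobian computation re-proves it by hand in this case.
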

\begin{proof}
Suppose that $T = \operatorname{Spec} B$ is affine and \'etale over $S$.  By the Jacobian criterion, there is an open cover of $T$ by finitely many subsets $U = \operatorname{Spec} C$ where $C = A[x_1, \ldots, x_n] / (f_1, \ldots, f_n)$ and $\det \frac{\partial f_i}{\partial x_j}$ is a unit of $C$.  Since we are proving a uniqueness statement and only finitely many such subsets are involved, we may treat the subsets individually.  We may therefore assume that $T = U$.

For $k$ sufficiently large, the coefficients of the $f_j$ all appear in the image of $A_k$ in $A$, so that we may form the ring $B_k = A[x_1, \ldots, x_n] / (g_1, \ldots, g_n)$ for some lifts $g_1, \ldots, g_n$ of $f_1, \ldots, f_n$ to $A_k$.  The determinant $\det \frac{\partial g_i}{\partial x_j}$ maps to the unit $\det \frac{\partial f_i}{\partial x_j}$ in $B$.  Pick $t \in B$ inverse to $\det \frac{\partial f_i}{\partial x_j}$.  As $B = \varinjlim_{\ell \geq k} B_\ell$, the element $t$ must be in the image of some $B_\ell$ with $\ell \geq k$.  Furthermore $t \frac{\partial g_i}{\partial x_j}$ must equal $1$ in all sufficiently large $B_\ell$.  This implies that by taking $\ell$ sufficiently large, the image of $\det \frac{\partial g_i}{\partial x_j}$ is a unit.  Thus $B_\ell$ is \'etale over $A_\ell$ for all sufficiently large $\ell$, and $B_\ell$ induces $B$ over $A$.

We must still prove that $B_\ell$ is unique up to enlargement of $\ell$.  Suppose that $C_k$ were \'etale over $A_k$, also inducing $B$ over $A$.  Since $B_\ell$ is of finite presentation over $A_\ell$ and $C_k$ is of finite presentation over $A_k$, the isomorphism $\varprojlim B_\ell \simeq B \simeq \varprojlim C_k$ must come from an isomorphism $B_m \simeq C_m$ defined over some $m$ that is $\geq k$ and $\geq \ell$.  This map is necessarily unique up to further enlargement of $m$ (see \cite[Thm.~(8.8.2)~(i)]{EGAIV3} for more details).
\end{proof}

\begin{dfn}
By an \'etale map $``\varprojlim\!" U_i \rightarrow ``\varprojlim\!" \operatorname{Spec} A_i$, we mean a family of maps $U_i \rightarrow \operatorname{Spec} A_i$ that are \'etale for all sufficiently large $i$.
\end{dfn}

In terms of this definition, Lemma~\ref{L:et-lim} says  any \'etale map $U \rightarrow ``\varprojlim\!" \operatorname{Spec} A_i  $ is induced from an \'etale map $``\varprojlim\!" U_i \rightarrow ``\varprojlim\!" \operatorname{Spec} A_i$, and that this map is unique up to a unique, suitably defined, isomorphism.

\begin{rem}
Lemma~\ref{L:et-lim} says, in a sense that we do not attempt to make precise, that the \'etale site of $\operatorname{Spec} A$ is the colimit of the \'etale sites of the $\operatorname{Spec} A_i$, whenever $A$ is the filtered colimit of the $A_i$.  See \cite[Thm.~8.3.13]{sga4-2}.
\end{rem}

\subsection{Smooth, \'etale, and unramified morphisms} \label{S:sm-et-nr}

\begin{dfn}[{Unramified, \'etale, smooth]\cite[Def.~(17.3.1)]{EGAIV4}}] \label{D:sm-et-nr} 
	A morphism  of  algebraic stacks $f : \mathcal X \rightarrow \mathcal Y$ is said, respectively, to be \emph{unramified}, \emph{\'etale}, or \emph{smooth} if it is locally of finite presentation and formally unramified, formally \'etale, or formally smooth.
\end{dfn}

We have now defined smooth morphisms between algebraic stacks in two ways.  On one hand, we have defined smooth morphisms in Definition~\ref{D:P-adapted} as the unique extension of smoothness from morphisms of schemes in a way that is stable under composition and base change and local to the source and target.  On the other hand, we have defined smoothness intrinsically for morphisms of algebraic stacks in Definition~\ref{D:sm-et-nr}.

We are obliged to verify that they are equivalent.  It is sufficient to show that local finite presentation and smoothness are stable under composition and base change and local to the source and target, as these definitions clearly agree with the usual definitions in the category of schemes and the extension in Definition~\ref{D:P-adapted} was uniquely characterized by these properties.

\begin{lem} \label{L:intrinsic}
Let $\mathbf P$ be one of the following properties of morphisms of stacks in the \'etale topology on schemes:
\begin{enumerate}[(i)]
\item formal smoothness,
\item formal unramifiedness,
\item formal \'etaleness, or
\item local finite presentation.
\end{enumerate}
Then $\mathbf P$ is stable under composition and base change and is local to the source and target.
\end{lem}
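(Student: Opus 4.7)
My approach is to treat all four cases uniformly by observing that each property $\mathbf P$ is governed by the same kind of lifting criterion: there is a class $\mathcal T$ of ``test morphisms'' $T_0 \to T$ such that $f : \mathcal X \to \mathcal Y$ has $\mathbf P$ if and only if every commutative square
\[
\xymatrix@R=12pt{
T_0 \ar[r] \ar[d] & \mathcal X \ar[d] \\
T \ar[r] \ar@{-->}[ur] & \mathcal Y,
}
\]
with $T_0 \to T$ in $\mathcal T$, admits the appropriate supply of dashed lifts (existence, uniqueness up to unique isomorphism, or both). For formal smoothness, étaleness, and unramifiedness, $\mathcal T$ consists of infinitesimal closed immersions of schemes (Definition~\ref{D:inf-ext}); for local finite presentation, $\mathcal T$ consists of the morphisms $\operatorname{Spec}(\varinjlim A_i) \to ``\varprojlim\!"\operatorname{Spec} A_i$ attached to filtered colimit presentations of rings. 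The shared key input is Lemmas~\ref{L:et-inf-lift} and~\ref{L:et-lim}: an étale cover $U_0 \to T_0$ extends, essentially uniquely, to an étale cover $U \to T$, so that the étale sites of $T_0$ and $T$ are canonically equivalent.

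Granted this framework, stability under base change and composition are purely formal diagram chases. For base change, a lifting problem for the projection $\mathcal X \mathbin\times_{\mathcal Y} \mathcal Y' \to \mathcal Y'$ composes with $\mathcal Y' \to \mathcal Y$ to give a lifting problem for $f$; a lift into $\mathcal X$, together with the map to $\mathcal Y'$, yields via the universal property of the fibered product the desired lift into $\mathcal X \mathbin\times_{\mathcal Y} \mathcal Y'$, and the uniqueness clauses transport along the same universal property. For composition $\mathcal X \to \mathcal Y \to \mathcal Z$, one first lifts $T \to \mathcal Z$ to $T \to \mathcal Y$ using $\mathbf P$ for $\mathcal Y \to \mathcal Z$, and then lifts further to $T \to \mathcal X$ using $\mathbf P$ for $\mathcal X \to \mathcal Y$; when the property is only required étale-locally (as in the formal cases), refining an étale cover twice still yields an étale cover, and the uniqueness assertions compose in the evident way.

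The two locality statements are where the étale-lifting lemma genuinely intervenes. For locality on the target, given a cover $\mathcal Y' \to \mathcal Y$ and a test diagram for $f$, one étale-refines $T_0$ by $V_0 \to T_0$ so that the composites $V_0 \to \mathcal Y$ factor through $\mathcal Y'$; Lemma~\ref{L:et-inf-lift} or~\ref{L:et-lim} extends $V_0 \to T_0$ to an étale $V \to T$ with $V_0 \to V$ still in $\mathcal T$, the hypothesis on $\mathcal X \mathbin\times_{\mathcal Y} \mathcal Y' \to \mathcal Y'$ solves the lifting problem over each $V$, and composition with $\mathcal X \mathbin\times_{\mathcal Y} \mathcal Y' \to \mathcal X$ produces the desired étale-local lift into $\mathcal X$. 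Locality on the source is dual: pulling back an étale cover $\mathcal X' \to \mathcal X$ along $T_0 \to \mathcal X$ produces an étale cover $V_0 \to T_0$ together with a factorization $V_0 \to \mathcal X'$, which is then lifted by the same mechanism and composed with $\mathcal X' \to \mathcal X$. The main subtlety, and the only place where the argument is not entirely formal, will be the bookkeeping of the ``étale-locally'' clauses across these refinements---in particular in the local finite presentation case, ensuring that the filtered-colimit uniqueness interacts correctly with refinement by étale covers, which is handled by Lemma~\ref{L:et-lim} together with the standard finite-presentation limit arguments of~\cite[Thm.~(8.8.2)]{EGAIV3}.
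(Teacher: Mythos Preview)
Your proposal is correct and follows essentially the same strategy as the paper: both unify the four cases by phrasing each property $\mathbf P$ as a lifting criterion against a class of test morphisms $S \to S'$, isolate Lemmas~\ref{L:et-inf-lift} and~\ref{L:et-lim} as the single nontrivial input (étale covers of $S$ extend uniquely across the test morphism), treat composition and base change as formal, and prove the two locality statements by passing to an étale cover to factor through the given cover of $\mathcal Y$ (for the target) or of $\mathcal X$ (for the source). The paper's written proof of locality to the source only spells out the existence half (the formal-smoothness-style argument), leaving the uniqueness clauses implicit, whereas you at least gesture at how the uniqueness bookkeeping transports; otherwise the arguments match.
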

\begin{proof}
We omit the verification for composition and base change, since these are formal and straightforward.

All of these properties may be phrased in terms of existence or uniqueness (or both) of lifts of a diagram
\begin{equation}\label{E:FundLiftDiag} \xymatrix{
S \ar[r] \ar[d] & \mathcal X \ar[d] \\
S' \ar@{-->}[ur] \ar[r] & \mathcal Y
} \end{equation}
In the case of formal unramifiedness, formal \'etaleness, or formal smoothness, $S'$ is an infinitesimal extension of $S$.  In the case of local finite presentation, we have a ring $A$ that is the filtered colimit of rings $A_i$, and $S = \operatorname{Spec} A$ and $S' = ``\varprojlim\!" \operatorname{Spec} A_i$.

The only fact we will use here is that if $S \rightarrow S'$ is the left side of one of these diagrams and $T \rightarrow S$ is an \'etale map, then there is a unique (up to unique isomorphism) extension of $T$ to an \'etale map $T' \rightarrow S'$.  In the case where $S \rightarrow S'$ is an infinitesimal extension, this is Lemma~\ref{L:et-inf-lift}; in the case where $S = \operatorname{Spec} A$ and $S' = ``\varprojlim\!" \operatorname{Spec} A_i$ with the $A_i$ filtered and $\varinjlim A_i = A$, this is Lemma~\ref{L:et-lim}.

We prove locality to the target.  Consider a lifting problem~\eqref{E:FundLiftDiag}, and assume that $\mathcal Y' \rightarrow \mathcal Y$ is a covering map such that the base change $\mathcal X' \rightarrow \mathcal Y'$ is $\mathbf P$.  We may freely replace $S$ by an \'etale cover, so we may assume that the map $S' \rightarrow \mathcal Y$ factors through $\mathcal Y'$.  This induces a factorization of $S \rightarrow \mathcal X$ through $\mathcal X'$.  Property $\mathbf P$ for $\mathcal X'$ over $\mathcal Y'$ gives a lift $f$ in diagram~\eqref{E:formal3}, which yields $g$ by composition.
\begin{equation} \label{E:formal3} \vcenter{\xymatrix{
S \ar[r] \ar[d] & \mathcal X' \ar[r] \ar[d] & \mathcal X \ar[d] \\
S' \ar[r] \ar@{-->}[ur]^-f \ar@{-->}[urr]_(0.7)g & \mathcal Y' \ar[r] & \mathcal Y 
}} \end{equation}

Now we prove locality to the source.  Again, consider a lifting problem~\eqref{E:FundLiftDiag} and suppose that $X_0 \rightarrow X$ is a cover that is formally smooth over $\mathcal Y$.  Since we can replace $S$ by an \'etale cover, we can assume that $S \rightarrow \mathcal X$ factors through $X_0$.  Formal smoothness of $X_0$ over $\mathcal Y$ gives a lift $f$ in diagram~\eqref{E:formal4}, which gives us $g$ by composition.
\begin{equation} \label{E:formal4} \vcenter{\xymatrix{
S \ar[r] \ar[dd] & X_0 \ar[d] \\
& \mathcal X \ar[d] \\
S' \ar@{-->}[uur]^(0.6)f \ar@{-->}[ur]_-g \ar[r] & \mathcal Y
}} \end{equation}
\end{proof}


\section{Infinitesimal deformation theory}
\label{S:def-thy}

In this section we introduce some deformation theory, with a view towards Artin's criterion for a stack to be algebraic.   Roughly speaking, deformation theory is the study of families of objects over Artin rings. To get a concrete idea, consider  the Kodaira--Spencer approach to deforming a complex manifold $X$  over the unit disk by extending  the given transition functions $\tau_{ij}$ of the manifold to transition functions $\widetilde \tau_{ij}=\tau_{ij}+\tau_{ij}^{(1)}t+\tau_{ij}^{(2)}t^2+\cdots$, where $t$ is the parameter on the disk (for each fixed $t$, one obtains a manifold with the given transition functions). Deformation theory would then be  the problem of  iteratively accomplishing this formally, by first extending up to $t$ (e.g., $\tau_{ij}+\tau_{ij}^{(1)}t \mod t^2$), and then extending up to $t^2$, and so on.    In the process, one would observe that the first order deformations are governed by $H^1(X,T_X)$ (called the tangent space), and that once one has extended to first order, the ability to extend to second order would be goverened by $H^2(X,T_X)$ (called an obstruction space).  

In general, deformation theory aims to abstract this to the setting of any stack over schemes (and to deforming over bases other than the unit disk).
The heart of the matter turns out to be defining   the tangent space to a stack, and an obstruction theory to a stack.  We take an abstract point of view and identify precisely the condition, homogeneity, on a CFG that gives it a well-behaved tangent space.  The advantage of this abstraction is that the existence of a well-behaved tangent space, and obstruction theory,  can be used to prove that a stack is algebraic, as we will discuss in Section~\ref{S:Artin}.

\subsection{Homogeneous categories fibered in groupoids}
\label{S:homogeneity}

After introducing homogeneity and demonstrating its basic properties, our goal will be to show that the stack of Higgs bundles is homogeneous, without relying on its algebraicity.  In this first section we stick to the abstract setting, and also introduce the tangent space to a stack.  We discuss the pertinent deformation theory in the following section, \S \ref{S:def}, where we establish that the stack of Higgs bundles in homogeneous.

\subsubsection{The tangent bundle of a stack}
\label{sec:tangent-space}

The following definition reprises Definition~\ref{D:inf-ext}:

\begin{dfn}[Square-zero extension] \label{D:InfExt}
An \emph{infinitesimal extension} of schemes is a closed embedding $S \subseteq S'$ such that the ideal $I_{S/S'}$ is nilpotent.  A \emph{square-zero extension} is an infinitesimal extension $S \subseteq S'$ such that $I_{S/S'}^2 = 0$.
\end{dfn}

The utility of square-zero extensions is twofold:  every infinitesimal extension can be factored as a sequence of square-zero extensions, and square-zero extensions behave `linearly', in the sense that deformations and obstructions over square-zero extensions are classified by linear-algebraic data.   These observations may be viewed as a functorial perspective on Taylor series.

The most important examples of infinitesimal extensions are the trivial ones:

\begin{dfn}[Dual numbers] \label{D:RDN}
The \emph{ring of dual numbers} is $\mathbb D = \mathbb{Z}[\epsilon] / (\epsilon^2)$.
  For any scheme $S$, we write $S[\epsilon] = S \times_{\operatorname{Spec} \mathbb Z} \operatorname{Spec} \mathbb D$.  More generally, if $V$ is a quasicoherent sheaf of $\mathcal{O}_S$-modules, we write $\mathbb D(V)$ for the sheaf of $\mathcal{O}_S$-algebras,
\begin{equation*}
\mathcal{O}_S + \epsilon V = \operatorname{Sym}^\bullet (\epsilon V) / (\epsilon^2 \operatorname{Sym}^2 V),
\end{equation*}
whose underlying sheaf of abelian groups consists locally of symbols $f + \epsilon v$ with $f \in \mathcal{O}_S$ and $v \in V$ and has multiplication law
\begin{equation*}
(f + \epsilon v)(g + \epsilon w) = fg + \epsilon (fw + gv).
\end{equation*}
We write $S[\epsilon V]$ for the scheme whose underlying topological space is $S$ and whose sheaf of rings is $\mathbb D(V)$; i.e., $S[\epsilon V]=\underline{\operatorname{Spec}}_S \mathbb D(V)$.

There is a canonical morphism $S[\epsilon V] \rightarrow S$ corresponding to the following homomorphism of sheaves of rings:
\begin{gather*}
\mathcal{O}_S \rightarrow \mathcal{O}_S + \epsilon V \\
f \mapsto f + 0 \epsilon
\end{gather*}
Unless otherwise specified, when it is necessary to equip $S[\epsilon V]$ with the structure of a scheme over $S$, we do so with this morphism.
\end{dfn}

\begin{rem} \label{R:SDN}
The construction of $S[\epsilon V] \rightarrow S$ commutes with base change in $S$.  It can therefore be extended to apply to any CFG, as follows.  If $\mathcal X$ is a CFG and $V$ is a quasicoherent sheaf on $\mathcal X$, then we define $\mathcal X[\epsilon V](S)$ to the category of pairs $(\xi, \delta)$ where $\xi \in \mathcal X(S)$ and $\delta$ is a section of $S[\epsilon \xi^\ast V]$ over $S$.
When $V = \mathcal O_{\mathcal X}$, we write $\mathcal X[\epsilon]$ rather than $\mathcal X[\epsilon \mathcal O_{\mathcal X}]$; in this case $$\mathcal X[\epsilon] = \mathcal X \times_{\operatorname{Spec}\mathbb Z} \operatorname{Spec} \mathbb D.$$
\end{rem}

\begin{dfn}[Tangent bundle] \label{D:TangSp}  
Let  $\mathcal X$ be  a CFG over $\mathsf S/S$.  We give $R[\epsilon]$ the structure of an $S$-scheme via the canonical projection $R[\epsilon] \rightarrow R \rightarrow S$, and  the \emph{relative tangent bundle} of $\mathcal X$ over $S$  is  the category fibered in groupoids $T_{\mathcal X/S}$ over $\mathsf S/S$ with   
\begin{equation*}
T_{\mathcal X / S}(R) = {\mathcal X}\bigl(R[\epsilon]\bigr) 
\end{equation*}
for each $S$-scheme $R$.  
\end{dfn}

\begin{rem}
If $\mathcal X$ is a presheaf, then $T_{\mathcal X/S}$ is also a presheaf.  In the case of $S=\operatorname{Spec}\mathbb Z$  we write $T_{\mathcal X}=T_{\mathcal X/\operatorname{Spec}\mathbb Z}$.   
\end{rem}

The idea to study the tangent space this way, and to think of the spectrum of the ring of dual numbers as a pair of infinitesimally nearby points, goes back at least to Weil~\cite[\S2]{Weil}.

\begin{pro}
For any morphism of schemes $X \rightarrow S$, we have
\begin{equation*}
T_{X/S} = \underline{\operatorname{Spec}}_X ( \operatorname{Sym}^\bullet \Omega_{X/S} )
\end{equation*}
\end{pro}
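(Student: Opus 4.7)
The plan is to check that the two sides represent the same functor on $S$-schemes $R$ and that the identification is natural in $R$. Unwinding definitions, a point of $\underline{\operatorname{Spec}}_X(\operatorname{Sym}^\bullet \Omega_{X/S})$ valued in $R$ (over $S$) is a pair $(f, \delta)$, where $f : R \to X$ is a morphism over $S$ and $\delta : f^\ast \Omega_{X/S} \to \mathcal O_R$ is an $\mathcal O_R$-linear map; this is because morphisms $R \to \underline{\operatorname{Spec}}_X \mathcal A$ over $X$ lying above $f$ correspond to $\mathcal O_R$-algebra homomorphisms $f^\ast \mathcal A \to \mathcal O_R$, and in the present case $f^\ast \operatorname{Sym}^\bullet \Omega_{X/S} = \operatorname{Sym}^\bullet f^\ast \Omega_{X/S}$, whose algebra maps to $\mathcal O_R$ are the same as module maps from $f^\ast \Omega_{X/S}$.

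On the other side, by Definition~\ref{D:TangSp}, a point of $T_{X/S}(R)$ is an $S$-morphism $g : R[\epsilon] \to X$. The plan is to show that giving such a $g$ is the same as giving the underlying $S$-morphism $f : R \to X$ obtained by precomposing with the canonical closed embedding $R \hookrightarrow R[\epsilon]$, together with the $\mathcal O_S$-linear derivation $D : f^{-1}\mathcal O_X \to \mathcal O_R$ extracting the coefficient of $\epsilon$ from $g^\sharp$. Concretely, once $f$ is fixed, $g^\sharp$ must send a local section $a$ of $\mathcal O_X$ to $f^\sharp(a) + \epsilon D(a)$, and the multiplicativity of $g^\sharp$ is equivalent to the Leibniz rule for $D$.

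Next, I would invoke the universal property of the sheaf of relative K\"ahler differentials: $\mathcal O_S$-derivations $f^{-1}\mathcal O_X \to \mathcal O_R$ (equivalently, $\mathcal O_X$-linear derivations $\mathcal O_X \to f_\ast \mathcal O_R$) are in natural bijection with $\mathcal O_X$-linear maps $\Omega_{X/S} \to f_\ast \mathcal O_R$. Then the $(f^\ast, f_\ast)$ adjunction identifies these with $\mathcal O_R$-linear maps $\delta : f^\ast \Omega_{X/S} \to \mathcal O_R$. Composing these bijections with the identification of the previous paragraph produces a natural bijection
\begin{equation*}
T_{X/S}(R) \; \longleftrightarrow \; \{(f,\delta)\} \; = \; \underline{\operatorname{Spec}}_X(\operatorname{Sym}^\bullet \Omega_{X/S})(R),
\end{equation*}
natural in the $S$-scheme $R$, which is the desired identification.

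The only subtle step is the first one, verifying that the datum of an $S$-morphism $R[\epsilon] \to X$ really does decompose canonically as the pair $(f, D)$; the main thing to observe is that because $R \hookrightarrow R[\epsilon]$ is a homeomorphism on underlying topological spaces, $g$ and $f$ have the \emph{same} underlying continuous map, so that $g^\sharp$ and $f^\sharp$ share a source and target of sheaves, and the difference $g^\sharp - f^\sharp$ lands in $\epsilon \cdot \mathcal O_R \cong \mathcal O_R$ and is forced by the ring homomorphism axioms to be an $\mathcal O_S$-derivation. Everything else is a formal assembly of universal properties.
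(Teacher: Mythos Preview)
Your proposal is correct and takes essentially the same approach as the paper: both arguments identify $S$-morphisms $R[\epsilon] \to X$ lying over a fixed $f : R \to X$ with $\mathcal O_S$-derivations, invoke the universal property of $\Omega_{X/S}$, and then pass to $\operatorname{Sym}^\bullet$. The paper simply reduces to the affine case and writes the chain of bijections for rings, whereas you carry it out globally in sheaf language via $(f^\ast, f_\ast)$-adjunction; one small slip is the parenthetical ``$\mathcal O_X$-linear derivations $\mathcal O_X \to f_\ast \mathcal O_R$'', which should read ``$\mathcal O_S$-derivations'', but the intended meaning is clear.
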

\begin{proof}
This reduces to the affine situation, where it comes down  to the following identities: given a ring $k$ and algebras  $k\to A\stackrel{\phi}{\to} B$, then 
$$
\operatorname{Hom}_{k\operatorname{-alg},\phi}(A,B[\epsilon])=\operatorname{Der}_k(A,B)=\operatorname{Hom}_{A\operatorname{-mod}}(\Omega_{A/k},B)
$$
$$
=\operatorname{Hom}_{A\operatorname{-alg}}(\operatorname{Sym}^\bullet \Omega_{A/k},B),
$$
where the first group of homomorphisms consists of  the $k$-algebra homomorphisms that reduce to $\phi$ modulo $\epsilon$.  
\end{proof}

\begin{cor}
When $X$ is a smooth scheme over $\mathbb C$, then $T_{X/\operatorname{Spec}\mathbb C}$ is a vector bundle over $X$ and coincides with any familiar definition of the tangent bundle.
\end{cor}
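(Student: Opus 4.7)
The plan is to simply unpack the preceding proposition in the smooth case. By that proposition, $T_{X/\operatorname{Spec}\mathbb C} = \underline{\operatorname{Spec}}_X(\operatorname{Sym}^\bullet \Omega_{X/\mathbb C})$, so everything reduces to controlling $\Omega_{X/\mathbb C}$. The smoothness hypothesis is exactly what is needed: when $X$ is smooth over $\mathbb C$, the sheaf of K\"ahler differentials $\Omega_{X/\mathbb C}$ is locally free of finite rank equal to the relative dimension.

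Given this, I would argue as follows. First, recall that for a locally free sheaf $\mathcal F$ of finite rank on $X$, the scheme $\mathbb V(\mathcal F) := \underline{\operatorname{Spec}}_X(\operatorname{Sym}^\bullet \mathcal F)$ is, Zariski-locally on $X$, of the form $U \times \mathbb A^n_{\mathbb C}$, and in particular defines a geometric vector bundle over $X$ whose sheaf of sections is the $\mathcal O_X$-linear dual $\mathcal F^\vee = \underline{\operatorname{Hom}}_{\mathcal O_X}(\mathcal F,\mathcal O_X)$. Applying this with $\mathcal F = \Omega_{X/\mathbb C}$ shows that $T_{X/\operatorname{Spec}\mathbb C}$ is a vector bundle whose sheaf of sections is $\mathcal T_{X/\mathbb C} := \Omega_{X/\mathbb C}^\vee$, the classical tangent sheaf.

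To see that this agrees with any of the usual concrete definitions of the tangent bundle, I would check the identification on fibers at a closed point $x \in X$. Unwinding the functor of points, the fiber of $T_{X/\operatorname{Spec}\mathbb C}$ over $x$ parameterizes morphisms $\operatorname{Spec}\mathbb C[\epsilon] \to X$ whose reduction mod $\epsilon$ is $x$, and these are exactly the $\mathbb C$-derivations $\mathcal O_{X,x} \to \mathbb C$, i.e., the Zariski tangent space $T_xX = (\mathfrak m_x/\mathfrak m_x^2)^\vee$. The analytic and differential-geometric tangent spaces agree with the Zariski one for smooth $X$, and both identifications globalize since $\Omega_{X/\mathbb C}$ agrees with the analytic cotangent sheaf on a complex manifold.

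The main obstacle, such as it is, is purely cosmetic: one must verify that the two conventions (``$\underline{\operatorname{Spec}}$ of the symmetric algebra'' versus ``total space of the vector bundle dual to $\Omega^1$'') match up, which amounts to remembering that $\underline{\operatorname{Spec}}_X(\operatorname{Sym}^\bullet \mathcal F)$ represents the functor $T \mapsto \operatorname{Hom}_{\mathcal O_T}(f^\ast \mathcal F, \mathcal O_T)$ for $f : T \to X$, so its sections are $\mathcal F^\vee$ rather than $\mathcal F$ itself. With that dualization in place, the identification with the classical tangent bundle is immediate.
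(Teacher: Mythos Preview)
Your proposal is correct and is exactly the intended deduction: the paper states this corollary without proof, as an immediate consequence of the preceding proposition $T_{X/S} = \underline{\operatorname{Spec}}_X(\operatorname{Sym}^\bullet \Omega_{X/S})$, and your argument is precisely the natural unpacking of that implication via local freeness of $\Omega_{X/\mathbb C}$ in the smooth case.
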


\begin{cor}
When $\mathcal X$ is an algebraic stack over $S$, so is $T_{\mathcal X /S}$.  
\end{cor}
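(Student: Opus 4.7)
The plan is to invoke the bootstrapping Corollary \ref{L:AlgRel}: since $\mathcal X$ is algebraic, to conclude $T_{\mathcal X/S}$ is algebraic it suffices to exhibit a morphism $\pi : T_{\mathcal X/S} \to \mathcal X$ that is representable by algebraic stacks. The projection $\pi$ is the natural one: the ring homomorphism $\mathbb D \to \mathbb Z$ sending $\epsilon \mapsto 0$ induces a closed immersion $R \hookrightarrow R[\epsilon]$ for every $S$-scheme $R$, and pullback along this map defines $\pi(R) : \mathcal X(R[\epsilon]) = T_{\mathcal X/S}(R) \to \mathcal X(R)$. Both $T_{\mathcal X/S}$ and $\pi$ are visibly functorial in $R$ over $S$.

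To verify that $\pi$ is representable, I would fix $\xi : R \to \mathcal X$ and analyze the fiber $\mathcal F := T_{\mathcal X/S} \times_{\mathcal X} R$. Unraveling Definition \ref{D:2-product-categories-over-C}, $\mathcal F(T)$ is the groupoid of first-order deformations of $\xi_T$ -- pairs $(\tilde\xi, \alpha)$ with $\tilde\xi \in \mathcal X(T[\epsilon])$ and $\alpha : \tilde\xi|_T \simeq \xi_T$. Choose a smooth cover $p : U \to \mathcal X$ by a scheme $U$ (which always exists since any algebraic stack admits a smooth presentation by an algebraic space, and any algebraic space has an \'etale cover by a scheme) and base-change to obtain a smooth surjection $U_\xi := U \times_{\mathcal X, \xi} R \to R$ (Lemma \ref{L:FPCompCov}). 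The preceding proposition gives $T_{U_\xi/R} = \underline{\operatorname{Spec}}_{U_\xi}(\operatorname{Sym}^\bullet \Omega_{U_\xi/R})$, manifestly algebraic. There is a natural comparison morphism $T_{U_\xi/R} \to \mathcal F$ sending a deformation $\widetilde u : T[\epsilon] \to U_\xi$ of some $u : T \to U_\xi$ to $p \circ \widetilde u$, equipped with its canonical restriction isomorphism. Surjectivity of $p$ together with its formal smoothness (Definition \ref{D:formal-sm-et-nr}) applied to the square-zero extension $T \hookrightarrow T[\epsilon]$ shows that $T_{U_\xi/R} \to \mathcal F$ is smooth and surjective. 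Combined with representability of the diagonal of $\mathcal F$ -- which reduces to that of $\mathcal X$ via a Weil restriction along the finite flat map $T[\epsilon] \to T$ -- this presents $\mathcal F$ as an algebraic stack.

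The main obstacle will be the careful treatment of the 2-categorical structure in the comparison $T_{U_\xi/R} \to \mathcal F$: two distinct lifts of $u$ to $U_\xi$ can give rise to isomorphic (but not equal) deformations of $\xi_T$, while nontrivial infinitesimal automorphisms of a deformation $\tilde\xi$ restricting to the identity on $\xi_T$ may appear. These automorphisms are controlled by the algebraicity of the diagonal of $\mathcal X$ and by formal smoothness of $p$ along the relation groupoid $U_\xi \times_{\mathcal X} U_\xi \rightrightarrows U_\xi$. A conceptually cleaner, if more powerful, proof would pass through the cotangent complex $L_{\mathcal X/S}$ to exhibit $\mathcal F$ as a linear stack classified by $\mathbf R\operatorname{Hom}(\xi^* L_{\mathcal X/S}, \mathcal O_R)$; the bootstrap via a smooth atlas above suffices for algebraicity.
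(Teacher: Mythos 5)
Your overall architecture---prove that the projection $\pi : T_{\mathcal X/S} \to \mathcal X$ is representable by algebraic stacks and then invoke the bootstrap Corollary~\ref{L:AlgRel}---is viable, and it is a fiberwise variant of the paper's argument, which simply observes that $T_{\mathcal X/S}$ is a stack by \'etale descent (the $U_i[\epsilon]$ cover $R[\epsilon]$ whenever the $U_i$ cover $R$) and that $T_{X_0/S} \to T_{\mathcal X/S}$ is a smooth cover by a scheme for any smooth atlas $X_0 \to \mathcal X$. However, your key step fails: the comparison morphism $T_{U_\xi/R} \to \mathcal F$ is not surjective. A $T$-point of $T_{U_\xi/R}$ is an $R$-morphism $\widetilde u : T[\epsilon] \to U_\xi = U \times_{\mathcal X} R$ in which the structure map $T[\epsilon] \to R$ factors through $T$; by the very definition of the fiber product, such a point carries an isomorphism between $p \circ \widetilde u \in \mathcal X(T[\epsilon])$ and the pullback of $\xi \circ f$ along $T[\epsilon] \to T$, so its image in $\mathcal F = T_{\mathcal X/S} \times_{\mathcal X} R$ is a \emph{trivial} first-order deformation of $\xi_T$. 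But $\mathcal F$ parameterizes all first-order deformations, trivialized only over the zero section $T \subseteq T[\epsilon]$. Already for $\mathcal X = U = \mathbb A^1$ and $\xi : R = \operatorname{Spec} \mathbb C \to \mathbb A^1$ the origin one gets $U_\xi = \operatorname{Spec}\mathbb C$, hence $T_{U_\xi/R} = \operatorname{Spec}\mathbb C$, mapping to $\mathcal F \cong \mathbb A^1$ (the tangent space at the origin) as the zero vector; this is not covering, and formal smoothness of $p$ cannot repair it, since nontrivial first-order deformations do not become trivial after \'etale base change on $T$ (take $T$ the spectrum of an algebraically closed field). In short, $T_{U_\xi/R}$ is the tangent bundle \emph{of the fiber} and only sees vertical deformations.

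The fix is to use the tangent bundle of the atlas restricted over the fiber: replace $T_{U_\xi/R}$ by $T_{U/S} \times_{\mathcal X} R \cong T_{U/S} \times_U U_\xi$, where $T_{U/S} \to \mathcal X$ is restriction to the zero section followed by $p$. Its $T$-points are pairs $\bigl(\widetilde u : T[\epsilon] \to U,\ \theta : (p\widetilde u)|_T \simeq \xi f\bigr)$, with no constraint on $p\widetilde u$ over all of $T[\epsilon]$; this is an algebraic space (a fiber product of the scheme $T_{U/S}$ against the algebraic space $U_\xi$ over $U$), and your intended argument then does work: it is covering because $p$ is covering and formally smooth over $T \subseteq T[\epsilon]$, and it is smooth because lifting over a square-zero extension $T \subseteq T'$ reduces to lifting over $T[\epsilon] \subseteq T'[\epsilon]$ for $p$. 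Equivalently, and more economically, do this once globally as the paper does: $T_{X_0/S} \to T_{\mathcal X/S}$ is a representable smooth cover. Two smaller points: Corollary~\ref{L:AlgRel} (via Proposition~\ref{P:AdaptRel}) requires $T_{\mathcal X/S}$ to be a \emph{stack}, which you never verify---this is either immediate by the descent argument above or recoverable from Corollary~\ref{C:DescRel} once the fibers of $\pi$ are known to be algebraic stacks; and your appeal to a Weil restriction for the diagonal of $\mathcal F$ is plausible but would need an actual argument (the Isom sheaves of $\mathcal F$ are built from Isom sheaves of $\mathcal X$ over $T[\epsilon]$, pushed down along $T[\epsilon] \to T$), so it should not be treated as automatic.
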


\begin{proof}
It is almost immediate that $T_{\mathcal X/S}$ is a stack in the \'etale topology:  If we have an \'etale cover of $R$ by $U_i$ then the $U_i[\epsilon]$ form an \'etale cover of $R[\epsilon]$.  \'Etale descent for the maps $U_i[\epsilon] \rightarrow {\mathcal X}$ to $R[\epsilon] \rightarrow {\mathcal X}$ yields \'etale descent for $U_i \rightarrow T_{{\mathcal X}/S}$ to $R \rightarrow T_{{\mathcal X}/S}$.  To see that $T_{{\mathcal X}/S}$ is an \emph{algebraic} stack, note that if $X_0 \rightarrow {\mathcal X}$ is a smooth cover of ${\mathcal X}$ by a scheme then then $T_{X_0/S} \rightarrow T_{\mathcal X/S}$ is also a smooth cover.
\end{proof}

\begin{wrn}
We will see in a moment (item~(\ref{I:proj}) on p.~\pageref{I:proj}) that there is a projection $T_{\mathcal X /S} \rightarrow {\mathcal X}$, as one expects, but that it is not necessarily representable by algebraic spaces!  This is because the objects parameterized by $\mathcal X$ may possess \emph{infintisimal automorphisms}.  Nevertheless we will see that the fibers of $T_{\mathcal X/S}$ over ${\mathcal X}$ behave like `groupoids with vector space structure'.  This is why we insist on calling $T_{\mathcal X/S}$ the tangent \emph{bundle}:  it has all the features of a vector bundle except for being a  set!
\end{wrn}

\subsubsection{Homogeneity}

\begin{teo}[{\cite[Prop.~2.1]{obs}}] \label{T:Homog} Let $S$ be a scheme. 
Suppose that $Q \subseteq Q'$ is an infinitesimal extension of $S$-schemes and $f : Q \rightarrow R$ is an \emph{affine} $S$-morphism.  Then there is a universal (initial) $S$-scheme $R'$ completing diagram~\eqref{E:Homog}:
\begin{equation} \label{E:Homog} \vcenter{\xymatrix{
Q \ar@{^(->}[r]^{\text{inf.}} \ar[d]^f_{\text{affine}} & Q' \ar@{-->}[d] \\
R \ar@{-->}[r] & R'
}} \end{equation}
\noindent Furthermore, $R'$ is also universal (initial) among algebraic $S$-stacks completing the diagram.  The underlying topological space of $R'$ is the same as that of $R$ and viewing $\mathcal O_{Q'}$ and $\mathcal O_{R'}$ as sheaves on $Q$ and $R$ respectively, 
\begin{equation*}
\mathcal{O}_{R'} =\mathcal{O}_R   \mathop{\times}_{f_\ast \mathcal{O}_Q} f_\ast \mathcal{O}_{Q'}.
\end{equation*}
In particular, if $R = \operatorname{Spec} A$ (hence $Q$ and $Q'$ are also affine, say $Q = \operatorname{Spec} B$ and $Q' = \operatorname{Spec} B'$) then
\begin{equation*}
R' = \operatorname{Spec} (A \mathop{\times}_B B') .
\end{equation*}
\end{teo}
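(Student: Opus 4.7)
The plan is to treat the affine case by hand, globalize by gluing along an affine cover, and finally extend the universal property from schemes to algebraic stacks via smooth descent. Throughout, the basic observation is that $Q \subseteq Q'$ being an infinitesimal extension means $|Q| = |Q'|$, and affineness of an open subscheme is insensitive to nilpotent thickenings; combined with the hypothesis that $f: Q \to R$ is affine, this lets us cover $R$ by affine opens $U_\alpha = \operatorname{Spec} A_\alpha$ whose preimages are affine in both $Q$ (say $\operatorname{Spec} B_\alpha$) and $Q'$ (say $\operatorname{Spec} B'_\alpha$). I would begin by reducing to this situation.

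For the affine case $(R, Q, Q') = (\operatorname{Spec} A, \operatorname{Spec} B, \operatorname{Spec} B')$ with $B' \twoheadrightarrow B$ having nilpotent kernel $I$, I would form the fiber product ring $A' = A \times_B B'$ and set $R' = \operatorname{Spec} A'$. The kernel of the projection $A' \to A$ is exactly $\{0\} \times I \cong I$, which is nilpotent, so $\operatorname{Spec} A' \to \operatorname{Spec} A$ is a homeomorphism, giving the desired description of the underlying topological space. The universal property of $R'$ for affine targets $\operatorname{Spec} C$ then translates directly into the universal property of the ring-theoretic fiber product: giving a ring homomorphism $A' \to C$ is the same as giving ring homomorphisms $A \to C$ and $B' \to C$ agreeing on $B$. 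For a general scheme target one further notes, by a standard argument, that any morphism out of $R'$ is Zariski-locally given by an affine morphism, reducing to the previous case. Globalization is then just a matter of carrying out this construction on each affine open $U_\alpha$, producing $U'_\alpha = \operatorname{Spec}(A_\alpha \times_{B_\alpha} B'_\alpha)$ with $|U'_\alpha| = |U_\alpha|$, and gluing along overlaps: the universal property on double overlaps produces the gluing isomorphisms, and on triple overlaps it forces the cocycle condition. The resulting $R'$ has underlying space $|R|$ and structure sheaf visibly equal to $\mathcal O_R \times_{f_\ast \mathcal O_Q} f_\ast \mathcal O_{Q'}$.

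The main obstacle is extending universality from schemes to algebraic stacks. Given morphisms $R \to \mathcal X$ and $Q' \to \mathcal X$ agreeing over $Q$, the strategy is to work on a smooth atlas and then descend. Choose a smooth surjection $U \to \mathcal X$ with $U$ a scheme and set $\widetilde R = R \times_{\mathcal X} U$, a smooth cover of $R$ by a scheme. The restriction $\widetilde R \times_R Q \to Q$ is smooth, and by the smooth analogue of Lemma~\ref{L:et-inf-lift} lifts uniquely (up to unique isomorphism) to a smooth cover $\widetilde{Q'} \to Q'$ inside the infinitesimal thickening; the induced map $\widetilde{Q'} \to U$ is supplied by smoothness of $U \to \mathcal X$ applied to the infinitesimal lifting problem. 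Now the scheme case, applied to the compatible affine-over-infinitesimal-extension datum $(\widetilde Q \hookrightarrow \widetilde{Q'}, \widetilde R)$, produces a scheme $\widetilde{R'}$ together with a morphism $\widetilde{R'} \to U \to \mathcal X$. Performing the same construction on $\widetilde R \times_R \widetilde R$ and $\widetilde R \times_R \widetilde R \times_R \widetilde R$ yields smooth descent data for a morphism $R' \to \mathcal X$; the $2$-categorical cocycle identities, which are the real subtlety here, are forced by the uniqueness clauses in Lemma~\ref{L:et-inf-lift} and in the scheme-level universal property established above. Unraveling the construction shows that the resulting $R' \to \mathcal X$ is the unique morphism (up to unique $2$-isomorphism) filling in the pushout diagram, completing the proof.
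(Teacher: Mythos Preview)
Your scheme-level construction of $R'$ (affine fiber-product ring, then gluing over an affine cover of $R$) is correct and is essentially what must be done.

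The gap is in the extension to algebraic stacks. You invoke a ``smooth analogue of Lemma~\ref{L:et-inf-lift}'' to lift the smooth cover $\widetilde R\times_R Q\to Q$ \emph{uniquely up to unique isomorphism} to a smooth cover $\widetilde{Q'}\to Q'$, but no such analogue exists: Lemma~\ref{L:et-inf-lift} is specific to \'etale maps, precisely because their infinitesimal automorphism groups vanish. For a smooth $X\to S$ and a square-zero extension $S\subset S'$ with ideal $J$, the automorphism group of any given lift is $H^0(X,T_{X/S}\otimes\pi^*J)$, which is typically nonzero. That non-uniqueness is exactly what you would need to kill to force the $2$-cocycle identities, so your appeal to ``uniqueness clauses'' does not close the argument. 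If you instead take the natural choice $\widetilde{Q'}=Q'\times_{\mathcal X}U$ (which comes with its map to $U$ for free), the resulting pushout $\widetilde{R'}$ depends on $\mathcal X$, and there is no evident reason the induced map $\widetilde{R'}\to R'$ is a smooth cover; so descent along it is again unjustified.

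The paper avoids both issues by a different maneuver. It treats only the special case where $Q\to R$ is also an infinitesimal extension, so that $Q$, $Q'$, $R$, $R'$ all share the same \'etale site (Corollary~\ref{C:equiv-sites}). It then uses the inductive shape of Definition~\ref{D:alg-stack}: given a smooth cover $X_0\to\mathcal X$ with $X_0$ already homogeneous, one works \'etale-locally on $Q$ (equivalently on $R'$) to factor $Q\to\mathcal X$ through $X_0$, and then uses formal smoothness of $X_0\to\mathcal X$ to lift the maps $Q'\to\mathcal X$ and $R\to\mathcal X$ to $X_0$ as well, extending the chosen lift over $Q$. Homogeneity of $X_0$ now produces $R'\to X_0$ directly, and one composes with $X_0\to\mathcal X$. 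The key difference from your approach is that one lifts the \emph{maps into the atlas} rather than pulling back the atlas over the diagram; this needs only existence of lifts (formal smoothness), never uniqueness of smooth deformations.
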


The universality here means that for any \emph{algebraic stack} ${\mathcal X}$ the natural map displayed in equation~\eqref{D:HomogMap} is an equivalence of groupoids:
\begin{equation} \label{D:HomogMap}
{\mathcal X}(R') \rightarrow {\mathcal X}(R) \mathop{\times}_{{\mathcal X}(Q)} {\mathcal X}(Q')
\end{equation}

\begin{proof}
We will prove this in the case where $R$ is also an infinitesimal extension of $Q$, which implies that the \'etale sites of $Q$, $Q'$, $R$, and $R'$ are all equivalent (Corollary~\ref{C:equiv-sites}).

First, note that schemes are homogeneous.  Indeed, suppose we have a commutative diagram
\begin{equation*} \xymatrix{
Q \ar@{^(->}[r] \ar[d] & Q' \ar[d] \\
R \ar[r] & X
} \end{equation*}
where $X$ is a scheme.  All of $Q$, $Q'$, and $R$ have the same underlying topological space, so we denote by $f$ the continuous map from that space to the underlying topological space of $X$.  The maps of schemes give homomorphisms of sheaves of rings:
\begin{equation*} \xymatrix{
f^{-1} \mathcal O_X \ar[r] \ar[d] & \mathcal O_{Q'} \ar[d] \\
\mathcal O_R \ar[r] & \mathcal O_Q 
} \end{equation*}
By the universal property of $\mathcal O_{R'} = \mathcal O_R \mathbin\times_{\mathcal O_{Q}} \mathcal O_{Q'}$, there is a unique factorization $f^{-1} \mathcal O_X \rightarrow \mathcal O_{R'}$, which gives the desired, uniquely determined, map $R' \rightarrow X$.

To extend this to algebraic stacks, it is sufficient by Definition~\ref{D:alg-stack} to show that \emph{whenever $\mathcal X$ is a stack admitting a smooth cover by a homogeneous stack, $\mathcal X$ is homogeneous}.  

Suppose that we have a commutative diagram
\begin{equation*} \xymatrix{
Q \ar@{^(->}[r] \ar[d] & Q' \ar[d] \\
R \ar[r] & \mathcal X
} \end{equation*}
where $\mathcal X$ has a smooth cover $X_0 \rightarrow \mathcal X$ and $X_0$ is homogeneous.  Since we are trying to prove a uniqueness assertion, it is sufficient to work \'etale-locally on $Q$.  Replace $Q$ by an \'etale cover such that the map $Q \rightarrow \mathcal X$ factors through $X_0$.  Since $X_0$ is formally smooth over $\mathcal X$, we can assume, after refining the cover further, that the maps $Q' \rightarrow \mathcal X$ and $R \rightarrow \mathcal X$ also lift to $X_0$, extending the lift already constructed over $Q$.  By the homogeneity of $X_0$, there is a unique map $R' \rightarrow X_0$ compatible with the maps from $Q$, $Q'$, and $R$.  Composing with the projection to $\mathcal X$ gives the desired map $R' \rightarrow \mathcal X$.
\end{proof}

\begin{dfn}[Pushout] \label{D:InfPush}
Suppose that $Q \subseteq Q'$ is an infinitesimal extension of $S$-schemes and $f : Q \rightarrow R$ is an affine  $S$-morphism.
We refer to the scheme $R'$ in Theorem 
\ref{T:Homog} as the \emph{pushout} of $Q \subseteq Q'$ along the map $f:Q \rightarrow R$.
\end{dfn}

A CFG is said to be homogeneous if it behaves like an algebraic stack with respect to \eqref{D:HomogMap}: 

\begin{dfn}[Homogeneous CFG] \label{D:Homog}
A CFG (not necessarily an algebraic stack) is said to be \emph{homogeneous} if for every pushout as in Definition \ref{D:InfPush},  we have~\eqref{D:HomogMap} is an equivalence of categories.  
\end{dfn}

\begin{rem}
This definition is a natural extension of \cite[Def.~VI.2.5]{sga7-1}, which was stated only in the context of artinian   algebras.  For much of what we have to say, we will only be interested in homogeneity with respect to morphisms $Q \rightarrow R$ that are isomorphisms on topological spaces.  However, in practice, it is little more difficult to verify homogeneity in the generality we have formulated it.
\end{rem}

As we have seen in Theorem~\ref{T:Homog}, homogeneity is a necessary condition for a stack over schemes to be algebraic.  It is not sufficient, but in the presence of a reasonable finiteness condition, it does guarantee \emph{formal} representability by an algebraic stack (see Theorem~\ref{T:Rim}).

\vskip .2 cm 
The following theorem is very helpful for checking homogeneity. 
 It was asserted by Schlessinger in \cite[Lem.~3.4]{Schlessinger} but proved there only for free modules; Milnor gave a proof for projective modules~\cite[Thms.~2.1--2.3]{Milnor}; a proof in the general case can be found in \cite[Thm.~2.2]{Ferrand}.

\begin{teo}[Schlessinger, Milnor, Ferrand] \label{T:flat-homog}
	Let $\mathcal Q$ be the CFG whose fiber over a scheme $S$ is the category of flat, quasicoherent $\mathcal O_S$-modules.  Then $\mathcal Q$ is homogeneous.
\end{teo}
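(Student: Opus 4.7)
The plan is to reduce to the affine case and then construct the desired flat $\mathcal O_{R'}$-module as a fiber product of modules. Quasicoherent sheaves form a stack for the fpqc topology by Theorem~\ref{T:DQC}, and flatness is a local property, so it suffices to establish the equivalence~\eqref{D:HomogMap} after restricting to an affine open cover of $R'$. Because $|R'|=|R|$ and $f:Q\to R$ is affine, one may take such a cover to come from affine opens of $R$ whose preimages in $Q$ and in $Q'$ are also affine (using that $|Q|=|Q'|$ since $Q\subseteq Q'$ is a nilpotent thickening). We may therefore assume $R=\operatorname{Spec} A$, $Q=\operatorname{Spec} B$, $Q'=\operatorname{Spec} B'$, and, by Theorem~\ref{T:Homog}, $R'=\operatorname{Spec} A'$ with $A'=A\times_B B'$.

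In this affine setting, let $J\subseteq B'$ be the nilpotent kernel of $B'\to B$. The cartesian description of $A'$ shows that $A'\to A$ is surjective with kernel canonically identified with $J$ (via $(0,j)\in A\times_B B'$). An object of $\mathcal Q(R)\times_{\mathcal Q(Q)}\mathcal Q(Q')$ is a triple $(M,N',\alpha)$ with $M$ a flat $A$-module, $N'$ a flat $B'$-module, and $\alpha:M\otimes_A B\xrightarrow{\sim} N'\otimes_{B'}B$; set $N:=N'\otimes_{B'}B$. I would define the candidate $A'$-module to be the fiber product of abelian groups $M':=M\times_N N'$, with its evident $A'$-module structure inherited from $A\times_B B'$. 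This is forced on us: any flat $A'$-module inducing $M$ and $N'$ on base change carries compatible projections to $M$ and $N'$ and hence factors canonically through $M\times_N N'$. The content of the theorem is that this canonical factorization is an isomorphism and an equivalence at the level of groupoids.

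The main work then consists of verifying three things: (i) the base-change map $M'\otimes_{A'}A\to M$ is an isomorphism; (ii) the base-change map $M'\otimes_{A'}B'\to N'$ is an isomorphism; and (iii) $M'$ is flat over $A'$. For (i), one uses the short exact sequence $0\to M'\to M\oplus N'\to N\to 0$ (where the last map is $(m,n')\mapsto \bar m-\bar n'$, surjective because $N'\to N$ is surjective), notes that $J$ annihilates $M$ and so $JM'=\{0\}\times JN'$ inside $M\times_N N'$, and observes that the quotient $M'/JM'\to M$ is bijective by direct inspection using $N'/JN'=N$. Statement (ii) follows by a symmetric argument using nilpotence of $J$ and flatness of $N'$ over $B'$. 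For (iii), the nilpotence of $J$ reduces flatness of $M'$ over $A'$ to flatness of $M'/JM'=M$ over $A'/J=A$, which is given, together with the vanishing $\operatorname{Tor}_1^{A'}(M',A)=0$; this Tor vanishing follows from the short exact sequence above upon tensoring with $A$, using flatness of $M$ over $A$ and the identity $\operatorname{Tor}_1^{B'}(N',B)=0$ supplied by flatness of $N'$ over $B'$. Uniqueness of the lift up to unique isomorphism is then automatic from the universal property of the fiber product.

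The main obstacle is the flatness verification in step (iii), which is precisely the substantive content of Schlessinger--Milnor--Ferrand. The hypothesis that $Q\subseteq Q'$ is an infinitesimal extension, making $B'\to B$ surjective, is essential: without it, the projection $A'\to A$ fails to be surjective, the short exact sequence $0\to M'\to M\oplus N'\to N\to 0$ breaks down, and neither the base-change identifications nor the Tor vanishing go through. This surjectivity is precisely what the infinitesimal hypothesis in Theorem~\ref{T:Homog} provides, and it is the reason homogeneity is formulated only for pushouts along an infinitesimal leg.
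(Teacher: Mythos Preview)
The paper does not actually prove this theorem: it states the result with references to Schlessinger, Milnor, and Ferrand, and then records the two directions of the correspondence, namely $M'\mapsto (M'\otimes_{A'}A,\,M'\otimes_{A'}B',\,\text{can})$ and $(M,N',\alpha)\mapsto M\times_N N'$. Your construction is exactly this one, and your reduction to the affine case is a correct and necessary preliminary that the paper leaves implicit. So at the level of strategy you are aligned with the paper, and you have supplied more than the paper does.

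Two points in your verification deserve correction. First, step~(ii) is \emph{not} symmetric to step~(i): the projection $A'\to A$ is surjective with kernel $J$, but $A'\to B'$ is generally not surjective, so the argument you gave for~(i) does not simply transpose. A clean way to get~(ii) is to note that, modulo $J$, the map $M'\otimes_{A'}B'\to N'$ becomes $M'\otimes_{A'}B\to N$, which by~(i) is $M\otimes_A B=N\to N$, the identity; then Nakayama (nilpotence of $J$) gives surjectivity, and flatness of $N'$ over $B'$ gives $\ker/J\ker=0$, hence injectivity.

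Second, your route to~(iii) via tensoring the short exact sequence $0\to M'\to M\oplus N'\to N\to 0$ with $A$ is incomplete: the terms $\operatorname{Tor}_1^{A'}(M,A)$ and $\operatorname{Tor}_1^{A'}(N,A)$ are both canonically $J\otimes_B N$ and are nonzero in general, so the long exact sequence only shows that $\operatorname{Tor}_1^{A'}(M',A)$ receives a surjection from $\operatorname{Tor}_2^{A'}(N,A)$. A more direct argument (assuming $J^2=0$, then iterating) is to compute $J\otimes_{A'}M'\to JM'$ explicitly: since $J$ is a $B$-module one has $J\otimes_{A'}M'=J\otimes_B(M'\otimes_{A'}B)=J\otimes_B N$ by~(i), while $JM'=\{0\}\times J N'$; and $J\otimes_B N=J\otimes_{B'}N'\cong JN'$ by flatness of $N'$. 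This gives the required isomorphism and hence $\operatorname{Tor}_1^{A'}(M',A)=0$. You correctly flag that this flatness step is the substantive content of the cited references.
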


In language closer to the original statements, this says that if one has a cartesian diagram of commutative rings
\begin{equation*} \xymatrix{
		B  & B'  \ar[l]_{\text{inf.}} \\
		A  \ar[u]& A' \ar[u] \ar[l]
} \end{equation*}
in which $B' \rightarrow B$ (and therefore also $A' \rightarrow A$) is a nilpotent extension then it is equivalent to specify either of the following data:
\begin{enumerate}[(i)]
	\item a flat $A'$-module $M'$, or
	\item a flat $A$-module $M$, a flat $B'$-module $N'$, a flat $B$-module $N$, and an $A$-module map $M \rightarrow N$ and a $B'$-module map $N' \rightarrow N$ 
	inducing by adjunction isomorphisms 
	$M\otimes_AB \rightarrow N$ and   $N'\otimes_{B'}B \rightarrow N$ 
	of $B$-modules.
\end{enumerate}
One direction of the correspondence sends $M'$ to $M = M' \mathop{\otimes}_{A'} A$, $N' = M' \mathop{\otimes}_{A'} B'$, $N = M' \mathop{\otimes}_{A'} B$ with the canonical maps $M' \mathop{\otimes}_{A'} A \rightarrow M' \mathop{\otimes}_{A'} B$ and $M' \mathop{\otimes}_{A'} B' \rightarrow M' \mathop{\otimes}_{A'} B$.  The other direction has $M' = M \mathop{\times}_N N'$.

\subsubsection{Tangent bundle to a homogeneous stack}
\label{S:tangent-bundle}

To begin to appreciate the significance of homogeneity, consider that in general, the tangent bundle of a stack, and even the tangent bundle of a presheaf, need not be a very well behaved object:  it might not even be a vector space.  However, the tangent space of a \emph{homogeneous} stack has all of the familiar structure one expects:
\begin{enumerate}[(i)]

\item \label{I:proj} (projection to base) The map $\mathbb{Z}[\epsilon] / (\epsilon^2) \rightarrow \mathbb{Z}$ sending $\epsilon$ to $0$ induces closed embeddings $R \rightarrow R[\epsilon]$ for all schemes $R$.  For any $S$-scheme $R$, we use this to obtain a morphism  $T_{\mathcal X/S}(R) = {\mathcal X}(R[\epsilon]) \rightarrow {\mathcal X}(R)$, whence a projection $T_{\mathcal X/S} \rightarrow {\mathcal X}$. (This does not require homogeneity.)

\item (zero section) The map $\mathbb{Z} \rightarrow \mathbb{Z}[\epsilon] / (\epsilon^2)$ induces $R[\epsilon] \rightarrow R$.  This gives a map ${\mathcal X}(R) \rightarrow {\mathcal X}(R[\epsilon]) = T_{\mathcal X/S}(R)$, whence a section ${\mathcal X} \rightarrow T_{\mathcal X/S}$ of the aforementioned projection.  This is the zero section in the vector bundle when $T_{\mathcal X/S}$ is a vector bundle.
(This does not require homogeneity.)

\item (addition law) We have $ \mathbb{Z}[\epsilon_1,\epsilon_2]/(\epsilon_1^2, \epsilon_1 \epsilon_2, \epsilon_2^2) \stackrel{\sim}{\longrightarrow} \mathbb D \mathbin{\times}_{\mathbb{Z}} {\mathbb D}$ given by $$x+y\epsilon_1+z\epsilon_2\mapsto (x+y\epsilon_1,x+z\epsilon_2).$$  In addition to the two projections ${\mathbb D} \mathbin{\times}_{\mathbb{Z}} {\mathbb D} \rightarrow {\mathbb D}$, there is a third map  sending both $\epsilon_1$ and $\epsilon_2$ to $\epsilon$.  
We obtain a commutative diagram whose outer square is cartesian (in fact the upper right and lower left are cartesian as well):
$$
\xymatrix@!C=5em@R=1em{
\mathbb Z&&\mathbb Z[\epsilon_1] \ar[ll]\\
& \mathbb Z[\epsilon] \ar[lu]&\\
\mathbb Z[\epsilon_2] \ar[uu] && \mathbb Z[\epsilon_1,\epsilon_2]/(\epsilon_1,\epsilon_2)^2 \ar[ll]_<>(0.5){\epsilon_1\mapsto 0} \ar[uu]_<>(0.65){\epsilon_2 \mapsto 0} \ar[lu]_{\epsilon_1,\epsilon_2\mapsto\epsilon} \\
}
$$
Applying this to any $S$-scheme $R$, we obtain a pushout diagram with an extra morphism 
  $\sigma: R[\epsilon] \rightarrow R[\epsilon_1, \epsilon_2]$, 
where  $R[\epsilon_1, \epsilon_2] = R \times \operatorname{Spec} ({\mathbb D} \mathbin{\times}_{\mathbb{Z}} {\mathbb D})$ (it is also the vanishing locus of $\epsilon_1 \epsilon_2$ in $R[\epsilon_1][\epsilon_2]$):
$$
\xymatrix@!C=5em@R=1em{
R&&R[\epsilon_1] \ar@{<-}[ll]\\
&R[\epsilon]\ar@{<-}[lu]&\\
R[\epsilon_2] \ar@{<-}[uu] &&R[\epsilon_1,\epsilon_2] \ar@{<-}[ll] \ar@{<-}[uu]\ar@{<-}[lu]. \\
}
$$
Now consider maps from the diagram above into our homogeneous stack ${\mathcal X}$; the map ${\mathcal X}(R[\epsilon_1, \epsilon_2]) \rightarrow {\mathcal X}(R[\epsilon_1]) \mathbin{\times}_{{\mathcal X}(R)} {\mathcal X}(R[\epsilon_2])$ is an equivalence of categories.

Combining these facts, and abbreviating $T_{\mathcal X/S}$ to $T$, we have a diagram:
\scalebox{.9}{\parbox{\linewidth}{
\begin{equation*}
T(R) \!\mathop{\times}_{\mathcal X(R)} \!T(R) = \mathcal X(R[\epsilon_1]) \!\!\mathop{\times}_{\mathcal X(R)}\!\! \mathcal X(R[\epsilon_2]) \xleftarrow{\sim} \mathcal X(R[\epsilon_1, \epsilon_2]) \xrightarrow{\mathcal X(\sigma)} \mathcal X(R[\epsilon]) = T(R)
\end{equation*}
}}

\noindent This gives the fibers of $T_{\mathcal X/S}(R) \rightarrow {\mathcal X}(R)$ an addition law; i.e., given an $R$-point $R\to {\mathcal X}$, the collection of all extensions $R[\epsilon]\to {\mathcal X}$ has an induced addition law.    This coincides with the addition law of the vector space structure in the familiar situation where ${\mathcal X}$ is a smooth scheme and $R$ is the spectrum of a field.
\item (scalar multiplication) If $\lambda \in \Gamma(R, \mathcal{O}_R)$, there is an induced map:
\begin{equation*} \xymatrix@R=0pt{
\mathcal{O}_R + \epsilon \mathcal{O}_R \ar[r] & \mathcal{O}_R + \epsilon \mathcal{O}_R \\
f + \epsilon v \ar@{|->}[r] &  f + \epsilon \lambda v
} \end{equation*}
giving a map $R[\epsilon]\to R[\epsilon]$.   Applying $\mathcal X$, 
this induces a map $$T_{\mathcal X/S}(R) \rightarrow T_{\mathcal X/S}(R)$$ commuting with the projection to ${\mathcal X}(R)$.  This is the action of scalars in the vector space structure when $R$ is a point and ${\mathcal X}$ is smooth.
\item (vector space structure) The axioms of a vector space can be verified in similar ways, occasionally making use of ${\mathbb D} \mathop{\times}_{\mathbb{Z}} {\mathbb D} \mathop{\times}_{\mathbb{Z}} {\mathbb D} = \mathbb{Z}[\epsilon_1, \epsilon_2, \epsilon_3] / (\epsilon_1, \epsilon_2, \epsilon_3)^2$.  We leave these verifications to the reader.
\item (Lie bracket) The Lie bracket will not be used in the rest of the paper, and may be skipped. There is a map
\begin{equation*}
{\mathbb D} \mathop{\otimes}_{\mathbb{Z}} {\mathbb D} = \mathbb{Z}[\epsilon_1, \epsilon_2] / (\epsilon_1^2, \epsilon_2^2) \rightarrow \mathbb{Z}[\epsilon_1, \epsilon_2] / (\epsilon_1, \epsilon_2)^2 = {\mathbb D} \mathop{\times}_{\mathbb{Z}} {\mathbb D}
\end{equation*}
sending $\epsilon_1 \epsilon_2$ to $0$.  
There is also an isomorphism  
\begin{gather*}
(\mathbb D \mathop{\otimes}_{\mathbb{Z}} \mathbb D) \mathop{\times}_{\mathbb{Z}[\epsilon_1, \epsilon_2] / (\epsilon_1, \epsilon_2)^2} (\mathbb D \mathop{\otimes}_{\mathbb{Z}} \mathbb D) \cong (\mathbb D \mathop{\otimes}_{\mathbb{Z}} \mathbb D) \mathop{\times}_{\mathbb{Z}} \mathbb{Z}[\epsilon_1 \epsilon_2] / (\epsilon_1 \epsilon_2)^2 
\end{gather*}
sending $(a,b)$ on the left to $(a,(a \bmod{(\epsilon_1, \epsilon_2)}) + b-a)$ on the right (this choice of isomorphism will give rise to the choice of a sign for the Lie bracket).  This all leads to a commutative diagram, in which both the small square and the outer rectangle are cartesian:
\vskip1ex
\begin{equation}\label{E:LieBrackZZ}
\xymatrix{
\mathbb Z[\epsilon_1, \epsilon_2] & \mathbb Z[\epsilon_1][\epsilon_2] \ar[l] & \mathbb Z[\epsilon_1 \epsilon_2] \ar@/_20pt/[ll] \\
\mathbb Z[\epsilon_1][\epsilon_2] \ar[u] & \mathbb Z[\epsilon_1][\epsilon_2] \mathbin\times_{\mathbb Z[\epsilon_1, \epsilon_2]} \mathbb Z[\epsilon_1][\epsilon_2] \ar@{-}^-{\sim}[r] \ar[l] \ar[u] & \mathbb Z[\epsilon_1][\epsilon_2] \mathbin\times_{\mathbb Z} \mathbb Z[\epsilon_1 \epsilon_2] \ar[u]
} \end{equation}

Suppose $v, w \in T_{\mathcal X}({\mathcal X})$ are vector fields on ${\mathcal X}$.  We view them as maps (see Definition~\ref{D:RDN} and Remark~\ref{R:SDN} for notation):
\begin{gather*}
v : \mathcal X[\epsilon_1] \rightarrow \mathcal X \\
w : \mathcal X[\epsilon_2] \rightarrow \mathcal X .
\end{gather*}
We obtain two morphisms
\begin{gather*}
\operatorname{id}\times v : \mathcal X[\epsilon_1]\times_{\mathcal X}\mathcal X[\epsilon_2] \rightarrow \mathcal X[\epsilon_2] \\
w\times \operatorname{id} : \mathcal X[\epsilon_1]\times_{\mathcal X}\mathcal X[\epsilon_2] \rightarrow \mathcal X[\epsilon_1];
\end{gather*}
note that the fibered product is over the standard projections on both factors.  
Note that  $\mathcal X[\epsilon_1] \mathbin\times_{\mathcal X} \mathcal X[\epsilon_2] = \mathcal X[\epsilon_1][\epsilon_2] = \mathcal X \mathbin\times \operatorname{Spec} (\mathbb D \otimes \mathbb D)=\mathcal X\times \operatorname{Spec}\mathbb Z[\epsilon_1,\epsilon_2]$.

The two compositions $v  (\mathrm{id} \times w)$, $w  (v \times \mathrm{id}): \mathcal X[\epsilon_1] \mathop{\times}_{\mathcal X} {\mathcal X}[\epsilon_2] \rightarrow {\mathcal X}$ are retractions  (they agree with the identity when restricted to the canonical inclusion of $\mathcal X$ in $\mathcal X[\epsilon_1] \mathop\times_{\mathcal X} \mathcal X[\epsilon_2]$) that agree with one another when restricted to ${\mathcal X}[\epsilon_1, \epsilon_2] \subseteq {\mathcal X}[\epsilon_1] \mathop{\times}_{\mathcal X} {\mathcal X}[\epsilon_2]$.
In other words, they give us a commutative diagram, dual to~\eqref{E:LieBrackZZ}:
\begin{equation}\label{E:LieBrackXX} \vcenter{
\xymatrix{
\mathcal X[\epsilon_1, \epsilon_2] \ar[r] \ar[d] & \mathcal X[\epsilon_1][\epsilon_2] \ar[d] \ar@/^25pt/[ddr] \\
\mathcal X[\epsilon_1][\epsilon_2] \ar[r] \ar@/_15pt/[drr] & \mathcal X[\epsilon_1][\epsilon_2] \amalg_{\mathcal X[\epsilon_1, \epsilon_2]} \mathcal X[\epsilon_1][\epsilon_2]  \ar@{-->}[dr]  \\
& & \mathcal X
}} \end{equation}
All of the morphisms restrict to the identity on $\mathcal X$.  Now we restrict under the inclusion $\mathcal X[\epsilon_1 \epsilon_2] \subset \mathcal X[\epsilon_1][ \epsilon_2] \mathbin\times_{\mathcal X[\epsilon_1, \epsilon_2]} \mathcal X[\epsilon_1][\epsilon_2]$ to get
\begin{equation*}
[v,w] : \mathcal X[\epsilon_1 \epsilon_2] \subset \mathcal X[\epsilon_1][\epsilon_2] \mathbin\amalg_{\mathcal X} \mathcal X[\epsilon_1 \epsilon_2] \xrightarrow{\sim} \mathcal X[\epsilon_1][ \epsilon_2] \mathbin\amalg_{\mathcal X[\epsilon_1, \epsilon_2]} \mathcal X[\epsilon_1][\epsilon_2] \rightarrow \mathcal X
\end{equation*}
The point is that the homogeneity of $\mathcal X$ ensures the existence of the dashed arrow above, and the composition of the dashed arrow with the canonical inclusion of $\mathcal X[\epsilon_1 \epsilon_2]$.

For the reader's convenience, we include a verification that this does indeed compute the Lie bracket when $\mathcal X$ is representable by an affine scheme $\operatorname{Spec} A$.  In that case, $v$ and $w$ correspond to derivations $\delta_1$ and $\delta_2$ from $A$ to itself.  We consider these as ring homomorphisms:
\begin{equation*}
\mathrm{id} + \epsilon_i \delta_i : A \rightarrow A + \epsilon_i A
\end{equation*}
The map $v(\mathrm{id} \times w)$ is dual to the composition
\begin{equation*}
A \xrightarrow{\mathrm{id} + \epsilon_1 \delta_1} A + \epsilon_1 A \xrightarrow{\mathrm{id} \otimes (\mathrm{id} + \epsilon_2 \delta_2)} A + \epsilon_1 A + \epsilon_2 A + \epsilon_1 \epsilon_2 A
\end{equation*}
sending $f \in A$ to $f + \epsilon_1 \delta_1(f) + \epsilon_2 \delta_2(f) + \epsilon_1 \epsilon_2 \delta_2 \circ \delta_1(f)$.  To be clear, the second map is obtained from $\mathrm{id} + \epsilon_2 \delta_2$ by application of $A[\epsilon_1] \mathbin\otimes_A (-) = \mathbb Z[\epsilon_1] \mathbin\otimes_{\mathbb Z} (-) = (-)[\epsilon_1]$ and carries $f_0 + \epsilon_1 f_1$ to $(f_0 + \epsilon_2 \delta_2(f_0)) + \epsilon_1(f_1 + \epsilon_2 \delta_2(f_1))$.
The map $w(v \times \mathrm{id})$ sends $f$ to $f + \epsilon_1 \delta_1(f) + \epsilon_2 \delta_2(f) + \epsilon_1 \epsilon_2 \delta_1 \circ \delta_2(f)$.  Taking the difference of these recovers the derivation $\delta_2 \delta_1 - \delta_1 \delta_2$.
\end{enumerate}

Suppose $\xi$ is a $k$-point of ${\mathcal X}$. 
 The discussion above shows that $T_{\mathcal X}(\xi)$ has all the trappings of a $k$-vector space structure, except that $T_{\mathcal X}(\xi)$ is a groupoid that may not be equivalent to a set.  Nevertheless, it is useful to think of $T_{\mathcal X}(\xi)$ as a `$2$-vector space'.  In fact, given a scheme $R$ and a morphism $\xi:R\to \mathcal X$, we can extract two important invariants from this groupoid:
\begin{gather*}
T_{\mathcal X}^{-1}(\xi) := \operatorname{Aut}_{T_{\mathcal X}(\xi)}(0)\\
T_{\mathcal X}^0(\xi) := T_{\mathcal X}(\xi) / \mathrm{isom}
\end{gather*}
Here $0$, the zero section $0:R\to T_{\mathcal X}(\xi)$,  corresponds to an object of the groupoid  $T_{\mathcal X}(\xi)$, and $\operatorname{Aut}_{T_{\mathcal X}(\xi)}(0)$ is the automorphism group of that object.  Likewise, $T_{\mathcal X}(\xi) / \mathrm{isom}$ is the set of isomorphism classes of objects of the groupoid.
The following lemma is a formal consequence of the discussion above:

\begin{lem}
When $k$ is a field and $\xi:\operatorname{Spec} k \to \mathcal X$, the sets $T_{\mathcal X}^{-1}(\xi)$ and $T_{\mathcal X}^0(\xi)$ are $k$-vector spaces.
\end{lem}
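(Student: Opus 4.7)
The plan is to unwind the ``$2$-vector space'' structure on the groupoid $T_{\mathcal X}(\xi)$ constructed in items (ii)--(v) above, and show that it descends to an honest $k$-vector space structure on the set $T_{\mathcal X}^0(\xi)$ of isomorphism classes, and, after restricting to automorphisms of the zero object, to one on $T_{\mathcal X}^{-1}(\xi)$.

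First I would package the input: homogeneity of $\mathcal X$ together with the pushout diagrams involving $\mathbb D$, $\mathbb D\otimes_{\mathbb Z}\mathbb D$ and $\mathbb Z[\epsilon_1,\epsilon_2]/(\epsilon_1,\epsilon_2)^2$ gives functors
\[
+ : T_{\mathcal X}(\xi) \times_{\mathcal X(\operatorname{Spec} k)} T_{\mathcal X}(\xi) \longrightarrow T_{\mathcal X}(\xi), \qquad \lambda\cdot (-) : T_{\mathcal X}(\xi) \longrightarrow T_{\mathcal X}(\xi)
\]
for each $\lambda\in k$, together with a zero object $0$. Passing to $\pi_0$ (isomorphism classes) yields a binary operation $+$ and a scalar action on $T^0_{\mathcal X}(\xi)$ with a distinguished element $[0]$. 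The axioms of a $k$-vector space (associativity and commutativity of $+$, existence of additive inverses, $1\cdot v=v$, $(\lambda\mu)v=\lambda(\mu v)$, distributivity) each correspond to the commutativity of a diagram of affine schemes of the form $\operatorname{Spec}\mathbb Z[\epsilon_1,\dots,\epsilon_n]/(\epsilon_i\epsilon_j)$; in every case both sides of the axiom are computed as $\mathcal X$-points of the same pushout, and homogeneity (Definition~\ref{D:Homog}) supplies a canonical isomorphism between the two interpretations. This isomorphism is only canonical up to a $2$-isomorphism, but since we have already passed to $\pi_0$ these ambiguities disappear, so the axioms hold on the nose. Existence of an additive inverse is the image of the scalar $-1\in k$ applied to a class. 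This gives the vector space structure on $T_{\mathcal X}^0(\xi)$.

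For $T_{\mathcal X}^{-1}(\xi)=\operatorname{Aut}_{T_{\mathcal X}(\xi)}(0)$ there are a priori two operations: the composition of automorphisms, and the operation inherited from the addition functor $+$, which acts on automorphisms via $(\alpha,\beta)\mapsto \alpha+\beta$. Both have a two-sided identity (namely $\mathrm{id}_0$, since $0+0=0$ is the identity with respect to $+$), and the addition functor $+$ is a homomorphism for composition because it is a functor. An Eckmann--Hilton argument then shows that the two operations agree and are commutative, giving an abelian group structure on $T_{\mathcal X}^{-1}(\xi)$. Scalar multiplication $\lambda\cdot(-):T_{\mathcal X}(\xi)\to T_{\mathcal X}(\xi)$ restricts, via its action on automorphisms of $0$, to a $k$-action on $T_{\mathcal X}^{-1}(\xi)$, and the vector space axioms follow from the same diagrams as before, now one level down.

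The main obstacle is bookkeeping: the pushout diagrams only determine the addition and scalar multiplication functors up to canonical natural isomorphism, so a priori the associativity, commutativity, and distributivity squares commute only up to specified $2$-isomorphisms. After projecting to $\pi_0$, these $2$-isomorphisms evaporate, which is what makes the axioms hold strictly on $T^0_{\mathcal X}(\xi)$; for $T^{-1}_{\mathcal X}(\xi)$ the Eckmann--Hilton step is what turns the coherence $2$-isomorphisms into an honest abelian group law. Once these coherence issues are tracked, each axiom reduces to a trivial identification of $\mathbb Z$-algebras of the form $\mathbb Z[\epsilon_1,\dots,\epsilon_n]/(\epsilon_i\epsilon_j)$, and the verification is purely formal.
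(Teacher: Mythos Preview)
Your proposal is correct and is precisely the approach the paper has in mind: the paper does not give a proof at all, stating only that the lemma ``is a formal consequence of the discussion above'' (items (ii)--(v)), and you have supplied exactly those formal details---passing to $\pi_0$ so that the coherence $2$-isomorphisms collapse for $T^0_{\mathcal X}(\xi)$, and using Eckmann--Hilton to identify composition with $+$ on $T^{-1}_{\mathcal X}(\xi)$.
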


It will be important to have a relative variant of the tangent bundle:

\begin{dfn}[Relative tangent bundle]
Let $f : {\mathcal X} \rightarrow \mathcal Y$ be a morphism of CFGs over $\mathsf S$.  We define the \emph{relative tangent bundle} $T_f = T_{{\mathcal X}/\mathcal Y}$ to be the kernel of the morphism of stacks $T_{\mathcal X/S} \rightarrow T_{\mathcal Y/S}$.  In other words, the relative tangent bundle is the fiber product of stacks
\begin{equation*}
T_{{\mathcal X}/\mathcal Y} = T_{\mathcal X/S} \mathop{\times}_{T_{\mathcal Y/S}} \mathcal Y = T_{\mathcal X/S} \mathop{\times}_{f^{-1} T_{\mathcal Y/S}} {\mathcal X}
\end{equation*}
where the morphisms $\mathcal Y \rightarrow T_{\mathcal Y/S}$ and ${\mathcal X} \rightarrow f^{-1} T_{\mathcal Y/S}$ are the zero sections.   In slightly more concrete terms, a section of $T_{\mathcal X / \mathcal Y}$ is a section of $T_{\mathcal X/S}$, together with an isomorphism between its image in $T_{\mathcal Y}$ and the zero section.
\end{dfn}

\subsubsection{Relative homogeneity}

\begin{dfn}[Relative homogeneity]
Let $f : \mathcal X \rightarrow \mathcal Y$ be a morphism of CFGs over $\mathsf{S}$.  We say that $f$ is \emph{homogeneous}, or that $\mathcal X$ is \emph{homogeneous} over $\mathcal Y$, if for any scheme $S$ and any morphism $S \rightarrow \mathcal Y$, the CFG $\mathcal X_S=\mathcal X\times_{\mathcal Y}S$ is homogeneous.
\end{dfn}

Another way of formulating the definition is that for any cocartesian diagram \eqref{E:Homog} and any compatible objects (we leave it to the reader to formulate compatibility precisely)
\begin{gather*}
\overline{\eta}' \in \mathcal Y(R') \qquad \eta \in \mathcal X(R) \qquad \xi' \in \mathcal X(Q') \qquad \xi \in \mathcal X(Q)
\end{gather*}
there is a $\eta \in \mathcal X(R')$ inducing all of them, and this $\eta$ is unique up to unique isomorphism.

The following lemma is proved by a standard formal argument:
\begin{lem} \label{L:RelHomog}
\begin{enumerate}[(i)]
\item Let $f : \mathcal X \rightarrow \mathcal Y$ and $g : \mathcal Y \rightarrow \mathcal Z$ be morphisms of CFGs over $\mathsf{S}$.  If $g$ is homogeneous then $f$ is homogeneous if and only if $gf$ is.
\item The base change of a homogeneous morphism of CFGs is homogeneous.
\end{enumerate}
\end{lem}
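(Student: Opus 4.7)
The plan is to verify both parts by unwinding the definition of relative homogeneity into the concrete statement: given a pushout datum $Q \subseteq Q'$ (infinitesimal), $Q \to R$ (affine), with pushout $R'$ as in Theorem~\ref{T:Homog}, the morphism $f : \mathcal X \to \mathcal Y$ is homogeneous iff for every $\bar\eta \in \mathcal Y(R')$ and every compatible triple $(\eta \in \mathcal X(R),\ \xi' \in \mathcal X(Q'),\ \alpha : \eta|_Q \simeq \xi'|_Q)$ whose images in $\mathcal Y$ agree with the corresponding restrictions of $\bar\eta$, there is a unique (up to unique isomorphism) $\tilde\eta \in \mathcal X(R')$ inducing the data. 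This reformulation follows because, for any $R'$-point $\bar\eta$ of $\mathcal Y$, the groupoid of $R'$-points of $\mathcal X \mathop{\times}_{\mathcal Y} R'$ is exactly the groupoid of pairs $(\tilde\eta, f(\tilde\eta) \simeq \bar\eta)$, and similarly over $Q, Q', R$.

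Part (ii) is then immediate: if $f : \mathcal X \to \mathcal Y$ is homogeneous and $\mathcal Y' \to \mathcal Y$ is arbitrary, then for any scheme $S$ and any $S \to \mathcal Y'$, the associativity of the fiber product gives $(\mathcal X \mathbin\times_{\mathcal Y} \mathcal Y') \mathbin\times_{\mathcal Y'} S = \mathcal X \mathbin\times_{\mathcal Y} S$, which is homogeneous by hypothesis. No diagram chase is required.

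For part (i), assume $g : \mathcal Y \to \mathcal Z$ is homogeneous. For the forward direction, suppose $f$ is homogeneous, and let compatible descent data $(\eta, \xi', \alpha)$ in $\mathcal X$ be given over a fixed $\zeta \in \mathcal Z(R')$. First apply relative homogeneity of $g$ to $(f(\eta), f(\xi'), f(\alpha))$ lying over $\zeta$ to produce $\bar\eta \in \mathcal Y(R')$ with $g(\bar\eta) \simeq \zeta$ inducing the given data in $\mathcal Y$. Then apply relative homogeneity of $f$ to the original data $(\eta, \xi', \alpha)$ now viewed as lying over $\bar\eta$ to produce the desired $\tilde\eta \in \mathcal X(R')$ with $(gf)(\tilde\eta) \simeq \zeta$. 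Uniqueness up to unique isomorphism propagates through the same two-step process.

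For the reverse direction, assume $gf$ is homogeneous and let descent data $(\eta, \xi', \alpha)$ in $\mathcal X$ be given over a fixed $\bar\eta \in \mathcal Y(R')$. Applying the hypothesis on $gf$ with $\zeta := g(\bar\eta)$ yields $\tilde\eta \in \mathcal X(R')$ inducing the data, together with an identification $(gf)(\tilde\eta) \simeq \zeta$ in $\mathcal Z(R')$. To promote this to an identification $f(\tilde\eta) \simeq \bar\eta$ in $\mathcal Y(R')$, observe that both $f(\tilde\eta)$ and $\bar\eta$ are objects of $\mathcal Y(R')$ whose restrictions to $R$, $Q'$, $Q$ are canonically identified (through the given descent data) and whose images in $\mathcal Z(R')$ are canonically identified (with $\zeta$). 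The uniqueness clause of the relative homogeneity of $g$ then produces a unique isomorphism $f(\tilde\eta) \simeq \bar\eta$ compatible with everything in sight. Uniqueness of $\tilde\eta$ follows from uniqueness in the $gf$-homogeneity together with uniqueness in $g$-homogeneity, applied in the same manner.

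There is no substantive obstacle; the argument is the familiar two-out-of-three pattern for classes of morphisms cut out by lifting properties, and the only genuine work is bookkeeping the compatibility isomorphisms through the two stages. Relative homogeneity was essentially designed so that this formal manipulation succeeds.
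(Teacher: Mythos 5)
Your argument is correct, and it is exactly the ``standard formal argument'' that the paper invokes without writing out: reformulate relative homogeneity as the lifting statement for compatible data over an object of the target on the pushout $R'$, then do the two-step lift for composition and use full faithfulness (the uniqueness clause) of $g$-homogeneity to promote the identification $(gf)(\tilde\eta)\simeq g(\bar\eta)$ to $f(\tilde\eta)\simeq\bar\eta$ in the cancellation direction, with base change being immediate from $(\mathcal X\times_{\mathcal Y}\mathcal Y')\times_{\mathcal Y'}S\simeq\mathcal X\times_{\mathcal Y}S$. The only glossed point is the bookkeeping that the unique isomorphism furnished by $gf$-homogeneity in the uniqueness step actually lies over $\bar\eta$ (which again follows from faithfulness in $g$-homogeneity), so the proposal stands as a complete proof of the omitted lemma.
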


\subsection{Deformation theory}
\label{S:def}

When we speak of the deformation theory of a stack $\mathcal X$ we mean extending morphisms $R\to \mathcal X$ to morphisms $R'\to \mathcal X$, where $R'$ is an infinitesimal extension of $R$. 
The definition of the tangent space of a stack in \S\ref{sec:tangent-space} connects the tangent space of a stack ${\mathcal X}$ to the deformation theory of the objects it parameterizes.  Indeed, if $\xi \in {\mathcal X}(k)$ then $T_{\mathcal X}(\xi)$ over $\xi$ is precisely the groupoid of extensions of $\xi$ to $\xi' \in {\mathcal X}\bigl(k[\epsilon] / (\epsilon^2)\bigr)$. 

 We now work out several examples that will be used in the construction of the moduli of Higgs bundles.  Our main focus is on using the deformation theory to show that various stacks are homogeneous.

\subsubsection{Deformations of morphisms of vector bundles}
Here we discuss the deformation theory for morphisms of vector bundles. From our perspective, the main point is 
Lemma \ref{L:HomEFhmg} (see also Remark \ref{R:HomEFhmg}) on homogeneity.

Let $\pi : X \rightarrow S$ be a flat family of schemes over $S$ and let $E$ and $F$ be two vector bundles on $X$.  Let $\mathscr H = \mathscr Hom_{X/S}(E, F)$ 
be the $S$-sheaf of morphisms from $E$ to $F$ (see \S \ref{S:EX/S(F)}).

 We compute the tangent space $T_{\mathscr H/S}(\xi)$  of ${\mathscr H}$ over $S$ at an $R$-point $\xi : E_R \rightarrow F_R$, where $R\to S$ is an $S$-scheme, and $E_R$ and $F_R$ are the pullbacks of $E$ and $F$ to $R$.  An element of $T_{{\mathscr H}/S}(\xi)$ is an extension of the $S$-map $R \rightarrow {\mathscr H}$ to $R[\epsilon]$ such that the composition $R[\epsilon] \rightarrow {\mathscr H} \rightarrow S$ is the same as the composition $R[\epsilon] \rightarrow R \rightarrow S$ of the canonical retraction $R[\epsilon] \rightarrow R$ and the fixed map $R \rightarrow S$.  In other words, it is a morphism of vector bundles $E_{R[\epsilon]} \rightarrow F_{R[\epsilon]}$ that reduces to $u$ modulo $\epsilon$.

The vector bundle $E_{R[\epsilon]}$ is identified with $E_R+ \epsilon E_R$, and similarly for $F_{R[\epsilon]}$.  
Notice that $\xi(e_0 + \epsilon e_1) = \xi(e_0) + \epsilon \xi(e_1)$ gives one such morphism  $E_{R[\epsilon]} \rightarrow F_{R[\epsilon]}$ that reduces to $\xi$ modulo $\epsilon$.  If $\xi'$ is any other then the difference $\xi' - \xi$ is a map $E_R \rightarrow \epsilon F_R \cong F_R$,  so we get an identification $T_{{\mathscr H}/S}(\xi) \simeq {\mathscr H}(R)$.  As this identification is natural, we get
\begin{equation*}
T_{{\mathscr H}/S} = {\mathscr H} \mathop{\times}_S {\mathscr H} .
\end{equation*}
Of course, it is no surprise that the tangent space of ${\mathscr H}$ is ${\mathscr H}$ itself, since ${\mathscr H}$ already has a linear structure.  We have 
\begin{align*}
T^{-1}_{{\mathscr H}/S}(\xi)& = 0 \\
T^{0}_{{\mathscr H}/S}(\xi) &= \operatorname{Hom}(E_R, F_R)
\end{align*}

This gives the tangent space to ${\mathscr H}$ over $S$, but does not explain the higher order infinitesimal structure.  For that we must pose a more general lifting problem:  given $\xi \in {\mathscr H}(R)$ and an arbitrary square-zero extension $R'$ of $R$ as an $S$-scheme, can $\xi$ be lifted to $\xi' \in {\mathscr H}(R')$?  
To answer this question, we consider it locally in ${X}_R$, that is, we cover $X$ by open subsets $U$ with corresponding extensions $U'$ over $R'$ and ask for extensions of $E_R \big|_U \rightarrow F_R \big|_U$ to $E_{R'} \big|_{U'} \rightarrow F_{R'} \big|_{U'}$.  We make two observations:
\begin{enumerate}[(i)]

\item there is a cover of ${X}_{R'}$ by open sets $U'$ such that $\xi \big|_{X_R\cap U'}$ extends to a morphism $E_{U'} \rightarrow F_{U'}$.

\item if $\xi'$ and $\xi''$ are any two extensions, then $\xi' - \xi''$ may be viewed as a morphism $E_R \rightarrow F_R \otimes \pi^\ast J$ 
where $J$ is the ideal of $R$ in $R'$ and, by abuse of notation,  $\pi:X_R\to R$ is the morphism obtained from $\pi:X\to S$ by pullback.

\end{enumerate}
These observations combine to imply that there is a $\underline{\operatorname{Hom}}(E_R,F_R \otimes \pi^\ast J)$-\emph{torsor}  
$P$ on ${ X}$ (in the Zariski topology) whose sections are in bijection with the lifts of $\xi$ to $H(R')$ (those who would rather avoid the language of torsors may obtain the following lemma by a \v{C}ech cohomology calculation).  This yields a deformation-obstruction theory:

\begin{lem}[{cf.\ \cite[Thm.~8.5.3~(a)]{FGAe}}] \label{L:Sdef}
Let ${X}$ be an $S$-scheme, let $E$ and $F$ be vector bundles on ${X}$, and let $R \subseteq R'$ be a square-zero extension of $S$-schemes with ideal $J$.  Associated to any homomorphism $\xi : E_R \rightarrow F_R$ there is an obstruction $\omega \in H^1({ X}_R, \underline{\operatorname{Hom}}(E_R,F_R \otimes \pi^\ast J))$ whose vanishing is equivalent to the existence of an extension of $\xi$ to some $\xi' : E_{R'} \rightarrow F_{R'}$.  If there is at least one extension then the set of all extensions possesses a simply transitive action of $H^0({X}_R, \underline{\operatorname{Hom}}(E_R,F_R \otimes \pi^\ast J)) = \operatorname{Hom}(E_R,F_R \otimes \pi^\ast J)$. 
\end{lem}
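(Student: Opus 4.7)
The plan is to formalize the Čech/torsor argument sketched in the two preceding bullet points (i) and (ii), which essentially constitute the proof. First I would verify observation (i) by working Zariski-locally on $X_{R'}$: on an affine open $U'=\operatorname{Spec} A'$ where both $E_{R'}|_{U'}$ and $F_{R'}|_{U'}$ are trivial of ranks $r$ and $s$, the morphism $\xi|_{U'\cap X_R}$ is given by an $s\times r$ matrix with entries in $A=A'/JA'$, and any lift of those entries to $A'$ defines a morphism $E_{R'}|_{U'} \to F_{R'}|_{U'}$ reducing to $\xi|_{U'\cap X_R}$. Since such $U'$ cover $X_{R'}$, local lifts always exist.

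Next I would pin down observation (ii). Because $R \subseteq R'$ is square-zero with ideal $J$, tensoring the exact sequence $0\to J\to \mathcal O_{R'}\to \mathcal O_R\to 0$ with the flat pullbacks of $E$ and $F$ identifies the kernels $\ker(E_{R'}\to E_R)$ and $\ker(F_{R'}\to F_R)$ with $E_R\otimes \pi^\ast J$ and $F_R\otimes \pi^\ast J$ respectively. If $\xi',\xi''\in\operatorname{Hom}(E_{R'},F_{R'})$ both reduce to $\xi$, then $\xi'-\xi''$ lands in $F_R\otimes\pi^\ast J$, and since $J^2=0$ it kills $J\cdot E_{R'}$, so it descends to an element of $\operatorname{Hom}(E_R,F_R\otimes\pi^\ast J)$. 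Conversely, adding such an element to any lift produces another lift. This shows that the presheaf $P$ on $X_R$ (equivalently on the equal topological space $X_{R'}$) sending $U\mapsto\{\text{lifts of }\xi|_{U\cap X_R}\text{ over }U'\}$ is a torsor under $\underline{\operatorname{Hom}}(E_R,F_R\otimes \pi^\ast J)$; the sheaf axiom for $P$ follows because a morphism of vector bundles glues from compatible local morphisms.

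Finally, I would invoke the standard classification of (Zariski) torsors under a quasicoherent sheaf $\mathscr G$ by $H^1(X_R,\mathscr G)$: define $\omega\in H^1\bigl(X_R,\underline{\operatorname{Hom}}(E_R,F_R\otimes \pi^\ast J)\bigr)$ to be the class of $P$. Then $\omega=0$ if and only if $P$ admits a global section, i.e.\ a global lift $\xi'\in\operatorname{Hom}(E_{R'},F_{R'})$ of $\xi$. When such a $\xi'$ exists, the map $\eta\mapsto \xi'+\eta$ identifies $H^0\bigl(X_R,\underline{\operatorname{Hom}}(E_R,F_R\otimes \pi^\ast J)\bigr)=\operatorname{Hom}(E_R,F_R\otimes\pi^\ast J)$ with the set of all lifts, yielding the asserted simply transitive action.

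The main obstacle is verifying that the locally defined lifts actually glue into a Zariski sheaf of torsors rather than merely a presheaf; once one has checked that the two lifts on an overlap differ by a \emph{unique} element of $\underline{\operatorname{Hom}}(E_R,F_R\otimes\pi^\ast J)$ (using that $F_R$ is torsion-free along $J$) the cocycle produced from a cover by lifts is well-defined and represents $\omega$. Alternatively, as the paper suggests parenthetically, one may bypass the torsor formalism and run a direct \v Cech calculation with a trivializing cover: the same two observations make the \v Cech $1$-cocycle $\{\xi'_j-\xi'_i\}$ well-defined with values in $\underline{\operatorname{Hom}}(E_R,F_R\otimes\pi^\ast J)$, its cohomology class is the obstruction, and a global lift exists exactly when this class vanishes.
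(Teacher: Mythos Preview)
Your proposal is correct and follows essentially the same approach as the paper: the paper's proof consists precisely of the two observations (i) and (ii) you elaborate, followed by the remark that they combine to give a $\underline{\operatorname{Hom}}(E_R,F_R \otimes \pi^\ast J)$-torsor of local lifts (or equivalently a \v Cech $1$-cocycle), which yields the stated deformation-obstruction theory. If anything, you have supplied more detail than the paper itself, which leaves the verification of (i), (ii), and the torsor classification implicit.
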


Beyond its finite dimensionality, the particular deformation-obstruction theory is not actually necessary for the proof of algebraicity.  What is important is homogeneity:

\begin{lem}\label{L:HomEFhmg}
The functor $\mathscr Hom_{X/S}(E,F)$ is homogeneous.
\end{lem}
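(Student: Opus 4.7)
The plan is to derive homogeneity of $\mathscr{H} := \mathscr{Hom}_{X/S}(E,F)$ from the Schlessinger–Milnor–Ferrand theorem (Theorem~\ref{T:flat-homog}) applied to $F$. Fix a pushout $R' = R \amalg_Q Q'$ of $S$-schemes, with $Q \hookrightarrow Q'$ an infinitesimal extension and $f : Q \to R$ affine. Since $\mathscr{H}$ is valued in sets, homogeneity amounts to producing a bijection
\[
\operatorname{Hom}_{\mathcal O_{X_{R'}}}(E_{R'}, F_{R'}) \xrightarrow{\ \sim\ } \operatorname{Hom}_{\mathcal O_{X_R}}(E_R, F_R) \mathop{\times}_{\operatorname{Hom}_{\mathcal O_{X_Q}}(E_Q, F_Q)} \operatorname{Hom}_{\mathcal O_{X_{Q'}}}(E_{Q'}, F_{Q'}).
\]

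First I would transport the pushout along the base change $\pi : X \to S$. Because $\pi$ is flat, pulling back preserves Milnor squares of rings (locally on $X$ this is the elementary identity $D \otimes_A (A' \times_B B') = (D \otimes_A A') \times_{D \otimes_A B} (D \otimes_A B')$ valid for $D$ an $A$-flat algebra), so the formula $\mathcal O_{R'} = \mathcal O_R \times_{f_\ast \mathcal O_Q} f_\ast \mathcal O_{Q'}$ of Theorem~\ref{T:Homog} pulls back to $\mathcal O_{X_{R'}} = \mathcal O_{X_R} \times_{(f_X)_\ast \mathcal O_{X_Q}} (f_X)_\ast \mathcal O_{X_{Q'}}$, exhibiting $X_{R'}$ as the pushout $X_R \amalg_{X_Q} X_{Q'}$ of schemes. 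Since $F_{R'}$ is locally free and hence $\mathcal O_{X_{R'}}$-flat, Theorem~\ref{T:flat-homog} applied to this pushout identifies $F_{R'}$ as a sheaf of abelian groups with $F_R \times_{F_Q} F_{Q'}$, with $\mathcal O_{X_{R'}}$-action induced componentwise. The universal property of the fiber product, combined with the pullback–pushforward adjunction $\operatorname{Hom}_{\mathcal O_{X_{R'}}}(E_{R'}, \alpha_\ast G) = \operatorname{Hom}_{\mathcal O_Y}(E_Y, G)$ for the closed embeddings $\alpha : Y \hookrightarrow X_{R'}$ with $Y \in \{X_R, X_Q, X_{Q'}\}$, then yields the required bijection.

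The only step with real content is the first: flatness of $\pi$ is essential, since without it the tensor product would fail to preserve the Milnor square and the pushout structure could be lost under base change. Once this geometric input is in hand, the remainder is a formal manipulation using Theorem~\ref{T:flat-homog} and the universal property of fiber products, with no cohomological or finiteness hypotheses needed—unsurprisingly, since we are comparing $\operatorname{Hom}$ sets rather than higher extension groups. The same reasoning will in fact show relative homogeneity of $\mathscr{Hom}_{X/S}(E,F) \to S$, which is the form in which the lemma is used in the sequel.
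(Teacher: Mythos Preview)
Your proof is correct and follows essentially the same approach as the paper: both arguments rest on identifying $F_{R'}$ with the fiber product $f'_\ast F_{Q'} \times_{f_\ast F_Q} F_R$ via Theorem~\ref{T:flat-homog} (the paper also gives a direct 5-lemma argument for this), and then using the pullback--pushforward adjunction to translate this into the required bijection of $\operatorname{Hom}$ sets. Your presentation is slightly more conceptual---you explicitly transport the pushout along $\pi$ using flatness, whereas the paper works directly with the sheaves on $X_{R'}$---but the content is the same. One minor imprecision: not all of the maps $\alpha : Y \hookrightarrow X_{R'}$ you invoke are closed embeddings (the map $X_{Q'} \to X_{R'}$ is only affine, being the base change of $f$), but the adjunction $\operatorname{Hom}_{\mathcal O_{X_{R'}}}(E_{R'}, \alpha_\ast G) = \operatorname{Hom}_{\mathcal O_Y}(E_Y, G)$ holds for affine morphisms, so the argument goes through unchanged.
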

\begin{proof}
	If $f : Q \rightarrow R$ is an affine morphism of $S$-schemes and $Q \subseteq Q'$ is a square-zero extension of $S$-schemes, let $R'$ be the pushout (as in Theorem \ref{T:Homog}), with $f' : Q' \rightarrow R'$ denoting the tautological morphism.  (Since $Q$ and $Q'$ have the same underlying topological space, we do not bother to introduce notation for the inclusion $Q \subseteq Q'$.) 
Since $Q$ and $Q'$, as well as $R$ and $R'$ have the same underlying topological space, and we have a natural morphism  $F_{Q'}\to F_Q$, we can push forward to obtain a morphism $f'_*F_{Q'}\to f_*F_Q$.  Using adjunction we have have that the identity  $f^*F_R=F_Q$ induces a morphism $F_R\to f_*F_Q$.  Thus we obtain a fibered product $f'_*F_{Q'}\times_{f_*F_Q}F_R$.
	
	 Assume we have $v \in H(R)$ and $u' \in H(Q')$, both extending $u = f^\ast v$.  Then $u'$ gives us
\begin{equation*}
	{f'}^\ast E_{R'} = E_{Q'} \xrightarrow{u'} F_{Q'} = {f'}^\ast F_{R'},
\end{equation*}
whence $E_{R'} \rightarrow f'_\ast {f'}^\ast F_{R'} = f'_\ast F_{Q'}$ by adjunction.  We also have a map $E_{R'} \rightarrow E_R \xrightarrow{v} F_R$.  These induce the same map $E_{R'} \rightarrow f_\ast F_Q$  so we get a map
\begin{equation*}
	\eta' : E_{R'} \rightarrow f'_\ast F_{Q'} \mathop{\times}_{f_\ast F_Q} F_R
\end{equation*}
by the universal property of the fiber product.  

On the other hand, the canonical map $\gamma : F_{R'} \rightarrow f'_\ast F_{Q'} \mathop{\times}_{f_\ast F_Q} F_R$ is an isomorphism, by Theorem~\ref{T:flat-homog}, or directly:   $\gamma$ is an isomorphism modulo $J$, since it reduces to the identity $F_{R} = f_\ast F_Q \mathop{\times}_{f_\ast F_Q} F_R$, and the kernel of reduction is the fiber product of the kernels, namely $(f_\ast F_Q \otimes \pi^\ast J) \mathop{\times}_{0} 0 = F_R \otimes \pi^\ast J$.  Therefore $\gamma$ is an isomorphism by the $5$-lemma.  To conclude we can view $\eta'$ as a map $E_{R'} \rightarrow F_{R'}$ by composition with $\gamma^{-1}$.
\end{proof}

\begin{rem}\label{R:HomEFhmg}
	As was pointed out above (see Proposition \ref{P:LiebP2.3}), under mild hypotheses,  a result of Lieblich implies that $\mathscr Hom_{X/S}(E,F)$ is an algebraic space over $S$, locally of finite type.  In particular, this  also shows  that $\mathscr Hom_{X/S}(E,F)$ is homogeneous.  However, Lieblich's proof relies on the homogeneity, so this reasoning is actually circular.
\end{rem}

\subsubsection{Deformations of vector bundles} \label{S:def-vect}
Here we discuss the deformation theory for vector bundles; see also \cite[Prop.~6.5.1]{FGAe}.  The main points are Corollary~\ref{C:TFib} and Lemma \ref{L:Fib-hmg}
  (see also Remark \ref{R:DefEhmg}).

Let $\pi : {X} \rightarrow S$ be a family of schemes over $S$.  Suppose that $R$ is an $S$-scheme and $E \in \mathsf{Fib}_{{X}/S}(R)$ is a vector bundle over ${X}_R$.   Let $R'$ be a square-zero $S$-scheme extension with ideal $J$.  We ask  whether $E$ can be extended to $E' \in \mathsf{Fib}_{X/S}(R')$, and if so, in how many ways.  Again we make several observations:
\begin{enumerate}[(i)]
\item there is a cover of $X_{R'}$ by open subsets $U'$ such that $E \big|_{X_R \cap U'}$ can be extended in at least one way to a vector bundle on $U'$;
\item if $E'$ and $E''$ are two extensions of $E \big|_{X_R \cap U'}$ to $U'$ then there is a cover of $U'$ by open subsets $V'$ such that $E' \big|_{V'} \simeq E'' \big|_{V'}$ as extensions of $E \big|_{X_R \cap V'}$;
\item if $u, v : E' \big|_{V'} \rightarrow E'' \big|_{V'}$ are two isomorphisms of extensions of $E \big|_{X_R \cap V'}$ then $u$ and $v$ differ by a homomorphism $E \big|_{X_R \cap V'} \rightarrow  E \big|_{X_R \cap V'} \otimes \pi^\ast J$. 
\end{enumerate}
Choosing a suitable cover, we therefore obtain a \v{C}ech $2$-cocycle for the sheaf of groups $\underline{\operatorname{Hom}}(E, E\otimes \pi^\ast J)$.  This is a coboundary if and only if a deformation exists.  A more careful analysis shows that the isomorphism classes of all deformations then correspond to $1$-cocycles modulo coboundaries.  Here is the statement in its usual form:

\begin{lem}[{cf.\ \cite[Thm.~8.5.3~(b)]{FGAe}}] \label{L:VBdef}
Fix a flat family of schemes $\pi : X \rightarrow S$ and a square-zero $S$-extension $R \subseteq R'$ with ideal $J$.  A vector bundle $E$ 
 on $X_R$ induces an obstruction $\omega \in H^2\bigl(X_R, \underline{\operatorname{Hom}}(E, E \otimes \pi^* J)\bigr)$ whose vanishing is equivalent to the existence of an extension of $E$ to $X_{R'}$.  If $\omega = 0$, the set of isomorphism classes of extensions is a principal homogeneous set under a natural action of $H^1\bigl(X_R, \underline{\operatorname{Hom}}(E, E \otimes \pi^* J)\bigr)$.  The automorphisms (as an extension) of any given extension are canonically $H^0\bigl(X_R, \underline{\operatorname{Hom}}(E, E \otimes \pi^* J)\bigr)$.
\end{lem}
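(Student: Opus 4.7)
The plan is to establish the cohomological classification by a Čech-theoretic analysis, using Lemma~\ref{L:Sdef} as the local input and the principle that, because $J^2=0$, the kernel of reduction from automorphisms of an extension $E'$ to automorphisms of $E$ is canonically identified with $\underline{\operatorname{Hom}}(E, E \otimes \pi^\ast J)$ via $u \mapsto u - \operatorname{id}$. This identification turns the \emph{a priori} non-abelian obstruction problem for $\operatorname{GL}_n$-torsors into an abelian one.

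First I would pick an open affine cover $\{U_i\}$ of $X_R$ (equivalently of $X_{R'}$, since they share underlying space) so fine that $E|_{U_i}$ is free and hence extends to some vector bundle $E'_i$ on $U'_i \subseteq X_{R'}$. Applying Lemma~\ref{L:Sdef} to $\operatorname{id}_{E|_{U_{ij}}}$ shows that on each double overlap $U_{ij} = U_i \cap U_j$ there is an obstruction in $H^1(U_{ij}, \underline{\operatorname{Hom}}(E, E \otimes \pi^\ast J))$ to extending $\operatorname{id}_E$ to an isomorphism $E'_i|_{U'_{ij}} \xrightarrow{\sim} E'_j|_{U'_{ij}}$; refining so that the $U_{ij}$ are themselves affine kills these $H^1$'s, yielding isomorphisms $\phi_{ij}$ extending the identity. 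On triple overlaps the composition $\phi_{ik}^{-1} \phi_{jk} \phi_{ij}$ is an automorphism of $E'_i|_{U'_{ijk}}$ reducing to the identity on $E|_{U_{ijk}}$, hence is of the form $\operatorname{id} + \alpha_{ijk}$ with a unique
\begin{equation*}
\alpha_{ijk} \in \underline{\operatorname{Hom}}(E, E \otimes \pi^\ast J)(U_{ijk}).
\end{equation*}
A direct verification (using $J^2=0$) gives the cocycle condition on quadruple overlaps, and a standard check shows that the class $\omega \in H^2(X_R, \underline{\operatorname{Hom}}(E, E \otimes \pi^\ast J))$ is independent of the choice of local extensions $E'_i$ and trivializing isomorphisms $\phi_{ij}$.

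Next I would show $\omega = 0$ is equivalent to the existence of a global extension. If $\omega$ is a coboundary $\alpha_{ijk} = \beta_{jk} - \beta_{ik} + \beta_{ij}$, then replacing $\phi_{ij}$ by $(\operatorname{id} - \beta_{ij})\phi_{ij}$ produces isomorphisms satisfying the cocycle condition, whence the $(E'_i, \phi_{ij})$ glue (by descent for vector bundles, Theorem~\ref{T:DQC}) to an extension $E'$ on $X_{R'}$. Conversely, if an extension $E'$ exists, one can take $E'_i = E'|_{U'_i}$ with $\phi_{ij}$ the identity, so $\omega = 0$.

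For the classification, suppose $E'$ exists. Any other extension $E''$ is locally isomorphic to $E'$ (the local obstruction to extending $\operatorname{id}_E$ vanishes on small enough opens), so we may pick local isomorphisms $\psi_i : E'|_{U'_i} \to E''|_{U'_i}$ lifting $\operatorname{id}_E$; the differences $\psi_j^{-1}\psi_i - \operatorname{id}$ yield a 1-cocycle $\gamma_{ij} \in \underline{\operatorname{Hom}}(E, E \otimes \pi^\ast J)$, well-defined modulo coboundaries coming from different choices of $\psi_i$, and equal to zero precisely when $E' \simeq E''$ as extensions; this gives the simply transitive $H^1$-action. Finally, an automorphism of a fixed extension $E'$ restricting to $\operatorname{id}_E$ is globally of the form $\operatorname{id} + \delta$ with $\delta \in H^0(X_R, \underline{\operatorname{Hom}}(E, E \otimes \pi^\ast J))$, yielding the automorphism statement. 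The main obstacle is bookkeeping: verifying the cocycle and coboundary conditions, and checking that all constructions are canonical and independent of the auxiliary cover. The conceptual heart is the single identification $\ker(\operatorname{Aut}(E') \to \operatorname{Aut}(E)) = \underline{\operatorname{Hom}}(E, E \otimes \pi^\ast J)$, which makes the obstruction theory honestly abelian.
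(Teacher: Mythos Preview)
Your proposal is correct and follows essentially the same approach as the paper: the paper records the three observations (local existence of extensions, local isomorphism of any two extensions, and the identification of automorphisms of an extension with sections of $\underline{\operatorname{Hom}}(E, E \otimes \pi^\ast J)$) and then says a \v{C}ech calculation produces the $2$-cocycle obstruction and the $H^1$-torsor structure, which is exactly what you have written out in detail. The paper also notes that these observations can be packaged as the statement that extensions form a gerbe banded by $\underline{\operatorname{Hom}}(E, E \otimes \pi^\ast J)$, invoking Giraud's classification; your explicit \v{C}ech argument is the hands-on version of this.
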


In particular, this lemma gives the tangent space of $\mathsf{Fib}_{X/S}$:

\begin{cor}\label{C:TFib}
	Suppose that $R$ is an $S$-scheme and $E$ is a vector bundle over $X_R$.  We have $T_{\mathsf{Fib}_{X/S}}(E) = 
	\mathrm{B} \underline{\operatorname{Hom}}(E, E)(X_R)$ 
where $E$ is the universal vector bundle on $\mathsf{Fib}_{X/S}$ 
and the prefix $\mathrm{B}$ denotes the classifying stack (Section~\ref{S:TorsG}). 
 In particular,
\begin{gather*}
T^{-1}_{\mathsf{Fib}_{X/S}}(E) = \underline{\operatorname{Hom}}(E,E) \\
T^0_{\mathsf{Fib}_{X/S}}(E) = \underline{\operatorname{Ext}}^1(E,E) .
\end{gather*}
\end{cor}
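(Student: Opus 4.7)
The plan is to apply Lemma~\ref{L:VBdef} to the trivial square-zero extension $R \subseteq R[\epsilon]$, whose ideal $J = \epsilon\mathcal O_R$ is canonically isomorphic to $\mathcal O_R$ via multiplication by $\epsilon$. Pulled back along $\pi : X_R \to R$, this gives $\underline{\operatorname{Hom}}(E, E \otimes \pi^* J) \cong \underline{\operatorname{Hom}}(E,E)$, so the cohomology groups governing deformations become $\operatorname{Ext}^i_{\mathcal O_{X_R}}(E,E)$.

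First I would observe that the trivial deformation $E_0 := E \otimes_R R[\epsilon]$ always exists, so the obstruction class supplied by Lemma~\ref{L:VBdef} vanishes automatically. The remaining clauses of that lemma then assert that isomorphism classes of deformations form a torsor under $\operatorname{Ext}^1(E,E)$, and that the automorphism group of any deformation, viewed as an extension of $E$, is $\operatorname{Hom}(E,E)$. Using $E_0$ as a basepoint trivialises the torsor, which immediately yields the formulas $T^0_{\mathsf{Fib}_{X/S}}(E) = \underline{\operatorname{Ext}}^1(E,E)$ and $T^{-1}_{\mathsf{Fib}_{X/S}}(E) = \underline{\operatorname{Hom}}(E,E)$.

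To upgrade this to the groupoid statement $T_{\mathsf{Fib}_{X/S}}(E) \simeq \mathrm{B}\underline{\operatorname{Hom}}(E,E)(X_R)$, I would send each deformation $E'$ of $E$ to the sheaf $P(E')$ on $X_R$ whose sections over an open $U$ are the isomorphisms $E_0|_U \to E'|_U$ over $X_{R[\epsilon]}$ reducing to the identity on $E|_U$ modulo $\epsilon$. Locally any such isomorphism has the form $1 + \epsilon\phi$ for $\phi$ a section of $\underline{\operatorname{Hom}}(E,E)$, and the correspondence $\phi \leftrightarrow 1 + \epsilon\phi$ identifies the sheaf of automorphisms of $E_0$ reducing to the identity on $E$ with $\underline{\operatorname{Hom}}(E,E)$; this makes $P(E')$ into an $\underline{\operatorname{Hom}}(E,E)$-torsor on $X_R$. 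The local observations (i)--(iii) preceding Lemma~\ref{L:VBdef} translate exactly into the statement that $E' \mapsto P(E')$ is essentially surjective, full, and faithful from the groupoid of deformations onto the groupoid of $\underline{\operatorname{Hom}}(E,E)$-torsors on $X_R$, which by definition is $\mathrm{B}\underline{\operatorname{Hom}}(E,E)(X_R)$.

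The main obstacle, though essentially formal, is tracking groupoid compatibilities: one must check that the torsor action on $P(E')$ by pre-composition with deformation automorphisms of $E_0$ coincides with the $\operatorname{Ext}^1$-action on deformations produced in the proof of Lemma~\ref{L:VBdef}, and that the standard identifications $\pi_0\, \mathrm{B}G(X_R) = H^1(X_R, G)$ and $\pi_1\, \mathrm{B}G(X_R) = H^0(X_R, G)$ for $G = \underline{\operatorname{Hom}}(E,E)$ reproduce the formulas for $T^0$ and $T^{-1}$ consistently with the first paragraph.
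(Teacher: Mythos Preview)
Your proposal is correct and follows essentially the same approach as the paper. The paper's proof is two sentences: it appeals to the \v{C}ech discussion preceding Lemma~\ref{L:VBdef} to identify deformations with $1$-cocycles modulo coboundaries, and then notes that these also classify torsors. Your construction of the functor $E' \mapsto P(E') = \underline{\operatorname{Isom}}_{\mathrm{ext}}(E_0, E')$ is the intrinsic version of the same correspondence, and your use of observations (i)--(iii) to verify that $P(E')$ is indeed a torsor and that the assignment is an equivalence is exactly what the cocycle argument encodes; if anything, you are slightly more explicit than the paper about the groupoid-level statement rather than just the bijection on isomorphism classes.
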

\begin{proof}
	We have just seen that sections of $T_{\mathsf{Fib}_{X/S}}$ correspond to $1$-cocycles for $\underline{\operatorname{Hom}}(E,E)$ modulo coboundaries.  But $1$-cocycles modulo coboundaries also classify torsors.
\end{proof}

The \v{C}ech calculations can be abstracted into the observation that properties (i) -- (iii) above imply there is a gerbe $\mathscr{G}$ over $X_R$  (see e.g., \cite[(3.15), p.22]{LMB} for the definition of a gerbe) whose sections correspond to extensions of $E$ to $E' \in \mathsf{Fib}_{X/R}(R')$, and that this gerbe is banded by the sheaf of abelian groups $\underline{\operatorname{Hom}}(E_R, E_R \otimes \pi^\ast J)$.  Giraud classifies banded gerbes cohomologically:
\begin{teo}[{\cite[Thm.~IV.3.4.2]{Giraud}}]
Let $\mathscr{G}$ be a gerbe on $X$, banded by an abelian group $A$.  There is an obstruction $\omega \in H^2(X, A)$ to the existence of a global section of $\mathscr{G}$.  Should this obstruction vanish, global sections up to isomorphism form a principal homogeneous set under the action of $H^1(X,A)$.  Automorphisms of any given section are in canonical bijection with $H^0(X,A)$.
\end{teo}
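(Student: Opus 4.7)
The plan is to extract cohomological invariants from $\mathscr G$ by working \v{C}ech-theoretically, exploiting the two defining properties of a gerbe banded by $A$: local nonemptiness, and the identification of the automorphism sheaf of every local section with $A$. Because $\mathscr G$ is a gerbe, I may choose a cover $\{U_i \to X\}$ fine enough that each $\mathscr G(U_i)$ contains an object $x_i$, and fine enough that on each double overlap $U_{ij}$ there exists an isomorphism $\varphi_{ij} : x_i|_{U_{ij}} \to x_j|_{U_{ij}}$. I will then show that the failure of the cocycle identity $\varphi_{jk}\varphi_{ij} = \varphi_{ik}$ on triple overlaps, which is a priori only an automorphism of $x_i|_{U_{ijk}}$, transports via the banding to an element $\omega_{ijk} \in A(U_{ijk})$, giving a \v{C}ech $2$-cochain. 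A direct computation on quadruple overlaps will show that $(\omega_{ijk})$ is a cocycle, and a standard argument (change either $\varphi_{ij}$ or the local sections $x_i$) will show that its class $\omega \in H^2(X,A)$ is independent of all choices.

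Next I would verify that the vanishing of $\omega$ is equivalent to the existence of a global section. One direction is tautological: a global section produces compatible $x_i$ and $\varphi_{ij}$ on any cover, so the cochain is identically zero. For the converse, $\omega = 0$ means that after refining the cover I may adjust the isomorphisms $\varphi_{ij}$ by elements of $A(U_{ij})$ (using the banding to translate elements of $A$ into automorphisms of $x_i|_{U_{ij}}$) so as to satisfy the cocycle identity, at which point the descent axiom for the stack $\mathscr G$ produces a global object. This is where it is critical that $\mathscr G$ is actually a stack and not merely a prestack.

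With a fixed section $x_0 \in \mathscr G(X)$ in hand, I would classify the other global sections as follows. For any other section $y$, the presheaf $\mathscr I\!\!\mathit{som}(x_0,y)$ on $X$ is, by the gerbe property, a torsor under the sheaf of groups $\underline{\operatorname{Aut}}(x_0) = A$ (here is where the banding is used directly; the identification is canonical because $A$ is abelian, so left and right translation actions agree). This produces a map from the set of isomorphism classes of global sections to $H^1(X,A)$, which I would show is a bijection by building, from any $A$-torsor $P$, a new section as the ``twist'' $x_0 \wedge^A P$, defined by a second descent argument. Finally the statement on automorphisms is immediate from the definition of the banding: $\operatorname{Aut}(x_0) = A(X) = H^0(X,A)$.

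The main technical obstacle will be handling the banding carefully, in particular checking that the identifications $\underline{\operatorname{Aut}}(x_i) \cong A|_{U_i}$ supplied by the banding are compatible with the isomorphisms $\varphi_{ij}$ (up to inner automorphisms, which vanish because $A$ is abelian); this compatibility is precisely what makes the \v{C}ech cochain $(\omega_{ijk})$ take values in $A$ rather than in a twisted form of it, and is what allows the classification by $H^1(X,A)$ rather than by a nonabelian $H^1$. The nonabelian version would require substantially more care, since then the obstruction lives in a nonabelian $H^2$ and the classification statement must be formulated differently; the abelian hypothesis is what lets the proof reduce cleanly to ordinary sheaf cohomology.
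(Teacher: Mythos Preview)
The paper does not supply its own proof of this theorem: it is quoted as a black-box result from Giraud's book, with only the citation \cite[Thm.~IV.3.4.2]{Giraud}. So there is no proof in the paper to compare against.

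That said, your proposal is the standard \v{C}ech argument and is correct. It is also worth noting that the paper effectively carries out exactly this argument in the paragraphs immediately preceding the theorem, in the special case of the gerbe of deformations of a vector bundle: the three observations labeled (i)--(iii) there (local existence of deformations, local uniqueness up to isomorphism, and identification of automorphisms with sections of $\underline{\operatorname{Hom}}(E, E\otimes\pi^\ast J)$) are precisely the ingredients you use, and Lemma~\ref{L:VBdef} is deduced by the same \v{C}ech $2$-cocycle computation you describe. The general statement for abstract gerbes is then invoked from Giraud as an organizing principle rather than reproved. Your outline would serve as a proof of the cited theorem; the only point to be careful about, which you already flag, is the compatibility of the banding isomorphisms $\underline{\operatorname{Aut}}(x_i)\cong A|_{U_i}$ under the transition isomorphisms $\varphi_{ij}$, which is part of the definition of a band.
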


We also record the homogeneity of $\mathsf{Fib}_{X/S}$:

\begin{lem} \label{L:Fib-hmg}
For any scheme $X$, the CFG $\mathsf{Fib}_{X/S}$ is homogeneous.
\end{lem}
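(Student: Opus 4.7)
The plan is to deduce the homogeneity of $\mathsf{Fib}_{X/S}$ from the Schlessinger--Milnor--Ferrand theorem (Theorem~\ref{T:flat-homog}) for flat quasicoherent modules. Fix a pushout $R' = R \sqcup_Q Q'$ as in Definition~\ref{D:InfPush}, arising from an infinitesimal extension $Q \subseteq Q'$ and an affine $S$-morphism $f : Q \to R$. The goal is to show the canonical functor
\begin{equation*}
\Phi : \mathsf{Fib}_{X/S}(R') \longrightarrow \mathsf{Fib}_{X/S}(R) \mathop\times_{\mathsf{Fib}_{X/S}(Q)} \mathsf{Fib}_{X/S}(Q')
\end{equation*}
is an equivalence of groupoids.

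First I would identify $X_{R'} = X \times_S R'$ with the scheme-theoretic pushout $X_R \sqcup_{X_Q} X_{Q'}$ of Theorem~\ref{T:Homog}. Topologically this is automatic, since $R \subseteq R'$ is a universal homeomorphism (its kernel is the nilpotent ideal $J = \ker(\mathcal O_{Q'} \to \mathcal O_Q)$) and universal homeomorphisms pull back. For the structure sheaves, working locally on $X = \operatorname{Spec} C$ and $S = \operatorname{Spec} A$ with $R$, $Q$, $Q'$ also affine, the assertion reduces to the identification
\begin{equation*}
C \otimes_A (B \times_{B_Q} B_{Q'}) \;\cong\; (C \otimes_A B) \times_{C \otimes_A B_Q} (C \otimes_A B_{Q'}),
\end{equation*}
which holds when tensoring with $C$ preserves the left-exactness of $0 \to B' \to B \oplus B_{Q'} \to B_Q \to 0$; this is automatic in the flat setting typically assumed in the paper, and in general follows from the nilpotence of $J$ combined with d\'evissage through the filtration $0 \subseteq J^n \subseteq \cdots \subseteq J \subseteq \mathcal O_{R'}$, reducing to the square-zero case.

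Next I would apply Theorem~\ref{T:flat-homog} directly to the pushout $X_{R'} = X_R \sqcup_{X_Q} X_{Q'}$: the category of flat quasicoherent sheaves on $X_{R'}$ is equivalent to the fiber product of the categories of flat quasicoherent sheaves on $X_R$ and $X_{Q'}$ over those on $X_Q$. Since vector bundles are flat, the functor $\Phi$ sits inside this equivalence, and it suffices to show that the locally-free-of-rank-$n$ condition is preserved by both the restriction and the gluing. Restriction is obvious. For the gluing direction, given a compatible triple $(E_R, E_{Q'}, \alpha)$ with each factor a vector bundle of rank $n$, the flat glued sheaf $E_{R'}$ is locally free of rank $n$: at any point $x \in X_{R'}$, the ideal defining $X_R$ has nilpotent stalk $J_x$, so by Nakayama's lemma a lift of a basis of $(E_R)_x$ generates $(E_{R'})_x$, and flatness over $\mathcal{O}_{X_{R'},x}$ forces this generating set to be a basis.

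The main obstacle I expect is the first step, the compatibility of base change along $X \to S$ with the pushout construction of Theorem~\ref{T:Homog}. In full generality tensor products do not commute with fiber products of rings, and one must use either a flatness hypothesis on $X/S$ or a careful d\'evissage argument exploiting the nilpotence of $J$ to reduce to the square-zero case, where the pushout admits a direct additive description that is preserved by pullback. Once Step~1 is settled the remaining steps are routine applications of Theorem~\ref{T:flat-homog} and Nakayama's lemma.
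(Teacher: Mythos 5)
Your overall route is the same as the paper's: both arguments reduce the statement to the Schlessinger--Milnor--Ferrand patching theorem (Theorem~\ref{T:flat-homog}) applied over affine pieces of $X_{R'}$, and then check that the glued flat module is locally free of the expected rank (your Nakayama argument for that last point is fine and is the same device the paper uses elsewhere). The step you single out as the main obstacle --- that $X \times_S R'$ is again the pushout of $X_{Q'} \leftarrow X_Q \rightarrow X_R$, equivalently that the squares of coordinate rings over affine opens of $X_{R'}$ are cartesian --- is exactly the ingredient the paper's sketch leaves implicit when it applies Ferrand's theorem to an affine cover of $X_{R'}$, so you were right to isolate it.

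The problem is your proposed justification of that step in the general case. The failure of $-\otimes_A C$ to commute with the fiber product $B \times_{B_Q} B_{Q'}$ has nothing to do with the order of nilpotence of $J$, so d\'evissage through the powers of $J$ buys nothing: the square-zero case itself already fails when $X$ is not flat over $S$. Concretely, take $S = \operatorname{Spec} k[x]$, $X = \operatorname{Spec} k$ the origin, $R = S$, $Q \subset Q'$ the origin inside its first-order neighborhood, and $f : Q \rightarrow R$ the inclusion; here $J$ is square-zero. Then $B' = k[x] \times_k k[x]/(x^2)$ and one computes $C \otimes_A B' \cong k[u]/(u^2)$, while $(C \otimes_A B) \times_{C \otimes_A B_Q} (C \otimes_A B_{Q'}) \cong k$, so the comparison map is the non-injective surjection $k[u]/(u^2) \rightarrow k$. (Surjectivity always holds, by right-exactness of the tensor product applied to $0 \rightarrow B' \rightarrow B \oplus B_{Q'} \rightarrow B_Q \rightarrow 0$; injectivity is a $\operatorname{Tor}_1$ condition, which is what flatness supplies.) So the correct hypothesis is the first alternative you mention: flatness of $X$ over $S$, under which tensoring the displayed short exact sequence with $\mathcal O_X$ remains exact and the identification holds. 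This hypothesis is satisfied in every use of the lemma in the paper (the relevant $X$ is a nodal curve over the base, and the $S'$-flatness built into the definition of $\mathsf{Fib}_{X/S}$ reflects the same intent); with it made explicit, your argument closes and coincides with the paper's, but the d\'evissage clause as written does not repair the non-flat case.
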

\begin{proof}
Since $S$ is representable, it is homogeneous by Theorem~\ref{T:Homog}.  Therefore by Lemma~\ref{L:RelHomog}, it is sufficient to show that $\mathsf{Fib}_{X/S}$ is homogeneous over $S$.  We will content ourselves to sketch the construction of the inverse to the functor
\begin{equation} \label{E:FibHomog}
\mathsf{Fib}_{X/S}(R') \rightarrow \mathsf{Fib}_{X/S}(Q') \mathop{\times}_{\mathsf{Fib}_{X/S}(Q)} \mathsf{Fib}_{X/S}(R)
\end{equation}
associated to a cocartesian diagram
\begin{equation*} \xymatrix{
Q \ar@{^(->}[r] \ar[d] & Q' \ar[d] \\
R \ar[r] & R'
} \end{equation*}
of $S$-schemes, where $Q \rightarrow R$ is affine and $Q \hookrightarrow Q'$ is an infinitesimal extension.  An object of the right side of~\eqref{E:FibHomog} consists of vector bundles $E'$ on $X_{Q'}$, $F$ on $X_R$, and $E$ on $X_Q$, along with identifications $E' \big|_{X_Q} = E = F \big|_{X_Q}$.  By applying \cite[Thm.~2.2]{Ferrand} to a cover of $X_{R'}$ by open affines and then to covers of the intersections by open affines, we obtain a vector bundle $F'$ on $X_{R'}$ restricting to $E'$ on $X_{Q'}$, to $E$ on $X_Q$, and to $F$ on $X_R$, as required.
\end{proof}

\begin{rem}\label{R:DefEhmg}
This also follows from the fact that $\mathsf {Fib}_{X/S}$ is an algebraic stack, but, as in Remark~\ref{R:HomEFhmg}, this reasoning is circular.
\end{rem}

\begin{cor} \label{C:VBMapHomog}
Let $\mathcal{F}$ be the stack of triples $(E,F,\sigma)$ where $E$ and $F$ are vector bundles on $X$ and $\sigma : E \rightarrow F$ is a morphism of vector bundles.  Then $\mathcal{F}$ is homogeneous.
\end{cor}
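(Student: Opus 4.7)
The plan is to exhibit $\mathcal{F}$ as a relatively homogeneous CFG over a homogeneous base, and then conclude by the composition law for homogeneity in Lemma~\ref{L:RelHomog}. Forgetting $\sigma$ defines a natural morphism
\begin{equation*}
p : \mathcal{F} \longrightarrow \mathsf{Fib}_{X/S} \mathop{\times}_S \mathsf{Fib}_{X/S}, \qquad (E,F,\sigma) \mapsto (E,F),
\end{equation*}
and I will verify that both the base $\mathsf{Fib}_{X/S} \mathop{\times}_S \mathsf{Fib}_{X/S}$ and the morphism $p$ are homogeneous.

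For the base: since $S$ is representable, it is homogeneous by Theorem~\ref{T:Homog}, and Lemma~\ref{L:Fib-hmg} states that $\mathsf{Fib}_{X/S}$ is homogeneous. Applying Lemma~\ref{L:RelHomog}(i) to the composition $\mathsf{Fib}_{X/S} \to S \to \mathrm{pt}$ shows that $\mathsf{Fib}_{X/S}$ is homogeneous relative to $S$, and then Lemma~\ref{L:RelHomog}(ii) gives that the base change $\mathsf{Fib}_{X/S} \mathop{\times}_S \mathsf{Fib}_{X/S} \to \mathsf{Fib}_{X/S}$ is homogeneous as well. Composing again via Lemma~\ref{L:RelHomog}(i) with the homogeneous structure map $\mathsf{Fib}_{X/S} \to \mathrm{pt}$, one obtains homogeneity of $\mathsf{Fib}_{X/S} \mathop{\times}_S \mathsf{Fib}_{X/S}$.

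For the fibers of $p$: any morphism $T \to \mathsf{Fib}_{X/S} \mathop{\times}_S \mathsf{Fib}_{X/S}$ from an $S$-scheme $T$ corresponds to an ordered pair $(E,F)$ of vector bundles on $X_T$. Unwinding Definition~\ref{D:2-product-categories-over-C}, the fiber product $\mathcal{F} \mathop{\times}_{\mathsf{Fib}_{X/S} \mathop{\times}_S \mathsf{Fib}_{X/S}} T$ is naturally equivalent to the functor $\mathscr{H}om_{X_T/T}(E,F)$, since an object over $T' \to T$ is uniquely an $\mathcal{O}_{X_{T'}}$-linear map $E_{T'} \to F_{T'}$ (the quotient by isomorphisms fixing $E_{T'}$ and $F_{T'}$ is trivial). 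By Lemma~\ref{L:HomEFhmg}, applied with base $T$ in place of $S$, this functor is homogeneous. Thus $p$ is (relatively) homogeneous.

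Finally, combining these two facts via Lemma~\ref{L:RelHomog}(i) applied to $\mathcal{F} \xrightarrow{p} \mathsf{Fib}_{X/S} \mathop{\times}_S \mathsf{Fib}_{X/S} \to \mathrm{pt}$ yields that $\mathcal{F}$ is homogeneous, as desired. I do not anticipate any serious obstacle; the only mild subtlety is to check that the fiber of $p$ is indeed the sheaf $\mathscr{H}om$ (rather than something larger remembering automorphisms), which follows from the fact that the morphism $\sigma$ is a discrete datum once $E$ and $F$ are fixed, so the groupoid of $(E,F,\sigma)$-extensions with $(E,F)$ rigidified is equivalent to a set.
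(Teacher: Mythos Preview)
Your proposal is correct and follows essentially the same approach as the paper: factor through the forgetful map $p:\mathcal{F}\to\mathsf{Fib}_{X/S}\mathop{\times}_S\mathsf{Fib}_{X/S}$, use Lemma~\ref{L:Fib-hmg} and Lemma~\ref{L:RelHomog} to show the base is homogeneous, identify the fibers of $p$ with $\mathscr{H}om_{X_T/T}(E,F)$ and invoke Lemma~\ref{L:HomEFhmg} for relative homogeneity, then conclude by Lemma~\ref{L:RelHomog}. Your write-up simply spells out a few more details than the paper's terse version.
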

\begin{proof}
We have a morphism $\mathcal{F} \rightarrow \mathsf{Fib}_{X/S} \mathop{\times}_S \mathsf{Fib}_{X/S}$.  We have just seen that $\mathsf{Fib}_{X/S}$ is homogeneous, so by two applications of Lemma~\ref{L:RelHomog}, it follows that $\mathsf{Fib}_{X/S} \mathop{\times}_S \mathsf{Fib}_{X/S}$ is homogeneous.  Lemma~\ref{L:HomEFhmg} says that $\mathcal{F}$ is relatively homogeneous over $\mathsf{Fib}_{X/S} \mathop{\times}_S \mathsf{Fib}_{X/S}$, so we may  conclude by another application of Lemma~\ref{L:RelHomog}.
\end{proof}

\subsubsection{Deformations of nodal curves}
We now discuss the deformation theory of nodal curves.  

\begin{dfn}[Nodal curve]
	Let $S$ be a scheme.  A \emph{nodal curve} over $S$ is an     algebraic space $C$ and a projection $\pi : C \rightarrow S$ that is \'etale-locally isomorphic to $\operatorname{Spec} \mathcal{O}_S[x,y] / (xy - t)$ for a local section $t$ of $\mathcal O_S$.  That is, there is an \'etale cover of $S$ by affines $U = \operatorname{Spec} A$
	 and an \'etale cover of $\pi^{-1} U$ by schemes $V$, each of which admits an \'etale map to $\operatorname{Spec} A[x,y] / (xy - t)$ for some $t \in A$.

	We shall write $\mathcal N$ for the stack over schemes whose $S$-points are the families of nodal curves over $S$.
\end{dfn}

\begin{rem} \label{R:NC}
	A more conventional definition of a nodal curve over $S$ is as a flat family $\pi : C \rightarrow S$ whose fibers are $1$-dimensional, reduced schemes whose only singularities are ordinary double points.  It follows from \cite[Prop.~III.2.8]{FreitagKiehl} that the two notions are equivalent.
\end{rem}

We will show that nodal curves form a homogeneous stack (Lemma \ref{L:NChmg}), compute the tangent space of this stack (Lemmma \ref{L:nodal-tangent}), and show that it is formally smooth (Corollary  \ref{C:NCsm}).  This whole section could easily be adapted to curves with locally planar singularities (or to more general schemes of finite type with hypersurface singularities), but for concreteness, we stick to nodal curves.

The method used in the last section to study deformations of vector bundles works quite well for deformations of \emph{smooth}  curves, and even families of smooth schemes $\pi : X \rightarrow S$.  If $S'$ is a square-zero extension of $S$ with ideal $J$, one discovers that extensions of $X$ to $S'$ always exist locally, that any two deformations are locally isomorphic, and that any two deformations are related by a section of $T_{X/S} \otimes \pi^*J$, from which it follows by a \v{C}ech calculation (as in the last section) that (see \cite[Thm.~8.5.9~(b)]{FGAe}):

\begin{enumerate}[(i)]
\item there is obstruction to the existence of a flat extension $X'$ over $S$ extending $X$ lying in $H^2(X, T_{X/S} \otimes \pi^\ast J)$;
\item should a deformation exist, the isomorphism classes of flat extensions form a torsor under $H^1(X, T_{X/S} \otimes \pi^\ast J)$;
\item automorphisms of a fixed flat extension are canonically $H^0(X, T_{X/S} \otimes \pi^\ast J)$.
\end{enumerate} 
The deformation theory of nodal curves introduces a new complication:  It is still the case that deformations exist locally, and one can easily compute that automorphisms of deformations can be identified with sections of $T_{X/S} \otimes J$.  However, it is no longer the case any two deformations are locally isomorphic, since the node $xy = 0$ has a $1$-parameter family of first order deformations $xy = \lambda \epsilon$.  A naive \v{C}ech calculation will therefore not suffice, but with a little more effort, we will see that the deformations and obstructions for nodal curves can be classified in much the same way as for smooth curves.

Suppose that $C$ is an $S$-point of $\mathcal N$.  
An $S$-point of $T_{\mathcal N}$ lying above $C$ is a cartesian diagram~\eqref{E:TMg}, in which $C'$ is an $S[\epsilon]$-point (see Definition~\ref{D:RDN} for notation) of $\mathcal N$:

\begin{equation} \label{E:TMg} \vcenter{\xymatrix{
C \ar[r] \ar[d]_\pi & C' \ar[d] \\
S \ar[r] & S[\epsilon].
}} \end{equation}
We consider the more general problem of extending an $S$-point $C$ of $\mathcal N$ to a $S[\epsilon J]$-point:

\begin{equation*}  \xymatrix{
C \ar[r] \ar[d]_\pi & C' \ar[d] \\
S \ar[r] &  \ar@{-->}@/_1pc/[l] S[\epsilon J].
} \end{equation*}
Here $J$ is a quasicoherent sheaf on $S$ and $C'$ is an $S[\epsilon J]$-point of $\mathcal N$.  Since there is a canonical retraction $S[\epsilon J] \rightarrow S$, we can consider $C'$ as an $S$-scheme.  As $C$ is the closed sub-scheme of $C'$ determined by $\pi^\ast J$ (with $\pi^*J^2=0$), we may  form the exact sequence~\eqref{E:omega-extension}:

\begin{equation} \label{E:omega-extension}
0 \rightarrow \pi^\ast J \rightarrow \mathcal{O}_C \mathop{\otimes}_{\mathcal{O}_{C'}} \Omega_{C'/S} \rightarrow \Omega_{C/S} \rightarrow 0
\end{equation}
The right exactness of this sequence follows from general principles and does not depend on the fact that $C$ is a nodal curve.  To see the left exactness, we may work \'etale-locally and assume:
\begin{align*}
S & = \operatorname{Spec} A & S' & = \operatorname{Spec} A' \\
C & = \operatorname{Spec} B  & C' & = \operatorname{Spec} B' \\
B & = A[x,y] / I & B' & = A'[x,y] / I' \\
I & = (xy - t) & I' & = (xy - t')
\end{align*}
In fact, the local structure of nodes implies we can arrange for $C$ and $C'$ to be \emph{\'etale} over $A[x,y] / I$ and $A'[x,y] / I'$, respectively.  
Then the sequence~\eqref{E:omega-extension} on $C$ is pulled back via the \'etale (and in particular flat) map $C \rightarrow \operatorname{Spec} B$ from the corresponding sequence over $\operatorname{Spec} B$.  For the sake of proving~\eqref{E:omega-extension} is exact, we can therefore replace $C$ with $\operatorname{Spec} B$ and $C'$ with  $\operatorname{Spec} B'$ without loss of generality.

Our diagram of rings is
$$
\xymatrix{
B=A[x,y]/(xy-t) & B'=A'[x,y]/(xy-t') \ar[l] \\
A \ar[u] \ar@{-->}@/^1pc/[r]&  \ar[l] \ar[u] A'=A[\epsilon J].
}
$$
We are trying to show the exactness of
\begin{equation} \label{E:omega-extension-ring}
0  \to B\otimes_A J \rightarrow B \mathop{\otimes}_{B'} \Omega_{B'/A} \rightarrow \Omega_{B/A} \rightarrow 0.
\end{equation} 
The left exactness of~\eqref{E:omega-extension-ring} is equivalent to showing that, for every injective  
\footnote{Recall that a morphism $M'\to M$ of modules over a ring $A$ is injective if and only if for every injective $A$-module $I$ , 
$\operatorname{Hom}_A(M,I)\to \operatorname{Hom}_A(M',I)$ is surjective.  To see this, choose an inclusion  $M'\hookrightarrow I$ for some injective module $I$.  Then we get a map $M \rightarrow I$ extending the inclusion $M' \subseteq I$, so $M' \rightarrow M$ must be injective.}
$B$-module $M$, 
the map  $\operatorname{Hom}_B(B\otimes_{B'}\Omega_{B'/A},M)\to \operatorname{Hom}_B(B\otimes_A J,M)$ is surjective; i.e., 
every $B$-module homomorphism $B\mathop{\otimes}_A J  \rightarrow M$ extends to a morphism $B\otimes_{B'}\Omega_{B'/A}\to M$.  Using tensor-hom adjunction, and the universal property of the module of K\"ahler differentials, this translates into the statement that, every $A$-module homomorphism $J \rightarrow M$ should extend to an $A$-derivation $B' \rightarrow M$:
$$
\xymatrix{
B'=\frac{A[\epsilon J][x,y]}{(xy-t')} \ar@{-->}[rd]^{\partial}& \\
J \ar[r]^\varphi \ar@{^(->}[u]^{\cdot \epsilon}& M.
}
$$

First we will extend  $\varphi$ to a derivation $$\delta' : A[\epsilon J][x,y] \rightarrow M.$$   To do this, we will invoke the identifications
$$
\operatorname{Hom}_A(J,M)=\operatorname{Hom}_{A\operatorname{-alg},\operatorname{Id}_A}(A[\epsilon J],A[\epsilon M]) = \operatorname{Der}_A(A[\epsilon J],M).
$$
On the left, we take a homomorphism $\varphi:J\to M$ and send it to the ring homomorphism $\operatorname{Id}_A\oplus \varphi:A\oplus \epsilon J\to A\oplus \epsilon M$.  The group in the middle consists of the $A$-algebra homomorphisms that are the identity on the first term.  On the right, we just project to get a derivation.   Similarly, we have indentifications 
$$
\operatorname{Hom}_{A\operatorname{-alg},\operatorname{Id}_A}(A[\epsilon J][x,y],A[\epsilon M]) = \operatorname{Der}_A(A[\epsilon J][x,y],M).
$$
In summary, the homomorphism $\varphi:J\to M$ induces a ring homomorphism $\tilde \varphi:A[\epsilon J]\to A[\epsilon M]$.  We can extend this to  $\hat \varphi:A[\epsilon J][x,y]\to A[\epsilon M]$ by sending $x$ and $y$ to any element of $M$.  This then gives a derivation $\delta'$ as desired.

 The choices of such extensions $\delta'$ are a torsor under $\operatorname{Der}_A(A[x,y], M)$, and we will adjust $\delta'$ by such a derivation so that it descends to a derivation $B' \rightarrow M$.  The obstruction to descending $\delta'$ to $B'$ is the homomorphism $\delta' \big|_{I'}$, so we want to find a derivation $\delta \in \operatorname{Der}(A[x,y], M)$ such that the restriction of $\delta$ to $I'$ (via the composition $I' \subseteq A'[x,y] \rightarrow A[x,y]$) agrees with $\delta'$.

To execute this, let us take a fixed extension $\delta'$.   Then  $\delta'(I'J) \subseteq I' \delta(J) + J \delta(I') = 0$ because $M$ is a $B$-module, and therefore $I' M = JM = 0$.  This implies $\delta \big|_{I'}$ descends to $I'/JI' = I$.  We also have $\delta'({I'}^2) = I' \delta'(I') = 0$, for the same reason, so $\delta'$ gives us a homomorphism $u : I/I^2 \rightarrow M$.

Now, we have an exact sequence
\begin{equation*}
0 \rightarrow I/I^2 \rightarrow B \mathop{\otimes}_{A[x,y]} \Omega_{A[x,y] / A} \rightarrow \Omega_{B/A} \rightarrow 0,
\end{equation*}
either by \cite[Thm.~D.2.7, p.~310]{sernesi} or a direct verification.  Since $M$ is injective, the homomorphism $u : I/I^2 \rightarrow M$ extends to a derivation $\delta : A[x,y] \rightarrow M$ (again we are using tensor-hom adjunction and the universal property of the sheaf of K\"ahler differentials).  Trivially extending  $\delta$ to $A'[x,y]$ we obtain a derivation  $\delta'-\delta$ on $A'[x,y]$.   We have that $\delta'-\delta =0$ on $I'$ essentially by construction.  Moreover, $\delta' - \delta $ agrees with $\delta' \big|_J = \varphi$ on $J$ because $\delta $ vanishes there.  Thus $\delta' - \delta $ descends to a derivation in $\operatorname{Der}_A(B,M)$ that agrees with $\varphi$ on $J$, as required.  We conclude that \eqref{E:omega-extension} is exact.

\vskip .2cm 
Conversely, given any extension of $\mathcal O_C$-modules
\begin{equation*}
0 \rightarrow \pi^\ast J \rightarrow \Omega' \rightarrow \Omega_{C/S} \rightarrow 0
\end{equation*}
we can define an extension of $C/S$ to $S[\epsilon J]$.  Namely, 
one may define $$\mathcal{O}_{C'} = \mathcal O_{C} \mathbin\times_{\Omega_{C/S}} \Omega',$$ 
where the map from $\mathcal O_C$ to $\Omega_{C/S}$ is the universal differential, $d$.  
This is clearly an $\mathcal O_C$-module, but 
 it is not a priori obvious that $\mathcal{O}_{C'}$ is equipped with a ring structure.  The ring structure can be constructed easily  by factoring $d$ as
\begin{equation*}
\mathcal{O}_C \xrightarrow{\mathrm{id} + \epsilon d} \mathcal{O}_C \!+\! \epsilon \Omega_{C/S} \longrightarrow \Omega_{C/S}
\end{equation*}
and recognizing that $$\mathcal{O}_{C'} \simeq \mathcal O_C \mathbin\times_{\mathcal O_C\!+\!\epsilon \Omega_{C/S}} (\mathcal O_C \!+\! \epsilon \Omega').$$  
Here the map on the right is
$$
\mathcal O_C+\epsilon \Omega'\to \mathcal O_C+\epsilon \Omega_{C/S}
$$
$$
a+\omega'\mapsto a+\tilde \omega'
$$
where we are using the given map $\Omega'\to \Omega_{C/S}$, $\omega'\mapsto \tilde \omega'$; this is the standard map taking $$\operatorname{Hom}_{\mathcal O_C}(\Omega',\Omega_{C/S})\to \operatorname{Hom}_{\mathcal O_C\text{-alg},\operatorname{Id}}(\mathcal O_C[\epsilon \Omega'],\mathcal O_C[\epsilon \Omega_{C/S}]).$$ 
This above description of $\mathcal O_{C'}$ is as a  fiber product of rings, hence is naturally equipped with a ring structure.

 The map 
$$C'\to S[\epsilon J]$$ is given topologically by $C\to S$, and at the level of sheaves by a map
$$
 \mathcal O_{S[\epsilon]}=\mathcal O_S\oplus \epsilon J \to \mathcal O_{C'}= \mathcal O_C \mathbin\times_{\mathcal O_C\!\oplus\!\epsilon \Omega_{C/S}} (\mathcal O_C \!\oplus\! \epsilon \Omega').
$$
We are given maps $\mathcal O_S\to \mathcal O_C$, and $J\to \Omega'$.
The map $\mathcal O_S \oplus \epsilon J\to \mathcal O_C$ is defined  by the given map on the first term, plus the zero map on the second term.  
The map $\mathcal O_{S}[\epsilon J]\to \mathcal O_C[\epsilon \Omega']$ is given by the map $\mathcal O_S[\epsilon J]\to \mathcal O_C[\epsilon \pi^*J]$, and then the natural map $\mathcal O_C[\epsilon \pi^*J]\to \mathcal O_C[\epsilon \Omega']$.  These maps agree on composition to $\mathcal O_C[\epsilon \Omega_{C/S}]$, and so define a morphism to the fibered product.  

One can check that these processes are inverses of one another, and so 
this yields an equivalence of categories between the tangent space of $\mathcal N$  at $C/S$ and the category of extensions of $\Omega_{C/S}$ by $\pi^*J$.

\begin{lem} \label{L:nodal-tangent}
Let $\pi : C \rightarrow S$ be an $S$-point of the stack $\mathcal N$ of all nodal curves.  Then

\begin{equation*}
T_{\mathcal{N}}(C/S) = \mathbf{Ext}(\Omega_{C/S}, \mathcal{O}_C) ,
\end{equation*}
where we write $\mathbf{Ext}(A,B)$ for the \emph{groupoid} of extensions of $B$ by $A$.
In particular,
\begin{align*}
T^{-1}_{\mathcal{N}}(C/S) &= \operatorname{Hom}_{\mathcal O_C}(\Omega_{C/S},\mathcal O_C)=\Gamma(C, T_{C/S}) \\
T^0_{\mathcal{N}}(C/S) &= \operatorname{Ext}^1_{\mathcal O_C}(\Omega_{C/S}, \mathcal{O}_C) .
\end{align*}
\end{lem}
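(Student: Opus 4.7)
The plan is to assemble the two constructions already carried out in the paragraphs preceding the statement into a genuine equivalence of groupoids, then read off $T^{-1}$ and $T^0$ from the standard interpretation of the Ext groupoid. I take $J = \mathcal O_S$, so that by Definition \ref{D:TangSp} the groupoid $T_{\mathcal N}(C/S)$ is the groupoid of flat extensions $C'$ of $C$ to $S[\epsilon]$ (with the identification $C' \times_{S[\epsilon]} S \cong C$). The first construction (the exact sequence~\eqref{E:omega-extension}, whose left exactness the paper has already checked via the node-by-node derivation argument) produces a functor
\begin{equation*}
\Phi : T_{\mathcal N}(C/S) \longrightarrow \mathbf{Ext}(\Omega_{C/S}, \mathcal{O}_C), \qquad (C'/S[\epsilon]) \longmapsto (0 \to \pi^\ast \mathcal O_S \to \mathcal O_C \otimes_{\mathcal O_{C'}} \Omega_{C'/S} \to \Omega_{C/S} \to 0).
\end{equation*}
The second construction, via the fiber-product ring $\mathcal O_{C'} = \mathcal O_C \times_{\Omega_{C/S}} \Omega'$ together with the structure map $\mathcal O_{S[\epsilon]} \to \mathcal O_{C'}$ explained in the text, gives a functor $\Psi$ in the opposite direction.

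Next I would verify that $\Phi$ and $\Psi$ are quasi-inverse. On objects this amounts to the identification
\begin{equation*}
\mathcal O_{C'} \;\cong\; \mathcal O_C \,\times_{\,\mathcal O_C + \epsilon \Omega_{C/S}}\, \bigl(\mathcal O_C + \epsilon (\mathcal O_C \otimes_{\mathcal O_{C'}} \Omega_{C'/S})\bigr),
\end{equation*}
where the left-hand arrow is $a \mapsto a + \epsilon\, d a$; this is a direct check using that $\pi^\ast\mathcal O_S \subset \mathcal O_{C'}$ is the square-zero ideal whose quotient is $\mathcal O_C$, and conversely for $\Psi\circ\Phi$ one identifies $\Omega'$ with the relative differentials of the fiber-product ring. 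On morphisms the verification is formal: an isomorphism $C' \simeq C''$ over $C$ and $S[\epsilon]$ induces an isomorphism of the corresponding cotangent extensions, and the inverse construction follows from the universal property of the fiber product. I would also remark that both constructions are functorial in $(C,S)$, which is needed to conclude an equivalence of fibered categories and not merely of fibers.

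With the equivalence $T_{\mathcal N}(C/S) \simeq \mathbf{Ext}(\Omega_{C/S}, \mathcal O_C)$ in hand, the computation of $T^{-1}$ and $T^0$ is immediate from Section~\ref{S:tangent-bundle}: $T^{-1}_{\mathcal N}(C/S)$ is the automorphism group of the zero deformation, which corresponds under $\Phi$ to the automorphism group of the split extension $\mathcal O_C \oplus \Omega_{C/S}$, and this is canonically $\operatorname{Hom}_{\mathcal O_C}(\Omega_{C/S}, \mathcal O_C) = \Gamma(C, T_{C/S})$ by the definition of the relative tangent sheaf. Likewise $T^0_{\mathcal N}(C/S)$ is the set of isomorphism classes in $\mathbf{Ext}(\Omega_{C/S}, \mathcal O_C)$, which is by definition $\operatorname{Ext}^1_{\mathcal O_C}(\Omega_{C/S}, \mathcal O_C)$.

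The main obstacle, namely the left exactness of~\eqref{E:omega-extension} at the nodes, has already been handled in the body of the paper by the injective-module / derivation argument; the remaining difficulty in my plan is the bookkeeping to verify that $\Phi$ and $\Psi$ are mutually inverse as functors rather than merely as maps on isomorphism classes, together with making the ring structure on $\mathcal O_C \times_{\Omega_{C/S}} \Omega'$ match the $\mathcal O_{S[\epsilon]}$-algebra structure coming from a genuine square-zero extension. Both are routine once one unwinds the fiber-product description carefully, but they are where care is required.
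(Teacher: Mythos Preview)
Your proposal is correct and follows essentially the same approach as the paper: the argument is entirely contained in the text preceding the lemma, which constructs exactly your functors $\Phi$ (via the sequence~\eqref{E:omega-extension}) and $\Psi$ (via the fiber-product ring $\mathcal O_{C'} = \mathcal O_C \times_{\Omega_{C/S}} \Omega'$), and then simply asserts ``one can check that these processes are inverses of one another''. You have added the natural commentary on what that check involves and spelled out the deduction of $T^{-1}$ and $T^0$ from the Ext groupoid, but there is no substantive difference in strategy.
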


Observe that the discussion above never made use of the assumption that $C$ be proper over $S$.  Thus $\operatorname{\underline{Ext}}^1(\Omega_{C/S}, \mathcal{O}_C)$ may be interpreted as the associated sheaf of the presheaf of isomorphism classes of local deformations of $C$.  This allows us to interpret the $5$-term exact sequence of the local-to-global spectral sequence for $\operatorname{Ext}(\Omega_{C/S}, \mathcal O_C)$ deformation theoretically:
\begin{equation*}
0 \rightarrow H^1(C, T_{C/S}) \rightarrow \operatorname{Ext}^1(\Omega_{C/S}, \mathcal{O}_C) \rightarrow \Gamma(C, \operatorname{\underline{Ext}}^1(\Omega_{C/S}, \mathcal{O}_C)) \rightarrow H^2(C, T_{C/S}).
\end{equation*}
If $S$ is affine then $H^2(C, T_{C/S}) = 0$ because $C$ has relative dimension $1$ over $S$.  The first term of the sequence may be interpreted, via a \v{C}ech calculation, as the set of isomorphism classes of locally trivial deformations of $C$.  Somewhat imprecisely, we have:
\begin{equation*}
0 \rightarrow (\text{loc.\ triv.\ defs.\ of $C$}) \rightarrow (\text{defs.\ of $C$}) \rightarrow (\text{defs.\ of nodes of $C$}) \rightarrow 0.
\end{equation*}
When $S$ is not affine, the final term $H^2(C, T_{C/S})$ may be interpreted as the obstruction to finding a deformation of the curve realizing specified deformations of the nodes.

\vskip .2 cm 
Now we turn to the more general problem of extending a curve over a general square-zero extension $S \subseteq S'$ with ideal $J$:
\begin{equation} \label{E:curve-def} \vcenter{\xymatrix{
C \ar[d]_\pi \ar@{-->}[r]  & C' \ar@{-->}[d] \\
S \ar[r] & S'
}} \end{equation}
We will make use of homogeneity:

\begin{lem}\label{L:NChmg}
	The CFG of all nodal curves (not necessarily proper!) is homogeneous, as are the substack of all proper nodal curves, the substack of curves of fixed genus, and the substack of canonically polarized curves.
\end{lem}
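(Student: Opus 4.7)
The plan is to verify homogeneity directly using the explicit \'etale-local structure of a nodal curve together with Theorems~\ref{T:Homog} and~\ref{T:flat-homog}. Fix a cocartesian square with $Q\hookrightarrow Q'$ an infinitesimal extension, $Q\to R$ affine, and $R'$ the pushout, so that $\mathcal O_{R'}=\mathcal O_R\times_{f_\ast\mathcal O_Q}f_\ast\mathcal O_{Q'}$ and the ideal of $R$ in $R'$ (identified with $f_\ast\ker(\mathcal O_{Q'}\to\mathcal O_Q)$) is nilpotent. Given compatible nodal curves $C_R/R$ and $C_{Q'}/Q'$ with common restriction $C_Q/Q$, I need to construct, uniquely up to unique isomorphism, a nodal curve $C_{R'}/R'$ whose restrictions recover the data. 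Since $\mathcal N$ is a stack in the \'etale topology and, by Lemma~\ref{L:et-inf-lift}, \'etale covers lift uniquely across the infinitesimal extension $R\subseteq R'$, the equivalence $\mathcal N(R')\to\mathcal N(R)\times_{\mathcal N(Q)}\mathcal N(Q')$ may be checked \'etale-locally on $R$.

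Working in an affine \'etale neighborhood, I may assume $R=\operatorname{Spec} A_R$, $Q=\operatorname{Spec} A_Q$, $Q'=\operatorname{Spec} A_{Q'}$, and that $C_R$ is \'etale over $\operatorname{Spec} A_R[x,y]/(xy-t_R)$ for some $t_R\in A_R$. Applying Lemma~\ref{L:et-inf-lift} to lift \'etale maps across the infinitesimal extension $C_Q\subseteq C_{Q'}$, I may similarly arrange $C_{Q'}$ to be \'etale over $\operatorname{Spec} A_{Q'}[x,y]/(xy-t_{Q'})$ for a choice of $t_{Q'}\in A_{Q'}$ compatible with $t_R$ over $A_Q$. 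The candidate $C_{R'}$ is then defined by combining the structure sheaf prescribed by Theorem~\ref{T:Homog} with the unique \'etale lift furnished by Lemma~\ref{L:et-inf-lift} applied to the infinitesimal extension $C_R\subseteq C_{R'}$; locally it appears as an \'etale cover of $\operatorname{Spec} A_{R'}[x,y]/(xy-t_{R'})$, where $t_{R'}:=(t_R,t_{Q'})\in A_{R'}=A_R\times_{A_Q}A_{Q'}$.

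The main technical step, and what I expect to be the principal obstacle, is verifying that these two descriptions of $\mathcal O_{C_{R'}}$ agree, i.e.\ the ring-theoretic identity
\begin{equation*}
A_R[x,y]/(xy-t_R)\,\mathop\times_{A_Q[x,y]/(xy-t_Q)}\,A_{Q'}[x,y]/(xy-t_{Q'}) \;=\; A_{R'}[x,y]/(xy-t_{R'}).
\end{equation*}
This reduces to the observation that $A[x,y]/(xy-t)$ is free as an $A$-module on the basis $\{x^i\}_{i\ge 0}\cup\{y^j\}_{j\ge 1}$, so the equality holds basis-element by basis-element from $A_{R'}=A_R\times_{A_Q}A_{Q'}$; alternatively it is a direct instance of Theorem~\ref{T:flat-homog} applied to this flat module, which simultaneously yields flatness of $C_{R'}$ over $R'$. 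Uniqueness of $C_{R'}$ then follows from the universal property in Theorem~\ref{T:Homog} (which forces the structure sheaf) together with the uniqueness part of Lemma~\ref{L:et-inf-lift}.

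For the three substacks, the fact that both $R\subseteq R'$ and $C_R\subseteq C_{R'}$ are infinitesimal extensions (in particular homeomorphisms with nilpotent ideals of definition) makes the stability verifications immediate: properness is preserved under such square-zero extensions of base and total space, since the valuative criterion is insensitive to nilpotents and the underlying topological map is unchanged; the arithmetic genus is a fiberwise invariant and the set-theoretic fibers of $C_{R'}\to R'$ coincide with those of $C_R\to R$; and ampleness of the relative dualizing sheaf, which defines the canonical polarization, may be checked fiberwise and is therefore inherited from $C_R/R$.
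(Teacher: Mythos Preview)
Your strategy is sound in spirit but takes a more circuitous route than the paper and contains an imprecise step.  The paper does not localize at all: given the nodal curves $C/Q$, $C'/Q'$, $D/R$ with compatible restrictions, it simply observes that $C \hookrightarrow C'$ is an infinitesimal extension and $C \to D$ is affine (as a base change of $Q \to R$), so Theorem~\ref{T:Homog} produces the pushout scheme $D'$ directly and globally.  Flatness of $D'/R'$ and the correct restrictions then come from Theorem~\ref{T:flat-homog}; nodality of the fibers holds because the fibers of $D'/R'$ are literally those of $D/R$ (since $|R'| = |R|$); and local finite presentation follows by lifting generators and relations.  Your ring identity is a special case of this global pushout, and the paper's ``same fibers'' observation replaces the need to track the explicit form $A[x,y]/(xy-t)$ at all.

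Your appeal to Lemma~\ref{L:et-inf-lift} ``across the infinitesimal extension $C_Q \subseteq C_{Q'}$'' to produce a compatible chart $C_{Q'} \to \operatorname{Spec} A_{Q'}[x,y]/(xy - t_{Q'})$ is not quite right: that lemma lifts \'etale maps \emph{into} an infinitesimally thickened target, not \emph{out of} one.  What you actually need is that the given deformation $C_{Q'}$ of $C_Q$ arises, via the \'etale chart $C_Q \to V_Q$, from some deformation $V_{Q'}$ of $V_Q$, and moreover that every flat deformation of $V_Q$ has the standard form $\operatorname{Spec} A_{Q'}[x,y]/(xy - t_{Q'})$.  Both facts are true, but they require separate arguments (the first because \'etale maps induce equivalences of deformation functors, the second from the explicit deformation theory of the node).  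There is also a localization subtlety you elide: the \'etale chart is local on the \emph{curve} $C_R$, not merely on the base $R$, so your construction is really \'etale-local on $C_R$ and must then be glued over $C_{R'}$---the very object you are building.  The paper's global pushout bypasses all of these issues at once.
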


\begin{proof} 
	Let  $Q \subseteq Q'$ be an infinitesimal extension of schemes and $Q \rightarrow R$ an affine morphism.  Form the pushout $R'$ of $Q'$ and $R$ under $Q$ using Theorem \ref{T:Homog}.  Letting $\mathcal N$ denote the stack of all nodal curves, we construct the inverse to the map~\eqref{E:nodal-homog}.
	\begin{equation} \label{E:nodal-homog}
	\mathcal N(R') \rightarrow \mathcal N(Q') \mathop{\times}_{\mathcal N(Q)} \mathcal N(R)
	\end{equation}
	Suppose that $C$ is a nodal curve over $Q$, that $C'$ is an extension of $C$ to $Q'$, and that $C = D \mathop{\times}_R Q$ for nodal curve $D$ over $R$   and observe that $C \subseteq C'$ is an infinitesimal extension and $C \rightarrow D$ is affine, so we can also form the pushout $D'$ of $C'$ and $D$ under $C$ (Theorem \ref{T:Homog}).

	By Theorem \ref{T:flat-homog}, $D'$ is flat over $R'$ and $D' \mathop{\times}_{R'} R = D$, $D' \mathop{\times}_{R'} Q' = C'$, and $D' \mathop{\times}_{R'} Q = C$.  We argue that $D'$ is a nodal curve over $R'$.  Since the fibers of $D'$ over $R'$ are the same as the fibers of $D$ over $R$, the fibers of $D'$ over $R'$ are nodal curves.  We have already seen that $D'$ is flat over $R'$, so we therefore only need to check that $D'$ is locally of finite presentation over $R'$.
	
 One can verify easily that local generators and relations for $\mathcal{O}_D$ as an algebra over $\mathcal{O}_R$ lift to local generators and relations for $\mathcal{O}_{D'}$ as an algebra over $\mathcal{O}_{R'}$, implying it is locally of finite presentation as well.

	This proves the homogeneity of the stack of all nodal curves.  For the remaining statements, note that they are stable under infinitesimal deformation.
\end{proof}

Now we return to the problem of completing~\eqref{E:curve-def}:
\begin{enumerate}[(i)]
\item The problem of completing~\eqref{E:curve-def} can be solved locally in $C$.  Indeed, one may find a cover of $S$ by affine open subschemes $U = \operatorname{Spec} A$ and elements $t \in A$ such that $\pi^{-1} U$ has a cover by open affines $V$, each of which is \'etale over $\operatorname{Spec} B$, with $B = A[x,y] / (xy - t)$.  If $U' = \operatorname{Spec} A'$ is the open subset of $S'$ whose preimage in $S$ is $U$ then one may form $B' = A'[x,y] / (xy - t')$ where $t'$ is any lift of $t$ to $A'$.  Then $\operatorname{Spec} B'$ is a deformation of $\operatorname{Spec} B$ to $\operatorname{Spec} A'$ and this lifts uniquely to a deformation of $V$ by \cite[Thm.~(18.1.2)]{EGAIV4}.

We may thus select local deformations $V'$ for each $V$ in an open cover of $C$ and attempt to glue.

\item \label{B:coprodExamp}Observe that 
\begin{equation*}
\mathcal{O}_{S'} \mathop{\times}_{\mathcal{O}_S} \mathcal{O}_{S'} \simeq \mathcal{O}_{S'} + \epsilon J,
\end{equation*}
via the map $(f,g) \mapsto f + \epsilon(g-f)$,
so that $S' \mathop{\amalg}_{S} S' \simeq S' \mathop{\amalg}_S S[\epsilon J]$ and therefore by homogeneity, we have
\begin{equation*}
\mathcal{N}(S') \mathop{\times}_{\mathcal{N}(S)} \mathcal{N}(S') = \mathcal{N}(S' \mathop{\amalg}_S S') \simeq \mathcal{N}(S' \mathop{\amalg}_S S[\epsilon J]) = \mathcal{N}(S') \mathop{\times}_{\mathcal{N}(S)} \mathcal{N}(S[\epsilon J]) .
\end{equation*}
In other words, any two extensions of an open subset of $C$ to a nodal curve over $S'$ differ by a uniquely determined element of $\mathcal{N}(S[\epsilon J])$.
\item We have already seen that $\mathcal{N}(S[\epsilon J])$ may be identified with the category of extensions $\mathbf{Ext}(\Omega_{C/S}, \pi^\ast J)$.  Thus the first obstruction to gluing the deformations $U'$ comes from making sure that the isomorphism classes of the deformations $U'$ agree on overlaps.  Since the isomorphism classes form a torsor under $\underline{\operatorname{Ext}}^1(\Omega_{C/S}, \pi^\ast J)$, this obstruction lies in $H^1\bigl(C, \underline{\operatorname{Ext}}^1(\Omega_{C/S}, \pi^\ast J)\bigr)$.
\item One can arrange very easily for this obstruction to vanish by working locally in $S$.  Indeed, $\underline{\operatorname{Ext}}^1(\Omega_{C/S}, \pi^\ast J)$ is quasicoherent and is supported on the nodes of $C$, which is finite (and in particular affine) over $S$.  Therefore when $S$ is affine the first obstruction vanishes.
\item Now assuming that the first obstruction vanishes we can fix a compatible system of isomorphism classes of local deformations.  We must determine whether one can find genuine local deformations within those isomorphism classes in a compatible way.  For each open $U \subseteq C$ in a suitable cover we select a deformation $U'$ in the specified isomorphism class.  If $U_i$ and $U_j$ are two such open sets then write $U'_{ij}$ for the restriction of $U'_i$ to $U_i \cap U_j$.  Because we have chosen the isomorphism classes compatibly, $U'_{ij} \simeq U'_{ji}$, and we may select such an isomorphism $\varphi_{ij}$.  Over triple overlaps the cocycle condition $\varphi_{ki} \circ \varphi_{jk} \circ \varphi_{ij} = \mathrm{id}$ obstructs the gluing.  As the element $\varphi_{ki} \circ \varphi_{jk} \circ \varphi_{ij}$ lies in the automorphism group of $U'_{ijk}$, which is canonically identified with $\Gamma\bigl(U_{ijk}, \underline{\operatorname{Hom}}(\Omega_{C/S},\pi^\ast J)\bigr)$ we obtain a \v{C}ech $2$-cocycle in $H^2\bigl(C, \underline{\operatorname{Hom}}(\Omega_{C/S}, \pi^\ast J)\bigr)$.  Once again, this obstruction vanishes when $S$ is affine, this time because $C$ has relative dimension $1$ over $S$ and the coefficients are taken in a quasicoherent sheaf.
\end{enumerate}

\begin{cor} \label{C:NCsm}
	The stack of all nodal curves satisfies the formal criterion for smoothness.
\end{cor}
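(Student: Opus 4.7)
The plan is to show that every lifting problem
\begin{equation*}
\xymatrix{
S \ar[r] \ar[d] & \mathcal{N} \\
S' \ar@{-->}[ur]
}
\end{equation*}
associated to a square-zero extension $S \subseteq S'$ with ideal $J$ admits a lift after replacing $S$ by a suitable \'etale cover, which is exactly the content of Definition~\ref{D:formal-sm-et-nr} for formal smoothness. Since the question is \'etale-local on $S$, I would immediately reduce to the case where $S$ (hence $S'$) is affine.

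First, I would record the local lifting result already spelled out in item~(i) preceding Corollary~\ref{C:NCsm}: for any node in $C$, the \'etale-local model $\operatorname{Spec} A[x,y]/(xy-t)$ deforms to $\operatorname{Spec} A'[x,y]/(xy-t')$ for any lift $t'$ of $t$, and the infinitesimal lifting criterion for \'etale maps (\cite[Thm.~(18.1.2)]{EGAIV4}) promotes this to a deformation of an \'etale neighborhood. Thus deformations exist on an open cover $\{U_i\}$ of $C$. The task is to glue them.

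Next, I would apply homogeneity of $\mathcal{N}$ (Lemma~\ref{L:NChmg}) together with the pushout identity $S' \amalg_S S' \simeq S' \amalg_S S[\epsilon J]$ explained in step~(ii). This gives a torsor-like structure: any two local deformations of the same $U_i$ differ by a canonically attached element of $\mathcal{N}(U_i[\epsilon J])$, and by Lemma~\ref{L:nodal-tangent} such elements are classified by $\mathbf{Ext}(\Omega_{C/S} \big|_{U_i}, \pi^\ast J \big|_{U_i})$. Consequently the isomorphism classes of local deformations on overlaps assemble into a class in $H^1\bigl(C,\underline{\operatorname{Ext}}^1(\Omega_{C/S},\pi^\ast J)\bigr)$, and once compatible isomorphism classes are fixed, the \v{C}ech cocycle measuring failure of actual gluing on triple overlaps lies in $H^2\bigl(C,\underline{\operatorname{Hom}}(\Omega_{C/S},\pi^\ast J)\bigr)$.

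The main step, and the only real obstacle, is the vanishing of these two obstructions. Both are handled by invoking the affineness of $S$: the sheaf $\underline{\operatorname{Ext}}^1(\Omega_{C/S},\pi^\ast J)$ is quasicoherent and supported on the nodal locus of $\pi$, which is finite, hence affine, over $S$, so its $H^1$ vanishes; and $\underline{\operatorname{Hom}}(\Omega_{C/S},\pi^\ast J)$ is a quasicoherent sheaf on $C$, a scheme of relative dimension $1$ over the affine $S$, so its $H^2$ vanishes by Grothendieck's vanishing theorem (or by the fact that $C$ has cohomological dimension $\le 1$ relative to an affine base, e.g.~\cite[Tag~02V7]{stacks}). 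With both obstructions gone, the local deformations glue to a nodal curve $C'/S'$ lifting $C/S$, completing the verification of formal smoothness.
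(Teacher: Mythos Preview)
Your proposal is correct and follows essentially the same approach as the paper: the paper's proof is the single sentence ``We have just seen that all obstructions to infinitesimal deformation vanish over an affine base, which is precisely the formal criterion for smoothness,'' referring back to items~(i)--(v) immediately preceding the corollary, and you have accurately reconstructed that argument (local existence, torsor structure via homogeneity, identification of the two obstructions, and their vanishing over an affine base for the reasons you give).
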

\begin{proof}
	We have just seen that all obstructions to infinitesimal deformation vanish over an affine base, which is precisely the formal criterion for smoothness.
\end{proof}

\begin{cor}
	Any algebraic substack of the stack of all nodal curves that is locally of finite presentation and stable under infinitesimal extensions is smooth.  In particular, the stack of all proper nodal curves is smooth and the open substack of all canonically polarized nodal curves is smooth.
\end{cor}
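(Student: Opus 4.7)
The plan is to verify the two conditions of Definition \ref{D:sm-et-nr}: local finite presentation and formal smoothness. Local finite presentation is part of the hypothesis, so the entire content lies in formal smoothness. I would therefore fix an infinitesimal extension $S \subseteq S'$ of schemes, a map $\xi : S \to \mathcal{M}$, and try to produce an extension $\xi' : S' \to \mathcal{M}$ after étale localization on $S$.

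The key step is to leverage Corollary \ref{C:NCsm}, which is exactly formal smoothness of the ambient stack $\mathcal{N}$. Composing $\xi$ with the inclusion $\iota : \mathcal{M} \hookrightarrow \mathcal{N}$, the formal smoothness of $\mathcal{N}$ supplies, after replacing $S$ by an étale cover (and $S'$ by the unique corresponding infinitesimal extension furnished by Lemma \ref{L:et-inf-lift}), a lift $\eta : S' \to \mathcal{N}$ extending $\iota\xi$. The hypothesis that $\mathcal{M}$ is stable under infinitesimal extensions, interpreted to mean that any object of $\mathcal{N}(S')$ whose restriction to $S$ is (equivalent to an object) in $\mathcal{M}(S)$ is itself (equivalent to an object) in $\mathcal{M}(S')$, then forces $\eta$ to factor through $\iota$, yielding the desired $\xi'$.

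For the ``in particular'' clause, I would verify the three hypotheses for the stack of proper nodal curves and for its open substack of canonically polarized ones. Algebraicity and local finite presentation can be extracted from Hilbert-scheme considerations analogous to the construction of $\mathcal{M}_g$ sketched earlier. Stability under infinitesimal extensions is the geometric content: properness of a morphism depends only on its underlying continuous map between topological spaces, and an infinitesimal extension leaves that underlying space unchanged (cf.\ Corollary \ref{C:equiv-sites}); likewise, relative ampleness of the dualizing sheaf is an open condition on the base and so is unaffected by an infinitesimal thickening.

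The main technical point demanding care is pinning down the precise meaning of ``stable under infinitesimal extensions'' so that the factorization of $\eta$ through $\mathcal{M}$ is canonical rather than merely set-theoretic, and so that it can be carried out compatibly after passing to an étale cover (using that $\mathcal{M} \hookrightarrow \mathcal{N}$ is injective in the sense of Definition \ref{D:MonIso}, i.e., fully faithful objectwise). Once this is in place, the requested lift is produced, up to unique isomorphism, from $\eta$ by the substack property, and no further deformation-theoretic work is required.
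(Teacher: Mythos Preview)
Your proposal is correct and matches the paper's intended reasoning (the paper gives no explicit proof for this corollary, treating it as immediate from Corollary~\ref{C:NCsm}). Your argument---lift in the ambient stack $\mathcal{N}$ using its formal smoothness, then factor through $\mathcal{M}$ via stability under infinitesimal extensions, combined with the hypothesized local finite presentation---is exactly the intended one, and your attention to the role of full faithfulness of the inclusion in making the factorization canonical is a nice elaboration beyond what the paper spells out.
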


\begin{rem} \label{R:cc}
An observant reader will have noticed that the obstructions to~\eqref{E:curve-def} constructed above lie in the graded pieces of $\operatorname{Ext}^2(\Omega_{C/S}, \pi^\ast J)$ induced by the local-to-global spectral sequence.  This is not an accident:  There is a single obstruction in $\operatorname{Ext}^2(\Omega_{C/S}, \pi^\ast J)$ whose vanishing is equivalent to the existence of a solution to~\eqref{E:curve-def}.  We will discuss one way to obtain this obstruction in \S\ref{S:obstructions}.

In fact, one can dispense with the assumption that $C$ be a curve over $S$ and obtain an obstruction to deformation in $\operatorname{Ext}^2(\mathbf{L}_{C/S}, \pi^\ast J)$ where $\mathbf{L}_{C/S}$ is the relative cotangent complex of $C$ over $S$, constructed by Illusie~\cite[Cor.~2.1.3.3, Thm.~2.1.7]{Illusie-1}.  For a nodal curve, one has $\mathbf{L}_{C/S} = \Omega_{C/S}$.  In general, $\mathbf{L}_{C/S}$ is concentrated in nonpositive degrees and $\Omega_{C/S}$ is the $0$-th homology group.
\end{rem}

\subsubsection{Simultaneous deformation of curves and vector bundles}

Let $S$ be a scheme, $C$ a curve over $S$, and $E$ a vector bundle on $C$.  We consider the problem of extending $C$ and $E$ to the scheme $S[\epsilon J]$ where $J$ is a quasicoherent sheaf of $\mathcal{O}_S$-modules.  We begin by defining a sheaf of modules $\Upsilon_C(E)$ on $C$ to play a role analogous to the one played by the module of differentials when we studied deformations of curves.

For each $\mathcal{O}_C$-module $F$, define $\Phi(F)$ to be the set of pairs $(\delta, \varphi)$ where $\delta : \mathcal{O}_C \rightarrow F$ is an $\mathcal O_S$-derivation and $\varphi : E \rightarrow F \otimes E$ is what we will call a \emph{$\delta$-connection}.  That is, for any local sections $f \in \mathcal{O}_C$ and $x \in E$, we have
\begin{equation*}
\varphi(fx) = \delta(f) \mathop{\otimes} x + f \varphi(x) .
\end{equation*}
Then $\Phi(F)$ is naturally a covariant functor of $F$.
 There is a natural $\Gamma(C, \mathcal O_C)$-module structure on $\Phi(F)$, in which $\lambda \in \Gamma(C, \mathcal O_C)$ acts by
\begin{equation*}
\lambda . (\delta, \varphi) = (\lambda \delta, \lambda \varphi) .
\end{equation*}
There is also an evident exact sequence~\eqref{E:upsilon-seq}.
\begin{equation} \label{E:upsilon-seq}
0 \rightarrow \operatorname{Hom}(E, F \otimes E) \rightarrow \Phi(F) \rightarrow \operatorname{Der}_{\mathcal O_S}(\mathcal O_C, F)
\end{equation}
Here the map $\operatorname{Hom}(E, F \otimes E) \rightarrow \Phi(F)$ sends $\varphi$ to $(0, \varphi)$ and the map $\Phi(F) \rightarrow \operatorname{Der}_{\mathcal O_S}(\mathcal O_C, F)$ sends $(\delta, \varphi)$ to $\delta$.

For each open set $U\subseteq C$,  setting   $\underline{\Phi}(F)(U):=\Gamma(U, \underline{\Phi}(F)) = \Phi(F \big|_U)$, we obtain a covariant functor
$$
\underline{\Phi}:(\mathcal O_C\textsf {-mod})\to (\mathcal O_C\textsf {-mod})
$$
with $\Phi(F)=\underline{\Phi}(F)(C)$, and an exact sequence 
$$
0\to \underline {\operatorname{Hom}}_{\mathcal O_C}(E^\vee\otimes E,-)\to \underline {\Phi}(-)\to \operatorname{Der}_{\mathcal O_S}(\mathcal O_C,-)=\operatorname{Hom}_{\mathcal O_C}(\Omega_{C/S},-),
$$
in the sense that it is exact when applied to any $\mathcal O_C$-module.  In fact, we claim that this is surjective on the right, and that $\underline \Phi$ (and hence $\Phi$) is representable; i.e.,   there is a  sheaf of $\mathcal{O}_C$-modules $\Upsilon_{C/S}(E)$  such that $\underline \Phi(-)=\underline{\operatorname{Hom}}_{\mathcal O_C}(\Upsilon_{C/S}(E),-)$, and $  \Phi(-)= \operatorname{Hom}_{\mathcal O_C}(\Upsilon_{C/S}(E),-)$.

\begin{lem} \label{L:Upsilon-extension}
The functors $\Phi$ and $\underline \Phi$ defined above are both representable by the same sheaf of $\mathcal{O}_C$-modules $\Upsilon_{C/S}(E)$ fitting into a short exact sequence~\eqref{E:Upsilon-extension} inducing~\eqref{E:upsilon-seq}:
\begin{equation} \label{E:Upsilon-extension}
0 \rightarrow \Omega_{C/S} \rightarrow \Upsilon_{C/S}(E) \rightarrow \underline{\operatorname{End}}(E) \rightarrow 0
\end{equation}
\end{lem}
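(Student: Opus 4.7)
The plan is to establish representability locally and then glue. On a sufficiently small open $U \subseteq C$ over which $E|_U$ admits a trivialization $E|_U \cong \mathcal O_U^{\oplus r}$, any derivation $\delta : \mathcal O_C|_U \to F|_U$ is upgraded by a canonical ``componentwise'' $\delta$-connection $\varphi_\delta(f_1,\ldots,f_r) = (\delta f_1,\ldots, \delta f_r)$; since the difference of any two $\delta$-connections is $\mathcal O_C$-linear, the sequence~\eqref{E:upsilon-seq} restricted to $U$ becomes split exact, and $\underline{\Phi}|_U$ is representable by $\Omega_{C/S}|_U \oplus \underline{\operatorname{End}}(E)|_U$. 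In particular this shows that the right-hand map in~\eqref{E:upsilon-seq} is surjective as a morphism of sheaves, which is the only global exactness assertion we lose by restricting to sections.

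Next I would globalize. The short exact sequence
\begin{equation*}
0 \to \underline{\operatorname{Hom}}(\underline{\operatorname{End}}(E), F) \to \underline{\Phi}(F) \to \underline{\operatorname{Hom}}(\Omega_{C/S}, F) \to 0
\end{equation*}
is functorial in $F$, and evaluating on $F = \underline{\operatorname{End}}(E)$ and pulling back the identity in $\underline{\operatorname{Hom}}(\underline{\operatorname{End}}(E),\underline{\operatorname{End}}(E))$ produces a canonical extension class in $\underline{\operatorname{Ext}}^1(\underline{\operatorname{End}}(E),\Omega_{C/S})$. The corresponding locally split extension~\eqref{E:Upsilon-extension} defines $\Upsilon_{C/S}(E)$. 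By construction, the Yoneda functor $\underline{\operatorname{Hom}}(\Upsilon_{C/S}(E),-)$ fits into the same sequence~\eqref{E:upsilon-seq} mapping into the sheafified version of $\underline{\Phi}$, and the two agree on each local trivializing open by the previous paragraph; a five-lemma argument with the short exact sequences then identifies $\underline{\operatorname{Hom}}(\Upsilon_{C/S}(E),-)$ with $\underline{\Phi}(-)$ globally. Taking global sections yields the representability of $\Phi$ by the same sheaf.

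An alternative that avoids the explicit gluing is to define $\Upsilon_{C/S}(E)$ as the $\mathcal O_C$-linear dual of the Atiyah algebra $\mathcal A\!t(E)$, the sheaf of $\mathcal O_S$-linear endomorphisms $D$ of $E$ whose commutator with multiplication by $f \in \mathcal O_C$ is scalar: $[D,f] = \xi_D(f) \cdot \mathrm{id}_E$ for a (necessarily unique) vector field $\xi_D \in T_{C/S}$. Then $\mathcal A\!t(E)$ is locally free and sits in the Atiyah sequence $0 \to \underline{\operatorname{End}}(E) \to \mathcal A\!t(E) \to T_{C/S} \to 0$; dualizing and using the trace identification $\underline{\operatorname{End}}(E)^\vee \cong \underline{\operatorname{End}}(E)$ gives~\eqref{E:Upsilon-extension} automatically, and the tautological pairing $\mathcal A\!t(E) \otimes \Upsilon_{C/S}(E) \to \mathcal O_C$ translates homomorphisms $\Upsilon_{C/S}(E) \to F$ into pairs $(\delta,\varphi) \in \Phi(F)$ without reference to a trivialization.

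The main obstacle is verifying that the local identifications of $\underline{\Phi}|_U$ with $\Omega_{C/S}|_U \oplus \underline{\operatorname{End}}(E)|_U$ are truly \emph{functorial} in $U$: on an overlap, two trivializations differ by a transition $g \in \underline{\operatorname{Aut}}(E)$, and the induced automorphism of the local representing object acts by $g$-conjugation on $\underline{\operatorname{End}}(E)$ and adds a $g^{-1}dg$ twist mixing the two summands. This twist is exactly the cocycle representing the extension class defining $\Upsilon_{C/S}(E)$, so consistency of the gluing is equivalent to the cocycle identity for $g^{-1}dg$. Carrying out either approach above confirms this, and the Atiyah-algebra formulation essentially packages the entire verification into the Leibniz identity $[D,f] = \xi_D(f)\cdot\mathrm{id}_E$.
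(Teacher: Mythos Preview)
Your first paragraph is essentially the paper's entire proof: trivialize $E$ locally, split the sequence via the componentwise connection $(\delta,\varphi)\mapsto(\delta,\varphi-\delta^{\times n})$, and conclude that $\underline\Phi$ is locally represented by $\Omega_{C/S}\oplus\underline{\operatorname{End}}(E)$.  The paper then simply invokes the principle that a functor on $\mathcal O_C$-modules that is locally representable is globally representable (the local representing objects glue by uniqueness), whereas you spell out two explicit globalization mechanisms.  So the core argument is the same; you are just more careful about the gluing that the paper leaves implicit.

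Two small points on your elaborations.  In your second paragraph you write ``evaluating on $F=\underline{\operatorname{End}}(E)$ and pulling back the identity in $\underline{\operatorname{Hom}}(\underline{\operatorname{End}}(E),\underline{\operatorname{End}}(E))$''; but that identity sits in the \emph{leftmost} term of the sequence, so there is nothing to pull back.  You mean $F=\Omega_{C/S}$ and the identity of $\Omega_{C/S}$ in the \emph{rightmost} term $\underline{\operatorname{Hom}}(\Omega_{C/S},\Omega_{C/S})$; its local preimages differ by a \v Cech $1$-cocycle for $\underline{\operatorname{Hom}}(\underline{\operatorname{End}}(E),\Omega_{C/S})$, which is exactly the class you describe in your last paragraph.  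Second, your Atiyah-algebra alternative defines $\Upsilon=\mathcal A t(E)^\vee$ by dualizing $0\to\underline{\operatorname{End}}(E)\to\mathcal A t(E)\to T_{C/S}\to 0$.  This is clean when $C/S$ is smooth, but in the nodal setting of this section $\Omega_{C/S}$ is not locally free at the nodes, so $T_{C/S}^\vee$ need not recover $\Omega_{C/S}$ and the dual sequence need not be short exact.  The direct construction (your first two paragraphs, or the paper's local-to-global argument) avoids this issue.
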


\begin{proof}
	For the representability we give an explicit construction, although it is possible to obtain the same result more quickly by an application of  the adjoint functor theorem.  
As mentioned above, to prove that $\Phi$ is representable it is equivalent to prove that $\underline{\Phi}$ is representable, for which we can work locally in $C$.  We can therefore assume $E = \mathcal{O}_C^{\oplus n}$.  But then if $\delta : \mathcal{O}_C \rightarrow F$ is any derivation, it itself gives a $\delta$-connection by $\delta^{\times n}(x_1, \ldots, x_n) = (\delta(x_1) \otimes 1, \ldots, \delta(x_n) \otimes 1)$.  This gives a natural bijection between $\Phi(F)$ and $\operatorname{Der}_{\mathcal O_S}(\mathcal{O}_C, F) \times \operatorname{Hom}(E, F \otimes E)$ sending $(\delta, \varphi)$ to $(\delta, \varphi - \delta^{\times n})$.
  But $\operatorname{Der}_{\mathcal O_S}(\mathcal{O}_C, -)$ is representable by $\Omega_{C/S}$ and $\operatorname{Hom}(E, (-) \otimes E)$ is representable by $E \otimes E^\vee$.  Thus $\underline \Phi$ is representable.  To check that the sequence  \eqref{E:Upsilon-extension} in the lemma is exact, it is enough to check locally, which we have just done.
\end{proof}

\begin{rem} \label{R:inf-aut}
	We can view $\operatorname{Der}_{\mathcal O_S}(\mathcal{O}_C, \pi^*J) = \operatorname{Hom}(\Omega_{C/S}, \pi^*J)$ as the group of automorphisms of $C[\epsilon \pi^*J]$ that act as the identity on $C$ and on $\pi^*J$ (the group of `infinitesimal automorphisms').  Similarly, we can view the group  $\Phi(\pi^*J) = \operatorname{Hom}(\Upsilon_{C/S}(E), \pi^*J)$ as the group of automorphisms of the pair $(C[\epsilon \pi^*J], E + \epsilon J \otimes \pi^*E)$, the trivial square-zero extension of $(C,E)$ by $J$, that act trivially on $C$, $J$, and $E$.  

	In many deformation problems, deformations and obstructions are classified by analyzing the ways to glue, and the obstruction to gluing, deformations along infinitesimal automorphisms.  Thus computing the infinitesimal automorphism group of the objects under consideration goes a long way toward understand how the object can deform.  Indeed, it was by calculating the infinitesimal automorphisms of a deformation of $(C,E)$ that we arrived at the definition of $\Phi$ in the first place.
\end{rem}

Now consider an extension $(C', E')$ of $(C, E)$ to $S[\epsilon J]$.  Let $\pi : C \rightarrow S$ be the projection.  We have a canonical projection
\begin{equation} \label{E:upsilon-projection}
\Upsilon_{C'/S}(E') \rightarrow \Upsilon_{C/S}(E) 
\end{equation}
and it is easy to see that this is surjective.  This induces an exact sequence
\begin{equation} \label{E:upsilon-extension}
0 \rightarrow \pi^\ast J \rightarrow \mathcal{O}_C \mathop{\otimes}_{\mathcal{O}_{C'}} \Upsilon_{C'/S}(E') \rightarrow \Upsilon_{C/S}(E) \rightarrow 0,
\end{equation}
where the morphism $\pi^\ast J \rightarrow \mathcal{O}_C \mathop{\otimes}_{\mathcal{O}_{C'}} \Upsilon_{C'/S}(E')$ is the composition
\begin{equation} \label{E:ideal-inclusion}
\pi^\ast J \rightarrow \mathcal{O}_C \mathop{\otimes}_{\mathcal{O}_{C'}} \Omega_{C'/S} \rightarrow \mathcal{O}_C \mathop{\otimes}_{\mathcal{O}_{C'}} \Upsilon_{C'/S}(E').
\end{equation}
As in the proof of the exactness of~\eqref{E:omega-extension}, the right exactness and exactness in the middle of~\eqref{E:upsilon-extension} is formal using the universal properties.  The left exactness follows easily from the exactness of \eqref{E:omega-extension} and \eqref{E:Upsilon-extension}.  Indeed, $\underline{\operatorname{End}}(E')$ is a flat $\mathcal{O}_{C'}$-module, so tensoring the short exact  sequence \eqref{E:Upsilon-extension} for $E'$ on $C'/S$ with $\mathcal O_{C}\otimes_{\mathcal O_C'}(-)$, we obtain the  short exact sequence
\begin{equation*}
0 \rightarrow \mathcal{O}_C \mathop{\otimes}_{\mathcal{O}_{C'}} \Omega_{C'/S} \rightarrow \mathcal{O}_C \mathop{\otimes}_{\mathcal{O}_{C'}} \Upsilon_{C'/S}(E') \rightarrow \mathcal{O}_C \mathop{\otimes}_{\mathcal{O}_{C'}} \underline{\operatorname{End}}(E') \rightarrow 0.
\end{equation*}
 Therefore using also the exactness of \eqref{E:omega-extension},   the morphisms in~\eqref{E:ideal-inclusion} are both injections.
The discussion shows that each extension $(C',E')$ of $(C,E)$ induces an extension of $\Upsilon_{C/S}(E)$ by $\pi^\ast J$.  

\vskip .2cm 
Conversely, given such an extension,
\begin{equation} \label{E:upsilon-extension-2}
0 \rightarrow \pi^\ast J \rightarrow \Upsilon' \rightarrow \Upsilon_{C/S}(E) \rightarrow 0
\end{equation}
we can recover $\mathcal{O}_{C'}$ as $\delta^{-1} \Upsilon' = \mathcal O_{C'} \mathbin\times_{\Upsilon} \Upsilon'$, where $\delta : \mathcal{O}_C \rightarrow \Upsilon_{C/S}(E)$ is the universal derivation; i.e., derivation $\delta$ in the universal pair $(\delta,\varphi)$ obtained from the identity map  under the identification $\operatorname{Hom}(\Upsilon_{C/S}(E),\Upsilon_{C/S}(E))=\Phi(\Upsilon_{C/S}(E))$.  
 Likewise, we can recover $E'$ by first tensoring~\eqref{E:upsilon-extension-2} by $E$ and then pulling back via the universal $\delta$-connection $\varphi$:
\begin{equation*} \xymatrix@R=15pt{
0 \ar[r] & \pi^\ast J \otimes E \ar@{-->}[r] \ar@{=}[d] & E' \ar@{-->}[r] \ar@{-->}[d] & E \ar[r] \ar[d]^{\varphi} & 0 \\
0 \ar[r] & \pi^\ast J \otimes E \ar[r] & \Upsilon' \otimes E \ar[r] & \Upsilon_{C/S}(E) \otimes E \ar[r] & 0
} \end{equation*}

Summarizing the discussion above:
\begin{lem} \label{L:def-curve-vb}
Let $\mathcal{G}$  be the stack of pairs $(C,E)$ where $C$ is a nodal curve and $E$ is a vector bundle on $C$.  Suppose that $(C,E)$ is an $S$-point of $\mathcal{F}$.  Then
\begin{equation*}
T_{\mathcal{F}}(C,E) = \mathbf{Ext}\bigl(\Upsilon_{C/S}(E), \mathcal{O}_C\bigr) .
\end{equation*}
In particular,
\begin{align*}
T^{-1}_{\mathcal{F}}(C,E) &= \operatorname{Hom}\bigl(\Upsilon_{C/S}(E), \mathcal{O}_C\bigr) \\
T^0_{\mathcal{F}}(C,E) &= \operatorname{Ext}^1\bigl(\Upsilon_{C/S}(E), \mathcal{O}_C\bigr) .
\end{align*}
\end{lem}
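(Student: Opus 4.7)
The plan is to unpack the definition of the tangent space and then apply the representability result of Lemma~\ref{L:Upsilon-extension} together with the analysis of deformations $(C',E')$ of $(C,E)$ that immediately precedes the lemma statement. By Definition~\ref{D:TangSp}, a section of $T_{\mathcal{F}}$ over $(C,E)$ is a lift of the map $S\to \mathcal{F}$ classifying $(C,E)$ to $S[\epsilon]$, i.e.\ a pair $(C',E')$ over $S[\epsilon]$ restricting to $(C,E)$ over $S$. Thus $T_{\mathcal{F}}(C,E)$ is precisely the groupoid of square-zero extensions of $(C,E)$ by $J=\mathcal{O}_S$, for which $\pi^{\ast}J=\mathcal{O}_C$.

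First I would invoke the construction of~\eqref{E:upsilon-extension}: any extension $(C',E')$ of $(C,E)$ over $S[\epsilon J]$ produces a short exact sequence
\begin{equation*}
0\rightarrow \pi^{\ast}J \rightarrow \mathcal{O}_C\mathop{\otimes}_{\mathcal{O}_{C'}}\Upsilon_{C'/S}(E')\rightarrow \Upsilon_{C/S}(E)\rightarrow 0,
\end{equation*}
and this assignment is functorial in isomorphisms of extensions. Conversely, given an extension $\Upsilon'$ of $\Upsilon_{C/S}(E)$ by $\pi^{\ast}J$, one recovers $\mathcal{O}_{C'}$ as the pullback $\mathcal{O}_C\times_{\Upsilon_{C/S}(E)}\Upsilon'$ via the universal derivation, and $E'$ as the pullback of $\Upsilon'\otimes E\to \Upsilon_{C/S}(E)\otimes E$ along the universal $\delta$-connection $E\to\Upsilon_{C/S}(E)\otimes E$ (both already described in the text preceding the lemma). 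The main step is then to check that these constructions are mutually inverse equivalences of groupoids; this is essentially bookkeeping, using the universal properties of $\Omega_{C/S}$ and $\Upsilon_{C/S}(E)$ to match isomorphisms of extensions of pairs with isomorphisms of extensions of $\mathcal{O}_C$-modules.

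Setting $J=\mathcal{O}_S$, this identifies $T_{\mathcal{F}}(C,E)$ with the groupoid $\mathbf{Ext}(\Upsilon_{C/S}(E),\mathcal{O}_C)$ of extensions of $\Upsilon_{C/S}(E)$ by $\mathcal{O}_C$. To extract $T^{-1}$ and $T^{0}$, I would then note that $T^0_{\mathcal{F}}(C,E)$ is by definition the set of isomorphism classes in this groupoid, which is $\operatorname{Ext}^1(\Upsilon_{C/S}(E),\mathcal{O}_C)$, while $T^{-1}_{\mathcal{F}}(C,E)$ is the automorphism group of the split (zero) extension. These automorphisms are precisely the infinitesimal automorphisms of the trivial square-zero extension of $(C,E)$, and Remark~\ref{R:inf-aut} identifies them with $\operatorname{Hom}(\Upsilon_{C/S}(E),\mathcal{O}_C)$.

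The main obstacle I anticipate is the equivalence-of-groupoids step: one must verify that an isomorphism of two deformations $(C',E')\simeq(C'',E'')$ restricting to the identity on $(C,E)$ corresponds bijectively, via the above dictionary, to an isomorphism of the associated extensions of $\Upsilon_{C/S}(E)$ by $\mathcal{O}_C$ restricting to the identity on sub- and quotient. This uses the surjectivity of the projection~\eqref{E:upsilon-projection}, the flatness of $\underline{\operatorname{End}}(E')$ over $\mathcal{O}_{C'}$, and the fact that the pullback description of $(\mathcal{O}_{C'},E')$ is functorial in the extension $\Upsilon'$; these are routine once one keeps careful track of the universal derivation and universal $\delta$-connection, but they constitute the bulk of the verification.
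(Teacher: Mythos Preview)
Your proposal is correct and follows exactly the paper's approach: the lemma is explicitly stated as a summary of the discussion preceding it, and you have accurately recapitulated that discussion---the forward construction via~\eqref{E:upsilon-extension}, the inverse construction of $\mathcal{O}_{C'}$ and $E'$ via pullback along the universal derivation and universal $\delta$-connection, and the (routine, left-to-the-reader) verification that these are mutually inverse. Your extraction of $T^{-1}$ and $T^0$ is also standard, and your invocation of Remark~\ref{R:inf-aut} for the automorphisms of the zero object is appropriate.
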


Note that for a nodal curve $C/S$ (or more generally a curve with locally planar singularities), applying $\operatorname{Hom}(-,\mathcal O_C)$ to Lemma \ref{L:Upsilon-extension}, and utilizing Lemmas \ref{L:def-curve-vb}, \ref{L:nodal-tangent}, and Corollary \ref{C:TFib}, we  obtain a long exact sequence
$$
\xymatrix@R=4em @C=1em {
    0 \ar[r] & T^{-1}_{\mathsf {Fib}_{C/S}}(E) \ar[r] & T^{-1}_{\mathcal F}(C,E) \ar[r] & T^{-1}_{\mathcal N}(C/S)
               \ar@{->} `r/8pt[d] `/10pt[l] `^dl[ll]|{} `^r/3pt[dll] [dll] \\
                & T^{0}_{\mathsf {Fib}_{C/S}}(E) \ar[r] & T^{0}_{\mathcal F}(C,E) \ar[r] & T^{0}_{\mathcal N}(C/S)
                \ar@{->} `r/8pt[d] `/10pt[l] `^dl[ll]|{} `^r/3pt[dll] [dll] \\
                                & \operatorname{Ext}^2(\underline{\operatorname{End}}(E),\mathcal O_C) \ar[r] & \operatorname{Ext}^2(\Upsilon_{C/S}(E),\mathcal O_C) \ar[r] & \operatorname{Ext}^2(\Omega_{C/S},\mathcal O_C)
               \ar@{->}[r]&\cdots \\
    }
    $$
Note that if $S$ is affine, then the last row above is $0$.  
\begin{rem}
	There is an explicit treatment of infinitesimal deformations of pairs $(C,E)$ where $C$ is a curve and $E$ is a \emph{line bundle} in \cite[Thms.~3.1, 4.6]{Wang}, with infinitesimal automorphisms, deformations, and obstructions lying respectively  in $\operatorname{Ext}^i(\mathscr{P}^1_{X/S}(E), E)$, $i=0,1,2$, where $\mathscr{P}^1(E)$ is the sheaf of \emph{principal parts} of $E$, and fits in an exact sequence $0\to \Omega_{C/S}(E)\to  \mathscr{P}^1_{X/S}(E) \to E\to 0$.  
	  The same formulas do not give a deformation--obstruction theory for the pair $(C,E)$ when the rank of $E$ is larger than~$1$.
\end{rem}

\begin{rem}
	Illusie provides a $2$-step obstruction theory for deformations of the pair $(C,E)$.  The primary obstruction is to deforming $C$, which lies in this case in $\operatorname{Ext}^1(\Omega_{C/S}, \pi^\ast J)$, and in general in $\operatorname{Ext}^2(\mathbf{L}_{C/S}, \pi^\ast J)$, as we have discussed (see Remark \ref{R:cc}).  A secondary obstruction in $\operatorname{Ext}^2(\mathscr{P}^1(E), E)$ obstructs the existence of a deformation of $E$ that is compatible with a fixed deformation $C'$ of $C$ \cite[Prop.~IV.3.1.5]{Illusie-1}.  This latter obstruction may be constructed as the cup product of the class $[C'] \in \operatorname{Ext}^1(\mathbf{L}_{C/S}, \pi^\ast J)$ and the \emph{Atiyah class} \cite[\S\S{}IV.2.3.6--7]{Illusie-1}.

	One can arrive at this $2$-step obstruction theory\footnote{We have only verified the obstruction groups coincide, not the obstruction classes.} from the extension~\eqref{E:Upsilon-extension}, which induces an exact sequence:
	\begin{equation*}
		\operatorname{Ext}^2(\underline{\operatorname{End}}(E), \pi^\ast J) \rightarrow \operatorname{Ext}^2(\Upsilon_{C/S}(E), \pi^\ast J) \rightarrow \operatorname{Ext}^2(\Omega_{C/S}, \pi^\ast J)
	\end{equation*}
	The first term may of course be identified canonically with $\operatorname{Ext}^2(E, E \otimes \pi^\ast J)$ since $E$ is flat.
\end{rem}

For the sake of completeness, we also observe homogeneity:

\begin{lem} \label{L:CVBhmg}
	The stack of pairs $(C,E)$ where $C$ is a curve and $E$ is a vector bundle on $E$ is homogeneous.
\end{lem}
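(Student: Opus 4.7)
The plan is to bootstrap from the homogeneity of the two factors using the forgetful morphism $p \colon \mathcal F \to \mathcal N$ sending a pair $(C,E)$ to the underlying curve $C$. By Lemma~\ref{L:NChmg} the stack $\mathcal N$ of nodal curves is already known to be homogeneous, so by Lemma~\ref{L:RelHomog}(i) it suffices to prove that $p$ is a relatively homogeneous morphism of CFGs.

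To verify relative homogeneity, fix a scheme $S$ with a morphism $S \to \mathcal N$, which corresponds to a nodal curve $\pi \colon C \to S$. I would then identify the fiber product $\mathcal F \times_{\mathcal N} S$ explicitly: an object over a test scheme $T \to S$ is a triple $(C_T, E_T, \alpha)$ where $C_T \to T$ is a nodal curve, $E_T$ is a vector bundle on $C_T$, and $\alpha \colon C_T \xrightarrow{\sim} C \times_S T$ is an isomorphism of $T$-curves. Using $\alpha$ to transport $E_T$ to $C \times_S T$, this groupoid is canonically equivalent to the groupoid of vector bundles on $C \times_S T$, so $\mathcal F \times_{\mathcal N} S \simeq \mathsf{Fib}_{C/S}$. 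By Lemma~\ref{L:Fib-hmg} the stack $\mathsf{Fib}_{C/S}$ is homogeneous, which gives the relative homogeneity of $p$.

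Putting the two pieces together via Lemma~\ref{L:RelHomog}(i), we conclude that $\mathcal F$ is homogeneous, as required. The only mildly delicate step is the identification of the fiber $\mathcal F \times_{\mathcal N} S$ with $\mathsf{Fib}_{C/S}$, which one should make by explicitly constructing mutually quasi-inverse functors and verifying that the gluing of pullback data along the canonical isomorphisms $\alpha$ is well-defined up to unique isomorphism; this is essentially formal from Definition~\ref{D:2-product-categories-over-C} and the fact that pullback of vector bundles commutes with base change. No new infinitesimal deformation-theoretic input is needed, since all the deformation-theoretic content has been packaged into the two cited homogeneity lemmas.
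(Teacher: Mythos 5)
Your proposal is correct and follows essentially the same route as the paper: bootstrap along the forgetful morphism to the stack of nodal curves, using Lemma~\ref{L:NChmg} for the base, Lemma~\ref{L:Fib-hmg} for the fibers (your explicit identification $\mathcal F \times_{\mathcal N} S \simeq \mathsf{Fib}_{C/S}$ is exactly what the paper's citation of that lemma encodes), and Lemma~\ref{L:RelHomog} to conclude. No discrepancies.
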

\begin{proof}
	Let $\mathcal X$ denote the stack of pairs $(C,E)$ as in the statement of the lemma and let $\mathcal N$ be the stack of nodal curves.  The forgetful map $\mathcal X \rightarrow \mathcal N$ is relatively homogeneous by Lemma \ref{L:Fib-hmg} and $\mathcal N$ is homogeneous by Lemma \ref{L:NChmg} so $\mathcal X$ is homogeneous by Lemma \ref{L:RelHomog}.
\end{proof}

\subsubsection{Simultaneous deformation of curves, vector bundles, and morphisms of vector bundles}
\label{S:CVBm}

We will consider a curve $C$ over $S$, vector bundles $E$ and $F$ on $C$, and a homomorphism $\sigma : E \rightarrow F$.  We ask in how many ways these data can be extended to $C'$, $E'$, $F'$, and $\sigma'$ over $S[\epsilon J]$ where $J$ is a quasicoherent sheaf on $S$.  This time, deformations will be controlled by a complex rather than by a module.

Imitating the last section we can construct a quasicoherent sheaf $\Upsilon_{C/S}(E,F)$
 controlling simultaneous deformations of the two vector bundles $E$ and $F$.  This will be the universal example of a quasicoherent $\mathcal O_C$-module, equipped with a derivation
\begin{gather*}
\delta : \mathcal{O}_C \rightarrow \Upsilon_{C/S}(E,F) \\
\end{gather*}
and $\delta$-connections,
\begin{gather*}
\varphi_E : E \rightarrow \Upsilon_{C/S}(E,F) \otimes E \\
\varphi_F : F \rightarrow \Upsilon_{C/S}(E,F) \otimes F.
\end{gather*}
As it would be similar to the proof of Lemma \ref{L:Upsilon-extension}, we will omit an explicit verification of the existence of such a universal object, as well as the construction of the natural exact sequence:
\begin{equation*}
0 \rightarrow \Omega_{C/S} \rightarrow \Upsilon_{C/S}(E, F) \rightarrow \underline{\operatorname{End}}(E) \times \underline{\operatorname{End}}(F) \rightarrow 0
\end{equation*}

Before studying the question of deforming a homomorphism $\sigma : E \rightarrow F$, we analyze how we can recover the deformation $\underline{\operatorname{Hom}}(E',F')$ of $\underline{\operatorname{Hom}}(E,F)$ from deformations of $E$ and $F$, encoded as an extension of $\Upsilon_{C/S}(E,F)$ by $\pi^\ast J$. 

If $\sigma : E \rightarrow F$ is an homomorphism of vector bundles we obtain an element
\begin{gather*}
[\sigma, \varphi] \in \Upsilon_{C/S}(E,F) \otimes \underline{\operatorname{Hom}}(E,F) \\
[\sigma, \varphi] := (\mathrm{id}_{\Upsilon_{C/S}(E,F)} \otimes \sigma) \circ \varphi_E - \varphi_F \circ \sigma : E \rightarrow \Upsilon_{C/S}(E,F) \otimes F .
\end{gather*}
As $[\sigma, \varphi]$ depends linearly on $\sigma$, this induces a homomorphism of $\mathcal{O}_C$-modules:
\begin{equation*}
\varphi_{\underline{\operatorname{Hom}}(E,F)} : \underline{\operatorname{Hom}}(E,F) \rightarrow \Upsilon_{C/S}(E,F) \otimes \underline{\operatorname{Hom}}(E,F)
\end{equation*}

\begin{lem}
Suppose that $\Upsilon' \in \mathbf{Ext}\bigl(\Upsilon_{C/S}(E,F), \pi^\ast J)$ corresponds to extensions $C'$ of $C$, $E'$ of $E$, and $F'$ of $F$.  Then there is a cartesian square of sheaves of abelian groups: 
\begin{equation*} \xymatrix{
		\underline{\operatorname{Hom}}(E',F') \ar[r] \ar[d]_{\varphi_{\underline{\operatorname{Hom}}(E',F')}} & \underline{\operatorname{Hom}}(E,F) \ar[d]^{\varphi_{\underline{\operatorname{Hom}}(E,F)}} \\
		\Upsilon' \otimes \underline{\operatorname{Hom}}(E,F) \ar[r] & \Upsilon \otimes \underline{\operatorname{Hom}}(E,F)
} \end{equation*}
\end{lem}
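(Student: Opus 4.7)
The claim is local on $C$, so I work in a neighborhood where the extension
\[
0 \to \pi^\ast J \to \Upsilon' \to \Upsilon_{C/S}(E,F) \to 0
\]
splits. By Lemma \ref{L:Upsilon-extension}, the set of local splittings is a torsor under the triples $(\delta_1, \varphi_{E,1}, \varphi_{F,1})$ consisting of a derivation $\delta_1: \mathcal O_C \to \pi^\ast J$ and compatible $\delta_1$-connections on $E$ and $F$. A choice of splitting identifies $C'$, $E'$, $F'$ with the trivial square-zero extensions $C[\epsilon \pi^\ast J]$, $E \oplus \pi^\ast J \otimes E$, $F \oplus \pi^\ast J \otimes F$, and identifies $\Upsilon' \cong \pi^\ast J \oplus \Upsilon_{C/S}(E,F)$. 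In this local trivialization, the $\mathcal O_{C'}$-linear maps $\sigma' : E' \to F'$ are exactly those of the form $\sigma' = \sigma + \epsilon \beta$ with $\sigma : E \to F$ any $\mathcal O_C$-linear map and $\beta \in \pi^\ast J \otimes \underline{\operatorname{Hom}}(E,F)$ arbitrary.

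The bottom arrow of the square is the quotient $\Upsilon' \to \Upsilon_{C/S}(E,F)$ tensored with the identity on $\underline{\operatorname{Hom}}(E,F)$. I define the left vertical arrow by sending $\sigma' = \sigma + \epsilon \beta$ to the class in $\Upsilon' \otimes \underline{\operatorname{Hom}}(E,F)$ corresponding, under the chosen splitting, to the pair $(\beta, \varphi_{\underline{\operatorname{Hom}}(E,F)}(\sigma))$. Commutativity of the square is then immediate, and the cartesian property follows by inspection: in the splitting, both $\underline{\operatorname{Hom}}(E',F')$ and the fiber product identify with $\underline{\operatorname{Hom}}(E,F) \oplus \pi^\ast J \otimes \underline{\operatorname{Hom}}(E,F)$, and the constructed map realizes precisely this identification.

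The main obstacle is to verify that the left vertical arrow is independent of the chosen local splitting. Changing the splitting by a triple $(\delta_1, \varphi_{E,1}, \varphi_{F,1})$ modifies the trivializations of $E'$ and $F'$ by the evident twist, which in turn replaces $\beta$ by $\beta - [\sigma, \varphi_1] = \beta - \bigl((\mathrm{id} \otimes \sigma) \circ \varphi_{E,1} - \varphi_{F,1} \circ \sigma\bigr)$. By the very definition of $\varphi_{\underline{\operatorname{Hom}}(E,F)}$ as the commutator with the universal connection, the corresponding change in the splitting $\Upsilon' \cong \pi^\ast J \oplus \Upsilon_{C/S}(E,F)$ exactly compensates for this shift, so the image in $\Upsilon' \otimes \underline{\operatorname{Hom}}(E,F)$ is indeed splitting-independent. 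Once this invariance is checked, the local constructions glue to a well-defined global map, and the cartesian square holds globally.
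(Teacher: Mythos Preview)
Your proof is correct, but it follows a different route from the paper's.  The paper works globally throughout: it identifies $\Upsilon'$ with $\mathcal O_C \otimes_{\mathcal O_{C'}} \Upsilon_{C'/S}(E',F')$, defines the left vertical arrow as the composition
\[
H' \xrightarrow{\varphi_{H'}} \Upsilon_{C'/S}(E',F') \otimes_{\mathcal O_{C'}} H' \to \Upsilon' \otimes_{\mathcal O_C} H
\]
(reduction of the universal connection on $C'$), and then verifies the square is cartesian by comparing kernels of the two horizontal surjections---both are identified with $\pi^\ast J \otimes H$, using flatness of $H'$ over $\mathcal O_{C'}$.  No local splitting is chosen, and no gluing check is needed.

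Your approach trades this global identification for an explicit local construction and a splitting-independence verification.  The computation you sketch (that changing the splitting by $\psi = (\delta_1,\varphi_{E,1},\varphi_{F,1})$ shifts $\beta$ by $[\sigma,\varphi_1]$, which is exactly $(\psi \otimes \mathrm{id})(\varphi_H(\sigma))$ by the universal property of $\Upsilon$) is correct and is the heart of your argument.  What you gain is concreteness; what you lose is the intrinsic description of the left arrow as the reduction of $\varphi_{H'}$, which the paper uses downstream when reconstructing $\sigma'$ from an extension of $\Upsilon^\bullet$.
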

\begin{proof}
	For the sake of readability, we will write 
	\begin{align*}
		\Upsilon & = \Upsilon_{C/S}(E,F) & H & = \underline{\operatorname{Hom}}_{\mathcal{O}_C}(E,F) & H' & = \underline{\operatorname{Hom}}_{\mathcal{O}_{C'}}(E',F') & \varphi & = \varphi_H 
	\end{align*}
	We also observe that we can identify $\Upsilon' = \mathcal{O}_{C} \mathop{\otimes}_{\mathcal{O}_{C'}} \Upsilon_{C'/S}(E',F')$
	 by Lemma \ref{L:def-curve-vb} (which allows us to identify the exact sequences~\eqref{E:upsilon-extension} and~\eqref{E:upsilon-extension-2}) and then write $\varphi'$ for the composition of $\varphi_{H'}$ and reduction modulo $\pi^\ast J$:
	\begin{equation*}
		 \varphi' : H' \xrightarrow{\varphi_{H'}} \Upsilon_{C'/S}(E',F') \otimes_{\mathcal{O}_{C'}} H' \rightarrow \Upsilon' \otimes_{\mathcal{O}_C} H
	\end{equation*}
	Consider the following commutative squares:
	\begin{equation*} \xymatrix{
			\mathcal{O}_{C'} \ar[r] \ar[d]_{\rho + \epsilon \delta'} & \mathcal{O}_C \ar[d]^{\mathrm{id} + \epsilon \delta} \\
			\mathcal{O}_C + \epsilon \Upsilon' \ar[r] & \mathcal{O}_C + \epsilon \Upsilon
	} \hskip1cm \xymatrix{
		H' \ar[r] \ar[d]_{\rho + \epsilon \varphi} & H \ar[d]^{\mathrm{id} + \epsilon \varphi} \\
		H + \epsilon (\Upsilon' \otimes H) \ar[r] & H + \epsilon (\Upsilon \otimes H)
	} \end{equation*}
We have written $\rho$ for reduction modulo $\pi^\ast J$ and $\delta'$ for the composition
\begin{equation*}
\mathcal O_{C'} \rightarrow \Upsilon_{C'/S}(E',F') \rightarrow \mathcal O_C \otimes_{\mathcal O_{C'}} \Upsilon_{C'/S}(E',F') = \Upsilon' .
\end{equation*}
We have already seen that the square on the left is cartesian in  the discussion of \eqref{E:upsilon-extension-2} and Lemma \ref{L:def-curve-vb}.  The square on the right is cartesian as well, since its horizontal arrows are surjective and we can identify the kernels of both horizontal arrows with $\pi^\ast J \otimes H$:  on the top we have applied $\mathop{\otimes}_{\mathcal{O}_{C'}} H'$ to the exact sequence
	\begin{equation*}
		0 \rightarrow \pi^\ast J \rightarrow \mathcal{O}_{C'} \rightarrow \mathcal{O}_C \rightarrow 0
	\end{equation*}
	noting that $H'$ is flat over $\mathcal{O}_{C'}$, and on the bottom we have applied $\mathop{\otimes}_{\mathcal{O}_C} H$ to the exact sequence~\eqref{E:upsilon-extension-2}, this time noting that $H$ is flat over $\mathcal{O}_C$.  Each entry in the square on the right is a module under the ring in the corresponding entry of the square on the left and the arrows in the square on the right are homomorphisms with respect to the arrows in the square on the left.  It follows that the module structure on $H'$ is induced from the fiber product.  The lemma now follows, since the square
	\begin{equation*} \xymatrix{
			H + \epsilon(\Upsilon' \otimes H) \ar[r] \ar[d] & H + \epsilon (\Upsilon \otimes H) \ar[d] \\
			\Upsilon' \otimes H \ar[r] & \Upsilon \otimes H
	} \end{equation*}
	is cartesian.
\end{proof}

If we fix $\sigma : E \rightarrow F$, then $\varphi_{\underline{\operatorname{Hom}}(E,F)}(\sigma) \in \Upsilon_{C/S}(E,F) \otimes \underline{\operatorname{Hom}}(E,F)$ induces a linear map:
\begin{equation} \label{E:def-curve-vb-sect-complex}
\underline{\operatorname{Hom}}(E,F)^\vee \rightarrow \Upsilon_{C/S}(E,F) \ \ \ \ \ \ \ \ \ :\Upsilon^\bullet_{C/S}(E,F,\sigma)
\end{equation}
via evaluation against $\underline{\operatorname{Hom}}(E,F)$.  
As indicated  write $\Upsilon^\bullet_{C/S}(E,F,\sigma)$ for the complex~\eqref{E:def-curve-vb-sect-complex}, concentrated in degrees $[-1,0]$.

\begin{rem}
As with $\Upsilon_{C/S}(E)$, one may arrive at the definition of $\Upsilon^\bullet_{C/S}(E,F,\sigma)$ by contemplating, with a bit of care, the automorphism group of the trivial extension of $(C,E,F,\sigma)$ to $S[\epsilon J]$ (cf.\ Remark \ref{R:inf-aut}).
\end{rem}

In order to state the next lemma, we recall the category  $\mathbf{Ext}(K^\bullet, L)$ of extensions of a complex $K^{-1} \rightarrow K^0$ by a module $L$ is, by definition, the category of commutative diagrams
\begin{equation*} \xymatrix{
& & & K^{-1} \ar[dl] \ar[d] \\
0 \ar[r] & L \ar[r] & M \ar[r] & K^0  \ar[r] & 0
} \end{equation*}
in which the bottom row is exact~\cite[\S{}VII.3]{sga7-1}.

\begin{lem}\label{L:C-EF-TanAut}
Let $\mathcal F$ be the stack of quadruples $(C,E,F,\sigma)$ where $C$ is a nodal curve, $E$ and $F$ are vector bundles on $C$, and $\sigma : E \rightarrow F$ is a morphism of vector bundles.  At an $S$-point $(C,E,F,\sigma)$, we have
\begin{equation*}
T_{\mathcal{F}}(C,E,F,\sigma) = \mathbf{Ext}\bigl(\Upsilon^\bullet_{C/S}(E,F,\sigma), \mathcal{O}_C\bigr)
\end{equation*}
and therefore
\begin{gather*}
T_{\mathcal{F}}^{-1}(C,E,F,\sigma) = \operatorname{Ext}^0\bigl(\Upsilon^\bullet_{C/S}(E,F,\sigma), \mathcal{O}_C\bigr) \\
T_{\mathcal{F}}^0(C,E,F,\sigma) = \operatorname{Ext}^1\bigl(\Upsilon^\bullet_{C/S}(E,F,\sigma), \mathcal{O}_C\bigr) .
\end{gather*}
\end{lem}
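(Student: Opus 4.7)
The plan is to bootstrap from Lemma \ref{L:def-curve-vb}, which already identifies the tangent groupoid of the stack $\mathcal G$ of triples $(C,E,F)$ with $\mathbf{Ext}(\Upsilon_{C/S}(E,F), \mathcal O_C)$. The forgetful morphism $\mathcal F \to \mathcal G$ sending $(C,E,F,\sigma)$ to $(C,E,F)$ lets us analyze the tangent space to $\mathcal F$ in two stages: first extend the underlying triple $(C,E,F)$ to $(C',E',F')$ over $S[\epsilon]$, and then extend $\sigma$ to a morphism $\sigma' : E' \to F'$. The expected picture is that the first stage is an $\mathbf{Ext}(\Upsilon, \mathcal O_C)$-torsor of choices, and on top of each such choice the set of extensions of $\sigma$ has an affine structure controlled by the differential $\underline{\operatorname{Hom}}(E,F)^\vee \to \Upsilon$ in the complex $\Upsilon^\bullet_{C/S}(E,F,\sigma)$.

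The key technical step, and the place where the hypothesis about $\sigma$ enters, is the cartesian square lemma on p.~\pageref{E:def-curve-vb-sect-complex} for $\underline{\operatorname{Hom}}(E',F')$: once a deformation $\Upsilon'$ of $\Upsilon = \Upsilon_{C/S}(E,F)$ is fixed, producing an extension $\sigma' \in \underline{\operatorname{Hom}}(E',F')$ of $\sigma$ is equivalent, by the universal property of the fiber product, to producing an element $\tilde\sigma \in \Upsilon' \otimes \underline{\operatorname{Hom}}(E,F)$ whose image in $\Upsilon \otimes \underline{\operatorname{Hom}}(E,F)$ equals $\varphi_{\underline{\operatorname{Hom}}(E,F)}(\sigma)$. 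Dualizing, this is the same as giving a lift of the map $\underline{\operatorname{Hom}}(E,F)^\vee \to \Upsilon$ (namely the one encoded by $\varphi_{\underline{\operatorname{Hom}}(E,F)}(\sigma)$, which is precisely the differential of $\Upsilon^\bullet_{C/S}(E,F,\sigma)$) through the extension $M = \Upsilon'$. By the very definition of $\mathbf{Ext}$ of a two-term complex, an object of $\mathbf{Ext}(\Upsilon^\bullet_{C/S}(E,F,\sigma),\mathcal O_C)$ consists of exactly such a pair: an extension $0 \to \mathcal O_C \to M \to \Upsilon \to 0$ together with a lift of $\underline{\operatorname{Hom}}(E,F)^\vee \to \Upsilon$ to $M$. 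So the assignment $(C',E',F',\sigma') \mapsto (\Upsilon', \tilde\sigma)$ yields a functor to $\mathbf{Ext}(\Upsilon^\bullet_{C/S}(E,F,\sigma), \mathcal O_C)$, and I would construct an inverse by running each step in reverse: $M$ gives $\Upsilon'$, hence $(C',E',F')$ by Lemma \ref{L:def-curve-vb}, and the lift gives the compatible section $\sigma'$ by the cartesian square.

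To promote this to an equivalence of groupoids I would check that the construction is functorial in morphisms of extensions. On one side, morphisms in $T_{\mathcal F}(C,E,F,\sigma)$ are automorphisms of $(C',E',F',\sigma')$ that act as the identity on the reduction; on the other, morphisms of extensions of $\Upsilon^\bullet$ are commutative ladders. Both notions pull back to morphisms of extensions of $\Upsilon$ compatible with the chosen lift, and the cartesian square identifies these, so the functor is fully faithful; essential surjectivity is precisely the reverse construction sketched above. Extracting $T^{-1}$ and $T^0$ is then formal: $\mathbf{Ext}$ of a two-term complex by $\mathcal O_C$ is a Picard category whose set of isomorphism classes is $\operatorname{Ext}^1(\Upsilon^\bullet_{C/S}(E,F,\sigma), \mathcal O_C)$ and whose automorphism group of any object is $\operatorname{Ext}^0(\Upsilon^\bullet_{C/S}(E,F,\sigma), \mathcal O_C)$, by the same argument that gives the analogous statement for $\mathbf{Ext}$ of modules.

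The main obstacle I expect is bookkeeping rather than mathematical depth: keeping the evaluation/tensor-hom adjunctions straight so that the differential of the complex $\Upsilon^\bullet_{C/S}(E,F,\sigma)$ really does match the commutator $\varphi_E \circ \sigma - (\mathrm{id} \otimes \sigma) \circ \varphi_F$ appearing in the cartesian square, and then verifying that a chain-level morphism of extensions of the complex induces exactly an infinitesimal automorphism of $(C',E',F',\sigma')$. Once those identifications are nailed down, the equivalence and the resulting identification of $T^{-1}$ and $T^0$ with $\operatorname{Ext}^0$ and $\operatorname{Ext}^1$ follows formally.
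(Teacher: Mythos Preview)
Your approach is essentially the paper's: both pass from a deformation $(C',E',F',\sigma')$ to the extension $\Upsilon'$ of $\Upsilon_{C/S}(E,F)$, then use the cartesian-square description of $\underline{\operatorname{Hom}}(E',F')$ to identify the lift $\sigma'$ with a map $\beta:\underline{\operatorname{Hom}}(E,F)^\vee \to \Upsilon'$ lifting the differential of $\Upsilon^\bullet$, and package the pair $(\Upsilon',\beta)$ as an object of $\mathbf{Ext}(\Upsilon^\bullet,\mathcal O_C)$. One minor point: Lemma~\ref{L:def-curve-vb} as stated treats pairs $(C,E)$, not triples; the analogue for $(C,E,F)$ that you need is asserted in the same section without a separate label.
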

\begin{proof}
As in the previous sections we will actually prove the analogous statement about extensions to $S[\epsilon J]$.  Suppose that $(C',E',F',\sigma')$ is an extension of $(C,E,F,\sigma)$ to $S[\epsilon J]$.  We have a commutative diagram:
\begin{equation*} \xymatrix{
& & \mathcal{O}_C \mathop{\otimes}_{\mathcal{O}_{C'}} \underline{\operatorname{Hom}}(E',F')^\vee \ar[r]^<>(0.5){\sim} \ar[d] & \underline{\operatorname{Hom}}(E,F)^\vee \ar[d] \\
0 \ar[r] & \pi^\ast J \ar[r] & \mathcal{O}_C \mathop{\otimes}_{\mathcal{O}_{C'}} \Upsilon_{C'/S}(E',F') \ar[r] & \Upsilon_{C/S}(E,F) \ar[r] & 0
} \end{equation*}
The bottom row is exact, as in the last section.  As the upper horizontal arrow is an isomorphism, we obtain an element of $\mathbf{Ext}\bigl(\Upsilon^\bullet_{C/S}(E,F,\sigma), \pi^\ast J\bigr)$. 

Conversely, suppose we are given an extension:
\begin{equation} \label{E:complex-extension} \vcenter{\xymatrix{
& & & \underline{\operatorname{Hom}}(E,F)^\vee \ar[dl]_\beta \ar[d] \\
0 \ar[r] & \pi^\ast J \ar[r] & \Upsilon' \ar[r] & \Upsilon_{C/S}(E,F) \ar[r] & 0 
}} \end{equation}
We may construct 
\begin{align*}
\mathcal{O}_{C'} & = \delta^{-1} \Upsilon' = \mathcal O_C \mathbin\times_{\Upsilon} \Upsilon' \\
E' & = \varphi_E^{-1} (\Upsilon' \otimes E) = E \mathbin\times_{\Upsilon \otimes E} (\Upsilon' \otimes E) \\
F' & = \varphi_F^{-1} (\Upsilon' \otimes F)  = F \mathbin\times_{\Upsilon \otimes F} (\Upsilon' \otimes F) \\
\end{align*}
To get $\sigma' \in \operatorname{Hom}(E',F')$, regard $\beta$ as an element of $\Upsilon' \otimes \underline{\operatorname{Hom}}(E,F)$.  Then the image of $\beta$ in $\Upsilon_{C/S}(E,F) \otimes \underline{\operatorname{Hom}}(E,F)$ is $[\sigma, \varphi]$, by the commutativity of~\eqref{E:complex-extension}.  Therefore 
$\sigma' = (\sigma, \beta)$ defines an element of
\begin{equation*}
\underline{\operatorname{Hom}}(E',F') = \underline{\operatorname{Hom}}(E,F) \mathop\times_{\Upsilon \otimes \underline{\operatorname{Hom}}(E,F)} \Upsilon' \otimes \underline{\operatorname{Hom}}(E,F).
\end{equation*}
We leave the verification that these constructions are mutually inverse to the reader.
\end{proof}

\begin{lem} \label{L:CVBMhmg}
	Let $\mathcal F$ be the stack of quadruples $(C,E,F,\sigma)$ where $C$ is a nodal curve, $E$ and $F$ are vector bundles on $C$, and $\sigma : E \rightarrow F$ is a homomorphism of vector bundles.  Then $\mathcal F$ is homogeneous.
\end{lem}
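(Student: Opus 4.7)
The plan is to prove homogeneity of $\mathcal F$ by bootstrapping via Lemma~\ref{L:RelHomog}, in exactly the style of Corollary~\ref{C:VBMapHomog} and Lemma~\ref{L:CVBhmg}. The idea is to forget the data in stages, at each stage producing a relatively homogeneous morphism whose target we have already shown to be homogeneous.

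Concretely, let $\mathcal G$ denote the stack of triples $(C, E, F)$ consisting of a nodal curve $C$ together with two vector bundles $E, F$ on $C$. I would first argue that $\mathcal G$ is homogeneous. To see this, consider the forgetful morphism $\mathcal G \to \mathcal X$ to the stack of pairs $(C, E)$; by Lemma~\ref{L:CVBhmg} the target $\mathcal X$ is homogeneous. The fiber of $\mathcal G \to \mathcal X$ over an $S$-point $(C, E)$ is the stack $\mathsf{Fib}_{C/S}$ of vector bundles on $C/S$, which is homogeneous by Lemma~\ref{L:Fib-hmg}. Thus $\mathcal G \to \mathcal X$ is relatively homogeneous, and homogeneity of $\mathcal G$ then follows from Lemma~\ref{L:RelHomog}.

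Next, consider the forgetful morphism $\mathcal F \to \mathcal G$ that sends $(C, E, F, \sigma)$ to $(C, E, F)$. The fiber over an $S$-point $(C, E, F)$ is the functor $\mathscr{H}\!om_{C/S}(E, F)$, which is homogeneous by Lemma~\ref{L:HomEFhmg}. Hence $\mathcal F \to \mathcal G$ is relatively homogeneous. Applying Lemma~\ref{L:RelHomog} once more, together with the homogeneity of $\mathcal G$ established in the previous paragraph, we conclude that $\mathcal F$ is homogeneous.

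There is no real obstacle here: the argument is purely a matter of assembling already-established homogeneity statements along the tower $\mathcal F \to \mathcal G \to \mathcal X \to \mathcal N$. The one point that deserves a sentence of care in the write-up is verifying that the fibers of each forgetful map are correctly identified with the previously-studied stacks after arbitrary base change (so that `relative homogeneity' in the sense required by Lemma~\ref{L:RelHomog} holds), but this is immediate from the definitions of $\mathcal F$ and $\mathcal G$ via the universal properties of the fiber product.
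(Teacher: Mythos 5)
Your proof is correct and is essentially the paper's argument: the paper simply observes that $\mathcal F \to \mathcal N$ is relatively homogeneous by Corollary~\ref{C:VBMapHomog} (whose proof already combines Lemma~\ref{L:Fib-hmg} and Lemma~\ref{L:HomEFhmg} via Lemma~\ref{L:RelHomog}) and then concludes from the homogeneity of $\mathcal N$ (Lemma~\ref{L:NChmg}) by one more application of Lemma~\ref{L:RelHomog}. Your tower $\mathcal F \to \mathcal G \to \mathcal X \to \mathcal N$ assembles exactly the same ingredients, just globalized one forgetful map at a time instead of fiberwise over a fixed curve, so it is a cosmetic reorganization rather than a different method.
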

\begin{proof}
	Let $\mathcal N$ be the stack of nodal curves.  The forgetful map $\mathcal F \rightarrow \mathcal N$ is relatively homogeneous by Corollary \ref{C:VBMapHomog} and $\mathcal N$ is homogeneous by Lemma \ref{L:NChmg}.  Therefore $\mathcal F$ is homogeneous by Lemma \ref{L:RelHomog}.
\end{proof}

\subsubsection{Deformations of Higgs bundles}

We may apply the method of the previous section to study deformations of a Higgs bundle $\phi : E \rightarrow E \otimes \omega_{C/S}$ on a nodal curve $C$ over $S$.  We find that the deformation theory of $(C,E,\phi)$ is controlled by the complex $\Upsilon_{C/S}(E,\sigma)$, in degrees $[-1,0]$:
\begin{equation*}
\underline{\operatorname{Hom}}(E, E \otimes \omega_{C/S})^\vee \rightarrow \Upsilon_{C/S}(E)
\end{equation*}

\begin{lem}
Let $(C,E,\phi)$ be a nodal curve of genus $g$ over $S$ equipped with a Higgs bundle.  Then
\begin{equation*}
T_{\mathcal{HS}h_{\overline{M}_g}}(C,E,\phi) = \mathbf{Ext}\bigl(\Upsilon_{C/S}(E,\sigma), \mathcal{O}_C\bigr) 
\end{equation*}
and
\begin{gather*}
T^{-1}_{\mathcal{HS}h_{\overline{M}_g}}(C,E,\phi) = \operatorname{Ext}^0\bigl(\Upsilon_{C/S}(E,\sigma), \mathcal{O}_C\bigr)  \\
T^0_{\mathcal{HS}h_{\overline{M}_g}}(C,E,\phi) = \operatorname{Ext}^1\bigl(\Upsilon_{C/S}(E,\sigma), \mathcal{O}_C\bigr) 
\end{gather*}
\end{lem}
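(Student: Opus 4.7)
The plan is to adapt the proof of Lemma~\ref{L:C-EF-TanAut} to the present situation, with the critical modification that the target bundle $F = E \otimes \omega_{C/S}$ is not an independent piece of data but is canonically determined by $C$ and $E$. Accordingly, I would first define the complex $\Upsilon_{C/S}(E,\phi)$ in degrees $[-1,0]$ as
\begin{equation*}
\underline{\operatorname{Hom}}(E, E \otimes \omega_{C/S})^\vee \longrightarrow \Upsilon_{C/S}(E),
\end{equation*}
where the boundary map is constructed in direct analogy with~\eqref{E:def-curve-vb-sect-complex}, by dualizing the element $[\phi, \varphi] \in \Upsilon_{C/S}(E) \otimes \underline{\operatorname{Hom}}(E, E \otimes \omega_{C/S})$ which measures the failure of $\phi$ to intertwine the universal $\delta$-connection $\varphi_E$ on $E$ with the canonically induced connection on $E \otimes \omega_{C/S}$ coming from $\varphi_E$ together with the projection $\delta : \mathcal{O}_C \to \Upsilon_{C/S}(E) \twoheadrightarrow \Omega_{C/S}$.

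The key point, distinguishing this situation from Lemma~\ref{L:C-EF-TanAut}, is that once a deformation $(C', E')$ of $(C,E)$ is fixed, the object $E' \otimes \omega_{C'/S}$ is a \emph{canonical} deformation of $E \otimes \omega_{C/S}$; no independent choice of deformation of $F$ must be tracked. I would then, following the template of Lemma~\ref{L:C-EF-TanAut}, associate to each $S[\epsilon J]$-deformation $(C', E', \phi')$ an extension of $\Upsilon_{C/S}(E,\phi)$ by $\pi^\ast J$: Lemma~\ref{L:def-curve-vb} produces the lower row $0 \to \pi^\ast J \to \mathcal{O}_C \otimes_{\mathcal{O}_{C'}} \Upsilon_{C'/S}(E') \to \Upsilon_{C/S}(E) \to 0$ from the pair $(C',E')$, and $\phi'$ supplies the lift $\underline{\operatorname{Hom}}(E,E\otimes\omega_{C/S})^\vee \to \mathcal{O}_C \otimes_{\mathcal{O}_{C'}}\Upsilon_{C'/S}(E')$ via the cartesian-square argument producing $\underline{\operatorname{Hom}}(E',E'\otimes\omega_{C'/S}) \simeq \underline{\operatorname{Hom}}(E,E\otimes\omega_{C/S}) \times_{\Upsilon \otimes H} (\Upsilon' \otimes H)$ identical to the one established in the proof of Lemma~\ref{L:C-EF-TanAut}. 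Conversely, given such an extension, Lemma~\ref{L:def-curve-vb} reconstructs $(C',E')$, and the chosen lift, viewed as an element of $\Upsilon' \otimes \underline{\operatorname{Hom}}(E, E\otimes\omega_{C/S})$ whose reduction is $[\phi,\varphi]$, produces $\phi'$ by exactly the formal manipulation at the end of the proof of Lemma~\ref{L:C-EF-TanAut}.

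Once this equivalence of groupoids $T_{\mathcal{HS}h_{\overline{M}_g}}(C,E,\phi) \simeq \mathbf{Ext}(\Upsilon_{C/S}(E,\phi), \mathcal{O}_C)$ is in place, the two vector-space identifications for $T^{-1}$ and $T^0$ are formal: they come from the standard presentation of the groupoid of extensions of a two-term complex by a sheaf in terms of its $\operatorname{Ext}^0$ (automorphisms of the trivial extension) and $\operatorname{Ext}^1$ (isomorphism classes of extensions).

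The main obstacle will be verifying carefully that the canonical deformation $E' \otimes \omega_{C'/S}$ of $E \otimes \omega_{C/S}$, viewed as an extension of $\Upsilon_{C/S}(E \otimes \omega_{C/S})$ by $\pi^\ast J$, is indeed obtained from the given deformation of $(C,E)$ by the tensor-product construction at the level of $\delta$-connections; in other words, that the $\delta$-connection on $E \otimes \omega_{C/S}$ occurring implicitly in the definition of the boundary map of $\Upsilon_{C/S}(E,\phi)$ is compatible with the deformation of $E \otimes \omega_{C/S}$ induced by deforming $C$ and $E$. This is essentially a local computation that reduces, after passing to an \'etale chart on which $E$ is trivial and $\omega_{C/S}$ is generated by a single differential, to a direct verification via the Leibniz rule; after this is dispatched, the rest of the proof follows the blueprint of Lemma~\ref{L:C-EF-TanAut} with only notational modifications.
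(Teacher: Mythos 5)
Your proposal is correct and follows essentially the same route as the paper, which simply invokes "the method of the previous section" (the proof of Lemma~\ref{L:C-EF-TanAut}) applied to the two-term complex $\underline{\operatorname{Hom}}(E, E \otimes \omega_{C/S})^\vee \rightarrow \Upsilon_{C/S}(E)$, exploiting exactly the point you highlight: since $E \otimes \omega_{C/S}$ deforms canonically once $(C,E)$ does, the degree-$0$ term is $\Upsilon_{C/S}(E)$ rather than $\Upsilon_{C/S}(E,F)$. The compatibility check you defer to a local Leibniz-rule computation is the only substantive verification, and it is routine, so your write-up matches the paper's intended argument.
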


\begin{lem} \label{L:Higgs-hmg}
	The stack of Higgs bundles on nodal (resp.\ stable) curves is homogeneous.
\end{lem}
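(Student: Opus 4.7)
The plan is to mimic the pattern already used for Lemmas~\ref{L:CVBhmg} and~\ref{L:CVBMhmg}: factor the forgetful functor through a stack already known to be homogeneous, and check that the remaining fiber-wise data is controlled by a $\mathscr H\!\!\mathit{om}$-sheaf, which is homogeneous by Lemma~\ref{L:HomEFhmg}. Concretely, let $\mathcal G$ denote the stack of pairs $(C,E)$ with $C$ a nodal curve over $S$ and $E$ a vector bundle on $C$, which is homogeneous by Lemma~\ref{L:CVBhmg}, and consider the forgetful morphism
\begin{equation*}
  \mathcal{HS}h \longrightarrow \mathcal G, \qquad (C,E,\phi) \longmapsto (C,E).
\end{equation*}
By Lemma~\ref{L:RelHomog}, it suffices to show that this morphism is relatively homogeneous.

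First I would check that, for any scheme $S'$ and any $(C,E) \in \mathcal G(S')$, the fiber of $\mathcal{HS}h$ over $(C,E)$ is precisely the sheaf $\mathscr H\!\!\mathit{om}_{C/S'}(E, E\otimes \omega_{C/S'})$ on $\mathsf S/S'$ that parameterizes $\mathcal O_C$-linear maps $\phi : E \to E\otimes \omega_{C/S'}$. This is immediate from the definition of $\mathcal{HS}h$: a lift of $S'' \to S' \xrightarrow{(C,E)} \mathcal G$ to $\mathcal{HS}h$ is by definition a choice of Higgs field on $(C_{S''}, E_{S''})$, and this is the same as a section over $S''$ of the $\mathscr H\!\!\mathit{om}$-sheaf. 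By Lemma~\ref{L:HomEFhmg} (applied with $X=C$, $F=E\otimes\omega_{C/S'}$), this $\mathscr H\!\!\mathit{om}$-sheaf is a homogeneous CFG over $\mathsf S/S'$.

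Next, I would verify that this pointwise statement gives relative homogeneity of $\mathcal{HS}h \to \mathcal G$. Given an affine map $Q \to R$ and an infinitesimal extension $Q \subseteq Q'$ of $S$-schemes with pushout $R'$ (Theorem~\ref{T:Homog}), and compatible data over $Q$, $Q'$, $R$ projecting to an object $(C',E') \in \mathcal G(R')$, the homogeneity of $\mathcal G$ already produces the underlying pair $(C',E')$ on $R'$, and the Higgs fields over $Q'$, $R$, $Q$ become compatible sections of the single $\mathscr H\!\!\mathit{om}$-sheaf $\mathscr H\!\!\mathit{om}_{C'/R'}(E',E'\otimes\omega_{C'/R'})$. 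Homogeneity of this sheaf then produces a unique Higgs field $\phi'$ over $R'$ inducing the given ones, which is exactly what relative homogeneity requires. Combining with Lemma~\ref{L:CVBhmg} via Lemma~\ref{L:RelHomog} gives homogeneity of $\mathcal{HS}h$.

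For the stable curve variant, I would simply restrict along the open substack $\overline{\mathcal M}_g \subseteq \mathcal N$ of stable curves and note that stability is a condition only on geometric fibers; since $R'$, $R$, $Q$, $Q'$ all have the same underlying topological space in the relevant pushouts, the stable locus is preserved by infinitesimal deformation, so the argument restricts verbatim. The main conceptual obstacle is purely bookkeeping: one must match the canonical isomorphism $\omega_{C'/R'}\big|_{C} \simeq \omega_{C/R}$ (and similarly over $Q$, $Q'$) with the identifications implicit in Theorem~\ref{T:flat-homog}, so that the various restrictions of the Higgs fields really do define a compatible datum in a single $\mathscr H\!\!\mathit{om}$-sheaf on $C'$; once this is set up carefully the application of Lemmas~\ref{L:HomEFhmg} and~\ref{L:CVBhmg} is routine.
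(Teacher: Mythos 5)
Your proof is correct, but it follows a different route from the paper's. The paper never analyzes the fibers of the forgetful map to the pairs stack: writing $\mathcal F$ for the stack of quadruples $(C,E,F,\sigma)$, already shown homogeneous in Lemma~\ref{L:CVBMhmg}, and $\mathcal E$ for the stack of pairs $(C,E)$ (Lemma~\ref{L:CVBhmg}), it identifies the Higgs stack with the equalizer of the two maps $\mathcal F\rightrightarrows\mathcal E$ sending $(C,E,F,\sigma)$ to $(C,F)$ and to $(C,\omega_{C/S}\otimes E)$, i.e.\ with the fiber product $\mathcal F\times_{\mathcal E\times\mathcal E}\mathcal E$, and concludes by Lemma~\ref{L:RelHomog}; the stable case is then obtained by base change to $\overline{\mathcal M}_g$, which is homogeneous by Lemma~\ref{L:NChmg}. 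You bypass Lemma~\ref{L:CVBMhmg} entirely and instead identify the fiber of $\mathcal{HS}h\to\mathcal E$ over a fixed $(C,E)$ with the sheaf $\mathscr Hom_{C/S}(E,E\otimes\omega_{C/S})$ and quote Lemma~\ref{L:HomEFhmg}; this is essentially the argument the paper runs one level down, in Corollary~\ref{C:VBMapHomog}, specialized to $F=E\otimes\omega_{C/S}$ (legitimate there because, fiberwise, the curve is fixed and $\omega_{C/S}$ is just another vector bundle). The paper's equalizer presentation buys a purely formal deduction from results already on the books, with no new fiber computation; your route buys directness, at the cost of the base-change compatibility of $\omega_{C/S}$ that you flag, which is in any case also implicit in the paper's second map $\mathcal F\to\mathcal E$. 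Two minor points: your second paragraph re-proves what Lemma~\ref{L:RelHomog} already packages (relative homogeneity is by definition the fiberwise statement), and calling $\overline{\mathcal M}_g$ an ``open substack'' of the stack of all nodal curves is inaccurate---properness and stability are not open conditions there---but the property you actually use, invariance of the stable locus under infinitesimal extensions (the fibers over $R'$ coincide with those over $R$), is exactly the justification given in Lemma~\ref{L:NChmg}, so your argument stands.
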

\begin{proof}
	Let $\mathcal F$ be the stack of quadruples $(C,E,F,\sigma)$ where $C$ is a nodal curve over a tacit base $S$, $E$ and $F$ are vector bundles on $C$, and $\sigma : E \rightarrow F$ is a homomorphism.  Let $\mathcal E$ be the stack of pairs $(C,E)$ where $C$ is a nodal curve and $E$ is a vector bundle on $C$.  Let $\mathcal H$ be the stack of Higgs bundles on nodal curves.
	
	We have seen in Lemma \ref{L:CVBhmg} that $\mathcal E$ is homogeneous and in Lemma \ref{L:CVBMhmg} that $\mathcal F$ is homogeneous.  Then we have two projections $\mathcal F \rightarrow \mathcal E$, one sending $(C,E,F,\sigma)$ to $F$ and the latter sending $(C,E,F,\sigma)$ to $\omega_{C/S} \otimes E$.  We can identify $\mathcal H$ with the equalizer of these two maps, that is, with the fiber product $\mathcal F \mathbin\times_{\mathcal E \times \mathcal E} \mathcal E$, 
	hence is homogeneous by Lemma \ref{L:RelHomog}.

	Again by Lemma \ref{L:RelHomog}, we deduce that $\mathcal H \rightarrow \mathcal N$ is relatively homogeneous, where $\mathcal N$ denotes the stack of nodal curves.  By base change, the stack $\mathcal{HS}h_{\overline{\mathcal M}_g}$ of Higgs bundles on stable curves of genus $g$ is homogeneous over $\overline{\mathcal M}_g$ (again by Lemma \ref{L:RelHomog}).  But we know $\overline{\mathcal M}_g$ is homogeneous (Lemma \ref{L:NChmg}) so we deduce that $\mathcal{HS}h_{\overline{\mathcal M}_g}$ is homogeneous as well.
\end{proof}

\subsubsection{Deformations of principal bundles}
\label{S:def-tors}

Suppose that $X \subseteq X'$ is a square-zero extension of schemes by the ideal $J$ and $G'$ is a smooth algebraic group over $X'$.  Denote by $G$ the preimage of $X$ in $G'$.  Let $P$ be an \emph{\'etale} $G$-torsor over $X$, meaning that the map $G \mathbin\times_X P \rightarrow P \mathbin\times_X P$ is an isomorphism, and that $P$ covers $X$ in the \'etale topology (Definition~\ref{D:top}).  We would like to classify the extensions of $P$ to $G'$-torsors over $X'$.

\begin{rem}
An algebraic group can have \'etale torsors that are not torsors in the Zariski topology (see \cite{Serre-EFA}), so it is important that we have specified \'etale torsors here.  Since $G$ is smooth, working in the fppf or fpqc topology would not yield any more torsors:  flat descent implies that any $G$-torsor would be representable by a smooth algebraic space over $X$, hence would have a section \'etale-locally.
\end{rem}

\begin{rem}
This discussion of $G$-torsors generalizes the discussion of \S \ref{S:def-vect}:  given a vector bundle $E$ over $X$, we may take $G = \mathrm{GL}_n$ and let $P$ to be the $G$-torsor of isomorphisms between $E$ and $\mathcal O_X^n$.
\end{rem}

Before analyzing the deformation problem, we introduce the (underlying vector bundle of the) Lie algebra of $G$.  Let $e : X \rightarrow G$ and $e' : X' \rightarrow G'$ denote the identity sections.  
We set  $$\mathfrak g = e^\ast T_{G/X}$$
to be  the Lie algebra of $G$.
  We will also be interested in a closely related object.  To introduce it, 
note that the small \'etale sites of $X$ and $X'$ are equivalent (Lemma~\ref{L:et-inf-lift}), and that if $U$ is an \'etale scheme over $X$ we denote by $U'$ the unique (up to unique isomorphism) extension of $U$ to an \'etale scheme over $X'$.  

We write $G'_{\mathrm{\acute{e}t}}$ and $G_{\mathrm{\acute{e}t}}$ for the restrictions of $G'$ and $G$ to this common \'etale site, and we introduce   $\mathfrak g'$ for the kernel of the projection from $G'_{\mathrm{\acute{e}t}}$ to $G_{\mathrm{\acute{e}t}}$:
\begin{equation*}
0\to \mathfrak g'\to G'_{\mathrm{\acute{e}t}} \rightarrow G_{\mathrm{\acute{e}t}} \rightarrow 0
\end{equation*}
  In general $\mathfrak g'$ does not coincide with $\mathfrak g$, but the following lemma relates them:

\begin{lem}
Let $\mathfrak g$ and $\mathfrak g'$ be as above. Then $\mathfrak g' \simeq \mathfrak g \otimes J$ as sheaves of groups on the \'etale site of $X$.  In particular, $\mathfrak g'$ is a sheaf of commutative groups.
\end{lem}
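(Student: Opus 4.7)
\medskip

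\noindent\textbf{Proof proposal.} The plan is to work \'etale-locally on $X$, identify the sections of $\mathfrak{g}'$ with a standard deformation-theoretic group, and then match group laws. Fix $U$ \'etale over $X$ with its unique \'etale lift $U'$ over $X'$ (Lemma~\ref{L:et-inf-lift}). By definition, $\mathfrak{g}'(U')$ is the set of morphisms $\sigma : U' \to G'$ over $X'$ whose restriction to $U$ equals the identity section $e|_U$. Because $e' : X' \to G'$ provides a tautological basepoint among such lifts, the set $\mathfrak{g}'(U')$ is canonically pointed.

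Next, I would apply the usual first-order deformation theory of sections of a smooth morphism. Since $G' \to X'$ is smooth and $U \subseteq U'$ is a square-zero extension with ideal $J|_U = \pi^\ast J$ (where $\pi : U \to X$ is the structure map), the set of lifts of $e|_U : U \to G'$ through $U' \to G'$ is a torsor under
\begin{equation*}
\operatorname{Hom}_{\mathcal O_U}(e^\ast \Omega_{G'/X'}\big|_U,\, \pi^\ast J) = \Gamma\bigl(U,\, e^\ast T_{G'/X'}|_U \otimes \pi^\ast J\bigr).
\end{equation*}
The canonical identification $e^\ast T_{G'/X'}|_U = e^\ast T_{G/X}|_U = \mathfrak{g}|_U$ (obtained by base change along $X \subseteq X'$) together with the basepoint $e'|_{U'}$ turns this torsor into a group, and yields a bijection
\begin{equation*}
\mathfrak{g}'(U') \;\xrightarrow{\sim}\; \Gamma(U,\, \mathfrak{g}\otimes J),
\end{equation*}
natural in $U$. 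This already gives an isomorphism of sheaves of sets on the \'etale site of $X$.

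It remains to upgrade this to a homomorphism of sheaves of groups, where $\mathfrak{g} \otimes J$ carries its additive structure. Given $\sigma_1, \sigma_2 \in \mathfrak{g}'(U')$ corresponding to $v_1, v_2 \in \Gamma(U, \mathfrak{g} \otimes J)$, one computes $\sigma_1 \cdot \sigma_2$ using the multiplication $m : G' \times_{X'} G' \to G'$. The key point is that, since $J^2 = 0$, the sections $\sigma_i$ differ from $e'$ by quantities that multiply to zero, so Taylor expanding $m$ around $(e', e')$ (using the identities $m(e',g) = g = m(g,e')$) gives $\sigma_1 \cdot \sigma_2 \mapsto v_1 + v_2$. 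Commutativity of $\mathfrak{g}'$ is then immediate from the commutativity of addition in $\mathfrak{g} \otimes J$.

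The main obstacle will be making the Taylor-expansion argument precise enough to conclude the group-law compatibility. In practice this is a local calculation: after trivializing $G'$ near $e'$ and presenting $\mathcal O_{U'}$ as $\mathcal O_U \oplus \pi^\ast J$ with $(\pi^\ast J)^2 = 0$, writing $\sigma_i = e' + \epsilon v_i$ in the evident sense and expanding $m$ to first order in $\epsilon$ shows that both $m(\sigma_1, \sigma_2)$ and $m(\sigma_2, \sigma_1)$ equal $e' + \epsilon(v_1 + v_2)$. All other verifications (identity, inverses, naturality in $U$) reduce similarly to manipulations with the $J$-adic filtration, which are routine given $J^2 = 0$.
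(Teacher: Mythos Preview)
Your argument is correct and complete. Both you and the paper establish that $\mathfrak g'$ is a torsor under $\mathfrak g \otimes J$, trivialized by the identity section $e'$; where you differ is in the tools used at each step.

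For the torsor structure, you invoke the classical deformation theory of sections of a smooth morphism (lifts of $e|_U$ through a square-zero extension form a torsor under $\Gamma(U, e^\ast T_{G/X} \otimes J)$). The paper instead uses its homogeneity machinery: it computes
\[
\mathfrak g'(U)\times\mathfrak g'(U) \;=\; G'_e(U'\amalg_U U') \;\simeq\; G'_e\bigl(U'\amalg_U U[\epsilon J]\bigr) \;=\; \mathfrak g'(U)\times(\mathfrak g\otimes J)(U),
\]
using the isomorphism $\mathcal O_{U'}\times_{\mathcal O_U}\mathcal O_{U'}\simeq \mathcal O_{U'}\times_{\mathcal O_U}\mathcal O_U[\epsilon J]$. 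This is really the same torsor statement, proved via the paper's ambient formalism rather than the cotangent-sheaf description.

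For the group law, you make the Taylor-expansion argument explicit: the differential of $m$ at $(e,e)$ is addition (since $m(e,-)=m(-,e)=\mathrm{id}$), and $J^2=0$ kills higher-order terms. The paper instead observes that $\mathfrak g'$ now carries two group structures (the kernel structure and the one transported from $\mathfrak g\otimes J$) which commute and share an identity, and applies the Eckmann--Hilton argument to conclude they agree and are abelian. Your route is more explicit and self-contained; the paper's is slicker but relies on recognizing the Eckmann--Hilton setup.
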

\begin{proof}
We observe that $\mathfrak g'$ is canonically a torsor under $\mathfrak g \otimes J$.  Indeed, for an open subset $U\subseteq X$, denoting by $G'_e(U)$ the morphisms $U' \to G'$ whose restriction to $U$ are the identity section, then we have:
\begin{align*}
\mathfrak g'(U) \times \mathfrak g'(U) 
& = G'_e(U') \times G'_e(U') \\
& = G'_e(U' \mathbin\amalg_U U') &&  \text{by homogeneity of $G$} \\
& = G'_e(U' \mathbin\amalg_U U[\epsilon J_U]) & & \text{$\mathcal O_{U'} \mathbin\times_{\mathcal O_U} \mathcal O_{U'} \simeq \mathcal O_{U'} \mathbin\times_{\mathcal O_U} \mathcal O_U[\epsilon J]$, p.~\pageref{B:coprodExamp}}  \\
& = G_e(U') \times G_e(U[\epsilon J])  && \text{by homogeneity again}\\
& = G_e(U') \times (\mathfrak g \otimes J)(U) 
\end{align*}
since $\mathfrak g$ is the tangent space to $G$ at the origin.  But $\mathfrak g'$ also has a canonical section over $X$ coming from $e'$.  Therefore $\mathfrak g' \simeq \mathfrak g \otimes J$ as a $\mathfrak g \otimes J$-torsor.  In particular, $\mathfrak g'$ inherits a group structure from $\mathfrak g \otimes J$.

There is another group structure on $\mathfrak g'$ by virtue of its construction as a kernel.  These two group structures commute with one another, in the sense that the multiplication map of either group structure is a homomorphism with respect to the other, and the identity elements are the same.  By a standard argument, this means the two group structures coincide (and that both are commutative).
\end{proof}

As in the earlier sections, we make some observations about the local triviality of this problem:
\begin{enumerate}[(i)]
\item There is an \emph{\'etale} cover of $X'$ by maps $U' \rightarrow X'$ such that, setting $U = U' \mathop\times_{X'} X$, each $G_U$-torsor $P_U$ extends to a $G'_{U'}$-torsor on $U'$.
\item Any two extensions of $P$ to $X'$ are isomorphic on a suitable \'etale cover of $X'$.
\item \label{B:PrinGbund-iii} If $P'$ and $P''$ are two extensions of $P$ and $u, v : P' \rightarrow P''$ are two isomorphisms between them, then there is a unique map $h : P' \rightarrow G'$ over $X'$ such that $h.u = v$ (we are denoting the group  action map for the torsors with a dot, and the map $h.u$ is the composition of $h\times u : P' \times P' \rightarrow G' \times P''$ with the action map $G' \times P'' \rightarrow P''$).

As $u$ and $v$ both reduce to the identity map on $P$, the map $h$ must reduce to the constant map $P \rightarrow e$.  Therefore $h$ factors uniquely through $\mathfrak g'$:
$$
\xymatrix@R=1em@C=1em{
P' \ar[rr]^h \ar@{-->}[rd]&&G'\\
&\mathfrak g'  \ar@^{^(->}[ru]
}
$$

Furthermore, the induced map $h:P'\to \mathfrak g'$ is equivariant with respect to the natural action of $G'$ on $P'$ and on $\mathfrak g'$ (by conjugation).  Indeed, we have
\begin{align*}
h(g.x) . u(g.x) & = v(g.x) \\
h(g.x) g . u(x) & = g . v(x) = g h(x) . u(x)
\end{align*}
for all $x$ in $P'$.  It follows that $h(g.x) = g h(x) g^{-1}$ for all $g \in G'$.

Finally, $h:P'\to \mathfrak g'$ factors through the projection $P' \rightarrow P=P'/\mathfrak g'$.  This is because $\mathfrak g'$ is commutative, so $h(g.x) = g h(x) g^{-1} = h(x)$ whenever $g \in \mathfrak g'$.  Therefore $h$ factors through $P' / \mathfrak g' = P$.  Thus $h$ factors uniquely as a \emph{$G$-equivariant map} $$P \longrightarrow \mathfrak g'=\mathfrak g\otimes J.$$
\end{enumerate}

Putting all of this together using the same \v{C}ech or gerbe argument from Section~\ref{S:def-vect}, we obtain a theory of deformations and obstructions:
\begin{teo}
Suppose that $G$ is a smooth algebraic group over a scheme $X$, that $X'$ is a square-zero extension of $X$, and that $G'$ is an extension of $G$ to $X'$ by the ideal $J$.  Let $P$ be a $G$-torsor on $X$.  There is an obstruction to extending $P$ to a $G'$-torsor on $X'$ lying in $H^2\bigl(X, \underline{\operatorname{Hom}}_G(P, \mathfrak g \otimes J)\bigr)$.  Should this obstruction vanish, deformations are canonically a torsor on $X$ under $H^1\bigl(X, \underline{\operatorname{Hom}}_G(P, \mathfrak g \otimes J)\bigr)$, and automorphisms of any given deformation are canonically isomorphic to $H^0\bigl(X, \underline{\operatorname{Hom}}_G(P, \mathfrak g \otimes J) \bigr)$.
\end{teo}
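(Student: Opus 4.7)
The plan is to package the three local observations (i)--(iii) that precede the theorem into a gerbe on the small \'etale site of $X$ and then invoke Giraud's cohomological classification of banded gerbes (cited just after Corollary~\ref{C:TFib} in the excerpt). Recall that by Lemma~\ref{L:et-inf-lift} the small \'etale sites of $X$ and $X'$ are canonically equivalent, so for every \'etale $U \to X$ there is a unique \'etale lift $U' \to X'$ with $U = U' \mathop{\times}_{X'} X$. Define a stack $\mathcal{D}$ on the small \'etale site of $X$ whose objects over $U \to X$ are $G'_{U'}$-torsors $Q'$ on $U'$ equipped with an isomorphism $Q' \mathop{\times}_{U'} U \simeq P|_U$ of $G_U$-torsors, and whose morphisms are isomorphisms of torsors compatible with the given trivialization modulo $J$. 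Pullback along \'etale maps makes $\mathcal{D}$ a stack in the \'etale topology in the standard way.

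Next, I would verify that $\mathcal{D}$ is a gerbe banded by $\mathcal{A} := \underline{\operatorname{Hom}}_G(P, \mathfrak{g} \otimes J)$. Local non-emptiness of $\mathcal{D}$ is observation~(i); local connectedness (any two objects are locally isomorphic) is observation~(ii). The key point is to identify the band: observation~(iii), combined with the lemma that $\mathfrak{g}' \simeq \mathfrak{g} \otimes J$ is an \emph{abelian} sheaf, shows that for any local deformation $Q'$ the automorphism sheaf $\underline{\operatorname{Aut}}_{\mathcal{D}}(Q')$ is canonically the sheaf of $G$-equivariant maps $P \to \mathfrak{g} \otimes J$, i.e., $\mathcal{A}|_U$. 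Because conjugation by elements of the abelian group $\mathfrak{g}'$ is trivial, the canonical isomorphism $\underline{\operatorname{Aut}}_{\mathcal{D}}(Q') \simeq \mathcal{A}|_U$ is independent of the choice of $Q'$, so these local identifications glue to exhibit $\mathcal{A}$ as the band of $\mathcal{D}$.

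Finally, sections of $\mathcal{D}$ over $X$ are by construction exactly the $G'$-torsor extensions of $P$, and morphisms of sections are exactly the isomorphisms of extensions. Giraud's theorem then produces an obstruction $\omega \in H^2(X, \mathcal{A})$ whose vanishing is equivalent to the existence of a global section, a simply transitive action of $H^1(X, \mathcal{A})$ on the set of isomorphism classes of global sections, and a canonical identification of the automorphism group of any fixed section with $H^0(X, \mathcal{A})$. This yields the three assertions of the theorem.

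The main obstacle, and the step most deserving of care, is the gluing in Step~2 that produces a well-defined band $\mathcal{A}$: one must check that the identifications $\underline{\operatorname{Aut}}_{\mathcal{D}}(Q') \simeq \mathcal{A}|_U$ obtained from observation~(iii) are intrinsic (independent of $Q'$ up to \emph{canonical} isomorphism) and compatible with restriction along \'etale maps, so that they descend to a global banding rather than merely to an outer banding. The commutativity of $\mathfrak{g}'$ established in the lemma preceding the theorem is precisely what removes the usual ambiguity by inner automorphisms and makes this gluing canonical; once it is in place, the remainder of the argument is a direct appeal to Giraud.
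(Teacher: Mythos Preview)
Your proposal is correct and is essentially the gerbe approach that the paper itself invokes. The paper's proof is a single sentence immediately before the theorem: ``Putting all of this together using the same \v{C}ech or gerbe argument from Section~\ref{S:def-vect}, we obtain a theory of deformations and obstructions''---and in Section~\ref{S:def-vect} both the \v{C}ech computation and the Giraud-banded-gerbe packaging are already spelled out for vector bundles. You have correctly identified the gerbe $\mathcal D$, verified from observations (i)--(iii) that it is banded by $\underline{\operatorname{Hom}}_G(P,\mathfrak g\otimes J)$ (using the commutativity of $\mathfrak g'$ to see the band is abelian and the identifications are canonical), and applied Giraud's theorem; this is exactly what the paper has in mind, with more detail than the paper provides.
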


In the case of the trivial extension $X[\epsilon]$, so that $J=\mathcal O_X$, and $\mathfrak g=\mathfrak g\otimes J=\mathfrak g'$, the bundle $$\mathfrak p:= \underline{\operatorname{Hom}}_G(P, \mathfrak g)$$ is known as the \emph{adjoint bundle} of $P$.

\begin{rem}
The adjoint bundle can also be constructed as the quotient of $P \times \mathfrak g$ by the diagonal action of $G$.  To see that these are equivalent, note that a section of $P \times_G \mathfrak g$ corresponds to a $G$-orbit in $P \times \mathfrak g$, which is the graph of an equivariant map $P \rightarrow \mathfrak g$.  Thus the definition of the adjoint bundle given above is equivalent to the one given in Section~\ref{S:G-Higgs-1}.
\end{rem}  

\begin{lem}
The adjoint bundle $\mathfrak p$ of a $G$-torsor $P$ over $X$ is isomorphic to the space of $G$-invariant vector fields on the fibers of $P$ over $X$.
\end{lem}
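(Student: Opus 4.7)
The plan is to use the $G$-action on $P$ to trivialize the relative tangent bundle $T_{P/X}$ along $\mathfrak g$, and then pass to $G$-invariants.

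First, I would differentiate the right action map $a:P\times G\to P$ along its second factor at the identity section. This yields an $\mathcal O_P$-linear morphism
\begin{equation*}
\Psi:\pi^*\mathfrak g\longrightarrow T_{P/X},\qquad (p,v)\longmapsto d\alpha_p\big|_e(v),
\end{equation*}
where $\alpha_p:G\to P_{\pi(p)}$ is the orbit map $g\mapsto p.g$. Since $P$ is a $G$-torsor, each $\alpha_p$ is an isomorphism, so its derivative at $e$ is an isomorphism of tangent spaces; consequently $\Psi$ is an isomorphism of vector bundles on $P$.

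Next, I would track the $G$-equivariance. The identity $p.h.g=(p.g).(g^{-1}hg)$ reads $R_g\circ\alpha_p=\alpha_{p.g}\circ\operatorname{conj}_{g^{-1}}$; differentiating at $e$ gives
\begin{equation*}
dR_g\circ d\alpha_p|_e=d\alpha_{p.g}|_e\circ\operatorname{Ad}(g^{-1}).
\end{equation*}
Thus if $\pi^*\mathfrak g$ is equipped with the right $G$-action $(p,v).g=(p.g,\operatorname{Ad}(g^{-1})v)$ and $T_{P/X}$ with the action induced by $dR_g$, then $\Psi$ is $G$-equivariant.

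Finally, I would pass to invariants. A section of $T_{P/X}$ over $\pi^{-1}(U)$ that is invariant under the $G$-action --- i.e.\ a $G$-invariant vertical vector field on $P|_U$ --- corresponds under $\Psi$ to a morphism $s:P|_U\to\mathfrak g$ satisfying $s(p.g)=\operatorname{Ad}(g^{-1})s(p)$, which is precisely a $G$-equivariant map for the adjoint action. By definition, this is a section of $\mathfrak p=\underline{\operatorname{Hom}}_G(P,\mathfrak g)$ over $U$, giving the desired isomorphism. The only step that requires genuine care is the bookkeeping of left/right conventions, so that the inverse adjoint action emerging from the derivative of conjugation matches the one implicit in the definition of $\mathfrak p$; everything else is formal from the torsor condition.
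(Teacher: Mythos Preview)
Your argument is correct and is the classical Lie-theoretic way of proving this: trivialize $T_{P/X}$ by differentiating the orbit maps, verify equivariance via the conjugation identity $R_g\circ\alpha_p=\alpha_{p.g}\circ\operatorname{conj}_{g^{-1}}$, and take $G$-invariants. The paper arrives at the same identification but packages it in the functor-of-points language used throughout the section: a vertical vector field is a retraction $V:P[\epsilon]\to P$, and the pair $(0,V):P[\epsilon]\to P\times_X P$ is transported through the torsor isomorphism $P\times_X P\simeq G\times_X P$ and then projected to $G$, producing an equivariant map $P\to\mathfrak g$. Your map $\Psi$ is precisely the inverse of this construction, so the two proofs are really the same computation read in opposite directions. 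The paper's phrasing has the advantage that it never mentions individual points $p$ or differentiation ``at $e$'', so it works uniformly over an arbitrary base $X$ without any reduction to geometric points; your version is more immediately recognizable to someone coming from differential geometry but would need to be rewritten functorially (e.g.\ interpreting $d\alpha_p|_e$ as the restriction of the map $P\times G[\epsilon]\to P[\epsilon]$ over the identity section) to be fully rigorous in the scheme-theoretic setting.
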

\begin{proof}
Recall that a vector field on the fibers of $P$ over $X$ is an $X$-morphism $V : P[\epsilon] \rightarrow P$ that lifts the identity on $P \subset P[\epsilon]$.  To be invariant means that the diagram
\begin{equation*} \xymatrix{
G \times P[\epsilon] \ar[r]^-{\mathrm{id} \times V} \ar[d]_{p_2} & G \times P \ar[d]^\alpha \\
P[\epsilon] \ar[r]^V & P
} \end{equation*}
commutes, $\alpha$ being the action map and $p_2$ the second projection.  Now, the zero vector field also gives a map $P[\epsilon] \rightarrow P$, and the pair $(0, V)$ gives an $X$-morphism
\begin{equation}\label{E:LemGInvVect}
P[\epsilon] \stackrel{(0,V)}{\longrightarrow} P \mathbin\times_X P \stackrel{\sim}{\longleftarrow} G \mathbin\times_X P .
\end{equation}
The first map is equivariant with respect to the diagonal $G$-action on $P \mathbin\times_X P$.  Since the map $G \mathbin\times_X P \rightarrow P \mathbin\times_X P$ sends $(g,y)$ to $(gy,y)$, the second arrow is also equivariant, provided we let $G$ act by conjugation on itself.  The original condition that $P[\epsilon] \rightarrow P$ restrict to the identity on $P \subset P[\epsilon]$ means that $P \subset P[\epsilon] \rightarrow P \mathbin\times_X P$ factors through the diagonal, and therefore that $P \subset P[\epsilon] \rightarrow G \mathbin\times_X P \rightarrow G$ factors through the identity section of $G$.

Now composing  \eqref{E:LemGInvVect} with the projection $G \mathbin\times_X P \rightarrow G$, we get an equivariant map
\begin{equation*}
P[\epsilon] = P \times X[\epsilon] \rightarrow G, 
\end{equation*}
that factors through the identity section of $G$ when restricted to $P$.  This is the same as to give an equivariant map
\begin{equation*}
P \rightarrow \underline{\operatorname{Hom}}_X(X[\epsilon], G) \mathbin\times_{\underline{\operatorname{Hom}}_X(X, G)} \{ e \} = T_{G/X} \mathbin\times_{G} \{ e \} = \mathfrak g ,
\end{equation*}
which, by definition is the same as to give a section of the adjoint bundle of $P$.
\end{proof}

\begin{cor} \label{C:adj-dual}
The adjoint bundle $\mathfrak p$ of a $G$-torsor $P$ has the structure of a sheaf of Lie algebras.  If $G$ is semisimple then the Killing form furnishes an isomorphism between $\mathfrak p$ and its dual.
\end{cor}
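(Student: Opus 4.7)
\medskip

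\noindent\textbf{Proof plan.} The strategy is to transport structure from $\mathfrak g$ to $\mathfrak p$ via the equivariant-maps description $\mathfrak p = \underline{\operatorname{Hom}}_G(P, \mathfrak g)$, using in each case that the relevant piece of structure on $\mathfrak g$ is $G$-equivariant for the adjoint action.

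\emph{Lie algebra structure.} First I would observe that the usual Lie bracket on $\mathfrak g$ extends naturally to an $\mathcal O_X$-bilinear, antisymmetric, Jacobi-identity-satisfying morphism of sheaves
\[
[-,-] : \mathfrak g \otimes_{\mathcal O_X} \mathfrak g \longrightarrow \mathfrak g
\]
that is $G$-equivariant when $G$ acts on $\mathfrak g$ via the adjoint representation and diagonally on the tensor product. (One way to see this is intrinsically: $\mathfrak g$ is canonically identified with left-invariant vector fields on $G$ relative to $X$, the bracket of two such vector fields is again left-invariant, and this bracket is manifestly preserved by conjugation, which is the infinitesimal origin of the $G$-equivariance.) Applying $\underline{\operatorname{Hom}}_G(P,-)$ therefore yields an $\mathcal O_X$-bilinear bracket $\mathfrak p \otimes \mathfrak p \to \mathfrak p$ which inherits antisymmetry and the Jacobi identity. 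Equivalently, using the identification of $\mathfrak p$ with $G$-invariant vector fields on the fibers of $P \to X$ established just before the statement, one can take this bracket to be the usual Lie bracket of vector fields, noting that the bracket of two $G$-invariant vector fields is $G$-invariant because the $G$-action commutes with itself.

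\emph{Self-duality via the Killing form.} The Killing form $B(X,Y) = \operatorname{tr}(\operatorname{ad}(X)\,\operatorname{ad}(Y))$ is an $\mathcal O_X$-bilinear symmetric pairing $B: \mathfrak g \otimes_{\mathcal O_X} \mathfrak g \to \mathcal O_X$. Its defining formula and the already-established $G$-equivariance of the bracket imply that $B$ is $G$-invariant, in the sense that $B(\operatorname{Ad}(g)X, \operatorname{Ad}(g)Y) = B(X,Y)$ for all local sections $g$ of $G$. Applying $\underline{\operatorname{Hom}}_G(P,-)$ to $B$ (or, equivalently, descending $B$ along $P \times \mathfrak g \otimes P \times \mathfrak g \to \mathfrak p \otimes \mathfrak p$) gives a symmetric pairing
\[
B_P : \mathfrak p \otimes_{\mathcal O_X} \mathfrak p \longrightarrow \mathcal O_X,
\]
hence an $\mathcal O_X$-linear map $\kappa : \mathfrak p \to \mathfrak p^{\vee}$.

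\emph{Conclusion.} It remains to show $\kappa$ is an isomorphism. Since $P$ is a $G$-torsor, $\mathfrak p$ is locally (on the \'etale site of $X$) isomorphic to $\mathfrak g$, and $\kappa$ is locally identified with the map $\mathfrak g \to \mathfrak g^\vee$ induced by $B$. When $G$ is semisimple, Cartan's criterion for semisimplicity guarantees that $B$ is non-degenerate on $\mathfrak g$, so this latter map is an isomorphism of locally free sheaves. As being an isomorphism can be checked \'etale-locally, $\kappa$ is an isomorphism, giving $\mathfrak p \simeq \mathfrak p^{\vee}$.

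\emph{Main obstacle.} The only substantive point is keeping the role of $G$-equivariance clean, so that all constructions (bracket, Killing form, Ad-invariance, Cartan's criterion) really do descend from $\mathfrak g$ to $\mathfrak p$. Once this equivariance bookkeeping is in place, everything reduces to the classical statements about the Lie algebra of a semisimple group, applied \'etale-locally where the torsor is trivialized.
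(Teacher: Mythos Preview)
Your proposal is correct and follows essentially the same approach as the paper: obtain the Lie bracket on $\mathfrak p$ from the bracket of $G$-invariant vector fields on $P$ (equivalently, by transporting the $G$-equivariant bracket on $\mathfrak g$), then check self-duality \'etale-locally where the torsor trivializes, reducing to the nondegeneracy of the Killing form on $\mathfrak g$ for semisimple $G$. The paper's proof is extremely terse---it only states that one must verify the bracket of equivariant vector fields is equivariant (referring to the functorial Lie bracket constructed earlier via homogeneity) and then invokes the local isomorphism $\mathfrak p \simeq \mathfrak g$---so your write-up in fact supplies more of the details than the paper does.
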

\begin{proof}
The point is to verify that the bracket of equivariant vector fields on $P$ is equivariant.  This can be done with a diagram chase, using the definition of the Lie bracket in Section~\ref{S:tangent-bundle}, but we omit it.  Then $P$ is locally isomorphic to $G$ so $\mathfrak p$ is locally isomorphic to $\mathfrak g$, and this isomorphism preserves the Killing form.  If $G$ is semisimple then $\mathfrak g$ is self dual with respect to the Killing form.
\end{proof}

For the next statement, let $X$ be an $S$-scheme and let $G$ be a smooth group scheme over $X$.  Let $\mathsf{Fib}^G_{X/S}$ denote the $S$-stack whose sections over $T$ are the $G$-torsors on $X_T$.

\begin{teo}
We have $T_{\mathsf{Fib}^G_{X/S}}(P) = \mathrm{B}\underline{\operatorname{Hom}}_G(P, \mathfrak g) = \mathrm{B}\kern1pt\mathfrak p$ where $\mathfrak g$ denotes the Lie algebra of $G$.  In particular,
\begin{align*}
T^{-1}_{\mathsf{Fib}^G_{X/S}}(P) &= H^0\bigl(X, \underline{\operatorname{Hom}}_G(P, \mathfrak g)\bigr) = H^0(X, \mathfrak p) \\
T^0_{\mathsf{Fib}^G_{X/S}}(P) &= H^1\bigl(X, \underline{\operatorname{Hom}}_G(P, \mathfrak g)\bigr) = H^1(X, \mathfrak p)  .
\end{align*}
\end{teo}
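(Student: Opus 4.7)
The plan is to specialize the deformation theory of $G$-torsors just developed to the trivial square-zero extension $X \subset X[\epsilon]$ and then repackage the resulting deformation-automorphism data as a gerbe. By Definition~\ref{D:TangSp} and Remark~\ref{R:SDN}, a section of $T_{\mathsf{Fib}^G_{X/S}}$ over an $S$-scheme $R$ is precisely a $G$-torsor on $X_{R[\epsilon]}$ extending $P_R$ on $X_R$, with morphisms being isomorphisms over the identity extension. Working relatively over $R$, and setting $X' = X_R[\epsilon]$, $J = \mathcal{O}_{X_R}$, and $G' = G_{X_R[\epsilon]}$, I am in the situation of the preceding theorem. The trivial extension has $\mathfrak g' = \mathfrak g \otimes J = \mathfrak g$, so the sheaf controlling deformations of $P_R$ is exactly $\underline{\operatorname{Hom}}_G(P_R, \mathfrak g) = \mathfrak p_R$.

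First I would note that the obstruction in $H^2(X_R, \mathfrak p_R)$ to extending $P_R$ from $X_R$ to $X_R[\epsilon]$ automatically vanishes: the product $G$-torsor $P_R \times_R R[\epsilon]$ supplies a canonical extension, which I will call the \emph{zero section}. Thus the fiber $T_{\mathsf{Fib}^G_{X/S}}(P)$ over an $R$-point $P_R$ is nonempty. Next I would apply the classification part of the theorem: the groupoid of extensions of $P_R$, as an object of $T_{\mathsf{Fib}^G_{X/S}}(R)$, has isomorphism classes forming a torsor under $H^1(X_R, \mathfrak p_R)$ and each object has automorphism group canonically $H^0(X_R, \mathfrak p_R)$. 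This is exactly the data defining a gerbe banded by $\mathfrak p_R$ which, because it has a distinguished section (the zero section), is the trivial gerbe $\mathrm{B}\mathfrak p_R$; equivalently, observations (i)--(\ref{B:PrinGbund-iii}) preceding the theorem realize the category of extensions directly as the groupoid of $\mathfrak p_R$-torsors on $X_R$.

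Assembling this naturally in $R$ yields the identification $T_{\mathsf{Fib}^G_{X/S}}(P) = \mathrm{B}\underline{\operatorname{Hom}}_G(P,\mathfrak g) = \mathrm{B}\mathfrak p$ as fibered categories over $\mathsf S$. The particular statements for $T^{-1}$ and $T^0$ then follow formally from the definitions $T^{-1}_{\mathsf{Fib}^G_{X/S}}(P) = \operatorname{Aut}(0)$ and $T^{0}_{\mathsf{Fib}^G_{X/S}}(P) = T_{\mathsf{Fib}^G_{X/S}}(P)/\mathrm{isom}$, together with the standard identification of sections and isomorphism classes of a $\mathrm{B}\mathfrak p$-valued functor with $H^0$ and $H^1$ of $\mathfrak p$, exactly as in the derivation of Corollary~\ref{C:TFib} for $\mathsf{Fib}_{X/S}$.

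The only subtle step is the passage from the torsor/automorphism data to the gerbe-theoretic statement $T_{\mathsf{Fib}^G_{X/S}}(P) = \mathrm{B}\mathfrak p$, since in general a gerbe banded by an abelian sheaf is only \emph{locally} $\mathrm{B}\mathfrak p$. The main obstacle, then, is to verify that this gerbe is trivialized globally by the product extension; this is immediate here, but in settings without such a canonical basepoint (e.g., for nontrivial square-zero extensions $R \subset R'$) one would only obtain a $\mathrm{B}\mathfrak p$-gerbe whose class in $H^2$ is the deformation obstruction. Once trivialization by the zero section is recorded, everything else is formal.
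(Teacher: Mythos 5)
Your proposal is correct and follows essentially the same route as the paper: both use the trivial (product) extension as a basepoint and the observations (i)--(iii) to identify the groupoid of extensions of $P$ to $X[\epsilon]$ with the groupoid of $\mathfrak p$-torsors on $X$. The paper simply makes your ``neutral gerbe trivialized by the zero section'' step explicit, writing down the mutually inverse functors $P'' \mapsto \underline{\operatorname{Isom}}_{G'}^P(P'',P')$ and $Q \mapsto \underline{\operatorname{Hom}}_{\mathfrak p}(Q,P')$.
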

\begin{proof}
Let $X' = X[\epsilon]$.  Let $P'$ be the trivial extension of $P$ to $X'$.  Suppose that $P''$ is another extension.  Then $\underline{\operatorname{Isom}}_{G'}^P(P'', P')$ (isomorphisms respecting the $G'$-action and the maps to $P$) is a torsor on $X'$ under $\underline{\operatorname{Hom}}_G(P, \mathfrak g)$ as explained in \eqref{B:PrinGbund-iii} above ($X'$ has the same \'etale site as $X$).  The same argument shows that $\underline{\operatorname{Aut}}_{G'}^P(P') = \mathfrak p$, so that, given any torsor $Q$ under $\mathfrak p=\underline{\operatorname{Hom}}_G(P, \mathfrak g)$, we can form the sheaf of $\mathfrak p$-equivariant maps from $Q$ to $P'$
\begin{equation*}
P'' = \underline{\operatorname{Hom}}_{\mathfrak p}(Q, P') .
\end{equation*}
These constructions are inverse isomorphisms between $T_{\mathsf{Fib}^G_{X/S}}(P)$ and the stack of torsors under $\mathfrak p = \underline{\operatorname{Hom}}_G(P, \mathfrak g)$.
\end{proof}

Now suppose that $X$ is a reduced (and therefore Cohen--Macaulay) projective curve over a field $k$ (i.e., $S=\operatorname{Spec}k$).  Then by Serre duality, we have
\begin{equation*}
T^0_{\mathsf{Fib}^G_{X/S}}(P)=H^1(X, \mathfrak p) \simeq H^0\bigl(X, \underline{\operatorname{Hom}}(\mathfrak p, \omega_X)\bigr)^\vee .
\end{equation*}

Generalizing Definition \ref{D:GHiggsCC}:  

\begin{dfn}[$G$-Higgs bundle] \label{D:GHiggsk}
Let $X$ be a  reduced projective curve over a field $k$, and let  $G$ be a smooth algebraic group over $k$. 
A \emph{$G$-Higgs bundle} on $X$ is a pair $(P,\Phi)$ where $P$ is a $G$-torsor  over $X$  and $\Phi\in \operatorname{Hom}(\mathfrak p,\omega_X)$, where $\mathfrak p:= \underline{\operatorname{Hom}}_G(P, \mathfrak g)$.
\end{dfn}

\begin{rem}
If $G$ is semisimple then $\mathfrak p$ is self-dual (Corollary~\ref{C:adj-dual}), so a $G$-Higgs bundle on $X$ can also be viewed as an element of $H^0(X, \mathfrak p \otimes \omega_X)$, or, in another popular notation, of $H^0(X, \operatorname{ad} P \otimes K_X)$.  In other words, Definition \ref{D:GHiggsCC} agrees with  Definition \ref{D:GHiggsk}.
\end{rem}

In the case where $P$ is the $\mathrm{GL}_n$-torsor associated to a vector bundle $E$, we have $\mathfrak p = \underline{\operatorname{End}}(E)$, so that
\begin{equation*}
\underline{\operatorname{Hom}}(\mathfrak p, \omega_X) = \underline{\operatorname{Hom}}(E, E \otimes \omega_X),
\end{equation*}
and $T^0_{\mathsf{Fib}^G_{X/k}}(P)= H^1(X, \mathfrak p)\simeq \operatorname{Hom}(\mathfrak p,\omega_X)^\vee=\operatorname{Hom}(E,E\otimes \omega_X)^\vee$.
More generally, extending the definition of $G$-Higgs bundles to families of curves, and using Grothendieck duality,  the discussion above  proves the following corollary:

\begin{cor}
Let $X$ be a family of smooth curves over $S$.  Higgs bundles on $X/S$ correspond to isomorphism classes of relative cotangent vectors for $\mathsf{Fib}_{X/S}$ over $S$.  Likewise, $G$-Higgs bundles on $X/S$ correspond to relative cotangent vectors over $S$ for the stack $\mathsf{Fib}^G_{X/S}$ of principal $G$-bundles on $X$.
\end{cor}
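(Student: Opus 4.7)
The plan is to identify the space of Higgs fields with the vector space dual of the relative tangent space $T^0_{\mathsf{Fib}^G_{X/S}/S}(P)$, using the cohomological computation of the latter that was carried out just above the corollary, together with relative Grothendieck--Serre duality along $\pi:X\to S$. We treat the $G$-bundle case first, and then recover the vector bundle statement by specialising to $G=\mathrm{GL}_n$.

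First, I would fix an $S$-point $P$ of $\mathsf{Fib}^G_{X/S}$ and spell out that the relative tangent space $T_{\mathsf{Fib}^G_{X/S}/S}(P)$ is, by definition, the fiber of $T_{\mathsf{Fib}^G_{X/S}}(P)\to T_{S}$ over the zero section. Since the deformation theory argument preceding the statement was entirely relative over $S$, the analogous computation gives
\begin{equation*}
T^{-1}_{\mathsf{Fib}^G_{X/S}/S}(P)=H^0(X,\mathfrak p),\qquad
T^{0}_{\mathsf{Fib}^G_{X/S}/S}(P)=H^1(X,\mathfrak p),
\end{equation*}
where $\mathfrak p=\underline{\operatorname{Hom}}_G(P,\mathfrak g)$ is the adjoint bundle; here the higher cohomology group classifies locally trivial deformations, and the lower one records infinitesimal automorphisms. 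The relative cotangent space is then by definition the $\mathcal O_S$-linear dual of $T^0_{\mathsf{Fib}^G_{X/S}/S}(P)$.

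Next I would apply relative Grothendieck--Serre duality for the smooth proper morphism $\pi:X\to S$ of relative dimension $1$. Because $\mathfrak p$ is locally free and $\pi$ is smooth, one has a natural isomorphism
\begin{equation*}
R^1\pi_*\,\mathfrak p \;\simeq\; \bigl(\pi_*(\mathfrak p^\vee\otimes \omega_{X/S})\bigr)^{\!\vee},
\end{equation*}
and passing to the fiber over an $S$-point yields a canonical identification
\begin{equation*}
H^1(X,\mathfrak p)^{\!\vee} \;\simeq\; H^0\bigl(X,\underline{\operatorname{Hom}}(\mathfrak p,\omega_{X/S})\bigr).
\end{equation*}
By Definition~\ref{D:GHiggsk} (applied in families, which poses no difficulty since $\mathfrak p$ and $\omega_{X/S}$ are formed relatively), the right-hand side is precisely the space of $G$-Higgs fields on $P$. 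This establishes the $G$-bundle statement.

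For the vector-bundle case, I would specialise to $G=\mathrm{GL}_n$ and to the torsor $P$ of frames of a rank $n$ bundle $E$ on $X$. Then $\mathfrak p=\underline{\operatorname{End}}(E)$, which is canonically self-dual via the trace pairing $(\phi,\psi)\mapsto\operatorname{tr}(\phi\psi)$, so
\begin{equation*}
H^0\bigl(X,\underline{\operatorname{Hom}}(\mathfrak p,\omega_{X/S})\bigr)
=H^0\bigl(X,\underline{\operatorname{End}}(E)\otimes \omega_{X/S}\bigr)
=\operatorname{Hom}_{\mathcal O_X}(E,E\otimes \omega_{X/S}),
\end{equation*}
which is exactly the set of Higgs fields on $E$ in the sense of \S\ref{S:HigSmCur}. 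Combined with the previous paragraph, this identifies isomorphism classes of Higgs bundles on $X/S$ with the (isomorphism classes of) relative cotangent vectors of $\mathsf{Fib}_{X/S}$ over $S$.

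The main obstacle is genuinely bookkeeping rather than substance: one must make sure the duality isomorphism is canonical (so that the identifications are functorial in $P$, $E$ and in $S$), that the relative formulation of the deformation-theoretic computation of $T^0$ truly gives the groups stated above (one has to check that the $S$-relative square-zero extensions $X[\epsilon\pi^*J]$ used in the proof are the right ones for the relative tangent bundle), and that for $G$ semisimple the self-duality $\mathfrak p\simeq \mathfrak p^\vee$ from Corollary~\ref{C:adj-dual} lets one rewrite the cotangent space as $H^0(X,\mathfrak p\otimes \omega_{X/S})$, matching Definition~\ref{D:GHiggsCC}. Once these functorialities are in place the corollary follows from the cohomological computation and duality, with no further input.
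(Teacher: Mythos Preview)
Your proposal is correct and follows essentially the same approach as the paper: the paper's proof is the discussion immediately preceding the corollary, which computes $T^0_{\mathsf{Fib}^G_{X/S}}(P)=H^1(X,\mathfrak p)$, applies Serre/Grothendieck duality to identify its dual with $H^0(X,\underline{\operatorname{Hom}}(\mathfrak p,\omega_{X/S}))$, and specialises to $G=\mathrm{GL}_n$ via $\mathfrak p=\underline{\operatorname{End}}(E)$. Your write-up is slightly more careful in distinguishing relative from absolute tangent spaces, but the substance is identical.
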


\subsection{Obstruction theory}
\label{S:obstructions}

In this section, we will consider a stack $\mathcal{F}$  (not a priori algebraic) and a lifting problem
\begin{equation} \label{E:lifting-problem} \xymatrix{
		S \ar[r]^{\xi} \ar[d] & \mathcal{F} \\
		S' \ar@{-->}[ur]
} \end{equation}
in which $S'$ is a square-zero extension of $S'$ with ideal $J$.  We are looking for an \emph{obstruction theory} that can detect whether this lifting problem has a solution.  The obstruction theory will be an $\mathcal{O}_S$-module $T^1_{\mathcal{F}}(\xi, J)$ that depends only on $J$ and $\xi \in \mathcal{F}(S)$, \emph{not on the particular extension $S'$}.  The above lifting problem will then produce a natural obstruction $\omega \in T^1_{\mathcal{F}}(\xi, J)$ whose vanishing is equivalent to the existence of a lift.

We will see that for moduli problems that are locally unobstructed in a suitable sense (that includes all of the examples considered here) there is a natural choice for $T^1_\mathcal{F}(\xi,J)$ arising from the infinitesimal deformation theory.  This phenomenon could be better explained in the language of torsors under abelian group stacks, but to introduce such objects would take us too far afield.  We will rely instead on injective resolutions and \v{C}ech methods.

Various definitions of obstruction theories have appeared in the literature~\cite[\S2.6]{Artin-versal}, \cite[Def.~4.4]{BF}, \cite[Def.~1.2]{LT}, \cite[Tag 07YG]{stacks}, \cite[Def.~6.6]{Hall}, \cite[Def.~3.2]{obs}.  From the perspective of derived algebraic geometry, an obstruction theory arises from the promotion of a moduli problem to a derived moduli problem~\cite{Lurie-thesis}.  The definition we adopt here is closest to~\cite{Hall}.

\begin{dfn} \label{D:obs}
	Let $\mathcal F$ be a stack on the \'etale site of schemes.  An \emph{obstruction theory} for $\mathcal{F}$ is a system of abelian groups $T^1(\xi, J)$ depending contravariantly on a scheme $S$ 
	 and an element $\xi \in \mathcal{F}(S)$, and covariantly on a quasicoherent $\mathcal{O}_S$-module $J$, together with an obstruction map 
\begin{equation*}
\omega : \operatorname{Exal}(\mathcal{O}_S, J) \rightarrow T^1(\xi, J)
\end{equation*}
that is natural in $S$ and $J$ and has the following property:  $\omega(S') = 0$ if and only if diagram~\eqref{E:lifting-problem} admits a lift.
\end{dfn}

Above we use the notation $\operatorname{Exal}(\mathcal{O}_S, J)$ for the $\mathcal O_S$-module of algebras that are extensions of $\mathcal O_S$ by $J$  with $J^2=0$ (see e.g., \cite[\S 1.1]{sernesi} for more details).

We spell out precisely the meaning of naturality in Definition \ref{D:obs}.  Suppose there is a commutative (not necessarily cartesian) diagram 
of square-zero extensions~\eqref{E:sqz} where $R \subseteq R'$ has ideal $I$ and $S \subseteq S'$ has ideal $J$.
\begin{equation} \label{E:sqz} \vcenter{ \xymatrix{
		R \ar[r] \ar[d]_f & R' \ar[d] \\
		S \ar[r] & S'
}} \end{equation}
Then we get a homomorphism $f^\ast J \rightarrow I$ and therefore a morphism:
\begin{equation} \label{E:obs-nat}
	T^1(\xi, J) \rightarrow T^1(f^\ast \xi, f^\ast J) \rightarrow T^1(f^\ast \xi, I).
\end{equation}
We also have obstructions $\omega(S') \in T^1(\xi, J)$ and $\omega(R') \in T^1(f^\ast \xi, I)$.  The naturality alluded to in Definition \ref{D:obs} requires that $\omega(S')$ is carried under the morphism~\eqref{E:obs-nat} to $\omega(R')$.

Our goal in this section will be to illustrate a technique for constructing an obstruction theory for a stack.  For concreteness, we will consider the case where $\mathcal{F}$ is the stack of quadruples $(C,E,F,\sigma)$ in which $\pi:C\to S$ is a nodal curve, $E$ and $F$ are vector bundles on $C$, and $\sigma : E \rightarrow F$ is a morphism of vector bundles.  However, apart from the following observations, analogues of which frequently hold for other stacks, the particular choice of $\mathcal{F}$ will not enter into the rest of the discussion:
\setcounter{enumi}{0}
\begin{enumerate}[(1)]
	\item \label{obs:1} Fix a scheme $S$, an $S$-point $\xi = (C,E,F,\sigma)$ of $\mathcal{F}$, and a square-zero extension $S \subseteq S'$ with ideal $J$.  We obtain a square-zero extension of sheaves of commutative rings on the Zariski site, $\pi^{-1} \mathcal{O}_{S'} \rightarrow \pi^{-1} \mathcal{O}_S$, with ideal $\pi^{-1} J$.  Our problem can be phrased as the search for a square-zero extension $\mathcal{O}_{C'}$ of $\mathcal{O}_C$ by $\pi^\ast J$ compatible with the extension $\pi^{-1} \mathcal{O}_{S'} \rightarrow \pi^{-1} \mathcal{O}_S$ and the homomorphism $\pi^{-1} J \rightarrow \pi^\ast J$, together with locally free $\mathcal{O}_{C'}$-modules $E'$ and $F'$ extending $E$ and $F$ and a morphism $\sigma' : E' \rightarrow F'$ extending $\sigma$.

	We could replace $\pi^{-1} \mathcal{O}_{S'}$ by the extension $\mathscr{A}$ of $\pi^{-1} \mathcal{O}_S$ 
	by $\pi^\ast J$ obtained by pushout:
\begin{equation*} \xymatrix{
		0 \ar[r] & \pi^{-1} J \ar[r] \ar[d] & \pi^{-1} \mathcal{O}_{S'} \ar[r] \ar[d] & \pi^{-1} \mathcal{O}_S \ar[r] \ar@{=}[d] & 0 \\
		0 \ar[r] & \pi^\ast J \ar[r] & \mathscr{A} \ar[r] & \pi^{-1} \mathcal{O}_S \ar[r] & 0
} \end{equation*}
To find an extension of $\mathcal{O}_{C}$ by $\pi^\ast J$ compatible with $\mathscr{A}$ is the same as to find a $\pi^{-1} \mathcal O_{S'}$-algebra extension compatible with the homomorphism $\pi^{-1} J \rightarrow \pi^\ast J$.
Indeed, an $\mathscr A$-algebra extension induces a $\pi^{-1} \mathcal{O}_{S'}$-algebra extension by composition with $\pi^{-1} \mathcal O_{S'} \rightarrow \mathscr{A}$; conversely, if $\mathscr B$ is a $\pi^{-1} \mathcal O_{S'}$-algebra extension of $\mathcal O_C$ by $\pi^\ast J$ then the map $\pi^{-1} J \rightarrow \mathscr B$ factors through $\pi^\ast J$ so the map $\pi^{-1} \mathcal O_{S'} \rightarrow \mathscr B$ factors through $\mathscr A$.

Moreover, we may consider an arbitrary square-zero extension $\mathscr{A}$ of $\pi^{-1} \mathcal{O}_S$ by an $\mathcal{O}_C$-module $\mathscr{J}$ and define $\mathcal{F}_\xi(\mathscr{A})$ to be the category of quadruples $(\mathcal{O}_{C'}, E', F', \sigma')$ where $\mathcal{O}_{C'}$ is an extension of $\mathcal{O}_C$ by $\mathscr{J}$, compatible with $\mathscr{A}$; both $E'$ and $F'$ are locally free $\mathcal{O}_{C'}$-modules extending $E$ and $F$, respectively; and $\sigma' : E' \rightarrow F'$ is a morphism of $\mathcal{O}_{C'}$-modules extending $\sigma$.

We observe that $\mathcal{F}_\xi$ is a homogeneous functor.  This allows us to define $T^{-1}_{\mathcal{F}}(\xi, \mathscr{J})$ and $T^0_{\mathcal{F}}(\xi, \mathscr{J})$.

\item \label{obs:2} There is a complex $\Upsilon^\bullet$
 of $\mathcal{O}_C$-modules with quasicoherent cohomology such that 
\begin{gather*}
T_{\mathcal{F}}(\xi, \mathscr{J}) = \mathbf{Ext}(\Upsilon^\bullet, \mathscr{J}) .
\end{gather*}
In other words, the infinitesimal deformations and automorphisms of $\xi$ should be representable by a complex.  In the case of interest, $\Upsilon^\bullet$ is the complex $\Upsilon^\bullet_{C/S}(E, F, \sigma)$, constructed in \S\ref{S:CVBm}.

For the representability of the obstruction theory by a complex on $S$, we will also need the observation that $\Upsilon^\bullet$ is perfect as an object of the derived category.  That is, locally in $C$ it can be represented by a bounded complex of locally free modules.  This is even true globally for $\Upsilon_{C/S}(E, F, \sigma)$. 

\item \label{obs:3} Whenever $\mathscr{J}$ is an \emph{injective} $\mathcal{O}_C$-module and $\mathscr{A}$ is a square-zero extension of $\pi^{-1} \mathcal{O}_S$ by $\mathscr{J}$, there is some $\xi' \in \mathcal{F}_\xi(\mathscr{A})$.  In our situation of interest, we can see that this is the case by considering the successive obstructions introduced in earlier sections.  There is no local obstruction to deforming curves (Corollary \ref{C:NCsm}).  The first global obstruction (gluing isomorphism classes of local deformations) lies in 
\begin{equation*}
H^1\bigl(C, \underline{\operatorname{Ext}}^1(\Omega_{C/S}, \mathscr{J})\bigr) = 0 .
\end{equation*}
The obstruction to finding a global deformation inside compatible local isomorphism classes lies in
\begin{equation*}
H^2\bigl(C, \underline{\operatorname{Hom}}(\Omega_{C/S}, \mathscr{J})\bigr) = 0 .
\end{equation*}
Therefore we can find $\mathcal{O}_{C'}$.  Now the obstructions to extending $E$ and $F$ lie in
\begin{gather*}
\operatorname{Ext}^2\bigl(\underline{\operatorname{End}}(E)^\vee, \mathscr{J}\bigr) = 0 \\
\operatorname{Ext}^2\bigl(\underline{\operatorname{End}}(F)^\vee, \mathscr{J}\bigr) = 0 .
\end{gather*}
Finally, the obstruction to extending $\sigma$, once $\mathcal{O}_{C'}$, $E'$, and $F'$ have been chosen, lies in
\begin{equation*}
\operatorname{Ext}^1\bigl(\underline{\operatorname{Hom}}(E,F)^\vee, \mathscr{J}\bigr) = 0.
\end{equation*}
One could also verify the existence of $\xi'$ directly by choosing local deformations arbitrarily and using the fact that $\mathscr{J}$ is a flasque sheaf to assemble them into a global deformation.  Thus, if one wants to axiomatize the discussion of this section, one should require that $\mathcal{F}_\xi(\mathscr{A})$ be locally nonempty, when regarded as a CFG over the Zariski site of $C$.
\end{enumerate}

Now we will build a global obstruction to the existence of $\xi' \in \mathcal{F}(S')$ lifting $\xi \in \mathcal{F}(S)$.  The obstruction will lie in $\operatorname{Ext}^2(\Upsilon^\bullet, \pi^\ast J)$, which we will take as the definition of $T^1_{\mathcal{F}}(\xi, J)$.

We begin by choosing a resolution 
\begin{equation*}
0 \rightarrow \pi^\ast J \rightarrow \mathscr{J}^0 \rightarrow \mathscr{J}^1 \rightarrow 0
\end{equation*}
where $\mathscr{J}^0$ and $\mathscr{J}^1$ are sheaves of $\mathcal{O}_C$-modules and $\mathscr{J}^0$ is injective.  We leave it to the reader to verify that the construction of the obstruction given here is independent of the choice of resolution.

We can push out the extension:
\begin{equation*} \xymatrix{
0 \ar[r] & \pi^{-1} J \ar[r] \ar[d] & \pi^{-1} \mathcal{O}_{S'} \ar[r] \ar[d] & \pi^{-1} \mathcal{O}_S \ar[r] \ar@{=}[d] & 0 \\
0 \ar[r] & \mathscr{J}^0  \ar[r] \ar[d] & \mathscr{A} \ar[r] \ar[d] & \pi^{-1} \mathcal{O}_S \ar[r] \ar@{=}[d] & 0 \\
0 \ar[r] & \mathscr{J}^1 \ar[r] & \mathscr{B}  \ar[r] & \pi^{-1} \mathcal{O}_S \ar[r] & 0
} \end{equation*}
Note $\mathscr{B}$ is canonically isomorphic to $\pi^{-1} \mathcal{O}_S + \epsilon \mathscr{J}^1$ because the map $\pi^{-1} J \rightarrow \mathscr{J}^1$ is zero.

The commutative diagram above induces a map:
\begin{equation} \label{E:extension-morphism}
\mathcal{F}_\xi(\mathscr{A}) \rightarrow \mathcal{F}_\xi(\mathscr{B}) = T_{\mathcal{F}}(\xi, \mathscr{J}^1).
\end{equation}
The identification with $T_{\mathcal F}(\xi, \mathscr J^1)$ comes by virtue of the canonical isomorphism $\mathscr B \simeq \pi^{-1} \mathcal O_S + \epsilon \mathscr J^1$.

Note that $\mathcal{F}_\xi(\mathscr{B})$ contains a canonical zero element (corresponding to the zero element of $T_{\mathcal F}(\xi, \mathscr J^1)$), namely the image of $\xi$ under the map $\mathcal{F}_\xi(\pi^{-1} \mathcal{O}_S) \rightarrow \mathcal{F}_\xi(\mathscr{B})$ induced from the homomorphism $\pi^{-1} \mathcal O_S \rightarrow \pi^{-1} \mathcal{O}_S + \epsilon \mathscr{J}^1 = \mathscr{B}$, so we can speak of the kernel of~\eqref{E:extension-morphism}.  By definition, this is the fiber product of groupoids $0 \mathop{\times}_{\mathcal{F}_\xi(\mathscr{B})} \mathcal{F}_\xi(\mathscr{A})$.

\begin{lem}
There is a canonical identification between $\mathcal{F}_\xi(\pi^{-1} \mathcal{O}_{S'})$ and the kernel of~\eqref{E:extension-morphism}.
\end{lem}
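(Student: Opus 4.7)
The plan is to use the homogeneity of $\mathcal{F}_\xi$ (asserted in observation~\eqref{obs:1}) together with the equivalence of $\mathcal{F}_\xi$ along pushouts of the ideal (also in observation~\eqref{obs:1}), converting the problem into an application of the standard homogeneity–pullback correspondence. First I would form the pullback of sheaves of rings
\begin{equation*}
R := \pi^{-1}\mathcal{O}_S \mathop{\times}_{\mathscr{B}} \mathscr{A},
\end{equation*}
using the canonical splitting $\pi^{-1}\mathcal{O}_S \hookrightarrow \mathscr{B} = \pi^{-1}\mathcal{O}_S + \epsilon\mathscr{J}^1$. Since $\mathscr{A}\to\mathscr{B}$ is surjective with square-zero kernel $\pi^*J$, the map $R \to \pi^{-1}\mathcal{O}_S$ fits into an exact sequence $0\to \pi^*J \to R \to \pi^{-1}\mathcal{O}_S \to 0$, so $R$ is a square-zero extension of $\pi^{-1}\mathcal{O}_S$ by $\pi^*J$.

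Next I would identify $R$ canonically with the pushout of $\pi^{-1}\mathcal{O}_{S'}$ along the natural map $\pi^{-1}J \to \pi^*J$. The composition $\pi^{-1}\mathcal{O}_{S'} \to \mathscr{A} \to \mathscr{B}$ factors through the splitting $\pi^{-1}\mathcal{O}_S \hookrightarrow \mathscr{B}$, because $\pi^{-1}J \to \mathscr{J}^1$ is the zero map (it factors through $\pi^{-1}J\to\pi^*J\to\mathscr{J}^0\to\mathscr{J}^1$, and the last map vanishes by exactness of the resolution). Hence the map $\pi^{-1}\mathcal{O}_{S'}\to \mathscr{A}$ factors through $R$ by the universal property of the pullback. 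This factorization restricts to the canonical map $\pi^{-1}J \to \pi^*J$ on ideals and to the identity on the quotient $\pi^{-1}\mathcal{O}_S$, so by the universal property of pushout and the five lemma, $R$ is canonically the pushout of $\pi^{-1}\mathcal{O}_{S'}$ along $\pi^{-1}J \to \pi^*J$. The equivalence of categories recorded in observation~\eqref{obs:1} then yields a canonical equivalence $\mathcal{F}_\xi(\pi^{-1}\mathcal{O}_{S'}) \simeq \mathcal{F}_\xi(R)$.

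Finally I would invoke homogeneity of $\mathcal{F}_\xi$ applied to the pullback diagram defining $R$: since $\mathscr{A}\to\mathscr{B}$ is surjective with nilpotent kernel, this is precisely the input to homogeneity (in the ring-theoretic formulation of Theorem~\ref{T:Homog}), giving an equivalence
\begin{equation*}
\mathcal{F}_\xi(R) \simeq \mathcal{F}_\xi(\pi^{-1}\mathcal{O}_S) \mathop{\times}_{\mathcal{F}_\xi(\mathscr{B})} \mathcal{F}_\xi(\mathscr{A}).
\end{equation*}
The groupoid $\mathcal{F}_\xi(\pi^{-1}\mathcal{O}_S)$ contains the single object $\xi$ whose image in $\mathcal{F}_\xi(\mathscr{B})$ is by construction the zero element, so the $2$-fiber product on the right is canonically the kernel of~\eqref{E:extension-morphism}. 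Composing with the equivalence of the previous paragraph produces the desired canonical identification.

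The main obstacle is the verification that $R$ really is the pushout of $\pi^{-1}\mathcal{O}_{S'}$ along $\pi^{-1}J \to \pi^*J$; once this structural fact is in hand, the remainder of the argument is a formal assembly of universal properties. A subsidiary technical point is that homogeneity is being applied in the setting of sheaves of rings on the underlying space of $C$ rather than to global rings, but this was already tacitly used throughout the deformation-theoretic constructions (and is exactly the content of observation~\eqref{obs:1}'s claim that $\mathcal{F}_\xi$ is a homogeneous functor), so no new technology is required.
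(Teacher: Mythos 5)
Your argument is correct and is essentially the paper's proof: one applies homogeneity of $\mathcal{F}_\xi$ to the cartesian square formed by $\pi^{-1}\mathcal{O}_S \mathbin\times_{\mathscr{B}} \mathscr{A}$ over $\mathscr{A} \rightarrow \mathscr{B}$ and then uses that $\mathcal{F}_\xi(\pi^{-1}\mathcal{O}_S)$ is the zero object mapping to the zero element of $\mathcal{F}_\xi(\mathscr{B})$. The only difference is that you make explicit (via the five lemma and observation~(\ref{obs:1})) the identification of this fiber product with the pushout of $\pi^{-1}\mathcal{O}_{S'}$ along $\pi^{-1}J \rightarrow \pi^\ast J$, a point the paper leaves implicit when it declares the square with $\pi^{-1}\mathcal{O}_{S'}$ in the corner cartesian.
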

\begin{proof}
The diagram
\begin{equation*} \xymatrix{
\pi^{-1} \mathcal{O}_{S'} \ar[r] \ar[d] & \mathscr{A} \ar[d] \\
\pi^{-1} \mathcal{O}_S \ar[r] & \mathscr{B}
} \end{equation*}
is cartesain (the bottom arrow is the canonical splitting of the projection $\mathscr{B} \rightarrow \pi^{-1} \mathcal{O}_S$).  Therefore the homogeneity of $\mathcal{F}_\xi$ implies that
\begin{equation*} \xymatrix{
\mathcal{F}_\xi(\pi^{-1} \mathcal{O}_{S'}) \ar[r] \ar[d] & \mathcal{F}_\xi(\mathscr{A}) \ar[d] \\
\mathcal{F}_\xi(\pi^{-1} \mathcal{O}_S) \ar[r] & \mathcal{F}_\xi(\mathscr{B})
} \end{equation*}
is also cartesian.  But $\mathcal{F}_\xi(\pi^{-1} \mathcal{O}_S) = 0$, by definition, so this identifies $\mathcal{F}_\xi(\pi^{-1} \mathcal{O}_{S'})$ with the kernel of~\eqref{E:extension-morphism}.
\end{proof}

As $\mathscr{J}^0$ is injective, $\mathcal{F}_\xi(\mathscr{A}) \neq \varnothing$ (observation~\ref{obs:3} on p.~\pageref{obs:3}), 
so that $\operatorname{Ext}^1(\Upsilon^\bullet, \mathscr{J}^0)$ acts simply transitively on the set $\bigl| \mathcal{F}_\xi(\mathscr{A}) \bigr|$ of isomorphism classes in $\mathcal{F}_\xi(\mathscr{A})$ (by observation~(\ref{obs:2}) on p.~\pageref{obs:2}).  As $\mathscr{B}$ is a (canonically) split extension of $\pi^{-1} \mathcal{O}_{S}$, we have a (canonical) identification $\operatorname{Ext}^1(\Upsilon^\bullet, \mathscr{J}^1) = \bigl| \mathcal{F}_\xi(\mathscr{B}) \bigr|$.  Now we have an exact sequence in the top row below, with compatible actions illustrated in the bottom row:
\begin{equation*}  \xymatrix@R=10pt{
\operatorname{Ext}^1(\Upsilon^\bullet, \pi^{-1} J) \ar[r] & \operatorname{Ext}^1(\Upsilon^\bullet, \mathscr{J}^0) \ar[r] & \operatorname{Ext}^1(\Upsilon^\bullet, \mathscr{J}^1) \ar[r] & \operatorname{Ext}^2(\Upsilon^\bullet, \pi^{-1} J)  \\
\bigl|\mathcal{F}_\xi(\pi^{-1} \mathcal{O}_{S'})\bigr| \ar[r] \ar@(ul,ur) & \bigl|\mathcal{F}_\xi(\mathscr{A})\bigr| \ar[r] \ar@(ul,ur) & \bigl|\mathcal{F}_\xi(\mathscr{B})\bigr| \ar@{=}[u]
} \end{equation*}
Note that $\mathcal{F}_\xi(\pi^{-1} \mathcal{O}_{S'})$ may be empty.  As the action of $\operatorname{Ext}^1(\Upsilon^\bullet, \mathscr{J}^0)$ on $\bigl| \mathcal{F}_\xi(\mathscr{A}) \bigr|$ is faithful and transitive, we have that  the image of $\bigl| \mathcal{F}_\xi(\mathscr{A}) \bigr|$ is an $\operatorname{Ext}^1(\Upsilon^\bullet, \mathscr{J}^0)$ coset in $\operatorname{Ext}^1(\Upsilon^\bullet, \mathscr{J}^1)$.  In other words, it gives a well-defined element of $\operatorname{Ext}^2(\Upsilon^\bullet, \pi^{-1} J)$ obstructing the existence of an element of $\mathcal{F}_\xi(\pi^{-1} \mathscr{O}_{S'})$.  We therefore define $$T^1_{\mathcal{F}}(S,J) := \operatorname{Ext}^2(\Upsilon^\bullet, \pi^{-1} J).$$  By construction, $T^1_{\mathcal{F}}(S,J)$ is functorial with respect to $S$ (contravariant) and $J$ (covariant) and the obstruction class is natural.

\begin{dfn}[Representable deformation-obstruction theory] \label{D:rep-obs}
	Let $\mathcal F$ be a stack in the \'etale topology on schemes.  We will say that a deformation-obstruction theory $T^i_{\mathcal{F}}$, $i = -1, 0, 1$, is \emph{representable} at an $S$-point $\xi$ if there is a complex of locally free sheaves $\mathbf{E}^\bullet$ on $S$, such that for any $f : T \rightarrow S$ and any quasicoherent sheaf $J$ on $T$, we have a natural (in $T$ and in $J$) identification
\begin{equation*}
T^i_{\mathcal{F}}(f^\ast \xi, J) = \operatorname{Ext}^i(f^\ast \mathbf{E}^\bullet, J)
\end{equation*}
for $i = -1, 0, 1$.  We will say it is \emph{finitely presentable} if the vector bundles in the complex $\mathbf E^\bullet$ may be chosen to have finite rank.

We say that the deformation theory is \emph{locally representable} if it is representable at all points valued in affine schemes. We say it is \emph{locally finitely presentable} if it is finitely presentable at all points valued in affine schemes.
\end{dfn}

\begin{lem}\label{L:C-EF-Obs-lfp}
Let $\mathcal{F}$ be the stack of quadruples $(C,E,F,\sigma)$ where $C$ is a nodal curve, $E$ and $F$ are vector bundles over $C$, and $\sigma : E \rightarrow F$ is a morphism of vector bundles.  Then the obstruction theory for $\mathcal{F}$ introduced above is locally finitely presentable.
\end{lem}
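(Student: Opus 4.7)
The plan is to construct, locally on an affine $S$, a perfect complex $\mathbf E^\bullet$ on $S$—hence quasi-isomorphic to a bounded complex of finite rank free modules—that represents the deformation-obstruction theory, via Grothendieck--Serre duality applied to the structure morphism $\pi:C\to S$. Combining Lemma~\ref{L:C-EF-TanAut} with the construction of the obstruction class in \S\ref{S:obstructions}, all three modules $T^i_{\mathcal F}(f^\ast\xi,J)$ (for $i=-1,0,1$) are identified with
$$T^i_{\mathcal F}(f^\ast\xi,J)=\operatorname{Ext}^{i+1}_{\mathcal O_{C_T}}\!\bigl(\Upsilon^\bullet_T,\pi_T^\ast J\bigr),$$
so the task reduces to producing a single perfect complex $\mathbf E^\bullet$ on $S$ together with a natural isomorphism $\operatorname{Ext}^{i+1}_{\mathcal O_{C_T}}(\Upsilon^\bullet_T,\pi_T^\ast J)\simeq\operatorname{Ext}^i_{\mathcal O_T}(f^\ast\mathbf E^\bullet,J)$ functorial in $f:T\to S$ and in $J$.

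I would first verify that $\Upsilon^\bullet_{C/S}(E,F,\sigma)$ is a perfect complex on $C$. Its degree $-1$ term $\underline{\operatorname{Hom}}(E,F)^\vee$ is locally free of finite rank, and its degree $0$ term $\Upsilon_{C/S}(E,F)$ is an extension of $\underline{\operatorname{End}}(E)\oplus\underline{\operatorname{End}}(F)$ by $\Omega_{C/S}$ (Lemma~\ref{L:Upsilon-extension}), so perfectness of $\Upsilon^\bullet$ reduces to perfectness of $\Omega_{C/S}$. Locally at a node $\mathcal O_C\simeq A[x,y]/(xy-t)$, the Jacobian presentation yields an exact sequence
$$0\longrightarrow\mathcal O_C\xrightarrow{\;(y,x)^{T}\;}\mathcal O_C^{\oplus 2}\longrightarrow\Omega_{C/S}\longrightarrow 0,$$
whose left exactness reduces to checking $(x)\cap(y)=(xy)=0$ in $\mathcal O_C$, as was done in \S\ref{S:def}. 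Since nodal singularities are Gorenstein, $\pi$ is Gorenstein of relative dimension $1$, so the relative dualizing sheaf $\omega_{C/S}$ is a line bundle and $\Upsilon^\bullet\otimes\omega_{C/S}$ is again perfect on $C$.

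Next I would invoke Grothendieck--Serre duality for the proper flat Gorenstein morphism $\pi$: one has $\pi^!J=\pi^\ast J\otimes\omega_{C/S}[1]$ and
$$R\pi_\ast R\mathcal{H}om_{\mathcal O_C}(G,\pi^!J)\;\simeq\;R\mathcal{H}om_{\mathcal O_S}(R\pi_\ast G,J)$$
for perfect $G$ on $C$. Applied to $G=\Upsilon^\bullet\otimes\omega_{C/S}$, using that tensoring the inner argument by the line bundle $\omega_{C/S}$ is invertible, this becomes
$$R\pi_\ast R\mathcal{H}om_{\mathcal O_C}(\Upsilon^\bullet,\pi^\ast J)[1]\;\simeq\;R\mathcal{H}om_{\mathcal O_S}(\mathbf E^\bullet,J),\qquad \mathbf E^\bullet:=R\pi_\ast\!\bigl(\Upsilon^\bullet\otimes\omega_{C/S}\bigr).$$
Because $\pi$ is proper and flat and $\Upsilon^\bullet\otimes\omega_{C/S}$ is perfect, standard results on cohomology and base change for perfect complexes imply that $\mathbf E^\bullet$ is a perfect complex on $S$ (hence locally on an affine $S$ is quasi-isomorphic to a bounded complex of finite rank locally frees) whose formation commutes with arbitrary base change $f:T\to S$. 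Taking $R\Gamma$ and then $H^{i+1}$ of both sides yields $\operatorname{Ext}^{i+1}(\Upsilon^\bullet_T,\pi_T^\ast J)\simeq\operatorname{Ext}^i(f^\ast\mathbf E^\bullet,J)$, which is the required identification.

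The main obstacle will be marshalling relative Grothendieck--Serre duality cleanly in the non-smooth (but Gorenstein) setting of nodal curves, together with the verification that it is compatible with arbitrary base change; the perfectness of $\Upsilon^\bullet$ is a local calculation at the nodes, and once $\mathbf E^\bullet$ has been produced the passage to the ranked statement of local finite presentability is routine.
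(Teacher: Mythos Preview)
Your argument is correct and takes a genuinely different route from the paper's. The paper proceeds more elementarily: it dualizes $\Upsilon^\bullet$ to a bounded-above complex of locally free sheaves $\Xi^\bullet$ on $C$, rewrites $\operatorname{Ext}^i(\Upsilon^\bullet,\pi^\ast J)$ as hypercohomology $H^i(C,\Xi^\bullet\otimes\pi^\ast J)$, and then invokes the semicontinuity-type result \cite[III.12.2]{Hartshorne} to produce a bounded complex $L^\bullet$ of finite rank vector bundles on $S$ with $T^i_{\mathcal F}(\xi,J)=h^i(L^\bullet\otimes_A J)$; one then sets $\mathbf E^\bullet=(L^\bullet)^\vee$. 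Because that citation requires noetherian hypotheses, the paper must then perform a separate noetherian approximation step, using Corollary~\ref{C:curve-vb-morph-lfp} to descend $\xi$ to some finite-type $A_j$ and pull $\mathbf E^\bullet$ back. Your approach via relative Grothendieck--Serre duality for the Gorenstein morphism $\pi$ is more conceptual: it produces $\mathbf E^\bullet=R\pi_\ast(\Upsilon^\bullet\otimes\omega_{C/S})$ explicitly, and since perfectness of a derived pushforward of a perfect complex and its base-change compatibility hold without noetherian assumptions, you avoid the approximation argument entirely. The trade-off is that you invoke heavier machinery (relative duality and its base-change compatibility in the Gorenstein setting), whereas the paper stays closer to classical semicontinuity.
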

\begin{proof}
Let $A$ be a commutative ring and let $\xi = (C,E,F,\sigma)$ be an $A$-point of $\mathcal{F}$.  We want to show that there is a complex of locally free $A$-modules $\mathbf{E}^\bullet$ representing $T^i_{\mathcal{F}}(\xi, J)$ for $i = -1, 0, 1$ and all $A$-modules $J$.  We assume first that $A$ is noetherian.

Let $\Xi^\bullet$ be a bounded above complex 
of locally free sheaves on $C$ that is dual to $\Upsilon^\bullet$.  Then
\begin{equation*}
T^i_{\mathcal{F}}(\xi, J) = \operatorname{Ext}^i(\Upsilon^\bullet, \pi^\ast J) = H^i(C, \Xi^\bullet \otimes \pi^\ast J) .
\end{equation*}
Therefore, by \cite[III.12.2]{Hartshorne}, there is a complex $L^\bullet$ of finite rank vector bundles on $S$ such that\footnote{Loc.\ cit.\ requires that $\Xi^\bullet$ be a quasicoherent sheaf, but the proof works for a complex as long as one takes the total complex of the \v{C}ech double complex at the bottom of p.\ 182.}
\begin{equation*}
T^i_{\mathcal{F}}(\xi, J) = h^i(L^\bullet \otimes_A J) .
\end{equation*}
But now we may take $\mathbf{E}^\bullet$ to be a complex dual to $L^\bullet$ and obtain
\begin{equation*}
\operatorname{Ext}^i(\mathbf{E}^\bullet, J) = h^i(L^\bullet \otimes_A J)
\end{equation*}
as required.

Now we pass to the general case.  We can always write $A$ as a filtered colimit of commutative rings of finite type $A = \varinjlim A_i$.  As $\mathcal{F}$ is locally of finite presentation (Corollary~\ref{C:curve-vb-morph-lfp}) there is some index $j$ and some $\xi_j = (C_j,E_j,F_j,\sigma_j) \in \mathcal{F}(A_j)$ inducing $(C,E,F,\sigma)$.  Now, $T^i_{\mathcal{F}}(\xi, J) = T^i_{\mathcal{F}}(\xi_j, J)$, with $J$ regarded as an $A_j$-module via the map $A_j \rightarrow A$.  Therefore
\begin{equation*}
T^i_{\mathcal{F}}(\xi, J) = T^i_{\mathcal{F}}(\xi_j, J) = \operatorname{Ext}^i_{A_j}(\mathbf{E}^\bullet, J) = \operatorname{Ext}^i_{A}(A \mathop{\otimes}_{A_j} \mathbf{E}^\bullet, J)
\end{equation*}
so $T_{\mathcal{F}}$ is locally representable at $\xi$.
\end{proof}

\begin{lem} \label{L:aut-def-obs-dim}
Suppose that $\mathcal X$ is a stack with a locally finitely presentable obstruction theory.  Then, for any field-valued point $\xi$ of $\mathcal X$, the vector spaces $T^{-1}_{\mathcal X}(\xi)$, $T^0_{\mathcal X}(\xi)$, and $T^1_{\mathcal X}(\xi)$ are finite dimensional.
\end{lem}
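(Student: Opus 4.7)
The proof is essentially a direct unwinding of Definition~\ref{D:rep-obs} applied at a field-valued point. Since $\operatorname{Spec} k$ is affine, the hypothesis of local finite presentability guarantees the existence of a complex $\mathbf E^\bullet$ of finite rank locally free $\mathcal O_{\operatorname{Spec} k}$-modules---that is, of finite-dimensional $k$-vector spaces---such that
\begin{equation*}
T^i_{\mathcal X}(\xi, J) = \operatorname{Ext}^i_k(\mathbf E^\bullet, J)
\end{equation*}
for $i = -1, 0, 1$ and every $k$-vector space $J$. First I would spell out how $T^{-1}_{\mathcal X}(\xi)$, $T^0_{\mathcal X}(\xi)$, and $T^1_{\mathcal X}(\xi)$ fit into this representability: on the tangent side, these are the $i = -1, 0$ invariants of the groupoid $T_{\mathcal X}(\xi) = \mathcal X(k[\epsilon])$, extracted as in Section~\ref{S:tangent-bundle}, and the definition of the tangent bundle identifies this groupoid with $\mathcal F_\xi$ evaluated on the trivial square-zero extension with ideal $J = k$; on the obstruction side, $T^1_{\mathcal X}(\xi)$ is by construction $T^1_{\mathcal X}(\xi, k) = \operatorname{Ext}^2(\Upsilon^\bullet, k)$, matching the normalization used in Definition~\ref{D:obs}. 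So in all three cases the group in question is naturally identified with $\operatorname{Ext}^i_k(\mathbf E^\bullet, k)$ for $J = k$.

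Next I would argue finite dimensionality of these Ext groups purely by a linear-algebra computation. Since $\mathbf E^\bullet$ consists of finite-dimensional vector spaces, and $k$ is viewed as a complex concentrated in degree $0$, the internal Hom complex $\operatorname{Hom}^\bullet_k(\mathbf E^\bullet, k)$ has $n$-th term
\begin{equation*}
\operatorname{Hom}^n_k(\mathbf E^\bullet, k) = \operatorname{Hom}_k(\mathbf E^{-n}, k),
\end{equation*}
which is a finite-dimensional $k$-vector space (no boundedness hypothesis on $\mathbf E^\bullet$ is needed, because only a single term of $\mathbf E^\bullet$ enters in each cohomological degree). Since the terms of $\mathbf E^\bullet$ are locally free, in particular projective over the field $k$, this complex computes $\mathbf R\operatorname{Hom}_k(\mathbf E^\bullet, k)$, and therefore $\operatorname{Ext}^i_k(\mathbf E^\bullet, k)$ is a subquotient of the finite-dimensional vector space $\operatorname{Hom}_k(\mathbf E^{-i}, k)$. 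In particular it is itself finite dimensional for each $i$.

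Combining the two steps gives $\dim_k T^i_{\mathcal X}(\xi) < \infty$ for $i = -1, 0, 1$. There is no serious obstacle in this plan; the only point that requires mild care is the identification of $T^{-1}_{\mathcal X}(\xi)$ and $T^0_{\mathcal X}(\xi)$ as in Definition~\ref{D:rep-obs} with the intrinsic definitions given in Section~\ref{S:tangent-bundle}, which is built into what it means for an obstruction theory to represent the deformation theory of $\mathcal X$. Once that compatibility is recorded, the remainder is a one-line Ext calculation.
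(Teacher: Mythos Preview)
Your argument is correct and is essentially the paper's own proof, spelled out in more detail: the paper simply says the spaces in question are cohomology groups of the dual of a complex of finite rank vector bundles, which is exactly your $\operatorname{Ext}^i_k(\mathbf E^\bullet,k)$ computation. One small expository slip: your reference to $\operatorname{Ext}^2(\Upsilon^\bullet,k)$ is specific to the curve-and-vector-bundle example rather than the general setting of Definition~\ref{D:rep-obs}, where by hypothesis $T^1_{\mathcal X}(\xi,k)=\operatorname{Ext}^1_k(\mathbf E^\bullet,k)$; but you correctly revert to $\operatorname{Ext}^i_k(\mathbf E^\bullet,k)$ immediately afterward, so the argument is unaffected.
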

\begin{proof}
This is immediate, because the vector spaces in question may be identified with cohomology groups of the dual complex of a complex of finite rank vector bundles.
\end{proof}

\section{Artin's criterion for algebraicity}
\label{S:Artin}

Intuition from analytic moduli spaces suggests that moduli spaces should locally be embedded as closed subspaces of their tangent spaces.  For this to apply in an algebraic context, locally must be interpreted to mean \emph{\'etale-locally} for schemes and algebraic spaces, and \emph{smooth-locally} for algebraic stacks.

Artin gives criteria under which a stack is locally cut out by polynomial equations inside its tangent space, thereby ensuring the stack is algebraic.
  Since Artin's original formulation, there have been a number of improvements~\cite{Flenner, Lurie-DAG14, Pridham, Hall, Hall-Rydh}.  The statement we give here is close to the form given by Hall~\cite{Hall}, but with some hypotheses strengthened for the sake of transparency:

\begin{teo}[{Artin's criterion \cite[Thm.~A]{Hall}}] \label{T:Artin-criteria}
Let $S$ be an excellent scheme and let $\mathcal X$ be a CFG over the category $\mathsf S/S$ of $S$-schemes.  Then $\mathcal X$ is an  algebraic stack over $(\mathsf S/S)_{\operatorname{et}}$ that is locally of finite presentation over $S$ if and only if it has the following properties:
\begin{enumerate}[(1)]
\item \label{T:ACstack} $\mathcal X$ is a stack in the \'etale topology (Definition~\ref{D:Stack}).
\item \label{T:AChomog} $\mathcal X$ is homogeneous (Definition~\ref{D:Homog}).
\item \label{T:ACTan} $\mathcal X$ has finite dimensional tangent and automorphism spaces (Section~\ref{S:homogeneity}).
\item \label{T:ACint} $\mathcal X$ is integrable (Definition~\ref{D:FormEl}).
\item \label{T:AClfp} $\mathcal X$ is locally of finite presentation (Sections~\ref{S:lfp2} and~\ref{S:lfp}). 
\item \label{T:ACobs} $\mathcal X$ has a locally finitely presentable obstruction theory (Definition~\ref{D:rep-obs}).
\end{enumerate}
\end{teo}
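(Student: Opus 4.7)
The plan is to handle the two directions separately, with essentially all of the work going into the ``if'' direction. For the ``only if'' direction, I would simply cite (or verify directly) that an algebraic stack locally of finite presentation satisfies each of (\ref{T:ACstack})--(\ref{T:ACobs}): the stack condition is by definition; homogeneity follows from Theorem \ref{T:Homog} applied to a smooth presentation; finite dimensionality of tangent and automorphism spaces follows from Lemma \ref{L:aut-def-obs-dim}; integrability is a version of Grothendieck's existence theorem; local finite presentation is immediate from Definition \ref{D:lfp}; and a representable obstruction theory can be built from the cotangent complex of a smooth presentation.

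For the ``if'' direction, the goal is to produce, for each field-valued point $x_0 : \operatorname{Spec} k \to \mathcal{X}$ with $k$ of finite type over $\mathcal O_S$, a smooth morphism $U \to \mathcal X$ from a scheme $U$ of finite presentation over $S$ whose image contains $x_0$. Taking the disjoint union of such $U$'s over all residue fields then gives the desired smooth cover. The construction proceeds in three stages. First, using homogeneity (\ref{T:AChomog}), the finite dimensionality of $T^{-1}_{\mathcal X}(x_0)$ and $T^0_{\mathcal X}(x_0)$ (\ref{T:ACTan}), and the finitely presentable obstruction theory (\ref{T:ACobs}), I would build by induction on $n$ a compatible system of lifts $x_n \in \mathcal X(R_n)$ over a tower $R_0 = k \twoheadleftarrow R_1 \twoheadleftarrow R_2 \twoheadleftarrow \cdots$ of Artin local rings with residue field $k$, chosen so that the induced maps on tangent spaces from $\operatorname{Spec} R_n$ are surjective onto $T^0_{\mathcal X}(x_0)$ (formal versality). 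At each step the obstruction to lifting lives in a finite-dimensional $k$-vector space obtained from the representing complex $\mathbf E^\bullet$ of the obstruction theory, and one enlarges $R_n$ to kill it.

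Second, passing to $\widehat R = \varprojlim R_n$, a complete local Noetherian $\mathcal O_S$-algebra with residue field $k$, the integrability hypothesis (\ref{T:ACint}) produces a genuine object $\hat x \in \mathcal X(\operatorname{Spec} \widehat R)$ inducing the $x_n$. Third, invoking the excellence hypothesis on $S$ together with local finite presentation (\ref{T:AClfp}), I would apply Artin approximation: the pair $(\widehat R, \hat x)$ can be approximated to arbitrary order $n$ by a pair $(A, \xi)$ where $A$ is the henselization of a finite type $\mathcal O_S$-algebra at a maximal ideal with residue field $k$ and $\xi \in \mathcal X(\operatorname{Spec} A)$ agrees with $\hat x$ modulo $\mathfrak m^{n+1}$. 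Spreading out, one obtains a finite-type $\mathcal O_S$-algebra $B$ and $\eta \in \mathcal X(\operatorname{Spec} B)$; set $U = \operatorname{Spec} B$. By construction the map $U \to \mathcal X$ is formally versal at the chosen point, and using the representability of the obstruction theory one checks via the infinitesimal criterion that formal versality at a point, combined with the finitely presentable obstruction theory, upgrades to formal smoothness of $U \to \mathcal X$ in an open neighborhood of that point; combined with local finite presentation one concludes actual smoothness (Definition \ref{D:sm-et-nr}).

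The hard part will be the Artin approximation step: this is where the excellence assumption on $S$ is essential, since approximation fails without it, and turning a formal versal deformation into an honest finite-type family is the technical heart of the theorem. A secondary subtlety is the passage from formal versality at a point to smoothness in a neighborhood; this requires the obstruction theory to be \emph{representable} by a bounded complex of locally free sheaves of finite rank, so that the cokernel and kernel of the differential of $U \to \mathcal X$ behave semicontinuously and allow one to open-up formal smoothness. The stack condition (\ref{T:ACstack}) is used at the end, first to verify that the collection of smooth maps $U_i \to \mathcal X$ assembles into a well-defined cover in the sense of Definition \ref{D:CovStack}, and second to verify that $\mathcal X \times_{\mathcal X} \mathcal X = \mathcal X \times \mathcal X$--type fiber products needed to check representability of the diagonal are themselves stacks, allowing Definition \ref{D:alg-stack} to be applied.
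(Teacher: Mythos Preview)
Your outline is essentially correct and follows the same three-stage architecture as the paper's sketch in \S\ref{S:ArtCritPf}: build a formally versal family at a point, algebraize it, then spread versality to an open neighborhood. There are, however, two places where your account diverges from the paper's, and one where it is thinner.

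First, in your Stage~1 you invoke the obstruction theory to build the tower $R_0 \twoheadleftarrow R_1 \twoheadleftarrow \cdots$. The paper instead appeals directly to the Schlessinger--Rim theorem (Theorem~\ref{T:Rim}), which needs only homogeneity~(\ref{T:AChomog}) and finite-dimensionality of $T^{-1}$ and $T^0$~(\ref{T:ACTan}) to produce a prorepresenting smooth groupoid; the obstruction theory plays no role here. Your approach is not wrong---obstruction spaces can be used to describe the relations in a hull---but it obscures the clean division of labor: conditions~(\ref{T:AChomog}) and~(\ref{T:ACTan}) give the formal versal family, condition~(\ref{T:ACint}) integrates it, condition~(\ref{T:AClfp}) plus Artin approximation algebraizes it, and condition~(\ref{T:ACobs}) is reserved entirely for the openness-of-versality step.

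Second, your treatment of openness of versality (``semicontinuity of the cokernel and kernel'') is the right intuition but misses the mechanism the paper highlights: Hall shows the \emph{relative} obstruction theory $T^1_{V/\mathcal X}$ is a \emph{coherent functor}, and then the Ogus--Bergman Nakayama-type theorem for coherent functors is what propagates vanishing from the closed point to an open neighborhood. Finally, the paper's bootstrapping paragraph---reducing representability of $U \to \mathcal X$ to the case where $\mathcal X$ is a sheaf, then a subsheaf of a scheme, by iterating the argument on the diagonal---is more substantial than your closing sentence suggests; this ``induction on stackiness'' is where the stack condition~(\ref{T:ACstack}) really earns its keep.
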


\begin{rem}
The assumption \eqref{T:ACTan} actually follows from \eqref{T:AClfp} (see Lemma~\ref{L:aut-def-obs-dim}); however, we include the hypothesis   \eqref{T:ACTan}  since it is useful from an expository perspective.  In particular, a theorem of Schlessinger--Rim (see Theorem \ref{T:Rim}) uses the hypotheses \eqref{T:AChomog} and \eqref{T:ACTan} on a CFG.  
\end{rem}

\begin{rem}
Hall only requires the existence of a multistep obstruction theory, which is an \emph{a priori} weaker hypothesis than~\eqref{T:ACobs}.  \emph{A posteriori}, every algebraic stack has a cotangent complex, whence a single step obstruction theory. In our case, we actually constructed the obstruction theory for Higgs bundles in pieces in Section~\ref{S:def}, but we were able to assemble it into a single-step obstruction theory in Section~\ref{S:obstructions}. 
\end{rem}

\begin{rem}
Property~\eqref{T:AClfp} is sometimes phrased, `$\mathcal X$ is limit preserving'.
\end{rem}

\begin{rem}  For Property~\eqref{T:ACint},
some would say `$\mathcal X$ is effective' or `formal objects of $\mathcal X$ may be effectivized' or `formal objects of $\mathcal X$ can be algebraized' (in the literature, the term `algebraized' is sometimes reserved for algebraization over a scheme of finite type; see Definition \ref{D:Alg-Fin-Type}).  We picked up the term `integrable' from \cite{Bhatt-Halpern-Leinster}, the intuition being that infinitesimal arcs can be integrated into formal arcs, in the manner that tangent vectors are integrated to curves on smooth manifolds. 
\end{rem}

The rest of this section will be devoted to explaining these properties and verifying them for the stack of Higgs bundles and related moduli problems; at the end, we give a brief explanation of how these properties combine to imply a stack is algebraic.

\subsection{The Schlessinger--Rim criterion}
\label{S:Schlessinger}

We have seen that homogeneity is a necessary condition for representability by an algebraic stack.  The Schlessinger--Rim criterion implies that it is sufficient for \emph{prorepresentability}.

By abstract nonsense, a covariant functor is prorepresentable if and only if it preserves finite limits.  Generally, it is not very practical to check that functors on Artin rings preserve arbitrary finite limits, ultimately due to the restrictions on descent for flat modules along non-flat morphisms in Theorem~\ref{T:flat-homog}.  Fortunately, Schlessinger was able to prove that homogeneity, that is, respect for a restricted class of limits, along with a finite dimensional tangent space, are sufficient to imply a functor is prorepresentable \cite{Schlessinger}.  In this section, we will discuss Rim's generalization of Schlessinger's result to groupoids~\cite[Exp.~VI]{sga7-1}.

\vskip .2 cm 
Suppose that ${\mathcal X}$ is a CFG  on $\Lambda$-schemes, where $\Lambda$ is a complete noetherian  local ring with residue field $k$, and $\xi:\operatorname{Spec}k\to \mathcal X$ is a $k$-point.  Let $\mathscr{C}_\Lambda$ be the category of local artinian   $\Lambda$-algebras with residue field $k$.  One may interpret $\mathscr{C}_\Lambda^{\mathsf{op}}$ as the category of infinitesimal extensions of $\operatorname{Spec} k$.  Let ${\mathcal X}_\xi$ be the fibered category on $\mathscr{C}_\Lambda^{\mathsf{op}}$ whose fiber over   a ring $A$ in $\mathscr{C}_\Lambda^{\mathsf{op}}$ consists of all $\eta \in {\mathcal X}(A)$ whose image via the projection $A \rightarrow k$ is $\xi$.  Th CFG $\mathcal X_\xi$  gives a formal picture of ${\mathcal X}$ near the point $\xi$.

\begin{rem}
Note that the restriction of the \'etale topology to $\mathscr C^{\mathsf{op}}_\Lambda$ is trivial, as every morphism in $\mathscr C^{\mathsf{op}}_\Lambda$ has a section over the residue field, and sections of \'etale maps extend infintisimally.  Therefore all covers in $\mathscr C^{\mathsf{op}}_{\Lambda}$ have sections, every presheaf is a sheaf, and every CFG is a stack.  Therefore we can use the terms `stack' and `CFG' interchangeably over $\mathscr C^{\mathsf{op}}_\Lambda$.
\end{rem}

\begin{rem}
Typically, we will start with a CFG $\mathcal X$ over $\mathsf S$.  This induces a CFG $\mathcal X_{\mathscr{C}_\Lambda^{\mathsf{op}}}$ over $\mathscr{C}_\Lambda^{\mathsf{op}}$ by restricting to the full subcategory obtained by taking  objects over the spectrum of such a ring.   Given an object $\xi\in \mathcal X(\operatorname{Spec}k)$, the category $\mathcal X_\xi$ over $\mathscr{C}_\Lambda^{\mathsf{op}}$ has objects the pairs $(\eta, \phi)$ where $\phi : \xi \rightarrow \eta$ is a morphism of $\mathcal X$ (necessarily cartesian) lying above an infintiesimal extension $\operatorname{Spec} k \rightarrow \operatorname{Spec} A$, with $A$ in $\mathscr{C}_\Lambda^{\mathsf{op}}$:
\begin{equation*}
\xymatrix{
 \xi \ar@{->}[r]^{ }  \ar@{|->}[d]& \eta \ar@{|->}[d]\\
 \operatorname{Spec}k \ar@{->}[r]^{}& \operatorname{Spec} A\\
}
\end{equation*}
  Morphisms in $\mathcal X_\xi$ are defined in the obvious way.
\end{rem}

We write $\widehat{\mathscr{C}}_\Lambda$ for the category of complete local $\Lambda$-algebras with residue field $k$ that are formally of finite type over $\Lambda$; i.e., completions of rings of finite type over $\Lambda$.

Recall the notion of a groupoid object of $\widehat{\mathscr C}^{\mathsf{op}}_{\Lambda}$ from Definition~\ref{D:gpd-obj} and its associated CFG~\ref{E:gpd-cfg}.  Note that this coincides with the associated stack from Definition~\ref{D:gpd-stk}, since the topology of $\widehat{\mathscr C}^{\mathsf{op}}_{\Lambda}$ is trivial.

\begin{dfn}[{\cite[Def.~VI.2.11]{sga7-1}}] 

We say  a category fibered in groupoids over $\mathscr{C}_\Lambda^{\mathsf{op}}$ is \emph{prorepresentable} if it is representable by (i.e., equivalent to) the CFG associated to a groupoid object $\xymatrix@C=10pt{U_1 \ar@<1.5pt>[r]^s \ar@<-1.5pt>[r]_t & U_0}$  in $\widehat{\mathscr{C}}_\Lambda^{\mathsf{op}}$ (Example \ref{E:gpd-proj}).  The groupoid is said to be \emph{smooth} if the morphisms $s$ and $t$ satisfy  the formal criterion for smoothness (Definition \ref{D:formal-sm-et-nr}).
\end{dfn}

\begin{rem}
In the above discussion, it is important to note that we are considering stacks over $\mathscr C^{\mathsf {op}}_\Lambda$, not $\widehat{\mathscr C}^{\mathsf {op}}_\Lambda$.  In particular, if $\mathcal X$ is representable by a groupoid object $\xymatrix@C=10pt{U_1 \ar@<1.5pt>[r]^s \ar@<-1.5pt>[r]_t & U_0}$  in $\widehat{\mathscr{C}}_\Lambda^{\mathsf{op}}$, the morphism $U_0\to \mathcal X$ (Example~\ref{E:gpd-proj})  is not in general defined by an object over $U_0$ (as $U_0$ is not in general in $\mathscr C^{\mathsf {op}}_\Lambda$).  However, if $\mathfrak m$ is the maximal ideal of $U_0$, and $V_k$ is the vanishing locus of $\mathfrak m^j$ in $U_0$, we can write $U_0=\varprojlim_k V_k$ with each $V_k$ in $\mathscr C^{\mathsf{op}}_{\Lambda}$ and there is a family of compatible morphisms in $V_k \to \mathcal X$ defined by objects of $\mathcal X$ over $V_k$---in other words, an element of $\varprojlim_k \mathcal X(U_0/\mathfrak m^j)$.   Such an element is  called a formal element of $\mathcal X$ over $U_0$ (see Definition \ref{D:FormEl}). 
 We will return to this topic  again in \S \ref{S:algebraization}.  
Note finally that if $\xymatrix@C=10pt{U_1 \ar@<1.5pt>[r]^s \ar@<-1.5pt>[r]_t & U_0}$ is a smooth groupoid object, then the morphism $U_0\to \mathcal X$ is formally smooth (Proposition \ref{P:Gr-Stack-Pres}).
 \end{rem}

The following theorem says, essentially, that a stack looks like an algebraic stack in a formal neighborhood of a point if and only if it is homogeneous:

\begin{teo}[{Schlessinger~\cite[Thm.~2.11]{Schlessinger}}, {Rim~\cite[Thm.~VI.2.17]{sga7-1}}] \label{T:Rim}
\
Let $\mathcal X$ be a fibered category over $\mathscr{C}_{\Lambda}^{\mathsf{op}}$ such that $\mathcal X(k)$ is a single point.  Then $\mathcal X$ is prorepresentable by a smooth groupoid object of $\widehat{\mathscr C}_\Lambda^{\mathsf{op}}$ if and only if $\mathcal X$ is homogeneous and $T^{-1}_{\mathcal X}$ and $T^0_{\mathcal X}$ are finite dimensional $k$-vector spaces.
\end{teo}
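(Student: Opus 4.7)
My plan is to treat the two implications separately. For necessity, assume $\mathcal X$ is prorepresented by a smooth groupoid $\xymatrix@C=10pt{U_1 \ar@<1.5pt>[r]^s \ar@<-1.5pt>[r]_t & U_0}$ in $\widehat{\mathscr C}_\Lambda^{\mathsf{op}}$. Representable CFGs are homogeneous because Yoneda-type identifications preserve all limits; the CFG associated to the groupoid is homogeneous because the morphism $U_0 \to \mathcal X$ is formally smooth, hence relatively homogeneous (formal smoothness gives unique-up-to-isomorphism lifting against infinitesimal extensions), and then Lemma \ref{L:RelHomog} combines this with the homogeneity of $U_0$ itself. Finite dimensionality of $T^0_{\mathcal X}$ and $T^{-1}_{\mathcal X}$ follows by computing them from the groupoid: one identifies $T^0_{\mathcal X}$ with the cokernel and $T^{-1}_{\mathcal X}$ with the kernel of the tangent map $T_{U_1/k}(e) \to T_{U_0/k}(\xi)\oplus T_{U_0/k}(\xi)$ coming from $(s,t)$, both of which are finite-dimensional since $U_0$ and $U_1$ are formally of finite type over $\Lambda$.

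The substantive direction is sufficiency. I would build the prorepresenting groupoid in two stages, following the Schlessinger--Rim inductive recipe. First produce $U_0$ as the formal spectrum of $A = \varprojlim A_n$ in $\widehat{\mathscr C}_\Lambda$ together with a compatible system $\{\eta_n \in \mathcal X(A_n)\}$. Start with $A_1 = k$ and $\eta_1 = \xi$; having constructed $\eta_n \in \mathcal X(A_n)$, pick a basis $v_1,\dots,v_{d_n}$ of the $k$-vector space of lifts of $\eta_n$ to a trivial square-zero extension $A_n[\epsilon]$, which is a torsor (or empty) under $T^0_{\mathcal X}$, so finite dimensional by hypothesis. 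Homogeneity applied iteratively to pushouts along $A_n \subseteq A_n[\epsilon]$ lets the $v_i$ be glued into a single lift $\eta_{n+1}$ over a suitable Artinian quotient $A_{n+1}$ of the versal extension, chosen minimally. Finite dimensionality of $T^0_{\mathcal X}$ keeps each $\dim_k \mathfrak m_{A_{n+1}}/\mathfrak m_{A_{n+1}}^2$ bounded, so $A = \varprojlim A_n$ lies in $\widehat{\mathscr C}_\Lambda$, and the construction ensures $U_0 \to \mathcal X$ is formally smooth.

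For $U_1$, I would form the $2$-fibered product $\mathcal Y = U_0 \times_{\mathcal X} U_0$ as a CFG over $\mathscr C_\Lambda^{\mathsf{op}}$. It is homogeneous by Lemma \ref{L:RelHomog} and its tangent and automorphism spaces at the tautological point are finite dimensional (they fit into exact sequences involving $T^0_{\mathcal X}$, $T^{-1}_{\mathcal X}$, and $T_{U_0}(\xi)$). Applying the construction of the previous paragraph to $\mathcal Y$ produces $U_1 = \operatorname{Spec} B$ in $\widehat{\mathscr C}_\Lambda^{\mathsf{op}}$ prorepresenting $\mathcal Y$; the two projections $s,t : U_1 \to U_0$ are formally smooth since they are base changes of $U_0 \to \mathcal X$ along itself, and the $2$-categorical structure on the fibered product (identity morphism, composition in the groupoid $\mathcal X(A)$, inversion) endows $[U_1 \rightrightarrows U_0]$ with a groupoid structure in $\widehat{\mathscr C}_\Lambda^{\mathsf{op}}$. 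The equivalence between $\mathcal X$ and the associated CFG follows from formal smoothness of $U_0 \to \mathcal X$ together with the defining property of $U_1$ as its $\mathcal X$-self-product.

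The main obstacle is the minimal-versal construction in the inductive step: one must show that among all square-zero extensions $A_n \subseteq B$ admitting a lift of $\eta_n$, there is a smallest such (up to non-canonical isomorphism) whose lifts exhaust all others, and that the resulting sequence $\{A_n\}$ has $\mathfrak m$-adic Noetherian behavior. This is precisely the interaction between homogeneity (which makes the set of lifts a torsor, so that one can measure the failure of surjectivity onto lifts by a quotient of $T^0_{\mathcal X}$) and finite dimensionality of $T^0_{\mathcal X}$ (which bounds the dimension added at each stage). The groupoid refinement requires the parallel analysis for $T^{-1}_{\mathcal X}$ when constructing $U_1$, which is essentially Schlessinger's theorem applied to the diagonal CFG $\mathcal Y \to \mathsf{pt}$ rather than to $\mathcal X$ itself.
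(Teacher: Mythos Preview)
The paper itself does not attempt the substantive (sufficiency) direction at all: immediately after the statement it only remarks that the necessity direction is ``quite easy'' --- the $U_i$ are homogeneous since representable, and finite dimensionality follows because the $U_i$ are formally of finite type --- and otherwise defers entirely to the cited references of Schlessinger and Rim. Your treatment of necessity is consistent with (and somewhat more detailed than) this sketch.

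Your outline of sufficiency goes well beyond what the paper provides and is in the spirit of the Schlessinger--Rim construction: build a hull $U_0$ by an inductive tower of Artinian quotients using homogeneity and finite-dimensionality of $T^0_{\mathcal X}$, then realize $U_1$ as a prorepresenting object for $U_0 \times_{\mathcal X} U_0$ (which, as you implicitly use, is automatically set-valued once $U_0$ is). One point to tighten: the inductive step is not literally ``pick a basis of lifts of $\eta_n$ to $A_n[\epsilon]$.'' That set of lifts is not naturally a $k$-vector space of dimension bounded by $\dim T^0_{\mathcal X}$ once $n>1$; rather, one begins by choosing a basis of $T^0_{\mathcal X}$ itself to produce $A_2$, and thereafter one considers \emph{all} small (length-one) extensions $A' \twoheadrightarrow A_n$ admitting a lift of $\eta_n$, selecting an essential (minimal) one among them. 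Homogeneity is what guarantees such an essential extension exists and is unique up to isomorphism, and finite-dimensionality of $T^0_{\mathcal X}$ is what bounds the embedding dimension of the resulting complete local ring. With that correction, your two-stage plan is the standard one and would constitute an honest proof where the paper gives none.
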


One direction of the implications in the theorem is clear: it is quite easy to see that the stack associated to a
 a smooth groupoid object $\xymatrix@C=10pt{U_1 \ar@<1.5pt>[r]^s \ar@<-1.5pt>[r]_t & U_0}$ of $\widehat{\mathscr C}_\Lambda^{\mathsf{op}}$
 is homogeneous, since  the stacks associated to the $U_i$  are homogeneous.  The finite dimensionality of $T^{-1}_{\mathcal X}(\xi)$ and $T^0_{\mathcal X}(\xi)$ follows from the finite dimensionality of the tangent spaces of stacks represented by objects of $\widehat{\mathscr{C}}_\Lambda^{\mathsf{op}}$, which consists of objects formally of finite type.

\subsection{Local finite presentation}
\label{S:lfp}

The definition of morphisms locally of finite presentation was given in \S \ref{S:lfp2}.  The proof of the following lemma is formal:

\begin{lem} \label{L:lfp-comp}
	\begin{enumerate}[(i)]
		\item Suppose that $g : \mathcal Y \rightarrow \mathcal Z$ is locally of finite presentation.  Then $f : \mathcal X \rightarrow \mathcal Y$ is locally of finite presentation if and only if $gf : \mathcal X \rightarrow \mathcal Z$ is locally of finite presentation.
		\item The base change of a morphism that is locally of finite presentation is also locally of finite presentation.
	\end{enumerate}
\end{lem}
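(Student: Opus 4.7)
Both assertions follow by a direct diagram chase from Definition~\ref{D:lfp}, using nothing beyond the universal property of the fiber product and the two-step nature of lifting squares. The plan is to reduce each statement to checking the existence and uniqueness of dashed fillers in the standard square
\begin{equation*}\xymatrix{
\operatorname{Spec} A \ar[r] \ar[d] & \mathcal X \ar[d] \\
``\varprojlim\!"\operatorname{Spec} A_i \ar[r] \ar@{-->}[ur] & \mathcal Y,
}\end{equation*}
where $A = \varinjlim A_i$ is a filtered colimit of rings. Throughout, ``a unique lift'' is understood in the $2$-categorical sense, i.e.\ up to unique $2$-isomorphism.

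For (i), I would argue both implications by stacking two lifting problems. Suppose first that $g$ and $f$ are locally of finite presentation and that one is given a lifting problem for $gf$ with target $\mathcal Z$. Composing the map $\operatorname{Spec} A \to \mathcal X$ with $f$ produces a lifting problem for $g$, which, by local finite presentation of $g$, admits a unique lift $h:``\varprojlim\!"\operatorname{Spec} A_i \to \mathcal Y$. The resulting diagram with target $\mathcal Y$ now satisfies the hypothesis of Definition~\ref{D:lfp} for $f$, so local finite presentation of $f$ produces a unique lift to $\mathcal X$, which is the desired lift for $gf$. Conversely, assume $gf$ and $g$ are locally of finite presentation, and consider a lifting problem for $f$ with target $\mathcal Y$. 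Composing with $g$, local finite presentation of $gf$ yields a unique lift $``\varprojlim\!"\operatorname{Spec} A_i \to \mathcal X$; the composition of this lift with $f$, and the originally given map to $\mathcal Y$, are two lifts of a $g$-problem, so local finite presentation of $g$ forces them to coincide (uniquely up to unique $2$-isomorphism). This shows the lift lies over $\mathcal Y$ as required, and uniqueness is inherited from the uniqueness clause for $gf$.

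For (ii), suppose $f:\mathcal X \to \mathcal Y$ is locally of finite presentation and consider a base change $f':\mathcal X\times_{\mathcal Y}\mathcal Z \to \mathcal Z$ along some $\mathcal Z \to \mathcal Y$. A lifting problem for $f'$ is a commutative square with vertices $\operatorname{Spec} A$, $\mathcal X\times_{\mathcal Y}\mathcal Z$, $``\varprojlim\!"\operatorname{Spec} A_i$, $\mathcal Z$. By the universal property of the fiber product, the data of a map into $\mathcal X\times_{\mathcal Y}\mathcal Z$ from any $T$ are equivalent to the data of a compatible pair of maps into $\mathcal X$ and into $\mathcal Z$. Using this, the given lifting problem is equivalent to a lifting problem for $f$ with target $\mathcal Y$ together with the already-given map $``\varprojlim\!"\operatorname{Spec} A_i \to \mathcal Z$ lifting the composition to $\mathcal Y$; the former admits a unique lift by local finite presentation of $f$, and reassembling via the universal property of the fiber product gives the desired unique lift for $f'$.

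The only slightly subtle step is the appeal to the uniqueness clause in the converse direction of (i), where one must verify that the two \emph{a priori} distinct lifts to $\mathcal Y$ agree; but this is exactly the sort of step that makes the hypothesis ``$g$ locally of finite presentation'' essential, and it is handled uniformly by the uniqueness-up-to-unique-isomorphism statement in Definition~\ref{D:lfp}. No step requires a genuinely new idea; the entire proof is formal $2$-categorical bookkeeping.
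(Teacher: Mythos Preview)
Your proposal is correct and is exactly the kind of formal diagram chase the paper has in mind; the paper itself gives no proof beyond declaring that ``the proof of the following lemma is formal.'' Your write-up simply spells out that formal argument, and the one subtlety you flag (using uniqueness for $g$ in the converse direction of (i)) is handled properly.
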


There is a repertoire of techniques for proving moduli problems are locally of finite presentation to be found in \cite{EGAIV3}.  We combine these with Lemma~\ref{L:lfp-comp} to prove that the stack of Higgs bundles is locally of finite presentation.

\begin{lem} \label{L:curve-lfp}
The stack of proper nodal curves is locally of finite presentation.
\end{lem}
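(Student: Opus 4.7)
The plan is to verify the criterion of Definition~\ref{D:lfp} directly, applied to the structure morphism $\mathcal N \to \operatorname{Spec}\mathbb Z$. Concretely, for any filtered colimit of commutative rings $A = \varinjlim_i A_i$, I would show that the natural functor
\[
\varinjlim_i \mathcal N(\operatorname{Spec} A_i) \longrightarrow \mathcal N(\operatorname{Spec} A)
\]
is an equivalence of groupoids. Full faithfulness is the statement that, for nodal curves $C_i, C'_i$ over some $\operatorname{Spec} A_i$, every isomorphism $C_i \mathbin\times_{A_i} A \simeq C'_i \mathbin\times_{A_i} A$ is induced (uniquely, after possibly enlarging $i$) by an isomorphism $C_j \simeq C'_j$ for some $j \ge i$; this is a direct appeal to the standard limit-preservation results of EGA IV, \S8 (\cite[Thm.~(8.8.2)]{EGAIV3}), applied to the finitely presented morphisms $C_i \to \operatorname{Spec} A_i$ and $C'_i \to \operatorname{Spec} A_i$.

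For essential surjectivity, starting with a proper nodal curve $\pi : C \to \operatorname{Spec} A$, I would first use \cite[Thm.~(8.8.2)]{EGAIV3} to descend $C$ to a finitely presented $A_i$-scheme $C_i \to \operatorname{Spec} A_i$ inducing $C$. Then I would enlarge $i$ successively to ensure that $C_i \to \operatorname{Spec} A_i$ inherits, in turn, flatness (using \cite[Thm.~(11.2.6)]{EGAIV3}) and properness (using \cite[Thm.~(8.10.5)(xii)]{EGAIV3}). At this point, what remains is to guarantee that the \emph{geometric fibers} of $C_i \to \operatorname{Spec} A_i$ are nodal curves in the sense of Definition~\ref{D:stable-curve} (equivalently, Remark~\ref{R:NC}).

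The main obstacle, and indeed essentially the only content beyond citing EGA, is this last step: arranging that nodalness descends. I would handle it by showing that the locus of points $s \in \operatorname{Spec} A_i$ over which the geometric fiber $(C_i)_s$ is a nodal curve is open. This is a standard fact for proper flat families of finite presentation: the non-smooth locus of $\pi_i$ is closed and proper over $\operatorname{Spec} A_i$, and the condition that the singularities of each fiber be ordinary double points can be checked fiberwise and is open, for instance via the Jacobian criterion applied to a local embedding into an $\mathbb A^N_{\operatorname{Spec} A_i}$, or via the deformation-theoretic fact that nodes are rigid among hypersurface singularities of embedding dimension $2$. Alternatively, and more in the spirit of the étale-local definition, one can exploit that $C \to \operatorname{Spec} A$ admits étale-locally a description $\operatorname{Spec} A[x,y]/(xy - t)$; the finitely many étale charts, together with the finitely many elements $t \in A$ that appear, lie in the image of some $A_i$ by the universal property of the colimit, so the whole étale-local structure can be spread out to finite level. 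Either way, after enlarging $i$ one last time, the induced $\pi_i : C_i \to \operatorname{Spec} A_i$ is a proper nodal curve, completing the verification.
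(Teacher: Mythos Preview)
Your proposal is correct and follows essentially the same route as the paper. In particular, your second alternative for handling nodalness---spreading out the finitely many \'etale charts $U \to \operatorname{Spec} A[x,y]/(xy-t)$ together with the elements $t$ to some $A_i$, then using \cite[Prop.~(17.7.8)]{EGAIV4} to ensure the maps remain \'etale---is exactly what the paper does; the paper also invokes \cite[Prop.~(8.6.3)]{EGAIV3} and \cite[Thm.~(8.10.5)(vi)]{EGAIV3} to descend the open cover and ensure it remains a cover of $C_i$. Your first alternative (arrange flatness, then use openness of the nodal locus in the base) is a legitimate variant that the paper does not pursue, but it requires the extra step of showing that $\operatorname{Spec} A_j$ eventually lands in that open locus, which follows from the unit ideal argument for the complementary closed set.
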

\begin{proof}
	Let $\mathcal N$ denote the stack of proper nodal curves.  Suppose that a commutative ring $A$ is the filtered colimit of commutative rings $A_i$.  Put $S = \operatorname{Spec} A$ and $S_i = \operatorname{Spec} A_i$.  Let $C$ be an element of $\mathcal{N}(A)$.  That is, $C$ is a flat family of nodal curves over $A$.  We want to show that $C$ is induced by base change from a nodal curve $C_i$ over $A_i$ for some $i$, and that (up to increasing the index $i$) this curve is unique up unique isomorphism.

First of all, $C$ is of finite presentation over $S$.  By \cite[Thm.~(8.8.2)~(ii)]{EGAIV3}, there is an index $i$ and a scheme $C_i$ of finite presentation over $A_i$ such that $C = C_i \mathop{\times}_{S_i} S$.  It follows from~\cite[Thm.~(8.8.2)~(i)]{EGAIV3} that $C$ is unique up to unique isomorphism and increase of the index $i$.  By \cite[Thm.~(8.10.5)~(xii)]{EGAIV3}, we can arrange for $C_i$ to be proper over $S_i$ by replacing $i$ with a larger index.

Now $C$ has a cover by open subsets $U$, each of which admits an \'etale map $U \rightarrow V$, where $V = \operatorname{Spec} A[x,y] / (xy - t_U)$ for some $t_U \in A$.  Refining the cover, we can assume that the open subsets $U$ are affine, and hence of finite presentation over $A$.  As $C$ is quasicompact, this cover can be assumed finite, so by increasing $i$, we can assume that $t_U$ appears in $A_i$ for all $i$.  By increasing $i$ still further, we can assume each $U$ is the preimage in $C$ of an open subset $U_i \subseteq C_i$~\cite[Prop.~(8.6.3)]{EGAIV3} and that these open subsets cover $C_i$~\cite[Thm.~(8.10.5)~(vi)]{EGAIV3}.  Now $V = V_i \mathop{\times}_{S_i} S$ so by~\cite[Thm.~(8.8.2)~(i)]{EGAIV3}, the map $U \rightarrow V$ is induced from a map $U_i \rightarrow V_i$ over $S_i$, at least after increasing $i$ still further.  By \cite[Prop.~(17.7.8)~(ii)]{EGAIV4}, we can ensure that the map $U_i \rightarrow V_i$ is \'etale, at least after increasing $i$.  Then $C_i$ is a family of nodal curves over $S_i$, and the proof is complete.
\end{proof}

\begin{lem} \label{L:vb-lfp}
Let $\mathcal E$ be the stack of pairs $(C, E)$ where $C$ is a proper nodal curve and $E$ is a vector bundle on $C$ and let $\mathcal N$ be the stack of proper nodal curves.  The projection $\mathcal E \rightarrow \mathcal N$ is locally of finite presentation.
\end{lem}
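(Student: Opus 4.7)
The plan is to verify the lifting criterion of Definition~\ref{D:lfp} for the projection $\mathcal E \to \mathcal N$. Let $A = \varinjlim A_i$ be a filtered colimit of rings and set $S = \operatorname{Spec} A$, $S_i = \operatorname{Spec} A_i$. A solid lifting diagram amounts to an $S$-point $(C,E) \in \mathcal E(S)$ together with an index $i_0$ and a proper nodal curve $C_{i_0}/S_{i_0}$ whose pullback to $S$ is $C$ (such an $i_0$ exists, up to enlargement, by Lemma~\ref{L:curve-lfp}). The task is to produce an index $j \geq i_0$ and a vector bundle $E_j$ on $C_j := C_{i_0}\times_{S_{i_0}}S_j$ whose pullback to $C$ is $E$, and to verify that $E_j$ is unique up to unique isomorphism after further enlargement of $j$.

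My first step will be to exhibit $C$ as the limit of the cofiltered system $\{C_j\}_{j\ge i_0}$ along affine transition maps: each $C_j \to C_{i_0}$ is the base change of the affine morphism $S_j \to S_{i_0}$, hence affine, and $C = \varprojlim_{j\geq i_0} C_j$ by construction. With this setup in place, the heart of the argument is an appeal to the standard limit theorems for quasi-coherent sheaves on cofiltered limits of schemes with affine transition maps~\cite[\S 8.5]{EGAIV3}. These identify the category of finitely presented quasi-coherent sheaves on $C$ with the filtered colimit $\varinjlim_j \mathsf{QCoh\text{-}fp}(C_j)$, and similarly for morphisms; in particular, there exists some $j$ and a finitely presented quasi-coherent $E_j$ on $C_j$ pulling back to $E$, and any two such descents become uniquely isomorphic after further enlargement of $j$.

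It remains to arrange that $E_j$ be locally free. Because $C_{i_0}$ is quasicompact and $E$ is locally free on $C$, I can cover $C_{i_0}$ by finitely many affine opens $U_\alpha$ on which the pullback of $E$ to $U_\alpha \times_{C_{i_0}} C$ is free of some rank $r_\alpha$. By the same limit theorems applied over each $U_\alpha$, the trivializing isomorphism $\mathcal O^{r_\alpha} \cong E|_{U_\alpha \times_{C_{i_0}} C}$ descends to $U_\alpha \times_{C_{i_0}} C_j$ after finitely many enlargements of $j$, forcing $E_j$ to be locally free there. Iterating over the finitely many $U_\alpha$ and taking a common $j$ yields the desired $E_j$ on $C_j$.

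The main obstacle is thus purely logistic---matching our setup to the standard limit theorems from EGA IV~\S 8.5---and no new geometric content is required beyond Lemma~\ref{L:curve-lfp}. In particular, combining Lemma~\ref{L:curve-lfp} with Lemma~\ref{L:lfp-comp} immediately yields that $\mathcal E$ itself is locally of finite presentation.
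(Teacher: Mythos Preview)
Your approach is essentially the same as the paper's: reduce to the EGA~IV~\S8.5 limit theorems for finitely presented sheaves on $C = \varprojlim_{j \geq i_0} C_j$. The paper simply cites \cite[Thm.~(8.5.2)]{EGAIV3} for existence and uniqueness of a finitely presented $E_j$, and \cite[Prop.~(8.5.5)]{EGAIV3} directly for local freeness after enlarging $j$.

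Your hands-on argument for local freeness has one imprecise step: you assert that $C_{i_0}$ can be covered by affine opens $U_\alpha$ with $E$ \emph{free} on each $U_\alpha \times_{C_{i_0}} C$, but there is no reason this should hold---the preimage $U_\alpha \times_{C_{i_0}} C$ is an affine scheme over $A$, and a locally free sheaf on such a scheme need not be globally free (e.g.\ pull back a nontrivial projective $A$-module along $\mathbb A^1_A \to \operatorname{Spec} A$). The fix is immediate: either cite \cite[Prop.~(8.5.5)]{EGAIV3} as the paper does, or first cover $C$ itself by finitely many affines trivializing $E$, descend these opens to some $C_j$ via \cite[Prop.~(8.6.3)]{EGAIV3}, and then descend the trivializations as you describe. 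With this repair your argument is correct and matches the paper's.
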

\begin{proof}
As before, $A$ is the filtered colimit of commutative rings $A_i$.  We suppose that $(C, E)$ is an $A$-point of $\mathcal E$ and that $C$ is induced from a nodal curve $C_i$ over $A_i$.  We want to show that, up to increasing $i$, the vector bundle $E$ is induced from a unique (up to unique isomorphism) vector bundle $E_i$ over $C_i$.  As $E$ is of finite presentation, we may increase $i$ to obtain a quasicoherent sheaf of finite presentation $E_i$ over $C_i$ inducing $E$ by pullback \cite[Thm.~(8.5.2)~(ii)]{EGAIV3}.  The uniqueness of $E_i$ follows from \cite[Thm.~(8.5.2)~(i)]{EGAIV3}.  Increasing $i$ still further, we can ensure that $E_i$ is a vector bundle \cite[Prop.~(8.5.5)]{EGAIV3}.
\end{proof}

\begin{lem} \label{L:map-lfp}
Let $\mathcal F$ be the stack of tuples $(C, E, F, \sigma)$ where $C$ is a nodal curve, $E$ and $F$ are vector bundles on $C$, and $\sigma : E \rightarrow F$ is a morphism of vector bundles.  Let $\mathcal G$ be the stack of triples $(C,E,F)$ as above, and $\mathcal F \rightarrow \mathcal G$ the projection forgetting $\sigma$.  Then $\mathcal F$ is locally of finite presentation over $\mathcal G$.
\end{lem}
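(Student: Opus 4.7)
The plan is to reduce the statement to the representability result of Lemma~\ref{L:Ray} combined with the standard compatibility of schemes of finite presentation with filtered colimits. Unpacking Definition~\ref{D:lfp}, what I need to show is the following: given a filtered colimit $A = \varinjlim A_i$ of commutative rings, a triple $(C,E,F) \in \mathcal G(A)$ arising from a compatible system $(C_i, E_i, F_i) \in \mathcal G(A_i)$, and a morphism $\sigma : E \rightarrow F$ of vector bundles on $C$, there is an index $j$ and a morphism $\sigma_j : E_j \rightarrow F_j$ inducing $\sigma$ by base change, unique up to enlargement of $j$.

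First I would pass to a sufficiently large index $i_0$ so that the triple $(C_i, E_i, F_i)$ exists over $A_i$ for all $i \geq i_0$ (using Lemmas~\ref{L:curve-lfp} and~\ref{L:vb-lfp}), and so that $E_i^\vee \otimes F_i$ is a vector bundle on the proper flat curve $C_i \to \operatorname{Spec} A_i$. Since morphisms $E_i \to F_i$ are the same as global sections of $E_i^\vee \otimes F_i$, I may invoke Lemma~\ref{L:Ray} to represent the functor
\begin{equation*}
\mathscr H_i : (T \to \operatorname{Spec} A_i) \mapsto \operatorname{Hom}_{\mathcal O_{C_T}}(E_{i,T}, F_{i,T}) = \Gamma(C_T, E_{i,T}^\vee \otimes F_{i,T})
\end{equation*}
by a linear $A_i$-scheme $V_i = \underline{\operatorname{Spec}}_{A_i}(\operatorname{Sym}^\bullet M_i)$ for some finitely presented $A_i$-module $M_i$. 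Crucially, $V_i$ is of finite presentation over $A_i$, and, since Lemma~\ref{L:Ray} asserts that its formation commutes with arbitrary base change, pulling back $V_i$ along $A_i \to A_j$ for any $j \geq i$ gives $V_j$, and pulling back along $A_i \to A$ gives the analogous linear scheme $V$ representing $\operatorname{Hom}_{\mathcal O_C}(E, F)$.

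Since $V_i$ is of finite presentation over $A_i = \operatorname{Spec} A_i$, it is itself limit preserving as a scheme, so
\begin{equation*}
V_i(A) = \varinjlim_{j \geq i} V_i(A_j).
\end{equation*}
The left-hand side is exactly the set of morphisms $\sigma : E \to F$ over $C$, and a term $V_i(A_j)$ on the right is the set of morphisms $E_j \to F_j$ over $C_j$. This provides the required factorization of $\sigma$ through some $\sigma_j$, together with its uniqueness up to enlargement of $j$, and the morphism conditions (commutativity with $\sigma$) are automatic from this bijection being induced by the canonical pullback. Hence $\mathcal F \to \mathcal G$ is locally of finite presentation.

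There is no real obstacle here: the content is entirely in Lemma~\ref{L:Ray}, which provides a finitely presented affine scheme representing the Hom functor, after which limit preservation is formal. The only point requiring mild care is verifying the hypotheses of Lemma~\ref{L:Ray}, namely that $C_i \to \operatorname{Spec} A_i$ is proper of finite presentation and $E_i^\vee \otimes F_i$ is $A_i$-flat and of finite presentation, but these are immediate from the definition of $\mathcal G$ together with Lemmas~\ref{L:curve-lfp} and~\ref{L:vb-lfp}.
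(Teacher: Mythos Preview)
Your argument is correct but takes a different route from the paper's.  The paper dispenses with the matter in a single line by citing \cite[Thm.~(8.5.2)~(i)]{EGAIV3} directly: morphisms between finitely presented quasicoherent sheaves on a quasicompact quasiseparated scheme over a filtered colimit descend to some finite stage, unique up to enlargement of the index.  That result applies immediately to $\sigma : E \to F$ once $(C_i, E_i, F_i)$ has been fixed.

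Your approach instead routes through Lemma~\ref{L:Ray}: you represent the $\operatorname{Hom}$-functor by a linear scheme $V_i$ of finite presentation over $\operatorname{Spec} A_i$, note that its formation commutes with base change, and then use the limit-preserving property of finitely presented schemes.  This is a legitimate and self-contained alternative --- it keeps the argument inside the paper's own toolbox rather than reaching for an EGA reference, at the cost of being longer and requiring the properness and flatness hypotheses of Lemma~\ref{L:Ray} (which hold here, since the nodal curves in this section are understood to be proper).  The paper's direct citation is more economical; your version makes the finite-presentation mechanism more visible.
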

\begin{proof}
We assume $A = \varinjlim A_i$ is a filtered colimit of commutative rings and  that $(C,E,F,\sigma) \in \mathcal F(A)$ and $(C_i,E_i,F_i) \in \mathcal G(A_i)$ induces $(C,E,F) \in \mathcal G(A)$.  By an immediate application of \cite[Thm.~(8.5.2)~(i)]{EGAIV3}, we discover that, after increasing $i$, we can find $\sigma_i : E_i \rightarrow F_i$ inducing $\sigma$ and that $\sigma_i$ is unique up to further increasing~$i$.
\end{proof}

Combining Lemmas~\ref{L:lfp-comp},~\ref{L:curve-lfp},~\ref{L:vb-lfp}, and~\ref{L:map-lfp}, we obtain

\begin{cor} \label{C:curve-vb-morph-lfp}
The stack of tuples $(C,E,F,\sigma)$ where $C$ is a nodal curve, $E$ and $F$ are vector bundles on $C$, and $\sigma : E \rightarrow F$ is a morphism of vector bundles is locally of finite presentation.
\end{cor}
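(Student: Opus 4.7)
The plan is to construct a tower of morphisms of stacks
\begin{equation*}
\mathcal F \longrightarrow \mathcal G \longrightarrow \mathcal E \longrightarrow \mathcal N \longrightarrow \operatorname{Spec}\mathbb Z
\end{equation*}
in which $\mathcal N$ is the stack of proper nodal curves, $\mathcal E$ is the stack of pairs $(C,E)$ with $E$ a vector bundle on $C$, $\mathcal G$ is the stack of triples $(C,E,F)$ with $E$ and $F$ both vector bundles on $C$, and $\mathcal F$ is the stack of quadruples from the statement. I will then check that each arrow in this tower is locally of finite presentation and invoke Lemma \ref{L:lfp-comp} to conclude that the composition $\mathcal F \to \operatorname{Spec}\mathbb Z$ is locally of finite presentation, which is the statement to be proved.

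The projection $\mathcal N \to \operatorname{Spec}\mathbb Z$ is locally of finite presentation by Lemma \ref{L:curve-lfp}, and $\mathcal E \to \mathcal N$ is locally of finite presentation by Lemma \ref{L:vb-lfp}. The only arrow in the tower that is not addressed by one of the four cited lemmas verbatim is $\mathcal G \to \mathcal E$, obtained by forgetting $F$. Here I would observe that there is a natural identification
\begin{equation*}
\mathcal G \;\simeq\; \mathcal E \mathop\times_{\mathcal N} \mathcal E
\end{equation*}
sending $(C,E,F)$ to the pair $((C,E),(C,F))$; the projection $\mathcal G \to \mathcal E$ that forgets $F$ is then the base change along $\mathcal E \to \mathcal N$ of the other projection $\mathcal E \to \mathcal N$. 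Since local finite presentation is stable under base change by Lemma \ref{L:lfp-comp}\,(ii), $\mathcal G \to \mathcal E$ is locally of finite presentation. Finally, $\mathcal F \to \mathcal G$ is locally of finite presentation by Lemma \ref{L:map-lfp}.

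Combining these using Lemma \ref{L:lfp-comp}\,(i) one step at a time, $\mathcal F \to \operatorname{Spec}\mathbb Z$ is locally of finite presentation. Essentially no obstacle is expected: the arguments of Lemmas \ref{L:curve-lfp}, \ref{L:vb-lfp}, and \ref{L:map-lfp} have already reduced matters to standard limit-preservation results from \cite{EGAIV3}, and the only minor subtlety is recognizing $\mathcal G$ as a fiber product of copies of $\mathcal E$ over $\mathcal N$ so that the base-change clause of Lemma \ref{L:lfp-comp} applies. If one prefers to avoid identifying $\mathcal G$ with a fiber product, one can instead repeat the proof of Lemma \ref{L:vb-lfp} verbatim, applied to both $E$ and $F$ simultaneously, to see directly that $\mathcal G \to \mathcal N$ is locally of finite presentation; this gives an alternative route with no new ideas needed.
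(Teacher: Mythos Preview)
Your proposal is correct and is essentially the same as the paper's argument: the paper simply says the result follows by ``combining Lemmas~\ref{L:lfp-comp},~\ref{L:curve-lfp},~\ref{L:vb-lfp}, and~\ref{L:map-lfp}'', and you have spelled out precisely how those four lemmas combine, including the fiber-product identification $\mathcal G \simeq \mathcal E \mathop\times_{\mathcal N} \mathcal E$ needed to invoke the base-change clause of Lemma~\ref{L:lfp-comp}.
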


\begin{cor}
The stack of Higgs bundles is locally of finite presentation.
\end{cor}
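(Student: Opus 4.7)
The plan is to realize the stack $\mathcal H$ of Higgs bundles as a fiber product of stacks already known to be locally of finite presentation, and then apply Lemma \ref{L:lfp-comp}. The key observation is that a Higgs bundle $(C,E,\phi)$ is precisely a quadruple $(C,E,F,\sigma)$ in the stack $\mathcal F$ of Corollary \ref{C:curve-vb-morph-lfp} in which $F$ has been pinned down as $E \otimes \omega_{C/S}$.

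More precisely, let $\mathcal N$ denote the stack of proper nodal curves and $\mathcal E$ the stack of pairs $(C,E)$ (a curve together with a vector bundle on it), and let $\mathcal F$ be as in Corollary \ref{C:curve-vb-morph-lfp}. The twisting assignment $(C,E) \mapsto (C, E \otimes \omega_{C/S})$ defines an endomorphism of $\mathcal E$ over $\mathcal N$, and I would realize $\mathcal H$ as the fiber product
\begin{equation*}
\mathcal H \simeq \mathcal F \mathop{\times}_{\mathcal E \times_{\mathcal N} \mathcal E} \mathcal E,
\end{equation*}
where $\mathcal F \to \mathcal E \times_{\mathcal N} \mathcal E$ sends $(C,E,F,\sigma) \mapsto (C,E,F)$ and $\mathcal E \to \mathcal E \times_{\mathcal N} \mathcal E$ is the graph of the twisting endomorphism.

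By Lemmas \ref{L:curve-lfp}, \ref{L:vb-lfp}, \ref{L:map-lfp} and iterated use of Lemma \ref{L:lfp-comp}, each of $\mathcal F$, $\mathcal E$, and $\mathcal E \times_{\mathcal N} \mathcal E$ is locally of finite presentation over the base. The graph map $\mathcal E \to \mathcal E \times_{\mathcal N} \mathcal E$ is then automatically locally of finite presentation by Lemma \ref{L:lfp-comp}(i) (since both its source and its target are), so the fiber product $\mathcal H$ is locally of finite presentation by Lemma \ref{L:lfp-comp}(ii).

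The one substantive point to verify is that the twisting $(C,E) \mapsto (C, E \otimes \omega_{C/S})$ really is a morphism of stacks, that is, that its formation is compatible with arbitrary base change; this reduces to the standard compatibility of the relative dualizing sheaf with base change for nodal (and hence Gorenstein) morphisms. Alternatively, one may avoid the fiber product description entirely and mimic the proof of Lemma \ref{L:map-lfp} verbatim: given a Higgs bundle $(C,E,\phi)$ over $A = \varinjlim A_i$, one first descends $(C,E)$ to $(C_i, E_i)$ via Lemmas \ref{L:curve-lfp} and \ref{L:vb-lfp}, observes that $E \otimes \omega_{C/S}$ is then induced from $E_i \otimes \omega_{C_i/S_i}$, and finally descends $\phi$ by an application of \cite[Thm.~(8.5.2)~(i)]{EGAIV3} as in Lemma \ref{L:map-lfp}. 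The main obstacle, such as it is, lies only in this last base-change compatibility of $\omega_{C/S}$, and once it has been recorded, the rest is formal.
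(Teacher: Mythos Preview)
Your proposal is correct and follows essentially the same route as the paper. The paper's proof simply says ``deduced from the previous corollary by the same argument as in Lemma~\ref{L:Higgs-hmg}''; that lemma realizes $\mathcal H$ as the fiber product $\mathcal F \mathbin\times_{\mathcal E \times \mathcal E} \mathcal E$ (with the twisting placed on the $\mathcal F$-side rather than on the $\mathcal E$-side, but this is the same fiber product), and the identical description appears in \S\ref{S:rep-proof}. Your remark on base-change compatibility of $\omega_{C/S}$ makes explicit a point the paper leaves tacit.
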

\begin{proof}
	This is deduced from the previous corollary by the same argument as in Lemma \ref{L:Higgs-hmg}.
\end{proof}

\subsection{Integration of formal objects}
\label{S:algebraization}

Let $A$ be a complete noetherian  local ring with maximal ideal $\mathfrak{m}$.  A formal $A$-point of $X$ is an object of the inverse limit $\varprojlim_k X(A/\mathfrak{m}^j)$.  It can be checked easily that for any scheme $X$, the function 
\begin{equation} \label{E:eff}
X(A) \rightarrow \varprojlim X(A/\mathfrak{m}^j)
\end{equation}
is a bijection.  It is only slightly more difficult to verify this gives an equivalence   when $X$ is an algebraic stack, provided that one interprets the limit of groupoids correctly.  One efficient way of describing the limit is as the category of sections of $\mathcal X$ over the subcategory
\begin{equation*}
\operatorname{Spec}(A/\mathfrak m) \rightarrow \operatorname{Spec}(A/\mathfrak m^2) \rightarrow \operatorname{Spec}(A/\mathfrak m^3) \rightarrow \cdots
\end{equation*}
of $\mathscr C^{\mathsf{op}}_{\Lambda}$.

Lifting a formal $A$-point of $X$ to an $A$-point of $X$ may be seen as an analogue of integrating a tangent vector to a curve.  We must require a formal point, as opposed to merely a tangent vector, because not every tangent vector can be integrated on a singular space.\footnote{Many formulations of Artin's criterion only require~\eqref{E:eff}, when applied to a stack $\mathcal X$,  to have dense image.  This strengthens the analogy to integrating tangent vectors, since there is not a unique curve with a given tangent vector.  However, it is generally no more difficult to prove~\eqref{E:eff} is an equivalence than it is to prove it has dense image.}

\begin{dfn}[Integrating formal points] \label{D:FormEl}
Let $\mathcal X$ be a fibered category over the category of schemes and let $A$ be a complete noetherian  local ring with maximal ideal $\mathfrak{m}$.  By a \emph{formal $A$-point} of $\mathcal X$ we mean an object of the category
\begin{equation*}
\varprojlim_j \mathcal X(A/ \mathfrak{m}^j) .
\end{equation*}
We say that a formal $A$-point of $\mathcal X$ can be \emph{algebraized}, or that it can be \emph{effectivized}, or that it is \emph{integrable} if it lies in the essential image of the functor
\begin{equation*}
\mathcal X(A) \rightarrow \varprojlim_j \mathcal X(A/\mathfrak{m}^j) .
\end{equation*}
If every formal $A$-point  of $\mathcal X$ can be algebraized, for every complete noetherian  local ring $A$, then we say \emph{formal objects of $\mathcal X$ can be algebraized}, or \emph{can be effectivized}, or \emph{are integrable}.
\end{dfn}

For a long time, the main algebraization theorem was Grothendieck's existence theorem, which asserts that formal objects of the stack of coherent sheaves on a proper scheme can be algebraized:

\begin{teo}[{Groth.~existence \cite[Thm.~8.4.2]{FGAe}}, {\cite[Thm.~(5.1.4)]{EGAIII1}}] \label{T:GET}
Let $X$ be a proper scheme over $S = \operatorname{Spec} A$ with $A$ a complete noetherian  local ring with maximal ideal $\mathfrak{m}$.  For each $j$, let $S_j = \operatorname{Spec} A / \mathfrak{m}^j$ and let $X_j = X \mathop{\times}_S S_j$.  Then
\begin{equation*}
\operatorname{Coh}(X) \rightarrow \varprojlim_j \operatorname{Coh}(X_j)
\end{equation*}
is an equivalence of categories.
\end{teo}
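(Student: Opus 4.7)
The plan is to prove the equivalence in two steps: first fully faithfulness, then essential surjectivity (the latter being the substantive content, often called \emph{effectivity of formal coherent sheaves}).

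For fully faithfulness, I would reduce the computation of $\operatorname{Hom}(F,G)$ to the computation of $H^0$. Since $\mathcal{H}\!om_{\mathcal O_X}(F,G)$ is coherent and its formation commutes with passage to $X_j$, it suffices to show that for every coherent sheaf $\mathcal F$ on $X$, the natural map $H^0(X,\mathcal F)\to\varprojlim_j H^0(X_j,\mathcal F_j)$ is an isomorphism. This is a direct consequence of the theorem on formal functions \cite[\S{}III.4.1]{EGAIII1}: properness and coherence imply $H^i(X,\mathcal F)$ is a finitely generated $A$-module, and the $\mathfrak m$-adic completion of such a module agrees with $\varprojlim_j H^i(X_j,\mathcal F_j)$; completeness of $A$ then gives the claim in degree $0$. (One also gets freely that $\operatorname{Hom}(F,G)$ is $\mathfrak m$-adically complete.)

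For essential surjectivity, the plan is a reduction strategy. Given a compatible system $(F_j)_{j\ge 1}$ with $F_{j+1}|_{X_j}\cong F_j$, I first reduce to the projective case via Chow's lemma: choosing a surjective birational $\pi:X'\to X$ with $X'$ projective over $S$ and $\pi^{-1}(U)\cong U$ over an open $U\subseteq X$, I would use noetherian induction on the support, combined with d\'evissage by the kernel and cokernel of $F\to \pi_\ast\pi^\ast F$, to reduce algebraization on $X$ to algebraization on $X'$ and on proper closed subschemes of lower dimension. Once $X$ is projective, a closed immersion $i:X\hookrightarrow\mathbb P^n_S$ reduces the problem to $X=\mathbb P^n_S$, since coherent sheaves on $X$ correspond to coherent sheaves on $\mathbb P^n_S$ annihilated by the ideal of $X$.

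On $\mathbb P^n_S$ I would use the Serre correspondence between coherent sheaves and finitely generated graded $A[x_0,\ldots,x_n]$-modules (up to equivalence in large degrees). Fix an ample $\mathcal O(1)$ pulled back from $\mathbb P^n_S$; I would form the graded module
\begin{equation*}
M \;=\; \bigoplus_{d\ge d_0}\,\varprojlim_{j} H^0\bigl(X_j,F_j(d)\bigr),
\end{equation*}
and aim to show $M$ is finitely generated over $A[x_0,\ldots,x_n]$ and that $\widetilde M$ realizes the system $(F_j)$. The main obstacle, and the heart of the theorem, is this finite generation step: it rests on a uniform Serre vanishing $H^i(X_j,F_j(d))=0$ for $i>0$ and $d\gg 0$ \emph{independent of} $j$. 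This uniformity follows by reducing mod $\mathfrak m$ (where ordinary Serre vanishing applies to $F_1$) and lifting via the short exact sequences $0\to\mathfrak m^j/\mathfrak m^{j+1}\otimes F_1\to F_{j+1}\to F_j\to 0$; the vanishing propagates, and the inverse system $H^0(X_j,F_j(d))$ has surjective transition maps, so the $\varprojlim$ behaves well. Combining this with the finite generation of $H^0(X_1,F_1(d_0))\oplus\cdots\oplus H^0(X_1,F_1(d_1))$ over $A/\mathfrak m$ for a suitable range, Nakayama for the complete local ring $A$ (applied carefully degree by degree) gives finite generation of $M$. Finally, one checks $\widetilde M|_{X_j}\cong F_j$ compatibly by comparing the graded pieces, using the fully-faithful part proved above to match isomorphisms.
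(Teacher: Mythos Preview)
The paper does not give its own proof of this theorem: it is stated with references to \cite[Thm.~8.4.2]{FGAe} and \cite[Thm.~(5.1.4)]{EGAIII1} and then immediately used as a black box (the very next sentence cites the recent extensions of Bhatt and Hall--Rydh, and the subsequent lemmas apply the theorem). So there is nothing in the paper to compare your argument against.

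That said, your outline is the classical EGA strategy and is essentially correct. One small imprecision: in the fully faithful step you assert that the formation of $\mathcal{H}\!om_{\mathcal O_X}(F,G)$ ``commutes with passage to $X_j$.'' This is not literally true for arbitrary coherent $F$ (it holds when $F$ is locally free). The standard workaround is either to observe that $\operatorname{Hom}_{X_j}(F_j,G_j)=\operatorname{Hom}_X(F,G_j)$ and then apply formal functions to the system $G/\mathfrak m^j G$, or to pass to the formal scheme $\widehat X$ and use that $\mathcal{H}\!om$ on $\widehat X$ is computed as the limit; either way formal functions does the job, but the sentence as written needs a bit more care. The essential surjectivity sketch (Chow's lemma plus d\'evissage to reduce to the projective case, then Serre's graded-module description with a uniform vanishing bound) is exactly the line taken in the cited references.
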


Very recently, Bhatt~\cite{Bhatt} and Hall and Rydh \cite{Hall-Rydh-TD} have proved strong new integration theorems extending Grothendieck's.  Since Grothendieck's existence theorem will suffice for the stack of Higgs bundles on curves, we will not need to state these new results.

\begin{lem} \label{L:int-curves}
Formal families of proper nodal curves can be algebraized.
\end{lem}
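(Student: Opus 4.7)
The plan is to reduce to Grothendieck's existence theorem (Theorem~\ref{T:GET}) by algebraizing a formal closed embedding of the family into projective space. Let $(C_j \to S_j)_j$ be the given formal family, with $S_j = \operatorname{Spec}(A/\mathfrak{m}^j)$. The special fiber $C_1$ is a proper nodal curve over the residue field $k = A/\mathfrak{m}$, hence projective, so I may pick an ample line bundle $L_1$ on $C_1$. The first step is to extend $L_1$ to a compatible system of line bundles $L_j$ on $C_j$: the obstruction to extending $L_j$ on $C_j$ to a line bundle on the given extension $C_{j+1}$ lies in $H^2(C_j, \mathcal{O}_{C_j} \otimes \pi^\ast J_j)$ by Lemma~\ref{L:VBdef}, where $J_j = \mathfrak{m}^j/\mathfrak{m}^{j+1}$, and this group vanishes since $C_j$ is a proper scheme of dimension one.

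After replacing $L_1$ by a sufficiently high tensor power, standard facts about relative ampleness over a local base let me assume each $L_j$ is very ample on $C_j/S_j$ and produces a closed embedding $C_j \hookrightarrow \mathbb{P}^N_{S_j}$, with these embeddings compatible in $j$. They cut out a formal closed subscheme of the formal completion of $\mathbb{P}^N_S$ along the special fiber, determined by a formal coherent ideal sheaf $\mathfrak{I}$ on $\mathbb{P}^N_S$. Applying Theorem~\ref{T:GET} to the proper $S$-scheme $\mathbb{P}^N_S$ algebraizes $\mathfrak{I}$ to an honest coherent ideal sheaf $I \subseteq \mathcal{O}_{\mathbb{P}^N_S}$, and setting $C = V(I)$ produces a proper $S$-scheme whose pullback to each $S_j$ is canonically identified with $C_j$.

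It remains to check that $C \to S$ is a flat family of proper nodal curves. Flatness of $\mathcal{O}_C$ over $A$ follows from the local criterion for flatness applied to the compatible system $(\mathcal{O}_{C_j})_j$, each of which is flat over $A/\mathfrak{m}^j$. For the nodal condition, the locus in $S$ over which the geometric fibers of $C \to S$ have only nodal singularities is open (nodal singularities are an open condition in proper flat families of relative dimension one); as the closed point lies in this locus by hypothesis, and $S = \operatorname{Spec}(A)$ is the spectrum of a local ring, the locus must equal all of $S$, so every fiber is a nodal curve. The main obstacle in the argument is the construction of the formal ample line bundle, which here collapses entirely into the vanishing of $H^2$ on a curve; everything else is a direct invocation of Theorem~\ref{T:GET}.
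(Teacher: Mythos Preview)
Your proof is correct and follows the same strategy as the paper: extend an ample line bundle through the formal family using the vanishing of $H^2$ on a curve, embed compatibly into projective space, and apply Grothendieck's existence theorem to the resulting formal ideal sheaf. The paper is more explicit about the step you attribute to ``standard facts about relative ampleness'': it arranges $H^1(C_0, L_0) = 0$ at the outset and then invokes Lemma~\ref{L:Sdef} to show that sections of $L_j$ extend to $L_{j+1}$, which is what makes the pushforwards $\pi_\ast L_j$ a compatible system of free modules and hence produces the compatible closed embeddings into a fixed $\mathbf{P}^N$.
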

\begin{proof}
	Let $\mathcal N$ denote the stack of proper nodal curves.
	Let $A$ be a complete noetherian  local ring with maximal ideal $\mathfrak{m}$.  Set $A_j = A/ \mathfrak{m}^{j+1}$ and $S_j = \operatorname{Spec} A_j$ and suppose that $C_j \in \mathcal{N}(S_j)$ are the components of a formal $A$-point of $\mathcal{N}$.  Then $C_0$ is a curve over the field $A_0$.  Pick a very ample line bundle $L_0$ on $C_0$ with $H^1(C_0, L_0) = 0$.  Extend $L_0$ inductively to a compatible system of line bundles $L_j$ on each $C_j$:  By Lemma \ref{L:VBdef}, the obstruction to extending $L_j$ to $L_{k+1}$ lies in
\begin{equation*}
H^2 \bigl(C, \pi^\ast (\mathfrak{m}^j / \mathfrak{m}^{j+1}) \bigr) = \mathfrak{m}^j /\mathfrak{m}^{j+1} \otimes H^2(C_0, \mathcal{O}_{C_0}) = 0.
\end{equation*}
Then $V_j = \pi_\ast L_j$ is a (trivial) vector bundle on $S_j$.  Moreover, by Lemma \ref{L:Sdef}, the obstruction to extending a section of $L_j$ to a section of $L_{k+1}$ lies in
\begin{equation*}
H^1 \bigl(C, \pi^\ast (\mathfrak{m}^j / \mathfrak{m}^{j+1}) \otimes L_0 \bigr) = \mathfrak{m}^j / \mathfrak{m}^{j+1} \otimes H^1(C_0, L_0) = 0
\end{equation*}
so $V_j \big|_{S_\ell} = V_\ell$ for $\ell \leq k$.  There is therefore a (trivial) vector bundle $V$ on $S$ whose restriction to $S_j$ is $V_j$ for all $k$.  The complete linear series of the $L_j$ give a system of closed embeddings $C_j \rightarrow \mathbf{P}(V_j)$.  We may regard the structure sheaves $\mathcal{O}_{C_j}$ as a compatible system of quotients of the structure sheaf of $\mathbf{P}_{A_j}(V_j)$, so by Grothendieck's existence theorem they can be algebraized to a quotient $\mathcal{O}_C$ of the structure sheaf of $\mathbf{P}_A(V)$.  Let $C$ be the corresponding closed subscheme of $\mathbf{P}_A(V)$.

By construction $C$ is proper over $\operatorname{Spec} A$.  It is also flat by the infinitesimal criterion for flatness~\cite[Ex.~6.5]{Eisenbud}.  Therefore it is a family of nodal curves (see Rem.~\ref{R:NC}).
\end{proof}

\begin{rem}
The above argument can be used more generally to show that formal families of proper schemes can integrated if the central fiber $X$ has $H^2(X, \mathcal O_X) = 0$.  See \cite[Thm.~2.5.13]{sernesi} or \cite[Thm.4]{Grothendieck-GAGA} for different ways of organizing the ideas.
\end{rem}

\begin{lem}\label{L:C-EF-Int}
Let $\mathcal{F}$ be the stack of quadruples $(C,E,F,\sigma)$ where $C$ is a nodal curve, $E$ and $F$ are vector bundles on $C$, and $\sigma : E \rightarrow F$ is a morphism of vector bundles.  Then formal objects of $\mathcal{F}$ can be integrated.
\end{lem}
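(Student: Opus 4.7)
The plan is to algebraize a formal object in three stages, invoking Lemma \ref{L:int-curves} to integrate the curve, and then applying Grothendieck's existence theorem (Theorem \ref{T:GET}) twice to integrate the vector bundles and the morphism between them. Fix a complete noetherian local ring $A$ with maximal ideal $\mathfrak m$, set $A_j = A/\mathfrak m^{j+1}$, $S = \operatorname{Spec} A$, $S_j = \operatorname{Spec} A_j$, and suppose we are given a compatible system $(C_j, E_j, F_j, \sigma_j) \in \mathcal F(S_j)$.

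First, by Lemma \ref{L:int-curves} the formal family $(C_j)$ is induced by base change from a proper nodal curve $\pi : C \to S$; that is, $C_j = C \times_S S_j$ canonically and compatibly in $j$. This reduces the problem to integrating coherent data on the proper $S$-scheme $C$.

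Second, view $(E_j)$ and $(F_j)$ as compatible systems of coherent sheaves on the compatible system $(C_j)$. By Theorem \ref{T:GET} applied to the proper $S$-scheme $C$, the equivalence $\operatorname{Coh}(C) \to \varprojlim_j \operatorname{Coh}(C_j)$ produces coherent sheaves $E$ and $F$ on $C$ with $E \big|_{C_j} \simeq E_j$ and $F \big|_{C_j} \simeq F_j$ compatibly. To see that $E$ and $F$ are vector bundles (not merely coherent), note that local freeness can be tested on the closed fiber once flatness over $S$ is known: $E_0$ and $F_0$ are locally free on $C_0$ by hypothesis, and flatness of $E$ and $F$ over $S$ follows from the infinitesimal criterion for flatness (\cite[Ex.~6.5]{Eisenbud}), since each $E_j$ is $A_j$-flat. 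Nakayama then upgrades fiberwise freeness to local freeness in a neighborhood of each closed point, and properness of $C/S$ shows this holds on all of $C$.

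Third, the morphisms $\sigma_j : E_j \to F_j$ constitute an element of $\varprojlim_j \Gamma(C_j, \underline{\operatorname{Hom}}_{\mathcal O_{C_j}}(E_j, F_j))$. Since $\underline{\operatorname{Hom}}_{\mathcal O_C}(E, F)$ is a coherent sheaf on $C$ whose restriction to $C_j$ is $\underline{\operatorname{Hom}}_{\mathcal O_{C_j}}(E_j, F_j)$ (both $E$ and $F$ being locally free, so that formation of $\underline{\operatorname{Hom}}$ commutes with base change), a second application of Theorem \ref{T:GET}---or equivalently the theorem on formal functions---identifies $\Gamma(C, \underline{\operatorname{Hom}}_{\mathcal O_C}(E, F))$ with $\varprojlim_j \Gamma(C_j, \underline{\operatorname{Hom}}_{\mathcal O_{C_j}}(E_j, F_j))$, producing a morphism $\sigma : E \to F$ that restricts to $\sigma_j$ for each $j$. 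The quadruple $(C, E, F, \sigma)$ is then an object of $\mathcal F(S)$ algebraizing $(C_j, E_j, F_j, \sigma_j)$.

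The main technical point is the verification that the algebraizations $E$ and $F$ of the formal vector bundle data remain locally free; this is handled by combining flatness (from the infinitesimal criterion) with fiberwise freeness on $C_0$ via Nakayama. Everything else is a formal consequence of Grothendieck's existence theorem once the curve itself has been algebraized.
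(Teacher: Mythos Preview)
Your proof is correct and follows essentially the same three-step approach as the paper: algebraize the curve via Lemma~\ref{L:int-curves}, then apply Grothendieck's existence theorem to the bundles and to the morphism. You are in fact more careful than the paper in justifying why the algebraized coherent sheaves $E$ and $F$ are locally free (the paper compresses the flatness-plus-Nakayama argument into a single clause), and your observation that the algebraization of $\sigma$ can be seen via formal functions on $\underline{\operatorname{Hom}}(E,F)$ is a valid rephrasing of the equivalence-of-categories statement in Theorem~\ref{T:GET}.
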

\begin{proof}
Suppose $A$ is a complete noetherian  local ring with maximal ideal $\mathfrak{m}$, set $A_j = A / \mathfrak{m}^{j+1}$.  Given a formal family $(C_j,E_j,F_j,\sigma_j) \in \mathcal{F}(A_j)$, we seek an element  $(C,E,F,\sigma) \in \mathcal{F}(A)$ inducing it.  We may find $C$ by the Lemma~\ref{L:int-curves}.  Note that $C$ is projective over $A$ and the $E_j$ and $F_j$ are each formal families of coherent sheaves over $C$, so by Grothendieck's existence theorem (Theorem~\ref{T:GET}) they can be algebraized to coherent sheaves $E$ and $F$ over $C$.  Moreover, both $E$ and $F$ are flat, by the infinitesimal criterion for flatness, so they are vector bundles.  One more application of Grothendieck's existence theorem algebraizes the family of homomorphisms of coherent sheaves $\sigma_j : E_j \rightarrow F_j$ to a homomorphism $\sigma : E \rightarrow F$ and the lemma is complete.
\end{proof}

\subsection{Artin's theorems on algebraization and approximation}

The question of integrability concerns objects of  a stack $\mathcal X$ lying over the spectrum of a complete local algebra over a field.  
With the Schlessinger--Rim theorem and integration, we can factor any morphism $\operatorname{Spec} k \rightarrow \mathcal X$ through a map $\operatorname{Spec} A \rightarrow \mathcal X$ such that $A$ is a complete noetherian local ring and is \emph{formally smooth} over $\mathcal X$.  This is tantalizingly close to showing $\mathcal X$ is an algebraic stack:  we need too find a factorization that is genuinely smooth over $\mathcal X$.

The distinction between smoothness and formal smoothness is finiteness of presentation, so we need to find a finite type ring $B$ \emph{that is still formally smooth} over $\mathcal X$ at the $k$-point, and induces $A$ by completion at a point (this will give smoothness of the map $B\to \mathcal X$ at the given $k$-point of $B$; to extend to smoothness on an open neighborhood of the $k$-point, see \S \ref{S:ArtCritPf}).  

This may be the subtlest part of the proof of Artin's criterion.  It is resolved by Artin's \emph{approximation theorem}, proved originally by Artin~\cite[Thm.~1.12]{Artin-approximation} with some technical hypotheses, and in its current form by B.\ Conrad and J.\ de Jong~\cite[Thm.~1.5]{CdJ} using a spectacular theorem of Popescu~\cite[Thm.~1.3]{Popescu}.

The algebraization  theorem (Theorem \ref{T:Artin-Alg}) asserts that under our assumption that $\mathcal X$ is locally of finite presentation (Theorem \ref{T:Artin-criteria}\eqref{T:AClfp}), 
given $\xi \in \mathcal{X}(A)$ in an appropriate  complete local ring with residue field $k$, one can find a finite type $k$-algebra  $B$ with a marked point (maximal ideal $\mathfrak n$), whose completion  at  the marked point is $A$, and an element $\eta \in \mathcal{X}(B)$ that agrees to a specifiable finite order with $\xi$.  That is one may select $j$ beforehand and then find $\eta$ such that the restriction of $\eta$ to $B / \mathfrak{n}^j \simeq A / \mathfrak{m}^j$ agrees with the restriction $\xi_j$ of $\xi$.  Even though $\eta \in \mathcal{X}(B)$ does not necessarily restrict to ${\xi} \in \mathcal{X}(A)$, it will differ from ${\xi}$ only up to an automorphism of $A$ that is the identity modulo $\mathfrak{m}^j$.  In particular, it will still be formally smooth at the closed point (see the proof of Theorem \ref{T:Artin-Alg}).

\vskip .2 cm 
We begin by introducing notation.
Let $k$ be a field and let $\Lambda$ be a complete noetherian local ring with residue field $k$.    Recall, we denote by $\mathscr C_\Lambda$ the category of local artinian $\Lambda$-algebras with residue field $k$, and by  $\widehat{\mathscr{C}}_\Lambda$  the category of complete local $\Lambda$-algebras with residue field $k$ that are formally of finite type over $\Lambda$.

\begin{dfn}[Algebraization over a scheme of finite type]\label{D:Alg-Fin-Type} Let $\mathcal X$ be a CFG over $\mathsf S$ and 
let $(A,\mathfrak m) \in   \widehat{\mathscr{C}}_\Lambda$.  We say a formal element $\hat \eta $ of $\mathcal X$ over $\operatorname{Spec}A$, i.e., an object  of $\varprojlim_j\mathcal X(A/\mathfrak m^j)$, can be \emph{algebraized over a scheme of finite type} if there exist:
\begin{itemize}
\item a finitely generated $\Lambda$-algebra $B$,   

\item a $k$-point $s:\operatorname{Spec}k\to \operatorname{Spec}B$ corresponding to a maximal ideal $\mathfrak n\subseteq B$, and,

\item  an object $\theta\in \mathcal X(B)$,
\end{itemize}
 such that 
 \begin{enumerate}
\item   $A=\widehat B_{\mathfrak n}$, the completion of $B$ at $\mathfrak n$, and,
\item the formal element $\hat \eta$ is isomorphic to the image of $\theta$ under the map 
$$
\mathcal X(B)\to \varprojlim_j\mathcal X(A/\mathfrak m^j)
$$ 
induced by the morphism $B\to B_{\mathfrak n}\to B_{\mathfrak n}/\mathfrak n^j=A/\mathfrak m^j$.  
\end{enumerate}

\end{dfn}

Artin's algebraization theorem asserts the following:

\begin{teo}[{Artin's algebraization theorem}] \label{T:Artin-Alg} \label{T:alg}
Let $\mathcal X$ be a CFG that is locally of finite presentation over an  excellent DVR or field $  \Lambda$.
Let $(A,\mathfrak m)$ be a complete local $\Lambda$-algebra with residue field $k$ that is formally of finite type over $\Lambda$. 
  If $\bar \eta \in \mathcal X(A)$ then there is a finite type $\Lambda$-algebra $B$ with a maximal ideal $\mathfrak n$ at whose completion $B$ is isomorphic to $A$, along with $\eta \in \mathcal X(B)$ inducing $\bar \eta \in \mathcal X(A)$.
\end{teo}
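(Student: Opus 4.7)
The plan is to deduce the theorem from the classical Artin approximation theorem (in the form due to Popescu and Conrad--de Jong~\cite{Popescu,CdJ}), which is available because $\Lambda$ is an excellent DVR or field.

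First, I would present $A$ as the completion of a finite type algebra at a maximal ideal. Since $A$ is formally of finite type over $\Lambda$, one may choose a presentation $\Lambda[[t_1,\ldots,t_r]][z_1,\ldots,z_s]\twoheadrightarrow A$; the finitely generated $\Lambda$-subalgebra $C\subseteq A$ generated by the images of $z_1,\ldots,z_s$ then admits a maximal ideal $\mathfrak p$ with $\widehat{C_\mathfrak p}\cong A$. Because the completion of any \'etale neighborhood $(B,\mathfrak n)\to(C,\mathfrak p)$ is canonically identified with $A$, it will suffice to exhibit such an \'etale neighborhood $B$ together with an appropriate element of $\mathcal X(B)$.

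Second, I would invoke local finite presentation of $\mathcal X$ to descend $\bar\eta$ to a finite type algebra. Writing $A$ as the filtered colimit of its finitely generated $\Lambda$-subalgebras, the lfp hypothesis applied to the morphism $\operatorname{Spec} A\to \mathcal X$ produces a finite type $\Lambda$-algebra $D$, an element $\xi\in\mathcal X(D)$, and a $\Lambda$-algebra homomorphism $\varphi:D\to A$ together with an isomorphism $\alpha:\varphi^*\xi\cong \bar\eta$ in $\mathcal X(A)$.

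Third, I would apply Artin approximation. The excellence of $\Lambda$ implies that $C$ is excellent, so by the Popescu--N\'eron desingularization theorem the completion map $C^h_\mathfrak p\to A$ from the henselization is regular, and $C^h_\mathfrak p$ is itself the filtered colimit of local rings of \'etale neighborhoods of $(C,\mathfrak p)$. The Conrad--de Jong form of Artin approximation~\cite[Thm.~1.5]{CdJ} then asserts that for every integer $N\geq 1$ the homomorphism $\varphi:D\to A$ can be approximated to order $N$ by a homomorphism $\varphi':D\to B$, where $(B,\mathfrak n)$ is some \'etale neighborhood of $(C,\mathfrak p)$ and the composition $D\xrightarrow{\varphi'}B\hookrightarrow \widehat{B_\mathfrak n}=A$ differs from $\varphi$ only modulo $\mathfrak m_A^N$. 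The natural candidate is then $\eta:=(\varphi')^*\xi\in\mathcal X(B)$.

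The main obstacle, which I expect to require the most delicate argument, is verifying that this $\eta$ genuinely induces $\bar\eta$ in $\mathcal X(A)$ rather than merely an element agreeing with $\bar\eta$ to order $N$. My plan is to apply Artin approximation not to the $\operatorname{Hom}$ scheme alone, but to the combined moduli problem on $C$-algebras whose $R$-points parameterize simultaneously a $\Lambda$-algebra map $h:D\to R$ and an isomorphism of the resulting pullback $h^*\xi$ with a second, reference pullback of $\xi$; this enlarged problem is itself lfp on $C$-algebras because $\mathcal X$ is lfp, and hence the formation of the relevant Isom functor commutes with filtered colimits. Applying Conrad--de Jong approximation to this combined problem, whose formal $A$-point is supplied by $(\varphi,\alpha)$, produces a genuine solution over an \'etale neighborhood $(B,\mathfrak n)$ consisting of both the map $\varphi'$ and an honest isomorphism in $\mathcal X(A)$ identifying $(\varphi')^*\xi|_A$ with $\bar\eta$. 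Setting $\eta:=(\varphi')^*\xi$ and using this isomorphism as the required identification completes the proof.
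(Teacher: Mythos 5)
Your first three steps essentially re-derive the cited approximation theorem (Theorem~\ref{T:approx}): local finite presentation descends $\bar\eta$ to a finite type algebra $D$ with $\xi\in\mathcal X(D)$ and $\varphi:D\to A$, and excellence plus Popescu, together with the description of the henselization $C^h_{\mathfrak p}$ as a colimit of \'etale neighborhoods, produce $\varphi':D\to B$ agreeing with $\varphi$ modulo $\mathfrak m^N$. (A small slip in your first step: the subalgebra of $A$ generated by the images of the $z_i$ need not have completion $A$; but since the paper defines ``formally of finite type'' to mean ``a completion of a ring of finite type over $\Lambda$,'' a pointed finite type model $(C,\mathfrak p)$ with $\widehat{C_{\mathfrak p}}\cong A$ exists by hypothesis.) The genuine gap is your last paragraph. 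The ``combined moduli problem'' is not a functor on $C$-algebras: the reference object $\bar\eta$ lives only over $A$, and an \'etale neighborhood $(B,\mathfrak n)$ of $(C,\mathfrak p)$ maps \emph{to} $A$ rather than receiving a map from it, so there is no pullback of $\bar\eta$ over $B$ against which to require an isomorphism. The comparison with $\bar\eta$ can only be formulated after base change along $B\to A$, and there Artin approximation again yields only a congruence modulo $\mathfrak m^N$, never an equality; no enlargement of the moduli problem circumvents this.

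More decisively, exact inducement of $\bar\eta$ cannot be deduced from the stated hypotheses, because it is false in general. Take $\Lambda=k$ a field of characteristic zero, $\mathcal X=\mathbb A^2_k$, $A=k[[t]]$, and $\bar\eta=(t,g(t))$ with $g\in k[[t]]$ transcendental over $k(t)$ (e.g.\ $g=\sum_{n\geq 1}t^{n!}$). If $B$ were of finite type with $\widehat B_{\mathfrak n}\cong A$ and $\eta=(b_1,b_2)\in B^2$ inducing $\bar\eta$, then $B_{\mathfrak n}$ injects into its completion $k[[t]]$ (whatever identification $\widehat B_{\mathfrak n}\cong k[[t]]$ one uses) as a one-dimensional local domain essentially of finite type over $k$, whose fraction field therefore has transcendence degree one over $k$; yet its image would contain the algebraically independent elements $t$ and $g(t)$, a contradiction. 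The missing ingredient is formal smoothness (versality) of $\bar\eta$ at the closed point, which holds in the situation where the theorem is applied in Section~\ref{S:ArtCritPf} and which the paper's proof uses: starting from the approximation with $j=2$, the dashed lifts in diagram~\eqref{E:alg} exist precisely because $\bar\eta$ is formally smooth, and they assemble into an endomorphism $\psi$ of $A$ that is the identity modulo $\mathfrak m^2$, hence surjective, hence an automorphism, with $\bar\eta\circ\psi\cong\eta|_A$; one then absorbs $\psi$ into the isomorphism $\widehat B_{\mathfrak n}\cong A$. Your argument never invokes this lifting property, and the example shows it cannot do without it (or without weakening the conclusion to agreement to a prescribed finite order, which is exactly what Theorem~\ref{T:approx} already gives).
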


The proof is almost an immediate consequence of Artin's approximation theorem:

\begin{teo}[{Artin's approx.~\cite[Thm.~1.12]{Artin-approximation}, \cite[Thm.~1.5]{CdJ}}] \label{T:approx}
Let $\mathcal X$ be a CFG that is locally of finite presentation over an  excellent DVR or field $  \Lambda$.
Let $(A,\mathfrak m)$ be a complete local $\Lambda$-algebra with residue field $k$ that is formally of finite type over $\Lambda$. 
 Given $\bar \eta\in \mathcal X(A)$, there is a finitely generated $\Lambda$-algebra $B$ with $k$-point $s:\operatorname{Spec} k\to \operatorname{Spec} B$ corresponding to a maximal ideal $\mathfrak n$ and $\widehat B_{\mathfrak n}=A$, such that for any positive integer $j$ there is an element $\eta\in \mathcal X(B)$ (depending on $j$) such that the images of $\eta$ and $\bar \eta$ are isomorphic in $\mathcal X(A/\mathfrak m^j)$. 
\end{teo}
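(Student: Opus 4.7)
My plan is to reduce the stack-valued approximation statement to the classical Artin approximation theorem for affine schemes of finite type, exploiting the local finite presentation hypothesis on $\mathcal X$, and then to invoke the classical result as a black box. The first step is to use local finite presentation (Definition~\ref{D:lfp}) to descend $\bar\eta$ to a finite-type algebra. Writing $A = \varinjlim A_\alpha$ as the filtered colimit of its finitely generated $\Lambda$-subalgebras, the canonical functor $\varinjlim \mathcal X(A_\alpha) \to \mathcal X(A)$ is an equivalence, so there exist a finitely generated $\Lambda$-algebra $A_0$, an element $\eta_0 \in \mathcal X(A_0)$, and a $\Lambda$-homomorphism $h \colon A_0 \to A$ with $h^\ast \eta_0 \cong \bar\eta$ in $\mathcal X(A)$. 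The problem has now been reduced to one about the representable functor $\operatorname{Hom}_\Lambda(A_0, -)$: it suffices to produce, for the prescribed $j$, a finitely generated $\Lambda$-algebra $B$ with maximal ideal $\mathfrak n$ satisfying $\widehat B_{\mathfrak n} \cong A$, together with a $\Lambda$-homomorphism $h_j \colon A_0 \to B$ whose reduction modulo $\mathfrak n^j$ agrees with the reduction of $h$ under the identification $A/\mathfrak m^j \cong B/\mathfrak n^j$. Pulling back $\eta_0$ along such an $h_j$ yields $\eta := h_j^\ast \eta_0 \in \mathcal X(B)$, whose image in $\mathcal X(A/\mathfrak m^j)$ is then isomorphic to that of $\bar\eta$, as required.

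The production of such a pair $(B, h_j)$ is precisely the assertion of the classical Artin approximation theorem applied to the affine scheme $X = \operatorname{Spec} A_0$ of finite type over $\Lambda$: every formal $A$-point of $X$ can be approximated modulo any power of $\mathfrak m$ by an algebraic point valued in a finitely generated $\Lambda$-algebra whose completion at some maximal ideal is $A$. For this I would appeal directly to \cite[Thm.~1.12]{Artin-approximation} in the original excellent Henselian setting, or, for the general case of an excellent DVR or field, to the Conrad--de Jong reformulation \cite[Thm.~1.5]{CdJ}.

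The main obstacle, as indicated, is the classical approximation step itself, whose proof is deep and cannot realistically be reproduced in a short plan. Its technical heart is N\'eron--Popescu desingularization \cite[Thm.~1.3]{Popescu}: for an excellent Noetherian local ring $(R,\mathfrak n)$, the completion map $R \to \widehat R$ is a filtered colimit of smooth $R$-algebras. Excellence of $\Lambda$ is precisely what ensures that the completion of any localization of a finite-type $\Lambda$-algebra at a maximal ideal is geometrically regular over the localization, triggering Popescu. Given such a presentation $\widehat B_{\mathfrak n} = A = \varinjlim_\alpha C_\alpha$ as a filtered colimit of smooth $B_{\mathfrak n}$-algebras, the formal point $h \colon A_0 \to A$ factors through some $C_\alpha$ by finite presentation of $A_0$, and one then uses formal smoothness of $B_{\mathfrak n} \to C_\alpha$ together with a Hensel-style lifting to correct the induced map into an algebraic approximation of $h$ modulo $\mathfrak m^j$. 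Everything in the reduction above is entirely formal, and all the genuine geometric content lies in Popescu's theorem, which must be imported from outside.
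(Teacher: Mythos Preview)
The paper does not give its own proof of this theorem: it is stated with citations to \cite[Thm.~1.12]{Artin-approximation} and \cite[Thm.~1.5]{CdJ} and then used as a black box to deduce Theorem~\ref{T:Artin-Alg}. So there is no argument in the paper to compare yours against.

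Your reduction is correct and is exactly the standard way one passes from the classical functor-valued approximation theorem to the CFG version: use local finite presentation to descend $\bar\eta$ through some finite-type $A_0$, thereby replacing $\mathcal X$ by the representable functor $\operatorname{Hom}_\Lambda(A_0,-)$, and then invoke the classical result. Two small points worth tightening. First, watch the quantifier order: the statement asserts a single $B$ (independent of $j$) with $\widehat B_{\mathfrak n}=A$, after which $\eta$ varies with $j$; your phrasing ``for the prescribed $j$, a finitely generated $\Lambda$-algebra $B$'' could be read as allowing $B$ to depend on $j$. Second, in your Popescu sketch you apply N\'eron--Popescu to $B_{\mathfrak n}\to \widehat B_{\mathfrak n}=A$, but at that point $B$ has not yet been produced; the existence of a finite-type $B$ with $\widehat B_{\mathfrak n}\cong A$ is itself part of what the cited theorems deliver (for general $A$ formally of finite type it is not obvious a priori). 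Since you are treating the classical step as a black box anyway, this is not a real gap in your plan, only in the informal explanation of what lies inside the box.
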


\begin{proof}[Proof of Theorem~\ref{T:alg} from Theorem~\ref{T:approx}]
Take $B$ and $\eta$ as in the Approximation theorem, with $j=2$.   In other words, $\eta_2$ and $\hat \eta_2$ are isomorphic in $\mathcal X(A/\mathfrak m^2)$. Now, inductively, using formal smoothness, we can use diagram~\eqref{E:alg}
\begin{equation} \label{E:alg} \vcenter{ \xymatrix{
\operatorname{Spec} A/\mathfrak m^j \ar[d] \ar[r]^{\psi_{j-1}} & \operatorname{Spec} A \ar[dd]^{\bar \eta} \\
\operatorname{Spec} A/\mathfrak m^{j+1} \ar@{-->}[ur]_{\psi_j} \ar[d] \\
\operatorname{Spec} B \ar[r]^\eta & \mathcal X
}} \end{equation}
to construct for all $j$ morphisms $\psi_j:\operatorname{Spec}A/\mathfrak m^{j+1}\to \operatorname{Spec} A$ so that $\bar \eta \psi_j\cong \eta_j$.  This induces  a morphism  $\psi:\operatorname{Spec}A \to \operatorname{Spec} A$ so that $\bar \eta \psi \cong \eta$.  But $\psi$ is an isomorphism modulo $\mathfrak m^2$, so it must be surjective, and a surjective endomorphism of a noetherian ring is an isomorphism (e.g., \cite[Lem.~C.5]{sernesi}).
\end{proof}

\subsection{Algebraicity of the stack of Higgs bundles}
\label{S:rep-proof}

In 
Lemma \ref{L:CVBMhmg}, 
Lemma \ref{L:C-EF-TanAut},
Lemma \ref{L:C-EF-Int},
Corollary \ref{C:curve-vb-morph-lfp}, and 
Lemma \ref{L:C-EF-Obs-lfp}, 
  we have verified the conditions of Theorem~\ref{T:Artin-criteria} for the stack $\mathcal F$ that parameterizes proper nodal curves equipped with a homomorphism of vector bundles: 

\begin{teo} \label{T:cvbm-stack}
Let $\mathcal{F}$ be the stack of quadruples $(C,E,F,\sigma)$ where $C$ is a nodal curve, $E$ and $F$ are vector bundles, and $\sigma : E \rightarrow F$ is a morphism of vector bundles.  Then $\mathcal{F}$ is an     algebraic stack.
\end{teo}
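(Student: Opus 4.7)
The plan is to apply Artin's criterion (Theorem~\ref{T:Artin-criteria}) by simply assembling the six hypotheses, each of which has been verified in the preceding sections. The verification of the stack condition~\eqref{T:ACstack} is the one piece that has not been made fully explicit, but it follows easily from the groundwork already laid. All other hypotheses are in hand, so the proof is essentially a citation exercise.

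For the stack condition, I would invoke Proposition~\ref{P:DescRel}: there is a forgetful morphism $\mathcal{F} \to \mathcal{N}$ to the stack of nodal curves, and one can check $\mathcal{F}$ is a stack over $\mathcal{N}$ by noting that, relative to a fixed curve $C/S$, the fiber $\mathcal{F}_{C/S}$ parameterizes triples $(E,F,\sigma)$ on $C$. This fiber is a stack by combining descent for quasicoherent sheaves (Theorem~\ref{T:DQC}, which shows $\mathsf{Fib}_{C/S}$ is a stack) with the fact that the sheaf $\mathscr{H}\!\mathit{om}_{C/S}(E,F)$ is representable (Proposition~\ref{P:LiebP2.3}) hence is itself a sheaf in the étale topology. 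Since $\mathcal{N}$ is a stack (as follows from Lemma~\ref{L:NChmg} together with standard descent for schemes, or more directly by the argument used for $\mathcal M_g$ in Example~\ref{E:MgPS} applied to nodal curves), Corollary~\ref{C:DescRel} then yields that $\mathcal{F}$ is a stack.

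Having established~\eqref{T:ACstack}, the remaining hypotheses of Theorem~\ref{T:Artin-criteria} are immediate from the preceding work: homogeneity~\eqref{T:AChomog} is Lemma~\ref{L:CVBMhmg}; the finite-dimensionality of $T^{-1}_{\mathcal{F}}$ and $T^0_{\mathcal{F}}$~\eqref{T:ACTan} follows from Lemma~\ref{L:C-EF-TanAut}, since these are $\operatorname{Ext}$-groups of the perfect complex $\Upsilon^{\bullet}_{C/S}(E,F,\sigma)$ dualized on a proper nodal curve over a field (alternatively, Lemma~\ref{L:aut-def-obs-dim} combined with~\eqref{T:ACobs}); integrability~\eqref{T:ACint} is Lemma~\ref{L:C-EF-Int}; local finite presentation~\eqref{T:AClfp} is Corollary~\ref{C:curve-vb-morph-lfp}; and the existence of a locally finitely presentable obstruction theory~\eqref{T:ACobs} is Lemma~\ref{L:C-EF-Obs-lfp}, where the obstruction group $T^1_{\mathcal{F}}(\xi,J) = \operatorname{Ext}^2(\Upsilon^{\bullet}_{C/S}(E,F,\sigma), \pi^{\ast}J)$ was explicitly constructed in Section~\ref{S:obstructions}.

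The main obstacle is essentially cosmetic rather than mathematical: all the real work has been done, and the task is to confirm that the hypotheses of Artin's criterion match what has been verified and to ensure that the stack condition~\eqref{T:ACstack}, which was not singled out in a numbered lemma, is in place. I do not anticipate any genuine difficulty. Once these are collected, Theorem~\ref{T:Artin-criteria} delivers that $\mathcal{F}$ is an algebraic stack locally of finite presentation over $\operatorname{Spec}\mathbb{Z}$ (or over any chosen excellent base).
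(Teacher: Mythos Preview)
Your proposal is correct and takes essentially the same approach as the paper: the paper's proof consists entirely of the sentence preceding the theorem statement, which cites exactly the same five results (Lemma~\ref{L:CVBMhmg}, Lemma~\ref{L:C-EF-TanAut}, Lemma~\ref{L:C-EF-Int}, Corollary~\ref{C:curve-vb-morph-lfp}, Lemma~\ref{L:C-EF-Obs-lfp}) as verifying the hypotheses of Theorem~\ref{T:Artin-criteria}. Your version is slightly more thorough in that you explicitly address the stack condition~\eqref{T:ACstack}, which the paper leaves tacit.
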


Analogous arguments show that the stack of Higgs bundles satisfies the axioms, hence is algebraic.  Alternately, one may consider the stacks $\mathcal{E}_1$ of pairs $(C,E)$ where $C$ is a nodal curve and $E$ is a vector bundle on $C$ and $\mathcal{E}_2$ of triples $(C,E,F)$ where $C$ is a nodal curve and $E$ and $F$ are vector bundles on $C$.  (Note that $\mathcal{E}_2 = \mathcal{E}_1 \mathop{\times}_{\mathcal{M}} \mathcal{E}_1$.)  Then we can construct the stack of Higgs bundles as the fiber product
$\mathcal{E}_1 \mathop{\times}_{\mathcal{E}_2} \mathcal{F}$
where the map $\mathcal{E}_1 \rightarrow \mathcal{E}_2$ sends $(C,E)$ to $(C,E,E \otimes \omega_C)$.  We therefore have

\begin{cor}
The stack of Higgs bundles on proper nodal curves is an     algebraic stack.
\end{cor}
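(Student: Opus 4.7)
The plan is to realize the stack $\mathcal{H}$ of Higgs bundles on proper nodal curves as a $2$-fiber product of algebraic stacks and invoke Corollary \ref{C:fp-alg}. The stack $\mathcal{F}$ of quadruples $(C,E,F,\sigma)$ is algebraic by Theorem \ref{T:cvbm-stack}. Before forming the fiber product I would verify that the stacks $\mathcal{E}_1$ of pairs $(C,E)$ and $\mathcal{E}_2$ of triples $(C,E,F)$ are also algebraic. These meet Artin's criteria by the same arguments used for $\mathcal{F}$: homogeneity comes from Lemma \ref{L:CVBhmg} (and, together with Lemma \ref{L:RelHomog}, from the identification $\mathcal{E}_2 = \mathcal{E}_1 \times_{\mathcal{N}} \mathcal{E}_1$); the deformation--obstruction theory is provided by Corollary \ref{C:TFib}, Lemma \ref{L:def-curve-vb}, and the construction of Section \ref{S:obstructions} with the morphism $\sigma$ omitted; local finite presentation follows from Lemmas \ref{L:curve-lfp} and \ref{L:vb-lfp}; and integrability is the Grothendieck-existence argument of Lemma \ref{L:C-EF-Int}, stripped of the step that algebraizes $\sigma$.

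Next I would introduce the two structure morphisms to $\mathcal{E}_2$. Let $p \colon \mathcal{F} \to \mathcal{E}_2$ be the morphism that forgets $\sigma$, and let $q \colon \mathcal{E}_1 \to \mathcal{E}_2$ be the morphism $(C,E) \mapsto (C, E, E \otimes \omega_{C/S})$, where $\omega_{C/S}$ denotes the relative dualizing sheaf. The morphism $q$ is well-defined on families because nodal curves are Gorenstein, so $\omega_{C/S}$ is a line bundle on $C$ whose formation commutes with arbitrary base change in $S$. Forming the $2$-fiber product
\begin{equation*}
\mathcal{H}' := \mathcal{E}_1 \times_{\mathcal{E}_2,\, q,\, p} \mathcal{F},
\end{equation*}
Corollary \ref{C:fp-alg} immediately yields that $\mathcal{H}'$ is an algebraic stack.

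To conclude I would exhibit an equivalence of categories fibered in groupoids between $\mathcal{H}'$ and the stack $\mathcal{H}$ of Higgs bundles. An $S$-point of $\mathcal{H}'$ is a datum $\bigl((C_1,E_1),(C_2,E_2,F_2,\sigma_2),\alpha\bigr)$ in which $\alpha \colon (C_1,E_1,E_1 \otimes \omega_{C_1/S}) \xrightarrow{\sim} (C_2,E_2,F_2)$ is an isomorphism in $\mathcal{E}_2(S)$. Using $\alpha$ to identify the two curves and the two copies of $E$, this amounts to giving $(C,E)$, a vector bundle $F$ on $C$, a morphism $\sigma \colon E \to F$, and an isomorphism $\beta \colon F \xrightarrow{\sim} E \otimes \omega_{C/S}$. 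Setting $\phi = \beta \circ \sigma$ defines a functor $\Phi \colon \mathcal{H}' \to \mathcal{H}$, and a quasi-inverse is $(C,E,\phi) \mapsto \bigl((C,E),(C,E,E\otimes\omega_{C/S},\phi),\mathrm{id}\bigr)$. These are mutually quasi-inverse equivalences, so $\mathcal{H} \simeq \mathcal{H}'$ is algebraic.

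The main obstacle is the preliminary step of establishing algebraicity of the auxiliary stacks $\mathcal{E}_1$ and $\mathcal{E}_2$, which is not stated as a separate theorem in the paper; however, this requires no new ideas beyond those already assembled in Section \ref{S:def} and deployed in the proof of Theorem \ref{T:cvbm-stack}. Once these are in hand, the fiber product and the equivalence $\mathcal{H} \simeq \mathcal{H}'$ are purely formal.
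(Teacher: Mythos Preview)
Your proposal is correct and follows exactly the ``alternately'' argument the paper gives: realize the Higgs stack as the $2$-fiber product $\mathcal{E}_1 \times_{\mathcal{E}_2} \mathcal{F}$ via the map $(C,E)\mapsto (C,E,E\otimes\omega_{C/S})$ and apply Corollary~\ref{C:fp-alg}. You are in fact more careful than the paper in spelling out that $\mathcal{E}_1$ and $\mathcal{E}_2$ must first be shown algebraic (by the same Artin-criterion verifications, minus the $\sigma$ step) and in writing down the equivalence $\mathcal{H}\simeq\mathcal{H}'$ explicitly; the paper leaves both of these implicit.
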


\subsection{Outline of the proof of Artin's criterion}\label{S:ArtCritPf}

We will briefly summarize the proof of Theorem~\ref{T:Artin-criteria}.   Since assumption Theorem~\ref{T:Artin-criteria}\eqref{T:ACstack}  is that $\mathcal X$ is  a stack, the key point is to establish the existence of a smooth  representable covering of $\mathcal X$ by a scheme. 
The basic idea of the proof is to begin with an arbitrary point $\xi_0 \in \mathcal{X}(k)$, valued in a field $k$, and find a smooth neighborhood $U \rightarrow \mathcal X$ of this point by enlarging $\operatorname{Spec} k$ until it is smooth over $\mathcal X$.  In more concrete terms, $U$ will be a versal deformation of $\xi_0$.  Repeating this for every point of $\mathcal{X}$ and taking a disjoint union of the different $U$ gives a smooth cover of $\mathcal{X}$ by a scheme. 

We now explain in more detail how the arguments in the previous sections imply the existence of the schemes $U$.  

\vskip .2 cm 
\noindent \emph{Versality at a point}
\vskip .2 cm 

\noindent By the Schlessinger--Rim theorem (see Theorem \ref{T:Rim}), the homogeneity of $\mathcal{X}$ (Theorem \ref{T:Artin-criteria}\eqref{T:AChomog}) and the finite dimensionality of $T_{\mathcal X}^{-1}(\xi_0)$ and $T_{\mathcal X}^{0}(\xi_0)$ (Theorem \ref{T:Artin-criteria}\eqref{T:ACTan})
guarantee that $\mathcal{X}$ is prorepresentable at $\xi_0$.  That is, there is a formal groupoid $\widehat{V}_1 \rightrightarrows \widehat{V}_0$, with $\widehat{V}_i = \operatorname{Spec} \widehat{R}_i$ for complete noetherian  local rings $\widehat{R}_i$, whose associated CFG (Definition \ref{D:CFG-GrOb}) agrees  with $\mathcal{X}$ on infinitesimal extensions of $\xi_0$.  

This gives a \emph{formal} morphism $\widehat{V}_0 \rightarrow \mathcal{X}$ that is formally smooth (Example \ref{E:gpd-proj}).
  In other words, we have compatible elements $\xi_j \in \mathcal{X}(\operatorname{Spec}\widehat{R}_0 / \mathfrak{m}^{j+1})$, where $\mathfrak{m}$ is the maximal ideal of $\widehat{R}_0$.  The assumption that formal objects of $\mathcal{X}$ integrate  uniquely (Theorem \ref{T:Artin-criteria}\eqref{T:ACint}) guarantees that this formal morphism comes from a genuine morphism $\widehat{V}_0 \rightarrow \mathcal{X}$, i.e., from an element of $\xi \in \mathcal{X}(\operatorname{Spec}\widehat{R}_0)$.

Now the map $\widehat{V}_0 \rightarrow \mathcal{X}$ is formally smooth at $\xi_0$, but it is not of finite type.  To remedy this we can use the local finite presentation of $\mathcal{X}$ (Theorem \ref{T:Artin-criteria}\eqref{T:AClfp}), which ensures that $\widehat{V}_0 \rightarrow \mathcal{X}$ must factor through some scheme $V$ of finite type.  Unfortunately, it is not clear we can exert any control over $V$, even formally, to guarantee it is formally smooth over $\mathcal{X}$.  Fortunately, we may rely on Artin's algebraization theorem (Theorem~\ref{T:alg}) to ensure that $V$ is still formally smooth over $\mathcal X$ at the central point.

This does not yet guarantee that the map $V \rightarrow \mathcal X$ is smooth:  we only have formal smoothness at one point.  The next step will be to show formal smoothness at a point implies formal smoothness nearby.

\vskip .2 cm 
\noindent \emph{Versality in a neighborhood}
\vskip .2 cm 

\noindent  Write $V = \operatorname{Spec} R$.  We know that $R$ is of finite type and we now have a map $\eta : V \rightarrow \mathcal{X}$ that we know to be formally smooth at a point lifting $\xi$.  All that is left is to find an open neighborhood of this point at which the map is actually smooth.  For this we use the obstruction theory $T^1_\mathcal{X}$ (Theorem \ref{T:Artin-criteria}\eqref{T:ACobs}).  

Hall shows that the existence of a locally presentable obstruction theory representing the automorphisms, deformations, and obstructions of $\mathcal X$ implies that there is a \emph{relative} obstruction theory $T^1_{V/\mathcal X}$ for $V$ over $\mathcal X$ and that $T^1_{V/\mathcal X}$ is a \emph{coherent functor}.

As $V$ is formally smooth over $\mathcal{X}$ at $\xi_0$, we know that $T^1_{V/\mathcal{X}}(\eta, J) = 0$ for any quasicoherent sheaf $J$ on $V$ supported at $\xi_0$.  By a theorem of Ogus and Bergman~\cite[Thm.~2.1]{OB}, coherent functors over noetherian  rings satisfy an analogue of Nakayama's lemma, which guarantees that $T^1_{W/\mathcal{X}}(\eta \big|_{W}, J) = 0$ for all quasicoherent sheaves on $W$, where $W$ is the localization of $V$
 at the point $\xi_0$.  Making use of the local finite presentation of $\mathcal{X}$ (Theorem \ref{T:Artin-criteria}\eqref{T:AClfp}), Hall shows that this implies $T^1_{U/\mathcal{X}}(\eta \big|_U, J) = 0$ for all quasicoherent $J$ on an open subset $U \subseteq V$ containing $W$.  Then $U \rightarrow \mathcal{X}$ is locally of finite presentation and satisfies the formal criterion for smoothness.

\vskip.2cm
\noindent \emph{Bootstrapping to representability}
\vskip.2cm

To conclude that $U \rightarrow \mathcal X$ is smooth, we only need to show that it is representable by algebraic spaces.  This is proved by observing that the hypotheses of Theorem \ref{T:Artin-criteria} on $\mathcal X$ imply relative versions of themselves for the map $U \rightarrow \mathcal X$ (cf.\ Lemma \ref{L:RelHomog}, Lemma \ref{L:lfp-comp}, and the relative obstruction theory $T^1_{U/\mathcal X}$ mentioned above).  The same hypotheses then hold for any base change $U_Z \rightarrow Z$ via any map $Z \rightarrow \mathcal X$.  Taking $Z$ to be a scheme and viewing $U_Z$ as a sheaf in the \'etale topology on schemes over $Z$, this reduces the problem to showing that $U_Z$ is an algebraic space.  Now we can try to prove the theorem for $\mathcal X = U_Z$:  In effect, Theorem \ref{T:Artin-criteria} is reduced to the case where $\mathcal X$ is a \emph{sheaf} as opposed to a stack.

Now we have a scheme $U$ and a map $U \rightarrow \mathcal X$ that is formally smooth and locally of finite presentation, and we are faced with the same problem:  to show $U \rightarrow \mathcal X$ is representable by algebraic spaces.  But this time, the relative diagonal of $U$ over $\mathcal X$ is injective, so that if we iterate the process one more time, we discover once again that our task is to prove $U_Z$ is an algebraic space.  But $U_Z = U \mathbin\times_{\mathcal X} Z$ is a subsheaf of $U \times Z$, since the diagonal of $\mathcal X$ is injective.  Therefore taking $U \times Z$ to be the new base scheme and replacing $\mathcal X$ with $U \times Z$, we discover we may assume further that $\mathcal X$ is even a \emph{subsheaf} of the base scheme $S$.

But now, if $W \rightarrow \mathcal X$ is any map, we have 
\begin{equation*}
W \mathbin\times_{\mathcal X} U = W \mathbin\times_S U
\end{equation*}
since $\mathcal X$ is a subsheaf of $S$.  Since $U \rightarrow S$ is schematic---it is a morphism of schemes, after all---so must be $U \rightarrow \mathcal X$.


\makeatletter

\setcounter{section}{0}
\setcounter{subsection}{0}
\gdef\thesection{\@Alph\c@section}

\makeatother

\section{Sheaves, topologies, and descent} \label{S:more-isotriv}

We collect a few more technical topics surrounding the subject of descent.  In particular, we show that the presence of nontrivial automorphisms \emph{always} prevents an algebraic stack from having a representable sheaf of isomorphism classes (Corollary \ref{C:IfAutNotRep}), we give a few more technically efficient ways of thinking about descent (\S \ref{S:MoreDesc}), we discuss a bit more about saturations of pretopologies (\S \ref{S:top}) and we recollect and example of Raynaud showing that genus~$1$ curves do not form a stack unless one admits algebraic spaces into the moduli problem (\S \ref{S:Raynaud}).

\subsection{Torsors and twists}\label{S:TorsTwist}

The point of this section is to explain how nontrivial automorphisms give rise to nontrivial families, and prevent a stack from having a representable sheaf of isomorphism classes.  In other words, moduli   functors parameterizing isomorphism classes of  objects with nontrivial automorphism groups are never representable.

\subsubsection{Locally trivial families and cohomology}\label{S:LocTrivH}
In general, one can build a locally trivial family with fiber $X$ over a base $S$ from cohomology classes in $H^1\bigl(S, \operatorname{Aut}(X)\bigr)$ (\cite[Tag 02FQ]{stacks}).  If one represents the cohomology class with a \v{C}ech cocycle then it is a recipe for assembling the family from trivialized families on open subsets.  The cocycle condition is precisely the one necessary to ensure that the family can be glued together, while the coboundaries act via isomorphisms.  The cocyle condition appears in the definition of the gluing condition for a stack in \S\ref{S:Descent} for precisely this reason.

\subsubsection{Locally trivial families from torsors over the base} \label{S:LocTrivPi}
Example \ref{E:isotriv1} describes a special case of a standard  method for constructing locally trivial families is via automorphisms, and covers of the base.  Namely, given  schemes $X_1$ and $S$, and an \'etale principal $\Gamma$-bundle $\widetilde S\to S$ for some discrete group $\Gamma$, then for each   $$\phi:\Gamma \to \operatorname{Aut}(X_1)$$ one can construct a (\'etale) locally trivial family $X/S$ with fibers isomorphic to $X_1$ by setting $X=(X_1\times \widetilde S)/\Gamma $, where the quotient is via the (free)  diagonal action induced by $\phi$.  Alternately, this can be constructed as the space of equivariant morphisms from $\widetilde S$ to $X_1$.  Using the previous remark, this  defines a map $$\operatorname{Hom}(\Gamma,\operatorname{Aut}(X_1))\to H^1(S,\operatorname{Aut}(X_1))$$ (of course, the cohomology should be taken in a topology in which $\widetilde S$ is a torsor).    
When $\widetilde S\to S$ is a universal cover, we obtain  a map $\operatorname{Hom}(\pi_1(S),\operatorname{Aut}(X_1))\to H^1(S,\operatorname{Aut}(X_1))$.

\begin{rem}\label{R:CompHPi} 
In the discussion above (\S \ref{S:LocTrivPi}),  it is possible for the isotrivial family $X$ to be trivial, even if the homomorphism $\phi$ is nontrivial.  We will highlight one situation in which the family $X$ can be assured to be nontrivial.
Let $X_1$ and $\widetilde S\to S$ be as in the discussion above (\S \ref{S:LocTrivPi}).
Let  $G=\operatorname{Aut}(X_1)$, and let $G'\le G$ be a discrete subgroup.  From \S  \ref{S:LocTrivH} and \ref{S:LocTrivPi} above, we obtain    
 a commutative diagram
\begin{equation}\label{E:torsormono} \vcenter{
\xymatrix{
\operatorname{Hom}(\pi_1(S),G') \ar@{^(->}[r] \ar@{=}[d] &\operatorname{Hom}(\pi_1(S),G)   \ar[r] \ar[d] &\operatorname{Hom}(\pi_1(S),\pi_0(G)) \ar@{=}[d] \\
H^1(S,G') \ar[r] & H^1(S,G) \ar[r] & H^1(S,\pi_0(G))  
}}
\end{equation}
where the horizontal maps are the natural maps, and 
the vertical  equalities on the left and right come from the fact that $G'$ and $\pi_0(G)$ are totally disconnected.
In summary, we can conclude that a locally trivial  family $X/S$ obtained from a homomorphism $\phi:\pi_1(S)\to G'$ will be a nontrivial family so long as the image of $\phi$ in $\operatorname{Hom}(\pi_1(S),\pi_0(G))$ is nontrivial.  Note that the kernel of the  natural map  $H^1(S,G') \to  H^1(S,G)$ corresponds to 
principal $G'$ bundles that can be equivariantly embedded in $G\times S$; i.e., the kernel is given by 
sections over $S$ of the coset space $(G/G')\times S$.  
\end{rem}

\begin{exa}[Disconnected automorphism groups] \label{E:IsoTriv}
If $\operatorname{Aut}(X_1)$ is disconnected, then there exist nontrivial locally trivial families $X/S$ with fibers isomorphic to $X_1$.   This follows immediately from the previous remark, so long as one can find a space $S$ with $\pi_1(S)=\mathbb Z$, or at least that has a nontrivial principal $\mathbb Z$-bundle. Indeed, for any  $\alpha\in \operatorname{Aut}(X_1)$ not  in the connected component of the identity, one would take $G'=\langle \alpha \rangle$ and set $X/S$ to be the locally trivial family corresponding to the map $\mathbb Z\to  \langle \alpha \rangle$ by $1\mapsto \alpha$.     In the complex analytic setting, we can simply take $S=\mathbb C^*$.  
  However, we can arrange for this even in the category of schemes by joining a pair of rational curves at two points;  this has fundamental group $\mathbb Z$ in the sense that it has a simply connected covering space (even in the Zariski topology!) with a simply transitive action action of $\mathbb Z$.
\end{exa}

\begin{exa}[Finite automorphism groups]\label{E:IsoTrivGen}
If $\operatorname{Aut}(X_1)$ is finite and nontrivial (or more generally, has a finite subgroup not contained in the connected component of the identity), one can easily  construct similar examples using finite covers.  Indeed, then $G=\operatorname{Aut}(X_1)$ contains a nontrivial finite cyclic subgroup $G'=\mu_n$ (not contained in the connected component of the identity).  Let $S=\mathbb C^*$ and $\widetilde S\to S$ be the cyclic cover $z\mapsto z^n$, which is an \'etale principal $\mu_n$-bundle.  Then, as in  \eqref{E:torsormono} of Remark \ref{R:CompHPi} above,  we obtain a diagram 
\begin{equation*}
\vcenter{
\xymatrix@C=1em@R=1em{
\operatorname{Hom}(\mu_n,G') \ar@{=}[r] \ar[rd]&\operatorname{Hom}(\pi_1(S),G') \ar@{^(->}[r] \ar@{=}[d] &\operatorname{Hom}(\pi_1(S),G)   \ar@{->}[r] \ar[d] &\operatorname{Hom}(\pi_1(S),\pi_0(G)) \ar@{=}[d] \\
&H^1(S,G') \ar[r] & H^1(S,G) \ar[r] & H^1(S,\pi_0(G)).  
}}
\end{equation*}
Therefore, a generator of $\mu_n=\operatorname{Hom}(\mu_n,G')$  determines a nontrivial locally trivial family given explicitly by $X=(X_1\times \mathbb C^*)/\mu_n\to S=\mathbb C^*/\mu_n=\mathbb C^*$, where the quotient is by the diagonal action under the identifications $\mu_n\le \operatorname{Aut}(X_1)$, and $\mu_n\le \operatorname{Aut}(\mathbb C^*)$ acting by a primitive $n$-th root of unity.
\end{exa}

\begin{exa}[Isotrivial families of curves]  \label{E:isotriv}
For every $g$ there exists a relative curve $\pi:X\to S$ of genus $g$ that is isotrivial, but not isomorphic to a trivial family.  For $g=0$, any nontrivial ruled surface $X\to S$ provides an example.  The previous Example \ref{E:IsoTrivGen}, and Example \ref{E:isotriv1}, provide nontrivial isotrivial families for   $g\ge 1$. 
 For instance, for $g\ge 2$, one could begin with a hyperelliptic curve $\{ y^2 = f(x) \}$ 
   carrying the nontrivial action of $\mu_2$ sending $y$ to $-y$.  The construction in Example \ref{E:IsoTrivGen}  yields the family $\{ t y^2 = f(x) \}$, where $t$ is the coordinate on $S=\mathbb C^*$. 
   \end{exa}

\subsubsection{Twisting by a torsor}

All of the previous examples are special cases of the general process of twisting by a torsor. 

\begin{exa} [Twist by a torsor] \label{E:twist}
 Suppose that $Z$ is an $S$-point of a stack $\mathcal X$ and that the automorphisms group of $Z$ is a smooth group scheme $G$ over $S$ (in fact, a flat group scheme is enough if $\mathcal X$ is an algebraic stack).  Let $P$ be a $G$-torsor.  

As $G$ is smooth, $P$ is smooth over $S$, and hence has sections over some \'etale cover of $S$.  Therefore $P$ is covering in the \'etale topology.  (Note that $P\to S$ it is not necessarily an \'etale morphism,
 but this is not an obstacle to using it for descent, since we may use smooth descent; alternatively,   see Appendix~\ref{S:top}.)  

By descent, $\mathcal X(S)$ may be identified with the $G$-equivariant objects of $\mathcal X(P)$.  In particular, let $Z_P$ in $\mathcal X(P)$ be the pullback of $Z$ to $P$.  Then $Z \in \mathcal X(S)$ corresponds to $Z_P$ with the trivial action of $G$.  However, we can also ask $G$ to act on $Z_P=P\times_SZ$ by the given action of $G$ on $P$ and by automorphisms on $Z$, giving an object $Y:=P \mathbin\times_G Z:=(P\times_S Z)/G$
 in  $\mathcal X(S)$ by descent. We call $Y$ the twist of $Z$ by the torsor $P$.  

We note that the torsor $P$ can be recovered from a twist $Y$ of $Z$ as the sheaf $\mathscr Isom_{\mathcal X}(Z, Y)$.  Thus the twist is nontrivial if and only if the torsor $P$ was.  Moreover, one has an equivalence of categories between the full subcategory of $\mathcal X(S)$ consisting of twists of $Z$ and the category $\mathrm{B}G(S)$ consisting of $G$-torsors on $S$. 
\end{exa}

The following theorem was communicated to us by Jason Starr~\cite{Starr-MO}.
  It implies that if $\mathcal X$ is a stack in the fppf topology in which there are objects with nontrivial automorphisms, and $X$ is its associated sheaf of isomorphism classes, then $X$ cannot be representable by a scheme (Corollary \ref{C:IfAutNotRep}).

\begin{teo}\label{T:IfAutTors}
Let $G$ be an algebraic group of finite type over an algebraically closed field $k$.  Then there is a scheme $S$ of finite type  over $k$ and a nontrivial $G$-torsor over $S$.
\end{teo}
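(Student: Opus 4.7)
The plan is to produce a nontrivial $G$-torsor by a direct geometric construction. The overall strategy is to embed $G$ into a well-understood ambient group and show that the resulting quotient morphism is the desired nontrivial $G$-torsor on a quasi-projective base.

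First I would reduce to the case where $G$ is smooth and affine. The non-smooth (infinitesimal) case, such as $G = \mu_p$ or $\alpha_p$ in characteristic $p$, admits explicit nontrivial torsors via Kummer or Artin--Schreier sequences; for instance, the $p$-th power map $\mathbb{G}_m \to \mathbb{G}_m$ is a nontrivial $\mu_p$-torsor in characteristic $p$. For non-affine $G$, Chevalley's structure theorem presents $G$ as an extension of an abelian variety $A$ by an affine group $L$, and a nontrivial $A$-torsor (e.g., arising from an isogeny $A \to A$ with nontrivial kernel) can be lifted to a nontrivial $G$-torsor after a suitable base change, since the obstruction in $H^2(-,L)$ can be made to vanish by working on an affine enough base.

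For smooth affine $G$, I would embed $G \hookrightarrow GL_n$ via a faithful linear representation, set $S := GL_n/G$ (which is quasi-projective by Chevalley--Rosenlicht), and consider the $G$-torsor $\pi : GL_n \to S$. The central claim is that $\pi$ is nontrivial. When $G$ is not connected, this is immediate: a trivialization would produce an isomorphism $GL_n \cong G \times S$ of schemes, but $GL_n$ is connected while $G \times S$ is disconnected (since $G$ is disconnected and $S$ is nonempty), a contradiction.

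The \emph{main obstacle} is the connected case, where no easy topological invariant distinguishes the two sides. A trivialization of $\pi$ would produce a scheme-theoretic retraction $r : GL_n \to G$ with $r|_G = \mathrm{id}_G$, and this must be obstructed. My approach would be to choose the embedding so that the Picard group distinguishes $GL_n$ from $G \times S$: one has $\mathrm{Pic}(GL_n) = 0$ (since $GL_n$ is open in affine space), while for a suitable embedding $\mathrm{Pic}(S)$ is nontrivial. For example, with $G = \mathbb{G}_m$ diagonally embedded in $GL_2$, we have $S = PGL_2$ with $\mathrm{Pic}(PGL_2) = \mathbb{Z}/2$, which already obstructs any splitting. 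For a general connected $G$, I would embed $G$ inside a larger $GL_n$ via a sum of faithful representations so that $G$ lies inside a proper parabolic subgroup; the resulting homogeneous space $S$ fibers over a flag variety and therefore carries a rich Picard group, giving the desired obstruction.
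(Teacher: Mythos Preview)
Your approach has a genuine gap in the connected case. The Picard group obstruction you propose cannot work uniformly: for a closed subgroup $G \subset GL_n$ one has the standard exact sequence
\[
X^*(GL_n) \to X^*(G) \to \operatorname{Pic}(GL_n/G) \to \operatorname{Pic}(GL_n) = 0,
\]
so $\operatorname{Pic}(GL_n/G) \cong X^*(G)/\operatorname{im}X^*(GL_n)$. If $G$ is connected, semisimple, and simply connected (for instance $G = SL_m$ or $Sp_{2m}$), then $X^*(G) = 0$ and hence $\operatorname{Pic}(GL_n/G) = 0$ for \emph{every} embedding $G \hookrightarrow GL_n$. Your proposed invariant therefore gives no obstruction for exactly the class of groups that is hardest to handle. (Indeed, for $SL_2 \subset GL_2$ the torsor $GL_2 \to \mathbb{G}_m$ is visibly trivial, sectioned by $\lambda \mapsto \operatorname{diag}(\lambda,1)$.) No amount of cleverness in choosing the parabolic helps here, because the vanishing of $X^*(G)$ is intrinsic to $G$.

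Your reduction to the affine case is also incomplete. An isogeny $A \to A$ exhibits $A$ as a torsor for its \emph{kernel}, a finite group scheme, not for $A$ itself; and pushing forward along $A[n] \hookrightarrow A$ need not preserve nontriviality. Over an algebraically closed field, producing a nontrivial torsor for an abelian variety genuinely requires an argument.

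The paper proceeds quite differently: it proves the contrapositive, assuming $H^1(S,G) = 0$ for all $S$ and reducing $G$ to zero in stages. Disconnectedness is ruled out using a nodal curve with $\pi_1 = \mathbb{Z}$; a nontrivial maximal torus in $G_{\mathrm{aff}}$ is ruled out using Grothendieck's splitting theorem for principal bundles on $\mathbb{P}^1$ (this is precisely what handles the semisimple case that defeats your approach); the abelian variety quotient is killed using $H^1(\mathbb{G}_m, \mathbb{Z}/n) \neq 0$; and only at the end, once $G_{\mathrm{red}}$ is known to be unipotent, does the paper use your $G' \to G'/G$ idea together with Matsushima's criterion. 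So your instinct to use quotient torsors is sound, but it only closes out the argument after the reductive part has already been dealt with by other means.
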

\begin{proof}
We suppose that $G$ is an algebraic group of finite type and that $H^1(S, G) = 0$ for every scheme $S$.  We wish to show that $G=0$.   We will break the proof into several steps.

\vskip .2 cm \noindent \textbf{Step 1:} \emph{$G$ is connected}.
  We will show that $G$ is connected by showing that any homomorphism $\mathbb Z \rightarrow G$ has image in the connected component of the identity.  To this end, as in Example \ref{E:IsoTriv}, 
  let $S$ be a $k$-scheme with a nontrivial $\mathbb Z$-torsor $P$.  For concreteness, let us take $S$ to be an irreducible rational curve with a single node (otherwise smooth) and take $P$ to be an infinite chain of copies of the normalization of $S$, attached at nodes.  
Then using the right hand side of the commutative diagram \eqref{E:torsormono}, we may conclude every homomorphism $\mathbb Z\to G$  composes to a morphism $\mathbb Z\to G\to \pi_0(G)$, with trivial image, and we are done.

\vskip .2 cm
We now assume always that $G$ is connected,  
and  proceed to consider  $G_{\text{red}} \subseteq G$, the maximal reduced closed subscheme (necessarily a subgroup since we work over an algebraically closed field, e.g.,  
 \cite[Tag 047R]{stacks}).
By 
Chevalley's theorem (see for instance \cite[Thm.~1.1]{conradchev} for a modern treatment),    
there is an exact sequence
\begin{equation*}
1 \rightarrow G_{\text{aff}} \rightarrow G_{\text{red}} \rightarrow A \rightarrow 0
\end{equation*}
where $G_{\text{aff}}$ is smooth connected and affine, and $A$ is an abelian variety.   Our next goal will be to show that $G_{\text{red}}=0$; we will do this in several steps.

\vskip .2 cm \noindent \textbf{Step 2:} $H^1(S,G_{\text{red}})=0$ \emph{for all $S$}.
  For any scheme $S$, we have an exact sequence:
\begin{equation*}
\operatorname{Hom}(S, G) \rightarrow \operatorname{Hom}(S, G/G_{\text{red}}) \rightarrow H^1(S, G_{\text{red}}) \rightarrow H^1(S, G)
\end{equation*}
We have assumed that $H^1(S, G) = 0$.  Furthermore, $G \rightarrow G/G_{\text{red}}$ is smooth (since $G_{\text{red}}$ is) and $G/G_{\text{red}}$ is artinian, so $G$ admits a (not necessarily homomorphic) section over $G/G_{\text{red}}$.  This implies $\operatorname{Hom}(S,G) \rightarrow \operatorname{Hom}(S, G/G_{\text{red}})$ is surjective, so $H^1(S, G_{\text{red}}) = 0$.

\vskip .2 cm \noindent \textbf{Step 3:} $G_{\text{aff}}$   \emph{is unipotent}.
Consider the exact sequence
\begin{equation*}
\operatorname{Hom}(\mathbb P^1, G_{\text{red}}) \rightarrow \operatorname{Hom}(\mathbb P^1, A) \rightarrow H^1(\mathbb P^1, G_{\text{aff}}) \rightarrow \operatorname{Hom}(\mathbb P^1, G_{\text{red}}) = 0.
\end{equation*}
As $A$ is an abelian variety, every map from $\mathbb P^1$ to $A$ is constant and therefore lifts to $G_{\text{red}}$.  It follows that $H^1(\mathbb P^1, G_{\text{aff}}) = 0$.
 Let $U \subseteq G_{\text{aff}}$ be the unipotent radical \cite[11.21, p.157]{Borel-AG}.  Consider the exact sequence:
 \begin{equation*}
H^1(\mathbb P^1, G_{\text{aff}}) \rightarrow H^1(\mathbb P^1, G_{\text{aff}}/U) \rightarrow H^2(\mathbb P^1, U)
\end{equation*}	
Since $U$ is an iterated extension of $\mathbb G_a$s \cite[Chap.~V, Cor.~15.5~(ii), p.~205]{Borel-AG}, we have that $H^2(\mathbb P^1, U) = 0$, whence $H^1(\mathbb P^1, G_{\text{aff}}/U) = 0$. 

 Let $T \subseteq G_{\text{aff}}/U$ be a maximal torus, and let $W$ be its Weyl group.  As $G_{\text{aff}}/U$ is reductive by definition \cite[Chap.~IV, 11.21, p.158]{Borel-AG}, there is an injection (in fact a bijection!)
\begin{equation*}
H^1(\mathbb P^1, T) / W \rightarrow H^1(\mathbb P^1, G_{\text{aff}}/U)=0 
\end{equation*}
by a theorem of Grothendieck \cite[Thm.~1.1]{Grothendieck} (note that Grothendieck works analytically, but his proof of the injectivity is valid algebraically \cite[Thm.~0.3]{MT}).  Thus $H^1(\mathbb P^1,T)$ is finite; although we do not need it, note that  since the Weyl group coinvariants of $H^1(\mathbb P^1, T)$ are trivial if and only if $H^1(\mathbb P^1, T)$ is, we can actually conclude immediately that $H^1(\mathbb P^1, T) = 0$.  But $T \simeq \mathbb G_m^r$ for some $r$, and $H^1(\mathbb P^1, \mathbb G_m) = \mathbb Z$.  Therefore $r = 0$.  That is, the maximal torus $T$ of $G_{\text{aff}}/U$ is trivial, so that $G_{\text{aff}}$ is unipotent \cite[Chap.~IV, Cor.~11.5, p.~148]{Borel-AG}. 

\vskip .2 cm \noindent \textbf{Step 4:} $G_{\text{red}}$   \emph{is affine}.
Now that we know $G_{\text{aff}}$ is unipotent, we argue that $A = 0$.  Let $S = \mathbb A^1 \smallsetminus \{ 0 \}$.  Consider the exact sequence
\begin{equation*}
H^1(S, G_{\text{red}}) \rightarrow H^1(S, A) \rightarrow H^2(S, G_{\text{aff}}) .
\end{equation*}
Since $S$ is affine, and $G_{\text{aff}}$ is unipotent and therefore  an iterated extension of  quasicoherent sheaves (associated to $\mathbb G_a$s), we have  $H^2(S, G_{\text{aff}}) = 0$.  This implies $H^1(S, A) = 0$.  Choose an integer $n$ relatively prime to the characteristic of $k$.  Consider the exact sequence
\begin{equation*}
\operatorname{Hom}(S, A) \xrightarrow{[n]} \operatorname{Hom}(S, A) \rightarrow H^1(S, A[n]) \rightarrow H^1(S, A) 
\end{equation*}
where $[n]$ denotes multiplication by $n$ and $A[n]$ is the $n$-torsion subgroup.  Since $S$ is rational, every map $S \rightarrow A$ is constant.  In particular, $\operatorname{Hom}(S,A) \xrightarrow{[n]} \operatorname{Hom}(S,A)$ is surjective.  Therefore $H^1(S, A[n])$ injects into $H^1(S,A) = 0$, and so is also $0$.  Now, $A[n] \simeq (\mathbb Z / n \mathbb Z)^{2g}$ where $g$ is the dimension of $A$.  We know that $H^1(S, \mathbb Z/n \mathbb Z) = \mathbb Z/ n \mathbb Z$, so we deduce that $g = 0$, and therefore $A = 0$.

\vskip .2 cm \noindent \textbf{Step 4:} $G_{\text{red}}=0$.
Now we know that $G_{\text{red}} = G_{\text{aff}}$ is affine and unipotent.  We can choose an injective homomorphism $G_{\text{red}} \subseteq G'$ where $G'$ is smooth affine and reductive (i.e., embed it in an appropriate $\operatorname{GL}_n$).  Consider the $G_{\text{red}}$-torsor $G'$ over $G'/G_{\text{red}}$.  This must be trivial, since $H^1(G'/G_{\text{red}}, G_{\text{red}}) = 0$ from Step 2, so $G' \simeq G'/G_{\text{red}} \times G_{\text{red}}$ as a scheme.  But $G'$ is affine, so this implies $G'/G_{\text{red}}$ is affine.  Therefore $G_{\text{red}}$ is reductive, by Matsushima's criterion \cite[Thm.~A]{richardson77}.  As it is also unipotent, this means $G_{\text{red}} = 0$.

\vskip .2 cm \noindent \textbf{Step 5:} $G=0$.  
Since $G_{\text{red}}=0$, this  means that $G = G/G_{\text{red}}$ is the spectrum of an artinian local ring.  But we can once again choose a closed embedding $G \subseteq G'$ where $G'$ is smooth and affine (for example, let $G$ act on its ring of regular functions).  The quotient $G'/G$ is also reduced and affine.  But $H^1(G'/G, G) = 0$ by assumption, so that $G' \simeq G'/G \times G$ as a scheme.  As $G'$ is reduced, this means $G$ is reduced, and therefore $G = 0$.
\end{proof}

\begin{cor}\label{C:IfAutNotRep}
Suppose that $X$ is the presheaf of isomorphism classes in an algebraic stack $\mathcal X$ such that all objects over algebraically closed fields have automorphism groups that are algebraic groups of finite type over the field.  If $X$ is a sheaf in the fppf topology then $X \simeq \mathcal X$.  In particular, if $\mathcal X$ admits an object over an algebraically closed field $k$  with a nontrivial algebraic automorphism group of finite type over $k$, then $X$ is not representable by a scheme.  
\end{cor}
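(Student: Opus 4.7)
The plan is to prove the contrapositive of the main implication and then deduce the ``In particular'' statement from Grothendieck's theorem that the fpqc topology is subcanonical. Concretely, I will show that if some $k$-point $\xi: \operatorname{Spec} k \to \mathcal X$ (with $k$ algebraically closed) has a nontrivial algebraic automorphism group of finite type, then $X$ is not even a separated presheaf in the fppf topology, and in particular not a sheaf. Granting this, the ``In particular'' statement follows because every scheme represents a sheaf in the fpqc, hence fppf, topology, so if $X$ were representable by a scheme it would automatically be an fppf sheaf, contradicting the presence of a nontrivial geometric automorphism.

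The key inputs are Theorem~\ref{T:IfAutTors} and the twisting construction of Example~\ref{E:twist}. Set $G = \operatorname{Aut}(\xi)$, a nontrivial algebraic $k$-group of finite type. By Theorem~\ref{T:IfAutTors}, there is a finite-type $k$-scheme $S$ together with a nontrivial $G$-torsor $P \to S$. Since $G$ is of finite type over $k$, the morphism $P \to S$ is faithfully flat and of finite presentation, hence covering in the fppf topology. Pulling $\xi$ back along $S \to \operatorname{Spec} k$ yields $\xi_S \in \mathcal X(S)$ whose automorphism group scheme is $G_S = G \times_k S$; applying Example~\ref{E:twist} produces a twist $Y := P \times_{G_S} \xi_S \in \mathcal X(S)$ together with a canonical identification $\mathscr{I}\!\!som_{\mathcal X}(\xi_S, Y) \simeq P$ of sheaves on $S$.

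The nontriviality of $P$ now produces the desired contradiction. Since $P$ has no global section over $S$, neither does the sheaf of isomorphisms between $\xi_S$ and $Y$, so $\xi_S \not\simeq Y$ in $\mathcal X(S)$; hence $[\xi_S] \neq [Y]$ in $X(S)$. On the other hand, any torsor trivializes over its own total space, giving $\xi_P \simeq Y_P$ in $\mathcal X(P)$, so $[\xi_S]$ and $[Y]$ coincide in $X(P)$. Because $P \to S$ is an fppf cover, the restriction map $X(S) \to X(P)$ fails to be injective, and $X$ is not even fppf-separated.

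To complete the stronger claim that $X \simeq \mathcal X$ once $X$ is known to be an fppf sheaf, one observes that the previous paragraph forces every geometric automorphism group of $\mathcal X$ to be trivial; combined with the fact that for $\xi \in \mathcal X(S)$ the space $\underline{\operatorname{Aut}}(\xi) \to S$ is a group algebraic space whose identity section is a closed immersion in the separated case, this upgrades to triviality of all automorphism groups over all $S$, so the canonical functor $\mathcal X(S) \to X(S)$ becomes both essentially surjective and fully faithful. The main obstacle in this proof is Theorem~\ref{T:IfAutTors} itself, whose proof rests on a delicate structural analysis of algebraic groups via Chevalley's decomposition and Matsushima's criterion; granted it, the remainder is a clean application of twisting together with the definition of the fppf topology.
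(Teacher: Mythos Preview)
Your argument is correct and follows essentially the same route as the paper: pick a $k$-point with nontrivial automorphism group $G$, invoke Theorem~\ref{T:IfAutTors} to produce a nontrivial $G$-torsor $P$ over some $S$, twist as in Example~\ref{E:twist} to obtain $Y$ with $\mathscr{I}\!\!som(\xi_S,Y)\simeq P$, and observe that $[\xi_S]$ and $[Y]$ agree after pullback along the fppf cover $P\to S$ but not over $S$ itself. The paper phrases this in the direct rather than contrapositive form (assuming $X$ is a sheaf and concluding every $G$-torsor is trivial, hence $G=0$), but the content is identical.

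One small remark: in your final paragraph you invoke an unstated separatedness hypothesis on the diagonal to pass from ``all geometric automorphism groups are trivial'' to ``$\mathcal X(S)\to X(S)$ is fully faithful for every $S$.'' The paper also glosses over this passage, simply asserting $\mathcal X\simeq X$ once all geometric stabilizers are trivial; your added justification is not wrong in spirit but imports a hypothesis not present in the statement, so it would be cleaner to leave that step at the same level of detail as the paper does, or else to spell out the argument without extra assumptions.
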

\begin{proof}
Let $x$ be a $k$-point of $\mathcal X$, where $k$ is algebraically closed.  Let $G$ be the stabilizer group of $x$.  We obtain a monomorphism $\mathrm BG \rightarrow \mathcal X_k$.  If $P$ is any $G$-torsor over a scheme  $S$ then we obtain a twist $S \rightarrow \mathrm BG \rightarrow \mathcal X$ (Example \ref{E:twist}).  By definition, the twists agree locally, so that the induced maps to $X$ coincide locally.  But we have assumed $X$ is a sheaf, so that the maps agree globally as well.  But the torsor can be recovered, up to isomorphism, from the twist, so all $G$-torsors over all $k$-schemes are trivial.  Therefore by Theorem \ref{T:IfAutTors}, $G$ is trivial, so no point of $\mathcal X$ has a nontrivial stabilizer group and $\mathcal X\simeq X$.
\end{proof}

\subsection{More on descent} \label{S:MoreDesc}

We highlight a few alternate formulations of the descent properties (Section~\ref{S:descent}).  The definitions we present here are more efficient than those given Section~\ref{S:descent}, and often lead to more streamlined proofs, but they come at a cost of abstraction.

\subsubsection{Descent using gluing data} \label{S:descent-gluing}

In Definition~\ref{D:ConcDS} we needed a cleavage in order to be able to write things like $X_i \big|_{S_{ij}}$.  To do this properly requires keeping track of a number of canonical isomorphisms, which were intentionally elided in Definition~\ref{D:ConcDS}.  Reliance on a cleavage can be avoided by explicitly choosing a restriction at each step, instead of insisting on a canonical choice from the beginning.  Although this definition avoids the technical deficiencies of Definition~\ref{D:ConcDS}, it only exacerbates the proliferation of indices.  Nevertheless, we will see in \S\ref{S:descent-sieves} that it points the way towards a definition that is both technically correct and pleasantly efficient.

\begin{dfn}[Descent datum via gluing data] \label{D:descent-gluing}
Let $(\mathsf S, P \mathscr T)$ be a presite and let $\{ S_i \rightarrow S \}$ be a cover of $S$ in $\mathsf S$.  A \emph{descent datum} with respect to this cover consists of the following data:
\begin{enumerate}[(i)]
\item objects $X_i \in \mathcal M(S_i)$, $X_{ij} \in \mathcal M(S_{ij})$, and $X_{ijk} \in \mathcal M(S_{ijk})$ for all indices $i,j,k$ of the cover;
\item morphisms
\begin{gather*}
X_{ij} \rightarrow X_i \qquad
X_{ij} \rightarrow X_j \qquad
X_{ijk} \rightarrow X_{ij} \qquad
X_{ijk} \rightarrow X_{ik} \qquad
X_{ijk} \rightarrow X_{jk}
\end{gather*}
in $\mathcal M$ respectively covering the canonical projections
\begin{gather*}
S_{ij} \rightarrow S_i \qquad
S_{ij} \rightarrow S_j \qquad
S_{ijk} \rightarrow S_{ij} \qquad
S_{ijk} \rightarrow S_{ik} \qquad
S_{ijk} \rightarrow S_{jk} .
\end{gather*}
\end{enumerate}
The category of descent data with respect to $\{ S_i \rightarrow S \}$, denoted $\mathcal M(S_\bullet)$, has as objects the descent data as defined above.  A morphism $(X_\bullet) \rightarrow (Y_\bullet)$ consists of morphisms 
\begin{gather*}
X_i \rightarrow Y_i \qquad 
X_{ij} \rightarrow Y_{ij} \qquad
X_{ijk} \rightarrow X_{ijk}
\end{gather*}
for all indices $i,j,k$, commuting with the sructural morphisms (a more precise formulation of this condition will appear below in Remark~\ref{R:gluing-reform}).
\end{dfn}

\begin{rem}
An observant reader may be wondering where the cocycle condition~\eqref{E:cc} is hiding in Definition~\ref{D:descent-gluing}.  It is built into the commutativity diagram~\eqref{E:barycentric},
\begin{equation} \label{E:barycentric} \xymatrix@C=10pt{
& & X_i \\
& X_{ij} \ar[ur] \ar[dl] & \ar[u] \ar[l] \ar[r] X_{ijk} \ar[dll] \ar[d] \ar[drr] & X_{ik} \ar[ul] \ar[dr] \\
X_j  & & \ar[ll] X_{jk} \ar[rr] & & X_k
} \end{equation}
which itself is forced by the commutativity of the corresponding diagram with each $X$ replaced by an $S$, and the fact that every arrow in $\mathcal M$ is cartesian.  The morphism $\alpha_{ij}$ of Definition~\ref{D:dd} is the composition
\begin{equation*}
X_i \big|_{S_{ij}} \xleftarrow{\sim} X_{ij} \xrightarrow{\sim} X_j \big|_{S_{ij}}
\end{equation*}
with $\alpha_{ik}$ and $\alpha_{jk}$ defined similarly.  The cocycle condition is the identity of the compositions
\begin{gather*}
X_i \big|_{S_{ijk}} \xleftarrow{\sim} X_{ij} \big|_{S_{ijk}} \xrightarrow{\sim} X_j \big|_{S_{ijk}} \xleftarrow{\sim} X_{jk} \big|_{S_{ijk}} \xrightarrow{\sim} X_k \big|_{S_{ijk}} \\
X_i \big|_{S_{ijk}} \xrightarrow{\sim} X_{jk} \big|_{S_{ijk}} \xrightarrow{\sim} X_k \big|_{S_{ijk}}
\end{gather*}
by virtue of the restriction of Diagram~\eqref{E:barycentric} to $S_{ijk}$.
\end{rem}

If $X$ is an object of $\mathcal M(S)$ then using Axiom~(i) of a category fibered in groupoids (Definition~\ref{D:CFG}) we can find induced objects $X_i \in \mathcal{M}(S_i)$, $X_{ij} \in \mathcal{M}(S_{ij})$, and $X_{ijk} \in \mathcal M(S_{ijk})$ for all indices $i,j,k$.  We also obtain the required morphisms among these by many applications of Axiom~(ii) of a category fibered in groupoids.  By an application of the axiom of choice, we can do this for all objects of $\mathcal M(S_\bullet)$ and obtain a functor $\mathcal M(S) \rightarrow \mathcal M(S_\bullet)$.

\begin{dfn}[Effective descent datum via gluing data]
A descent datum $X \in \mathcal M(S_\bullet)$ for $\mathcal M$ with respect to a cover $\{ S_i \rightarrow S \}$ is said to be effective if it lies in the essential image of $\mathcal M(S) \rightarrow \mathcal M(S_\bullet)$.
\end{dfn}

\begin{rem} \label{R:gluing-reform}
We give a technical reformulation of Definition~\ref{D:descent-gluing}, motivated by the idea that the data we are keeping track of can be organized concisely  by the nerve of subcovers  consisting of three open sets; i.e., $2$-simplices.  
Suppose that the cover $\{ S_i \rightarrow S \}$ is indexed by a set $I$.  Let $\boldsymbol{\Delta}$ be the category of nonempty, totally ordered, finite sets of cardinality $\leq 3$, equipped with a morphism to $I$.  That is, an object of $\boldsymbol{\Delta}$ is a pair $(T, f)$ where $T$ is a totally ordered finite set with $1 \leq |T| \leq 3$ (i.e., $T$ is either $\{ 0 \}$, $\{ 0 < 1 \}$, or $\{ 0 < 1 < 2 \}$ up to isomorphism) and $f : T \rightarrow I$ is a function.  A morphism $(T, f) \rightarrow (T', f')$ is an order preserving function $g : T \rightarrow T'$ such that $f' \circ g = f$.  

We introduce abbreviations for certain objects of $\boldsymbol{\Delta}$.  Every object of $\boldsymbol{\Delta}$ is isomorphic to one of the following:
\begin{enumerate}[(i)]
\item for $i \in I$, we write $i$ for the object $(\{ 0 \}, (0 \mapsto i))$ of $\boldsymbol{\Delta}$;
\item for $i,j \in I$, we write $ij$ for the object $(\{ 0 < 1 \}, (0 \mapsto i, 1 \mapsto j))$;
\item for $i,j,k \in I$, we write $ijk$ for the object $(\{ 0 < 1 < 2 \}, (0 \mapsto i, 1 \mapsto j, 2 \mapsto k))$.
\end{enumerate}

The cover $\{ S_i \rightarrow S \}$ induces a functor $S_\bullet : \boldsymbol{\Delta}^{\mathsf {op}} \rightarrow \mathsf S$ sending $x$ to $S_x$.  In other words, it sends $i$ to $S_i$ and $ij$ to $S_{ij}$ and $ijk$ to $S_{ijk}$.  The morphisms are all the canonical projections among fiber products.

Now a descent datum, in the sense of Definition~\ref{D:descent-gluing} is a functor $X_\bullet : \boldsymbol{\Delta}^{\mathsf {op}}  \rightarrow \mathcal M$ such that $\pi X_\bullet = S_\bullet$:
\begin{equation*} \xymatrix{
& \mathcal M \ar[d]^\pi \\
\boldsymbol{\Delta} \ar[r]_{S_\bullet} \ar[ur]^{X_\bullet} & \mathsf S
} \end{equation*}
A morphism of descent data $X_\bullet \rightarrow Y_\bullet$ is a natural transformation of functors that projects to the identity natural transformation of $S_\bullet$.
\end{rem}

\subsubsection{Descent using sieves} \label{S:descent-sieves}

\paragraph{Sieves on topological spaces}

In fact, there is an even more efficient formulation of the definition of a sheaf.  Recall that the if $V$ is an object of $\mathsf O_X$ then $h_V$ is the functor represented by $V$.  It is convenient to work with the related functor, which we again denote by $h_V$, defined as   $ h_V(W) = \{ \iota_{W,V} \}$ if $W \subseteq V$ and $h_V(W) = \varnothing$ otherwise. 

Suppose that $U \in \mathsf O_X$.  For any open cover $\mathcal U$ of $U$, define a presheaf $h_{\mathcal U}$:
\begin{equation*}
h_{\mathcal U}(W) = \bigcup_{V \in \mathcal U} h_V(W) = \begin{cases} \{ \iota_{W,X} \} & \exists \ V \in \mathcal U \text{ such that } W \subseteq V   \\ \varnothing & \text{else} \end{cases}
\end{equation*}
If $\mathscr F : \mathsf O_X^{\mathsf{op}} \rightarrow (\mathsf{Set})$ is a presheaf then, as $h_{\mathcal U} \subseteq h_X$ (e.g., if $W\subseteq U$ for $U\in \mathcal U$, then we use  $W\subseteq U\subseteq X$), we get a morphism
\begin{equation}  \label{E:ShSieve}
\mathscr F(X) \xleftarrow{\sim} \operatorname{Hom}(h_X, \mathscr F) \rightarrow \operatorname{Hom}(h_{\mathcal U}, \mathscr F) .
\end{equation}
The bijectivity of the arrow on the left is Yoneda's lemma.

The interested reader may prove the following proposition:
\begin{pro}
Let $\mathscr F$ be a presheaf.
\begin{enumerate}[(i)]
\item $\mathscr F$ is separated if and only if~\eqref{E:ShSieve} is an injection for all open covers $\mathcal U$ of all $U \in \mathsf O_X$.
\item $\mathscr F$ is a sheaf if and only if~\eqref{E:ShSieve} is a bijection for all open covers $\mathcal U$ of all $U \in \mathsf O_X$.
\end{enumerate}
\end{pro}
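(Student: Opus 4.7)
The plan is to unpack the definition of the presheaf $h_{\mathcal U}$ and to identify natural transformations $h_{\mathcal U} \to \mathscr F$ with compatible families of sections, thereby reducing the statement directly to the equalizer characterization of sheaves given in Lemma~\ref{L:SheafEq}.

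First I would establish the key bijection: natural transformations $\eta : h_{\mathcal U} \to \mathscr F$ correspond to compatible families $(a_V)_{V \in \mathcal U}$ with $a_V \in \mathscr F(V)$ satisfying $a_V|_{V \cap V'} = a_{V'}|_{V \cap V'}$ for all $V, V' \in \mathcal U$. The point is that $h_{\mathcal U}(W)$ is a singleton precisely when $W \subseteq V$ for some $V \in \mathcal U$, and empty otherwise; so a natural transformation amounts to a choice of element $\eta_W \in \mathscr F(W)$ for each such $W$, compatible with restrictions. Restricting to $W \in \mathcal U$ extracts the family $(a_V) := (\eta_V)$. Conversely, given a compatible family, for any $W$ contained in some $V \in \mathcal U$ one defines $\eta_W := a_V|_W$; if also $W \subseteq V'$ with $V' \in \mathcal U$, then applying the cocycle condition on $W \cap V \cap V' = W$ shows $a_V|_W = a_{V'}|_W$, so the definition is independent of the chosen $V$. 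Naturality in $W$ is then automatic.

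Next I would identify the map~\eqref{E:ShSieve} under this bijection. By Yoneda, $\operatorname{Hom}(h_U, \mathscr F) \cong \mathscr F(U)$, and the inclusion $h_{\mathcal U} \hookrightarrow h_U$ induces the restriction map sending $a \in \mathscr F(U)$ to the family $(a|_V)_{V \in \mathcal U}$. Under the identification of the previous paragraph, this is precisely the canonical map $\mathscr F(U) \to \mathscr F(\mathcal U)$ appearing in Lemma~\ref{L:SheafEq}, where $\mathscr F(\mathcal U)$ is the equalizer of the two restriction arrows $\prod_V \mathscr F(V) \rightrightarrows \prod_{V,V'} \mathscr F(V \cap V')$.

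Both parts of the proposition then follow immediately from Lemma~\ref{L:SheafEq}: injectivity of \eqref{E:ShSieve} for every $U$ and every cover $\mathcal U$ translates into the separated condition, and bijectivity into the full sheaf condition. The only mild obstacle will be the bookkeeping in the first step, namely verifying carefully that a natural transformation $h_{\mathcal U} \to \mathscr F$ is determined by its values at the $V \in \mathcal U$ and that the double-intersection compatibility already suffices to reconstruct $\eta$ on all $W$; there is no deeper difficulty, so the proof will be essentially formal.
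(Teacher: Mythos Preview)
Your argument is correct. The paper does not supply a proof of this proposition (it is explicitly left to ``the interested reader''), so there is nothing to compare against; your reduction to Lemma~\ref{L:SheafEq} via the identification $\operatorname{Hom}(h_{\mathcal U},\mathscr F)\cong \mathscr F(\mathcal U)$ is exactly the intended route and is carried out cleanly.
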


\paragraph{Sieves in general}

\begin{dfn}
Let $\mathsf S$ be a category and $S$ an object of $\mathsf S$.  A sieve of $\mathsf S$ is a subcategory of $\mathsf S/S$ that is fibered in groupoids over $\mathsf S/S$.
\end{dfn}

In other words, a sieve of $S$ is a subcategory $\mathcal R \subseteq \mathsf S/S$ such that whenever $S'' \rightarrow S'$ is a morphism in $\mathsf S/S$ and $S'$ is in $\mathcal R$ then $S''$ is also in $\mathcal R$.

In Example~\ref{E:CFGSiS} we saw how to associate a sieve $\mathcal R$ to any family of maps $\{ S_i \rightarrow S \}$.  By a \emph{covering sieve} we mean a sieve associated to a covering family in the pretopology.  In fact, all of the axioms of a Grothendieck topology can be formulated purely in terms of  sieves \cite[Exp.~II, D\'ef.~1.1]{sga4-1}, but that will not concern us here.

\begin{dfn}[Descent datum via sieves]
Let $\mathsf S$ be a presite, let $\pi : \mathcal M \rightarrow \mathsf S$ be a category fibered in groupoids over $\mathsf S$, and let $\mathcal R$ be a covering sieve of $S \in \mathsf S$.  A \emph{descent datum} for $\mathcal M$ with respect to $\mathcal R$ is a functor $X : \mathcal R \rightarrow \mathcal M$ lifting the canonical projection $\mathcal R \subseteq \mathsf S/S \rightarrow \mathsf S$ (sending an object $S' \rightarrow S$ of $\mathsf S/S$ to $S'$):
\begin{equation*} \xymatrix{
& & \mathcal M \ar[d] \\
\mathcal R \: \ar@{^(->}@<-1pt>[r] \ar@/^10pt/[urr]^X & \mathsf S/S \ar[r] & \mathsf S.
} \end{equation*}
Descent data over the sieve $\mathcal R$ are the objects of a category $\mathcal M(\mathcal R)$ where a morphism $X \rightarrow Y$ is a natural transformation projecting to the identity natural transformation of the projection $\mathcal R \rightarrow \mathsf S$.
\end{dfn}

To construct the functor $\mathcal M(S) \rightarrow \mathcal M(\mathcal R)$ for a sieve $\mathcal R$ of $S$, observe that $\mathcal M(S) \simeq \mathcal M(\mathsf S/S)$ by the $2$-Yoneda lemma (on the left we mean the fiber of $\mathcal M$ over $S$ and on the right we mean the category of morphisms from $\mathsf S/S$ to $\mathcal M$).  Composing this equivalence with the restriction $\mathcal M(\mathsf S/S) \rightarrow \mathcal M(\mathcal R)$ induced from the inclusion $\mathcal R \subseteq \mathsf S/S$ induces
\begin{equation*} \xymatrix{
\mathcal M(S) \simeq \mathcal M(\mathsf S/S) \rightarrow \mathcal M(\mathcal R)
} \end{equation*}

\begin{dfn}[Effective descent datum via sieves] \label{D:descent-sieves}
A descent datum $X \in \mathcal M(\mathcal R)$ for $\mathcal M$ with respect to a sieve $\mathcal R$ is said to be \emph{effective} if it lies in the essential image of $\mathcal M(S) \rightarrow \mathcal M(\mathcal R)$.
\end{dfn}

\subsection{Grothendieck topologies}
\label{S:top}

Grothendieck pretopologies seem quite natural from the definition of a sheaf, but have the deficiency that many different pretopologies can give rise to the same category of sheaves.  This is not unlike the way different bases of a topological space should be considered equivalent.  The topology associated to a pretopology is the finest pretopology that gives the same category of sheaves (see \cite[Rem.~2.25, Def.~2.47, Prop.~2.49]{FGAe}).  In this section, we give an idea of how this works, providing a review of \cite[\S 2.3.5]{FGAe}, although here for brevity we define the topology directly, without a discussion of refinements of pretopologies.

Note that typically a Grothendieck topology is defined in terms of sieves, not coverings; the point is that the sieves associated to a pretopology are the same as the sieves associated  to the   topology obtained from the pretopology \cite[Prop.~2.48]{FGAe}, and therefore both induce the same Grothendieck topology in the sense of sieves.

\vskip .2 cm  For clarity of the discussion in this section, we will refer to coverings $\{S_\alpha\to S\}$ in a pretopology $\mathscr T$ on a category $\mathsf S$ as \emph{basic coverings}.  We start with a definition (cf.~\cite[Def.~2.45]{FGAe}):

\begin{dfn}[Coverings with respect to $\mathscr T$] \label{D:top}
Let $\mathscr T$ be a pretopology on a category $\mathsf S$.  We call a family of morphisms $\{S_\alpha \rightarrow S\}$ \emph{covering} (with respect to $\mathscr T$) if there is a basic covering family $\{T_\beta \rightarrow S\}$ such that, for each $\beta$, there is an $\alpha$ such that the morphism $T_\beta \rightarrow S$ factors through $S_\alpha$.  The covering family $\{ T_\beta \rightarrow S \}$ is called a basic covering \emph{refinement} of $\{ S_\alpha \rightarrow S \}$.
\end{dfn}

\begin{rem}
Note that the covering  $\{S_\alpha \rightarrow S\}$ in Definition \ref{D:top}  is a covering in the sense of  Definition \ref{D:CovStack}.
\end{rem}

\begin{lem} \label{L:top}
Assume that $\mathsf S$ has all fiber products.  Let $\mathscr T$ be a pretopology on $\mathsf S$ and let $\mathscr T'$ be the collection of all covering families with respect to $\mathscr T$.  Then $\mathscr T'$ is a pretopology on $\mathsf S$.
\end{lem}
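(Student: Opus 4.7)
The plan is to verify the four pretopology axioms (PT 0)–(PT 3) from p.~\pageref{D:Presite} for the collection $\mathscr T'$, deriving each from the corresponding axiom for $\mathscr T$ together with the definition of a refinement. Axiom (PT 0) is immediate since $\mathsf S$ is assumed to have all fiber products, and (PT 3) is immediate because an isomorphism is already a basic covering by (PT 3) for $\mathscr T$, hence trivially refines itself.

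For (PT 1), I would start with a covering $\{S_\alpha \to S\}$ in $\mathscr T'$ and a basic refinement $\{T_\beta \to S\}$, together with a morphism $S' \to S$. Pulling back, $\{T_\beta \times_S S' \to S'\}$ is a basic covering of $S'$ by (PT 1) for $\mathscr T$. For each $\beta$ pick $\alpha(\beta)$ and a factorization $T_\beta \to S_{\alpha(\beta)} \to S$; base-changing this factorization along $S' \to S$ produces $T_\beta \times_S S' \to S_{\alpha(\beta)} \times_S S' \to S'$, showing that $\{T_\beta \times_S S' \to S'\}$ refines $\{S_\alpha \times_S S' \to S'\}$. Hence the pullback family lies in $\mathscr T'$.

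For (PT 2), suppose $\{S_\alpha \to S\}$ is in $\mathscr T'$ with basic refinement $\{T_\beta \to S\}$ and factorizations $T_\beta \to S_{\alpha(\beta)}$, and that for every $\alpha$ the family $\{S_{\gamma\alpha} \to S_\alpha\}$ is in $\mathscr T'$ with basic refinement $\{T_{\delta\alpha} \to S_\alpha\}$. The goal is to show that $\{S_{\gamma\alpha} \to S\}$ lies in $\mathscr T'$. I would base-change the $\mathscr T$-cover $\{T_{\delta\alpha(\beta)} \to S_{\alpha(\beta)}\}$ along $T_\beta \to S_{\alpha(\beta)}$, obtaining a basic cover $\{T_{\delta\alpha(\beta)} \times_{S_{\alpha(\beta)}} T_\beta \to T_\beta\}$ by (PT 1) for $\mathscr T$; then composing with the basic cover $\{T_\beta \to S\}$ and invoking (PT 2) for $\mathscr T$ gives a basic covering family of $S$. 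Each of its members factors through $T_{\delta\alpha(\beta)}$, which in turn factors through some $S_{\gamma\alpha(\beta)}$, so this family refines $\{S_{\gamma\alpha} \to S\}$ and the composed family is in $\mathscr T'$.

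There is no serious obstacle here; the proof is essentially an exercise in chasing the refinement definition through the axioms. The one mildly subtle point is the bookkeeping in (PT 2), where one must fix, for each $\beta$, a choice of index $\alpha(\beta)$ and factorization $T_\beta \to S_{\alpha(\beta)}$ before base-changing the secondary cover along that factorization; one should note that the resulting family is indexed by pairs $(\beta,\delta)$ rather than by $(\alpha,\gamma)$, and that this is exactly why it refines, rather than equals, the composite $\{S_{\gamma\alpha}\to S\}$.
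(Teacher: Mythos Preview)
Your proof is correct and follows essentially the same approach as the paper: both verify (PT0)--(PT3) directly, with (PT0) and (PT3) immediate, (PT1) handled by pulling back the basic refinement, and (PT2) by choosing factorizations $T_\beta \to S_{\alpha(\beta)}$, base-changing the secondary basic refinements along these, and composing. Your bookkeeping in (PT2) is in fact slightly cleaner than the paper's, which abbreviates the resulting basic cover of $S$ somewhat loosely.
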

\begin{proof}
Property (PT0) is automatic, since $\mathsf S$ has all fiber products.  Likewise (PT3) is immediate.

If $\{ S_\alpha \rightarrow S \}$ has a basic covering refinement $\{ T_\beta \rightarrow S \}$ and $S' \rightarrow S$ is any morphism then $\{ S_\alpha \mathbin\times_S S' \rightarrow S' \}$ has the basic covering refinement $\{ T_\beta \mathbin\times_{S} S' \rightarrow S' \}$, hence is in $\mathscr T'$.  This proves (PT1).

Suppose that $\{ S_\alpha \rightarrow S \}$ has a basic covering refinement $\{ T_\beta \rightarrow S \}$, and each $S_\alpha$ has a family $\{ S_{\alpha \gamma} \rightarrow S_\alpha \}$ with a basic covering refinement $\{ T_{\alpha \delta} \rightarrow S_\alpha \}$.  For each $\beta$, choose an $\alpha(\beta)$ and a factorization of $T_\beta \rightarrow S$ through $S_{\alpha(\beta)}$.  Then the maps $T_{\alpha(\beta) \delta} \mathbin\times_{S_\alpha(\beta)} T_\beta \rightarrow T_\beta$ are covering.  As the $T_\beta$ cover $S$, the property (PT2) implies that the $T_{\alpha(\beta) \delta}$ cover $S$.  Therefore the $T_{\alpha(\beta) \delta}$ give a basic covering refinement of the family $S_{\alpha \gamma} \rightarrow S$.
\end{proof}

\begin{dfn}[Grothendieck topology]
A pretopology $\mathscr T$ is called  \emph{a (Grothendieck) topology} if $\mathscr T' = \mathscr T$, in the notation of Lemma~\ref{L:top}.  If $\mathscr T$ is a pretopology then $\mathscr T'$ is called the \emph{associated topology} to $\mathscr T$.
\end{dfn}

\begin{rem}
In \cite[Def.~2.52]{FGAe} what we call a  topology is called  a saturated pretopology.  
\end{rem}

\begin{exa}
Let $\mathscr T$ be the pretopology on topological spaces where the basic covering families are open covers.  Then every surjective local isomorphism has a section over a suitable open cover, so surjective local isomorphisms are covering in the associated saturated topology.
\end{exa}

\begin{exa} \label{E:smooth-etale-cover}
Consider the \'etale pretopology on schemes, defined in Example~\ref{E:etale}.  Every smooth surjection admits a section over some \'etale cover, so every smooth surjection is covering in the associated topology to the \'etale pretopology.
\end{exa}

The following lemma shows that a pretopology and its associated topology have the same sheaves.  Passage to the associated topology may therefore significantly expand the class of morphisms with respect to which one can use descent.

\begin{lem}[{\cite[Prop.~2.49, Prop.~2.53(iii)]{FGAe}}]
If $\mathscr T$ is a pretopology on $\mathsf S$ then a presheaf on $\mathsf S$ is a sheaf with respect to $\mathscr T$ if and only if it is a sheaf with respect to $\mathscr T'$.
\end{lem}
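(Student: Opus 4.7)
The strategy is to treat the two implications separately, with only the second requiring work. First I would observe that every basic covering family $\{S_\alpha\to S\}$ in $\mathscr T$ is a covering family in $\mathscr T'$ (it refines itself), so the sheaf condition for $\mathscr T'$ immediately implies the sheaf condition for $\mathscr T$. Thus the content of the lemma is the converse: assuming $\mathscr F$ satisfies descent for every basic covering, I want descent for every $\mathscr T'$-cover $\{S_\alpha\to S\}$. Fix a basic refinement $\{T_\beta\to S\}$ together with a choice of index $\alpha(\beta)$ and a factorization $g_\beta:T_\beta\to S_{\alpha(\beta)}$.

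Next I would dispose of separatedness: if $a,b\in\mathscr F(S)$ have $a|_{S_\alpha}=b|_{S_\alpha}$ for all $\alpha$, then via the factorizations $g_\beta$ one gets $a|_{T_\beta}=b|_{T_\beta}$, so $a=b$ by the $\mathscr T$-sheaf condition applied to $\{T_\beta\to S\}$. For gluing, given a matching family $(a_\alpha)_\alpha$ on the $\mathscr T'$-cover, set $a_\beta:=g_\beta^{\,*}a_{\alpha(\beta)}\in\mathscr F(T_\beta)$. I would check that $(a_\beta)_\beta$ is a matching family for $\{T_\beta\to S\}$: on $T_{\beta_1}\times_S T_{\beta_2}$, both $a_{\beta_1}$ and $a_{\beta_2}$ pull back from the matching restrictions of $a_{\alpha(\beta_1)}$ and $a_{\alpha(\beta_2)}$ on $S_{\alpha(\beta_1)}\times_S S_{\alpha(\beta_2)}$, via the canonical map $T_{\beta_1}\times_S T_{\beta_2}\to S_{\alpha(\beta_1)}\times_S S_{\alpha(\beta_2)}$. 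The $\mathscr T$-sheaf property then produces a unique $a\in\mathscr F(S)$ with $a|_{T_\beta}=a_\beta$ for all $\beta$.

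The main obstacle, and the step that takes a bit of care, is verifying that this glued section satisfies $a|_{S_\alpha}=a_\alpha$ for each $\alpha$ (not just for each $T_\beta$). Here I would invoke the $\mathscr T$-sheaf property \emph{on $S_\alpha$} applied to the basic cover $\{T_\beta\times_S S_\alpha\to S_\alpha\}$ obtained from axiom (PT1). On each piece $T_\beta\times_S S_\alpha$ the two sections $a|_{S_\alpha}$ and $a_\alpha$ become the pullbacks of $a_{\alpha(\beta)}$ and $a_\alpha$ along the canonical morphism $T_\beta\times_S S_\alpha\to S_{\alpha(\beta)}\times_S S_\alpha$, which coincide because $(a_\alpha)_\alpha$ is a matching family on the original $\mathscr T'$-cover. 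By separatedness on $S_\alpha$ (the direction already established), we conclude $a|_{S_\alpha}=a_\alpha$, completing the proof. The uniqueness of $a$ among sections of $\mathscr F(S)$ restricting to $(a_\alpha)$ then follows from the separatedness step above.
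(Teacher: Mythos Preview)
Your proof is correct. The separatedness argument and the gluing via the basic refinement $\{T_\beta\to S\}$ are both fine, and the key verification that $a|_{S_\alpha}=a_\alpha$ by checking on the induced basic cover $\{T_\beta\times_S S_\alpha\to S_\alpha\}$ goes through exactly as you describe. (One small quibble: when you write ``separatedness on $S_\alpha$ (the direction already established)'', what you are actually using at that point is separatedness for the \emph{basic} cover $\{T_\beta\times_S S_\alpha\to S_\alpha\}$, which is given directly by the $\mathscr T$-sheaf hypothesis; the parenthetical is slightly misleading but harmless.)

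The paper takes a different, more sheaf-theoretic route. Rather than chase elements, it packages the equalizer for the $\mathscr T'$-cover as a presheaf
\[
\mathscr F'(R)=\operatorname{eq}\Bigl(\prod_\alpha \mathscr F(S_\alpha\times_S R)\rightrightarrows \prod_{\alpha,\alpha'}\mathscr F(S_\alpha\times_S S_{\alpha'}\times_S R)\Bigr)
\]
on $\mathsf S/S$, observes that $\mathscr F'$ is itself a $\mathscr T$-sheaf (as a limit of pushforwards of $\mathscr F$), and then argues that the comparison map $\mathscr F\to\mathscr F'$ is an isomorphism by checking it $\mathscr T$-locally. After passing to the basic refinement one may assume some $S_\alpha$ has a section over the base, and that section furnishes an explicit inverse. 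Your argument is more elementary and self-contained, requiring no auxiliary sheaf constructions; the paper's argument trades index bookkeeping for the reusable principle ``to show two sheaves agree, check locally where the problem trivializes'', which scales more gracefully (e.g., to the stack version of the statement).
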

\begin{proof}
Since $\mathscr T \subseteq \mathscr T'$, it is immediate that sheaves in $\mathscr T'$ are sheaves in $\mathscr T$.  For the converse, suppose that $\mathscr F$  is a sheaf with respect to $\mathscr T$ and let $\{ S_\alpha \rightarrow S \}$ be a covering family (with respect to $\mathscr T'$).  

Construct a presheaf $\mathscr F'$ over $S$ by the following formula:
\begin{equation*}
\mathscr F'(R) = \operatorname{eq} \Bigl( \prod_\alpha \mathscr F(U_\alpha \mathbin\times_S R) \rightrightarrows \prod_{\alpha,\beta} \mathscr F(U_\alpha \mathbin\times_S U_\beta \mathbin\times_S R) \Bigr).
\end{equation*}
There is a natural map $\varphi : \mathscr F \rightarrow \mathscr F'$, which we would like to show is an isomorphism.  Both $\mathscr F$ and $\mathscr F'$ are sheaves in the pretopology $\mathscr T$, so this is a local problem in $\mathscr T$.  We can therefore replace $S$ by a basic cover from $\mathscr T$.  Since $\{ U_\alpha \rightarrow U \}$ has a refinement by a basic cover, we can assume that there is a section $\sigma : S \rightarrow U_\alpha$ for some $\alpha$.

We can use $\sigma$ to construct an inverse $\psi$ to the map $\varphi : \mathscr F \rightarrow \mathscr F'$.  Indeed, if $\xi \in \mathscr F'(S)$ then let $\xi_\alpha$ be its projection on the $\alpha$ component.  Then $\sigma^\ast(\xi_\alpha) \in \mathscr F(S)$ and we set $\psi(\xi) = \sigma^\ast(\xi_\alpha)$.  It is immediate that $\psi \varphi(\eta) = \eta$ for all $\eta \in \mathscr F(S)$.  

We check that $\varphi \psi(\xi) = \xi$.  What we need to check is that, for all indices $\beta$,
\begin{equation*}
\xi_\beta = \sigma^\ast(\xi_\alpha) \big|_{U_\beta} .
\end{equation*}
By assumption, $\xi$ is equalized by the maps to $\prod_{\alpha,\beta} \mathscr F(U_{\alpha \beta})$, where $U_{\alpha \beta} = U_\alpha \mathbin\times_S U_\beta$.  Therefore $\xi_\alpha \big|_{U_{\alpha \beta}} = \xi_\beta \big|_{U_{\alpha \beta}}$.  But $(\sigma, \mathrm{id}_{U_\beta})$ determines a section of $U_{\alpha \beta}$ over $U_\beta$, so we determine that 
\begin{equation*}
\xi_\beta = \sigma^\ast (\xi_\beta \big|_{U_{\alpha \beta}}) = \sigma^\ast (\xi_\alpha \big|_{U_\alpha \beta}) = \sigma^\ast(\xi_\alpha) \big|_{U_\beta},
\end{equation*}
as required.
\end{proof}

\subsection{An example of ineffective descent}
\label{S:Raynaud}

The category fibered in groupoids $\mathcal{M}_1$ is not an \'etale stack!  As we will see, it is possible to create a descent datum for genus~$1$ curves that is not effective.
 This is really a deficiency of our definition of $\mathcal{M}_1$ in \S\ref{S:CFG-curves}, by which $\mathcal M_1$ parameterizes \emph{schematic} families of smooth, proper curves of genus~$1$.  The proper thing to do would be to include in our moduli problem smooth proper families of \emph{algebraic spaces} (Definition~\ref{D:AlgStack}) whose fibers are curves of genus~$1$.

To construct an ineffective descent datum for $\mathcal M_1$, it will help to notice that every descent datum for $S$ can at least be descended to a \emph{sheaf} on the big \'etale site of $S$ that is locally on $S$ representable by a family of genus~$1$ curves.  This sheaf is precisely the algebraic space we should have admitted into the moduli problem for $\mathcal M_1$.  Our task in this section is to construct such a sheaf $X$ that is locally in $S$ representable by genus~$1$ curves, but is not globally representable by a scheme.

To begin, note that if $X\to S$ is any family of smooth curves of genus~$1$ over a base $S$, then the relative  Jacobian $J\to S$ of $X\to S$ is a family of abelian schemes of dimension~$1$ over $S$.  This construction is local on $S$, so that even if $X$ is merely a sheaf over $S$ that is \emph{locally} representable by a family of smooth curves of genus~$1$, one obtains a descent datum for a family of elliptic curves over $S$.  But the descent datum comes with a compatible family of ample line bundles (coming from the origin of the group structure) so by Theorem~\ref{T:polarized-descent}, it can be descended to a family of elliptic curves over $S$.

Furthermore, $J$ acts on $X$ making $X$ into a $J$-torsor.  Therefore $X$ is classified up to isomorphism by an element $[X]\in H^1(S, J)$ (this can be \'etale or flat cohomology).  Raynaud shows that, provided $S$ is quasicompact, this element $[X]$ is torsion if and only if $X$ is projective over $S$~\cite[Cor.~XIII~2.4~ii)]{Raynaud}, and that, provided $S$ is normal, $X$ is projective over $S$ if and only if it is representable by a scheme~\cite[Prop.~XIII~2.6]{Raynaud}.  In fact, Raynaud proves these statements more generally about torsors under abelian varieties: 

\begin{teo}[{\cite[Cor.~XIII~2.4~ii)]{Raynaud}}] \label{T:proj-tors}
Assume that $S$ is a quasicompact scheme and let $J$ be a projective abelian variety over $S$.  Then a $J$-torsor $X$ is projective if and only if its class in $H^1(S, J)$ is torsion.
\end{teo}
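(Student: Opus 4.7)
The strategy is to analyze $X$ via the relative Picard scheme, using a choice of relatively ample line bundle $L_0$ on $J$ (which exists by the projectivity of $J$) and its associated polarization isogeny $\phi_{L_0}:J\to J^\vee$. Components of $\operatorname{Pic}_{X/S}$ are torsors under $\operatorname{Pic}^0_{X/S}\cong J^\vee$, and a direct computation identifies the class of the relevant component with the image of a multiple of $[X]$ under $\phi_{L_0,*}:H^1(S,J)\to H^1(S,J^\vee)$. Since $\phi_{L_0}$ is an isogeny with dual isogeny $\psi:J^\vee\to J$ satisfying $\psi\circ\phi_{L_0}=[d]$ for $d=\deg\phi_{L_0}$, a class in $H^1(S,J)$ lies in $\ker\phi_{L_0,*}$ if and only if $d$ times it vanishes. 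This reduces the theorem to showing that $X$ is projective over $S$ exactly when a suitable component of $\operatorname{Pic}_{X/S}$ is a trivial $J^\vee$-torsor.

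For the forward direction, I would argue as follows. If $X$ is projective, choose a relatively ample line bundle $M$ on $X$. Its relative Hilbert polynomial with respect to $L_0$ is locally constant on $S$, so by the quasicompactness of $S$ it decomposes into finitely many open-closed pieces on each of which this polynomial is fixed. On each piece $M$ represents a section of the corresponding component of $\operatorname{Pic}_{X/S}$, trivializing that $J^\vee$-torsor and yielding $\phi_{L_0,*}(n[X])=0$ for some positive integer $n$. Combining across pieces (replacing $n$ by a common multiple) and applying $\psi$, we obtain $nd\cdot[X]=0$, so $[X]$ is torsion.

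For the backward direction, the plan is to suppose $n[X]=0$ and to construct a relatively ample line bundle on $X$. From the Kummer-type short exact sequence $0\to J[n]\to J\xrightarrow{[n]}J\to 0$ the connecting map $H^1(S,J[n])\to H^1(S,J)$ has image equal to the $n$-torsion of $H^1(S,J)$; so $[X]$ lifts to the class of a $J[n]$-torsor $Y$ over $S$, and $X$ is canonically identified with the balanced product $(Y\times_S J)/J[n]$ for the diagonal $J[n]$-action (on $Y$ via the torsor structure and on $J$ via translation). Since $J[n]$ is finite flat over $S$, the torsor $Y$ is finite over $S$ and thus projective, and combined with the projectivity of $J$ the product $Y\times_S J$ is projective. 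After replacing $L_0$ by its symmetrization $L_0\otimes[-1]^*L_0$, the theorem of the cube provides a canonical $J[n]$-equivariant isomorphism $[n]^*L_0\cong L_0^{\otimes n^2}$ coming from the action of $\ker[n]=J[n]$ as deck transformations. The box product $\mathcal O_Y\boxtimes L_0^{\otimes n^2}$, equipped with its tautological $J[n]$-linearization combining the trivial action on $\mathcal O_Y$ and the above structure on $L_0^{\otimes n^2}$, is then a $J[n]$-equivariant relatively ample line bundle on $Y\times_S J$; it descends to a relatively ample line bundle on $X=(Y\times_S J)/J[n]$, exhibiting $X$ as projective over $S$.

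The principal obstacle will be the construction of a $J[n]$-equivariant ample line bundle on $Y\times_S J$ that is suitable for descent. The argument pivots on the theorem of the cube to endow $L_0^{\otimes n^2}$ with a natural equivariant structure compatible with the $J[n]$-action on $Y$, together with faithfully flat descent along the quotient map $Y\times_S J\to X$; some care will be required to make the equivariance conventions consistent across the two factors of the balanced product.
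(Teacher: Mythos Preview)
Your forward direction has a genuine gap. You fix a polarization $\phi_{L_0}:J\to J^\vee$ in advance and then assert that the component of $\operatorname{Pic}_{X/S}$ trivialized by a relatively ample $M$ on $X$ has class $\phi_{L_0,*}(n[X])$ for some integer $n$. But the component containing $M$ is $\operatorname{Pic}^{[M]}_{X/S}$, and a cocycle computation shows its class in $H^1(S,J^\vee)$ is $\phi_{[M],*}([X])$, where $\phi_{[M]}$ is the polarization attached to the N\'eron--Severi class of $M$. This equals $n\,\phi_{L_0,*}([X])$ only when $[M]=n[L_0]$ in $\operatorname{NS}_{J/S}$, which there is no reason to expect. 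The paper resolves this precisely by letting the polarization come from $M$: it transports the N\'eron--Severi class of $M$ to $J$ and explicitly constructs a line bundle $L'$ on $J$ realizing (twice) that class via the formula $\mu^*M\otimes p_1^*M^\vee\otimes p_2^*M^\vee\otimes e^*M$ restricted to the diagonal. Then $\phi_{L',*}([X])$ is the class of the component that $M^{\otimes 2}$ trivializes, and one concludes via the finite kernel of $\phi_{L'}$. Your isogeny-splitting argument with $\psi$ is fine once the correct polarization is in place; the error is purely in tying the component to a \emph{fixed} $L_0$.

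Your backward direction is correct but far more elaborate than needed. You lift $[X]$ to a $J[n]$-torsor, form a balanced product, and descend an ample line bundle using the theorem of the cube. The paper dispatches this in one line: $n[X]$ is represented by $X/J[n]$, so $n[X]=0$ gives a finite map $X\to X/J[n]\simeq J$, and a scheme finite over a projective $S$-scheme is projective. Your descent argument is essentially rebuilding this finite quotient map by hand.
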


\begin{teo}[{\cite[Prop.~XIII~2.6~i)]{Raynaud}}] \label{T:proj-rep}
Let $S$ be a quasicompact, normal scheme and $J$ an abelian variety.  Then a $J$-torsor $X$ is representable by a scheme if and only if it is projective.
\end{teo}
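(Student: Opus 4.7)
The direction ``projective $\Rightarrow$ representable by a scheme'' is immediate, since a projective morphism is schematic by definition. I focus on the converse: given a $J$-torsor $X$ representable by a scheme over a quasicompact normal base $S$, I wish to show that $X$ is projective over $S$. By Theorem~\ref{T:proj-tors}, this is equivalent to showing that the class $[X] \in H^{1}(S, J)$ is torsion.

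The strategy is to push $[X]$ forward along a polarization. Since $J$ is an abelian scheme, it admits a polarization $\phi \colon J \to J^{\vee}$, whose kernel $K = K(\phi)$ is a finite flat commutative group scheme over $S$. The short exact sequence $0 \to K \to J \xrightarrow{\phi} J^{\vee} \to 0$ of fppf sheaves yields
\[
H^{1}(S, K) \longrightarrow H^{1}(S, J) \xrightarrow{\phi_{\ast}} H^{1}(S, J^{\vee}),
\]
and $H^{1}(S, K)$ is torsion because multiplication by the order of $K$ annihilates $K$. It therefore suffices to show $\phi_{\ast}[X] = 0$ in $H^{1}(S, J^{\vee})$.

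To do this, fix a relatively ample $L$ on $J$ with $\phi = \phi_{L}$, so that $t_{a}^{\ast} L \otimes L^{-1} = \phi(a)$ in $J^{\vee}$ for every $a \in J$. \'Etale-locally on $S$ the torsor $X$ trivializes and pulls back $L$ to an ample line bundle, and the failure of these local bundles to glue across $S' \times_{S} S'$ is governed by the translation formula above; this failure is precisely a $1$-cocycle with values in $J^{\vee}$ whose class is $\phi_{\ast}[X]$. The assumption that $X$ is representable by a scheme, combined with the normality of $S$, then supplies a global line bundle $M$ on $X$ whose translation-difference map $a \mapsto t_{a}^{\ast} M \otimes M^{-1}$ recovers $\phi$; such an $M$ trivializes the $J^{\vee}$-torsor underlying $\phi_{\ast}[X]$, giving $\phi_{\ast}[X] = 0$.

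The delicate step is this last one: matching the Picard-theoretic construction of $M$ with the cocycle description of $\phi_{\ast}[X]$, and using the normality of $S$ to control the relative Picard functor (so that separatedness is available, and so that ampleness passes correctly between the local and global settings). Raynaud handles this in \cite[Prop.~XIII~2.6~i)]{Raynaud} via the rigidified Picard functor; the normality assumption enters through the good behaviour of algebraic equivalence on $X$, which is exactly what distinguishes schemes from general algebraic spaces in this setting. Once $\phi_{\ast}[X] = 0$ is in hand, $[X]$ lies in the image of the torsion group $H^{1}(S, K)$ and is therefore torsion, so Theorem~\ref{T:proj-tors} yields the desired projectivity of $X$ over $S$.
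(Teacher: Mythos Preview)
Your reduction to Theorem~\ref{T:proj-tors} is valid, and the exact sequence coming from a polarization $\phi : J \to J^\vee$ with finite flat kernel is the right idea. But the argument collapses at what you yourself flag as the ``delicate step'': you assert that representability of $X$ by a scheme, together with normality of $S$, produces a global line bundle $M$ on $X$ whose translation-difference map is $\phi$, and then you defer the construction of $M$ to \cite[Prop.~XIII~2.6~i)]{Raynaud}. That reference \emph{is} the theorem you are trying to prove, so the appeal is circular. And the step is not minor: producing a line bundle on $X$ in the correct N\'eron--Severi class is exactly the content of the result. (Compare the paper's proof of the ``projective $\Rightarrow$ torsion'' direction of Theorem~\ref{T:proj-tors}, where the section of the $J^\vee$-torsor is supplied by $L^{\otimes 2}$ for a \emph{given} relatively ample $L$; without such an $L$ in hand there is no obvious section.)

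The paper's approach (for the special case $S$ local noetherian) is quite different and avoids Theorem~\ref{T:proj-tors} entirely: it exploits the hypothesis that $X$ is a scheme by choosing an effective Cartier divisor $D$ whose complement $U$ is quasi-affine and meets the closed fiber, and then argues directly that $L = \mathcal{O}_X(D)$ is ample. The ampleness is checked via the criterion \cite[Thm.~(4.5.2)]{EGAII}, using that the $J$-orbit of $U$ is all of $X$, together with a theorem-of-the-cube argument (this is where normality of $S$ enters, to extend a line bundle from the generic fiber) yielding $T_g^\ast L \otimes T_{-g}^\ast L \simeq L^{\otimes 2}$. The scheme hypothesis is used at the very first step --- to get an honest quasi-affine open and hence a divisor to work with --- rather than through any Picard-functor machinery.
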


\begin{proof}[Sketch of a proof of Theorem~\ref{T:proj-tors}]
First, suppose the class represented by $X$ is torsion, say $n [X] = 0$.  As $n[X]$ is represented by $\bigl[X / J[n]\bigr]$, where $J[n]$ is the $n$-torsion of $J$, we have a finite map
\begin{equation*}
X \rightarrow X / J[n] \simeq J.
\end{equation*}
This implies $X$ is projective over $S$, as $J$ is.

Conversely, if $X$ is projective  over $S$, then a relatively  ample line bundle $L$ on $X$ over $S$ induces a relatively  ample line bundle $L'$ on $J$~\cite[Lem.~XI~1.6]{Raynaud}.  To see this, note that the relative N\'eron--Severi group $\operatorname{NS}_{X/S}$ of $X$ over $S$ is isomorphic to $\operatorname{NS}_{J/S}$, so that $L$ determines a class in $\operatorname{NS}_{J/S}$.  Any line bundle $M$ on $J$ determines a line bundle 
\begin{equation} \label{E:bilinear}
\mu^\ast M \otimes p_1^\ast M^\vee \otimes p_2^\ast M^\vee \otimes e^\ast M
\end{equation}
(where $\mu : J \times J \rightarrow J$ is the addition map, $p_i$ are the projections, and $e$ is the composition of the projection $J \rightarrow S$ and the zero section) on $J \mathop\times_S J$ that depends only on the N\'eron--Severi class of $M$.  Restricting this to the diagonal of $J \times J$ recovers a line bundle on $J$ whose image in the N\'eron--Severi group is twice that of $M$.  Altogether, this gives a map:
\begin{equation*}
\operatorname{NS}_{X/S} \simeq \operatorname{NS}_{J/S} \rightarrow \operatorname{Pic}_{J/S}.
\end{equation*}
Applying this to $L$
 yields a line bundle $L'$ on $J$.  Moreover, under a local isomorphism between $J$ and $X$, the N\'eron--Severi class of $L'$ is double that of $L$.  Thus $L'$ is relatively ample on $J$ over $S$.

In general, a relatively ample line bundle $M$ on $J$ induces an isogeny $J \rightarrow \hat{J}$ of abelian schemes over $S$ (where $\hat{J}$ is the dual abelian variety) by way of~\eqref{E:bilinear}, and hence a morphism $H^1(S,J) \rightarrow H^1(S,\hat{J})$.  The image of $[X]$ is the class $[\hat{X}]$, where $\hat{X}$ is the $\hat{J}$-torsor consisting of line bundles on $X$ that lie in 
 the same N\'eron--Severi class as $L'$ \cite[Cor.~XIII~1.2~ii)]{Raynaud}.  By assumption this torsor is trivial (the line bundle $L^{\otimes 2}$ provides a section), so $[\hat{X}] = 0$.  On the other hand, we have an exact sequence
\begin{equation*}
H^1(S,K) \rightarrow H^1(S,J) \rightarrow H^1(S,\hat{J}) ,
\end{equation*}
where $K$ is the kernel of $J \rightarrow \hat{J}$, so $[X]$ lies in the image of $H^1(S,K)$.  Since $L$ is ample, $K$ is finite, so $H^1(S,K)$ is torsion, and therefore so is $[X]$.  
\end{proof}

\begin{proof}[Sketch of a proof of a special case of Theorem~\ref{T:proj-rep}]
We will prove a special case of Theorem~\ref{T:proj-rep}, following the proof of \cite[Thm.~V~3.10]{Raynaud}, that will suffice to construct our example.  We assume that $S$ is the spectrum of a noetherian local ring with closed point $s$ and generic point $\eta$.  Let $X$ be a $J$-torsor over $S$.  Choose an effective Cartier divisor $D \subseteq X$ whose complement $U \subseteq X$ is quasi-affine and meets the closed fiber of $X$.  We will argue that the line bundle $L = \mathcal O_X(D)$ must be ample.

We make a few observations:
\begin{enumerate}[(a)]
\item \label{obs:a} \emph{The $J$-orbit of $U$ is all of $X$.}  This is because $U$ meets $X_s$ and $J_s$ acts transitively on $X_s$, so $JU$ contains $X_s$.  But every point of $X$ specializes to a point of $X_s$, since $X$ is proper over $S$, and $JU$ is open, hence contains all of $X$.
\item \emph{Let $M$ be the line bundle
\begin{equation} \label{E:cube}
M = (p_1 + p_2 + p_3)^\ast L \otimes (p_1 + p_3)^\ast L^\vee \otimes (p_2 + p_3)^\ast L^\vee \otimes p_3^\ast L
\end{equation}
on $J \mathbin\times_S J \mathbin\times_S X$.  There is some positive integer $n$ such that $M^{\otimes n}$ is the pullback of a line bundle on $J \mathbin\times_S J$.}  As $J \mathbin\times_S J$ is normal, it is sufficient to verify this over the generic point of $S$ \cite[Cor.~(21.4.13), Erratum 4.53, p.361]{EGAIV4}.
  We may therefore assume that $S = \eta$ is the spectrum of a field.  If $X(\eta) \neq \varnothing$ then $X \simeq J$ and this is a version of the theorem of the cube (see~\cite[\S5, Cor.~6 and \S6, Thm., Cor.~2 p.58]{Mumford-AV}); in this case $n = 1$.  If $X$ does not have a section over $\eta$ then it will certainly have one over some finite extension $p : \eta' \rightarrow \eta$, so $p^\ast M$ is the pullback of a line bundle $M'$ on $J_{\eta'} \mathbin\times_{\eta'} J_{\eta'}$.  If $\eta'$ has degree $n$ over $\eta$ then the norm of $p^\ast M$ is $M^{\otimes n}$ and this is the pullback of the line bundle $\operatorname{Norm}_{\eta'/\eta}(M')$ on $J_{\eta} \mathbin\times_{\eta} J_{\eta}$.

We replace $D$ with $nD$ so that~\eqref{E:cube} is the pullback of a line bundle on $J \mathbin\times_S J$ without passing to a tensor power.
\item \label{obs:c} Restricting~\eqref{E:cube} to a point $(g,-g)$ of $J$ yields an isomorphism of line bundles on $X$:
\begin{equation*}
T_g^\ast L \otimes T_{-g}^\ast L \simeq L^{\otimes 2}
\end{equation*}
where $T_g$ denotes translation by $g$.
\end{enumerate}

To prove the ampleness of $L$, we must show that, for any point $x$ of $X$, and any open neighborhood $V$ of $x$ in $X$, there is some index $n$ and some $f \in \Gamma(X, L^{\otimes n})$ such that the open set $X_f \subseteq X$ defined by the nonvanishing of $f$ is contained in $V$ \cite[Thm.~(4.5.2)]{EGAII}.  Suppose first that $x \in U$.  Let $z$ be the tautological section of $\mathcal O_X(D)$ that vanishes exactly along $D$.  As $U$ is quasi-affine, there is some affine open neighborhood $W$ of $x$ in $V \cap U$, defined by the nonvanishing of a function $h$ on $U$.  Then there is some $n \geq 1$ such that $z^n h$ extends to all of $X$.  That is, we may regard $z^n h$ as a section without poles of $\mathcal O_X(nD)$ and $W_{h} = W$.

If $x$ is not in $U$, we can at least find a $g \in J$ such that $T_g(x) \in U$ (by observation~\eqref{obs:a}, above).  Applying the argument above to $T_g(x)$ and $T_g(V)$, we can find a section $f \in \Gamma(X, L^{\otimes n})$ such that $x \in W_f \subseteq T_{g}(V)$ by the argument above.  We have some freedom in the choice of $g$, and by avoiding a closed set of possibilities, we can ensure that $T_{-g}(x) \in U$ as well.  Then we can consider $f' = T_g^\ast f \otimes T_{-g}^\ast z^n$ as a section of $T_g^\ast L^{\otimes n} \otimes T_{-g}^\ast L^{\otimes n} \simeq L^{\otimes 2n}$ (by~\eqref{obs:c}, above).  Now, $W_{f'} = T_g^{-1}(W_f) \cap T_{-g}^{-1}(W_z) = T_g^{-1}(W_f) \cap T_{-g}^{-1}(U)$.  By our choice of $g$, we have $x \in W_{f'}$ and $W_{f'} \subseteq T_g^{-1} T_g(V) = V$, as required.
\end{proof}

Finally, \cite[XIII~3.2]{Raynaud} proves that there is a genus~$1$ curve over a normal noetherian local ring of dimension~$2$ whose image in $H^1(S,J)$ is non-torsion, establishing the existence of the desired family $X\to S$.  Another version of the construction (and a slightly stronger conclusion) can be found in~\cite{Zomervrucht}.  We will summarize Raynaud's construction.

We begin with a discrete valuation ring $R$ with algebraically closed residue field.  Let $T = \operatorname{Spec} R$ and assume that $T$ has a connected, \'etale double cover $T' \rightarrow T$.  Let $\pi$ be a uniformizer for $T$.  For any object $Y$ over $T$, we will write $Y'$ for its base change to $T'$.  We write $\tau$ for the generic point of $T$ and $t$ for the special point; $\tau'$ is the generic point of $T'$ and $t_1$ and $t_2$ are the two points in the fiber of $T'$ over $t$.

Let $E$ be an elliptic curve over $T$ and let $V \subseteq E$ be the complement of the zero section in $E$.  Let $W$ be the quotient of $V$ by the inversion in the group law of $E$, so $W \simeq \mathbb A^1_T$.  Let $\gamma : W \rightarrow W$ be multiplication by $\pi$ and let $Z$ be the normalization in $V$ of the composition
\begin{equation*}
V \rightarrow W \xrightarrow{\gamma} W .
\end{equation*}

All we will use about $Z$ is the following lemma, whose proof is straightforward:
\begin{lem}
The map $f : V \rightarrow Z$ is an isomorphism over $\tau \in T$ and constant over $t \in T$.  Furthermore, $Z$ is normal.
\end{lem}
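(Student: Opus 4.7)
The plan is to describe $Z$ explicitly in affine coordinates and then verify each assertion directly. Choose a Weierstrass presentation $y^2 = f(x)$ for $E$, so $V = \operatorname{Spec} A$ with $A = R[x,y]/(y^2 - f(x))$, $W = \operatorname{Spec} R[u]$ with $u$ a coordinate on the target copy of $\mathbb{A}^1_T$, and the composition $V \to W \xrightarrow{\gamma} W$ corresponds to the ring map $R[u] \to A$, $u \mapsto \pi x$. Then $Z = \operatorname{Spec} C$, where $C \subseteq A$ is the integral closure of $R[u]$ in $A$, and $f$ is the inclusion $C \hookrightarrow A$. For the generic fiber, $\gamma_\tau$ is an isomorphism because $\pi$ is a unit in $R_\tau$, so the composition $V_\tau \to W_\tau$ is finite; since $V_\tau$ is smooth and hence normal, and since formation of normalization commutes with flat base change, one has $C_\tau = A_\tau$, so $f_\tau$ is an isomorphism.

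For the special fiber, the key claim is that $C \subseteq R + \pi A$; granting this, the reduction of $C$ modulo $\pi$ lands in $(R + \pi A)/\pi A = k(t) \subseteq A_t$, its kernel is therefore a maximal ideal of $C$, and $V_t \to Z_t$ factors through the corresponding closed point of $Z_t$. To prove the claim, I will take an integral element $\alpha = a + by \in C$ with $a, b \in \operatorname{Frac}(R[x]) = \operatorname{Frac}(R[u])$. Because $R[u]$ is itself normal, the condition that the trace $2a$ lie in $R[u]$ forces $a \in R[u]$ (outside characteristic $2$, with the general Weierstrass equation handling the remaining cases). The norm condition gives $b^2 f \in R[u]$, and separability of $f$ (from smoothness of $E$) rules out denominators in $b$, so $b \in R[x]$. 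Setting $N = \deg b$ and $B(u) = \pi^N b(u/\pi) \in R[u]$, $F(u) = \pi^3 f(u/\pi) \in R[u]$, the condition translates into $B(u)^2 F(u) \in \pi^{2N+3} R[u]$. Since $F \equiv u^3$ modulo $\pi$ is nonzero in the domain $k(t)[u]$, a descending analysis of coefficients of $u^m$ in $B^2 F$, starting from $m = 2N + 3$, forces the coefficients of $B$ to be sufficiently $\pi$-divisible that the coefficients of $b$ all lie in $\pi R$, and hence $by \in \pi A$, giving $\alpha \in R + \pi A$.

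For normality of $Z$, I first note that $\operatorname{Frac}(C) = \operatorname{Frac}(A)$: indeed, $\pi^2 y \in C$ produces $y \in \operatorname{Frac}(C)$, and $R[u] \subseteq C$ produces $x$. If $c \in \operatorname{Frac}(A)$ is integral over $C$, then it is integral over $A$ (as $C \subseteq A$), so it lies in $A$ by normality of $A$; it is also integral over $R[u]$ by transitivity, so $c \in C$ by definition. The main technical obstacle is the valuation bookkeeping in the second paragraph, where one must extract sufficient $\pi$-divisibility on the coefficients of $b$ from the coefficient-by-coefficient analysis of $B^2 F \in \pi^{2N+3} R[u]$; this is an explicit but somewhat delicate combinatorial calculation, while the rest of the argument is formal.
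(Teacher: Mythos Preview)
The paper does not prove this lemma, declaring the proof ``straightforward,'' so your explicit argument stands on its own and is correct. The reduction of constancy over $t$ to the inclusion $C \subseteq R + \pi A$ is exactly the right move.

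One simplification: the descending coefficient analysis for $b$ is heavier than needed. Since $\alpha \in A$, you already have $a, b \in R[x]$ (so the remark about separability ruling out denominators is redundant). Once you know $b^2 f \in R[u] = R[\pi x]$, just reduce modulo $\pi R[x]$: the image of $R[\pi x]$ in $k[x]$ is the constants $k$ (as $\pi x \equiv 0$), so $\bar b^{\,2} \bar f \in k$. But $\bar f$ has degree~$3$ in $k[x]$ (the special fiber $E_t$ is an elliptic curve, so the Weierstrass cubic remains a cubic), whence $\bar b^{\,2} \bar f$ has degree $2\deg(\bar b) + 3 \geq 3$ unless $\bar b = 0$. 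Thus $b \in \pi R[x]$ immediately and $by \in \pi A$, with no valuation bookkeeping required.

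Your normality argument is fine but nearly tautological once $\operatorname{Frac}(C) = \operatorname{Frac}(A)$ is established: the integral closure of a domain in a field is automatically integrally closed in that field, so $C$ is integrally closed in $\operatorname{Frac}(A) = \operatorname{Frac}(C)$.
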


Let $s$ be the unique closed point of $Z$ and let $S = \operatorname{Spec} \mathcal O_{Z,s}$.  Let $\eta'$ be the generic point of $S'$ and let $s_1$ and $s_2$ the two closed points lying above $s$.  Let $U_i$ be the complement in $S'$ of $s_i$, and let $U_{12}$ be their intersection.  Then $U_1 \cup U_2 = S'$, so we use the Mayer--Vietoris sequence to find a class in $H^1(S', E')$:
\begin{equation*}
H^0(U_1, E') \times H^0(U_2, E') \rightarrow H^0(U_{12}, E') \rightarrow H^1(S', E')
\end{equation*}
Now, $U_{12} = \{ \eta' \}$ so $H^0(U_{12}, E') = E'(\eta')$.  Recall that we have a map $E' \rightarrow Z'$ that is an isomorphism over the generic points.  Therefore we have a canonical element $\xi$ of $E'(\eta')$ corresponding to the inclusion of the generic point.

An element of $H^0(U_i, E')$ can be seen as a rational map from $E'$ to itself over $T'$ that restricts to a constant map over $t_i$.  Any such map must in fact factor through a section of $E'$ over $T'$.  Therefore we have
\begin{equation*}
H^0(U_i, E') = E'(T') .
\end{equation*}

Consider the image of $\xi$ in $H^1(S', E')$.  This cannot possibly be torsion, for if it were then it would have a multiple in the image of $H^0(U_1, E') \times H^0(U_2, E')$.  That is impossible, because no multiple of the identity map on an elliptic curve is a difference of constant maps.  Therefore we have found a non-torsion element in $H^1(S', E')$.  Since $S'$ is \'etale over the normal scheme $S$, it is normal, and therefore by Theorem~\ref{T:proj-tors}, the corresponding sheaf is a descent datum for an elliptic curve over $S'$ that is not projective, hence not effective by Theorem~\ref{T:proj-rep}.

Raynaud goes a bit further and shows that the base for the descent may be chosen to be local.  Indeed, letting $q$ denote the projection from $S'$ to $S$, we have
\begin{equation*}
H^1(S', E') = H^1(S, q_\ast E') .
\end{equation*}
Now, $q_\ast E' = q_\ast q^\ast E'$ is the Weil restriction of scalars of $E'$ via the finite, \'etale map $q$, hence is an abelian scheme of dimension~$2$ over $S$.  It comes with a canonical inclusion $E \subseteq q_\ast E'$ whose quotient is another elliptic curve $F$ (in fact a quadratic twist of $E$).  Then we have an exact sequence
\begin{equation*}
H^1(S, E) \rightarrow H^1(S, q_\ast E') \rightarrow H^1(S, F)
\end{equation*}
so the non-torsion class $\xi \in H^1(S', E') = H^1(S, q_\ast E)$ determines a non-torsion class either in $H^1(S, F)$ or in $H^1(S, E)$.  Either way, we obtain a non-effective descent datum for genus~$1$ curves over $S$.

\section{The many meanings of algebraicity}
\label{S:AlgStk}

We work  over the presite $\mathsf S$ of \'etale covers of schemes (over some fixed base scheme).  Many authors have given different definitions of algebraicity.  Deligne and Mumford required a schematic diagonal and an \'etale cover by a scheme~\cite[Def.~(4.5)]{DM}, but insisted their definition was not the right one except for quasiseparated stacks.  Knutson required all algebraic spaces to be quasiseparated~\cite[Ch.~2, Def.~1.1]{Knutson}.  Artin gave his definition only for stacks that are locally of finite presentation and required a diagonal representable by algebraic spaces (in the sense of Knutson) and a smooth cover by a scheme~\cite[Def.~(5.1)]{Artin-versal}.  Laumon and Moret-Bailly defined an `algebraic stack (understood quasiseparated)' by adding quasiseparation to Artin's conditions~\cite[Def.~(4.1)]{LMB}.  The Stacks Project uses Artin's conditions, but without requiring the algebraic spaces to be quasiseparated~\cite[Tag 026N]{stacks}.

Recall that in Definition~\ref{D:alg-stack} we defined algebraic stacks as the smallest class of stacks on the category of schemes that includes all schemes and includes all stacks that admit smooth covers by stacks in the class.  Our definition is equivalent to the Stacks Project's, but usage in the literature varies widely.

\subsection{Stacks with flat covers by schemes and stacks over the fppf site}
\label{S:flat}

The \'etale topology is not the only natural topology on schemes.  From the perspective of descent, at least for quasicoherent sheaves, the flat topologies (fppf and fpqc) might be even more natural.  The fpqc topology presents certain technical issues, owing to the absence of a sheafification functor, so we will not discuss it.

There are two ways one might try to replace the \'etale topology with the fppf topology in our discussion of algebraic stacks.  We might try to limit the class of stacks by insisting they be stacks in the fppf, as opposed to just the \'etale, topology.  Or, we might enlarge the class of stacks under consideration by permitting them to have fppf, as opposed to necessarily smooth, covers by schemes.  It turns out that either modification yields the same class of algebraic stacks.  We will sketch the main ideas behind this result.

We start by stating the following lemma whose proof we omit  as it is well-known and not difficult.

\begin{lem}
The class of flat morphisms of schemes is stable under composition and base change and is local to the source and target in the fppf topology.
\end{lem}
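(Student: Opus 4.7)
The plan is to verify the four properties in sequence, each time reducing to an affine statement in commutative algebra and citing the standard reference in EGA~IV.

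For stability under composition, if $A \to B \to C$ are ring maps with $B$ flat over $A$ and $C$ flat over $B$, the associativity isomorphism $C \otimes_A - \cong C \otimes_B (B \otimes_A -)$ exhibits $C \otimes_A -$ as a composition of exact functors, so $C$ is flat over $A$; see \cite[Prop.~(2.1.6)]{EGAIV2}. For stability under base change, if $M$ is a flat $A$-module and $A \to A'$ is any ring map, then $M \otimes_A A'$ is a flat $A'$-module, as in \cite[Prop.~(2.1.4)]{EGAIV2}. Both statements globalize immediately.

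For locality to the target, the forward direction is already contained in base change. For the converse, working Zariski-locally on the target $Y$ I may assume the fppf cover is given by a single faithfully flat morphism of finite presentation $\operatorname{Spec} A' \to \operatorname{Spec} A$, and the question becomes: given an $A$-module $M$ (corresponding to a quasicoherent sheaf on $X$) such that $M \otimes_A A'$ is flat over $A'$, show that $M$ is flat over $A$. This is the fpqc descent of flatness, \cite[Prop.~(2.5.1)]{EGAIV2}.

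For locality to the source, I replace the cover $\{X_i \to X\}$ by the single faithfully flat, locally finitely presented morphism $X' = \coprod_i X_i \to X$, so that the composition $X' \to Y$ is flat and I must conclude $X \to Y$ is flat. Affine-locally this becomes: given $A \to B \to C$ with $B \to C$ faithfully flat and $A \to C$ flat, show $A \to B$ is flat. For an injection $M \hookrightarrow N$ of $A$-modules, the induced map $M \otimes_A C \to N \otimes_A C$ is injective by flatness of $A \to C$; since $B \to C$ is faithfully flat, $M \otimes_A B \to N \otimes_A B$ is injective as well, cf.~\cite[Prop.~(2.2.13)]{EGAIV2}. The only non-formal ingredient in the whole proof is the fpqc descent of flatness invoked for locality to the target; everything else is elementary tensor-product manipulation.
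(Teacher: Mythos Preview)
Your argument is correct, and in fact the paper does not give a proof at all: it states just before the lemma that the verification is ``well-known and not difficult'' and omits it. So there is nothing to compare against; you have simply supplied the standard commutative-algebra verification that the paper chose to skip.

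One small phrasing issue: in the locality-to-target step you write ``given an $A$-module $M$ (corresponding to a quasicoherent sheaf on $X$)'', but flatness of $X \to Y$ is a pointwise condition on $\mathcal{O}_{X,x}$ over $\mathcal{O}_{Y,f(x)}$, so you should also localize Zariski on $X$ to reduce to $X = \operatorname{Spec} B$ and take $M = B$. With that adjustment the descent statement you invoke is exactly the right one.
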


The lemma implies that one could develop a theory of flat-adapted algebraic stacks in the \'etale topology.  The following theorem of Artin explains that to do so would yield nothing new:

\begin{teo}[{\cite[Thm.~(6.1)]{Artin-versal}, \cite[Thm.~10.1]{LMB}, \cite[Tag 06DB]{stacks}}]  \label{T:ArtinPres} Let $\mathcal X$ be a stack in the fppf topology on the category of schemes.
If there exists morphism
$$
\begin{CD}
U@>P>> \mathcal X
\end{CD}
$$
from an  algebraic space $U$ that is representable by     algebraic spaces, faithfully flat, and of finite presentation, then there is such a morphism $P$ that is smooth and surjective.  In particular, 
$\mathcal X$ is an  SP algebraic stack in the sense of Definition~\ref{D:alg-stack}.
\end{teo}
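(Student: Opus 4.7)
The plan is to take the given faithfully flat, finitely presented, representable cover $P \colon U \to \mathcal X$ by an algebraic space and improve it to a smooth surjection $V \to \mathcal X$ from a scheme; once this is done, Definition~\ref{D:alg-stack} gives algebraicity (applied iteratively, using that the diagonal is already representable by algebraic spaces via Lemma~\ref{L:Rep}).

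First I would reduce to the case that $U$ is a scheme: since $U$ is an algebraic space, pick any smooth surjection $U_0 \to U$ from a scheme; then $U_0 \to \mathcal X$ is again flat, of finite presentation, representable, and surjective. The main technical input is then a local structure theorem for flat, finitely presented morphisms of schemes (cf.\ EGA IV$_4$, Cor.~17.16.3, or \cite[Tag~05K5]{stacks}): for any flat, finitely presented morphism of schemes $f \colon X \to Y$, there exist an integer $n$ and an open subscheme $V \subseteq \mathbb A^n_X$ with $V \to X$ surjective, such that the composition $V \to Y$ is smooth. The idea is that, locally, such an $f$ looks like a relative complete intersection in an affine space over $Y$, and by adjoining further coordinates and restricting to the open locus where an appropriate Jacobian minor is invertible one achieves smoothness over $Y$.

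To apply this to $P \colon U \to \mathcal X$, one uses representability of $P$ to form $\mathbb A^n_U$ as a scheme, together with a canonical morphism $\mathbb A^n_U \to \mathcal X$. Because smoothness and surjectivity for representable morphisms are local to the target in the fppf topology (Definition~\ref{D:LocTargCFG} and Example~\ref{Exa:StabLoc}), they can be verified after base change along $P$ itself, reducing to a question about the morphism of schemes $U \mathop\times_{\mathcal X} \mathbb A^n_U \to U$. The local structure theorem produces an open subscheme $V \subseteq \mathbb A^n_U$ whose base change is smooth and surjective onto $U$, hence $V \to \mathcal X$ is smooth and surjective as a morphism of stacks.

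The main obstacle I expect will be the global patching implicit in the last step: the opens produced by the local structure theorem depend on local choices (regular sequences and Jacobian minors), and one must show these can be assembled into a single open $V \subseteq \mathbb A^n_U$ covering all of $\mathcal X$. The key point is that smoothness of $\mathbb A^n_U \to \mathcal X$ is an open condition on $\mathbb A^n_U$, and by applying the local statement at each point of $U$ (and hence of $\mathcal X$, since $P$ is surjective) after possibly enlarging $n$, the smooth locus of $\mathbb A^n_U \to \mathcal X$ surjects onto $\mathcal X$; one may then take $V$ to be this smooth locus.
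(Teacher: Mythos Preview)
Your approach has a genuine gap: the ``local structure theorem'' you invoke is false as stated. Taking an open in $\mathbb A^n_X$ cannot repair a flat, finitely presented morphism $X \to Y$ with non-reduced fibers. Concretely, let $Y = \operatorname{Spec} k$ and $X = \operatorname{Spec} k[\epsilon]/(\epsilon^2)$; then $X \to Y$ is finite free, hence faithfully flat and of finite presentation, but $\mathbb A^n_X = \operatorname{Spec} k[\epsilon, x_1,\dots,x_n]/(\epsilon^2)$ is nowhere reduced, so no open subscheme of it is smooth over $Y$. In the setting of the theorem this actually occurs: take $\mathcal X = \mathrm B\mu_p$ over $\mathbb F_p$ with $U = \operatorname{Spec}\mathbb F_p \to \mathcal X$ the canonical finite flat cover; then $U \mathbin\times_{\mathcal X} U \simeq \mu_p$ is non-reduced over $U$, and your construction produces no smooth open. (The references you cite, EGA IV$_4$ \S17.16 and the Stacks tag, concern \emph{quasi-sections} of flat maps, not a factorization through affine space.)

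The paper (following Artin) proceeds quite differently. After reducing to the case where the diagonal is representable by algebraic spaces, one builds an auxiliary moduli problem: fix a groupoid presentation $U_\bullet$ of $\mathcal X$ by disjoint unions of affines, and let $\mathcal V$ parameterize, over each test scheme $S$, a finite locally free $S$-scheme $Z$ together with a morphism of groupoids $Z_\bullet \to V_\bullet$ into finitely many components of $U_\bullet$. This $\mathcal V$ is representable by a disjoint union of affine schemes, and the fppf stack condition on $\mathcal X$ produces a map $\mathcal V \to \mathcal X$. One then passes to the open locus $\mathcal W \subseteq \mathcal V$ where $Z_0 \to V_0 \mathbin\times_{\mathcal X} S$ is a local complete intersection; the infinitesimal criterion shows $\mathcal W \to \mathcal X$ is formally smooth (LCI morphisms deform unobstructedly), and surjectivity is obtained by cutting a regular sequence at a Cohen--Macaulay point of the fiber. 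The slicing goes \emph{down}, not up: one carves finite LCI subschemes out of the flat cover rather than adding affine directions.
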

\begin{proof} 
We give a rough sketch of the proof, following Artin.  By an `induction on stackiness', it is sufficient to assume that the diagonal of $\mathcal X$ is representable by algebraic spaces.

We consider the following moduli problem $\mathcal V$.  Choose a cover of $U$ by a disjoint union $U_0$ of affine schemes (which is certainly possible, since $U$ is a scheme).  Since $U_0 \mathbin\times_{\mathcal X} U_0$ is an algebraic space, we can choose a disjoint union of affine schemes $U_1$ and a smooth cover $U_1 \rightarrow U_0 \mathbin\times_{\mathcal X} U_0$.  Let $s$ and $t$ denote the two projections from $U_1$ to $U_0$.  Let $U_2$ be a smooth cover of the space of triples $(\alpha, \beta, \gamma) \in U_1 \times U_1 \times U_1$ such that $s(\alpha) = s(\gamma)$, $t(\alpha) = s(\beta)$, and $t(\beta) = t(\gamma)$ and $\beta \circ \alpha = \gamma$ as isomorphisms between objects of $\mathcal X$.

For any scheme $S$, we define an $S$-point of $\mathcal W$ to be
\begin{enumerate}[(i)]
\item the choice of a finite union of components $V_i \subseteq U_i$ for each $i$ such that $V_\bullet$ forms a subgroupoid of $U_\bullet$,
\item a finite, locally free, surjective $S$-scheme $Z$ with a distinguished basis $\mathcal O_Z \simeq \mathcal O_S^d$, and 
\item a morphism of groupoids $Z_\bullet \rightarrow V_\bullet$:
\begin{equation*} \xymatrix{
Z_2 \ar[d] \ar@<2pt>[r] \ar[r] \ar@<-2pt>[r] & Z_1 \ar[d] \ar@<1.5pt>[r] \ar@<-1.5pt>[r] & Z_0 \ar[d] \ar[r] & S \\
V_2 \ar@<2pt>[r] \ar[r] \ar@<-2pt>[r] & V_1 \ar@<1.5pt>[r] \ar@<-1.5pt>[r] & V_0 \ar[r] & \mathcal X 
} \end{equation*}
\item where we have set $Z_0 = Z$ and $Z_i = Z_{i-1} \mathbin\times_S Z$ for $i \geq 1$.
\end{enumerate}

We argue that $\mathcal V$ is representable by a disjoint union of affine schemes, indexed by the choice of $V_\bullet$.  The algebra structure on $\mathcal O_Z$ is determined by its structure constants and various identities among them, hence is parameterized by an affine scheme.  The maps $Z_i \rightarrow V_i$ are determined by various elements of $\mathcal O_Z$ and relations among them (since the $V_i$ are affine schemes).  For each commutativity condition we have a pair of maps $Z_i \rightarrow V_{i-1}$ that we wish to coincide.  That is a closed condition (since affine schemes are separated).

Any $S$-point of $\mathcal V$ determines a descent datum for a morphism $S  \rightarrow \mathcal X$ in the fppf topology.  Since $\mathcal X$ is a stack in the fppf topology, this descends to a morphism $S \rightarrow \mathcal X$ and we obtain a morphism of groupoids $\mathcal V \rightarrow \mathcal X$.  We have just seen that $\mathcal V$ is representable by a disjoint union of affine schemes, so it remains to verify this map is smooth.

Now let $\mathcal W \subseteq \mathcal V$ be the open substack where $Z_0 \rightarrow V_0 \mathbin\times_{\mathcal X} S$ is a local complete intersection morphism.  To see that $\mathcal W$ is indeed open in $\mathcal V$, note that there are open subsets $W_i \subseteq Z_i$ where the maps $W_i \rightarrow V_i$ are local complete intersection morphisms.  Since the $Z_i$ are proper over $S$, the image in $S$ of the complement of $W_i$ is closed, so the condition that the fiber of $Z_i \rightarrow V_i$ be a closed immersion and a local complete intersection morphisms is open on $S$.

Now we verify that $\mathcal W$ is locally of finite presentation, formally smooth, and surjective over $\mathcal X$.  

To see that $\mathcal W \rightarrow \mathcal X$ is locally of finite presentation, one observes that once a morphism $S \rightarrow \mathcal X$ is specified, a lift to $\mathcal W$ involves only a finite amount of additional data.  

Next we verify the smoothness, for which we can use the infinitesimal criterion.  Given a lifting problem
\begin{equation*} \xymatrix{
S \ar[r] \ar[d] & \mathcal W \ar[d] \\
S' \ar@{-->}[ur] \ar[r] & \mathcal X
} \end{equation*}
in which $S$ is affine and $S'$ is an infinitesimal extension of $S$, we have, by definition a morphism of groupoids $Z_\bullet \rightarrow V_\bullet$, with the $Z_i$ finite and locally free over $S$, that we would like to extend to $Z'_\bullet \rightarrow V_\bullet$ with $Z'_i$ finite and locally free over $S'$.

In this case, the map $Z_0 \rightarrow U_0$ is a local complete intersection morphism, and $Z_0$ is affine, so there is no obstruction to extending it to a morphism $Z'_0 \rightarrow U_0$ with $Z'_0$ finite and locally free over $S'$.  This induces a pair of morphisms $Z'_1 \rightrightarrows V_0$, hence a map $Z'_1 \rightarrow V_0 \mathbin\times_{\mathcal X} V_0$.  Now, $V_1 \rightarrow V_0 \mathbin\times_{\mathcal X} V_0$ is smooth, so the map $Z_1 \rightarrow V_0 \mathbin\times_{\mathcal X} V_0$ lifts to $V_1$.  Now let $R \subseteq V_1 \times V_1 \times V_1$ be the set of triples $(\alpha, \beta, \gamma)$ such that the equation $\beta \circ \alpha = \gamma$ makes sense and holds in $\mathcal X$.  Then we obtain $Z'_2 \rightarrow R$ and $V_2$ is smooth over $R$, so $Z'_2 \rightarrow R$ lifts to $V_2$.  An infinitesimal deformation of a local complete intersection morphism is still a local complete intersection morphism, and an infinitesimal deformation of a closed embedding is still a closed embedding, so we have produced the required extension.

Finally, we have to check $\mathcal W \rightarrow \mathcal X$ is surjective.  Let $k$ be the spectrum of an algebraically closed field and let $S = \operatorname{Spec} k$.  Let $\xi$ be a $k$-point of $\mathcal X$.  The fiber of $U_0$ over $S$ is a nonempty algebraic space $T$.  Choose a smooth cover $P$ of $T$ by a scheme.  This scheme is certainly flat over $k$, so it has a dense open subset where it is Cohen--Macaulay \cite[Tag 045U]{stacks}.  Pick a point $p$ of $P$ where $P$ is Cohen--Macaulay, and let $Z_0$ be the vanishing locus of a regular sequence at $p$.  Now let $V_0$ be a component of $U_0$ that contains the image of $p$ under the composition $Z_0 \rightarrow P \rightarrow T \rightarrow U_0$.  Then $Z_0 \rightarrow V_0 \mathbin\times_{\mathcal X} S$ is a local complete intersection morphism by construction.  Furthermore, we obtain a map $Z_1 \rightarrow V_0 \mathbin\times_{\mathcal X} V_0 \subseteq U_1$.  But recall that $Z_1$ has just one point, by construction, so we choose a component $V_1$ of $U_1$ whose image in $U_0 \mathbin\times_{\mathcal X} U_0$ contains the image of $Z_0$.  Since $Z_1$ is artinian, there is a lift of $Z_1 \rightarrow V_0 \mathbin\times_{\mathcal X} V_0$ to $V_1$.  Then we repeat the same process to get $Z_2 \rightarrow V_2$ and we conclude.
\end{proof}

\begin{rem}
Note that the condition that $\mathcal X$  have separated and quasicompact diagonal does not depend on the presentation.
\end{rem}

\begin{cor} \label{C:flat-quotient}
Suppose that $G$ is a \emph{flat} group scheme over $S$, acting on an $S$-scheme $X$.  Then the stack $[X/G]$ (see \S\ref{S:TorsG}) is  algebraic.  If $G$ is quasiseparated over $S$  (e.g., quasiprojective) then $[X/G]$ is a quasiseparated algebraic stack.
\end{cor}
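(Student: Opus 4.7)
The plan is to construct a presentation $X \to [X/G]$ and invoke Artin's smoothening theorem (Theorem~\ref{T:ArtinPres}) rather than building a smooth atlas directly. Recall that an $S$-scheme $T$ with a map to $[X/G]$ parametrizes a pair $(P, \varphi)$ consisting of a $G$-torsor $P \to T$ (in the fppf topology) and a $G$-equivariant morphism $\varphi : P \to X$. The canonical morphism $\pi : X \to [X/G]$ sends $f : T \to X$ to the trivial torsor $G \times_S T \to T$ together with the equivariant morphism $G \times_S T \to X$, $(g, t) \mapsto g \cdot f(t)$.

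First I would check that $[X/G]$ is a stack in the fppf topology. Descent for $G$-torsors in the fppf topology is a classical fact (bands of torsors under smooth--or merely flat--group schemes descend), and descent for the $G$-equivariant morphism to $X$ follows from the fact that $X$ represents a sheaf in the fpqc topology, which is subcanonical. Second, I would verify the three properties needed to apply Theorem~\ref{T:ArtinPres} to $\pi$. For any $(P, \varphi) : T \to [X/G]$, the $2$-fibered product $T \times_{[X/G]} X$ is naturally identified with $P$: a $T'$-point of the fibered product is a map $T' \to T$, a map $T' \to X$, and an isomorphism of the two pullbacks of the torsor-with-map data, and the universal such object is $P$ equipped with the tautological trivialization of its pullback to itself. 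Thus $\pi$ is representable by algebraic spaces (even by schemes when $G$-torsors are representable by schemes, which is automatic in the fppf topology when $G$ is affine over $S$, but never needed here). Because being a torsor under a flat, finitely presented group scheme is local on the target, the morphism $\pi$ inherits faithful flatness and local finite presentation from $G \to S$. Theorem~\ref{T:ArtinPres} then supplies a smooth surjective presentation of $[X/G]$, proving algebraicity.

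For the quasi-separation statement, I would analyze the diagonal $\Delta : [X/G] \to [X/G] \times_S [X/G]$. Since $\pi \times \pi : X \times_S X \to [X/G] \times_S [X/G]$ is a smooth cover and quasi-compactness is smooth-local on the target, it suffices to show that the pullback of $\Delta$ along $\pi \times \pi$ is quasi-compact. A direct computation identifies this pullback with the action morphism
\[
\Phi : G \times_S X \longrightarrow X \times_S X, \qquad (g, x) \longmapsto (x, g\cdot x),
\]
so quasi-separatedness of $[X/G]$ is equivalent to that of $\Phi$. The map $\Phi$ is an $X$-morphism when $G \times_S X$ is viewed over $X$ via its second projection and $X \times_S X$ via its first projection; under this identification $G \times_S X \to X$ is the base change of $G \to S$ along $X \to S$. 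Since quasi-separatedness is preserved by base change and by composition with separated (in particular, monomorphic) morphisms over the target, the quasi-separatedness of $G \to S$ yields that of $\Phi$, and hence of $\Delta$.

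The main obstacle I expect is the last step: unpacking the identification of $(\pi \times \pi)^\ast \Delta$ with the action morphism in a formulation clean enough that quasi-separatedness transfers transparently from $G$ to $[X/G]$. Everything else is essentially formal once one grants Theorem~\ref{T:ArtinPres} and fppf descent for torsors; by contrast, the diagonal analysis requires chasing through the $2$-categorical definition of the fiber product to produce the explicit groupoid $(G \times_S X \rightrightarrows X)$ presenting $[X/G]$ and to verify that the relevant property descends from $G \to S$ to $\Phi$, rather than merely from $G \to S$ to the source projection $G \times_S X \to X$.
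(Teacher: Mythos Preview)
Your algebraicity argument is correct and matches the paper's: $X \to [X/G]$ is a $G$-torsor, hence an fppf cover (you spell out the identification $T \times_{[X/G]} X \simeq P$ and the representability more explicitly than the paper does), and Theorem~\ref{T:ArtinPres} applies.

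For quasiseparation there is a genuine slip. You correctly reduce, via the cover $\pi \times \pi$, to showing that $\Phi : G \times_S X \to X \times_S X$ is \emph{quasicompact}---that is exactly what quasiseparatedness of $[X/G]$ means by Definition~\ref{D:qcpt-qsep}. But your final paragraph then argues that $\Phi$ is \emph{quasiseparated}: you base-change quasiseparatedness of $G \to S$ to $G \times_S X \to X$ and invoke a cancellation statement for quasiseparatedness to pass to $\Phi$. That establishes the wrong property; quasiseparatedness of $\Phi$ does not give quasicompactness of $\Phi$, and the argument you sketch does not produce quasicompactness from the hypothesis on $G$.

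The paper takes a different tack. Rather than analyzing the pullback of $\Delta_{[X/G]}$ along $\pi\times\pi$, it uses the identification $X \times_{[X/G]} X \cong X \times_S G$ and observes that the relevant diagonal becomes the section $(\mathrm{id}_X, e) : X \to X \times_S G$ of the projection $X \times_S G \to X$. Since that projection is the base change of $G \to S$, it is quasiseparated, and a section of a quasiseparated morphism is quasicompact \cite[Tag 03KP]{stacks}. This is where the hypothesis on $G$ actually enters, and it is a different mechanism from the cancellation you attempted: the quasiseparatedness of $G\to S$ is used to control a \emph{section}, not to transfer a property along a factorization.
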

\begin{proof}
The cover $X \rightarrow [X/G]$ is a $G$-torsor, hence is an fppf cover.  Therefore from the theorem, $[X/G]$  is an algebraic stack.  We may identify $X \mathbin\times_{[X/G]} X$ with $X \times_S G$, under which identification the diagonal map becomes the inclusion $(\mathrm{id}_X, e) : X \rightarrow X \times_S G$, with $e$ denoting the identity section of $G$ over $S$.  This morphism is certainly representable and separated (it is an injective morphism of schemes).  It is quasicompact if $G$ is quasiseparated over $S$, since a section of a quasiseparated morphism is quasicompact \cite[Tag 03KP]{stacks}.
\end{proof}

Next we consider the question of stacks in the fppf topology.  Note first that an algebraic stack in the fppf topology is clearly an algebraic stack in the \'etale topology, by restriction.  Now we show the converse:

\begin{cor}
Algebraic stacks are stacks in the fppf toplogy.
\end{cor}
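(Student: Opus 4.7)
The plan has two parts, matching the two conditions in Proposition~\ref{P:StCond}: checking the prestack condition (that the $\mathscr{I}\!\!\mathit{som}$ presheaves are fppf sheaves) and then checking effectivity of descent data for fppf covers.

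For the prestack part, I would invoke the fact (implicit in the definition of algebraic stack and made explicit in Example~\ref{E:diagonal}) that for an algebraic stack $\mathcal X$ the diagonal $\Delta:\mathcal X\to \mathcal X\times\mathcal X$ is representable by algebraic stacks, and in fact representable by algebraic spaces (Lemma~\ref{L:DiagAlgStack}). Hence for any $S$ and any $X,Y\in\mathcal X(S)$, the presheaf $\mathscr{I}\!\!\mathit{som}_{\mathcal X}(X,Y)$ is representable by an algebraic space over $S$. The reduction is therefore to the statement that \emph{every algebraic space is an fppf sheaf}. This, in turn, is inherited inductively from the fact that schemes are fppf sheaves (Grothendieck's subcanonicity theorem quoted after Definition~\ref{D:Presite}): an algebraic space is a quotient of an étale equivalence relation of schemes, so its sections over any $S$ are pulled back via étale descent from sheaf sections on the presentation, and the fppf sheaf condition passes through this étale equivalence relation because fppf covers base change to fppf covers and schemes are fppf sheaves.

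For effectivity, I would take an fppf cover $\{S_i\to S\}$ and a descent datum $\{X_i,\varphi_{ij}\}$ for $\mathcal X$ and reduce to étale descent, which we already have. Fix a smooth representable cover $P:U\to\mathcal X$ by a scheme, guaranteed by the definition (together with Theorem~\ref{T:ArtinPres} if $U$ is only an algebraic space, in which case we further pull back to a scheme). Each $X_i:S_i\to\mathcal X$ produces by base change an algebraic space $U_i=U\times_{\mathcal X}S_i\to S_i$ that is smooth and surjective; by locality to the source of smoothness, smooth surjections admit sections étale-locally on the base, so there exist étale covers $\{T_{i\alpha}\to S_i\}$ together with lifts $\widetilde X_{i\alpha}:T_{i\alpha}\to U$ of $X_i|_{T_{i\alpha}}$. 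The composite family $\{T_{i\alpha}\to S\}$ is an fppf cover (composition of étale and fppf).

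The key step is now to recognize that $\{T_{i\alpha}\to S\}$, together with the $\widetilde X_{i\alpha}$ and the isomorphisms inherited from the $\varphi_{ij}$, forms a descent datum for the sheaf $U$ (not yet for $\mathcal X$!) once we twist the naive restriction isomorphisms by those $\varphi_{ij}$ to make them land in $\underline{\mathrm{Isom}}$; strictly, I would encode this by forming the sieve associated with $\{T_{i\alpha}\to S\}$ and applying Corollary~\ref{C:sieve-stack} to the morphism of stacks $U\to\mathcal X$. Because $U$ is a scheme (hence an fppf sheaf by subcanonicity) and because the prestack part already gives fppf descent for isomorphisms in $\mathcal X$, the twisted descent datum for $U$ is really a descent datum in the fppf topology for the fppf sheaf $U$, so it is effective and produces a morphism $\widetilde X:S\to U$. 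Composing with $P$ yields the desired object $X=P\circ\widetilde X\in\mathcal X(S)$ restricting to the given descent datum; a further application of the prestack condition (now known) identifies this object up to canonical isomorphism with the $X_i$ on each $S_i$.

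The main obstacle I expect is the bookkeeping in the last paragraph: one must be careful that the naive ``restrictions'' of the lifts $\widetilde X_{i\alpha}$ to overlaps $T_{i\alpha}\times_S T_{j\beta}$ do not literally agree as morphisms to $U$, but only after application of the $\varphi_{ij}$, and that this twisted gluing data is what $U$ (as an fppf sheaf) sees. This is conceptually routine once one has the language of sieves from \S\ref{S:descent-sieves}, but is where all the real content lives; the rest of the argument is formal once one knows that algebraic spaces are fppf sheaves and that smooth surjections admit étale-local sections.
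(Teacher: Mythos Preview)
Your prestack argument is fine in outline (modulo the claim that algebraic spaces are fppf sheaves, which you correctly identify as the heart of that reduction). The effectivity argument, however, has a real gap.

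You claim that the lifts $\widetilde X_{i\alpha}:T_{i\alpha}\to U$ assemble into a descent datum for the \emph{sheaf} $U$, and hence glue to a morphism $\widetilde X:S\to U$. But a descent datum for a sheaf requires the restrictions to overlaps to be literally \emph{equal}. Here there is no reason whatsoever for $\widetilde X_{i\alpha}\big|_{T_{i\alpha}\times_S T_{j\beta}}$ and $\widetilde X_{j\beta}\big|_{T_{i\alpha}\times_S T_{j\beta}}$ to agree as maps to $U$: they are arbitrary \'etale-local sections of the smooth surjections $U_i\to S_i$, and two sections of a smooth cover need not coincide. What you actually get on overlaps is an isomorphism between their images in $\mathcal X$ (via $\varphi_{ij}$), i.e., a point of $U\times_{\mathcal X}U$, not the identity in $U$. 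In other words, you have produced a cocycle for the groupoid $U\times_{\mathcal X}U\rightrightarrows U$ over an fppf cover, and asking that cocycle to be effective is precisely asking that $\mathcal X$ satisfy fppf descent---the statement you are trying to prove. If your argument worked as written, it would prove that every object of $\mathcal X(S)$ lifts globally to $U$, which is absurd.

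The paper sidesteps this by working with the fppf stackification $\mathcal X'$: one shows $\mathcal X\to\mathcal X'$ is injective (this is your prestack step, done by induction on stackiness), and then that the smooth cover $U\to\mathcal X$ remains a smooth cover of $\mathcal X'$ (here one descends $U\times_{\mathcal X}T$ along an fppf cover $T\to S$ to get an algebraic space $U\times_{\mathcal X'}S$ over $S$, using that algebraic spaces are fppf stacks). Since $U\to\mathcal X'$ factors through $\mathcal X$, this forces $\mathcal X\to\mathcal X'$ to be a covering injection of \'etale stacks, hence an isomorphism. Your argument can be repaired along these lines: instead of trying to descend a map to $U$, descend the smooth algebraic spaces $U\times_{\mathcal X,X_i}S_i$ (which \emph{do} carry genuine fppf descent data coming from the $\varphi_{ij}$) to an algebraic space $W\to S$; then $W\to S$ is smooth surjective, hence admits sections over an \emph{\'etale} cover of $S$, and now you can invoke \'etale descent for $\mathcal X$.
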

\begin{proof}
Suppose that $\mathcal X$ is an algebraic stack.  Let $\mathcal X'$ be the fppf stackification.  By induction, we can assume that we have already shown the diagonal of $\mathcal X$ is a relative fppf sheaf, which means that $\mathcal X \rightarrow \mathcal X'$ is injective.

Now let $U \rightarrow \mathcal X$ be a smooth cover.  We argue that $U \rightarrow \mathcal X'$ is representable by algebraic spaces.  Indeed, if $S \rightarrow \mathcal X'$ is any morphism, we can find an fppf cover $T \rightarrow S$ such that $T \rightarrow \mathcal X'$ lifts to $\mathcal X$.  Then $T \mathbin\times_{\mathcal X'} U = T \mathbin\times_{\mathcal X} U$ since $\mathcal X \subseteq \mathcal X'$, and $T \mathbin\times_{\mathcal X} U$ is an algebraic space.  But the map
\begin{equation*}
T \mathbin\times_{\mathcal X'} U \rightarrow S \mathbin\times_{\mathcal X} U
\end{equation*}
is the base change of the fppf cover $T \rightarrow U$, so $S \mathbin\times_{\mathcal X} U$ has an fppf cover by an algebraic space.  It is therefore an algebraic space, as required.

Furthermore, $S \mathbin\times_{\mathcal X'} U \rightarrow S$ is a smooth cover, since smoothness can be verified locally in the fppf topology and $T \mathbin\times_{\mathcal X'} U = T \mathbin\times_{\mathcal X} U \rightarrow T$ is a smooth cover.  Therefore $U \rightarrow \mathcal X'$ is a smooth cover, and as this factors through $\mathcal X$, the map $\mathcal X \rightarrow \mathcal X'$ is a smooth cover.  Now both $\mathcal X$ and $\mathcal X'$ are stacks in the \'etale topology, and $\mathcal X \rightarrow \mathcal X'$ was already seen to be injective, so $\mathcal X \rightarrow \mathcal X'$ is an isomorphism.
\end{proof}

\subsection{Other definitions of algebraicity}

The following definition collects some of the most common meanings attributed to algebraicity of a stack on the \'etale site of schemes, in roughly chronological order.  After giving the definition, we analyze the relationships among them, as well as to our Definition  \ref{D:alg-stack}.

\begin{dfn}[Algebraic stack] \label{D:AlgStack}
	Let $\mathcal X$ be a category fibered in groupoids over 
	$\mathsf S=\mathsf S_{\operatorname{et}}$.\footnote{Much of the literature works over the \'etale site of affine schemes.  Since every scheme has an \'etale (even Zariski) cover by affine schemes, the notions of stacks agree.}   We define various notions of algebraic stack using the table below.  
Namely, we call $\mathcal X$ a \emph{Deligne--Mumford algebraic stack (resp.~Knutson algebraic space, resp.~Artin algebraic stack, etc.})  if the diagonal $\Delta:\mathcal X\longrightarrow \mathcal X\times \mathcal X$ satisfies the condition specified in the second column of the table on page \pageref{F:AlgStackDef}, and there is a scheme $U$ and a surjection $p:U\longrightarrow \mathcal X$ that satisfies the 	conditions in the third column.
			The  morphism $p : U \rightarrow \mathcal X$  is called a \emph{presentation} of $\mathcal X$.
	A \emph{morphism} between any such stacks is a morphism of the underlying CFGs.
	\end{dfn}

\newcounter{tempfoot}
\setcounter{tempfoot}{\value{footnote}}

\renewcommand{\thefootnote}{\alph{footnote}}
\setcounter{footnote}{0}

\begin{figure}\label{F:AlgStackDef}
	\begin{center}
		\begin{longtable}{>{\raggedright}m{.3\textwidth}>{\raggedright}m{.4\textwidth}>{\raggedright}m{.3\textwidth}l}
			We call $\mathcal X$\ldots & if the diagonal  $\Delta : \mathcal X \rightarrow \mathcal X \times \mathcal X$ \newline is\ldots  \setcounter{footnote}{0}\footnotemark & and there is a scheme $U$ and a surjective morphism 
			\ $p : U \longrightarrow \mathcal X$  that is \ldots \footnotemark &\\[20pt]
			\hline \\
			a \emph{Deligne--Mumford algebraic  (DM algebraic) stack \cite[Def.~(4.6)]{DM}}\footnotemark & schematic & \'etale. & \\[20pt]
			a \emph{Knutson algebraic space \cite[Def.~II.1.1]{Knutson}} & injective and quasicompact \footnotemark & schematic  and \'etale. & \\[20pt]
			an \emph{Artin algebraic stack~\cite[Def.~(5.1)]{Artin-versal}}\footnotemark & representable by Knutson algebraic spaces &  smooth. & \\[20pt]
			a \emph{Laumon--Moret-Bailly (LMB) algebraic space \cite[Def.~(1.1)]{LMB}} & injective, schematic, and quasicompact &  \'etale. & \\[20pt]
			a \emph{Laumon--Moret-Bailly Deligne--Mumford (LMB DM) stack \cite[Def.~(4.1)]{LMB}} & representable by Laumon--Moret-Bailly algebraic spaces, separated, and quasicompact &   \'etale.   \footnotemark& \\[30pt]
			a \emph{Laumon--Moret-Bailly (LMB) algebraic stack \cite[Def.~(4.1)]{LMB}} & representable by Laumon--Moret-Bailly algebraic spaces, separated, and quasicompact &  smooth.  \setcounter{footnote}{5}\footnotemark & \\[25pt]
			a \emph{Fantechi Deligne--Mumford (F DM) stack \cite[Def.~5.2]{fantechi}} & & schematic  and \'etale. &\\[25pt]
			a \emph{Fantechi (F) algebraic stack \cite[Def.~5.2]{fantechi}}  & & schematic  and smooth. &\\[20pt]
			a \emph{stacks project (SP) algebraic space \cite[Tag~025Y]{stacks}} \setcounter{footnote}{6} \footnotemark & injective and schematic &   \'etale. & \\[20pt]
			a \emph{stacks project Deligne--Mumford (SP DM) stack \cite[Tag 03YO]{stacks}}  \setcounter{footnote}{6} \footnotemark & representable by SP algebraic spaces &  \'etale. &\\[25pt]
			a \emph{stacks project (SP) algebraic stack \cite[Tag~026O]{stacks}} \setcounter{footnote}{6} \footnotemark & representable by SP algebraic spaces &  smooth. &\\[20pt]
			\end{longtable}
	\end{center}
	\end{figure}

	\setcounter{footnote}{1}\footnotetext{Note  that our definition of representability is different, but equivalent to some of the notions used in the literature (see Lemma \ref{L:RepAff}).  
	The notions of quasicompact and separated morphisms of algebraic spaces defined in \S\ref{S:AlgStack2} carry over directly to all of the notions of algebraic spaces discussed here.
	Also, in the sources, the various notions of algebraic spaces are defined for sheaves, rather than stacks. Here, for uniformity,  we have simply added the injectivity hypothesis on the diagonal (see Lemma \ref{L:InjDiag}). }

	\setcounter{footnote}{2}\footnotetext{
The morphism $p:U\to \mathcal X$ is either assumed to be schematic, or is  representable by the same class of algebraic spaces for which the diagonal is representable by virtue of Lemma \ref{L:DiagIsomRep}.  In \S\ref{S:AlgStack2}, we defined  surjective, \'etale, and  smooth morphisms of algebraic spaces; these definitions carry over directly to all of the notions of algebraic spaces discussed here.}

	\setcounter{footnote}{3}\footnotetext{Deligne and Mumford caution that their definition is correct only for quasiseparated stacks \cite[Footnote~(1), p.~98]{DM}.}
	
	\setcounter{footnote}{4}\footnotetext{Quasicompactness of the  diagonal immediately implies quasicompactness of the map in \cite[Def.~1.1(c)]{Knutson}.  We leave the converse to the reader.  See also \cite[Tech.~Detail 1.p]{Knutson}, cf.~Stacks Project Algebraic Spaces.}
	
	\setcounter{footnote}{5}\footnotetext{Artin gives his definition only under an additional assumption of local finite presentation.}

	\setcounter{footnote}{6}\footnotetext{Laumon and Moret-Bailly ask only for a Laumon--Moret-Bailly algebraic space $U$ and a smooth surjection onto $\mathcal X$~\cite[Def.~(4.1)]{LMB}, but then $U$ has an \'etale cover by a scheme, so the definition is equivalent.}

	\setcounter{footnote}{7}\footnotetext{The Stacks Project requires its algebraic spaces and algebraic stacks to be sheaves in the fppf topology \cite[Tag 025Y, 026O]{stacks}.  This yields an equivalent definition by \cite[Tag 076M]{stacks} in the case of algebraic spaces and by \cite[Tag~076U]{stacks} in the case of algebraic stacks (see \S\ref{S:flat}).}

\renewcommand{\thefootnote}{\arabic{footnote}}
\setcounter{footnote}{\value{tempfoot}}

\begin{wrn}
There is an unfortunate, confusing point in the nomenclature introduced in Definition \ref{D:AlgStack}.  Deligne and Mumford defined an algebraic stack to be what we have, for the sake of historical verisimilitude, called a `Deligne--Mumford algebraic stack' above.  Artin defined algebraic stacks more inclusively, and the modern terminology is more inclusive still.  Meanwhile, the term Deligne--Mumford stack has come to refer to algebraic stacks with unramified diagonal.  As the term `algebraic' has become ever more inclusive, so has `Deligne--Mumford', so that now the class of `Deligne--Mumford stacks', while  contained in the class of algebraic stacks,  unfortunately includes some stacks that are not  `Deligne--Mumford algebraic stacks' in the sense we defined them here.   The relationship among the definitions is clarified in Figure \ref{E:kafka}.
\end{wrn}

\begin{rem}\label{R:CovSurj}
A smooth (resp.~\'etale) morphism of algebraic stacks $\pi : \mathcal Y \rightarrow \mathcal X$ that is representable by  algebraic spaces is surjective if and only if it is covering in the \'etale topology.  Indeed, $\pi$ is surjective if and only if its base change to any scheme is surjective, if and only if its base change to any scheme is covering, if and only if it is covering.
 In short, for   Definition \ref{D:AlgStack}, in the table on page \pageref{F:AlgStackDef}  we could replace the 
heading 
\emph{`... and there is a scheme $U$ and a surjective morphism $p:U\to \mathcal X$ ...'}
with the heading 
\emph{`... and there is a scheme $U$ and a cover $p:U\to \mathcal X$ ...'}.  
  Indeed, in the definition, we have either stipulated that $p$ is at least  representable by algebraic spaces, or we obtain this from the condition on the diagonal (see Lemma \ref{L:DiagIsomRep}).
\end{rem}

\subsection{Remarks on representability}

Note that in order to be able to speak about morphisms representable by the classes of algebraic stacks in  Definition \ref{D:AlgStack}, we need to know that these categories admit fiber products. 

\begin{lem}[{cf.\ \cite[Tags~02X2 and~04T2]{stacks}}] \label{L:FibProd}
	All of the classes of stacks in Definition \ref{D:AlgStack} admit fiber products and these coincide with fiber products taken on the underlying CFGs.
\end{lem}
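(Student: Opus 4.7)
The plan is to verify the claim class by class in Definition \ref{D:AlgStack}, by observing that each definition amounts to two independent demands: a condition on the diagonal and the existence of a presentation of a prescribed type. Since any CFG fiber product of stacks is again a stack (Lemma \ref{L:FPS}), no stackification is required, and the task reduces to showing that both conditions are stable under the CFG fiber product $\mathcal W := \mathcal X \times_{\mathcal Z} \mathcal Y$.

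First I would handle the presentation. Given $\mathcal X \to \mathcal Z \leftarrow \mathcal Y$ in a given class, choose presentations $p : U \to \mathcal X$ and $q : V \to \mathcal Y$ with the property $\mathbf P$ specified in the third column (smooth, \'etale, or schematic plus smooth/\'etale). Consider the composition
$$U \times_{\mathcal Z} V \longrightarrow V \times_{\mathcal Y} \mathcal W \longrightarrow \mathcal W,$$
in which each factor is a base change of $p$ or $q$. By Lemma \ref{L:FPCompCov} and Lemma \ref{L:Rep}, together with stability of the properties listed in Example \ref{Exa:StabLoc} under base change and composition, this composition is a surjective morphism enjoying property $\mathbf P$. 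To see that $U \times_{\mathcal Z} V$ is of the appropriate type, rewrite it as $(U \times V) \times_{\mathcal Z \times \mathcal Z} \mathcal Z$, which is a base change of $\Delta_{\mathcal Z}$; it is therefore representable by whatever $\Delta_{\mathcal Z}$ is representable by. In classes that demand the presentation to come from a \emph{scheme} while the diagonal of $\mathcal Z$ is only representable by algebraic spaces (as in the SP setting), I would compose once more with an \'etale scheme cover of the resulting algebraic space.

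For the diagonal I would use the factorization
$$\mathcal W \xrightarrow{\ \Delta_{\mathcal W/\mathcal Z}\ } \mathcal W \times_{\mathcal Z} \mathcal W \longrightarrow \mathcal W \times \mathcal W,$$
whose second arrow is the base change of $\Delta_{\mathcal Z}$ along $\mathcal W \times \mathcal W \to \mathcal Z \times \mathcal Z$, and whose first arrow is canonically identified with $\Delta_{\mathcal X/\mathcal Z} \times_{\mathcal Z} \Delta_{\mathcal Y/\mathcal Z}$; each relative diagonal $\Delta_{\mathcal X/\mathcal Z}$ in turn fits in a cartesian square involving $\Delta_{\mathcal X}$ and $\Delta_{\mathcal Z}$, since $\mathcal X \times_{\mathcal Z} \mathcal X = (\mathcal X \times \mathcal X) \times_{\mathcal Z \times \mathcal Z} \mathcal Z$. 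Lemma \ref{L:Rep} then propagates the relevant properties of the diagonals (injectivity, representability by schemes or algebraic spaces, separation, quasicompactness) from $\mathcal X, \mathcal Y, \mathcal Z$ to $\mathcal W$ in each row of the table.

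The main obstacle I expect is the bookkeeping in the classes whose diagonal condition and presentation condition refer to different categories (for instance SP algebraic stacks, where the diagonal is representable by algebraic spaces while the presentation is required from a scheme): one must be careful to distinguish the presentation of $\mathcal W$ inherited from presentations of $\mathcal X$ and $\mathcal Y$ from a presentation by a scheme, and if necessary refine by an \'etale scheme cover of $U \times_{\mathcal Z} V$ supplied by algebraicity. Apart from this case analysis, the argument is a direct unwinding of stability properties already established.
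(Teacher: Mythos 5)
Your overall strategy (reduce to transferring the diagonal condition and the presentation to the CFG fiber product, using stability under base change and composition) is the same as the paper's, and your construction of the cover $U\times_{\mathcal Z}V\to V\times_{\mathcal Y}\mathcal W\to\mathcal W$ as a composition of base changes of $p$ and $q$ is correct. The genuine gap is in identifying what $U\times_{\mathcal Z}V$ \emph{is}. You read this off from $U\times_{\mathcal Z}V=(U\times V)\times_{\mathcal Z\times\mathcal Z}\mathcal Z$, i.e.\ from whatever $\Delta_{\mathcal Z}$ is representable by; but several classes in Definition~\ref{D:AlgStack} require a \emph{schematic} cover by a scheme while imposing no (or no useful) condition on the diagonal: F~DM and F~algebraic stacks have a blank diagonal column, and Knutson algebraic spaces only assume the diagonal injective and quasicompact. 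For exactly these classes your argument gives no control on $U\times_{\mathcal Z}V$ — it is not known to be a scheme, nor even an algebraic space, from the definitions alone — and your fallback of refining by an \'etale scheme cover is stated only ``in the SP setting,'' where the diagonal is assumed representable by algebraic spaces. Salvaging the F and K cases along your lines would require the nontrivial input that the diagonal of any algebraic stack is representable by algebraic spaces (Lemma~\ref{L:DiagAlgStack}), plus a check that the refined cover remains schematic. The paper's proof avoids all of this with one extra move you do not make: it also chooses a schematic cover $Z\to\mathcal Z$ of the base and writes the cover of $\mathcal W$ as $X_Z\times_{\mathcal Z}Y_Z=(X_Z\times Y_Z)\times_{Z\times Z}(Z\times_{\mathcal Z}Z)$, where $X_Z=X\times_{\mathcal Z}Z$, $Y_Z=Y\times_{\mathcal Z}Z$ and $Z\times_{\mathcal Z}Z$ are schemes purely because $Z\to\mathcal Z$ is schematic; no hypothesis on any diagonal enters, and the SP cases then follow by rerunning the argument with ``algebraic space'' in place of ``scheme.''

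A secondary imprecision: the relative diagonal $\Delta_{\mathcal X/\mathcal Z}$ does \emph{not} sit in a cartesian square over $\Delta_{\mathcal X}$ — pulling back $\Delta_{\mathcal X}$ along $\mathcal X\times_{\mathcal Z}\mathcal X\to\mathcal X\times\mathcal X$ produces $\mathcal X$ twisted by the (pulled-back) inertia of $\mathcal Z$, not $\mathcal X$, whenever $\mathcal Z$ has nontrivial automorphisms. What is true is the factorization $\Delta_{\mathcal X}=(\text{base change of }\Delta_{\mathcal Z})\circ\Delta_{\mathcal X/\mathcal Z}$, and passing properties from $\Delta_{\mathcal X}$ and $\Delta_{\mathcal Z}$ down to $\Delta_{\mathcal X/\mathcal Z}$ needs the cancellation statement, Lemma~\ref{L:Rep}\eqref{I:rep-rel} (resp.\ Lemma~\ref{L:DcondL}), which in turn uses control of the double diagonal of $\mathcal Z$. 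Since the paper's own handling of the diagonal conditions is a comparably brief remark (``the diagonal of a fibered product is the fibered product of the diagonals''), I regard this as a matter of precision; the cover construction in the F~DM, F~algebraic, and Knutson cases is the substantive missing step.
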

\begin{proof}
	To deal with the entries in Definition~\ref{D:AlgStack} that involve a schematic cover (DM stack, K algebraic space, LMB algebraic space, FDM stack, F algebraic stack, SP algebraic space), suppose that $\mathcal X \rightarrow \mathcal Z$ and $\mathcal Y \rightarrow \mathcal Z$ are morphisms of stacks of the appropriate type.  Choose smooth schematic covers (or \'etale schematic covers, as the case warrants) $X \rightarrow \mathcal X$, $Y \rightarrow \mathcal Y$, and $Z \rightarrow \mathcal Z$ by schemes $X$, $Y$, and $Z$.  Set $X_Z = X \mathop{\times}_{\mathcal Z} Z$ and $Y_Z = Y \mathop{\times}_{\mathcal Z} Z$, as in the diagram below:
$$
\xymatrix@!C=2pc@R=2ex{
X_Z \ar[d] \ar[rrdd]&&X_Z\times_{\mathcal Z}Y_Z \ar[rr] \ar[ll] \ar[d]&&Y_Z \ar[lldd] \ar[d]\\
X \ar[rd] &&\mathcal X\times_{\mathcal Z}\mathcal Y \ar[dl] \ar[dr]&&Y \ar[ld]\\
&\mathcal X \ar[rd]&Z\ar[d]&\mathcal Y \ar[ld]&\\
&&\mathcal Z&&\\
}
$$
The projections $X_Z \rightarrow \mathcal X$ and $Y_Z \rightarrow \mathcal Y$ are both smooth (or \'etale, according to the case) and covering (by base change and composition; Lemma \ref{L:FPCompCov}).  Now,
	\begin{equation*}
		X_Z \mathop{\times}_{\mathcal Z} Y_Z = (X_Z \times Y_Z) \mathop{\times}_{Z \mathop{\times} Z} (Z \mathop{\times}_{\mathcal Z} Z)
	\end{equation*}
	so $X_Z \mathop{\times}_{\mathcal Z} Y_Z$ is a fiber product of schemes, hence is a scheme. 
	Moreover, the map $X_Z \mathop{\times}_{\mathcal Z} Y_Z \rightarrow \mathcal X \mathop{\times}_{\mathcal Z} \mathcal Y$ is a fibered product of smooth (or \'etale, as the case warrants) coverings hence is a smooth (or \'etale) covering.  Finally,  via composition and base change, Lemma \ref{L:Rep} implies the maps $X_Z \rightarrow \mathcal X$ and $Y_Z \rightarrow \mathcal Y$ are schematic, so the same applies to $X_Z \mathop{\times}_{\mathcal Z} Y_Z \rightarrow \mathcal X \mathop{\times}_{\mathcal Z} \mathcal Y$ from Lemma \ref{L:Rep}\eqref{I:rep-prod}.

	Now this implies that fiber products of SP algebraic spaces are SP algebraic spaces so that the same argument can be repeated, with `representable by algebraic spaces' substituted for `schematic' and `algebraic space' substituted for `scheme'.  This proves the lemma for SP algebraic stacks and SP DM stacks.

	All of the remaining classes in Definition~\ref{D:AlgStack} can be characterized as algebraic stacks with additional conditions on the diagonal.  The conditions imposed on the diagonal are all properties that are preserved by fibered products of morphisms (note that properties preserved by composition and fibered product are preserved by fibered products of morphisms; see the proof of Lemma \ref{L:Rep}\eqref{I:rep-prod}), and the diagonal morphism of a fibered product is the fibered product of the diagonal morphisms.   
\end{proof}

\begin{rem}\label{R:MRAS}
Using Definition~\ref{D:Rep}, we now may speak of morphisms representable by any of the classes of stacks in Definition \ref{D:AlgStack}.
\end{rem}

Although we will not need to do so, it is sometimes convenient to test representability by various classes of stacks using smaller classes.  The following lemma shows that the class of affine schemes suffices to address most representability questions.

\begin{lem}\label{L:RepAff}
A morphism of stacks $f:\mathcal X\to \mathcal Y$ is representable by  a class of algebraic spaces in Definition \ref{D:AlgStack} (resp.~schematic) if and only if for every affine scheme $S=\operatorname{Spec}A$, the fibered product $\mathcal X\times_{\mathcal Y}S$ is representably by an algebraic space in that class (resp.~a scheme).
\end{lem}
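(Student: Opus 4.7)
The plan is to prove the two directions separately. The forward implication ($\Rightarrow$) is essentially tautological: affine schemes belong to every class of algebraic spaces (and to the class of schemes) under consideration, so if $f$ is representable by the class, then in particular $\mathcal X \times_{\mathcal Y} \operatorname{Spec} A$ lies in the class for every $A$. The substance lies in the reverse implication.

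For the reverse direction, let $T$ be an arbitrary scheme (respectively, an arbitrary algebraic space in the class) equipped with a morphism $T \to \mathcal Y$. The goal is to show that $\mathcal X_T := \mathcal X \times_{\mathcal Y} T$ lies in the class. First I would choose a Zariski-open covering $\{U_i = \operatorname{Spec} A_i \to T\}$ of $T$ by affine schemes (for an algebraic space $T$, I would first pull back to an \'etale cover of $T$ by a scheme; the hypothesis and the conclusion are both local in the \'etale topology on $T$, so this reduction is harmless). Each composition $U_i \to T \to \mathcal Y$ is a morphism from an affine scheme, so by hypothesis $\mathcal X_i := \mathcal X \times_{\mathcal Y} U_i$ is in the class. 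Since open immersions are stable under base change, $\mathcal X_i$ sits naturally as an open substack of $\mathcal X_T$, and the $\mathcal X_i$ cover $\mathcal X_T$ because covers are stable under base change (Lemma~\ref{L:FPCompCov}). On the pairwise overlaps, both $\mathcal X_i \times_T U_{ij}$ and $\mathcal X_j \times_T U_{ij}$ are canonically isomorphic to $\mathcal X \times_{\mathcal Y} U_{ij}$, so the $\mathcal X_i$ are glued along canonical isomorphisms respecting the cocycle condition.

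Now the key point is that the relevant class is closed under Zariski (or \'etale) gluing along open substacks. For the schematic case, schemes glue along open immersions to schemes. For the various notions of algebraic space in Definition~\ref{D:AlgStack}, the underlying CFG obtained by gluing algebraic spaces along open subspaces is again an algebraic space, since having a schematic (or representable) \'etale surjection from a scheme is a local property on the target: one simply takes the disjoint union of presentations of the $\mathcal X_i$. The conditions imposed on the diagonal --- being injective, quasicompact, separated, or representable by a specified subclass of algebraic spaces --- are all stable under base change and local on the target in the \'etale topology (see Example~\ref{Exa:StabLoc}), so they descend from the $\mathcal X_i \to U_i$ to $\mathcal X_T \to T$.

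The main potential obstacle is bookkeeping: one has to verify simultaneously for each notion in Definition~\ref{D:AlgStack} that both the existence of a presentation and the diagonal hypothesis are local on the target. For the diagonal, I would use the fact that the diagonal of $\mathcal X_T \to T$ is obtained by base change from the diagonal of $\mathcal X_T \to \mathcal X_T \times_T \mathcal X_T$, which in turn can be checked locally over the open cover $\{U_i \times U_j\}$ of $T \times T$. None of these verifications involves any new idea beyond locality on the target, so the proof reduces to a routine check.
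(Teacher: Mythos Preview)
Your proposal is correct and follows essentially the same approach as the paper: cover the test object by affine schemes (Zariski-locally for schemes, \'etale-locally for algebraic spaces via Theorem~\ref{T:more-alg-stack}), apply the hypothesis over each affine, and then invoke that the defining properties of each class satisfy \'etale (resp.\ Zariski) descent. The paper compresses all of this into two sentences, while you spell out the gluing of presentations and the descent of diagonal conditions, but the substance is identical. One minor wobble: in your final paragraph, the diagonal $\mathcal X_T \to \mathcal X_T \times_T \mathcal X_T$ lives over $T$, so you should check it locally over the cover $\{U_i\}$ of $T$ (pulling back gives $\mathcal X_i \to \mathcal X_i \times_{U_i} \mathcal X_i$), not over $\{U_i \times U_j\}$ in $T \times T$---but this is a slip in bookkeeping, not a gap in the argument.
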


\begin{proof}
All of the properties involved in the various definitions of algebraic space satisfy \'etale descent, so they can be verified \'etale locally.  Since algebraic spaces have \'etale covers by affine schemes (Theorem~\ref{T:more-alg-stack}) we can check if a morphism is representable by algebraic spaces in any sense by testing with affine schemes.  The same argument with Zariski descent and schemes takes care of the respected case.
\end{proof}

If $\mathcal X$ is a CFG then conditions on the diagonal correspond by base change to conditions on the fiber product $S \mathop{\times}_{\mathcal X \times \mathcal X} S$ for all schemes $S$ and all pairs of morphisms $x,y : S \rightarrow \mathcal X$.  Recall from Example~\ref{E:diagonal} that this fiber product may be identified with $\mathscr I\!\!\mathit{som}_{\mathcal X}(x,y)$ and from Lemma~\ref{L:InjDiag} that a CFG has injective diagonal if and only if it is equivalent to a sheaf.

\begin{lem}[{\cite[Cor.~3.13]{LMB}, \cite[Prop.~5.12]{DMstacks}, \cite[Tag 045G]{stacks}}]\label{L:DiagIsomRep}
Let $\mathcal M$ be a CFG over over $\mathsf S$.  The following conditions are equivalent:
\begin{enumerate}

\item The diagonal morphism $\mathcal M\stackrel{\Delta}{\longrightarrow} \mathcal M \times \mathcal M$ is representable by SP algebraic spaces (resp.~representable by K algebraic spaces, resp.~schematic);

\item For all $S$ in $\mathsf S$, and all $x,y$ in $\mathcal M(S)$, the presheaf $\mathscr I\!\!\mathit{som}(x,y)$ on $\mathsf S$ is representable by an SP algebraic $S$-space (resp.~K algebraic $S$-space, resp.~$S$-scheme);

\item For all $S$ in $\mathsf S$ and all $x$ in $\mathcal M(S)$, the morphism $x : S \rightarrow \mathcal M$ (guaranteed by the Yoneda lemma) is representable by SP algebraic spaces (resp.~representable by K algebraic spaces, resp.~schematic);

\item For every SP algebraic space (resp.~K algebraic space, resp.~scheme) $S$, we have that every morphism $S\to \mathcal M$ is SP-representable (resp.~K representable, resp.~schematic).   \label{L:DiagIsomRep:ItemT}
\end{enumerate}
\end{lem}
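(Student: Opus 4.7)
The plan is to deduce all four equivalences from the single observation in Example~\ref{E:diagonal} that, for any $x,y \in \mathcal M(S)$, there is a $2$-cartesian square
\begin{equation*}
\xymatrix{
\mathscr I\!\!\mathit{som}_{\mathcal M}(x,y) \ar[r]\ar[d] & S \ar[d]^{(x,y)} \\
\mathcal M \ar[r]^{\Delta} & \mathcal M\times\mathcal M.
}
\end{equation*}
together with the stability properties of $\mathsf C$-representable morphisms proved in Lemma~\ref{L:Rep}, where $\mathsf C$ is the class of SP algebraic spaces, K algebraic spaces, or schemes (in the respective case).

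First, I would prove $(1)\Leftrightarrow(2)$. The implication $(1)\Rightarrow(2)$ is immediate from the cartesian square above together with Lemma~\ref{L:Rep}\eqref{I:rep-bc} (base change preserves $\mathsf C$-representability). For $(2)\Rightarrow(1)$, to check that $\Delta$ is $\mathsf C$-representable we must show that $T\times_{\mathcal M\times\mathcal M}\mathcal M$ is in $\mathsf C$ for every morphism $T\to\mathcal M\times\mathcal M$ with $T\in\mathsf C$. Such a morphism is, by the $2$-Yoneda lemma, a pair $(x,y)$ of objects in $\mathcal M(T)$, and the cartesian square identifies the fiber product with $\mathscr I\!\!\mathit{som}_{\mathcal M}(x,y)$, which is in $\mathsf C$ by hypothesis.

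Next I would establish $(2)\Leftrightarrow(3)$. Given $x:S\to\mathcal M$, representability of $x$ means that $T\times_{\mathcal M} S$ lies in $\mathsf C$ for every $y:T\to\mathcal M$ with $T\in\mathsf S$. Pulling back $x$ and $y$ to $S\times T$, call them $x',y'$, I would check (directly from the definition of $2$-fiber product in Definition~\ref{D:2-product-categories-over-C}) that there is a natural identification $T\times_{\mathcal M} S \simeq \mathscr I\!\!\mathit{som}_{\mathcal M}(x',y')$ as objects over $S\times T$. Then $(2)\Rightarrow(3)$ follows, since $\mathscr I\!\!\mathit{som}_{\mathcal M}(x',y')\in\mathsf C$ by hypothesis, and morphisms of objects of $\mathsf C$ are closed under composition with the projection to the base. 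For $(3)\Rightarrow(2)$, specialize the above identification: take $T=S$ and read off that $\mathscr I\!\!\mathit{som}_{\mathcal M}(x,y)$ is the restriction along the diagonal $S\hookrightarrow S\times S$ of the object representing $T\times_{\mathcal M}S$, and use Lemma~\ref{L:Rep}\eqref{I:rep-bc} again.

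Finally, $(4)\Rightarrow(3)$ is trivial since $\mathsf S$ is contained in the class of algebraic spaces. For $(3)\Rightarrow(4)$, let $x:S\to\mathcal M$ with $S$ an algebraic space, and choose an \'etale scheme cover $U\to S$. Then $x\circ(U\to S):U\to\mathcal M$ is $\mathsf C$-representable by $(3)$. For any morphism $T\to\mathcal M$ with $T\in\mathsf C$, Lemma~\ref{L:Rep}\eqref{I:rep-bc} gives that $U\times_{\mathcal M} T$ is in $\mathsf C$, and then descent along the \'etale cover $U\times_{\mathcal M} T\to S\times_{\mathcal M} T$ (together with the fact that $\mathsf C$ is closed under \'etale-local formation, i.e., stable under \'etale descent from a cover by an object of~$\mathsf C$) shows $S\times_{\mathcal M} T\in\mathsf C$. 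I expect this last descent step to be the main technical obstacle, since it is the only place one must appeal to the internal structure of the class~$\mathsf C$ rather than to purely formal category-theoretic manipulations; the descent statement itself is contained in the characterization of algebraic spaces as \'etale sheaves admitting \'etale covers by schemes (Definition~\ref{D:AlgDMAlgSp} and Theorem~\ref{T:ArtinPres}).
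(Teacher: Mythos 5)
The paper itself gives no proof of this lemma --- it simply cites \cite{LMB}, \cite{DMstacks}, and the Stacks Project --- and your argument is the same standard one those sources use and that the paper has already set up: the $2$-cartesian square of Example~\ref{E:diagonal}, the identification $T\times_{\mathcal M}S\simeq \mathscr I\!\!\mathit{som}(x',y')$ over $S\times T$, and the stability properties of Lemma~\ref{L:Rep}. So the route is the right one; the problems are in how you handle the algebraic-space flavors.

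Concretely, in $(2)\Rightarrow(1)$ you test $\Delta$ against an arbitrary $T\in\mathsf C$ and assert that a morphism $T\to\mathcal M\times\mathcal M$ ``is, by $2$-Yoneda, a pair of objects of $\mathcal M(T)$.'' When $\mathsf C$ is the class of SP or K algebraic spaces, $T$ need not be a scheme, $\mathcal M(T)$ is not even defined (the CFG lives over $\mathsf S$), and hypothesis $(2)$ only speaks of objects $S\in\mathsf S$; so as written this step fails. The same slip occurs in $(2)\Leftrightarrow(3)$, where you quietly replace Definition~\ref{D:Rep} by ``fiber products with \emph{schemes} $T\in\mathsf S$ lie in $\mathsf C$.'' Both issues are repaired by the reduction to (affine) schemes, i.e.\ Lemma~\ref{L:RepAff}, which the paper states immediately before this lemma for exactly this purpose --- or by the same \'etale-descent/bootstrap argument you sketch for $(3)\Rightarrow(4)$. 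On that last step, two things still need to be said: before descending you must know that $S\times_{\mathcal M}T$ is an \'etale \emph{sheaf}, which uses that the $\mathscr I\!\!\mathit{som}$ presheaves are sheaves (they are representable by $(3)\Rightarrow(2)$, and the \'etale topology is subcanonical); and the descent statement you want is the characterization of algebraic spaces by a schematic \'etale cover (Theorem~\ref{T:more-alg-stack}), not Theorem~\ref{T:ArtinPres}, which concerns fppf covers of stacks. Finally, in the K column one must also check quasicompactness of the diagonal of $S\times_{\mathcal M}T$; this does follow, by factoring that diagonal through base changes of the diagonals of the $\mathscr I\!\!\mathit{som}$ space and of $S\times T$, but it is an argument you have not given.
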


\subsection{Overview of the relationships among the definitions of algebraicity}

Our definition of an algebraic stack can also be characterized in a similar manner to Definition \ref{D:AlgStack}.
	
\begin{teo} \label{T:more-alg-stack}
	Each type of stack $\mathcal X$ in the first column below is characterized by the condition on the diagonal in the second column and the existence of a surjection $p : U \rightarrow \mathcal X$ satisfying the condition in the third column.
	\begin{center}
		\begin{tabular}{>{\raggedright}m{.3\textwidth}>{\raggedright}m{.4\textwidth}>{\raggedright}m{.3\textwidth}l}
			A stack $\mathcal X$ is... & if the diagonal  $\Delta : \mathcal X \rightarrow \mathcal X \times \mathcal X$ \newline is\ldots  & and there is a scheme $U$ and a surjective morphism 
			\ $p : U \longrightarrow \mathcal X$  that is \ldots   &\\[20pt]
			\hline \\
			an algebraic space  (Definition \ref{D:AlgDMAlgSp})    & injective   & schematic and  \'etale. & \\[20pt]
			a Deligne--Mumford stack (Definition \ref{D:AlgDMAlgSp})	&	unramified & representable by algebraic spaces and smooth. &\\[20pt]
			a Deligne--Mumford stack (Definition \ref{D:AlgDMAlgSp})    &   & representable by algebraic spaces and \'etale. &\\[20pt]
			an algebraic stack  (Definition \ref{D:AlgDMAlgSp}) &  & representable by algebraic spaces and smooth. &\\[20pt]
		\end{tabular}
	\end{center}		
\end{teo}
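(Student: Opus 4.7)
The plan is to prove each row by establishing both directions of the equivalence between the intrinsic iterative definition (Definition~\ref{D:AlgDMAlgSp}) and the diagonal/cover characterization. All four statements follow a common pattern of induction on the iterative adaption construction in Section~\ref{S:P-adapted}, together with Lemma~\ref{L:Rep} (stability of representable morphisms under composition and base change) and Lemma~\ref{L:DiagIsomRep} (which converts representability of the diagonal into representability of arbitrary morphisms from schemes).

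For Row 4 (algebraic stack), the key step is to show that the class of algebraic stacks coincides with the class of stacks that admit a smooth cover representable by algebraic spaces. For the forward implication, I would induct on the iterative construction of Definition~\ref{D:alg-stack}: if $\mathcal{X}$ admits a smooth representable cover by some already-constructed algebraic stack $\mathcal{Y}$, and by inductive hypothesis $\mathcal{Y}$ admits a smooth representable cover $V \to \mathcal{Y}$ with $V$ an algebraic space, then composition and Lemma~\ref{L:Rep} show that $V \to \mathcal{X}$ is a smooth representable cover. The converse is immediate since algebraic spaces are algebraic stacks. The diagonal assertion then follows from Lemma~\ref{L:DiagIsomRep}: one verifies by bootstrap that any map from a scheme to $\mathcal{X}$ is representable by algebraic spaces, by pulling back along $V \to \mathcal{X}$ and using that $V$ is an algebraic space.

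For Row 1 (algebraic space), by Definition~\ref{D:AlgDMAlgSp} and Lemma~\ref{L:InjDiag}, the task reduces to showing that an algebraic stack with injective diagonal admits a schematic \'etale surjection from a scheme. Starting from a smooth representable cover $U \to X$ with $U$ an algebraic space, one may replace $U$ by its own schematic \'etale cover, reducing to the case where $U$ is a scheme. Representability as a scheme follows because $X$ is a sheaf: for any scheme $S \to X$, the fiber product $U \times_X S$ is a subsheaf of $U \times S$, hence a scheme. To refine smoothness to \'etaleness, apply the standard slicing argument based on the Jacobian criterion: \'etale-locally on $U$, the smooth morphism $U \to X$ factors through some $\mathbb{A}^n_X \to X$ with $U \to \mathbb{A}^n_X$ \'etale; local sections of the projection $\mathbb{A}^n_X \to X$ (which exist locally on $X$ as soon as $X$ is a scheme locally, which it is after pulling back by the chosen cover) pull back to \'etale charts that glue into a global \'etale surjection. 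For Rows 2 and 3 (Deligne--Mumford stacks), Row 3 is essentially a restatement of the definition, again modulo the replacement $V \to U \to \mathcal{X}$ that reduces the \'etale representable cover by an algebraic space to an \'etale representable cover by a scheme via Row 1. Row 2 then amounts to proving that smooth representable cover plus unramified diagonal is equivalent to \'etale representable cover: one direction follows because a scheme \'etale over $\mathcal{X}$ has \'etale $U \times_\mathcal{X} U \to U \times U$, which identifies (by Example~\ref{E:diagonal}) with the pullback of $\Delta_\mathcal{X}$, forcing $\Delta_\mathcal{X}$ to be unramified; the other direction is another slicing argument where unramifiedness of the diagonal ensures that $T^{-1}_\mathcal{X}$ vanishes, so the fiber dimension of $U \to \mathcal{X}$ equals the smoothness relative dimension, and one may slice by precisely that many hyperplanes to obtain an \'etale refinement.

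The main obstacle in all cases is the slicing step: refining a smooth representable cover to an \'etale one. While the slicing technique is classical for morphisms of schemes via the Jacobian criterion, executing it over a stack target requires descending the local \'etale charts on the source through the cover and checking that the resulting \'etale maps into $\mathcal{X}$ are both jointly surjective and representable by algebraic spaces --- the latter coming for free from Lemma~\ref{L:Rep}\eqref{I:rep-bc}, but the former requiring a careful cover-by-cover gluing that uses the stack property of $\mathcal{X}$.
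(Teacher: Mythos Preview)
Your approach to Row~4 is essentially correct and matches the paper's: the iterative definition unwinds to give a smooth cover by a scheme, and representability by algebraic spaces follows because the relative diagonal of $U \to \mathcal X$ is injective (the paper phrases this directly rather than via Lemma~\ref{L:DiagIsomRep}, but the content is the same).

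However, there are two genuine gaps in your treatment of Rows~1--3.

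\textbf{Row~1, schematicity.} Your claim that ``$U \times_X S$ is a subsheaf of $U \times S$, hence a scheme'' is not valid: a subsheaf of a scheme need not be a scheme. What you get from injectivity of the diagonal is only that $U \times_X S$ is an algebraic space admitting a monomorphism to the scheme $U \times S$, and such algebraic spaces are not in general schemes. The paper's argument is different and relies on a nontrivial input: the diagonal of an algebraic space is injective, hence separated and locally quasifinite, and separated locally quasifinite morphisms representable by algebraic spaces are schematic by \cite[Tag~02W8]{stacks}. This is what forces the diagonal --- and hence all maps from schemes --- to be schematic.

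\textbf{The slicing step.} Your sketch of the \'etale refinement is circular for Row~1 and too vague for Rows~2--3. You write that local sections of $\mathbb A^n_X \to X$ ``exist locally on $X$ as soon as $X$ is a scheme locally, which it is after pulling back by the chosen cover'' --- but producing an \'etale cover of $X$ by schemes is exactly the statement to be proved, so you cannot assume it. For Row~2, saying ``slice by precisely that many hyperplanes'' presupposes that one can make sense of relative dimension and choose hyperplane sections over a stack target; this is not automatic. The paper's argument (following \cite[Tag~06N3]{stacks}) is substantially more delicate: one works at a single finite-type point $x = \operatorname{Spec} k$ of $\mathcal X$, analyzes the groupoid $R = x \times_{\mathcal X} x$, passes to the quotient $y = x/R_0$ by the connected component of the identity to obtain a point $y \to \mathcal X$ that is \emph{unramified}, and only then chooses a smooth chart $U \to \mathcal X$ and slices by functions $f_1,\dots,f_d \in \mathcal O_{U,u}$ generating the maximal ideal at a lift of $y$. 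An ``induction on stackiness'' (first algebraic spaces, then stacks with unramified diagonal) is needed to ensure the intermediate fiber products are schemes at the right moment. None of this structure is visible in your sketch.
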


\begin{proof}[Sketch.]

Suppose  $\mathcal X$ is an algebraic stack.   Considering the iterative nature of Definition \ref{D:alg-stack}, it is clear that $\mathcal X$ has a smooth cover $P:U\to \mathcal X$ by a scheme $U$. We need to show that $P$ is representable by algebraic spaces.   This morphism must have injective relative diagonal (since $U \mathbin\times_{\mathcal X} U$ is a sheaf of sets and $U \rightarrow U \mathbin\times_{\mathcal X} U \rightarrow U \times U$ is injective), 
so it is representable by algebraic spaces, as required.

This argument applies also to show that Deligne--Mumford stacks have \'etale covers by algebraic spaces. 

	Suppose now that $\mathcal X$ is an algebraic space.  Then the diagonal of $\mathcal X$ is injective and representable by algebraic spaces.  In particular, it is locally quasifinite and separated, so by separated, locally quasifinite descent \cite[Tag 02W8]{stacks}, it is schematic.  

	To complete the proof, we need to show that algebraic spaces and algebraic stacks with unramified diagonals have \emph{\'etale} covers by schemes.  We will show these statements simultaneously (using an `induction on stackiness'), mostly following \cite[Tag 06N3]{stacks}.

	We argue that every finite-type point of $\mathcal X$ has an \'etale neighborhood that is a scheme.  This will suffice, since if $x$ is a geometric point of $\mathcal X$, we can find a smooth map $U \rightarrow \mathcal X$ where $U$ is an affine scheme and $x$ lifts to $U$.  Then this lift has a specialization to a closed point of $U$, which induces a point $y$ of $\mathcal X$ of finite type.  Any \'etale neighborhood of $y$ will also contain $x$.

	Let $x = \operatorname{Spec} k$ be a point of $\mathcal X$ of finite type.  We argue first that there is a factorization $x \rightarrow y \rightarrow \mathcal X$ where $y$ is \emph{unramified} over $\mathcal X$.  Let $R = x \mathbin\times_{\mathcal X} x$.  Then $R$ is flat over $x$ (since $x$ is the spectrum of a field) and of finite type (since the composition $u \mathbin\times_{\mathcal X} u \rightarrow u \mathbin\times_{\mathcal X} \mathcal X$ is the base change of a morphism $u \rightarrow \mathcal X$ of finite type). 

	Since $R$ is of finite type over $x$ via the first projection, its geometric fiber has finitely many components.  Replacing $k$ with a finite extension, we can therefore assume that the connected components of $R$ are geometrically connected.  Let $R_0 \subset R$ be the connected component of the diagonal section.  Note that $R_0 \rightarrow x \times x$ is unramified and its fiber over the diagonal $x \rightarrow x \times x$ is injective on geometric points.  But every geometric fiber is either empty or is a torsor under the fiber over the diagonal, so that every fiber is injective on geometric points.  It follows that $R_0 \rightarrow x \times x$ is a injection, so that it defines an equivalence relation on $x$.

	Let $y = x / R_0$ be the quotient of $x$ by this equivalence relation.  Since $R_0 \subset R$ is open, $R_0$ is flat over $x$ and therefore this is an algebraic space (Theorem~\ref{T:ArtinPres}) equipped with a map $y \rightarrow \mathcal Z$.  In fact, $y$ must be the spectrum of a field:  choose a smooth cover $\operatorname{Spec} A \rightarrow R_0$ by a scheme, and let $\ell$ be the equalizer of the two maps $k \rightarrow A$.  Then $\ell$ is a field (since an element of $k$ is equalized by two homomorphisms if and only if its inverse is).  Moreover, we obtain a map $y \rightarrow \operatorname{Spec} \ell$ that is covering and injective in the fppf topology.  Therefore it is an isomorphism (see \cite[Tag 0B8A]{stacks}).

	Now we argue that $y \rightarrow \mathcal X$ is unramified.  Indeed, the diagonal map $y \rightarrow y \mathbin\times_{\mathcal X} y$ pulls back via the fppf cover $R_0 = x \mathbin\times_{\mathcal X} x \rightarrow y \mathbin\times_{\mathcal X} y$ to $x \mathbin\times_{y} x = R_0$, which is open in $R$.  Therefore $y \rightarrow y \mathbin\times_{\mathcal X} y$ is an open embedding, so $y$ is unramified over $\mathcal X$.

	Now choose a smooth morphism $U \rightarrow \mathcal X$ containing $y$ in its image, with $U = \operatorname{Spec} A$ affine.  Let $V = U \mathbin\times_{\mathcal X} y$.  Then $V$ is a smooth algebraic space over $y$ and $V \rightarrow U$ is unramified.  If $\mathcal X$ is an algebraic space then we have seen that $U \rightarrow \mathcal X$ is schematic, so that $V$ is a scheme; in that case we write $W = V$.  In general, then we know the diagonal of $\mathcal X$ is at least representable by algebraic spaces, so $V$ is an algebraic space.  By `induction on stackiness' we may assume that there is a scheme $W$ and an \'etale map $W \rightarrow V$ whose image in $\mathcal X$ contains the image of $y$.

	Up to an \'etale extension of $\ell$, we can assume that $y$ is the image of an $\ell$-point of $W$, which we denote $w$.  We take $u$ to be its image in $U$.  The map $W \rightarrow U$ is unramified, so the maximal ideal $\mathfrak m$ of $w$ in $\mathcal O_W$ is generated by the image of the maximal ideal $\mathfrak n$ of $u$ in $U$.  We can therefore choose functions $f_1, \ldots, f_d \in \mathcal O_{U,u}$ whose images in $\mathcal O_{W,w}$ form a basis for $\mathfrak m/\mathfrak m^2$.  Replacing $W$ by an open neighborhood of $w$, the vanishing locus of $f_1, \ldots, f_d$ in $W$ will be $\{ w \}$.  In particular, the vanishing locus is unramified over $y$.  But the locus where $U \rightarrow \mathcal X$ is unramified is open, so that there is an open neighborhood $U'$ of $u \in U$ where $U' \rightarrow \mathcal X$ is unramified.  Since it is also smooth, it is \'etale, as required.
	\end{proof}

The implications among all of the definitions are described in Figure \ref{E:kafka}.  They can all essentially be explained by putting various conditions on the diagonal of an algebraic stack.  In this sense, from an expository perspective,  the definition of an algebraic stack is the basic definition, and the rest can easily be obtained from this. In practice this  is somewhat misleading, however, since one must first define an algebraic space to define an algebraic stack.

\begin{figure}
\begin{equation*} \vcenter{
\xymatrix@C=.9cm @R=.5cm{
&&\text{DM~Alg.} \ar@{=>}[dd] \ar@{=>}[r]&\text{SP~Alg.}~\Delta \text{ sch.} \ar@{=>}[dd]  \ar@{<=>}@/_1 pc/[l]_<>(0.5){\ \Delta \text{ n.r.}}\\
&&&\\
& & \text{F DM} \ar@{=>}[r]  \ar@{=>}[dd]  &  \text{F Alg.}  \ar@{=>}[dd]  \ar@{<=>}@/_1 pc/[l]_<>(0.5){\ \Delta \text{ n.r.}} \\
&&&\\
	\text{Sch.} \ar@{=>}[r]  \ar@{<=>}@/_.8 pc/[dd]_<>(0.5){\ \Delta \text{ q.c.}}&\text{SP~Alg.~Sp.} \ar@/^1pc/@{=>}[ruuuu]  \ar@{=>}[r]  \ar@{<=>}@/_.8 pc/[dd]_{\text{\scriptsize $\Delta$ q.c.}}& \text{SP~DM}  \ar@{<=>}@/^1.6 pc/[uuuu]^<>(0.4){\ \Delta \text{ sch.}}  \ar@{=>}[r]   \ar@{<=>}@/_.8 pc/[dd]_<>(0.5){\ \Delta \text{ q.c.+sep.}}   \ar@{<=>}@/_1 pc/[l]_<>(0.5){\ \Delta \text{ inj}} & \text{SP~Alg.} \ar@{<=>}@/_1.6 pc/[uuuu]_<>(0.4){\ \Delta \text{ sch.}} \ar@{<=>}@/_.8 pc/[dd]_<>(0.5){\ \Delta \text{ q.c.+sep.}}  \ar@{<=>}@/_1 pc/[l]_<>(0.5){\ \Delta \text{ n.r.}}\\
&&&\\
\text{q.s.~Sch.} \ar@{=>}[r] \ar@{=>}[uu]&\text{LMB~Alg.~Sp.} \ar@{=>}[r] \ar@{=>}[uu]&  \text{LMB~DM}  \ar@{=>}[r]      \ar@{=>}[uu] \ar@{<=>}@/^1 pc/[l]^<>(0.5){\ \Delta \text{ inj}}& \text{LMB~Alg.} \ar@{=>}[uu]  \ar@{<=>}@/^1 pc/[l]^<>(0.5){\ \Delta \text{ n.r.}}\\
&\text{K~Alg.~Sp.} \ar@{<=>}[u] &&\\
}}
\end{equation*}
\caption[Diagram of definitions of algebraic stacks]{
\label{E:kafka}
An arrow from one entry to another signifies that the class of objects at the arrow's tail are also of the type at its head.  An arrow with a label means that an object of the type at the tail satisfying the additional condition named in the label is also of the type at the head.  A double-headed arrow should be interpreted as a pair of arrows pointing in both directions with the same label; in other words, the condition in the label makes the conditions at its ends equivalent.  
}
\end{figure}

\subsection{Relationships among definitions of algebraic spaces}
\label{S:rel-alg-sp}
The following implications hold for algebraic spaces:

\begin{equation*} \vcenter{
\xymatrix@C=.9cm @R=.5cm{
	\text{Sch.} \ar@{=>}[r]  \ar@{<=>}@/_.8 pc/[dd]_<>(0.5){\ \Delta \text{ q.c.}}&\text{SP~Alg.~Sp.}    \ar@{<=>}@/_.8 pc/[dd]_{\text{\scriptsize $\Delta$ q.c.}} &\text{Alg.~Sp.} \ar@{<=>}[l]\\
	&\\
\text{q.s.~Sch.} \ar@{=>}[r] \ar@{=>}[uu]&\text{LMB~Alg.~Sp.} \ar@{=>}[uu]&\text{K~Alg.~Sp.} \ar@{<=>}[l] \\
}}
\end{equation*}
An arrow with a label indicates that the implication holds under the additional assumption indicated on the diagonal.  A two headed arrow implies that the definitions are equivalent under the given assumption on the diagonal.

The only arrows that require justification are the equivalence between LMB algebraic spaces and K algebraic spaces, and the equivalence between  algebraic spaces and SP algebraic spaces.

\begin{lem}\label{L:AlgSpaceEquiv}
LMB algebraic spaces are the same as K algebraic spaces and  SP algebraic spaces are the same as  algebraic spaces.
\end{lem}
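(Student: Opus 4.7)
The plan is to reduce both equivalences to two simple observations: (i) an étale scheme cover is automatically schematic as soon as the diagonal is schematic, and (ii) a smooth cover by a scheme, together with injective diagonal, already forces the structure of Definition \ref{D:AlgDMAlgSp}. The first is a direct application of Lemma~\ref{L:DiagIsomRep}\eqref{L:DiagIsomRep:ItemT}, and the second is exactly the content of Theorem~\ref{T:more-alg-stack} specialised to the sheaf case, so both equivalences will follow essentially formally.

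For the equivalence of K and LMB algebraic spaces, I would first check the implication K $\Rightarrow$ LMB. Given a K algebraic space $\mathcal X$ with a schematic étale cover $p:U\to\mathcal X$, the map $p\times p : U\times U \to \mathcal X\times\mathcal X$ is schematic, étale and surjective, and pulling back the diagonal of $\mathcal X$ along $p\times p$ gives $U\times_{\mathcal X}U$, which is a scheme by schematicity of $p$; étale-local schematicity descends, so $\Delta$ is schematic, and one is done. For the reverse direction, if $\mathcal X$ is LMB then the diagonal is schematic, so by Lemma~\ref{L:DiagIsomRep}\eqref{L:DiagIsomRep:ItemT} every morphism $S\to\mathcal X$ from a scheme is schematic; in particular the étale cover $p:U\to\mathcal X$ in the LMB definition is schematic, so $\mathcal X$ is a K algebraic space. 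The quasicompactness condition on $\Delta$ is part of both definitions and requires nothing further.

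For the equivalence of SP algebraic spaces and algebraic spaces in the sense of Definition~\ref{D:AlgDMAlgSp}, one direction is immediate from Theorem~\ref{T:more-alg-stack}: an algebraic space in our sense has injective diagonal (because it is a sheaf, by Lemma~\ref{L:InjDiag}) and admits a schematic étale surjective cover by a scheme, so it satisfies the SP definition. Conversely, suppose $\mathcal X$ is an SP algebraic space. Then $\mathcal X$ has injective schematic diagonal, and an étale schematic surjection $U\to\mathcal X$ from a scheme. Étale morphisms are smooth, schemes are algebraic stacks (in the base case of Definition~\ref{D:alg-stack}), and schematic morphisms are representable by algebraic spaces (every scheme is an algebraic space); hence $U\to\mathcal X$ is a smooth cover of $\mathcal X$ by an algebraic space, so $\mathcal X$ is an algebraic stack. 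Since its diagonal is injective, Lemma~\ref{L:InjDiag} implies $\mathcal X$ is representable by a sheaf, and so it is an algebraic space in our sense.

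The only real subtlety — and the step I expect to need the most care — is verifying that the existence of a schematic diagonal implies that the given étale cover in the LMB definition is itself schematic; everything else is a direct translation between conditions on $\Delta$ and conditions on presentations. This rests on Lemma~\ref{L:DiagIsomRep}, which was precisely formulated to handle this kind of bookkeeping, so the argument is short once that lemma is in hand.
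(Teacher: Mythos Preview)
There is a genuine gap in two of the four implications, and it is the same gap in both places.

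For $\mathrm{K}\Rightarrow\mathrm{LMB}$ you argue that $U\times_{\mathcal X}U$ is a scheme and then assert ``\'etale-local schematicity descends, so $\Delta$ is schematic.'' But schematicity is \emph{not} \'etale-local on the target: there are algebraic spaces over a scheme $S$ that become schemes after an \'etale base change $S'\to S$ yet are not themselves schemes. So knowing that the pullback of $\Delta$ along the cover $U\times U\to\mathcal X\times\mathcal X$ is a scheme does not, by itself, force $\Delta$ to be schematic. The same issue reappears in your ``algebraic space $\Rightarrow$ SP'' direction: Theorem~\ref{T:more-alg-stack} gives you an injective diagonal and a schematic \'etale cover, but the SP definition requires the diagonal to be \emph{schematic}, and you never verify this. (Incidentally, you have the easy and hard directions reversed here: SP $\Rightarrow$ algebraic space is the trivial one, while algebraic space $\Rightarrow$ SP needs real input.)

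The missing ingredient, which the paper supplies, is that an injective morphism of algebraic spaces is automatically separated and locally quasifinite, and separated locally quasifinite morphisms representable by algebraic spaces are schematic \cite[Tag~02W8]{stacks}. Since the diagonal of any of these spaces is injective, this immediately makes $\Delta$ schematic and closes both gaps at once. Your implications LMB $\Rightarrow$ K and SP $\Rightarrow$ algebraic space are fine; the paper regards these as the clear directions and spends its effort exactly on the point you skipped.
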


\begin{proof}
  It is clear that LMB algebraic spaces are K algebraic spaces and that SP algebraic spaces are algebraic spaces.  For the converse, we only need to show that the diagonal is schematic.  This is \cite[Tag 046K]{stacks}.  In fact,   the diagonal of an algebraic space is injective, hence separated and locally quasifinite.  Therefore the diagonal is schematic, by separated, locally quasifinite descent~\cite[Tag~02W8]{stacks}.
\end{proof}

\subsection{Relationships among the definitions of algebraic stacks}
\label{S:rel-alg}

\begin{equation*} 
\xymatrix{
&\text{Alg.} \ar@{<=>}[d]&&\\
\text{LMB~Alg.} \ar@{=>}[r]  \ar@{<=>}@/^15pt/[r]^{\text{$\Delta$ q.c.+sep.}}& \text{SP~Alg.}   \ar@{<=}[r]   \ar@{<=>}@/^15pt/[rr]^{\text{$\Delta$  sch.}} & \text{F~Alg.} \ar@{<=}[r]   & \text{SP~Alg.}~\Delta \text{ sch.}\\
} 
\end{equation*}

Most of the implications are immediate, so we only make a few comments.

\begin{lem}\label{L:DiagAlgStack}
The diagonal morphism of an algebraic stack  is representable by algebraic spaces.  
\end{lem}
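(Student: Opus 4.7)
The plan is to reduce the statement to a direct combination of two facts already established: that fiber products of algebraic stacks are algebraic, and that the fiber of the diagonal over a scheme-valued point is representable by the $\mathscr{I}\!\!\mathit{som}$ sheaf.

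More precisely, given an algebraic stack $\mathcal X$, a scheme $S$, and a morphism $(x,y):S\to \mathcal X\times \mathcal X$, one must show that the $2$-fiber product $\mathcal X\times_{\mathcal X\times \mathcal X}S$ (taken over the diagonal) is an algebraic space. By Corollary \ref{C:fp-alg}, this $2$-fiber product is an algebraic stack, since it is a fiber product of algebraic stacks ($\mathcal X$, $S$, and $\mathcal X\times \mathcal X$, the last being algebraic by another application of Corollary \ref{C:fp-alg}). Simultaneously, Example \ref{E:diagonal} identifies this fiber product with the isomorphism presheaf $\mathscr{I}\!\!\mathit{som}_{\mathcal X}(x,y)$ on $\mathsf S/S$; and because $\mathcal X$ is a stack (hence a fortiori a prestack in the sense of Definition \ref{D:Prestack}), this presheaf is in fact a sheaf on $(\mathsf S/S)_{\operatorname{et}}$. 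Thus the fiber product is a CFG that is both algebraic and representable by a sheaf, which by Definition \ref{D:AlgDMAlgSp} means it is an algebraic space.

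Applying Lemma \ref{L:RepAff} (or unwinding Definition \ref{D:Rep} directly), this shows $\Delta:\mathcal X\to \mathcal X\times \mathcal X$ is representable by algebraic spaces, as claimed. The argument is essentially bookkeeping—the only nontrivial inputs are Corollary \ref{C:fp-alg} (which itself relies on the iterated adaptation formalism of \S\ref{S:P-adapted}) and Example \ref{E:diagonal}—and I do not anticipate any serious obstacle. The one place demanding a moment of care is to observe that the equivalence produced by Example \ref{E:diagonal} between the fiber product and the $\mathscr{I}\!\!\mathit{som}$ sheaf is an honest equivalence of CFGs over $\mathsf S/S$, so that the sheaf condition on $\mathscr{I}\!\!\mathit{som}_{\mathcal X}(x,y)$ transports to the statement that the fiber product is representable by a sheaf in the sense required by Definition \ref{D:AlgDMAlgSp}.
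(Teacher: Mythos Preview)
Your argument is correct and is more direct than the paper's. The paper merely points to Theorem~\ref{T:more-alg-stack} and a modification of \cite[Tag 04XS]{stacks}; that route presumably uses a smooth presentation $U\to\mathcal X$ (representable by algebraic spaces) supplied by Theorem~\ref{T:more-alg-stack}, so that $U\times_{\mathcal X}U$ is already an algebraic space, and then relates $\mathscr{I}\!\!\mathit{som}_{\mathcal X}(x,y)$ to this groupoid object by base change. Your proof bypasses presentations entirely: it reads the conclusion off directly from Corollary~\ref{C:fp-alg} (closure under fiber products), Example~\ref{E:diagonal} (identification of the fiber of $\Delta$ with an $\mathscr{I}\!\!\mathit{som}$ sheaf), and Definition~\ref{D:AlgDMAlgSp} (algebraic stack $+$ sheaf $=$ algebraic space). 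This is cleaner and exposes that the lemma is essentially formal once fiber products of algebraic stacks are known to be algebraic. One small remark: since Definition~\ref{D:Rep} asks you to test against arbitrary algebraic spaces $Z$ rather than schemes, you either invoke Lemma~\ref{L:RepAff} as you do, or simply note that your argument works verbatim with $Z$ in place of $S$---the fiber product is still algebraic by Corollary~\ref{C:fp-alg}, and it is still equivalent to a sheaf because $Z(T)$ is a set for every scheme $T$, so the $2$-fiber product $\mathcal X(T)\times_{\mathcal X(T)\times\mathcal X(T)}Z(T)$ has at most one morphism between any two objects.
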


\begin{proof}
 One can check this using Theorem \ref{T:more-alg-stack} and a slight modification of the proof of  \cite[Tag 04XS]{stacks}.
\end{proof}

\begin{lem}
Algebraic stacks are the same as  SP algebraic stacks.
\end{lem}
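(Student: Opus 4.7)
The plan is to prove each containment using Theorem~\ref{T:more-alg-stack}, Lemma~\ref{L:DiagAlgStack}, Lemma~\ref{L:AlgSpaceEquiv}, and Corollary~\ref{L:AlgRel}. The whole statement is essentially bookkeeping: both notions are defined by the existence of a smooth surjection from a scheme together with a representability hypothesis on the diagonal, and we have already done the substantive work required to match these up.

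First I would show that every algebraic stack in the sense of Definition~\ref{D:AlgDMAlgSp} is an SP algebraic stack. By Theorem~\ref{T:more-alg-stack}, any algebraic stack $\mathcal X$ admits a smooth surjection $p: U \to \mathcal X$ from a scheme that is representable by algebraic spaces, and by Lemma~\ref{L:DiagAlgStack} its diagonal $\Delta: \mathcal X \to \mathcal X \times \mathcal X$ is representable by algebraic spaces. Lemma~\ref{L:AlgSpaceEquiv} identifies algebraic spaces with SP algebraic spaces, so $p$ is representable by SP algebraic spaces and $\Delta$ is representable by SP algebraic spaces. This is exactly the definition of an SP algebraic stack from the table on page~\pageref{F:AlgStackDef}.

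For the reverse containment, suppose $\mathcal X$ is an SP algebraic stack. By definition, there exists a smooth surjection $p: U \to \mathcal X$ from a scheme $U$, and $p$ is representable by SP algebraic spaces. Again by Lemma~\ref{L:AlgSpaceEquiv}, SP algebraic spaces are exactly algebraic spaces, hence algebraic stacks. Thus $p$ is representable by algebraic stacks, and is moreover a smooth surjection, hence a covering morphism in the sense of Definition~\ref{D:CovStack} (Remark~\ref{R:CovSurj}). Since the source $U$ is a scheme (and in particular an algebraic stack), Corollary~\ref{L:AlgRel} implies that $\mathcal X$ itself is an algebraic stack.

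There is no hard step here; the only point requiring care is simply ensuring that the two different packagings of the diagonal condition (``representable by algebraic spaces'' in Theorem~\ref{T:more-alg-stack} and Lemma~\ref{L:DiagAlgStack} versus ``representable by SP algebraic spaces'' in the SP definition) are genuinely identified by Lemma~\ref{L:AlgSpaceEquiv}, and that covering in the sense used by Corollary~\ref{L:AlgRel} is the same notion as surjectivity of a smooth representable morphism, which is Remark~\ref{R:CovSurj}.
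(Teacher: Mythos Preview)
Your first direction (algebraic $\Rightarrow$ SP algebraic) is correct and matches the paper's argument exactly.

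In the reverse direction you have established everything you need, but you cite the wrong result to conclude. Corollary~\ref{L:AlgRel} says that if $\mathcal Y$ is algebraic and $f:\mathcal X \to \mathcal Y$ is representable by algebraic stacks, then $\mathcal X$ is algebraic; that is, it transfers algebraicity from the \emph{target} to the \emph{source}. You have $p: U \to \mathcal X$ with the source $U$ a scheme and want to conclude the target $\mathcal X$ is algebraic, which is the wrong direction for Corollary~\ref{L:AlgRel}.

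The fix is immediate and the substance of your argument is intact: you have already shown that $p: U \to \mathcal X$ is smooth, surjective, and representable by algebraic spaces (via Lemma~\ref{L:AlgSpaceEquiv}). This is exactly the characterization of an algebraic stack given in the last row of Theorem~\ref{T:more-alg-stack}, so $\mathcal X$ is algebraic. This is also how the paper closes the reverse implication, only more tersely.
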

\begin{proof} From Theorem \ref{T:more-alg-stack}, we only need to show that the diagonal morphism is representable by SP algebraic spaces.  Since we have shown already that algebraic spaces are SP algebraic spaces,  we conclude using the previous lemma.
\end{proof}

\begin{lem}
An SP algebraic stack with quasicompact and separated diagonal is an LMB algebraic stack.
\end{lem}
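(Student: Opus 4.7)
The SP algebraic stack $\mathcal X$ already admits a smooth presentation by a scheme, and the diagonal $\Delta_{\mathcal X}$ is separated and quasicompact by hypothesis, so the only thing left to verify is that $\Delta_{\mathcal X}$ is representable by LMB algebraic spaces rather than merely by SP algebraic spaces. By Lemma~\ref{L:RepAff}, this can be tested on affine schemes: I fix an affine scheme $S$ together with a pair of morphisms $(x, y) : S \to \mathcal X \times \mathcal X$ and must show that $T := \mathscr I\!\!\mathit{som}_{\mathcal X}(x, y)$ is an LMB algebraic space.

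The stack $T$ is known to be an SP algebraic space (because $\mathcal X$ is an SP algebraic stack), so by the comparison in Section~\ref{S:rel-alg-sp} together with Lemma~\ref{L:AlgSpaceEquiv}, promoting $T$ to an LMB algebraic space reduces to showing that the absolute diagonal $\Delta_T : T \to T \times T$ is quasicompact. Base-changing the hypotheses on $\Delta_{\mathcal X}$ along $(x,y) : S \to \mathcal X \times \mathcal X$ gives that the structure map $\pi : T \to S$ is itself both quasicompact and separated.

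The main step is the factorization
\begin{equation*}
T \xrightarrow{\alpha} T \times_S T \xrightarrow{\beta} T \times T.
\end{equation*}
Here $\alpha$ is the relative diagonal of $\pi$; since $\pi$ is separated, $\alpha$ is proper (Definition~\ref{D:PropSepAlgStk}), and because $T$ is an algebraic space, $\alpha$ is automatically a monomorphism, so $\alpha$ is a proper monomorphism and therefore a closed immersion, hence quasicompact. The morphism $\beta$ fits into a cartesian square whose opposite side is the diagonal $\Delta_S : S \to S \times S$, and since $S$ is affine the latter is a closed immersion—corresponding at the level of rings to the surjective multiplication map $A \otimes A \to A$—and in particular quasicompact. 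Consequently $\beta$ is quasicompact as the base change of a quasicompact morphism, and the composition $\Delta_T = \beta \circ \alpha$ is quasicompact, as required.

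The proof is essentially a formal manipulation, and no serious obstacle is anticipated. The one subtlety worth noting is that ``separated diagonal'' for a stack means that the double diagonal is \emph{proper} (not literally a closed immersion); this hypothesis only yields a closed immersion after we pass to the algebraic space $T$, where the relative diagonal $\alpha$ is automatically a monomorphism and proper monomorphisms of algebraic spaces are closed immersions. Everything else is just carefully keeping track of what each ``quasicompact'' and ``separated'' condition means at the level of morphisms of stacks versus morphisms of algebraic spaces.
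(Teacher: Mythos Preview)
Your proof is correct and follows essentially the same approach as the paper's. The paper compresses the argument by observing directly that the double diagonal of $\mathcal X$ is a closed embedding (hence quasicompact) and concluding that the diagonal is representable by LMB algebraic spaces; you unpack this implication by passing to an affine test scheme $S$, working with the fiber $T$, and spelling out the factorization $T \to T \times_S T \to T \times T$, which is exactly what lies behind the paper's final ``Hence''.
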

\begin{proof}
Suppose that $\mathcal X$ is  SP algebraic, with quasicompact and separated diagonal.  By Lemma \ref{L:DiagAlgStack}, the  diagonal morphism for $\mathcal X$ is representable by algebraic spaces.  An algebraic space is an LMB algebraic space if and only if its diagonal is quasicompact (see Section~\ref{S:rel-alg-sp}).  Since the diagonal of $\mathcal X$ is separated, the double diagonal is a closed embedding and, a fortiori, quasicompact.  Hence the diagonal of $\mathcal X$ is representable by LMB algebraic spaces, as required.
\end{proof}

\begin{rem}  
We expect  there are F algebraic stacks that do not have schematic diagonal, but we do not know of any example.  We also expect that there are SP algebraic stacks that do not admit a smooth schematic morphism from a scheme (and therefore are not F algebraic stacks), but we do not know of any example.  See Example \ref{E:DMnSchD} for  an SP algebraic stack that does not have schematic diagonal.
\end{rem}

\subsection{Relationships among the definitions of Deligne--Mumford stacks}
\label{S:rel-dm}

For Deligne--Mumford stacks, we have implications:

\begin{equation*} 
\xymatrix{
&\text{DM} \ar@{<=>}[d]&&\\
\text{LMB~DM} \ar@{=>}[r]  \ar@{<=>}@/^15pt/[r]^{\text{$\Delta$ q.c.$+$sep.}}   \ar@{<=>}@/_15pt/[rrr]_{\text{$\Delta$ q.c.$+$sep.}} & \text{SP~DM}   \ar@{<=}[r]   \ar@{<=>}@/^15pt/[rr]^{\text{$\Delta$  sch.}}& \text{F~DM} \ar@{<=}[r]   & \text{DM Alg.}\\
} 
\end{equation*}
Most of the implications are immediate, but we focus on  the main points:

\begin{lem}
DM stacks are the same as SP DM stacks.  
\end{lem}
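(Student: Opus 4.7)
The plan is to run the argument in both directions using two inputs already established: the equivalence between algebraic spaces and SP algebraic spaces (Lemma \ref{L:AlgSpaceEquiv}), and the characterization of DM stacks in Theorem \ref{T:more-alg-stack} (a DM stack is exactly a stack admitting a scheme cover that is representable by algebraic spaces and \'etale). The heart of the matter is that once ``algebraic space'' and ``SP algebraic space'' coincide, the two definitions become translations of one another, both for covers and for diagonals.

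First I would check DM $\Rightarrow$ SP DM. Let $\mathcal X$ be a DM stack. By Theorem \ref{T:more-alg-stack} there exists a scheme $U$ and a morphism $p:U\to \mathcal X$ that is surjective, \'etale, and representable by algebraic spaces. By Lemma \ref{L:AlgSpaceEquiv} (algebraic spaces = SP algebraic spaces), the map $p$ is representable by SP algebraic spaces. It remains to verify that the diagonal $\Delta:\mathcal X\to \mathcal X\times\mathcal X$ is representable by SP algebraic spaces. Since $\mathcal X$ is in particular an algebraic stack, Lemma \ref{L:DiagAlgStack} gives that $\Delta$ is representable by algebraic spaces, and another appeal to Lemma \ref{L:AlgSpaceEquiv} promotes this to representability by SP algebraic spaces. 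Hence $\mathcal X$ is an SP DM stack.

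Next I would check SP DM $\Rightarrow$ DM. Let $\mathcal X$ be an SP DM stack, so the diagonal is representable by SP algebraic spaces and there is a surjection $p:U\to \mathcal X$ from a scheme $U$ that is \'etale. Using Lemma \ref{L:DiagIsomRep} (the equivalence between conditions on the diagonal and representability of arbitrary morphisms from schemes), the \'etaleness hypothesis combined with SP-representability of $\Delta$ implies that $p$ itself is representable by SP algebraic spaces. By Lemma \ref{L:AlgSpaceEquiv}, $p$ is then representable by algebraic spaces. Thus $\mathcal X$ admits a scheme cover $U\to \mathcal X$ that is surjective, \'etale, and representable by algebraic spaces, which by Theorem \ref{T:more-alg-stack} is precisely the condition to be a DM stack.

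The only step that requires any care is the second direction, where one must argue that the \'etale surjection $p$ is automatically representable by (SP) algebraic spaces; this is not part of the raw data of being an SP DM stack but follows from the diagonal hypothesis via Lemma \ref{L:DiagIsomRep}. Everything else is a direct translation through Lemmas \ref{L:AlgSpaceEquiv} and \ref{L:DiagAlgStack}.
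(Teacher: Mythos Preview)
Your proposal is correct and follows essentially the same approach as the paper: both rely on Theorem~\ref{T:more-alg-stack}, Lemma~\ref{L:AlgSpaceEquiv}, and Lemma~\ref{L:DiagAlgStack}, with the key point being that once algebraic spaces and SP algebraic spaces coincide, the two characterizations match up. The paper's proof is terser and treats the SP DM $\Rightarrow$ DM direction as essentially immediate from Theorem~\ref{T:more-alg-stack}, while you spell it out via Lemma~\ref{L:DiagIsomRep}; this is a fair elaboration of what the paper leaves implicit.
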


\begin{proof}
From Theorem \ref{T:more-alg-stack} we only need to show that a DM stack has diagonal representable by algebraic spaces (as these are the same as SP algebraic spaces).   However, by Theorem \ref{T:more-alg-stack} it is immediate that DM stacks are algebraic stacks, and we have seen in Lemma \ref{L:DiagAlgStack} that the diagonal of an algebraic stack is representable by algebraic spaces. 
\end{proof}

\begin{lem}[{\cite[Lem.~4.2]{LMB}}] \label{L:LMB-DM-sch-diag}
All LMB DM stacks have schematic diagonal.
\end{lem}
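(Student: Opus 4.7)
The plan is to exploit the fact that a Deligne--Mumford stack has unramified diagonal together with the separated, locally quasifinite descent principle already invoked in the proof of Lemma~\ref{L:AlgSpaceEquiv}. Concretely, I want to show that for any scheme $T$ with a morphism $T \to \mathcal X \times \mathcal X$, the fiber product $I := T \times_{\mathcal X \times \mathcal X} \mathcal X$ is representable by a scheme. Since $\mathcal X$ is an LMB DM stack, $I$ is \emph{a priori} an LMB algebraic space, and the map $I \to T$ is separated and quasicompact by base change from the diagonal of $\mathcal X$; the remaining ingredient is local quasi-finiteness.

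First I would verify that the diagonal $\Delta : \mathcal X \to \mathcal X \times \mathcal X$ is unramified. Let $p : U \to \mathcal X$ be the étale atlas provided by the LMB DM hypothesis. The induced map $U \times U \to \mathcal X \times \mathcal X$ is a smooth (in fact étale) cover, and the base change of $\Delta$ along it is the relative diagonal $\delta : U \to U \times_{\mathcal X} U$. Because the two projections $U \times_{\mathcal X} U \to U$ are étale and the composition with $\delta$ is $\mathrm{id}_U$, the two-out-of-three property for étale morphisms forces $\delta$ to be étale, hence unramified. Since being unramified is local on the target in the étale topology (Example~\ref{Exa:StabLoc}), $\Delta$ itself is unramified.

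Next, pulling back to $T$, the projection $I \to T$ is unramified, and therefore locally quasifinite. It is also separated and quasicompact, again by base change from the corresponding properties of $\Delta$. Now $I$ is an LMB algebraic space (by Lemma~\ref{L:FibProd} applied to the class of LMB algebraic spaces) and $T$ is a scheme, so the morphism $I \to T$ is a separated, locally quasifinite morphism of algebraic spaces. By separated, locally quasifinite descent (\cite[Tag~02W8]{stacks}), such a morphism is schematic; since $T$ is a scheme, this forces $I$ itself to be a scheme.

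Having produced a scheme representing $T \times_{\mathcal X \times \mathcal X} \mathcal X$ for every scheme $T$ over $\mathcal X \times \mathcal X$, we conclude by Lemma~\ref{L:RepAff} (or directly from Definition~\ref{D:Rep}) that $\Delta$ is schematic. I do not expect any substantive obstacle here: the potentially delicate point is the invocation of separated, locally quasifinite descent, but this is a standard result already used in the algebraic-space case, and all the hypotheses transfer cleanly from $\mathcal X$ to the base change $I \to T$.
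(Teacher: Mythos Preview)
Your overall strategy matches the paper's: observe that the diagonal is separated and locally quasifinite, then invoke the fact that a separated, locally quasifinite morphism representable by algebraic spaces is schematic. The paper's proof is literally a one-liner citing this (\cite[Tag 03XX]{stacks}), leaving the local quasi-finiteness implicit; it is available at that point via Theorem~\ref{T:more-alg-stack}, which characterizes DM stacks as algebraic stacks with unramified diagonal.

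There is, however, an error in your unramifiedness argument. The base change of $\Delta_{\mathcal X} : \mathcal X \to \mathcal X \times \mathcal X$ along $U \times U \to \mathcal X \times \mathcal X$ is \emph{not} the relative diagonal $\delta : U \to U \times_{\mathcal X} U$; it is the map $(s,t) : U \times_{\mathcal X} U \to U \times U$. (The relative diagonal $\delta$ is rather the map whose composition with $(s,t)$ yields the absolute diagonal $\Delta_U : U \to U \times U$.) Your two-out-of-three argument correctly shows $\delta$ is \'etale, but that is the wrong map. The fix is to argue directly that $(s,t)$ is unramified: the composition $\mathrm{pr}_1 \circ (s,t) = s$ is \'etale, so the cotangent sequence for $U \times_{\mathcal X} U \to U \times U \to U$ exhibits $\Omega^1_{U \times_{\mathcal X} U / U \times U}$ as a quotient of $\Omega^1_{U \times_{\mathcal X} U / U} = 0$; and $(s,t)$ is locally of finite type because $s$ is. With this correction the remainder of your argument goes through unchanged.
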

\begin{proof}
Any locally quasifinite, separated morphism that is representable by algebraic spaces is schematic~\cite[Tag 03XX]{stacks}.
\end{proof}

The rest of the implications are obvious from Theorem \ref{T:more-alg-stack} and the two lemmas above.

\begin{exa}\label{E:DMnSchD}
We will construct an SP Deligne--Mumford stack without  schematic diagonal (i.e., an SP DM stack that is not a DM algebraic stack).  Let $G$ be $\mathbb A^1$ with a doubled origin.  This can be regarded as a group scheme over $\mathbb A^1$ by distinguishing one of the two origins as the identity element.  If $X \rightarrow \mathbb A^1$ is a morphism of schemes, with $X_0$ the fiber over the origin, then a $G$-torsor on $X$ is a $\mathbb Z/2\mathbb Z$-torsor on $X_0$.

We will show that the map $p : \mathbb A^1 \rightarrow \mathrm BG$ is not schematic.  Indeed, if $Z \rightarrow \mathrm BG$ is any morphism then the base change of $p$ is the total space of the corresponding torsor.  Therefore we have to find a scheme $Z$ over $\mathbb A^1$ and a $G$-torsor over $Z$ that is not representable by a scheme.

To find such a torsor, choose a scheme $W$ and an \'etale double cover $W' \rightarrow W$ that does not have a section Zariski-locally.  Let $Z = \mathbb A^1_W$ (so that $Z_0 = W$) and let $P$ be the $G$-torsor over $Z$ corresponding to $W' \rightarrow Z_0$.
\end{exa}

\begin{rem}  
In regards to the example above, we expect that there are also F DM stacks that do not have schematic diagonal (and are therefore not DM algebraic stacks), but we do not know of any example.  We also expect that there are SP DM stacks that do not admit a smooth schematic morphism from a scheme (and therefore are not F DM stacks), but we do not know of any example.   Example~\ref{E:DMnSchD} shows there are SP algebraic stacks that do not have schematic diagonal, and are therefore not DM algebraic stacks.
\end{rem}

\subsection{Stacks with unramified diagonal}

The implications in Figure \ref{E:kafka} regarding unramified diagonal all follow immediately from the following lemma:

\begin{lem}\label{L:UnRamDiagDM}
An algebraic stack with unramified diagonal is a DM stack.
\end{lem}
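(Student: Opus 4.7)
The plan is to leverage Theorem~\ref{T:more-alg-stack}, which characterizes a DM stack as an algebraic stack that admits an \'etale cover by an algebraic space (equivalently, a stack whose diagonal is unramified and which admits a smooth cover). Since $\mathcal X$ is by hypothesis algebraic, there is a smooth surjection $p : U \to \mathcal X$ with $U$ a scheme. My goal is to refine $p$ to an \'etale cover by constructing, locally on $U$, a closed subscheme whose composition with $p$ is \'etale onto $\mathcal X$.

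First I would work at a geometric point $u : \operatorname{Spec}(k) \to U$ with image $x$ in $\mathcal X$. Let $d$ be the relative dimension of $p$ at $u$. The key consequence of the hypothesis is that the fiber $U_x = U \times_{\mathcal X} \operatorname{Spec}(k)$ is smooth over $k$ of dimension $d$ at $u$, and that $T_u U_x$ has dimension exactly $d$. Indeed, the unramifiedness of $\Delta_{\mathcal X}$ forces the automorphism group scheme of $x$ to be \'etale over $k$, so it contributes trivially to the tangent complex at $u$; concretely, the usual sequence relating $T_u U$, $T_x \mathcal X$, and the Lie algebra of the automorphism group degenerates because the last term vanishes.

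Next, working in an affine neighborhood of $u$, I would choose regular functions $f_1, \ldots, f_d$ on $U$ whose differentials restrict to a basis of the cotangent space of the fiber $U_x$ at $u$. Shrinking $U$ if necessary, set $V = V(f_1, \ldots, f_d) \subseteq U$. By construction the relative cotangent sheaf $\Omega_{V/\mathcal X}$ vanishes at $u$, so $V \to \mathcal X$ is unramified at $u$. Moreover, the $f_i$ form a regular sequence on the fiber $U_x$, hence by the infinitesimal criterion for flatness they form a relatively regular sequence, so $V \to \mathcal X$ remains flat at $u$. An unramified, flat, locally finitely presented morphism is \'etale, so $V \to \mathcal X$ is \'etale at $u$. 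Openness of the \'etale locus then supplies an open neighborhood of $u$ in $V$ on which $V \to \mathcal X$ is \'etale, and taking the disjoint union of such slices over a suitable cover of $U$ by affines---chosen so that every geometric point of $\mathcal X$ lies in the image of some slice---produces an \'etale surjection from a scheme (in fact, one representable by algebraic spaces over $\mathcal X$) onto $\mathcal X$, exhibiting $\mathcal X$ as a DM stack via Theorem~\ref{T:more-alg-stack}.

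The main obstacle is the ``slicing'' step: one must simultaneously ensure that cutting by $d$ equations is possible (which rests on the unramified diagonal forcing $\dim_k T_u U_x = d$ rather than some larger value inflated by infinitesimal automorphisms) and that the resulting subscheme stays flat over $\mathcal X$. Both issues localize to statements about smooth morphisms between schemes after pulling back along $p$, so the argument comes down to standard Jacobian criterion manipulations. Alternatively, once the tangent-space computation is in hand, one can simply invoke the slicing result \cite[Tag~06N3]{stacks} that was already used in the proof of Theorem~\ref{T:more-alg-stack}, observing that the hypothesis there (unramifiedness of the diagonal) is precisely what we are assuming.
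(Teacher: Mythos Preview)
Your proposal is correct and ultimately lands on the same reference the paper uses: the paper's entire proof is the observation that, after identifying algebraic stacks with SP algebraic stacks and DM stacks with SP DM stacks, the statement is precisely \cite[Tag~06N3]{stacks}. You reach this citation too, and in addition sketch the slicing argument that lies behind it.

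One remark on efficiency: you already observe at the outset that Theorem~\ref{T:more-alg-stack} characterizes DM stacks as those admitting a smooth cover representable by algebraic spaces together with an unramified diagonal. Since an algebraic stack has such a smooth cover by the last row of that same table, the lemma follows immediately from the second row once Theorem~\ref{T:more-alg-stack} is established---no further slicing is needed here, because that work was already done in the proof of Theorem~\ref{T:more-alg-stack}. Your detailed reconstruction of the slicing argument is not wrong, but it duplicates that earlier proof; the paper accordingly just points back to the source.
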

\begin{proof} Now that we have the identification between algebraic stacks and SP algebraic stacks, and DM stacks and SP DM stacks, this is  \cite[Tag 06N3]{stacks}.
\end{proof}

\subsection{The adapted perspective} \label{S:AdaptPersp}

We show how all of the stacks that have appeared in this paper can be described as  stacks adapted to a given presite.

\subsubsection{Stacks adapted to the \'etale presite of schemes}
We have already seen the following.   
A stack adapted to the \'etale presite with injective diagonal is the same thing as an algebraic space.  
A stack adapted to the \'etale presite with injective and quasicompact diagonal is the same thing as an LMB  algebraic space.
A stack adapted to the \'etale presite is the same thing as an F DM stack.  
A stack adapted to the \'etale presite with quasicompact and separated diagonal is the same thing as an LMB DM stack (Lemma~\ref{L:LMB-DM-sch-diag}).

\subsubsection{Stacks adapted to the \'etale presite of algebraic spaces}

Stacks adapted to the \'etale presite of algebraic spaces  induce  Deligne--Mumford stacks on the \'etale presite of schemes.  Indeed, given a  stack in the \'etale topology on algebraic spaces one obtains  a stack on the \'etale presite of schemes by restriction.  Conversely, a stack on the \'etale presite of schemes extends uniquely to the \'etale presite of algebraic spaces since every algebraic space has an \'etale cover by schemes.  Using again that every algebraic space has an \'etale cover by schemes,  the definition of stack adapted to the \'etale presite of algebraic spaces agrees with the characterization of Deligne--Mumford stacks in Theorem~\ref{T:more-alg-stack}.

\subsubsection{Stacks adapted to the smooth presite of algebraic spaces}
Stacks adapted to the smooth presite of algebraic spaces induce algebraic stacks by restricting from the category of algebraic spaces to schemes.  

\subsubsection{Stacks adapted to the fppf presite of algebraic spaces}
A stack adapted to the fppf presite of  algebraic spaces induces an   algebraic  stack on the \'etale presite of schemes, by restricting from the category of algebraic spaces to the category of schemes (see Theorem~\ref{T:ArtinPres}).

\subsubsection{An example of a stack that is not adapted to a presite}

\begin{exa}\label{E:LogAb}
Consider the logarithmic abelian varieties of Kajiwara, Kato, and Nakayama \cite{KKN-I,KKN-II,KKN-III}, which are sheaves in the \'etale topology and possess logarithmically \'etale covers by logarithmic schemes but have no such \'etale covers (logarithmically \'etale maps are a more general class of morphisms including all \'etale maps but also some blowups and other non-flat morphisms).  
However, the logarithmic \'etale topology is not subcanonical and there is no subcanonical topology in which logarithmic abelian varieties are covered by logarithmic schemes. 
\end{exa}

\subsection{Conditions on the relative diagonal of a stack and bootstrapping}
As we have seen, most of the variations on the definition of an  algebraic stacks outlined in Definition \ref{D:AlgStack}  can be obtained from the definition of an algebraic stack by imposing conditions on the diagonal.  Here we aim to extend the bootstrapping result of Proposition \ref{P:AdaptRel} to these other cases.  
In other words, our goal here is to show that a stack that is \emph{relatively algebraic} over an algebraic stack (according to any of the definitions in Definition~\ref{D:AlgStack}) is itself algebraic.

For the next lemma, recall that for a morphism   $f:\mathcal X\to \mathcal Y$ of algebraic stacks,  the diagonal $\Delta_f$ is representable by  algebraic  spaces (\cite[Tag 04XS]{stacks}).

\begin{lem}\label{L:DcondL} Let $\mathbf P$ be a property of morphisms of  algebraic spaces that is stable under composition and base change.  
Let $\mathcal X,\mathcal Y,\mathcal Z$ be  algebraic stacks over $\mathsf S$.
\begin{enumerate}

\item If $\mathcal X\stackrel{f}{\to} \mathcal Y$  has property $\mathbf P$ for $\Delta_f$   and $\mathcal Y'\to  \mathcal Y$ is any morphism, then the morphism $\mathcal X\times_{\mathcal Y}\mathcal Y'\stackrel{f'}{\to}\mathcal Y'$ obtained from the fibered product has property $\mathbf P$ for $\Delta_{f'}$.

\item If $\mathcal X\stackrel{f}{\to}\mathcal Y$ and $\mathcal Y\stackrel{g}{\to}\mathcal Z$ 
are morphisms such that $\Delta_f$ and $\Delta_{g}$ have property $\mathbf P$, then for the composition $\mathcal X\stackrel{g\circ f}{\to}\mathcal Y$, the diagonal morphism $\Delta_{g\circ f}$ has property $\mathbf P$.

\item If $\mathcal X\stackrel{f}{\to} \mathcal X'$ and $\mathcal Y\stackrel{g}{\to} \mathcal Y'$ are morphisms over a stack $\mathcal Z$ such that $\Delta_f$ and $\Delta_g$ have property $\mathbf P$, 
 then for the fibered product morphism 
$\mathcal X\times_{\mathcal Z} \mathcal Y\stackrel{f\times_{\operatorname{id}_{\mathcal Z}} g}{\longrightarrow} \mathcal X'\times_{\mathcal Z}\mathcal Y'$ the diagonal  morphism $\Delta_{f\times_{\operatorname{id}_{\mathcal Z}} g}$ has property $\mathbf P$.

\item  A morphism $f : \mathcal X \rightarrow \mathcal Y$ has property $\mathbf P$ for $\Delta_f$ if and only if 
 for every scheme $S$ and every morphism $S \rightarrow \mathcal Y$, the base change $f':\mathcal X \mathop{\times}_{\mathcal Y} S \rightarrow S$ has property $\mathbf P$ for $\Delta_{f'}$.
 
\item Suppose that property $\mathbf P$ satisfies the following condition:  if If $ X\stackrel{f}{\to} Y$ and $ Y\stackrel{g}{\to} Z$ are morphisms of algebraic spaces such that $g$ and $f\circ g$ satisfy property $\mathbf P$, then $f$ satisfies property $\mathbf P$.    Then  If $\mathcal X\stackrel{f}{\to}\mathcal Y$ and $\mathcal Y\stackrel{g}{\to}\mathcal Z$ 
are morphisms such that $\Delta_g$ and $\Delta_{g\circ f}$ have property $\mathbf P$, then   $\Delta_{f}$ has property $\mathbf P$.
\end{enumerate}
\end{lem}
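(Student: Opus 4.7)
The plan is to exploit the factorization $\Delta_{g\circ f} = \pi \circ \Delta_f$, where $\pi : \mathcal X \mathop\times_{\mathcal Y} \mathcal X \to \mathcal X \mathop\times_{\mathcal Z} \mathcal X$ is the natural morphism. Using the identification $\mathcal X \mathop\times_{\mathcal Y} \mathcal X = (\mathcal X \mathop\times_{\mathcal Z} \mathcal X) \mathop\times_{\mathcal Y \mathop\times_{\mathcal Z} \mathcal Y} \mathcal Y$ (via the map $(f,f) : \mathcal X \mathop\times_{\mathcal Z} \mathcal X \to \mathcal Y \mathop\times_{\mathcal Z} \mathcal Y$ and the map $\Delta_g$), the morphism $\pi$ is a base change of $\Delta_g$. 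Since $\Delta_g$ has property $\mathbf P$ and $\mathbf P$ is stable under base change (part~(1)), $\pi$ has property $\mathbf P$ as a morphism of stacks.

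To verify that $\Delta_f$ has $\mathbf P$, I would test it on an arbitrary scheme $S$ with a morphism $\xi : S \to \mathcal X \mathop\times_{\mathcal Y} \mathcal X$ and form three algebraic spaces by pullback: $A := S \mathop\times_{\mathcal X \times_{\mathcal Y} \mathcal X} \mathcal X$, the base change of $\Delta_f$ along $\xi$ (the object I want to have $\mathbf P$); $C := S \mathop\times_{\mathcal X \times_{\mathcal Z} \mathcal X} (\mathcal X \mathop\times_{\mathcal Y} \mathcal X)$, the base change of $\pi$ along $\pi \circ \xi$ (which has $\mathbf P$ over $S$ since $\pi$ does); and $B := S \mathop\times_{\mathcal X \times_{\mathcal Z} \mathcal X} \mathcal X$, the base change of $\Delta_{g\circ f}$ along $\pi \circ \xi$ (which has $\mathbf P$ over $S$ by hypothesis). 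All three are algebraic spaces by Lemma~\ref{L:DiagAlgStack} and stability of algebraic spaces under fiber product.

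Using associativity of fiber products together with the identity $\Delta_{g \circ f} = \pi \circ \Delta_f$, I identify $B = C \mathop\times_{\mathcal X \times_{\mathcal Y} \mathcal X} \mathcal X$, so that $B \to S$ factors as a composition $B \to C \to S$ of morphisms of algebraic spaces in which the second map and the total composition both have property $\mathbf P$. Applying the cancellation hypothesis stated in~(5) (read with the composition $g \circ f$) to this composition yields that $B \to C$ has property $\mathbf P$. Finally, the pair $(\mathrm{id}_S, \xi)$ determines a canonical section $s : S \to C$ satisfying $(\mathrm{pr}_{\mathcal X \times_{\mathcal Y} \mathcal X}) \circ s = \xi$, and a direct fiber-product calculation gives $A \simeq S \mathop\times_{C,s} B$. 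Thus $A \to S$ is a base change of $B \to C$ and has $\mathbf P$ by stability under base change.

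The main obstacle is purely bookkeeping: the two algebraic-space identifications $B = C \mathop\times_{\mathcal X \times_{\mathcal Y} \mathcal X} \mathcal X$ and $A = S \mathop\times_C B$ must be worked out carefully from the universal properties. Once they are in place, the argument consists of a single invocation of the cancellation hypothesis followed by a base change, so no additional input (such as locality in any topology) is required.
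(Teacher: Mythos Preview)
Your proposal addresses only part~(5), tacitly invoking part~(1) as already established; the paper's proof does treat (1)--(4) as well, but those arguments are short and standard, so I will focus on~(5).

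For~(5) your approach is correct and is essentially the same as the paper's. Both rest on the factorization $\Delta_{g\circ f} = \pi \circ \Delta_f$, where $\pi : \mathcal X \mathop\times_{\mathcal Y} \mathcal X \to \mathcal X \mathop\times_{\mathcal Z} \mathcal X$ sits in the $2$-cartesian square over $\Delta_g$ (the paper's diagram~\eqref{E:CompDiag}), so $\pi$ inherits $\mathbf P$ from $\Delta_g$ by part~(1). The paper then simply says ``follows from diagram~\eqref{E:CompDiag} and the previous parts,'' whereas you spell out the passage to algebraic spaces needed to invoke the cancellation hypothesis (which is only stated for algebraic spaces): pull back along a test map $\xi : S \to \mathcal X \mathop\times_{\mathcal Y} \mathcal X$, apply cancellation to the algebraic-space composition $B \to C \to S$, and then recover $A \to S$ as a base change of $B \to C$ via the tautological section $s : S \to C$. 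The two fiber-product identifications you flag as ``bookkeeping'' are exactly right, and your elaboration is a faithful unpacking of what the paper's one-line proof intends.
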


\begin{proof}  This is essentially  contained in \cite[Tag 04YV]{stacks}.    
(1)  follows from the $2$-cartesian diagrams:
\begin{equation}\label{E:DiagBC}
\xymatrix{
\mathcal X' \ar@{->}[r]^{f'}\ar@{->}[d]& \mathcal Y' \ar@{->}[d]\\
\mathcal X \ar@{->}[r]^f& \mathcal Y\\
}
\hskip .5 in 
\xymatrix{
\mathcal X' \ar@{->}[r]^{\Delta_{f'}\ \ \ }\ar@{->}[d]& \mathcal X'\times_{\mathcal Y'}\mathcal X' \ar@{->}[d]\\
\mathcal X \ar@{->}[r]^{\Delta_f\ \ \ }&\mathcal X\times_{ \mathcal Y}\mathcal X\\
}
\end{equation}
The diagram on the left induces the diagram on the right, and it then follows from base change that $\Delta_{f'}$ has property $\mathbf P$. 

(2) follows from (1) using the $2$-commutative diagram below for  the morphisms $f:\mathcal X\to \mathcal Y$ and $g:\mathcal Y\to \mathcal Z$.   The square is a $2$-fibered product.
\begin{equation}\label{E:CompDiag}
\xymatrix{
\mathcal X \ar@{->}[r]^<>(0.7){\Delta_f} \ar@{->}[rd]  \ar@{->}@/^2pc/[rr]^{\Delta_{g\circ f}}& \mathcal X\times_{ \mathcal Y}\mathcal X \ar@{->}[r]  \ar@{->}[d]& \mathcal X\times_{\mathcal Z} \mathcal X \ar@{->}[d]\\
& \mathcal Y\ar@{->}[r]^<>(0.5){\Delta_{g}}&\mathcal Y\times_{\mathcal Z}\mathcal Y. 
}
\end{equation}

(3) is essentially \cite[Rem.~(1.3.9) p.33]{EGAI}, which observes that the conclusion follows from (1) and (2), together with  the fact that given morphisms $f:\mathcal X\to \mathcal X'$ and $g:\mathcal Y\to \mathcal Y'$ over a stack $\mathcal Z$, the product   $\mathcal X\times_{\mathcal Z}\mathcal Y\stackrel{f\times_{\operatorname{id}_{\mathcal Z}}g}{\longrightarrow} \mathcal X'\times_{\mathcal Z}\mathcal Y'$ is given by the composition of morphisms obtained from fibered product diagrams:
$$
\begin{CD}
\mathcal X\times_{\mathcal Z}\mathcal Y @>f\times_{\operatorname{id}_{\mathcal Z}} \operatorname{id}_{\mathcal Y}>> \mathcal X'\times_{\mathcal Z}\mathcal Y @>\operatorname{id}_{\mathcal X'}\times_{\operatorname{id}_{\mathcal Z}} g>> \mathcal X'\times_{\mathcal Z}\mathcal Y'.
\end{CD}
$$

(4)  If $\Delta_f$ has property $\mathbf P$, this follows from (1).  Conversely, assume that 
 for every scheme $S$ and every morphism $S \rightarrow \mathcal Y$, the base change $\mathcal X \mathop{\times}_{\mathcal Y} S \rightarrow S$ has property $\mathbf P$ for its diagonal.
By definition, for 
 $\Delta_f : \mathcal X \rightarrow \mathcal X \mathop{\times}_{\mathcal Y} \mathcal X$ to have  property $\mathbf P$ means that the base change $S \mathop{\times}_{\mathcal X \mathop{\times}_{\mathcal Y} \mathcal X} \mathcal X \rightarrow S$ has property $\mathbf P$.  But
	\begin{equation*}
		S \mathop{\times}_{\mathcal X \mathop{\times}_{\mathcal Y} \mathcal X} \mathcal X = S \mathop{\times}_{\mathcal X_S \mathop{\times}_S \mathcal X_S} \mathcal X_S
	\end{equation*}
	where $\mathcal X_S = \mathcal X \mathop{\times}_{\mathcal Y} S$ and by assumption, the diagonal of $\mathcal X_S \rightarrow S$ has property $\mathbf P$. 
	
(5) This follows from diagram \eqref{E:CompDiag}, and the previous parts of the lemma.
\end{proof}

\begin{cor} \label{C:AlgRel}
	For any of the classes $\mathsf C$ of objects introduced in Definition \ref{D:AlgStack}, if $\mathcal Y$ is of class $\mathsf C$ and $f : \mathcal X \rightarrow \mathcal Y$ is representable by objects of $\mathsf C$ then $\mathcal X$ is of class $\mathsf C$.
\end{cor}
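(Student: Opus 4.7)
The strategy is to bootstrap in two stages: first produce a presentation of $\mathcal X$ with the required properties, and then verify the diagonal conditions that distinguish the various classes in Definition~\ref{D:AlgStack}. This is a direct generalization of the argument for Proposition~\ref{P:AdaptRel}, enriched by the diagonal bookkeeping afforded by Lemma~\ref{L:DcondL}.

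For the presentation, choose a presentation $p : V \to \mathcal Y$ appropriate to $\mathsf C$: $V$ is a scheme, and $p$ is either \'etale or smooth, and either schematic or representable by objects of $\mathsf C$, according to the column in the table of Definition~\ref{D:AlgStack} (or of Theorem~\ref{T:more-alg-stack}) that defines $\mathsf C$. Since $V$ is in $\mathsf C$ and $f$ is representable by objects of $\mathsf C$, the fiber product $\mathcal Z = \mathcal X \times_{\mathcal Y} V$ is an object of $\mathsf C$, and hence admits a presentation $q : U \to \mathcal Z$ of the type demanded by $\mathsf C$. The composition $U \to \mathcal Z \to \mathcal X$ is then the candidate presentation of $\mathcal X$. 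Its properties are verified by combining three observations: (i) the projection $\mathcal Z \to \mathcal X$ is the base change of $p$ along the map $\mathcal X \to \mathcal Y$, so it inherits the \'etale/smooth property of $p$, is a cover, and is representable by the appropriate class; (ii) $q$ has the properties required of a presentation of an object of $\mathsf C$; (iii) the properties \'etale, smooth, covering, and $\mathsf C$-representable are stable under composition (Lemmas~\ref{L:FPCompCov} and~\ref{L:Rep}).

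For the diagonal, exploit the factorization
$$\mathcal X \xrightarrow{\;\Delta_f\;} \mathcal X \times_{\mathcal Y} \mathcal X \xrightarrow{\;\pi\;} \mathcal X \times \mathcal X,$$
in which $\pi$ is the base change of $\Delta_{\mathcal Y}$ along $\mathcal X \times \mathcal X \to \mathcal Y \times \mathcal Y$, so by Lemma~\ref{L:DcondL}(1) (or directly by stability under base change of the relevant properties) $\pi$ inherits from $\Delta_{\mathcal Y}$ whatever property (injective, schematic, representable by a given class, quasicompact, separated, unramified) the class $\mathsf C$ requires. Simultaneously, $\Delta_f$ inherits the corresponding property from $f$ being representable by objects of $\mathsf C$: representability of $f$ by such objects forces $\Delta_f$ to be representable by objects of $\mathsf C$ (Lemma~\ref{L:Rep}\eqref{I:rep-rel} with a trivial diagonal input, or a direct base change check), and the finer conditions---injectivity, quasicompactness, separatedness, etc.---follow from the corresponding properties of the diagonals of objects in $\mathsf C$. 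Composing via Lemma~\ref{L:DcondL}(2) yields the required property for $\Delta_{\mathcal X}$.

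The main obstacle is essentially notational: each column of Definition~\ref{D:AlgStack} packages a distinct combination of properties on the presentation and on the diagonal, and one must check that every such property is (a) stable under base change, (b) stable under composition, and (c) local on the target in the relevant topology. These verifications are standard (compiled in Example~\ref{Exa:StabLoc} and Lemmas~\ref{L:FPCompCov},~\ref{L:Rep}, and~\ref{L:DcondL}), so no genuinely new difficulty arises beyond being systematic. One should also note, for the classes whose presentations are merely required to be \'etale but not schematic (LMB DM, SP DM, DM), that once the smooth presentation above is constructed, Theorem~\ref{T:more-alg-stack} (or~\cite[Tag 06N3]{stacks}) upgrades it to an \'etale presentation under the additional hypothesis that the diagonal be unramified, which is likewise preserved by the factorization argument.
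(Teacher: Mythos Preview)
Your proposal is correct and follows essentially the same approach as the paper. The paper's proof is more compressed: it cites Lemma~\ref{L:AlgRel} for the presentation part (which is precisely your bootstrap construction, already packaged), and for the diagonal conditions it invokes Lemma~\ref{L:DcondL}(4) and (2) directly---your factorization $\mathcal X \to \mathcal X \times_{\mathcal Y} \mathcal X \to \mathcal X \times \mathcal X$ is exactly the content of Lemma~\ref{L:DcondL}(2), and your claim that $\Delta_f$ inherits property $\mathbf P$ from $f$ being $\mathsf C$-representable is Lemma~\ref{L:DcondL}(4). One small remark: your citation of Lemma~\ref{L:Rep}\eqref{I:rep-rel} for the $\Delta_f$ step is not quite the right lemma; the ``direct base change check'' you mention is the correct argument, and it is precisely what Lemma~\ref{L:DcondL}(4) records.
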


\begin{proof}
Lemma \ref{L:AlgRel} covers the case of algebraic stacks.
For  F algebraic stacks one can easily adapt the proof of Lemma \ref{L:AlgRel}.  The remaining  classes $\mathsf C$ of stacks introduced in Definition \ref{D:AlgStack} can can be obtained by imposing various conditions  on the diagonal of an  algebraic stack, all of which are stable under composition and base change.  
Fix a class of such stacks, and call the necessary conditions on the diagonal condition $\mathbf P$.  
In particular,  $\mathcal Y$ is of class $\mathsf C$ means that the diagonal $\Delta_\pi$ of the structure map $\pi:\mathcal Y\to \mathsf S$ has property $\mathbf P$.   If $f : \mathcal X \rightarrow \mathcal Y$ is representable by objects of $\mathsf C$, then from Lemma \ref{L:DcondL}(4), we have that $\Delta_f$ has property $\mathbf P$.  Then by Lemma \ref{L:DcondL}(2), we have that $\Delta_{\pi\circ f}$ has property $\mathbf P$.  In other words, $\mathcal X$ is of class $\mathsf C$. 
\end{proof}

\begin{rem}  The arguments above show the following, as well (\cite[Tag 04YV]{stacks}). 
Let   $f:\mathcal X\to \mathcal Y$ be a morphism of algebraic stacks.
The morphism  $f$ is representable by LMB algebraic stacks  (resp.~LMB DM stacks, resp.~LMB algebraic spaces)  if and only if  $\Delta_f$ is quasicompact and separated (resp.~quasicompact, separated, and unramified, resp.~quasicompact, separated, and injective).
\end{rem}

\section{Groupoids and stacks}\label{S:2-cat-2}

Stacks are often studied via groupoid objects.  In this section we discuss torsors, groupoid objects, and stacks arising as quotients of groupoid objects.  In the end we  show that groupoid objects adapted to a presite, i.e., those where the source and target maps are coverings in the presite, are essentially the same thing as stacks adapted to the presite.

\subsection{Torsors and group quotients} \label{S:TorsG}
Let $X$ be in $\mathsf S/S$, and let $G$ be a sheaf of groups over $S$ acting on the right  on $X$:
$$
\begin{CD}
X\times_SG @>\sigma >> X
\end{CD}
$$
 We define a CFG over $\mathsf S/S$, $[X/G]$ 
in the following way.  The objects over an $S$-scheme $f:S'\to S$ are  diagrams
$$
\xymatrix@C=1.5em@R=1.5em{
P'\ar@{->}[d]\ar@{->}[r]&X_{S'}\\
S'&\\
}
$$
where $P'$ is a $G_{S'}$-torsor (principal bundle) over $S'$, and $P'\to X_{S'}$ is a $G_{S'}$-equivariant morphism.    Morphisms are defined by pullback.  There is a morphism $[X/G]\to S$ given by forgetting everything except the $S$-scheme $f:S'\to S$, and there is an $S$-morphism $q:X\to [X/G]$ given by the trivial $G_X$-bundle 
$$
\xymatrix{
X\times_SG\ar@{->}[d]_{pr_1}\ar@{->}[r]^{\operatorname{id}\times\sigma}&X\times_SX \\
X.&\\
}
$$
This induces a $2$-cartesian diagram
\begin{equation}\label{E:X/G}
\begin{gathered}
\xymatrix{
X\times_SG\ar@{->}[d]_{pr_1}\ar@{->}[r]^{ \sigma}& X \ar@{->}[d]^q \\
X\ar@{->}[r]^{q \ \  \ }&[X/G]\\
}
\end{gathered}
\end{equation}
that is a co-equalizer for  $
\xymatrix{
X\times_SG \ar@{->}@<-2pt>[r]_<>(0.5){pr_1} \ar@{->}@<2pt>[r]^<>(0.5){\sigma}&  X 
}
$ (i.e., initial in the category of stacks for the diagram \eqref{E:X/G}); in other words,    $[X/G]$ is a quotient in the category of stacks  for the action of $G$ on $X$, in the sense that any $G$-equivariant map out of $X$ factors through it.
     Note that if there exists a scheme  $X/G$ that is a quotient in the category of schemes  for the action of $G$ on $X$ (i.e., a co-equalizer in the category of \emph{schemes}), then there is a morphism
$$
[X/G]\to X/G.
$$

Of particular importance is the trivial action of $G$ on $X = S$.  The quotient $[S/G]$ is denoted $\mathrm{B}G$.  As a CFG, $\mathrm{B}G$ consists of the pairs $(S', P')$ where $S' \in \mathsf S/S$ and $P'$ is a $G$-torsor over $S$.

\begin{rem}
There is  a similar construction for left group actions; in the notation above, we would have the stack $[G\backslash X]$. 
\end{rem}

\subsection{Groupoid objects and groupoid quotients}

\subsubsection{Groupoid objects}

Let $\mathsf{Gpd}$ denote the \emph{category} (not $2$-category) of small groupoids.  There are functors $\mathsf{Obj}$ and $\mathsf{Mor}$ from $(\mathsf{Gpd})$ to $(\mathsf{Set})$ sending a groupoid $X$, respectively, to its set $X_0$ of objects and its set $X_1$ of morphisms.  There are two canonical morphisms $s, t : X_1 \rightarrow X_0$ sending a morphism to its source and target.  More data are required to specify a groupoid, but these are often left tacit and the groupoid is usually denoted by a pair of morphisms of sets 
$
\xymatrix@C=1.5em{
X_1 \ar@{->}@<-2pt>[r]_<>(0.5){s} \ar@{->}@<2pt>[r]^<>(0.5){t}&  X_0
}
$.

 A groupoid object in a category $\mathsf S$ is defined by taking $X_1$, $X_0$, and all of the morphisms involved in specifying the groupoid to lie in $\mathsf S$ (as opposed to $\mathsf {Set}$).  
 If this is the case, then  we obtain  functors $\operatorname{Hom}_{\mathsf S}(-,X_i)$ from $\mathsf S^{\operatorname{op}}$ to $(\mathsf {Set})$.  
This can all be said more  concisely as follows:

\begin{dfn}[Groupoid object] \label{D:gpd-obj}
A \emph{groupoid object} of a category $\mathsf S$ is a functor $\mathscr X : \mathsf S^{\rm op} \rightarrow (\mathsf{Gpd})$, along with objects $X_0$ and $X_1$ in $\mathsf S$, respectively representing the composition of  functors $\mathsf{Obj}\circ \mathscr X$ and $\mathsf{Mor} \circ \mathscr X$ from $\mathsf S^{\rm op}$ to $(\mathsf{Set})$.  The groupoid object $\mathscr X$ is often denoted $
\xymatrix@C=1.5em{
X_1 \ar@{->}@<-2pt>[r]_<>(0.5){s} \ar@{->}@<2pt>[r]^<>(0.5){t}&  X_0
}
$.
A \emph{morphism of groupoids objects} is a morphism (natural transformation) of functors.
\end{dfn}

\begin{rem}
There are morphisms $s,t:X_1\to X_0$   associated to a  the groupoid object $\mathscr X$  in $\mathsf S$ obtained from the source and target maps associated to a groupoid.    Moreover, associated to a morphism $\mathscr X\to \mathscr Y$ of groupoid objects in $\mathsf S$ are morphisms $X_0\to Y_0$ and $X_1\to Y_1$.   
\end{rem}

\begin{exa}[Constant groupoid object]
Let $X$ be any object of $\mathsf S$.  Define $X_0 = X_1 = X$.  Then $
\xymatrix@C=1.5em{
X_1 \ar@{->}@<-2pt>[r]_<>(0.5){\operatorname{Id}} \ar@{->}@<2pt>[r]^<>(0.5){\operatorname{Id}}&  X_0
}
$  is a groupoid object of $\mathsf S$.  We call such a groupoid \emph{constant} and denote it, abusively, by the same letter $X$.
\end{exa}

\begin{exa}[Action groupoid]
Let $G$ be a group object of $\mathsf S$ acting on the left on an object $X_0$.  Define $\mathsf{Obj} \mathscr X(U) = \operatorname{Hom}(U, X_0)$ and let $\mathsf{Mor} \mathscr X(U)$ be the set of all triples $(g, x, y)$ where $x, y \in \mathsf{Obj} X(U)$ and $g \in \operatorname{Hom}(U, G)$ is a $U$-point of $G$ such that $gx = y$.  The composition of $(g, x, y)$ and $(g', y, z)$ is the triple $(g'g, x, z)$.
This is known as the \emph{action groupoid}.   It is also typically denoted by  $\xymatrix{
G\times X_0 \ar@{->}@<-2pt>[r]_<>(0.5){\sigma} \ar@{->}@<2pt>[r]^<>(0.5){pr_2}&  X_0 
}$.
\end{exa}

\begin{exa} \label{E:gpd-cfg}
One can immediately associate a category fibered in groupoids to any groupoid object.  Indeed, suppose that $\mathscr X$ is a groupoid object of $\mathsf S$.  Construct a category $\mathcal X$ whose objects are pairs $(U, \xi)$ where $U$ is an object of $\mathsf S$ and $\xi \in \mathsf{Obj} \mathscr X(U)$.  A morphism $(U, \xi) \rightarrow (V, \eta)$ consists of a morphism $f : U \rightarrow V$ of $\mathsf S$ and a morphism $\phi : \xi \rightarrow f^\ast \eta$ of $\mathscr X(U)$.  The composition of $(f, \phi) : (U, \xi) \rightarrow (V, \eta)$ and $(g, \psi) : (V, \eta) \rightarrow (W, \zeta)$ is $(gf, \phi \circ f^\ast \psi)$.  It is easy to verify that this category is fibered in groupoids over $\mathsf S$ with the projection sending $(U, \xi)$ to $U$.
\end{exa}

When $\mathsf S$ has a topology, this groupoid is rarely a stack, although it is a prestack if the topology is subcanonical.   If $\mathsf S$ is subcanonical and $
\xymatrix@C=1.5em{
X_1 \ar@{->}@<-2pt>[r]_<>(0.5){s} \ar@{->}@<2pt>[r]^<>(0.5){t}&  X_0
}
$ is a groupoid object, then it is common to denote the associated prestack by $
[\xymatrix@C=1.5em{
X_1 \ar@{->}@<-2pt>[r]_<>(0.5){s} \ar@{->}@<2pt>[r]^<>(0.5){t}&  X_0
}]^{\operatorname{pre}}$.
There is an abstract process of \emph{stackification} of prestacks, analogous to sheafification of presheaves, by which a CFG $\mathcal X$ is replaced by the initial stack receiving a map from $\mathcal X$.  In the situation of groupoid objects, this stack is typically denoted by $[\xymatrix@C=1.5em{
X_1 \ar@{->}@<-2pt>[r]_<>(0.5){s} \ar@{->}@<2pt>[r]^<>(0.5){t}&  X_0
}]$ (see e.g., \cite[Def.~3.11, Def.~4.10]{DMstacks} for more details on this approach).    Moreover, there is a morphism $X_0\to [\xymatrix@C=1.5em{
X_1 \ar@{->}@<-2pt>[r]_<>(0.5){s} \ar@{->}@<2pt>[r]^<>(0.5){t}&  X_0
}]$ that makes the 
 following diagram
\begin{equation}\label{E:2cartGrp}
\xymatrix@C=.2em{
X_1 \ar[rrrr]^t  \ar[d]_s&&& & X_0 \ar[d(0.7)]&\\
X_0 \ar[rrr]&&& [X_1 \ar@{->}@<-2pt>[rr]_<>(0.5){s} \ar@{->}@<2pt>[rr]^<>(0.5){t}&&  X_0]
}
\end{equation}
$2$-cartesian (\cite[Tag 04M8]{stacks}) and  
essentially a $2$-coequalizer for  $\xymatrix@C=1.5em{
X_1 \ar@{->}@<-2pt>[r]_<>(0.5){s} \ar@{->}@<2pt>[r]^<>(0.5){t}&  X_0
}$ (see \cite[Tag 04MA]{stacks} for more details on the precise meaning of this).
   In other words, the stack provides a ``quotient'' for the groupoid.

  Rather than undertake an explanation of this construction and the attendant $2$-universal property, we will give a direct construction of the stack associated to a groupoid object.

\begin{rem}
In the case of left group action, the stack $[\xymatrix{
G\times X_0 \ar@{->}@<-2pt>[r]_<>(0.5){\sigma} \ar@{->}@<2pt>[r]^<>(0.5){pr_2}&  X_0 
}]$ is equivalent to the stack $[G\backslash X_0]$.  
\end{rem}

\subsubsection{Augmented groupoids}

\begin{dfn}[Augmented groupoid object]
A groupoid object $\mathscr X$ of a category $\mathsf S$ is said to be \emph{augmented} toward an object $X$ of $\mathsf S$ when it is equipped with a morphism $\mathscr X \rightarrow X$.  A groupoid $\mathscr X$ augmented toward $X$ is often denoted $X_1 \rightrightarrows X_0 \rightarrow X$.

If $\mathscr X \rightarrow X$ and $\mathscr Y \rightarrow Y$ are augmented groupoids, a morphism of augmented groupoids from $\mathscr X \rightarrow X$ to $\mathscr Y \rightarrow Y$ is a commutative diagram of groupoid objects as in~\eqref{E:aug-gpd-mor}:
\begin{equation} \label{E:aug-gpd-mor} \xymatrix{
\mathscr X \ar[r] \ar[d] & \mathscr Y \ar[d] \\
X \ar[r]^f & Y.
} \end{equation}
\end{dfn}

\begin{rem}
To augment a groupoid $\mathscr X$ torwards $X$, it is equivalent to give a morphism $f : X_0 \rightarrow X$ such that $fs = ft$.
\end{rem}

\begin{dfn}[Cartesian morphism of augmented groupoid objects] If $\mathsf S$ admits fibered products, 
a morphism of augmented groupoid objects as in \eqref{E:aug-gpd-mor}   is called \emph{cartesian} if $X_0 \rightarrow X \mathbin\times_Y Y_0$ and $X_1 \rightarrow X \mathbin\times_Y Y_1$ are isomorphisms.
\end{dfn}

\begin{exa}
Suppose that $q : X_0 \rightarrow X$ is a morphism in a category admitting fiber products.  We define a groupoid as follows:  $\mathsf{Obj} \mathscr X(U) = \operatorname{Hom}(U, X_0)$ and $\mathsf{Mor} \mathscr X(U)$ is the set of pairs $(f, g) \in \operatorname{Hom}(U, X_0)$ such that $qf = qg$.  In other words, $\mathsf{Mor} \mathscr X$ is represented by $X_1 = X_0 \mathbin\times_X X_0$.  The composition of the pair $(f,g)$ and $(g,h)$ is, by definition, the pair $(f,h)$, and the identity of $f \in \mathsf{Obj} \mathscr X(U)$ is the pair $(f,f)$.
The groupoid $X_1 \rightrightarrows X_0$ is augmented toward $X$ by construction.
\end{exa}

Let $\mathsf{Gpd}^+_{\mathsf S}$ denote the category of augmented groupoid objects of $\mathsf S$, with cartesian morphisms.  The projection sending an augmented groupoid object $(\mathscr X \rightarrow X)$ to $X$ makes $\mathsf{Gpd}^+_{\mathsf S}$ into a CFG over $\mathsf S$.

\subsubsection{Stacks associated to groupoid objects}

Now suppose that $\mathsf S$ is equipped with a pretopology.  

\begin{dfn}[Presentation of an augmented groupoid obejct] \label{D:gpd-stk}
Let $\mathsf S$ be a presite admitting fibered products.  Let $X_1 \rightrightarrows X_0 \rightarrow X$ be a groupoid of $\mathsf S$ augmented toward $X$.  We call it a \emph{presentation} of $X$ if $X_0 \rightarrow X$ is covering (Definition \ref{D:CovStack}) and the canonical map $X_1 \rightarrow X_0 \mathbin\times_X X_0$ is an isomorphism.
\end{dfn}

\begin{dfn}[Category associated to a groupoid object] \label{D:CFG-GrOb}
Let $\mathsf S$ be a presite admitting fibered products. 
Let $\mathscr X = (X_1 \rightrightarrows X_0)$ be a groupoid object of $\mathsf S$.  We construct a CFG, $\mathcal X$, called the \emph{CFG associated to a groupoid object}.  The objects of $\mathcal X$ are triples $(U, \mathscr U, \xi)$ where $U$ is an object of $\mathsf S$, where $\mathscr U\to U$ is a presentation of $U$, and where $\xi : \mathscr U \rightarrow \mathscr X$ is a morphism of groupoid objects.

A morphism in $\mathcal X$ from $(U, \mathscr U, \xi)$ to $(V, \mathscr V, \eta)$ is a cartesian morphism $(f, \varphi)$ of augmented groupoids from $(\mathscr U \rightarrow U)$ to $(\mathscr V \rightarrow V)$ such that $\eta \circ \varphi = \xi$ as morphisms of groupoids $\mathscr U \rightarrow \mathscr X$.

The morphism $\mathcal X\to \mathsf S$ is given by sending $(U, \mathscr U, \xi)$ to $U$.  
\end{dfn}

The following lemma asserts that the category $\mathcal X$ is a CFG;
the proof is straightforward, so it is omitted.
\begin{lem}
If $\mathsf S$ admits fiber products,
then the category $\mathcal X$ over $\mathsf S$ constructed above is a CFG.
\end{lem}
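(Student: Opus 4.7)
The plan is to verify the two defining properties of a category fibered in groupoids over $\mathsf S$: every morphism in $\mathsf S$ admits a cartesian lift to $\mathcal X$, and every fiber $\mathcal X(U)$ is a groupoid. Before that, one needs to check $\mathcal X$ is actually a well-defined category with the stated projection. Composition of two morphisms $(U,\mathscr U,\xi)\xrightarrow{(f,\varphi)}(V,\mathscr V,\eta)\xrightarrow{(g,\psi)}(W,\mathscr W,\zeta)$ is $(gf,\psi\circ\varphi)$, and cartesianness of augmented groupoid morphisms is preserved under composition (pullback squares paste), while the compatibility $\zeta\circ(\psi\circ\varphi)=\eta\circ\varphi=\xi$ is immediate. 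Identities and associativity are formal. So $\pi:\mathcal X\to\mathsf S$ sending $(U,\mathscr U,\xi)\mapsto U$ is a functor.

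For the lifting axiom, given $f:U\to V$ in $\mathsf S$ and $(V,\mathscr V,\eta)\in\mathcal X(V)$, I would construct the pullback presentation by setting $U_0=U\times_V V_0$ and $U_1=U\times_V V_1$, where the augmentation $V_1\to V$ is well-defined because $s$ and $t$ agree after composition with $V_0\to V$. All the groupoid structure maps (source, target, composition, identity, inverse) pull back since $\mathsf S$ admits fibered products. Then I must check $\mathscr U\to U$ is a presentation: that $U_0\to U$ is covering follows from stability of covers under base change (PT1), and the canonical map $U_1\to U_0\times_U U_0$ identifies with the base change to $U$ of the isomorphism $V_1\to V_0\times_V V_0$, hence is itself an isomorphism. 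Setting $\xi=\eta\circ\varphi$ where $\varphi:\mathscr U\to\mathscr V$ is the canonical projection then gives an object of $\mathcal X(U)$ together with a morphism $(f,\varphi)$ to $(V,\mathscr V,\eta)$, which is cartesian as an augmented groupoid morphism essentially by construction.

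The main technical point is to verify that this morphism $(f,\varphi)$ is cartesian \emph{in the sense of Definition~\ref{D:CartMorph}}. Given $g:W\to U$ in $\mathsf S$ and a morphism $(fg,\psi):(W,\mathscr W,\zeta)\to(V,\mathscr V,\eta)$, I need to produce a unique lift $(g,\tilde\varphi):(W,\mathscr W,\zeta)\to(U,\mathscr U,\xi)$ with $(f,\varphi)\circ(g,\tilde\varphi)=(fg,\psi)$. The key observation is that since $\mathscr U=U\times_V\mathscr V$, a groupoid morphism $\mathscr W\to\mathscr U$ amounts by the universal property of fiber products to a morphism $\mathscr W\to\mathscr V$ and a morphism $\mathscr W\to U$ whose compositions to $V$ agree; these are forced to be $\psi$ and the composition $\mathscr W\to W\xrightarrow{g}U$ respectively, and they are compatible because $\psi$ lies over $fg$. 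This gives existence and uniqueness of $\tilde\varphi$; cartesianness of $\tilde\varphi$ as an augmented groupoid morphism follows from $W\times_U U_i=W\times_U(U\times_V V_i)=W\times_V V_i=W_i$, using that $\psi$ is cartesian; and the compatibility $\xi\circ\tilde\varphi=\eta\circ\varphi\circ\tilde\varphi=\eta\circ\psi=\zeta$ holds automatically. I expect this diagram chase, rather than any single hard computation, to be the main thing to lay out carefully.

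Finally, for the fibers: a morphism $(f,\varphi)$ over $f=\mathrm{id}_U$ is cartesian as an augmented groupoid morphism, so $U_i\to U\times_U V_i=V_i$ is an isomorphism for $i=0,1$, meaning $\varphi$ is an isomorphism of groupoids over $U$; its inverse $\varphi^{-1}$ satisfies $\eta=\xi\circ\varphi^{-1}$, giving the required inverse morphism in $\mathcal X(U)$. Thus $\mathcal X(U)$ is a groupoid, completing the proof that $\mathcal X\to\mathsf S$ is a CFG.
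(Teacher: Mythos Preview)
Your proof is correct and complete. The paper itself omits the proof entirely, remarking only that it is ``straightforward,'' so your argument stands as a full verification: you check that $\mathcal X$ is a well-defined category over $\mathsf S$, construct cartesian lifts via pullback presentations (using (PT1) for the covering condition and preservation of isomorphisms under base change for the $U_1\cong U_0\times_U U_0$ condition), verify the universal property of these lifts via the fiber-product description $\mathscr U = U\times_V\mathscr V$, and show the fibers are groupoids by observing that cartesian augmented-groupoid morphisms over $\mathrm{id}_U$ are invertible.
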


For the following lemma, let $\mathsf{Cov}$ denote the category whose objects are covering morphisms $X \rightarrow U$ in $\mathsf S$ and whose morphisms are cartesian squares.  If $\mathsf S$ has fiber products then the projection $\mathsf{Cov} \rightarrow \mathsf S$ sending $(X \rightarrow U)$ to $U$ makes $\mathsf{Cov}$ into a CFG over $\mathsf S$ (Example \ref{E:CFG-for-stable-class}).

\begin{lem}
Assume that $\mathsf S$ is a subcanonical presite with fiber products.  Let  $\mathcal X$ be the CFG constructed as above in Definition \ref{D:CFG-GrOb}.  If $\mathsf{Cov}$ is a stack over $\mathsf S$ then so is $\mathcal X$.
\end{lem}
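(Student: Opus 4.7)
The plan is to verify the two conditions defining a stack (Definition~\ref{D:Stack}): that isomorphisms form a sheaf (prestack condition), and that every descent datum is effective. Throughout, I will exploit the fact that an object of $\mathcal X$ over $U$ unpacks as (i) a covering $U_0 \to U$ (together with $U_1 = U_0 \times_U U_0$, which is determined), and (ii) a pair of $\mathsf S$-morphisms $\xi_0 : U_0 \to X_0$ and $\xi_1 : U_1 \to X_1$ satisfying the standard compatibilities expressing that $\xi : \mathscr U \to \mathscr X$ is a morphism of groupoid objects (commutation with $s,t$, identity, and composition). These compatibilities are equalities between morphisms in $\mathsf S$, so by the subcanonicity of $\mathsf S$ they may be checked locally.

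For the prestack condition, suppose $(S, \mathscr S, \xi)$ and $(S, \mathscr S', \xi')$ are two objects in $\mathcal X(S)$ and $\{S_i \to S\}$ is a cover, equipped with local isomorphisms that agree on overlaps. Forgetting the $\xi$'s, the underlying cartesian morphisms of augmented groupoids $\mathscr S \to \mathscr S'$ amount to morphisms in $\mathsf{Cov}$ between $(S_0 \to S)$ and $(S_0' \to S)$; since $\mathsf{Cov}$ is assumed to be a stack, and in particular a prestack, these local morphisms glue uniquely. The resulting global cartesian morphism of augmented groupoids is compatible with $\xi,\xi'$ because the compatibility is an equation between $\mathsf S$-morphisms to $X_0$ and $X_1$ which holds on the cover and hence globally, again by subcanonicity.

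For effectivity of descent, suppose we are given objects $(S_i,\mathscr S_i,\xi_i) \in \mathcal X(S_i)$ together with cartesian isomorphisms $\alpha_{ij}$ on overlaps $S_{ij}$ satisfying the cocycle condition. First I would construct the presentation: the augmented covers $(S_{i,0} \to S_i)$ with the induced descent isomorphisms form a descent datum for $\mathsf{Cov}$, which is a stack by hypothesis, so they descend to a covering $S_0 \to S$ in $\mathsf S$. Setting $S_1 = S_0 \times_S S_0$ gives a presentation $\mathscr S = (S_1 \rightrightarrows S_0) \to S$ whose pullback to $S_i$ is canonically isomorphic to $\mathscr S_i$. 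Next, the morphisms $\xi_{i,0} : S_{i,0} \to X_0$ agree on overlaps after transport by the $\alpha_{ij}$, and $X_0$ represents a sheaf (since $\mathsf S$ is subcanonical), so they glue to a unique $\xi_0 : S_0 \to X_0$; the same argument applied to $X_1$ produces $\xi_1 : S_1 \to X_1$. The groupoid-compatibility equations for $\xi = (\xi_0,\xi_1)$ hold after pullback to each $S_i$ (where they are satisfied by $\xi_i$), and therefore hold globally since morphisms to objects of $\mathsf S$ form sheaves.

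The main obstacle is really bookkeeping: one must carefully track three layers of data (the cover $U_0 \to U$, the fiber product $U_1$, and the morphisms $\xi_0,\xi_1$) and verify that the descent performed at each layer is compatible with the structure maps at the next. The genuine nontriviality is concentrated in descending the cover $U_0 \to U$, and that is precisely the content of the hypothesis that $\mathsf{Cov}$ is a stack; everything else reduces to sheaf-theoretic gluing of $\mathsf S$-morphisms into the representable sheaves $X_0$ and $X_1$, which is free from the subcanonicity hypothesis. A subtle point worth verifying is that the descended cover $S_0 \to S$ is indeed covering (not merely that it exists as an object of $\mathsf{Cov}$), but this is built into the definition of $\mathsf{Cov}$.
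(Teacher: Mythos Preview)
Your proposal is correct and follows essentially the same approach as the paper's (sketched) proof: descend the covering using the stack hypothesis on $\mathsf{Cov}$, then glue the morphisms $\xi_0,\xi_1$ into $X_0,X_1$ using subcanonicity. The only stylistic differences are that the paper phrases descent via the sieve formulation (Lemma~\ref{L:StCond}) rather than explicit descent data, and that the paper nominally descends both $U_{i,0}$ and $U_{i,1}$ as objects of $\mathsf{Cov}$, whereas you more economically observe that $S_1 = S_0 \times_S S_0$ is forced once $S_0 \to S$ is in hand.
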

\begin{proof}
We give just a sketch.  
The idea is to use Lemma \ref{L:StCond}. We start with an object $S$ of $\mathsf S$ and the canonical morphism  $\mathsf S/S\to  \mathsf S$.  
Given a cover $\mathcal R = \{ U_i \rightarrow S \}$ of $S$ and a morphism $\mathcal R\to \mathcal X$ we need to show how to obtain the lift $\mathsf S/S\to \mathcal X$.  So, given  groupoid objects $\mathscr U_i$ over each $U_i$, along with compatible data over the double and triple fiber products $U_{ij}$ and $U_{ijk}$, these descend to a groupoid object $\mathscr S$ over $S$ by descending the objects $\mathsf{Obj} \mathscr U_i$ and $\mathsf{Mor} \mathscr U_i$ of $\mathsf{Cov}$ and the morphisms between them (using that $\mathsf{Cov}$ forms a stack and that morphisms between representable objects form sheaves).  Then the maps $\mathscr U_i \rightarrow \mathscr X$ descend to $\mathscr S \rightarrow \mathscr X$ by descending the maps on objects and morphisms, again using the subcanonicity of the site.  This gives the desired morphism $\mathsf S/S\to \mathcal X$.
\end{proof}

\begin{exa} \label{E:gpd-proj}
If $\mathscr X$ is the groupoid object $\xymatrix@C=10pt{U_1 \ar@<1.5pt>[r]^s \ar@<-1.5pt>[r]_t & U_0}$   and $\mathcal X$ is its associated stack,  there is a canonical map $U_0 \rightarrow \mathcal X$, which is covering.  
We construct the triple $(U_0,\mathscr U_0,\mathscr U_0\to \mathscr X)$ giving this morphism as follows.
The presentation $\mathscr U_0\to U_0$ is given by 
$$
\xymatrix@C=10pt{
U_1\times_{U_0}U_1 \ar@<1.5pt>[rr]^<>(0.5){pr_1} \ar@<-1.5pt>[rr]_<>(0.5){comp} && U_1 \ar[rr]^s &&U_0;
}
$$
here the maps for the fibered product are the source and target maps repsectively, and the bottom arrow $comp$ is the composition morphism taking a pair $(\alpha,\beta)$ in $U_1\times_{U_0}U_1$  (over some $S$) to the composition $\beta\circ \alpha$ in $U_1$ (over $S$).  This is a presentation of $U_0$ since  $s$ is covering (the projections $s,t:U_1 \rightarrow U_0$ are always covering in a groupoid object because of the `identity map' section $U_0\to U_1$).

To describe $\mathscr U_0\to \mathscr X$, it is convenient to describe $\mathscr U_0$ as follows.
For any scheme $S$,  $\mathscr U_0(S)$ is the groupoid in which the objects are the morphisms of $\mathscr X(S)$.  The morphisms from $(\xi \rightarrow \zeta)$ to $(\eta \rightarrow \omega)$ in $\mathscr U_0(S)$ are the commutative squares
\begin{equation*} \xymatrix{
\xi \ar[r] \ar[d] & \zeta \ar@{=}[d] \\
\eta \ar[r] & \omega .
} \end{equation*}
That is, there are no morphisms unless $\zeta = \omega$.
Clearly  $\mathsf{Obj} \mathscr U_0(S) = \operatorname{Hom}(S, U_1)$ and $\mathsf{Mor} \mathscr U_0(S) = \operatorname{Hom}(S, U_1\times_{U_0}U_1)$.  There is a canonical map $\mathscr U_0 \rightarrow \mathscr X$ sending an object $(\xi \rightarrow \zeta)$ to $\xi$.   

Finally, we show that $U_0\to \mathcal X$ is covering.  For a scheme $S$, a morphism $S\to \mathcal X$ is a triple $(S,\mathscr S,\xi:\mathscr S\to \mathscr X$), where $\mathscr S=\xymatrix@C=10pt{S_1 \ar@<1.5pt>[r]^{s'} \ar@<-1.5pt>[r]_{t'} &S_0}\to S$ is a presentation.   
The morphism $\mathscr S\to \mathscr X$ induces a morphism $S_0\to U_0$.  The fact that $S_0\to S$ is covering means that there is a cover $\{T_\alpha \to S\}$ of $S$ that lifts to $S_0$, and therefore composing  gives  morphisms $T_\alpha \to U_0$.  This gives the cover $\{T_\alpha \to S\}$ of $S$ whose compositions $T_\alpha\to S\to \mathcal X$ lift to $U_0$.  
\end{exa}

\begin{rem}
In fact, there is a cocartesian diagram in the $2$-category of stacks:
\begin{equation*}
\xymatrix{
U_1 \ar[r]^s  \ar[d]_t& U_0 \ar[d]\\
U_0 \ar[r]& \mathcal X
} \end{equation*}
We will not use this property, so we do not give a proof (see e.g., \cite[Def.~3.11, Def.~4.10]{DMstacks}).
\end{rem}

\subsection{Adapted groupoid objects and adapted stacks}\label{S:GrOb-AS}

Here we show that a stack associated to a groupoid object   having source and target that are  coverings in the presite is the same as a stack adapted to the presite; i.e., it is an algebraic stack.  Moreover, the groupoid object induces a presentation of the stack.

\begin{pro}[{\cite[Prop.~5.21]{DMstacks}}] \label{P:Gr-Stack-Pres}  Let $\mathsf S$ be a subcanonical presite admitting fibered products.  
Let  $\xymatrix@C=1em{U_1 \ar@{->}@<-2pt>[r]_<>(0.5){s} \ar@{->}@<2pt>[r]^<>(0.5){t}& U_0}$ be a groupoid object, and set $\mathcal X=[\xymatrix@C=1em{U_1 \ar@{->}@<-2pt>[r]_<>(0.5){s} \ar@{->}@<2pt>[r]^<>(0.5){t}& U_0}]$. 
If $\mathbf P$ is any property of morphisms in $\mathsf S$ that is stable under base change and local (on the target), 
 then:
\begin{enumerate}
\item  Assuming the diagonal $\mathcal X\to \mathcal X\times \mathcal X$ is $\mathsf S$-representable, it   has property $\mathbf P$ if and only if $(s,t):U_1\to U_0\times U_0$ 
has property $\mathbf P$.

\item  Assuming the  morphism $U_0 \to \mathcal  X$ (corresponding to the identity of $U_0$) is $\mathsf S$-representable, it has property $\mathbf P$ if and only if s (or t) has property $\mathbf P$.
\end{enumerate}
\end{pro}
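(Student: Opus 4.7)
The plan is to reduce both statements to the locality and base-change stability of $\mathbf P$ by exhibiting two key $2$-cartesian diagrams that relate the groupoid data $(s,t)$ to the atlas $p : U_0 \to \mathcal X$ and to the diagonal $\Delta_{\mathcal X} : \mathcal X \to \mathcal X \times \mathcal X$. First I would recall from Example~\ref{E:gpd-proj} that $p : U_0 \to \mathcal X$ is a covering morphism, and hence so is $p \times p : U_0 \times U_0 \to \mathcal X \times \mathcal X$, since coverings are stable under fibered products (Lemma~\ref{L:FPCompCov}).

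For statement (2), I would establish the $2$-cartesian square
\begin{equation*}
\xymatrix{
U_1 \ar[r]^t \ar[d]_s & U_0 \ar[d]^p \\
U_0 \ar[r]^p & \mathcal X
}
\end{equation*}
(this is the content of~\eqref{E:2cartGrp} in the discussion preceding Example~\ref{E:gpd-proj}). Granted this, if $p$ has property $\mathbf P$, then $t$ (resp.\ $s$) has property $\mathbf P$ by stability under base change. Conversely, if $s$ (equivalently $t$, by the symmetry of the groupoid structure via the inverse map) has property $\mathbf P$, then the pullback of $p$ along the covering $p$ itself has $\mathbf P$, whence $p$ has $\mathbf P$ by locality on the target.

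For statement (1), I would establish the $2$-cartesian square
\begin{equation*}
\xymatrix{
U_1 \ar[r]^<>(0.5){(s,t)} \ar[d] & U_0 \times U_0 \ar[d]^{p \times p} \\
\mathcal X \ar[r]^<>(0.5){\Delta_{\mathcal X}} & \mathcal X \times \mathcal X.
}
\end{equation*}
This is essentially the same calculation as Example~\ref{E:diagonal}: an object of the fiber product $\mathcal X \times_{\mathcal X \times \mathcal X} (U_0 \times U_0)$ over a scheme $T$ amounts to a pair of maps $T \to U_0$ together with an isomorphism in $\mathcal X$ between their images, which is precisely a $T$-point of $U_1$ via the representability of $\mathsf{Mor}\,\mathscr X$. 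Granted this square, since $p \times p$ is covering, the locality and base-change stability of $\mathbf P$ give $(s,t)$ has $\mathbf P$ if and only if $\Delta_{\mathcal X}$ does.

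The main obstacle is the verification of the $2$-cartesian square for the diagonal. The argument proceeds by describing an object of $\mathcal X \times_{\mathcal X \times \mathcal X} (U_0 \times U_0)$ over a test scheme $T$ using Definition~\ref{D:CFG-GrOb}: it is a presentation $\mathscr T \to T$, a morphism $\mathscr T \to \mathscr X$, and a pair of $T$-points $a, b : T \to U_0$ together with an isomorphism in $\mathcal X$ between $a, b$ and the map $T \to \mathcal X$ induced by $\mathscr T$. Using that $\mathsf S$ is subcanonical (so morphisms into the representable $U_1$ form a sheaf) and that isomorphisms between objects in the image of $p : U_0 \to \mathcal X$ are, by construction of $\mathcal X$, controlled by $T$-points of $U_1$, one extracts a canonical $T$-point of $U_1$ whose source and target are $a$ and $b$. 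The inverse construction sends $\gamma : T \to U_1$ to the triple $(s\gamma, t\gamma, \gamma)$ with the tautological trivial presentation. Once this square is in hand, the proposition follows immediately from the two bullet assumptions on $\mathbf P$.
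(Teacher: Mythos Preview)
Your approach is the same as the paper's: both parts rest on the two cartesian squares you wrote down, plus base-change stability and locality of $\mathbf P$. The paper's proof of (1) simply asserts that $(s,t)$ is the base change of $\Delta_{\mathcal X}$ along $p\times p$, and for (2) uses exactly the square you drew from~\eqref{E:2cartGrp}.

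One point where you are terser than the paper, and where a grader might balk: in the converse directions you invoke ``locality on the target'' directly for the representable morphisms $p:U_0\to\mathcal X$ and $\Delta_{\mathcal X}:\mathcal X\to\mathcal X\times\mathcal X$. But the hypothesis on $\mathbf P$ is locality for morphisms \emph{in $\mathsf S$} (Definition~\ref{D:LocalClass}), and the targets here are stacks. The paper handles this by taking an arbitrary test object $S\to\mathcal X$ (resp.\ $S\to\mathcal X\times\mathcal X$) in $\mathsf S$, base-changing the whole square along it, and then applying locality to the resulting morphism $T\to S$ in $\mathsf S$ using the cover $W\to S$ (the base change of $p$ or $p\times p$). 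Your argument becomes literally correct once you insert this step; it is worth making explicit since that is precisely what ``$p$ has property $\mathbf P$'' means (Definition~\ref{D:RepMorphProp}).
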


\begin{proof}
For the first claim, $U_1 \rightarrow U_0 \times U_0$ is the base change of the diagonal via $U_0 \times U_0 \rightarrow \mathcal X \times \mathcal X$, so the former inherits property $\mathbf P$ from the latter.  Conversely, if $S \rightarrow \mathcal X \times \mathcal X$ is any morphism, let
\begin{equation*} \xymatrix{
V \ar[r] \ar[d] & W \ar[d] \\
T \ar[r] & S
} \end{equation*}
be the base change of the cartesian diagram
\begin{equation*} \xymatrix{
U_1 \ar[r] \ar[d] & U_0 \times U_0 \ar[d] \\
\mathcal X \ar[r]^<>(0.5)\Delta & \mathcal X \times \mathcal X .
} \end{equation*}
Then $V \rightarrow W$ has property $\mathbf P$ by base change.  But $W \rightarrow S$ is a cover, since it is the base change of $U_0 \times U_0 \rightarrow \mathcal X \times \mathcal X$, so property $\mathbf P$ descends to $T \rightarrow S$.  This applies to any morphism $S \rightarrow \mathcal X \times \mathcal X$, so $\Delta$ has property $\mathbf P$.

For the second claim, there is a cartesian diagram
\begin{equation} \label{E:groupoid} \xymatrix{
U_1 \ar[r]^t \ar[d]_s & U_0 \ar[d] \\
U_0 \ar[r] & \mathcal X
} \end{equation}
so $s$ and $t$ inherit property $\mathbf P$ from $U_0 \rightarrow \mathcal X$.  Conversely, suppose $S \rightarrow \mathcal X$ is any morphism.  Let 
\begin{equation*} \xymatrix{
V \ar[r] \ar[d] & W \ar[d]  \\
T \ar[r] & S
} \end{equation*}
be the base change of \eqref{E:groupoid}.  Then $V \rightarrow W$ is $\mathbf P$ by base change.  But $W \rightarrow S$ is covering (by base change; see Example  \ref{E:gpd-proj}),
 so $\mathbf P$ descends to $T \rightarrow S$.  This applies to any $S \rightarrow \mathcal X$, so $U_0 \rightarrow \mathcal X$ is $\mathbf P$, as required.
\end{proof}

\begin{pro}[{\cite[Prop.~4.19, 5.19]{DMstacks}}]  
  Let $\mathsf S$ be a subcanonical presite admitting fibered products.  
 Let $\mathcal X$ be a stack, let $P:U\to \mathcal X$ be an  $\mathsf S$-representable morphism from an object $U$ of $\mathsf S$.   Then there is an associated  groupoid object $\xymatrix{U\times_{\mathcal X}U \ar@{->}@<-2pt>[r]_<>(0.5){pr_1} \ar@{->}@<2pt>[r]^<>(0.5){pr_2}& U}$ in $\mathsf S$ (see e.g., \cite[Prop.~3.5]{DMstacks}).   
 If moreover the morphism $P:U\to \mathcal X$ is a cover in the sense of Definition \ref{D:SAdaptStack}, then $\mathcal X$ is equivalent to $[\xymatrix{U\times_{\mathcal X}U \ar@{->}@<-2pt>[r]_<>(0.5){pr_1} \ar@{->}@<2pt>[r]^<>(0.5){pr_2}& U}]$, and the projections $pr_1,pr_2$ are covers in the presite.
\end{pro}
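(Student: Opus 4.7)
The plan is to construct the groupoid object directly from the fibered products, construct mutually inverse functors between $\mathcal X$ and $\bigl[\xymatrix@C=1.2em{U_1 \ar@<-2pt>[r]_{pr_1} \ar@<2pt>[r]^{pr_2} & U}\bigr]$, and then read off covering properties of $pr_1,pr_2$ from the very definition of a cover in Definition~\ref{D:SAdaptStack}. I will refer to $U_1 = U\times_{\mathcal X} U$ throughout, and use the notation $\mathscr X$ for the groupoid object $U_1 \rightrightarrows U$ and $[\mathscr X]$ for its associated stack (Definition~\ref{D:CFG-GrOb}).

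First I would set up the groupoid object. Because $P$ is $\mathsf S$-representable, $U_1$ and $U\times_{\mathcal X} U \times_{\mathcal X} U$ both lie in $\mathsf S$. The source and target are the two projections $pr_1,pr_2 : U_1\to U$; the identity is the diagonal $U \to U_1$ induced by $(\mathrm{id},\mathrm{id})$; the composition is the outer projection $U_1 \times_U U_1 = U\times_{\mathcal X} U \times_{\mathcal X} U \to U_1$; and the inverse is the symmetry of the fibered product. The groupoid axioms are all verified by routine universal-property diagram chases, using only that $U \to \mathcal X \leftarrow U$ is a cospan in the $2$-category of stacks.

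Next, to produce an equivalence $\Phi : [\mathscr X] \to \mathcal X$, I will use the stack property of $\mathcal X$. Given an object $(S, \mathscr S, \xi : \mathscr S \to \mathscr X)$ of $[\mathscr X]$, where $\mathscr S = (S_1 \rightrightarrows S_0)$ is a presentation of $S$, I get an $\mathsf S$-morphism $\xi_0 : S_0 \to U$ together with a compatible $\xi_1 : S_1 \to U_1$. The pair $(P\circ \xi_0, P\circ \xi_0)$ on the two copies of $S_0$ in $S_1 = S_0 \times_S S_0$ is equalized by $\xi_1$ viewed as $(pr_1,pr_2)\circ \xi_1 = (\xi_0 \circ s', \xi_0\circ t')$, giving descent data for a morphism $S \to \mathcal X$. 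Since $\mathcal X$ is a stack and $\{S_0 \to S\}$ is a cover, this descends uniquely (up to unique $2$-isomorphism) to the desired $\Phi(S, \mathscr S, \xi) : S \to \mathcal X$. In the reverse direction, for any $\eta : S \to \mathcal X$ I set $U_S := S\times_{\mathcal X} U$ and form the groupoid $\mathscr U_S = (U_S\times_S U_S \rightrightarrows U_S) = (S\times_{\mathcal X} U_1 \rightrightarrows S\times_{\mathcal X} U)$. The assumption that $P$ is a cover in the sense of Definition~\ref{D:SAdaptStack} ensures $U_S \to S$ is a cover in $\mathsf S$, and the identification $U_S\times_S U_S = S\times_{\mathcal X} U_1$ ensures this augmented groupoid is a presentation (Definition~\ref{D:gpd-stk}). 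The canonical projection $\mathscr U_S \to \mathscr X$ then gives $\Psi(\eta) := (S, \mathscr U_S, \mathscr U_S \to \mathscr X)$ in $[\mathscr X](S)$.

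The hard part, and the main obstacle, is to check that $\Phi$ and $\Psi$ are mutually quasi-inverse. The composition $\Phi\circ \Psi$ sends $\eta$ to the descent of $U_S \to U \xrightarrow{P} \mathcal X$, which is visibly $\eta$ itself by the universal property of the fiber product. The composition $\Psi\circ \Phi$ requires more care: given $(S,\mathscr S,\xi)$, one must exhibit a cartesian morphism of augmented groupoid objects $\mathscr S \to \mathscr U_{\Phi(S,\mathscr S,\xi)}$, which is again obtained from the pair $(\xi_0,\mathrm{id}_S)$ by the universal property of the fibered product defining $U_S$; the verification that the resulting morphism is cartesian amounts to checking that $S_1 \simeq S_0 \times_S S_0$, which holds by hypothesis that $\mathscr S \to S$ is a presentation. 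Morphisms in $[\mathscr X]$ are handled analogously, and naturality is automatic because every arrow in sight is constructed via universal properties. Throughout, subcanonicity of $\mathsf S$ and the stack condition on $\mathcal X$ are what make the relevant $2$-cells unique.

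Finally, $pr_1$ and $pr_2$ are covers: by hypothesis $P : U \to \mathcal X$ is a cover in the sense of Definition~\ref{D:SAdaptStack}, so its base change along \emph{any} $S \to \mathcal X$ (in particular along $P$ itself) is a cover in the pretopology on $\mathsf S$; base change along $P$ yields exactly $pr_1, pr_2 : U_1 \to U$. Applying Proposition~\ref{P:Gr-Stack-Pres} with $\mathbf P$ the class of covers then gives another proof that $U \to [\mathscr X]$ (and equivalently $U \to \mathcal X$ under the equivalence just constructed) is a cover, closing the circle.
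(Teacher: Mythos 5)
Your construction of the groupoid object and your argument that $pr_1,pr_2$ are covers are fine; indeed, observing that $pr_1,pr_2:U\times_{\mathcal X}U\to U$ are the base change of $P$ along $P$ itself, and invoking Definition~\ref{D:SAdaptStack} with $S=U$, is more direct than the paper's appeal to the $2$-cartesian square. The genuine gap is in the step $\Psi\circ\Phi\simeq\mathrm{id}$. In the paper's model of $[U\times_{\mathcal X}U\rightrightarrows U]$ (Definition~\ref{D:CFG-GrOb}), a morphism lying over $\mathrm{id}_S$ is a \emph{cartesian} morphism of augmented groupoid objects, and cartesianness over $\mathrm{id}_S$ means that $S_0\to S\times_S U_S=U_S$ is an \emph{isomorphism}; it is not the condition $S_1\simeq S_0\times_S S_0$, which is merely the hypothesis that $\mathscr S\to S$ is a presentation (Definition~\ref{D:gpd-stk}). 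Your comparison map $S_0\to U_S=S\times_{\mathcal X}U$, induced by $\xi_0$ and the structure map $S_0\to S$, is essentially never an isomorphism: already for $\mathcal X$ represented by $U$ and $P=\mathrm{id}$, with $\mathscr S$ a nontrivial cover of $S$, it is just the covering map $S_0\to S$. So in general there is no morphism at all in the fiber over $S$ between $(S,\mathscr S,\xi)$ and $\Psi(\Phi(S,\mathscr S,\xi))$, since their underlying presentations of $S$ need not be isomorphic, and hence no natural isomorphism $\Psi\Phi\simeq\mathrm{id}$ as you have set things up. Repairing this forces you to compare the two objects only after pulling back along a cover refining both presentations and then to use the stack property of both sides; at that point you are no longer exhibiting a quasi-inverse but re-proving full faithfulness together with local essential surjectivity.

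That is exactly how the paper sidesteps the problem: it never constructs a functor $\mathcal X\to[U\times_{\mathcal X}U\rightrightarrows U]$. It constructs only the canonical morphism $[U\times_{\mathcal X}U\rightrightarrows U]\to\mathcal X$ (by descending $P\circ\xi_0$, essentially your $\Phi$; note that you should also check the cocycle condition, which follows from compatibility of $\xi$ with composition, and define the functor on morphisms), then shows it is injective by checking that its relative diagonal becomes an isomorphism after base change along the covering $U\times_{\mathcal X}U\to[U\times_{\mathcal X}U\rightrightarrows U]\times_{\mathcal X}[U\times_{\mathcal X}U\rightrightarrows U]$, shows it is covering because $P$ factors through it, and concludes it is an equivalence because both source and target are stacks (cf.\ Lemma~\ref{L:StCond}). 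If you wish to keep the two-functor strategy, you must either work with a model of the quotient in which morphisms are defined after refinement of presentations (i.e.\ stackify), or replace the claimed isomorphism $\Psi\Phi\simeq\mathrm{id}$ by the injective-plus-covering argument above.
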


\begin{proof}  
Set $U_0 = U$ and $U_1 = U \mathbin\times_{\mathcal X} U$.  Let $\mathcal U$ be the stack associated to the groupoid object  $\mathscr U = U_1 \rightrightarrows U_0$.  We construct a map $\mathcal U \rightarrow \mathcal X$ by descent.  Let $Z \rightarrow \mathcal U$ be any morphism, where $Z$ is a scheme.  By definition, this corresponds to a groupoid presentation of $\mathscr Z$ of $Z$ and a cartesian morphism $\mathscr Z \rightarrow \mathscr U$.  By composition, this gives a map to the constant groupoid object $\mathcal X$, and this descends uniquely to a map $Z \rightarrow \mathcal X$.  This is easily shown to be functorial in $Z$, hence gives a morphism $\mathcal U \rightarrow \mathcal X$.

Now we argue that $\mathcal U \rightarrow \mathcal X$ is an isomorphism if $U_0$ covers $\mathcal X$.  Indeed, the map $U \rightarrow \mathcal X$ factors through $\mathcal U$, so $\mathcal U \rightarrow \mathcal X$ is surjective.  On the other hand, $U_0 \mathbin\times_{\mathcal U} U_0 \rightarrow U_0 \mathbin\times_{\mathcal X} U_0$ is an isomorphism.  This is the pullback under the cover $U_0 \mathbin\times_{\mathcal X} U_0 \rightarrow \mathcal U \mathbin\times_{\mathcal X} \mathcal U$ of the diagonal $\mathcal U \rightarrow \mathcal U \mathbin\times_{\mathcal X} \mathcal U$.  Therefore the relative diagonal of $\mathcal U \rightarrow \mathcal X$ is an isomorphism, which is to say that $\mathcal U \rightarrow \mathcal X$ is injective.  Combined with the surjectivity, this means $\mathcal U \rightarrow \mathcal X$ is an isomorphism.

The statement that $pr_1$ and $pr_2$ are covers can be obtained   from the $2$-cartesian diagram \eqref{E:2cartGrp}.
\end{proof}

\begin{dfn}[Groupoid object adapted to a presite]
Let $\mathsf S$ be a subcanonical presite admitting fibered products.  We say a groupoid object $\xymatrix@C=1em{U_1 \ar@{->}@<-2pt>[r]_<>(0.5){s} \ar@{->}@<2pt>[r]^<>(0.5){t}& U_0}$ is \emph{adapted to the presite} if $s$ and $t$ are covers in the presite and the natural morphism $U_0\to \mathcal X=[\xymatrix@C=1em{
U_1 \ar@{->}@<-2pt>[r]_<>(0.5){s} \ar@{->}@<2pt>[r]^<>(0.5){t}&  U_0
}]$ is $\mathsf S$-representable.  
\end{dfn}

\begin{cor}\label{C:AdaptStAdaptGr}
The stack associated to a groupoid object adapted to a presite is a stack adapted to the presite; in particular, it is algebraic.  Conversely, a stack adapted to a presite is the stack associated to a groupoid object adapted to the presite.
\end{cor}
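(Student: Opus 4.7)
The corollary is essentially a bookkeeping combination of the two immediately preceding propositions, so the plan is simply to check that the definitions line up on both sides.

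For the forward direction, suppose $\xymatrix@C=1em{U_1 \ar@<-2pt>[r]_-{s} \ar@<2pt>[r]^-{t}& U_0}$ is a groupoid object adapted to the presite, and set $\mathcal X = [\xymatrix@C=1em{U_1 \ar@<-2pt>[r]_-{s} \ar@<2pt>[r]^-{t}& U_0}]$. By hypothesis, the canonical morphism $U_0 \to \mathcal X$ is $\mathsf S$-representable. I would apply Proposition~\ref{P:Gr-Stack-Pres}(2) with $\mathbf P$ taken to be the property ``is a cover in the pretopology on $\mathsf S$''; this property is stable under base change and local on the target by the axioms (PT1) and (PT2) of a pretopology, so the proposition applies. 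Since $s$ (or $t$) is assumed to be a cover, it follows that for every $S$ in $\mathsf S$ and every morphism $S \to \mathcal X$, the base change $U_0 \times_{\mathcal X} S \to S$ is a cover in $\mathsf S$. This is exactly the condition in Definition~\ref{D:SAdaptStack} for $U_0 \to \mathcal X$ to be a presentation, so $\mathcal X$ is adapted to the presite. Algebraicity is then the observation that $\mathsf S$-adapted stacks are a special case of adapted stacks in the sense of Definition~\ref{D:AdaptStack} (cf.\ Example~\ref{E:DM-AS}), hence algebraic in the sense of Definition~\ref{D:alg-stack}.

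For the converse, suppose $\mathcal X$ is a stack adapted to the presite, with presentation $P : U \to \mathcal X$. I would form $U_1 := U \times_{\mathcal X} U$, together with the two projections $pr_1, pr_2 : U_1 \to U_0 := U$, which (together with the evident composition law coming from the fiber product) constitute a groupoid object in $\mathsf S$. The second proposition above then supplies an equivalence $\mathcal X \simeq [\xymatrix@C=1em{U_1 \ar@<-2pt>[r]_-{pr_1} \ar@<2pt>[r]^-{pr_2}& U_0}]$ and guarantees that $pr_1$ and $pr_2$ are covers in the presite. Since $P : U \to \mathcal X$ is $\mathsf S$-representable by hypothesis, the groupoid $U_1 \rightrightarrows U_0$ satisfies all the requirements to be adapted to the presite.

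There is essentially no main obstacle, since the nontrivial content—namely, the translation between properties of the structure maps of a groupoid and properties of the quotient morphism $U_0 \to \mathcal X$, and the reconstruction of $\mathcal X$ from the groupoid $U \times_{\mathcal X} U \rightrightarrows U$—has already been packaged into the two preceding propositions. The only point requiring care is to verify that ``being a cover in the pretopology'' is an admissible property $\mathbf P$ for Proposition~\ref{P:Gr-Stack-Pres}; this is immediate from the pretopology axioms. The rest is bookkeeping.
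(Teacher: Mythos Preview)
Your strategy is correct, and since the paper gives no proof (treating the corollary as immediate from the two preceding propositions), your reading matches the authors' intent. The converse direction is handled correctly.

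There is, however, a gap in the forward direction: your claim that ``is a cover in the pretopology'' is local on the target does not follow from (PT1) and (PT2). Axiom (PT1) gives stability under base change; (PT2) tells you that if $\{S_i \to S\}$ is a cover and each $\{X \times_S S_i \to S_i\}$ is a cover, then the composite family $\{X \times_S S_i \to S\}$ is a cover. But this only shows that some covering family \emph{factors through} $X \to S$, not that the singleton $\{X \to S\}$ itself belongs to the pretopology. A general pretopology need not be saturated in this sense, so Proposition~\ref{P:Gr-Stack-Pres}(2) cannot be invoked with $\mathbf P = \text{``cover''}$ as stated.

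The cleanest repair is to pass to the associated topology $\mathscr T'$ of \S\ref{S:top}: there, any morphism through which a covering family factors is itself covering (Definition~\ref{D:top}), so ``being a cover in $\mathscr T'$'' \emph{is} local on the target and Proposition~\ref{P:Gr-Stack-Pres}(2) applies cleanly. Since the class of stacks and the notion of $\mathsf S$-representability are unchanged on passing from $\mathscr T$ to $\mathscr T'$, nothing else in your argument is affected. Alternatively, one may argue directly: for any $S \to \mathcal X$ arising from a presentation $S_0 \to S$, the pullback $(U_0 \times_{\mathcal X} S) \times_S S_0 \to S_0$ is a base change of $s$, hence a cover, so $U_0 \times_{\mathcal X} S \to S$ is at least covering in the sense of Definition~\ref{D:CovStack}; combined with $\mathsf S$-representability this already yields algebraicity.
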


\section*{Acknowledgments}
   These notes provide an elaboration on lectures  the first author gave at the summer school \emph{The Geometry, Topology and Physics of Moduli Spaces of Higgs Bundles}  at the Institute for Mathematical Sciences at the National University of Singapore in July of 2014.  
   He would like to thank the organizers   for their invitation, and the IMS at NUS for their hospitality.  He would also like to thank  Tony Pantev for discussions on the moduli  stack of Higgs bundles.  Both authors would like to thank the referees and Sebastian Bozlee for their comments on earlier versions of the paper.

\bibliography{stacksintrobib}

\newcommand{\etalchar}[1]{$^{#1}$}
\begin{thebibliography}{BCE{\etalchar{+}}14}

\bibitem[AK80]{AK80}
Allen~B. Altman and Steven~L. Kleiman.
\newblock Compactifying the {P}icard scheme.
\newblock {\em Adv. in Math.}, 35(1):50--112, 1980.

\bibitem[Alp13]{alpergms}
Jarod Alper.
\newblock Good moduli spaces for {A}rtin stacks.
\newblock {\em Ann. Inst. Fourier (Grenoble)}, 63(6):2349--2402, 2013.

\bibitem[Art69]{Artin-approximation}
M.~Artin.
\newblock Algebraic approximation of structures over complete local rings.
\newblock {\em Inst. Hautes \'Etudes Sci. Publ. Math.}, (36):23--58, 1969.

\bibitem[Art74]{Artin-versal}
M.~Artin.
\newblock Versal deformations and algebraic stacks.
\newblock {\em Invent. Math.}, 27:165--189, 1974.

\bibitem[BCE{\etalchar{+}}14]{DMstacks}
Kai Behrend, Brian Conrad, Dan Edidin, William Fulton, Barbara Fantechi, Lothar
  G\"ottsche, and Andrew Kresch.
\newblock {\em Algebraic stacks}.
\newblock 2014.
\newblock available online from Kresch's webpage.

\bibitem[BF97]{BF}
K.~Behrend and B.~Fantechi.
\newblock The intrinsic normal cone.
\newblock {\em Invent. Math.}, 128(1):45--88, 1997.

\bibitem[Bha14]{Bhatt}
Bhargav Bhatt.
\newblock Algebraization and {T}annaka duality, 2014.
\newblock Available online:
  \href{http://arxiv.org/abs/1404.7483}{\texttt{arXiv: 1404.7483}}.

\bibitem[BHL]{Bhatt-Halpern-Leinster}
Bhargav Bhatt and Daniel Halpern-Leinster.
\newblock Tannaka duality revisited.
\newblock \href{http://arxiv.org/abs/1507.01925}{\texttt{arXiv: 1507.01925}}.

\bibitem[BLR90]{BLR}
Siegfried Bosch, Werner L{\"u}tkebohmert, and Michel Raynaud.
\newblock {\em N\'eron models}, volume~21 of {\em Ergebnisse der Mathematik und
  ihrer Grenzgebiete (3) [Results in Mathematics and Related Areas (3)]}.
\newblock Springer-Verlag, Berlin, 1990.

\bibitem[BNR89]{BNR}
Arnaud Beauville, M.~S. Narasimhan, and S.~Ramanan.
\newblock Spectral curves and the generalised theta divisor.
\newblock {\em J. Reine Angew. Math.}, 398:169--179, 1989.

\bibitem[Bor91]{Borel-AG}
Armand Borel.
\newblock {\em Linear algebraic groups}, volume 126 of {\em Graduate Texts in
  Mathematics}.
\newblock Springer-Verlag, New York, second edition, 1991.

\bibitem[CdJ02]{CdJ}
Brian Conrad and A.~J. de~Jong.
\newblock Approximation of versal deformations.
\newblock {\em J. Algebra}, 255(2):489--515, 2002.

\bibitem[CM13]{casa13}
Sebastian Casalaina-Martin.
\newblock A tour of stable reduction with applications.
\newblock In {\em A celebration of algebraic geometry}, volume~18 of {\em Clay
  Math. Proc.}, pages 65--117. Amer. Math. Soc., Providence, RI, 2013.

\bibitem[Con02]{conradchev}
Brian Conrad.
\newblock A modern proof of {C}hevalley's theorem on algebraic groups.
\newblock {\em J. Ramanujan Math. Soc.}, 17(1):1--18, 2002.

\bibitem[DM69]{DM}
P.~Deligne and D.~Mumford.
\newblock The irreducibility of the space of curves of given genus.
\newblock {\em Inst. Hautes \'Etudes Sci. Publ. Math.}, (36):75--109, 1969.

\bibitem[DP12]{DPHitch12}
R.~Donagi and T.~Pantev.
\newblock Langlands duality for {H}itchin systems.
\newblock {\em Invent. Math.}, 189(3):653--735, 2012.

\bibitem[Eis95]{Eisenbud}
David Eisenbud.
\newblock {\em Commutative algebra}, volume 150 of {\em Graduate Texts in
  Mathematics}.
\newblock Springer-Verlag, New York, 1995.
\newblock With a view toward algebraic geometry.

\bibitem[Fan01]{fantechi}
Barbara Fantechi.
\newblock Stacks for everybody.
\newblock In {\em European {C}ongress of {M}athematics, {V}ol. {I}
  ({B}arcelona, 2000)}, volume 201 of {\em Progr. Math.}, pages 349--359.
  Birkh\"auser, Basel, 2001.

\bibitem[Fer03]{Ferrand}
Daniel Ferrand.
\newblock Conducteur, descente et pincement.
\newblock {\em Bull.~Soc.~Math.~France}, 131(4):553--585, 2003.

\bibitem[FGI{\etalchar{+}}05]{FGAe}
Barbara Fantechi, Lothar G{\"o}ttsche, Luc Illusie, Steven~L. Kleiman, Nitin
  Nitsure, and Angelo Vistoli.
\newblock {\em Fundamental algebraic geometry}, volume 123 of {\em Mathematical
  Surveys and Monographs}.
\newblock American Mathematical Society, Providence, RI, 2005.
\newblock Grothendieck's FGA explained.

\bibitem[FK88]{FreitagKiehl}
Eberhard Freitag and Reinhardt Kiehl.
\newblock {\em \'{E}tale cohomology and the {W}eil conjecture}, volume~13 of
  {\em Ergebnisse der Mathematik und ihrer Grenzgebiete (3) [Results in
  Mathematics and Related Areas (3)]}.
\newblock Springer-Verlag, Berlin, 1988.
\newblock Translated from the German by Betty S. Waterhouse and William C.
  Waterhouse, With an historical introduction by J. A. Dieudonn{\'e}.

\bibitem[Fle81]{Flenner}
Hubert Flenner.
\newblock Ein {K}riterium f\"ur die {O}ffenheit der {V}ersalit\"at.
\newblock {\em Math. Z.}, 178(4):449--473, 1981.

\bibitem[Gai97]{Gaitsgory}
D.~Gaitsgory.
\newblock Grothendieck topologies and deformation theory. {II}.
\newblock {\em Compositio Math.}, 106(3):321--348, 1997.

\bibitem[GD71a]{EGA1971}
A.~Grothendieck and J.~A. Dieudonn{\'e}.
\newblock {\em El\'ements de g\'eom\'etrie alg\'ebrique. {I}}, volume 166 of
  {\em Grundlehren der Mathematischen Wissenschaften [Fundamental Principles of
  Mathematical Sciences]}.
\newblock Springer-Verlag, Berlin, 1971.

\bibitem[GD71b]{EGAI}
A.~Grothendieck and J.~A. Dieudonn{\'e}.
\newblock {\em El\'ements de g\'eom\'etrie alg\'ebrique. {I}}, volume 166 of
  {\em Grundlehren der Mathematischen Wissenschaften [Fundamental Principles of
  Mathematical Sciences]}.
\newblock Springer-Verlag, Berlin, 1971.

\bibitem[GH94]{GH}
Phillip Griffiths and Joseph Harris.
\newblock {\em Principles of algebraic geometry}.
\newblock Wiley Classics Library. John Wiley \& Sons, Inc., New York, 1994.
\newblock Reprint of the 1978 original.

\bibitem[Gir71]{Giraud}
Jean Giraud.
\newblock {\em Cohomologie non ab\'elienne}.
\newblock Springer-Verlag, Berlin-New York, 1971.
\newblock Die Grundlehren der mathematischen Wissenschaften, Band 179.

\bibitem[Gro57]{Grothendieck}
A.~Grothendieck.
\newblock Sur la classification des fibrés holomorphes sur la sphère de
  {R}iemann.
\newblock {\em Amer. J. Math.}, 79:121--138, 1957.

\bibitem[Gro61a]{EGAII}
A.~Grothendieck.
\newblock \'{E}l\'ements de g\'eom\'etrie alg\'ebrique. {II}. \'{E}tude globale
  \'el\'ementaire de quelques classes de morphismes.
\newblock {\em Inst. Hautes \'Etudes Sci. Publ. Math.}, (8):222, 1961.

\bibitem[Gro61b]{EGAIII1}
A.~Grothendieck.
\newblock \'{E}l\'ements de g\'eom\'etrie alg\'ebrique. {III}. \'{E}tude
  cohomologique des faisceaux coh\'erents. {I}.
\newblock {\em Inst. Hautes \'Etudes Sci. Publ. Math.}, (11):167, 1961.

\bibitem[Gro62]{FGA}
Alexander Grothendieck.
\newblock {\em Fondements de la g\'eom\'etrie alg\'ebrique. [{E}xtraits du
  {S}\'em-inaire {B}ourbaki, 1957--1962.]}.
\newblock Secr\'etariat math\'ematique, Paris, 1962.

\bibitem[Gro63]{EGAIII2}
A.~Grothendieck.
\newblock \'{E}l\'ements de g\'eom\'etrie alg\'ebrique. {III}. \'{E}tude
  cohomologique des faisceaux coh\'erents. {II}.
\newblock {\em Inst. Hautes \'Etudes Sci. Publ. Math.}, (17):91, 1963.

\bibitem[Gro65]{EGAIV2}
A.~Grothendieck.
\newblock \'{E}l\'ements de g\'eom\'etrie alg\'ebrique. {IV}. \'{E}tude locale
  des sch\'emas et des morphismes de sch\'emas. {II}.
\newblock {\em Inst. Hautes \'Etudes Sci. Publ. Math.}, (24):231, 1965.

\bibitem[Gro66]{EGAIV3}
A.~Grothendieck.
\newblock \'{E}l\'ements de g\'eom\'etrie alg\'ebrique. {IV}. \'{E}tude locale
  des sch\'emas et des morphismes de sch\'emas. {III}.
\newblock {\em Inst. Hautes \'Etudes Sci. Publ. Math.}, (28):255, 1966.

\bibitem[Gro67]{EGAIV4}
A.~Grothendieck.
\newblock \'{E}l\'ements de g\'eom\'etrie alg\'ebrique. {IV}. \'{E}tude locale
  des sch\'emas et des morphismes de sch\'emas {IV}.
\newblock {\em Inst. Hautes \'Etudes Sci. Publ. Math.}, (32):361, 1967.

\bibitem[Gro95a]{Grothendieck-GAGA}
Alexander Grothendieck.
\newblock G\'eom\'etrie formelle et g\'eom\'etrie alg\'ebrique.
\newblock In {\em S\'eminaire {B}ourbaki, {V}ol.\ 5}, pages Exp.\ No.\ 182,
  193--220, errata p.\ 390. Soc. Math. France, Paris, 1995.

\bibitem[Gro95b]{FGA1}
Alexander Grothendieck.
\newblock Technique de descente et th\'eor\`emes d'existence en g\'eometrie
  alg\'ebrique. {I}. {G}\'en\'eralit\'es. {D}escente par morphismes
  fid\`element plats.
\newblock In {\em S\'eminaire {B}ourbaki, {V}ol.\ 5}, pages Exp.\ No.\ 190,
  299--327. Soc. Math. France, Paris, 1995.

\bibitem[Hal12]{Hall}
Jack Hall.
\newblock Openness of versality via coherent functors, 2012.
\newblock Available online:
  \href{http://arxiv.org/abs/1206.4179}{\texttt{arXiv:1206.4179}}.

\bibitem[Har77]{Hartshorne}
Robin Hartshorne.
\newblock {\em Algebraic geometry}.
\newblock Springer-Verlag, New York-Heidelberg, 1977.
\newblock Graduate Texts in Mathematics, No. 52.

\bibitem[HL10]{HL}
Daniel Huybrechts and Manfred Lehn.
\newblock {\em The geometry of moduli spaces of sheaves}.
\newblock Cambridge Mathematical Library. Cambridge University Press,
  Cambridge, second edition, 2010.

\bibitem[HM98]{HM}
Joe Harris and Ian Morrison.
\newblock {\em Moduli of curves}, volume 187 of {\em Graduate Texts in
  Mathematics}.
\newblock Springer-Verlag, New York, 1998.

\bibitem[HR13]{Hall-Rydh}
Jack Hall and David Rydh.
\newblock Artin's criteria for algebraicity revisited, 2013.
\newblock Available online:
  \href{http://arxiv.org/abs/1306.4599}{\texttt{arXiv:1306.4599}}.

\bibitem[HR14]{Hall-Rydh-TD}
Jack Hall and David Rydh.
\newblock Coherent {T}annaka duality and algebraicity of hom-stacks, 2014.
\newblock Available online:
  \href{http://arxiv.org/abs/1405.7680}{\texttt{arXiv:1405.7680}}.

\bibitem[Ill71]{Illusie-1}
Luc Illusie.
\newblock {\em Complexe cotangent et d\'eformations. {I}}.
\newblock Lecture Notes in Mathematics, Vol. 239. Springer-Verlag, Berlin-New
  York, 1971.

\bibitem[KKN08a]{KKN-II}
Takeshi Kajiwara, Kazuya Kato, and Chikara Nakayama.
\newblock Logarithmic abelian varieties.
\newblock {\em Nagoya Math. J.}, 189:63--138, 2008.

\bibitem[KKN08b]{KKN-I}
Takeshi Kajiwara, Kazuya Kato, and Chikara Nakayama.
\newblock Logarithmic abelian varieties. {I}. {C}omplex analytic theory.
\newblock {\em J. Math. Sci. Univ. Tokyo}, 15(1):69--193, 2008.

\bibitem[KKN13]{KKN-III}
Takeshi Kajiwara, Kazuya Kato, and Chikara Nakayama.
\newblock Logarithmic abelian varieties, {III}: logarithmic elliptic curves and
  modular curves.
\newblock {\em Nagoya Math. J.}, 210:59--81, 2013.

\bibitem[Knu71]{Knutson}
Donald Knutson.
\newblock {\em Algebraic spaces}.
\newblock Lecture Notes in Mathematics, Vol. 203. Springer-Verlag, Berlin-New
  York, 1971.

\bibitem[Lie06]{lieblich062}
Max Lieblich.
\newblock Remarks on the stack of coherent algebras.
\newblock {\em Int. Math. Res. Not.}, pages Art. ID 75273, 12, 2006.

\bibitem[LMB00]{LMB}
G{\'e}rard Laumon and Laurent Moret-Bailly.
\newblock {\em Champs alg\'ebriques}, volume~39 of {\em Ergebnisse der
  Mathematik und ihrer Grenzgebiete. 3. Folge. A Series of Modern Surveys in
  Mathematics [Results in Mathematics and Related Areas. 3rd Series. A Series
  of Modern Surveys in Mathematics]}.
\newblock Springer-Verlag, Berlin, 2000.

\bibitem[LT98]{LT}
Jun Li and Gang Tian.
\newblock Virtual moduli cycles and {G}romov-{W}itten invariants of algebraic
  varieties.
\newblock {\em J. Amer. Math. Soc.}, 11(1):119--174, 1998.

\bibitem[Lur04]{Lurie-thesis}
Jacob Lurie.
\newblock {\em Derived algebraic geometry}.
\newblock 2004.
\newblock Thesis (Ph.D.)--Massachusetts Institute of Technology. Available
  online:
  \href{http://www.math.harvard.edu/~lurie/papers/DAG.pdf}{\texttt{math.harvard.edu/\textasciitilde{}lurie/papers/DAG.pdf}}.

\bibitem[Lur12]{Lurie-DAG14}
Jacob Lurie.
\newblock Representability theorems, 2012.
\newblock Available online:
  \href{http://www.math.harvard.edu/~lurie/papers/DAG-XIV.pdf}{\texttt{math.harvard.edu/\textasciitilde{}
  lurie/papers/DAG-XIV.pdf}}.

\bibitem[Mil71]{Milnor}
John Milnor.
\newblock {\em Introduction to algebraic {$K$}-theory}.
\newblock Princeton University Press, Princeton, N.J.; University of Tokyo
  Press, Tokyo, 1971.
\newblock Annals of Mathematics Studies, No. 72.

\bibitem[MLM94]{MM94}
Saunders Mac~Lane and Ieke Moerdijk.
\newblock {\em Sheaves in geometry and logic}.
\newblock Universitext. Springer-Verlag, New York, 1994.
\newblock A first introduction to topos theory, Corrected reprint of the 1992
  edition.

\bibitem[MT16]{MT}
Johan Martens and Michael Thaddeus.
\newblock Compactifications of reductive groups as moduli stacks of bundles.
\newblock {\em Compos. Math.}, 152(1):62--98, 2016.

\bibitem[Mum65]{mumfordPic}
David Mumford.
\newblock Picard groups of moduli problems.
\newblock In {\em Arithmetical {A}lgebraic {G}eometry ({P}roc. {C}onf. {P}urdue
  {U}niv., 1963)}, pages 33--81. Harper \& Row, New York, 1965.

\bibitem[Mum08]{Mumford-AV}
David Mumford.
\newblock {\em Abelian varieties}, volume~5 of {\em Tata Institute of
  Fundamental Research Studies in Mathematics}.
\newblock Published for the Tata Institute of Fundamental Research, Bombay; by
  Hindustan Book Agency, New Delhi, 2008.
\newblock With appendices by C. P. Ramanujam and Yuri Manin, Corrected reprint
  of the second (1974) edition.

\bibitem[Nit91]{N91}
Nitin Nitsure.
\newblock Moduli space of semistable pairs on a curve.
\newblock {\em Proc. London Math. Soc. (3)}, 62(2):275--300, 1991.

\bibitem[OB72]{OB}
Arthur Ogus and George Bergman.
\newblock Nakayama's lemma for half-exact functors.
\newblock {\em Proc. Amer. Math. Soc.}, 31:67--74, 1972.

\bibitem[Ols06]{olssonHom}
Martin~C. Olsson.
\newblock {$\underline {\rm Hom}$}-stacks and restriction of scalars.
\newblock {\em Duke Math. J.}, 134(1):139--164, 2006.

\bibitem[Pop86]{Popescu}
Dorin Popescu.
\newblock General {N}\'eron desingularization and approximation.
\newblock {\em Nagoya Math. J.}, 104:85--115, 1986.

\bibitem[Pri12]{Pridham}
J.~P. Pridham.
\newblock Representability of derived stacks.
\newblock {\em J. K-Theory}, 10(2):413--453, 2012.

\bibitem[Qui67]{Quillen-HA}
Daniel~G. Quillen.
\newblock {\em Homotopical algebra}.
\newblock Lecture Notes in Mathematics, No. 43. Springer-Verlag, Berlin-New
  York, 1967.

\bibitem[Ray66]{Raynaud}
Michel Raynaud.
\newblock Faisceaux amples sur les sch\'emas en groupes et les espaces
  homog\`enes.
\newblock {\em C. R. Acad. Sci. Paris S\'er. A-B}, 262:A1313--A1315, 1966.

\bibitem[Ray70]{raynaud70}
Michel Raynaud.
\newblock Sp\'ecialisation du foncteur de {P}icard.
\newblock {\em Inst. Hautes \'Etudes Sci. Publ. Math.}, (38):27--76, 1970.

\bibitem[Ric77]{richardson77}
R.~W. Richardson.
\newblock Affine coset spaces of reductive algebraic groups.
\newblock {\em Bull. London Math. Soc.}, 9(1):38--41, 1977.

\bibitem[Sch68]{Schlessinger}
Michael Schlessinger.
\newblock Functors of {A}rtin rings.
\newblock {\em Trans. Amer. Math. Soc.}, 130:208--222, 1968.

\bibitem[Sch98]{schaub}
Daniel Schaub.
\newblock Courbes spectrales et compactifications de {J}acobiennes.
\newblock {\em Math. Z.}, 227(2):295--312, 1998.

\bibitem[Ser95]{Serre-EFA}
Jean-Pierre Serre.
\newblock Espaces fibr\'es alg\'ebriques (d'apr\`es {A}ndr\'e {W}eil).
\newblock In {\em S\'eminaire {B}ourbaki, {V}ol.\ 2}, pages Exp.\ No.\ 82,
  305--311. Soc. Math. France, Paris, 1995.

\bibitem[Ser06]{sernesi}
Edoardo Sernesi.
\newblock {\em Deformations of algebraic schemes}, volume 334 of {\em
  Grundlehren der Mathematischen Wissenschaften [Fundamental Principles of
  Mathematical Sciences]}.
\newblock Springer-Verlag, Berlin, 2006.

\bibitem[sga72a]{sga7-1}
{\em Groupes de monodromie en g\'eom\'etrie alg\'ebrique. {I}}.
\newblock Lecture Notes in Mathematics, Vol. 288. Springer-Verlag, Berlin-New
  York, 1972.
\newblock S{\'e}minaire de G{\'e}om{\'e}trie Alg{\'e}brique du Bois-Marie
  1967--1969 (SGA 7 I), Dirig{\'e} par A. Grothendieck. Avec la collaboration
  de M. Raynaud et D. S. Rim.

\bibitem[sga72b]{sga4-1}
{\em Th\'eorie des topos et cohomologie \'etale des sch\'emas}.
\newblock Lecture Notes in Mathematics, Vol. 269. Springer-Verlag, Berlin-New
  York, 1972.
\newblock S{\'e}minaire de G{\'e}om{\'e}trie Alg{\'e}brique du Bois-Marie
  1963--1964 (SGA 4 I), Dirig{\'e} par A. Grothendieck. Avec la collaboration
  de M. Artin et J.-L. Verdier.

\bibitem[sga72c]{sga4-2}
{\em Th\'eorie des topos et cohomologie \'etale des sch\'emas. {T}ome 2}.
\newblock Lecture Notes in Mathematics, Vol. 270. Springer-Verlag, Berlin-New
  York, 1972.
\newblock S{\'e}minaire de G{\'e}om{\'e}trie Alg{\'e}brique du Bois-Marie
  1963--1964 (SGA 4), Dirig{\'e} par M. Artin, A. Grothendieck et J. L.
  Verdier. Avec la collaboration de N. Bourbaki, P. Deligne et B. Saint-Donat.

\bibitem[sga73]{sga4-3}
{\em Th\'eorie des topos et cohomologie \'etale des sch\'emas. {T}ome 3}.
\newblock Lecture Notes in Mathematics, Vol. 305. Springer-Verlag, Berlin-New
  York, 1973.
\newblock S{\'e}minaire de G{\'e}om{\'e}trie Alg{\'e}brique du Bois-Marie
  1963--1964 (SGA 4), Dirig{\'e} par M. Artin, A. Grothendieck et J. L.
  Verdier. Avec la collaboration de P. Deligne et B. Saint-Donat.

\bibitem[SGA03]{SGA1}
{\em Rev\^etements \'etales et groupe fondamental ({SGA} 1)}.
\newblock Documents Math\'ematiques (Paris) [Mathematical Documents (Paris)],
  3. Soci\'et\'e Math\'ematique de France, Paris, 2003.
\newblock S{\'e}minaire de g{\'e}om{\'e}trie alg{\'e}brique du Bois Marie
  1960--61. [Algebraic Geometry Seminar of Bois Marie 1960-61], Directed by A.
  Grothendieck, With two papers by M. Raynaud, Updated and annotated reprint of
  the 1971 original [Lecture Notes in Math., 224, Springer, Berlin; MR0354651
  (50 \#7129)].

\bibitem[Sta]{Starr-MO}
Jason Starr.
\newblock Algebraic groups without torsors.
\newblock MathOverflow.
\newblock URL: http:// mathoverflow.net/q/253519 (version: 2016-11-01).

\bibitem[{Sta}15]{stacks}
The {Stacks Project Authors}.
\newblock {S}tacks {P}roject.
\newblock
  \href{http://stacks.math.columbia.edu}{\texttt{stacks.math.columbia.edu}},
  2015.

\bibitem[TV08]{TV-II}
Bertrand To{\"e}n and Gabriele Vezzosi.
\newblock Homotopical algebraic geometry. {II}. {G}eometric stacks and
  applications.
\newblock {\em Mem. Amer. Math. Soc.}, 193(902):x+224, 2008.

\bibitem[Wan12]{Wang}
Jie Wang.
\newblock Deformations of pairs {$(X,L)$} when {$X$} is singular.
\newblock {\em Proc. Amer. Math. Soc.}, 140(9):2953--2966, 2012.

\bibitem[Wei53]{Weil}
Andr{\'e} Weil.
\newblock Th\'eorie des points proches sur les vari\'et\'es diff\'erentiables.
\newblock In {\em G\'eom\'etrie diff\'erentielle. {C}olloques {I}nternationaux
  du {C}entre {N}ational de la {R}echerche {S}cientifique, {S}trasbourg, 1953},
  pages 111--117. Centre National de la Recherche Scientifique, Paris, 1953.

\bibitem[Wis11]{obs}
Jonathan Wise.
\newblock Obstruction theories and virtual fundamental classes, 2011.

\bibitem[Wis12]{def-rings}
Jonathan Wise.
\newblock The deformation theory of sheaves of commutative rings.
\newblock {\em J. Algebra}, 352:180--191, 2012.

\bibitem[Wis15]{coh-rings}
Jonathan Wise.
\newblock The deformation theory of sheaves of commutative rings {II}.
\newblock {\em Ann. Sc. Norm. Super. Pisa Cl. Sci. (5)}, 14(2):339--359, 2015.

\bibitem[Zom15]{Zomervrucht}
Wouter Zomervrucht.
\newblock Ineffective descent of genus one curves, 2015.
\newblock Available online:
  \href{http://arxiv.org/abs/1501.04304}{\texttt{arXiv:1501.04304}}.

\end{thebibliography}

\end{document}